\documentclass[11pt]{amsart}
\usepackage[margin=1in]{geometry}
\usepackage{amsmath,amssymb,amsthm,mathtools}
\usepackage{mathrsfs}
\usepackage[T1]{fontenc}
\usepackage{lmodern}
\usepackage{microtype}
\usepackage{tikz}
\usepackage{enumitem}
\usepackage{booktabs}
\usepackage{needspace}
\usepackage{titlesec,etoolbox}
\usepackage[colorlinks=true,linkcolor=blue,citecolor=blue,urlcolor=blue,pagebackref=true]{hyperref}
\patchcmd{\abstract}{\bfseries}{\normalfont}{}{}
\titleformat{\section}{\large\normalfont}{\thesection}{1em}{}
\titleformat{\subsection}{\normalsize\normalfont}{\thesubsection}{1em}{}
\titleformat{\part}[display]{\centering\Large\normalfont}{\partname{} \thepart}{1em}{}
\newtheoremstyle{plainunbold}{6pt}{6pt}{\itshape}{}{\normalfont}{.}{.5em}{}
\newtheoremstyle{remarkunbold}{6pt}{6pt}{\normalfont}{}{\normalfont}{.}{.5em}{}
\theoremstyle{plainunbold}
\usepackage[colorlinks=true,linkcolor=blue,citecolor=blue,urlcolor=blue]{hyperref}
\hypersetup{pdftitle={Improved dimension bounds for Falconer’s
distance problem in the plane},pdfauthor={Alex Iosevich and Alexia Yavicoli},bookmarksnumbered=true}
\setlist{nosep}
\numberwithin{equation}{section}
\newtheorem{theorem}{Theorem}[section]
\newtheorem{lemma}[theorem]{Lemma}
\newtheorem{proposition}[theorem]{Proposition}
\newtheorem{corollary}[theorem]{Corollary}

\theoremstyle{remarkunbold}
\newtheorem{remark}[theorem]{Remark}
\newtheorem{question}[theorem]{Question}
\theoremstyle{plainunbold}
\title{Improved dimension bounds for Falconer's\\ distance problem in the plane}

\author{A. Iosevich}
\address{Department of Mathematics, University of Rochester, Rochester, NY, USA}
\email{iosevich@gmail.com}

\author{A. Yavicoli}
\address{Department of Mathematics, University of British Columbia}
\email{yavicoli@math.ubc.ca}

\thanks{A.~I. was supported in part by National Science Foundation grant
DMS-2154232. A.~Y. was supported in part by the Natural Sciences and Engineering Research
Council of Canada (NSERC), grants GR030571 and GR030540.}

\begin{document}
\maketitle

\begin{abstract}
Let $E\subset\mathbb R^2$ be analytic with $t=\dim_{\mathrm H}E>1$.
We prove that its distance set satisfies
$\dim_{\mathrm H}\Delta E\ge\frac{4}{5}+\eta_0$ for an absolute
constant $\eta_0>0$, obtained by compactness without an explicit
numerical value. We also obtain the explicit estimate
\[
\dim_{\mathrm H}\Delta E\ge
\frac{t^2-2t+5}{2t^2-7t+10}
\quad\text{for }1<t\le\frac{11-\sqrt{41}}{4},
\]
which reaches $\frac{7}{8}$ at the right endpoint.
For the pinned distance sets, we prove
$\dim_{\mathrm H}\Delta_xE\ge q_*=0.7813\ldots$ for every
$x\in\mathbb R^2$ outside a set of Hausdorff dimension at most
one, with stronger estimates depending on $t$ and further
bounds on the exceptional sets.
If $E$ is compact and supports an $s$-Frostman measure $\mu$,
where $s>1$, then
\[
\underline{\dim}_{\mathrm B}\Delta_xE\ge
\max\left\{\frac{4}{5}+\eta_0,
\min\left\{\frac{3s+1}{5},\frac{7}{8}\right\}\right\}
\quad\text{for $\mu$-almost every }x.
\]
The same lower bound holds for pinned Hausdorff dimension
under a common dyadic mass law, which may oscillate with
the scale.

The proof retains information about both endpoints in the
Kolmogorov complexity estimates of Stull and Fiedler--Stull.
Applying the partition estimates with the roles of the
endpoints reversed yields the bound $\frac{4}{5}$ and the
rational estimate above. To pass beyond $\frac{4}{5}$, we
examine the constraints imposed by equality. Both information
profiles must have excess information on a common interval
of scales. Encoding a single source measure for all pins
converts this constraint into a configuration of points and
circles with many rich perpendicular bisectors. The lower
bounds supplied by Katz--Tardos and Ren--Wang contradict
the upper bound of Fu--Gan--Ren, forcing the fixed gain
$\eta_0$. This passage from rigidity to a strict improvement
has a precedent in the work of H\'era, Shmerkin and Yavicoli
on Furstenberg sets, where configurations close to the
elementary bound are excluded using Bourgain's projection
theorem. A separate selection argument at each finite
scale establishes the pinned lower box estimate for
every $s$-Frostman measure with $s>1$.

Together with recent work of Stull, Fiedler, Cs\"ornyei
and their collaborators, these results illustrate how
effective dimension and Kolmogorov complexity, in combination
with Fourier analytic and combinatorial techniques, can yield
new bounds in classical geometric measure theory.
\end{abstract}

\tableofcontents 

\section{Introduction}\label{sec:introduction}

The Falconer distance problem is one of the most intriguing problems in modern geometric measure theory. Let $E\subset\mathbb R^2$, and define its distance set and its distance set pinned at
$x\in\mathbb R^2$, respectively, by 
\[
\Delta E=\{|y-z|:y,z\in E\},\qquad \Delta_xE=\{|x-y|:y\in E\}, 
\] where $|\cdot|$ denotes the Euclidean norm. 

We call $x$ the pin and $E$ the source set. A source point is a point
$y\in E$ from which the distance to the pin is measured. For $x\in E$,
$\Delta_xE\subseteq\Delta E$.

For $s\ge0$, define
\[
\mathcal H^s(E)=\lim_{r\searrow0}
\inf\left\{\sum_i(\operatorname{diam}U_i)^s:
U_i\subseteq\mathbb R^2,\ E\subseteq\bigcup_iU_i, \operatorname{diam}U_i\le r\right\},
\]
where the covers are countable and $\operatorname{diam}$ denotes
diameter. The Hausdorff dimension is
$\dim_{\mathrm H}E=\inf\{s\in[0,\infty):\mathcal H^s(E)=0\}$.
The infimum in the display before taking the limit is the
$s$-dimensional Hausdorff content at scale $r$. Omitting the
restriction $\operatorname{diam}U_i\le r$ defines the Hausdorff content
without a scale restriction.

An analytic set is a continuous image
of a complete separable metric space.
Every Borel set is analytic.
The question is how much of the size of $E$ must survive under the
distance map. Falconer conjectured that $\Delta E$ has positive Lebesgue
measure whenever $\dim_{\mathrm H}E>1$~\cite{Falconer}. Even the weaker
conclusion $\dim_{\mathrm H}\Delta E=1$ is open when
$\dim_{\mathrm H}E$ is only slightly larger than one.

For $\dim_{\mathrm H}E$ slightly larger than one, the known lower bounds
approach $\frac{3}{4}$. Stull~\cite[Theorem~1]{S} proved that every analytic
$E\subset\mathbb R^2$ with $\dim_{\mathrm H}E>1$ satisfies
\[
\dim_{\mathrm H}\Bigl\{x \in \mathbb{R}^2:\dim_{\mathrm H}\Delta_xE<\frac{1}{2}+\frac{\dim_{\mathrm H}E}{4}\Bigr\}\le1.
\]
Since this exceptional set cannot contain $E$, also
$\dim_{\mathrm H}\Delta E\ge\frac{1}{2}+\frac{\dim_{\mathrm H}E}{4}$. Both bounds
decrease to $\frac{3}{4}$ as $\dim_{\mathrm H}E$ decreases to one. We obtain
improvements that remain positive in this limit.

\begin{theorem}[A universal pinned bound]\label{thm:universal-main}
Let $\alpha_*=\frac{10-3\mathrm e}{24-7\mathrm e}=0.371107\ldots$, where $\mathrm e$ is the base of the natural logarithm, and 
\[
q_*=\frac{3}{4}+\frac{3\alpha_*}{40-12\alpha_*}=0.78131\ldots.
\]
Every analytic $E\subset\mathbb R^2$ with $\dim_{\mathrm H}E>1$
satisfies
\[
\dim_{\mathrm H}\left\{x\in\mathbb R^2:
\dim_{\mathrm H}\Delta_xE<Q(\dim_{\mathrm H}E)\right\}\le1,
\]
where
\[
Q(t)=1-\frac{(1-q_*)(3-2t)}{1+(6-8q_*)(t-1)} \quad \text{for } 1\le t\le\frac{3}{2},
\]
and $Q(t)=1$ for $t\ge\frac{3}{2}$. The function $Q$ is increasing on
$[1,\frac{3}{2}]$, with $Q(1)=q_*$ and $Q(\frac{3}{2})=1$.
\end{theorem}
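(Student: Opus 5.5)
The plan is to run the effective-dimension (Kolmogorov complexity) framework of Stull and Fiedler--Stull. By the point-to-set principle, it suffices to find an oracle $A$ such that every pin $x$ outside a set of dimension at most one has a source point $y\in E$ with
\[
\liminf_{r\to\infty}\frac{K^{A,x}_r(|x-y|)}{r}\ge Q(t)-\varepsilon,\qquad t=\dim_{\mathrm H}E .
\]
We fix $A$ encoding $E$ and a Frostman measure $\mu$ of exponent $s<t$ close to $t$. The exceptional pins are those $x$ with $\dim^A(x)\le 1$; by the point-to-set principle these form a set of Hausdorff dimension at most one. For every other pin we choose $y$ $\mu$-typical and independent of $x$, so that $K^{A,x}_r(y)\ge sr-o(r)$, and we pick the direction $e=(x-y)/|x-y|$ with $K_r(e)\gtrsim r$ outside the standard small exceptional set.

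Next we use the partition machinery from Stull's proof. The interval $[1,r]$ is partitioned into \emph{teal} intervals, where $K(x\mid |x-y|,\ldots)$ is essentially full, and \emph{yellow} intervals, where the projection theorem for $x$ relative to $e$ applies. Summing the local complexity growth of $|x-y|$ over the pieces gives the bound $\frac12+\frac t4$.

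The new step, following the abstract, keeps track of both endpoints. We rerun the same partition estimate with the roles of $x$ and $y$ swapped, which is legitimate because $|x-y|$ is symmetric and we also control $K(y)$ through $\mu$. This produces two information profiles $f_x(s')=K_{s'}(x)$ and $f_y(s')$, together with two lower bounds for $K_r(|x-y|)$. Combining them gives a min--max problem over admissible Lipschitz profiles. The $\frac45$-type bound comes from balancing the two estimates.

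To reach $q_*$ we add a further gain on the intervals where both profiles are nearly equal and saturated. There we use the fact that $x$ cannot have too many independent pieces: an entropy/iteration argument across dyadic blocks of scales yields a quantitative improvement. The constant $\mathrm e$ in $\alpha_*$ suggests that the optimization involves an exponential growth profile $f(s')\sim \mathrm e^{s'}$ type solutions of the ODE $f'=f/s'$ on the worst-case interval. I would take $\alpha_*$ to be the optimal proportion of scales on which the improvement is forced, and the formula $q_*=\frac34+\frac{3\alpha_*}{40-12\alpha_*}$ to be the outcome of the linear program obtained by splitting $[1,r]$ at $\alpha_*$.

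The $t$-dependence is handled by interpolation. For $1<t\le\frac32$, the slack $t-1$ in the dimension of $y$ lets the teal intervals gain. Writing $t=1+\delta$ and re-solving the same linear program with the budget increased by $\delta$ gives the fractional-linear expression $Q(t)$, equal to $q_*$ at $\delta=0$ and to $1$ at $t=\frac32$. At $t=\frac32$ the argument already gives full dimension; this matches the known result that for $t\ge\frac32$ the exceptional set has dimension at most one, which is Liu's pinned estimate in its effective form. Monotonicity of $Q$ is then a direct calculus check: the derivative sign reduces to $1-q_*>0$ together with $6-8q_*<0$ having the appropriate relation.

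The main obstacle is the optimization step: showing rigorously that the worst-case pair of profiles is the extremal configuration that produces $\alpha_*$, uniformly in $r$, while the errors from independence, the oracle and the direction bounds remain $o(r)$. I would first attempt it by discretizing the profiles into piecewise-linear functions with finitely many slopes and reducing to a finite linear program, then pass to the limit.
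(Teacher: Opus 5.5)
There is a genuine gap at the step you call new. Rerunning the partition estimate with $x$ and $y$ exchanged gives lower bounds for $K^{A,y}_r(|x-y|)$, which is the distance complexity given the \emph{source} point. Combining the two orientations into a better lower bound for the pinned quantity $K^{A,x}_r(|x-y|)$ would need an \emph{upper} bound on $K^{A,y}_r(|x-y|)$. For a prescribed pin there is no such bound. The hypothesis $\dim_{\mathrm H}\Delta_xE<q$ says nothing about $\Delta_yE$, and the symmetry of the number $|x-y|$ does not make the two conditional complexities comparable.

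In the unpinned problem the missing upper bound comes from \eqref{eq:pin-conditioning}. That is why the reciprocal balancing only concludes $\max\{d_x(1),d_y(1)\}\ge\frac{4}{5}$, which is a statement about $\Delta E$.

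For a prescribed pin, Proposition~\ref{prop:pinned-obstruction} gives profiles satisfying every constraint you propose to optimize. These include both prefix bounds, the closed and full partition transfers in both orientations, the refined prefixes, and endpoint profiles at least $(1+\theta)u$. Yet they have $d_x(1)=\frac{3}{4}+\theta$ and $d_y(1)=1$. Your reduction handles each pin with $\dim^Ax>1$ separately, using one pair $(x,y)$, so it falls entirely inside this obstruction. No min--max over the profiles of a single pair yields a fixed improvement over $\frac{3}{4}$ at a prescribed pin. The outline is also internally inconsistent here: $q_*=0.7813\ldots<\frac{4}{5}$, so if balancing really gave $\frac{4}{5}$, no further gain would be needed.

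The ``further gain'' is not an argument, and your guess for the origin of $\alpha_*$ is wrong. $\alpha_*=\frac{10-3\mathrm e}{24-7\mathrm e}$ is the Katz--Tardos exponent for pairwise sums within rows of a matrix with distinct entries. It is not an optimal proportion of scales coming from an ODE.

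The missing idea is to use many pairs together with one common source measure, arguing by contradiction:
\begin{itemize}
\item If the bad-pin set has dimension greater than one, it carries a Frostman measure $\nu$. Together with a fixed source measure $\mu$, this gives a positive-measure relation of pairs with $K^{A,x}_n(|x-y|)\le qn$ at common recurrent precisions.
\item Near $q=\frac{3}{4}+\omega$, the information estimates force a pin plateau (Proposition~\ref{prop:pin-plateau}). They also force an annular mass cap for the common source measure in every occupied parent (Proposition~\ref{prop:conditional-parent-cap}).
\item These yield a finite configuration of circles, with about $\delta^{-q}$ radii per pin, all sharing one source vertex set.
\item The crossing lemma then produces rows whose pairs have rich perpendicular bisectors.
\item Katz--Tardos, combined with a max-flow selection, spreads the bisector directions. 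Ren--Wang then gives a lower count of rich tubes.
\item Fu--Gan--Ren, applied with the plateau, gives an upper count.
\end{itemize}
The two counts are incompatible exactly when $\omega<\frac{3\alpha}{40-12\alpha}(1-2\Delta)$ (Proposition~\ref{prop:strict-budget}). This is where $q_*$ comes from.

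Likewise, the $t$-dependence does not come from enlarging a linear-program budget. A source of dimension $1+\kappa$ raises the certificate level to $1+\Delta_*$, through Lemma~\ref{lem:bootstrap} with the retained prefix coefficient $b_{\mathrm{pin}}(\kappa)$. Substituting $\Delta_*$ into the same geometric comparison gives $Q(t)=\frac{3}{4}+\frac{r+(\frac{1}{2}-4r)\kappa}{1-8r\kappa}$, where $\kappa=t-1$ and $r=q_*-\frac{3}{4}$.
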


The exceptional set consists of the pins where the stated bound
fails.
We denote by $B(z,r)$ the open ball
of radius $r$ about $z$. A finite Borel measure $\nu$ is $s$-Frostman if
$\nu(B(z,r))\le Cr^s$ for every $z$ and $0<r\le1$, with $C$
independent of $z,r$. Such a measure assigns mass zero to sets of
Hausdorff dimension less than $s$ \cite[Chapter~8]{Mattila}.
Thus the conclusion holds for $\nu$-almost every pin whenever
$s>1$.
The constant $\alpha_*$ comes from the estimate of Katz and Tardos 
for pairwise sums within rows of a matrix whose entries are all
distinct~\cite[Corollary~5]{KT}. Katz and Tardos proved this estimate on the way to improved bounds for the Erdős distinct distances problem in the plane. Their explicit five-entry
and nine-entry estimates give, by the same proof, the thresholds
$\frac{132}{169}$ and $\frac{368}{471}$ in place of $q_*$, and
\[
Q_5(t)=\frac{32t+100}{211-42t}, \qquad 
Q_9(t)=\frac{88t+280}{589-118t}
\]
in place of $Q(t)$ for $1\le t\le\frac{3}{2}$. The proof keeps the
threshold $q_*$ independent of the source dimension, while the scale
at which its estimates apply may depend on the source and pin
measures. The source dimension enters through the estimate in
Section~\ref{sec:asymmetric} and its iteration in
Section~\ref{sec:bootstrap}. The proof outline below describes which information about the two points these estimates retain.

Allowing either endpoint to serve as the pin yields a larger gain in the exponent. 

\begin{theorem}[Unpinned distance sets]\label{thm:main}
There is an absolute constant $0<\eta_0<\frac{3}{40}$ such that
every analytic $E\subset\mathbb R^2$ with $t=\dim_{\mathrm H}E>1$
satisfies
\begin{equation}\label{eq:target}
\dim_{\mathrm H}\Delta E\ge
\max\left\{\frac{4}{5}+\eta_0,Y(t),R(t)\right\},
\end{equation}
where $S_0=\frac{11-\sqrt{41}}{4}=1.149218\ldots$ and
\[
Y(t)=\begin{cases}
\displaystyle\frac{t^2-2t+5}{2t^2-7t+10},&1\le t\le S_0,\\[5pt]
\displaystyle\frac{7}{8},&t\ge S_0,
\end{cases}
\qquad
R(t)=\min\left\{1,\frac{t(4-t)}{5-t}\right\}.
\]
In particular, $\dim_{\mathrm H}\Delta E\ge\frac{7}{8}$ for $t\ge S_0$.
\end{theorem}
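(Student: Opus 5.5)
The plan is to prove the three lower bounds $Y(t)$, $R(t)$ and $\frac45+\eta_0$ separately, all in the effective-dimension framework of Stull and Fiedler--Stull. By the point-to-set principle it suffices to find an oracle $A$ and points $x,y\in E$ such that $\dim^{A,x}(|x-y|)$ is at least the target. We fix Frostman measures $\mu$ on $E$ of exponent $s<t$ close to $t$, choose $x$ and $y$ that are independently random relative to $\mu$ (so $\dim^A(x),\dim^A(y)\ge s$ with independence), and lower bound $K_r^{A,x}(|x-y|)$ by partitioning $[1,r]$ into scales. On each scale interval the projection $y\mapsto|x-y|$ behaves like an orthogonal projection in the direction $e=(y-x)/|x-y|$.

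\textbf{The bounds $R(t)$ and $Y(t)$.} The partition estimates of Stull bound the complexity growth on each interval by the complexity profile of $y$ at the pin $x$. The new point is to keep both profiles, $f_y(r)=K_r(y)$ and $f_x(r)=K_r(x)$, and to apply the same partition lemma with $x$ as the source and $y$ as the pin. Since $|x-y|=|y-x|$, each choice gives a lower bound, and we take the better of the two on each interval. This gives a linear program over the pair of profiles, subject to the constraints $f(r)\ge s r$, $f$ nondecreasing, and $f$ being $2$-Lipschitz. Optimizing the adversarial profiles should give
\begin{itemize}
\item $R(t)=t(4-t)/(5-t)$ as the single-interval bound from a direct Frostman count;
\item the rational function $Y(t)$ by solving the two-interval extremal problem in which the adversary balances both endpoints.
\end{itemize}
The threshold $S_0$ is where $Y$ reaches $\frac78$, namely the root of $8(t^2-2t+5)=7(2t^2-7t+10)$, i.e.\ $2t^2-11t+10=0$. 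As $t\downarrow1$ the bound tends to $\frac45$. For $t\ge S_0$ we pass to subsets of smaller dimension, which is why the bound there is the constant $\frac78$. These computations are routine once the two-sided partition lemma is established.

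\textbf{The gain $\eta_0$.} We argue by compactness and contradiction. Suppose there were sets with $t\downarrow1$ and distance dimension at most $\frac45+\eta$ with $\eta\to0$. Then the optimization above forces near-equality. Both profiles $f_x$ and $f_y$ must be near the extremal shape, which means both have excess information (slope near $2$, meaning locally two-dimensional behaviour) on a common interval of scales $[a r,b r]$, and on that interval the distance map loses almost all of this excess. Since this holds for $\mu\times\mu$-typical pairs and a single source measure is used for all pins, we discretize at a scale $\delta$ in that interval. This yields $\delta$-separated sets of points $P$ and circles centered at $P$ with many incidences, and many pairs of points whose perpendicular bisectors are rich, i.e.\ contain many other pairs. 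We then play the lower bound on the number of distinct bisector or row-sum configurations from Katz--Tardos~\cite[Corollary~5]{KT} and Ren--Wang against the Fu--Gan--Ren upper bound on rich bisectors/circle tangencies. In the limit this gives a contradiction, so some $\eta_0>0$ survives. The bound $\eta_0<\frac3{40}$ is consistent with $Y\le\frac78$.

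\textbf{Main obstacle.} The hard step is the rigidity one: converting near-equality in the Kolmogorov partition inequalities into a uniform discrete configuration at a single scale $\delta$. This needs the same interval of scales for a positive-measure set of pins, which is the reason for encoding one source measure. I would first try pigeonholing the interval endpoints over a finite grid of ratios, using compactness of the space of normalized profiles, and then extract the point--circle configuration by a Frostman-type counting of dyadic cells on which both $K(x)$ and $K(y)$ grow at rate near $2$.
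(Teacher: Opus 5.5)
Your architecture (point-to-set principle, both orientations, endpoint rigidity plus compactness, Katz--Tardos and Ren--Wang against Fu--Gan--Ren) matches the paper's. However, the step that is supposed to give $\frac{4}{5}$ and $Y(t)$ is missing its key idea and, as stated, fails.

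\textbf{Wrong target quantity.} You must bound the unconditioned $K_r^A(|x-y|)$, which dominates both conditioned complexities. Targeting $\dim^{A,x}|x-y|$ cannot work, because the pinned scalar system allows $d_x(1)$ arbitrarily close to $\frac{3}{4}$ (Proposition~\ref{prop:pinned-obstruction}).

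\textbf{The combination step fails.} In your plan each orientation is driven by its source profile, and you take the better of the two on each interval. That gains nothing when $f_x=f_y$. Take $f_x=f_y=\min\{2u,1+\theta\}$ and let the pin enter only through $g\ge u$, so the closed gain is $\min\{b-s,\frac{b}{2}\}$. Then $d(u)=\min\{u,\frac{3}{4}+\theta\}$ satisfies the prefix and transfer inequalities in both orientations, and you are stuck at $\frac{3}{4}$.

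\textbf{The missing idea.} The transfer estimate depends on the \emph{pin} profile. A competitor $z$ on the same circle at separation $2^{-s}$ puts the pin on the perpendicular bisector of $[y,z]$. So $y,z$ determine a projection of the pin at precision $b-s$, and the competitor cost gains $p_g(s;b)=\min\{b-s,\frac{g(b-s)+s}{2}\}$ (Lemma~\ref{lem:transfer}). For the profile above, with $\theta>0$ small, this gives $H_{f,f}(\frac{1+\theta}{5},1)=1+\theta$ and hence $d(1)\ge\frac{4}{5}(1+\theta)$. In general, a short distance in the $y$-pinned orientation forces, through prefix plus tail transfer, lower envelopes for $g$. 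These enlarge $p_g$ in the $x$-pinned orientation, and conversely.

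Closing this feedback is not a routine linear program. Proposition~\ref{newlev:prop:level} needs the least common cutoff, two barriers, cone bounds, and a feedback estimate from the full partition transfer $\widehat H$ in the reverse orientation. It ends in a contradiction with the contact equation at the minimal cutoff. The elementary exchanges of the appendices reach only $\frac{13}{17}$ and $\frac{2725}{3521}$.

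For $Y(t)$ you also need the source dimension to raise the transfer level. The retained-teal prefix with $b=b_{\mathrm{sym}}(\kappa)$ (Corollary~\ref{cor:prefix-rate}) and Lemma~\ref{lem:bootstrap} give $\Delta=D(1-Q)$. Then $Q\ge\frac{4}{5}+\frac{3}{5}\Delta$ solves to $Y(1+\kappa)$, and $S_0$ is where $\Delta$ reaches $\frac{1}{8}$, the limit of validity of the reciprocal estimate. $R(t)$ you may simply cite from Fiedler--Stull; it is a partition bound, not a single-interval Frostman count.

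Your $\eta_0$ step rests on rigidity of this same reciprocal system, so it inherits the gap, and several details need correcting:
\begin{itemize}
\item Rigidity gives $f=g=2u$ on $[0,\frac{1}{5}]$ and $f,g\ge u+\frac{1}{5}$ on $[\frac{1}{5},\frac{3}{5}]$. This is used as a source-parent bound at exponent $\frac{3}{10}$ and as pin-ancestor bounds for Fu--Gan--Ren.
\item The compactness is over scalar systems (Lemma~\ref{lem:new-stability}), not over sets with $t\downarrow1$.
\item A rich bisector has many pins near it, not many other pairs.
\item The repeated source pairs come from the crossing lemma applied to the circle drawing. That requires the occupied-parent annular cap (conditional transfer given the parent square, with the parent mass cancelling in the measure code) and the $2^{qn}$ bound on radius bins per pin.
\item A common precision for a positive-mass relation comes from Tonelli applied to the unconditioned complexity, followed by joint binning of the profiles.
\end{itemize}
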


The function $Y$ is increasing on $[1,S_0]$, with
$Y(1)=\frac{4}{5}$ and $Y(S_0)=\frac{7}{8}$.
For every fixed $1<t\le S_0$, its gain over $\frac{4}{5}$ is explicit:
\[
Y(t)-\frac{4}{5}
=\frac{3(t-1)(5-t)}{5(2t^2-7t+10)}>0.
\]
This gain tends to zero as $t\searrow1$. The constant $\eta_0$
provides a uniform positive gain throughout $t>1$.
The bound $R(t)$ follows from Fiedler--Stull~\cite[Theorem~1
and Corollary~2]{FS}; it is also recovered by letting $p$ increase
to $t$ in Theorem~\ref{thm:pinnedmain} below. It exceeds $Y(t)$
when $t>\frac{39-\sqrt{401}}{16}=1.1859\ldots$.
Table~\ref{tab:intro-comparison} records the comparison with earlier
unpinned estimates.

Two arguments enter Theorem~\ref{thm:main}. The reciprocal
partition estimate in Section~\ref{sec:reciprocal}, combined with
the source-dimension iteration, proves $Y(t)$. The fixed gain
$\eta_0$ requires more. At the scalar endpoint, both profiles have
a common interval of excess information. Section~\ref{sec:new-rigidity}
proves this rigidity statement, and Section~\ref{sec:new-geometry}
uses the common source measure to rule out the resulting geometric
configuration. The compactness argument does not assign a numerical
value to $\eta_0$. This constant is chosen independently of the set
and its measures. The starting precision in the finite argument
may depend on the measures, as explained in
Remark~\ref{rem:threshold-dependence}.
The elementary arguments in Appendices~\ref{sec:thirteen-seventeen}
and~\ref{sec:refined} illustrate the first interaction between the
two endpoint estimates.

There is a stronger pinned conclusion if Hausdorff dimension of
the image is replaced by lower box dimension. For a bounded set
$F$, let $N_r(F)$ be the least number of open balls of radius $r$
covering $F$, and define
\[
\underline{\dim}_{\mathrm B}F
=\liminf_{n\to\infty}\frac{\log_2N_{2^{-n}}(F)}{n}.
\]
Put
\begin{equation}\label{eq:lower-box-functions}
L(s)=\min\left\{\frac{3s+1}{5},\frac{7}{8}\right\},\qquad
L_*(s)=\max\left\{\frac{4}{5}+\eta_0,L(s)\right\}.
\end{equation}

\begin{theorem}[Pinned lower box dimension]\label{thm:new-box}
Let $E\subset\mathbb R^2$ be compact, and let $\mu$ be any
nonzero finite $s$-Frostman measure supported on $E$, with $s>1$.
Then
\begin{equation}\label{eq:new-box}
\underline{\dim}_{\mathrm B}\Delta_xE\ge L_*(s)
\quad\text{for $\mu$-almost every }x.
\end{equation}
After normalizing $\mu$ to have mass one, for every $q<L_*(s)$
and $M>0$ one has
\begin{equation}\label{eq:new-tail}
\mu\{x\in E:N_{2^{-n}}(\Delta_xE)<2^{qn}\}\le n^{-M}
\end{equation}
for all sufficiently large $n$. The starting precision may depend
on $E,\mu,s,q,M$.
\end{theorem}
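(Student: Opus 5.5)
The plan is to derive the almost-everywhere statement \eqref{eq:new-box} from the quantitative tail bound \eqref{eq:new-tail}. If \eqref{eq:new-tail} holds for every $q<L_*(s)$, take $M=2$. Then $\sum_n\mu\{x:N_{2^{-n}}(\Delta_xE)<2^{qn}\}<\infty$, and Borel--Cantelli gives, for $\mu$-almost every $x$, that $N_{2^{-n}}(\Delta_xE)\ge 2^{qn}$ for all large $n$. Hence $\underline{\dim}_{\mathrm B}\Delta_xE\ge q$, and letting $q\nearrow L_*(s)$ along a countable sequence gives \eqref{eq:new-box}. So the real work is the single-scale estimate \eqref{eq:new-tail}. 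This is the ``selection argument at each finite scale'' advertised in the abstract.

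To prove \eqref{eq:new-tail}, I would fix $n$ large and argue by contradiction. Let $B_n=\{x:N_{2^{-n}}(\Delta_xE)<2^{qn}\}$ and suppose $\mu(B_n)>n^{-M}$. Restrict and renormalize: $\nu=\mu|_{B_n}/\mu(B_n)$ is a pin measure that is still Frostman, with exponent $s$ and constant inflated only by $n^{M}$. This polynomial loss is invisible at the exponential scale $2^{-n}$. Next I would pass to discrete form. Pigeonhole among the $O(n)$ dyadic mass levels to select a $2^{-n}$-separated set of pins and sources of comparable mass; this loses at most $\mathrm{poly}(n)$ factors. Then I would apply the finite-scale (precision $n$) versions of the Kolmogorov-complexity partition estimates from the earlier sections (the Stull/Fiedler--Stull type estimate, the asymmetric estimate of Section~\ref{sec:asymmetric}, and its bootstrap in Section~\ref{sec:bootstrap}) with pin drawn from $\nu$ and source from $\mu$. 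For the $L(s)=\min\{\frac{3s+1}{5},\frac78\}$ branch, I expect that the pinned estimate at a single precision $n$ gives $K_n(|x-y|\mid\ldots)\ge L(s)n-o(n)$ for most pairs. This works because at a fixed precision one may choose oracles adapted to the $2^{-n}$-discretized measures, so no common dyadic mass law across scales is needed. The lower box dimension is exactly what a single-scale statement controls.

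The $\frac45+\eta_0$ branch requires transferring the rigidity argument of Sections~\ref{sec:new-rigidity}--\ref{sec:new-geometry} to a single scale. Suppose most pins in $B_n$ have fewer than $2^{qn}$ distance classes with $q<\frac45+\eta_0$. The rigidity statement then forces both information profiles (pin-to-source and source-to-pin) to carry excess information on a common interval of scales $[an,bn]$. Because all pins share the single source measure $\mu$, this yields a configuration of $2^{-n}$-discretized points and circles with many rich perpendicular bisectors. The Katz--Tardos and Ren--Wang lower bounds then contradict the Fu--Gan--Ren upper bound once $n$ exceeds a threshold depending on $E,\mu,s,q,M$.

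I expect the main obstacle to be precisely this step: running the compactness/rigidity argument at a single precision with only a $n^{-M}$-dense set of pins. The pin measure is merely $\mu$ restricted to a small set, so the asymmetric roles are unequal. I would first try to absorb the $n^{M}$ Frostman loss into the $o(n)$ error terms of the complexity inequalities and choose $\eta_0$ from the compactness argument uniformly in these error terms. If that fails, I would instead iterate a pigeonholing over $O(\log n)$ scale blocks to recover a common interval of excess information.
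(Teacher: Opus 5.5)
The Borel--Cantelli reduction from \eqref{eq:new-tail} to \eqref{eq:new-box} is exactly what the paper does. The single-scale estimate, however, has a genuine gap. You put only the pin in $B_n$ and draw the source from all of $\mu$, then run the prescribed-pin estimates. Those estimates control only $K_n^{A,x}(|x-y|)$, i.e.\ the profile $d_x$, and in one orientation they cannot reach $L(s)$ or $\frac{4}{5}$.

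\begin{itemize}
\item Proposition~\ref{prop:asymmetric-pinned} with $p=s$ gives $B(s,s)=\frac{s(4-s)}{5-s}$, which tends to $\frac{3}{4}$ as $s\searrow1$.
\item Lemma~\ref{lem:bootstrap} does not bound distances by itself. It only raises a certificate level for the geometric argument, and that argument's prescribed-pin output is $Q(s)$, with $Q(1)=q_*<\frac{4}{5}$.
\item Proposition~\ref{prop:pinned-obstruction} exhibits profiles satisfying every one-orientation scalar inequality with $f,g\ge(1+\theta)u$ but $d_x(1)=\frac{3}{4}+\theta$.
\end{itemize}

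So your expectation $K_n\ge L(s)n-o(n)$ has no support. The same defect blocks your $\frac{4}{5}+\eta_0$ branch. Proposition~\ref{newrig:prop:rigidity} assumes $d_x(1),d_y(1)\le\frac{4}{5}$, and when $y$ is a generic point of $\mu$ rather than a bad pin, nothing bounds $K_n^{A,y}(|x-y|)$. Pigeonholing over scale blocks will not repair this. What is missing is the second orientation itself, not the location of a common interval.

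The idea you need is that the theorem is internal: $B_n\subset E$, so you may take \emph{both} endpoints in $B_n$. Lemma~\ref{newsel:lem:cover} gives $K_n^{A,z}(r)\le\log_2N_{2^{-n}}(\Delta_zE)+O(\log n)$ for every $z\in E$ and $r\in\Delta_zE$. Since $|x-y|\in\Delta_xE\cap\Delta_yE$, both conditioned distance complexities are at most $qn+O(\log n)$. This makes the bilateral machinery available at a single precision.

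\begin{itemize}
\item For $\frac{4}{5}+\eta_0$, bin endpoint and distance profiles jointly; $f\ne g$ is allowed. Lemma~\ref{lem:new-stability} then forces the rigid plateau with a measure-independent $\eta_0$, and Proposition~\ref{prop:endpoint-finite-geometry} excludes the resulting configuration.
\item For $\frac{3s+1}{5}$, bilateral control is still not enough: with the bootstrap it gives only $Y(s)<\frac{3s+1}{5}$, e.g.\ $Y(1.1)\approx0.850<0.86$. The decisive input is matching. Partition $B_n$ into $n^{o(1)}$ classes by coarse records of $K^A_{\lfloor jn/m_n\rfloor}$, with $m_n\approx\sqrt{\log n}$, and pick both points in one class. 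The limiting profiles then satisfy $f=g$, and the minimum-ratio argument of Proposition~\ref{newmatch:prop:matched} lifts the certificate to level $1+\kappa$, giving $\frac{4}{5}+\frac{3}{5}\kappa$.
\end{itemize}

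Two technical points are also needed. The pair comes from Lemma~\ref{newsel:lem:selection}, which finds generic, polynomially separated pairs inside any Borel set of mass $>n^{-M}$ with the oracle fixed in advance. The paper deliberately does not adapt the oracle to $B_n$ or to a renormalized $\nu$, to avoid circularity with the selection. Lemma~\ref{lem:new-finite} then makes the Stull and Fiedler--Stull estimates uniform along pairs that change with $n$.
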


The exponent $s$ belongs to the specified measure. No lower
mass bound or equality of Hausdorff and packing dimensions is
assumed. The explicit bound $L(s)$ reaches $\frac{7}{8}$ at
$s=\frac{9}{8}$. The proof chooses two bad pins whose complexity
profiles nearly agree at the precision under consideration.
The pair may change with the precision; Section~\ref{sec:new-finite}
establishes the uniform estimates needed for this selection.

\begin{corollary}[Internal lower-box exceptional sets]\label{cor:new-exceptional}
For every bounded $E\subset\mathbb R^2$,
\begin{equation}\label{eq:new-exceptional}
\dim_{\mathrm H}\{x\in E:
\underline{\dim}_{\mathrm B}\Delta_xE<\frac{4}{5}+\eta_0\}\le1.
\end{equation}
For $\frac{4}{5}\le q<\frac{7}{8}$, one also has
\[
\dim_{\mathrm H}\{x\in E:
\underline{\dim}_{\mathrm B}\Delta_xE\le q\}\le\frac{5q-1}{3},
\qquad
\dim_{\mathrm H}\{x\in E:
\underline{\dim}_{\mathrm B}\Delta_xE<\frac{7}{8}\}\le\frac{9}{8}.
\]
No measurability assumption on $E$ is required.
\end{corollary}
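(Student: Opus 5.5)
We derive Corollary~\ref{cor:new-exceptional} from Theorem~\ref{thm:new-box} by Frostman's lemma, arguing by contradiction. The only difficulty is that the exceptional set need not be Borel or even analytic, so Frostman's lemma cannot be applied to it directly. Let $q$ be one of the three thresholds. Put $s_q=1$ for the first estimate, $s_q=\frac{5q-1}{3}$ for the second, and $s_q=\frac98$ for the third. Let $A_q$ be the corresponding exceptional set inside $E$, and suppose $\dim_{\mathrm H}A_q>s_q$. Fix $s$ with $s_q<s<\dim_{\mathrm H}A_q$; then $s>1$ in all three cases.

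\emph{Step 1: replace $E$ by its closure.} Let $K=\overline E$, which is compact. Since $\Delta_x\overline E\subseteq\overline{\Delta_xE}$ and covering numbers are unchanged under closure, we have $N_r(\Delta_xK)=N_r(\Delta_xE)$ for every $x$. Hence $\underline{\dim}_{\mathrm B}\Delta_xK=\underline{\dim}_{\mathrm B}\Delta_xE$, and $A_q\subseteq A'_q$, where
\[
A'_q=\{x\in K:\underline{\dim}_{\mathrm B}\Delta_xK<q\}
\]
(with $\le q$ in the middle case).

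\emph{Step 2: $A'_q$ is Borel.} The function $x\mapsto N_{2^{-n}}(\Delta_xK)$ is Borel. To see this, note that $\Delta_xK$ depends continuously on $x$ in the Hausdorff metric. Also, $N_r(F)<m$ holds exactly when $F$ is covered by $m-1$ open intervals of radius $r$, and for compact $F$ this condition is open in the Hausdorff metric. So each set $\{x:N_{2^{-n}}(\Delta_xK)<m\}$ is open. A liminf of Borel functions is Borel, so $A'_q$ is Borel.

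\emph{Step 3: Frostman's lemma and contradiction.} Since $\dim_{\mathrm H}A'_q\ge\dim_{\mathrm H}A_q>s$, Frostman's lemma for Borel sets (e.g.\ \cite[Chapter~8]{Mattila}) gives a nonzero $s$-Frostman measure $\mu$ supported on a compact set $C\subseteq A'_q\subseteq K$. Apply Theorem~\ref{thm:new-box} to the compact set $K$ with this $\mu$ (viewed as a measure on $K$). It gives $\underline{\dim}_{\mathrm B}\Delta_xK\ge L_*(s)$ for $\mu$-almost every $x$. On the other hand, every $x$ in the support of $\mu$ lies in $A'_q$, so $\underline{\dim}_{\mathrm B}\Delta_xK<q$ (or $\le q$ in the middle case). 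It remains to check $L_*(s)>q$ in each case:
\begin{itemize}
\item First case, $q=\frac45+\eta_0$: $L_*(s)\ge\frac45+\eta_0$ is not enough for strict inequality, since the set uses $<$. But $\mu$-a.e.\ $x$ has $\underline{\dim}_{\mathrm B}\Delta_xK\ge\frac45+\eta_0$, which already contradicts $<\frac45+\eta_0$.
\item Middle case, $\frac45\le q<\frac78$ with $s>\frac{5q-1}{3}$: we get $\frac{3s+1}{5}>q$ and $\frac78>q$, so $L(s)>q$, contradicting $\le q$.
\item Third case, $s>\frac98$: $L(s)=\frac78$, contradicting $<\frac78$.
\end{itemize}
In each case the set of $x$ where the contradiction holds has full $\mu$-measure, which is impossible since $\mu\ne0$. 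This proves all three estimates.

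The main point needing care is Step 2, the Borel measurability of the exceptional set after passing to the closure. Without it, Frostman's lemma is unavailable and no measurability assumption on $E$ could be dropped.
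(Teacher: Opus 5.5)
Your proposal is correct and follows essentially the paper's route: Borel measurability of the sublevel sets from the Hausdorff-metric continuity of $x\mapsto\Delta_xK$ for compact $K$, a Frostman measure on the Borel exceptional set, Theorem~\ref{thm:new-box}, and passage from bounded $E$ to its compact closure. One small correction concerns Step~1. Covering numbers by open balls are not literally unchanged under closure. However, $N_{2r}(\overline F)\le N_r(F)\le N_r(\overline F)$, so the lower box dimensions agree, and that equality is all your argument uses (it is also what the paper invokes).
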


These are internal lower-box statements. Theorem~\ref{thm:universal-main}
concerns Hausdorff dimension and pins anywhere in $\mathbb R^2$.
Under the common dyadic mass law of Theorem~\ref{thm:uniform-main},
matching profiles persist along all sequences of precisions, and
the bound $L_*(s)$ also holds for pinned Hausdorff dimension.
Appendix~\ref{sec:matching-realization} treats that case.

Fiedler and Stull combine the dimension parameters of the two endpoints
into a common minimum in part of their argument, and observe in
footnote~7 that keeping them separate could improve the bound at the
cost of more calculations~\cite[\S4.3, footnote~7]{FS}. The next theorem
carries out this refinement, keeping the source dimension
$\dim_{\mathrm H}E$ and the exceptional-set parameter $p$ separate.

\begin{theorem}[Pinned exceptional sets]\label{thm:pinnedmain}
Let $E\subset\mathbb R^2$ be analytic with $\dim_{\mathrm H}E>1$,
and let $1\le p<\dim_{\mathrm H}E$. Put
\[
B(p,t)=\min\left\{1,\frac{p+t(3-p)}{5-p}\right\}.
\]
Then
\[
\dim_{\mathrm H}\{x\in\mathbb R^2:
\dim_{\mathrm H}\Delta_xE<B(p,\dim_{\mathrm H}E)\}\le p.
\]
In particular, $\dim_{\mathrm H}\Delta_xE\ge
\min\{1,\frac{1}{4}+\frac{\dim_{\mathrm H}E}{2}\}$ outside an exceptional set of dimension
at most one.
\end{theorem}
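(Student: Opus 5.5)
The plan is to follow the Kolmogorov-complexity framework of Stull and Fiedler--Stull, tracking the exceptional-set parameter $p$ and the source dimension $t=\dim_{\mathrm H}E$ separately throughout instead of merging them into one minimum as in \cite[\S4.3]{FS}.

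\textbf{Reduction to a pointwise statement.} By the point-to-set principle of Lutz--Lutz, it suffices to find, for each $x$ outside a set of dimension at most $p$, oracles $A$ and points $y\in E$ with $\dim^{A,x}(|x-y|)\ge B(p,t)-\varepsilon$. Fix an $s$-Frostman measure $\mu$ on $E$ with $p<s<t$. We may then assume that the bad pins form a set $F$ with $\dim_{\mathrm H}F>p$ and derive a contradiction. To do so, choose a Frostman measure $\nu$ on $F$ of exponent greater than $p$, and an oracle $A$ relative to which $\mu$ and $\nu$ are computable. Then $\mu$-typical $y$ have $K^{A,x}_r(y)\ge sr$ at every precision. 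Moreover, there is a pin $x$ with $\dim^A(x)>p$ such that $y$ remains $s$-random relative to $x$, and the direction $e=(y-x)/|y-x|$ satisfies $K_r^{A,x}(e)\ge \min\{p,1\}r-o(r)$ up to a small error. This last property follows from the standard radial projection input (Orponen, or Fiedler--Stull's use of it) applied to the pair of measures.

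\textbf{Partition of scales.} Next comes the main estimate. Following Stull, we bound $K_r^{A,x}(|x-y|)$ from below by $K_r^{A,x}(y)$ minus the information needed to recover $y$ from $(x,|x-y|)$. We partition $[1,r]$ into intervals on which the complexity function $f(a)=K_a(y)$ is either teal (slope at most $1$) or yellow (slope at least $1$). On teal intervals, the projection lemma for $x$ with $\dim(x)>p$ gives full growth of the distance. On yellow intervals, the enumeration/geometric lemma shows that the loss is at most the excess of $f$ over the line, weighted by the fraction $(1-\dim(e))$. The new step is to use $\dim(x)\ge p$ in the pin-dependent error terms, while using $\dim(y)\ge s\to t$ in the final count $K_r(y)\ge tr$, rather than replacing both by $\min\{p,t\}$.

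Optimizing the worst-case profile of $f$ gives a linear program in the lengths of the teal and yellow intervals. Its extremal configuration is a yellow interval of relative length governed by $(5-p)$, and the program should return
\[
\frac{K_r(|x-y|)}{r}\ge \frac{p+t(3-p)}{5-p}-\varepsilon .
\]
Capping this bound at $1$ and letting $s\to t$ yields the claimed $B(p,t)$. For the final claim, take $p=1$, which gives $\frac{1+2t}{4}=\frac14+\frac t2$.

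\textbf{Main obstacle.} The hardest step will be the bookkeeping in the yellow-interval estimate. That is where Fiedler--Stull's footnote~7 locates the loss: one must check that the greedy partition remains valid when the pin threshold $p$ and the source slope $t$ differ. In particular, yellow intervals may end at precision at most $p$-controlled levels rather than $t$-controlled ones. I would first redo their Lemma on good partitions with two slope parameters, verify that each interval's contribution is linear in $(p,t)$, and then solve the resulting two-parameter optimization to confirm that the extremum matches $B(p,t)$.
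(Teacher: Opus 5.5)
Your outline follows the paper's route: the Fiedler--Stull partition with $d=p$ and $D=2$, with $p$ and $t$ kept separate, and Frostman measures on the source and on the bad pins. However, the step that actually produces $B(p,t)$ is missing, and where you say the source dimension enters would not give it.

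\textbf{The gain is on teal blocks, not yellow ones.} The yellow blocks need no new bookkeeping. There the distance grows at rate at least one and the source at rate at most two, exactly as in \cite{FS}. (Your weight $1-\dim\mathbf e$ would be zero, since the direction is random, which would give no yellow loss at all.) On a non-green teal block $[a,b]$ the root identity reads $f(b)-f(a)=\frac p2 b-\frac{p-1}2 a$. Inserting $f(b)\ge sb$ and $f(a)\le 2a$ gives $\frac ab\ge\frac{2s-p}{5-p}-o(1)$. The slope $\frac{p-(p-1)k}{2(1-k)}$ is increasing in $k=\frac ab$, so every teal block has source growth rate, and hence transferred distance growth rate, at least $\min\{1,c(p,s)\}$ with $c(p,s)=\frac{p(2-s)+s}{5-2s}$. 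The two summations $Q_r\ge L+c(r-L)$ and $sr\le K_r^{A,x}(y)\le L+Q_r$ then meet at $\frac{p+s(3-p)}{5-p}\,r$.

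\textbf{Your version gives a weaker bound.} Suppose, as you propose, $s$ is used only in the final count $K_r(y)\ge sr$, and the teal rate is computed from the pin parameter alone. Then the ratio bound is only $\frac{p}{5-p}$, the rate is $c(p,p)$, and the same two sums give $B(p,s)-\frac{s-p}{(2-p)(5-p)}$. At $p=1$ this is Stull's $\frac12+\frac s4$, not the claimed $\frac14+\frac s2$. The source dimension must enter the teal rate itself, not just the final count.

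\textbf{Other points needed to close the argument.}
\begin{itemize}
\item The Fiedler--Stull hypotheses also require $K_r^A(\mathbf e\mid y)\ge r-o(r)$, which is used for tangent competitors; randomness of $\mathbf e$ relative to $x$ alone does not supply it. You obtain it by applying Orponen's theorem with $y$ as center to the pin measure, whose exponent exceeds $p\ge1$, and then Fubini over $\nu\times\mu$.
\item That Fubini step forces one common oracle $A$ for all pins. So the contradiction must come from the effectively compact point-to-set principle, $\dim^{A,x}|x-y|\le\dim_{\mathrm H}\Delta_xK<q$, not from a pin-dependent minimizing oracle. Before that, fix $q<B(p,t)$ by countable stability and pass to a compact $K\subset E$, so that the bad-pin set is Borel and carries a Frostman measure.
\item With no packing assumption you are at $D=2$, where the Fiedler--Stull counts containing $\frac{1}{2-D}$ break down. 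The bounded ratio $\frac ab\ge\frac{d}{5-d}$ on teal blocks is what keeps the projection partitions of bounded size and the errors uniform.
\end{itemize}
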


The proof of Theorem~\ref{thm:universal-main} controls the directions of the
source points as seen from each pin. Controlling also the directions of the
pins as seen from each source point leads to a second estimate, which is better
when $S_{\mathrm{rel}}<\dim_{\mathrm H}E<S_f$. Let
\begin{align*}
 S_{\mathrm{rel}}&=1+\frac{5-8\alpha_*-3\alpha_*^2}{6\alpha_*^2-48\alpha_*+30}
 =1.124334\ldots,\\
 S_{\mathrm{switch}}&=1+\frac{5\alpha_*^2-\alpha_*-1}{10\alpha_*^2+2\alpha_*-6}
 =1.175876\ldots,\\
 S_f&=1+\frac{6+10\alpha_*-\sqrt{(6+10\alpha_*)^2-48\alpha_*(1+2\alpha_*)}}{24\alpha_*}
 =1.197246\ldots.
\end{align*}
For $S_{\mathrm{rel}}\le t\le S_f$, define
\begin{equation}\label{rel-new-pinned-function}
 Q_{\mathrm{rel}}(t)=\min\Biggl\{
 1-\frac{5(1-\alpha_*)(3-2t)}{6\{1+2\alpha_*+2(1-2\alpha_*)t\}},\quad
 1-\frac{3-2t}{4+6\alpha_*-4\alpha_*t}\Biggr\}.
\end{equation}
The first expression is the smaller one up to $S_{\mathrm{switch}}$.
The second is the smaller one after that point.

\begin{theorem}[A second pinned bound]\label{rel-new-pinned-theorem}
Let $E\subset\mathbb R^2$ be analytic with
$\dim_{\mathrm H}E\in[S_{\mathrm{rel}},S_f]$. Then
\[
\dim_{\mathrm H}\{x\in\mathbb R^2:
\dim_{\mathrm H}\Delta_xE<Q_{\mathrm{rel}}(\dim_{\mathrm H}E)\}\le1.
\]
For $S_{\mathrm{rel}}<\dim_{\mathrm H}E<S_f$, the bound $Q_{\mathrm{rel}}(\dim_{\mathrm H}E)$ exceeds
$Q(\dim_{\mathrm H}E)$, Liu's bound $\dim_{\mathrm H}E-\frac{1}{3}$ \cite[Theorem 1.1]{Li}, and $\frac{1}{4}+\frac{\dim_{\mathrm H}E}{2}$ of Theorem \ref{thm:pinnedmain}.
\end{theorem}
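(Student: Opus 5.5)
The plan is to follow the effective-dimension framework used for Theorem~\ref{thm:universal-main}, adding a second family of direction estimates. By the point-to-set principle, it suffices to fix $p>1$, an oracle $A$, a pin $x$ with $\dim^A(x)>1$ outside a set of dimension at most one, and a source point $y$ chosen generic relative to $x$ from an $s$-Frostman measure with $s$ close to $t=\dim_{\mathrm H}E$. For such a pair I will lower bound $K_r^{A,x}(|x-y|)/r$ by $Q_{\mathrm{rel}}(t)-\varepsilon$.

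As in Stull's partition scheme, I split $[1,r]$ into intervals on which the complexity profile of $y$ relative to $x$ is either ``yellow'' (growth slope at most one, so the distance recovers essentially everything) or ``teal/red'' (slope above one, where only a projection-type gain is available). The proof of Theorem~\ref{thm:universal-main} controls the direction $e=(y-x)/|y-x|$ as seen from the pin. Its loss on a bad interval is governed by the Katz--Tardos exponent $\alpha_*$, via the row-sum lemma applied to the discretized set of source directions. The new ingredient is to also control the direction of the pin as seen from the source, $-e$ measured at $y$. Since $x$ has dimension greater than one and is generic relative to $y$ in the reverse order, the same partition argument with the roles of $x$ and $y$ exchanged gives a second lower bound for $K(e\mid y)$ at each precision. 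On a bad interval I take the better of the two direction estimates. This yields a per-interval inequality of the form $K_{a,b}(|x-y|)\ge(b-a)\bigl(1-\lambda(t)\bigr)$, where $\lambda$ depends linearly on $\alpha_*$ and on $t-1$.

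I then optimize over the lengths of the bad intervals. A bad interval can only carry excess information $(t-1)$ times its length, and the total dimension budget is $t\le\frac32$. Summing the per-interval bounds produces a linear-fractional function of the bad-interval proportion. The worst case has two regimes: either the first (pin-direction) constraint is active or the second (source-direction) one is. These give the two expressions in \eqref{rel-new-pinned-function}, and the crossover occurs at $S_{\mathrm{switch}}$. The endpoints $S_{\mathrm{rel}}$ and $S_f$ are the values of $t$ where the new bound meets $Q(t)$ and $\frac14+\frac t2$ respectively; I will check the comparison claims by direct algebra with $\alpha_*=0.3711\ldots$, including against Liu's bound $t-\frac13$. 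The exceptional-set conclusion then follows as in Theorem~\ref{thm:pinnedmain}, by taking $p\searrow1$.

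The main obstacle is the reverse direction estimate. Making $x$ play the role of the source requires a measure on pins with dimension greater than one, together with genericity of $x$ relative to $y$, at the same precisions where the forward estimate is being applied. I will first obtain this by choosing $x$ random relative to $(A,y)$ from a Frostman measure on the complement of the exceptional set. I will then use symmetry of information to transfer complexity bounds between $K(x\mid y)$ and $K(y\mid x)$ at each scale, paying only $o(r)$ error terms. The delicate point is that the two profiles may have their bad intervals in different places. The optimization must therefore use the worst joint placement, and verifying that the two-regime minimum is exactly attained there is the step I expect to require the most care.
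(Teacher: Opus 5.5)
There is a genuine gap at the center of the plan. You work with one fixed pair $(x,y)$ and a Stull-type partition of $[1,r]$, and you assert a per-interval inequality $K_{a,b}(|x-y|)\ge(b-a)(1-\lambda(t))$ in which the Katz--Tardos exponent enters through ``the row-sum lemma applied to the discretized set of source directions.'' No such mechanism exists for a single pair. The Katz--Tardos estimate counts pairwise sums within many rows whose entries are globally distinct. In the paper those rows are lists of common source vertices lying on circles about many different pins. They exist only after both measures are discretized into one finite configuration: a common source set, the source-parent annular cap of Proposition~\ref{prop:conditional-parent-cap}, the crossing-lemma pruning to rows whose pairs have rich bisectors, the max-flow choice of one direction per row, and the Ren--Wang lower count $\Lambda=\frac{1+3\alpha}{2}+(1-3\alpha)a$.

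Your ``second direction estimate'' is also empty at the pointwise level. Hypotheses (H2) and (H4) already give $K_r^{A,y}(\mathbf e)\ge r-O(\log r)$ and $K_r^A(\mathbf e\mid y)\ge r-o(r)$, so the direction seen from the source is saturated. Symmetry of information adds nothing about $|x-y|$ beyond the scalar system in both orientations. That system contains no $\alpha_*$. At a prescribed pin the paper extracts only $\frac14+\frac t2$ from it (Theorem~\ref{thm:pinnedmain}), and Proposition~\ref{prop:pinned-obstruction} shows it cannot force any uniform gain over $\frac34$. So the per-interval inequality, and hence the linear-fractional optimization built on it, has no source.

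The reverse information the paper uses is a statement about measures, not a complexity bound for one direction.
Orponen's theorem in both orientations (Lemma~\ref{rel-two-sided}) gives a bounded radial density of the restricted pin measure about each source point. This bound survives random representatives and pin coarsening as \eqref{rel-reverse}. A length-$D$ piece of the bisector of $v,z$ subtends an angle $O(|v-z|D)$ at $v$, so the bound caps the witnesses of a rich pair in any $D$-square (Theorem~\ref{rel-main}). The quantitative Ren--Wang estimate of Lemma~\ref{rel-lem:RW-linear-C} then turns the resulting spreading into the second upper count $\#\mathcal T\lesssim\delta^{-3\mathfrak r+B_{\mathrm{eff}}-\eta}$, with $B_{\mathrm{eff}}=\min\{\zeta,\alpha+(2-3\alpha)a\}$.

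The two expressions in $Q_{\mathrm{rel}}$ are the comparisons of $\Lambda$ with these two branches of $B_{\mathrm{eff}}$. They appear after inserting the certificate level $\Delta=\frac{4\kappa(1-q)}{1-2\kappa}$ from Corollary~\ref{cor:prefix-rate} and Lemma~\ref{lem:bootstrap}. They are not a pin-direction versus source-direction dichotomy.

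Two smaller points. To bound the exceptional set, you must put the pin Frostman measure on the putative bad-pin set, not on its complement. And $S_f$ is where $Q_{\mathrm{rel}}$ meets Liu's bound $t-\frac13$, not $\frac14+\frac t2$; $Q_{\mathrm{rel}}$ exceeds $\frac14+\frac t2$ on the whole interval.
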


At $t=S_{\mathrm{rel}}$, $Q_{\mathrm{rel}}(t)=Q(t)$. At $t=S_f$,
$Q_{\mathrm{rel}}(t)=t-\frac{1}{3}$, Liu's bound.
For example, $Q_{\mathrm{rel}}(1.18)=0.856983\ldots$, while
$Q(1.18)=0.853434\ldots$. It does not change
the universal pinned constant $q_*$.

Above the threshold $q_*$ we also bound the exceptional set.

\begin{theorem}[Exceptional sets above $q_*$]
\label{thm:exceptional-quantitative}
Let $E\subset\mathbb R^2$ be analytic with $\dim_{\mathrm H}E>1$.
\begin{enumerate}[label=(\alph*)]
\item For $q_*\le q\le\frac{19}{24}$,
\[
\dim_{\mathrm H}\{x\in\mathbb R^2:\dim_{\mathrm H}\Delta_xE<q\}
\le21-6\alpha_*-\frac{15-3\alpha_*}{q}.
\]
\item For $0<k\le\frac{1}{6}$ with $k<\dim_{\mathrm H}E-1$,
\[
\dim_{\mathrm H}\{x\in\mathbb R^2:\dim_{\mathrm H}\Delta_xE<P(1+k)\}\le1+k,
\]
where
\[
P(t)=\frac{t^{3}+(3\alpha_*-2)t^{2}-(21\alpha_*+4)t+15\alpha_*+20}{9t^{2}-(6\alpha_*+39)t+50}.
\]
\end{enumerate}
The right side in (a) equals $1$ at $q=q_*$, increases with $q$, and
equals $\frac{39-42\alpha_*}{19}=1.2322\ldots$ at $q=\frac{19}{24}$.
The function $P$ is increasing on $[1,\frac{7}{6}]$, with $P(1)=q_*$ and
$P(\frac{7}{6})=0.8612\ldots$. In particular, if $1<\dim_{\mathrm H}E\le\frac{7}{6}$,
then $\dim_{\mathrm H}\{x\in E:\dim_{\mathrm H}\Delta_xE\ge q\}=\dim_{\mathrm H}E$ for
every $q<P(\dim_{\mathrm H}E)$.
\end{theorem}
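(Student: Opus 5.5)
We argue inside the effective-dimension framework behind Theorem~\ref{thm:universal-main}, and reduce both parts to a point-to-set statement. By the point-to-set principle, it suffices to show the following. Let $F$ be the exceptional set in (a) or (b), and suppose $\dim_{\mathrm H}F>p$, where $p$ is the claimed dimension bound. Then there is a pin $x\in F$ and an oracle $A$ such that $\dim^{A,x}(|x-y|)\ge q$ for a source point $y$ that is random relative to $(A,x)$. We take $y$ random with respect to a $t'$-Frostman measure on $E$, with $t'<\dim_{\mathrm H}E$. We choose $x$ with $\dim^A(x)\ge p'>p$ and $x$ random relative to $y$. The essential new parameter is the pin dimension $p'$, which now exceeds $1$. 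In the proof of Theorem~\ref{thm:universal-main} one only used $\dim x\ge1$ on the non-exceptional side.

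Step 1. We rerun the partition argument of Section~\ref{sec:asymmetric}, keeping $\dim(x)=p'$ separate from $\dim(y)=t'$, as in Theorem~\ref{thm:pinnedmain}. The intervals of the partition are of two kinds. On teal intervals the projection of $y$ in direction $e_{x-y}$ is controlled by the Katz--Tardos row-sum estimate with exponent $\alpha_*$. On yellow intervals the distance recovers all the information about $y$. The extra pin dimension $p'-1$ enlarges the lower bound for $K_r(e_{x-y}\mid y)$ on intervals where the direction must be recovered. This gives a one-step inequality of the form $q\ge \Phi(\alpha_*,t',p',\text{interval lengths})$. We then optimize over partition lengths as in the proof that $Q(1)=q_*$.

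Step 2. For (a) we set $t'$ to the worst case $t'\searrow1$, which is allowed since $\dim_{\mathrm H}E>1$ is all that is assumed. We solve the optimized inequality for $p'$ as a function of $q$. If the bookkeeping is linear in $p'$ and in $1/q$, as the target suggests, we obtain exactly
\[
p'\le 21-6\alpha_*-\frac{15-3\alpha_*}{q}
\]
whenever $\dim(|x-y|)<q$. We check that this equals $1$ at $q=q_*$, using $q_*=\frac34+\frac{3\alpha_*}{40-12\alpha_*}$. The restriction $q\le\frac{19}{24}$ marks where a different partition regime (all-yellow) becomes optimal, so it stays within the range of validity. For (b) we instead take $p'=t'=1+k$ simultaneously and iterate the source-dimension bootstrap of Section~\ref{sec:bootstrap}. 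The fixed point of the iteration should be the rational function $P(1+k)$. The cubic numerator indicates that one substitution of the $p'$-dependent estimate into the bootstrap recursion produces it. The condition $k\le\frac16$ is where the teal regime ceases to be admissible.

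Step 3. We verify the stated monotonicity claims and endpoint values by direct computation. The final sentence follows from (b). For $q<P(\dim_{\mathrm H}E)$, take $k$ slightly below $\dim_{\mathrm H}E-1$. Then $P(1+k)>q$ by continuity, so the bad pins have dimension at most $1+k<\dim_{\mathrm H}E$. Hence the good part of $E$ has full dimension.

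The main obstacle is Step 1. We must track $p'>1$ through the Katz--Tardos step, where the row-sum estimate was applied using only that the pin directions are $1$-dimensional. We must confirm that the surplus $p'-1$ transfers linearly to a gain in the teal-interval complexity without degrading the $\alpha_*$ factor. The first thing to try is to split $x$'s information into a direction part of dimension $1$ and a radial part of dimension $p'-1$, and to feed only the radial surplus into the yellow-interval estimate.
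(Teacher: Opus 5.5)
There is a genuine gap, and it is in Step~1, which cannot be carried out as described.

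\textbf{Why Step~1 fails.} The Katz--Tardos theorem bounds from below the number of distinct pairwise sums within the rows of a real matrix whose entries are all distinct. It is a statement about a whole family of rows, and it has no content for a single pair $(x,y)$. It plays no role in the yellow--teal partition of Section~\ref{sec:asymmetric}, whose teal intervals are handled by the Stull/Fiedler--Stull projection and collision estimates. Likewise, $q_*$ is not obtained by optimizing partition lengths. It comes from the geometric comparison $L-U_1\ge\frac{3\alpha}{4}(1-2\Delta)-(10-3\alpha)\omega$ in Proposition~\ref{prop:strict-budget}. The surplus $p'-1$ also cannot raise $K_r(\mathbf e\mid y)$: a direction has complexity at most $r+O(\log r)$, and (H4) already gives $r-o(r)$.

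More decisively, Proposition~\ref{prop:pinned-obstruction} gives profiles with $f,g\ge(1+\theta)u$ and $d_x(1)=\frac{3}{4}+\theta$. These profiles satisfy both prefixes, every refined prefix with coefficient $b(p,s)$ for $1<p\le s\le 1+\theta$, and both full partition transfers. Part (a) at $q=q_*$ is exactly the $q_*$ statement of Theorem~\ref{thm:universal-main}. A pointwise proof of (a) slightly above $q_*$ would need $\dim^{A,x}|x-y|\ge q$ for pins and sources of effective dimension barely above one. A pointwise proof of (b) with $\dim_{\mathrm H}E$ just above $1+k$ would need $\dim^{A,x}|x-y|\ge P(1+k)$ for dimensions about $1+k$. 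Both are contradicted by these profiles when $k$ is small. So no inequality $q\ge\Phi(\alpha_*,t',p',\dots)$ extracted from one pair by these estimates can prove either part.

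Step~2 reads the formulas off the target rather than deriving them, and the proposed meanings of the ranges are wrong. The restriction $q\le\frac{19}{24}$ is the plateau condition $\omega=q-\frac{3}{4}<\frac{1}{24}$ at $\Delta=0$. The restriction $k\le\frac{1}{6}$ is used to verify $\omega<\frac{1-2\Delta}{24}$ and $\varkappa<\frac{5}{2}-6\alpha$ at the threshold. Neither marks a change of partition regime.

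\textbf{The missing idea.} The pin dimension must be spent in the common-source geometric contradiction, across many pairs sharing one source measure. Assume the bad-pin set has dimension greater than $1+\varkappa$ and put a $(1+\varkappa)$-Frostman pin measure on it. Then run the proof of Theorem~\ref{thm:universal-main} unchanged:
\begin{itemize}
\item the recurrent relation of Lemma~\ref{lem:seventh-classical-recurrence};
\item the annular cap and the pin plateau;
\item the circle graphs and the rows with rich bisectors;
\item Katz--Tardos with max-flow to spread the bisector directions;
\item the Fu--Gan--Ren upper count.
\end{itemize}
The only change is that the coarsened generating pins $P'$ now form a $(\rho,1+\varkappa)$-set. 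Theorem~\ref{thm:RW} is therefore applied with $t=1+\varkappa$ instead of $t=1$, which raises the lower exponent to $\min\{\frac{(1+3s_{\mathrm{act}}+\varkappa)\zeta}{2},(1+s_{\mathrm{act}})\zeta\}$. Proposition~\ref{prop:pin-budget} turns this into the margin
\[
\frac{3\alpha}{4}(1-2\Delta)+\frac{\varkappa}{8}(3-2\Delta)-\Bigl(10-3\alpha-\frac{\varkappa}{2}\Bigr)\omega>0 .
\]

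For (a), take $\Delta=0$ and $\omega=q-\frac{3}{4}$. The margin is positive exactly when $\varkappa>20-6\alpha-\frac{15-3\alpha}{q}$, which gives (a) after letting $\alpha\nearrow\alpha_*$.

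For (b), both measures have exponent $1+k$. Corollary~\ref{cor:prefix-rate} then gives the coefficient $b_{\mathrm{sym}}(k)$, and Lemma~\ref{lem:bootstrap} raises the certificate to $\Delta=\mathcal D(1-q)$. With $\varkappa=k$ the same margin is affine in $q$, and $P_\alpha(1+k)$ is its zero. The case $P_\alpha(1+k)\le\frac{(1+k)(3-k)}{4-k}$ is handled by Theorem~\ref{thm:pinnedmain}.

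Your Step~3 is correct and is the paper's argument.
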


Part (a) does not depend on the source dimension. Part (b) uses the source dimension
through the same iteration as $Q(\dim_{\mathrm H}E)$, and $P(\dim_{\mathrm H}E)>Q(\dim_{\mathrm H}E)$ for
$1<\dim_{\mathrm H}E\le\frac{7}{6}$.
For $\dim_{\mathrm H}E>1+k$ part (b) improves part (a), whose
threshold at exceptional dimension $1+k$ is smaller than $P(1+k)$.
At exceptional dimension $1+k$, part~(b) improves
Theorem~\ref{thm:pinnedmain} precisely when
\[
\dim_{\mathrm H}E<\frac{(4-k)P(1+k)-1-k}{2-k}.
\]
For $1<\dim_{\mathrm H}E<1.13$, its last assertion improves both
$R(\dim_{\mathrm H}E)$ of Fiedler--Stull~\cite[Corollary~2]{FS}
and Liu's bound $\frac{4\dim_{\mathrm H}E-2}{3}$ for
$\sup_{x\in E}\dim_{\mathrm H}\Delta_xE$~\cite[Theorem~1.1]{Li}.

Table~\ref{tab:intro-comparison} records the principal comparisons.
In its pinned rows, each estimate holds outside a set of pins in
$\mathbb R^2$ of Hausdorff dimension at most one. For the table,
write $t=\dim_{\mathrm H}E$ and
\[
U(t)=\max\left\{\frac{4}{5}+\eta_0,Y(t),R(t)\right\}.
\]

\begin{table}[!htbp]
\centering
\caption{Selected lower bounds obtained here and their gains over the
best previously known bounds in the corresponding settings, with
$t=\dim_{\mathrm H}E$. All gains are strictly positive throughout
the indicated ranges.}
\label{tab:intro-comparison}
\begin{tabular}{@{}p{2.45in}p{1.3in}p{1.8in}@{}}
\toprule
Setting & Earlier bound & Bound here and gain\\
\midrule
Unpinned, uniformly for $t>1$ &
$\frac{3}{4}$, Stull &
$\frac{4}{5}+\eta_0$\par Gain $\frac{1}{20}+\eta_0$\\[9pt]

Pinned, uniformly for $t>1$ &
$\frac{3}{4}$, Stull &
$q_*$\par Gain $>\frac{1}{32}$\\[9pt]

Unpinned, $1<t<5-\sqrt{15}$ &
$\frac{t(4-t)}{5-t}$\par Fiedler--Stull &
$U(t)$\par Gain $U(t)-R(t)$\\[9pt]

Unpinned, $5-\sqrt{15}\le t<\frac{37}{32}$ &
$\frac{4t-2}{3}$, Liu &
$U(t)$\par Gain $U(t)-\frac{4t-2}{3}$\\[9pt]

Pinned, $1<t\le\frac{10}{9}$ &
$\frac{1}{2}+\frac{t}{4}$, Stull &
$Q(t)$\par Gain $Q(t)-\frac{1}{2}-\frac{t}{4}$\\[9pt]

Pinned, $\frac{10}{9}<t\le S_{\mathrm{rel}}$ &
$t-\frac{1}{3}$, Liu &
$Q(t)$\par Gain $Q(t)-t+\frac{1}{3}$\\[9pt]

Pinned, $S_{\mathrm{rel}}<t<S_f$ &
$t-\frac{1}{3}$, Liu &
$Q_{\mathrm{rel}}(t)$\par
Gain $Q_{\mathrm{rel}}(t)-t+\frac{1}{3}$\\[5pt]
\bottomrule
\end{tabular}
\end{table}

The earlier estimates are due to Stull~\cite[Theorem~1]{S},
Fiedler--Stull~\cite[Corollary~2]{FS}, and
Liu~\cite[Theorem~1.1]{Li}.
The first two rows compare bounds that are uniform as $t$ decreases
to one. The remaining rows compare bounds at a fixed $t$.
For unpinned distances, the earlier bound is the larger of
$R(t)$ and $\frac{4t-2}{3}$ on the displayed ranges.
These bounds cross at $t=5-\sqrt{15}=1.127016\ldots$.
Liu's bound reaches $\frac{7}{8}$ at $t=\frac{37}{32}$,
whereas $Y(t)$ reaches it at $t=\frac{11-\sqrt{41}}{4}$.
For pinned distances at exceptional dimension one, the earlier
bound is the larger of Stull's $\frac{1}{2}+\frac{t}{4}$ and
Liu's $t-\frac{1}{3}$, which cross at $t=\frac{10}{9}$.
Fiedler--Stull's stronger dimension-dependent estimate in
\cite[Corollary~2]{FS} does not give this exceptional-dimension-one
conclusion.
Let
\[
\tau=\frac{5+14\alpha_*-\sqrt{25-8\alpha_*^2}}{12\alpha_*}
=1.191685897\ldots.
\]
$Q(t)$ exceeds Liu's bound for $1<t<\tau$.
Together, $Q(t)$ and $Q_{\mathrm{rel}}(t)$ extend this to $1<t<S_f$, and beyond $S_f$
Liu's bound is stronger and implies full dimension for
$t\ge\frac{4}{3}$.
For example, at $t=1.1$ the unpinned bound increases from
$R(1.1)=0.817948\ldots$ to $Y(1.1)=0.849576\ldots$,
and at $t=1.18$ the pinned bound increases from
$0.846666\ldots$ to $Q_{\mathrm{rel}}(1.18)=0.856983\ldots$.

\subsection{Earlier results and the inputs to this paper}

Falconer proved positive Lebesgue measure of the planar distance set
above dimension $\frac{3}{2}$~\cite{Falconer}.
Wolff lowered this
threshold to $\frac{4}{3}$~\cite{Wolff}. Guth, Iosevich, Ou and Wang
proved the positive-measure conclusion above $\frac{5}{4}$, including
a pinned conclusion with a pin in $E$~\cite{GIOW}. The present paper
concerns dimensions close to one, where the dimension problem remains
open without additional assumptions.

Bourgain's work, following Katz--Tao, gave a uniform improvement over
the elementary exponent $\frac{1}{2}$ at source dimension one
\cite{Bourgain,KatzTao}. For sources of dimension greater than one,
Keleti--Shmerkin obtained the universal unpinned bound $\frac{37}{54}$
and the pinned bound $\frac{2}{3}\dim_{\mathrm H}E$ outside a set of
pins of dimension at most one when
$1<\dim_{\mathrm H}E\le\frac{3}{2}$~\cite[Theorems~1.4 and~1.1]{KS}.
Shmerkin improved the universal unpinned exponent to $\frac{40}{57}$
and obtained the pinned exponent $\frac{29}{42}$ outside a set of
pins of dimension at most one~\cite[Theorems~1.1 and~1.2]{Sh}. Their treatment of information across scales provides
a measure-theoretic precursor to our use of information at
intermediate scales.

The point-to-set principle of J. H. Lutz and N. Lutz expresses
Hausdorff dimension through the description lengths of individual
points~\cite{LL}. Lutz--Stull developed estimates for these
description lengths under orthogonal projection, the map
$z\mapsto z\cdot v$ for a unit vector $v$~\cite{LS}.
Stull used this approach to obtain the pinned bound quoted
above~\cite{S}. With the covering number $N_r(F)$ defined above, the upper box dimension of $F$
is $\overline{\dim}_{\mathrm B}F=\limsup_{r\searrow0}
\frac{\log N_r(F)}{\log(\frac{1}{r})}$. The packing dimension is
\[
\dim_{\mathrm P}E=\inf_{E\subseteq\bigcup_iE_i}
\sup_i\overline{\dim}_{\mathrm B}E_i,
\]
where the infimum is over countable covers by bounded sets
\cite[Chapter~5]{Mattila}. Fiedler--Stull refined the scale
partitions and retained both Hausdorff and packing dimension
parameters~\cite[Theorems~1 and~5]{FS}. At planar
packing dimension two, their expression becomes
$\min\{1,\frac{d(4-d)}{5-d}\}$ for $1<d<\dim_{\mathrm H}E$, with
an exceptional-pin dimension bound $d$. Sections~\ref{sec:borrowed}
and~\ref{sec:transfer} specify the information estimates retained from
these proofs. References to their equation and lemma numbers use the
listed arXiv versions: \cite{S}, v2, and \cite{FS}, v1.
Appendix~\ref{app:retained-prefix}
contains the retained prefix calculation and its endpoint case.

Liu's $L^2$ method provides additional dimension-dependent estimates
\cite{Li}. At exceptional-pin dimension one, it yields the threshold
$\min\{1,\dim_{\mathrm H}E-\frac{1}{3}\}$. It also implies
\[
\sup_{x\in E}\dim_{\mathrm H}\Delta_xE
\ge\min \left\{\frac{4\dim_{\mathrm H}E-2}{3},1 \right\}.
\]
The supremum is needed because of the correction to the pinned endpoint
in~\cite{Li}. The pin achieving an exponent close to the right side may
depend on that exponent. Liu states these results for compact $E$.
They extend to analytic $E$ by inner approximation with compact subsets.

Regularity can lead to stronger conclusions. A compact set $E$ of positive diameter is
$s$-Ahlfors--David regular if it supports a finite Borel measure
$\mu$ with support equal to the set and
$cr^s\le\mu(B(z,r))\le Cr^s$ for all points $z$ of the set and
$0<r\le\operatorname{diam}E$.
The constants are positive and
independent of $z,r$. Orponen proved that the distance sets of such
planar sets have packing dimension one when $s\ge1$~\cite{OrponenRegular}. Shmerkin obtained
full pinned Hausdorff dimension when
$\dim_{\mathrm H}E=\dim_{\mathrm P}E>1$~\cite{ShmerkinRegular}.
Fiedler--Stull cover further ranges of these two dimensions
\cite{FS,FSLow}. Shmerkin--Wang proved that for every $d\ge2$ and every Borel
$E\subseteq\mathbb R^d$ with
$\dim_{\mathrm H}E=\dim_{\mathrm P}E=\frac{d}{2}$,
$\sup_{x\in E}\dim_{\mathrm H}\Delta_xE=1$
\cite[Theorem~1.4]{SW}. Liu recently proved that if $E,F\subset\mathbb R^2$ are Borel, $\dim_{\mathrm H}E>1$, $\dim_{\mathrm H}F=\dim_{\mathrm P}F$ and $\dim_{\mathrm H}E+\dim_{\mathrm H}F>2$, then $\Delta_xE$ has positive Lebesgue measure for some $x\in F$~\cite[Theorem~1.1]{Liu2026}. At source dimension at most one, Cholak, Cs\"ornyei, N. Lutz,
P. Lutz, Mayordomo and Stull proved the pinned supremum bound
$\frac{3}{4}\dim_{\mathrm H}E$ for analytic $E$~\cite[Theorem~6]{CCLLMS}.

A closely related combination of algorithmic information and
incidence geometry appears in recent work of Cs\"ornyei and
Stull~\cite{CSHyperplanes}. They extend the Ren--Wang incidence
estimate to hyperplanes in higher dimensions and prove that,
for every Borel set $E\subset\mathbb R^d$ with
$\dim_{\mathrm H}E>\frac{d}{2}$, where $d=3$ or $4$,
\[
\sup_{x\in E}\dim_{\mathrm H}\Delta_xE\ge\frac{2}{3}.
\]
Their proof converts incidence estimates into conditional
Kolmogorov complexity bounds and combines them over intervals
of scales. These intervals are selected by a decomposition of
complexity functions based on Jones's traveling salesman theorem
\cite[Section~5]{CSHyperplanes}. In dimension two, their
incidence theorem specializes to the Ren--Wang estimate used
here. Our planar improvement combines information retained
at both endpoints with a common source measure to obtain
incompatible lower and upper counts for rich perpendicular
bisectors.

Our geometric argument uses further inputs. The radial projection
about $x$ sends $y\ne x$ to its direction
$\pi_x(y)=\frac{y-x}{|y-x|}$ on the unit circle
$S^1=\{z\in\mathbb R^2:|z|=1\}$. Orponen's theorem controls the
distribution of these directions~\cite{O}. The crossing lemma, in the circle-drawing tradition
of Sz\'ekely, turns many circle incidences into repeated pairs
\cite{Szekely}. The restricted-sum estimate of Katz--Tardos bounds from below
the number of bisector directions produced by rows, meaning
ordered lists of source points assigned to a common circle~\cite{KT}, and the max-flow min-cut theorem of
Ford--Fulkerson turns this into a simultaneous choice of one
direction per row~\cite{FF}. The Furstenberg estimate of Ren--Wang
bounds the number of rich bisector tubes from below~\cite{RW}, and
the incidence estimate of Fu--Gan--Ren for rectangularly spaced
balls bounds it from above~\cite{FGR}. We state the precise forms
needed when they enter the proof.

A related use of rigidity occurs in the work of H\'era, Shmerkin
and Yavicoli on Furstenberg sets~\cite[Section~4.1]{HSY}.
There, a configuration close to the elementary bound has a
restricted incidence structure, which is incompatible with
Bourgain's projection theorem. Here the starting constraint is
equality in the reciprocal information estimates. It forces a
common interval of excess information, from which we obtain
rich bisectors. The final contradiction uses the geometric
estimates specified above.

Several features of the proof also have counterparts in work on the
Erd\H{o}s similarity conjecture. This conjecture asks whether, for
every infinite $A\subset\mathbb R$, there is a measurable set of
positive Lebesgue measure containing no set $x+\lambda A$ with
$x\in\mathbb R$ and $\lambda\ne0$.
Shmerkin and Yavicoli~\cite[Section~3.2]{SYUniversality} use
probabilistic selection to construct one small set whose sums with
every member of a prescribed family cover a fixed region. The cost
of treating the family simultaneously is logarithmic in the number
of discrete tests. The same requirement of a common choice enters
Lemmas~\ref{lem:source-thinning} and~\ref{lem:pin-coarsening} below.
The source sample must serve every retained pin, and the pin sample
must preserve the witnesses for every retained source pair.
Separate choices would lose the shared incidence structure.

A second connection concerns approximate coincidences of sums.
Mora Cuellar, Iosevich, Kulkarni, Rojas Aravena and
Yavicoli~\cite[Section~3.2]{MIKRYSumsets} bound their number for two
lacunary sequences, meaning positive sequences $(a_j)$ satisfying
$a_{j+1}\le q a_j$ for some fixed $0<q<1$.
Their estimate allows many separated sums to be extracted without
requiring the two sequences to occupy different ranges of scales.
Here sums of angular coordinates determine bisector directions,
so a related counting question enters Section~\ref{sec:finite-incidence}.
Our angular rows need not be lacunary, and we use the restricted-sum
estimate of Katz--Tardos under the hypotheses available to us.

The lattice construction of Iosevich and
Yavicoli~\cite[Section~2]{IYLattice} illustrates a further role for
information across scales. Sufficient branching between successive
lattice scales allows the construction of an infinite set $A$ whose
triple sumset $A+A+A=\{a_1+a_2+a_3:a_1,a_2,a_3\in A\}$ lies in the
limiting set. In the present argument, excess information over a
common interval of scales produces the mass bounds from which the
rich bisectors are extracted. Both arguments use how branching is
distributed across scales. The arithmetic alignment of the lattice
construction, however, is additional structure that a general
Frostman measure need not possess.

There is also a comparison with the work of Iosevich, Kulkarni,
Mora Cu\'ellar, Rojas Aravena and Yavicoli on Rajchman
measures~\cite[Lemma~2.3 and Proposition~3.2]{IKMRYRajchman}.
A Rajchman probability measure is a Borel probability measure whose
Fourier--Stieltjes transform tends to zero at infinity. For such a
measure with compact support, that qualitative decay yields a density
statement modulo one for all sufficiently large dilates of the
support. This leads to a single set excluding all nontrivial affine
copies of that support. Our endpoint stability argument likewise
separates a qualitative limiting statement from the finite-scale
geometry, with the measure affecting the starting precision.
Here the required input is supplied by information profiles and
incidence estimates. No Rajchman hypothesis is imposed.
These comparisons describe related uses of selection, additive
structure and scale information. The estimates needed for the
distance problem are proved below.

\subsection{The bisector results as separate statements}
For distinct $y,z\in\mathbb R^2$, their perpendicular bisector is
\[
\mathcal B(y,z)=\{x\in\mathbb R^2:|x-y|=|x-z|\}.
\]

The perpendicular bisectors $\mathcal B(y,z)$ defined above will
be treated as a family of lines.
Write a line as $L_{n,c}=\{z\in\mathbb R^2:n\cdot z=c\}$, where $n\in S^1$
is a unit normal and $c\in\mathbb R$ is its offset. The parameters
$(n,c)$ and $(-n,-c)$ represent the same unoriented line. We give
this space the metric
\[
\operatorname{dist}(L_{n,c},L_{n',c'})
=\min\{|n-n'|+|c-c'|,|n+n'|+|c+c'|\}.
\]
The dimension of a family of lines is taken in this metric. A tube is a neighborhood of a line
of specified width. Relative to a finite pin set, a tube is called
$k$-rich if it contains at least $k$ pins.

Several geometric statements below are independent of the information
theory. Here the Hausdorff dimension of a family of lines is computed
in the space of affine lines equipped with the metric above,
not as the dimension of their union in $\mathbb R^2$. Proposition~\ref{prop:bisector-dimension} establishes the elementary
identity
\[
\dim_{\mathrm H}\{\mathcal B(y,z):z\in F\}
=\dim_{\mathrm H}F \qquad \text{for } y\notin F.
\]
It follows that the set of all bisectors determined by $E$ has
Hausdorff dimension at least $\dim_{\mathrm H}E$ when this dimension
is positive. The reflection map proves this identity directly.
It supplies a Hausdorff-dimension statement to accompany the finite
results.

The finite geometric estimates used below have their own hypotheses.
They concern approximate circle drawings, rows of points with rich
bisectors, the distribution of their directions, and rich tubes for
point sets with controlled spacing. These statements can also be
used independently of the description-length estimates below.

\subsection{How the proofs work}\label{sec:intro-proof-outline}

A short description of a distance places constraints on both
endpoints. The estimates of Stull and Fiedler--Stull express these
constraints at each scale. We retain their dependence on the
information at both endpoints. In the unpinned problem, the resulting
inequalities can be applied in both directions. They force the
scalar bound $\frac{4}{5}$ and identify the profiles near equality. For a prescribed pin they need not do so. They instead
force a particular distribution of pin and source mass, which we
use to construct a finite configuration of circles and points.

The geometric contradiction concerns one family of perpendicular
bisectors. Each bisector has many pins near it. The source points
on the circles provide many possible bisector directions, and
Katz--Tardos and Ren--Wang turn those choices into a lower bound
for the number of distinct bisectors. The distribution of the pins
provides an upper bound for the same number. The improvement occurs
when these bounds are incompatible. Thus information theory determines
the configuration that a small distance set would require, and
incidence geometry rules out that configuration.

There are two reasons for keeping more information than a single
dimension exponent. The first is that information may accumulate
unevenly across scales. Information missing on one interval has to
appear on another, and the projection estimates can use its location.
The second is that all pins see the same source set. A mass bound
for one source measure can be used simultaneously in the circle
drawings at different pins. Choosing a separate source measure for
each pin would lose the shared vertices needed for the counting
argument. We now explain both points and how the estimates fit
together.

Let $x,y\in\mathbb R^2$ be distinct. The line segment joining them is
\[
 [x,y]=\{(1-t')x+t'y:t'\in[0,1]\},
\]
and its length is $|x-y|$. We measure information by program length.
Fix a universal binary machine with prefix-free programs, meaning
that no admissible program is a proper initial segment of another.
An oracle $A\subseteq\mathbb N$ supplies background information.
The machine may ask whether an integer belongs to $A$ without
including the answer in its program. The oracle is fixed before
the points and the sequence of precisions are chosen.

For $z\in\mathbb R^m$, let $K_r^A(z)$ be the length of a shortest
program, using $A$, that locates $z$ within distance $2^{-r}$.
The notation $K_r^{A,x}(|x-y|)$ also supplies the complete pin $x$,
so the program describes only the distance. Section~\ref{sec:complexity}
specifies the encodings and the distinction between supplying a
whole point and supplying only a coarse approximation. These are
the conventions of~\cite{LiVitanyi,S,FS}.

To record where the information lies, choose terminal precisions
$r_j\to\infty$ and write $u\in[0,1]$ for a fraction of terminal
precision. Along a common subsequence we have
\begin{align*}
 f(u)&=\lim_{j\to\infty}\frac{K_{\lfloor ur_j\rfloor}^A(y)}{r_j},&
 g(u)&=\lim_{j\to\infty}\frac{K_{\lfloor ur_j\rfloor}^A(x)}{r_j},\\[6pt]
 d_x(u)&=\lim_{j\to\infty}\frac{K_{\lfloor ur_j\rfloor}^{A,x}(|x-y|)}{r_j},&
 d_y(u)&=\lim_{j\to\infty}\frac{K_{\lfloor ur_j\rfloor}^{A,y}(|x-y|)}{r_j}.
\end{align*}
Here $\lfloor v\rfloor$ and $\lceil v\rceil$ denote integer rounding
down and up, respectively, and the complexities at precision zero
are set equal to zero. These functions are the information profiles.
The source profile is $f$, the pin profile is $g$, and the distance
has one profile for each choice of pin. The points remain fixed.
Section~\ref{sec:profiles} proves uniform convergence of all four
profiles along one subsequence.

The pairs used in the proof have directions requiring nearly one bit
per unit of precision, even after either endpoint is supplied.
Also, supplying one endpoint does not substantially shorten a
description of the other. Orponen's radial projection theorem and
the measure-coding argument allow us to select such pairs. Under
these hypotheses, the prefix estimate reads
\[
 d_x(u)\ge\frac{u}{2}+\frac{f(u)}{4}.
\]
It says how much information already accumulated at the source must
have reached the distance by scale $u$. At the lowest profile
$f(u)=u$, this is $d_x(u)\ge\frac{3u}{4}$. At the largest planar
profile $f(u)=2u$, it is $d_x(u)\ge u$. To improve the first bound,
we must use how the profiles change between scales.

This is the purpose of the interval transfer estimate. Suppose a
coarse location of $y$ and its distance from $x$ are known. Another
possible source point $z$ must be near that location and satisfy
$|x-z|=|x-y|$. Thus $x$ lies on the perpendicular bisector of
$[y,z]$. The points $y,z$ determine a projection of $x$, at a
precision governed by their separation. The projection estimates
of Stull and Fiedler--Stull therefore put a lower bound on the
description length of every competitor $z$. If all competitors
require many bits, the distance and the coarse location must
together contain many bits as well.

The transfer threshold $H_{f,g}(a,b)$ in~\eqref{eq:H} is the least
of these competitor costs, capped by the source information at
the terminal scale. Lemma~\ref{lem:transfer} states the resulting bound
\[
 d_x(b)-d_x(a)\ge[H_{f,g}(a,b)-f(a)]_+.
\]
The subtraction of $f(a)$ accounts for the coarse source information
already supplied. Partitioning the projection calculation into
intervals with different patterns of information growth strengthens
$H$ to $\widehat H$ in Lemma~\ref{lem:fullpartition}. Both estimates
retain the two endpoint profiles.

For the unpinned problem, a short description of $|x-y|$ remains
short after either endpoint is supplied. We may therefore assume
that both $d_x(1)$ and $d_y(1)$ are small and use the transfer
inequalities in both orientations. A common cutoff is a scale
after which all competitors in either orientation cost at least
a specified amount of information, called the transfer level.
Prefix and transfer then bound both endpoint profiles from below
at the cutoff. Their Lipschitz bounds carry some of this information
to earlier scales, where it strengthens the next transfer estimate.

The final contradiction uses the least possible common cutoff.
At this scale one orientation must have a competitor whose cost
equals the transfer level. Otherwise the cutoff could be moved
slightly earlier. We retain the two separate endpoint excesses,
apply the partition estimates in both directions, and return to
this equality. The resulting lower bound exceeds the value allowed
by the equality. Section~\ref{sec:reciprocal} proves in this way that
$d_x(1),d_y(1)$ cannot both be smaller than $\frac{4}{5}$.
Keeping the source dimension in the transfer level also yields
$Y(\dim_{\mathrm H}E)$ in Section~\ref{sec:unpinned}. The fixed-pair
construction and the point-to-set principle in
Section~\ref{sec:classical} convert these conclusions into bounds
for $\dim_{\mathrm H}\Delta E$. Proposition~\ref{prop:direct-terminal-flat}
shows why the stated scalar system stops at $\frac{4}{5}$.
At equality, however, the profiles have much more structure.
They grow at the maximal rate before $\frac{1}{5}$ and remain
above $u+\frac{1}{5}$ until $\frac{3}{5}$. This interval of
excess information allows the geometric count to pass the scalar
endpoint. Compactness makes the same contradiction valid in a
fixed neighborhood of $\frac{4}{5}$.

For a prescribed pin, this argument has a specific failure. A
description that is short when $x$ is supplied may be long when
only $y$ is supplied. Thus $d_x(1)$ may be small while $d_y(1)$ is
large. Proposition~\ref{prop:pinned-obstruction} exhibits abstract
profiles satisfying all the scalar inequalities with $d_x(1)$
arbitrarily close to $\frac{3}{4}$ and $d_y(1)=1$. The pinned
improvement must therefore use a relation among many pairs.

Suppose that the set of pins where the proposed bound fails has
Hausdorff dimension greater than one. We choose a source probability
measure $\mu$ and a pin probability measure $\nu$, supported on
separated compact sets, both with Frostman exponent greater than
one. A relation of pairs is a Borel set $G$ in the product of these
supports. Its fiber at a pin is $G_x=\{y:(x,y)\in G\}$. We retain
one such relation with
\[
 (\nu\times\mu)(G)=\int\mu(G_x)d\nu(x)>0.
\]
The radial projection theorem supplies bounded angular densities
on this relation after restriction. It is these restrictions of
the same original measures that are used throughout the geometry.

Fix a proposed distance bound $q$. For every pair in the retained
relation, the inequality $K_n^{A,x}(|x-y|)\le qn$ holds at infinitely
many precisions. These precisions may differ from pair to pair.
Lemma~\ref{lem:seventh-classical-recurrence} finds infinitely many
common precisions at which the pairs satisfying this inequality
have total mass at least $n^{-2}$. At one such precision there are
at most $2^{qn+O(\log n)}$ occupied radius bins per pin. Indeed,
there are only that many short programs, and an approximation to
a radius identifies its bin up to a bounded choice. Thus the small
distance set produces many source incidences on relatively few
circles at each pin.

The profile argument supplies two further facts at these precisions.
First, $g(u)-u\ge W$ throughout an interval $[u_-,u_+]$ of controlled
length, with $W>0$. This is the pin plateau. The profile need not be
constant on that interval. Coding the pin measure turns the plateau
into upper bounds for pin mass over a range of scales. Second, the
source reaches a prescribed amount of information early enough
that a lower bound for the radius complexity remains after a
coarse source location has been supplied. This is the source-parent
estimate in Section~\ref{sec:parent-cap}.

The cancellation in that second fact is worth displaying. Set
$\delta=2^{-n}$, and let $C$ be a dyadic parent square containing
$y$. A dyadic square has the form
$2^{-k}([i,i+1)\times[j,j+1))$, and a parent is a containing square
at a coarser level. If $J$ is the radius interval of length $\delta$
containing $|x-y|$, put
\[
 p_{C,J}=\mu\{z\in C:|x-z|\in J\}.
\]
The transfer estimate, the source coding identity, and the code
for the radial distribution of $\frac{\mu|_C}{\mu(C)}$ imply,
up to the selectable small errors,
\[
 \lambda n+\log_2\mu(C)
 \le K_n^{A,x}(|x-y|\mid C)
 \le-\log_2p_{C,J}+\log_2\mu(C).
\]
The parent mass cancels, leaving $p_{C,J}\le\delta^\lambda$ at
leading exponent order. Proposition~\ref{prop:conditional-parent-cap}
states the precise bound with its error tolerance. It holds for
the original source measure in every occupied parent and annulus.
Thus many incidences cannot all be hidden in one small part of
a circle. If the source dimension is larger, its stronger profile
lower bound permits a larger transfer level $\lambda$, as proved
in Section~\ref{sec:bootstrap}.

We now select source squares once for all pins and use their centers
as vertices. An assigned incidence records a retained pair, its
source square, and its radius bin. The vertex lies within
$O(\delta)$ of the associated circle. The source-parent mass bound
controls how many assigned vertices can occur in one parent on
that circle. The common-scale mass lower bound allows the selections
to lose fixed powers of $n$. Their logarithms are $o(n)$, so they
do not change the limiting exponents.

Join consecutive assigned vertices along the circles. A drawing
with many edges but controlled crossings must repeat many source
pairs. A repeated pair $y,z$ is seen from many circle centers.
Those centers lie within distance $O(\frac{\delta}{|y-z|})$ of
the perpendicular bisector of $[y,z]$. A witnessing pin is a pin
to which both source vertices were assigned on the same circle.
We have therefore constructed bisector tubes with many witnesses,
as illustrated in Figure~\ref{fig:shared-pair-bisector}.
The drawing and pruning arguments also provide short lists of
vertices on a circle, called rows, from which pairs can be chosen.

\begin{figure}[htbp]
\centering
\begin{tikzpicture}[x=1.05cm,y=1.05cm,>=stealth,
every node/.style={font=\small}]
\fill[blue!9] (-.13,-.55) rectangle (.13,3.45);
\draw[blue!60!black,dashed] (0,-.55)--(0,3.45);
\coordinate (y) at (-1,0);
\coordinate (z) at (1,0);
\coordinate (xone) at (0,1.4);
\coordinate (xtwo) at (0,2.7);
\draw[blue!65!black,thin] (y)--(xone)--(z);
\draw[orange!85!black,thin,dashed] (y)--(xtwo)--(z);
\draw[gray!70] (y)--(z);
\draw[blue!65!black,thick]
 plot[domain=215:325,samples=80] ({sqrt(2.96)*cos(\x)},{1.4+sqrt(2.96)*sin(\x)});
\draw[orange!85!black,thick]
 plot[domain=238:302,samples=80] ({sqrt(8.29)*cos(\x)},{2.7+sqrt(8.29)*sin(\x)});
\fill (y) circle (.045) node[above left] {$y$};
\fill (z) circle (.045) node[above right] {$z$};
\fill[blue!65!black] (xone) circle (.045) node[right=4pt] {$x_1$};
\fill[orange!85!black] (xtwo) circle (.045) node[right=4pt] {$x_2$};
\node[anchor=west,align=left] at (2.45,2.85)
{The same source pair lies\\on circles about different pins.};
\draw[->,gray!70] (2.28,2.5)--(.35,2.3);
\node[anchor=west,align=left] at (2.45,1.45)
{The circle centers lie on\\the perpendicular bisector.};
\draw[->,gray!70] (2.28,1.15)--(.18,.75);
\node[anchor=west,align=left] at (2.45,-.1)
{With incidence error $O(\delta)$,\\the centers lie in a tube of width\\$O(\frac{\delta}{|y-z|})$.};
\node[align=center] at (0,-.92) {One source pair, many witnessing pins};
\end{tikzpicture}
\caption{Why repeated circle incidences produce rich bisectors.
The two circles shown pass exactly through $y,z$. The finite
construction has errors of order $\delta$, so its witnesses lie
near the bisector. The shaded tube is enlarged for visibility.
The same source vertices are used at every pin.}
\label{fig:shared-pair-bisector}
\end{figure}

The lower count uses the freedom in this choice. On an exact circle,
if two source points have angles $\theta_i,\theta_j$ about the pin,
their bisector direction is $\frac{\theta_i+\theta_j}{2}$ modulo
$\pi$. Our approximate circles obey the corresponding relation
with controlled errors. After removing repeated angular entries,
the Katz--Tardos restricted-sum estimate bounds the number of
available directions for collections of rows. A large total number
of directions would not by itself suffice, since they might cluster in
a short angular interval. The matching argument in
Section~\ref{sec:finite-incidence} chooses one pair per row while
controlling the directions at every intermediate scale. Ren--Wang's
Furstenberg theorem can then be applied to the pins and their
selected tube directions. It forces many distinct bisector tubes.

The first upper count uses the same tubes and their witnessing pins.
The pin plateau controls concentration in rectangles. The sampling
argument of Section~\ref{sec:rectangular-sampling} converts that
control into the rectangular occupancy required by Fu--Gan--Ren.
Their incidence theorem limits how many distinct tubes can each
contain the required number of pins. Comparing this upper count
with the Furstenberg lower count proves
$Q(\dim_{\mathrm H}E)$, including the universal bound $q_*$.

For the second upper count, keep the angular bound as seen from
the source endpoint as well as from the generating pin. For a
source pair separated by $\ell$, a length-$D$ portion of its
bisector subtends an angle of order $\ell D$ from that endpoint,
in our separated regions. The reverse angular bound therefore
limits how many witnesses of this pair can lie in one small
square. A rich pair must have witnesses in many such squares.
Lemma~\ref{rel-lem:RW-linear-C} retains the dependence on their
coarse tube occupancy and turns this additional distribution
information into a second upper count. It improves the pinned
bound to $Q_{\mathrm{rel}}(\dim_{\mathrm H}E)$ on the interval
specified in Theorem~\ref{rel-new-pinned-theorem}.

The distinction between these counts is useful. Katz--Tardos
controls the choices of directions, Ren--Wang turns distributed
directions into a lower count, and the upper incidence estimates
use the distribution of witnesses. All three refer to the same
finite configuration. Figure~\ref{fig:proof-outline} records these
dependencies.

\begin{figure}[!htbp]
\centering
\begin{tikzpicture}[x=1cm,y=1cm,>=stealth,line width=.55pt,
every node/.style={align=center,font=\small,inner sep=6pt}]
\node[draw,text width=12cm] (radial) at (0,0)
{Orponen's radial projection theorem\\
Good directions and bounded angular densities on one relation};
\node[draw,text width=12cm] (input) at (0,-1.45)
{Information estimates of Stull and Fiedler--Stull\\
Retain both endpoint profiles in the prefix and transfer bounds};
\node[draw,text width=5.4cm] (unpin) at (-3.3,-3.2)
{Reciprocal estimates\\Scalar bounds $\frac{4}{5}$ and $Y(t)$\\
Rigidity near the endpoint};
\node[draw,text width=5.4cm] (pin) at (3.3,-3.2)
{Prescribed pin\\Pin plateau and\\source-parent annular cap};
\node[draw,text width=12cm] (geometry) at (0,-5.15)
{One common source measure\\Circle rows whose pairs have rich bisectors};
\node[draw,text width=12cm] (directions) at (0,-6.7)
{Katz--Tardos and simultaneous direction selection\\
Bisector directions distributed over all relevant scales};
\node[draw,text width=5.4cm] (upper) at (-3.3,-8.5)
{Upper counts\\Fu--Gan--Ren and the pin plateau\\
Further count from reverse angles};
\node[draw,text width=5.4cm] (lower) at (3.3,-8.5)
{Lower count\\Ren--Wang\\Furstenberg estimate};
\node[draw,text width=12cm] (result) at (0,-10.35)
{Incompatible bisector counts\\
Unpinned $\frac{4}{5}+\eta_0$; pinned $Q(t)$ and $Q_{\mathrm{rel}}(t)$};
\draw[->] (radial) -- (input);
\draw[->] (input.south) -- (0,-2.2) -| (unpin.north);
\draw[->] (0,-2.2) -| (pin.north);
\draw[->,dashed] (unpin.south) -- (unpin.south |- geometry.north);
\draw[->] (pin.south) -- (pin.south |- geometry.north);
\draw[->] (geometry) -- (directions);
\draw[->] (directions.south) -- (0,-7.45) -| (upper.north);
\draw[->] (0,-7.45) -| (lower.north);
\draw[->] (upper.south) -- (upper.south |- result.north);
\draw[->] (lower.south) -- (lower.south |- result.north);
\end{tikzpicture}
\caption{The information and incidence arguments. The reciprocal
inequalities prove the scalar bounds. The dashed arrow records the
further use of endpoint rigidity in the geometric argument. In every
geometric application, the same bisectors are counted from below
through their directions and from above through the distribution of
their witnesses.}
\label{fig:proof-outline}
\end{figure}

Keeping the pin Frostman exponent in the same lower count yields
the quantitative exceptional-set estimates of
Section~\ref{sec:exceptional}. Appendix~\ref{sec:matching-realization}
combines the matching scalar estimate with the geometric argument
when almost every pair has matching information profiles. The elementary
scalar proofs are retained in
Appendices~\ref{sec:thirteen-seventeen} and~\ref{sec:refined}.
Section~\ref{sec:gain} identifies the limitations of the scalar
inequalities and the separate geometric comparisons.
Appendix~\ref{sec:joint-count-obstruction} treats their joint
optimization.

\section{Preliminaries}\label{sec:preliminaries}

We collect the geometric conventions and the definitions of information
used in the proofs.
The basic object is a shortest program producing a rational point
within a prescribed error. All programs below use the same optimal
machine. The pair $(x,y)$ and the background oracle $A$ remain fixed
as the error tends to zero. This is what will allow a statement about
individual points to imply a Hausdorff-dimension bound.

\subsection{Sets, measures, and geometric scales}

The Hausdorff measure $\mathcal H^s$, Hausdorff and packing
dimensions, and Frostman and Ahlfors--David bounds were defined in
the introduction. See also~\cite[Chapters~4, 5, 8]{Mattila}. We will use countable stability of Hausdorff dimension. The dimension of a countable union is
the supremum of the dimensions of its members \cite[Chapter~4]{Mattila}.

The restriction $\mu|_F$ is the
measure $B\mapsto\mu(B\cap F)$, and its normalization is
$\frac{\mu|_F}{\mu(F)}$ when $0<\mu(F)<\infty$.

Frostman's lemma says that every analytic set of dimension greater
than $s$ contains a compact subset carrying an $s$-Frostman
probability measure~\cite[Chapter~8]{Mattila}. Its energy
\[
I_t(\mu)=\iint |x-y|^{-t}d\mu(x)d\mu(y)
\]
is finite for $0<t<s$. We apply the radial projection $\pi_x$ only on source regions
separated from $x$. For a measurable map $T$, its pushforward
$T_*\mu$ is defined by $(T_*\mu)(B)=\mu(T^{-1}(B))$. A measure is
absolutely continuous with respect to another if it assigns zero
mass to every set of zero mass for the other. A density of $\nu$
with respect to $\mu$ is a nonnegative measurable function $h$
such that $\nu(B)=\int_B h d\mu$ for every measurable $B$.
This is also called the Radon--Nikodym derivative. Angular
Lebesgue measure means arclength
on $S^1$. A bound $(\pi_x)_*\mu\le L\mathcal H^1$ means that every
arc $J$ has preimage mass at most $L|J|$, where $|J|$ is its length.
For a Borel relation $G$ we use the fibers $G_x=\{y\in\mathbb R^2:(x,y)\in G\}$
and $G^y=\{x\in\mathbb R^2:(x,y)\in G\}$ and the pair mass $(\nu\times\mu)(G)$.

For a set $F\subseteq\mathbb R^m$,
$\operatorname{dist}(z,F)=\inf_{w\in F}|z-w|$ and
$F^{(r)}=\{z\in\mathbb R^m:\operatorname{dist}(z,F)<r\}$. A set is
$r$-separated if distinct points have distance at least $r$.
A fixed enlargement of a square is its concentric dilation by a
fixed factor. We use the Hausdorff metric on nonempty compact sets,
\[
d_{\mathrm H}(K,K')=\max\left\{
\sup_{z\in K}\operatorname{dist}(z,K'), \quad
\sup_{w\in K'}\operatorname{dist}(w,K)\right\}.
\]
A map is $L$-Lipschitz if it increases distances by at most a factor
$L$.
It is bi-Lipschitz if it is one-to-one and it and its inverse
are Lipschitz. Such maps preserve Hausdorff dimension
\cite[Chapter~7]{Mattila}. A lower envelope for a function is
another function lying below it at every argument. We also call a
lower bound a barrier, and a constant lower bound a floor.
For functions on a fixed compact interval,
$\|f-g\|_\infty=\sup_u|f(u)-g(u)|$ is their uniform distance.
A functional is lower semicontinuous if its value at a limit is
at most the lower limit of its values along every convergent
sequence.
Upper semicontinuity reverses this inequality and uses
the upper limit. For sets $R_n$, the set
$\limsup_nR_n=\bigcap_N\bigcup_{n\ge N}R_n$ consists of points
belonging to infinitely many $R_n$.

We write $[t]_+=\max\{t,0\}$, $\#F$ for cardinality, and $1_G$ for
the indicator of $G$, equal to one on $G$ and zero outside it.
Logarithms are to base two. Dyadic cubes in $\mathbb R^m$ are the
half-open products $\prod_{j=1}^m[2^{-n}k_j,2^{-n}(k_j+1))$.
In the plane they are squares. We write $\mathcal D_r$ for the grid
of dyadic squares of side $r$, and $Q_n(z)$ for the unique
level-$n$ cube containing $z$. A dyadic parent or ancestor is a
containing cube at a specified coarser level. A dyadic interval
has the analogous one-dimensional meaning. For $z\ne0$,
$\arg z$ denotes its angle modulo $2\pi$.
An angular chart chooses
a real representative on an arc shorter than a full circle.
The notation $A\lesssim B$ means $A\le CB$ with the stated
parameters fixed, and $A\gtrsim B$ means $B\lesssim A$, and
$A\asymp B$ or $A\sim B$ means both inequalities. The notation
$A\ll B$ requires a sufficiently small ratio $\frac{A}{B}$, or a
ratio tending to zero when a limit is specified. As usual,
$O(B)$ denotes a quantity bounded in absolute value by $CB$ and
$o(B)$ one whose ratio to $B$ tends to zero in the indicated limit.
A loss
$\delta^{-\tau}$, where $\tau>0$ is fixed and small, is called a
small power loss. It is distinguished from a fixed multiplicative
constant, which does not change a dimension exponent.

We use the line-space metric defined in the introduction.
In local coordinates, write a line as
\[
L_{\theta,c}
=\{x\in\mathbb R^2:(\cos\theta,\sin\theta)\cdot x=c\},
\]
where $\theta$ is the angle of its unit normal and $c$ is its
signed offset from the origin. If $\theta$ ranges over a fixed
interval of length less than $\pi$ and $c$ remains bounded, then
\[
\operatorname{dist}(L_{\theta,c},L_{\theta',c'})
\asymp |\theta-\theta'|+|c-c'|.
\]
The comparison constants depend only on these parameter ranges.
A $\rho$-tube is a neighborhood of width
$C\rho$ of a line, with $C$ fixed. Tube separation means separation
of their line parameters. A family of $\rho$-tubes is
parameter-separated if the defining lines of distinct members have
line-space distance at least $c\rho$, where $c>0$ is fixed.
These conventions make the covering numbers of line families precise. Dyadic slope--intercept tubes
used in the Ren--Wang estimate are introduced in
Section~\ref{sec:finite-incidence}. The overlap condition for the
tubes and balls in the Fu--Gan--Ren estimate is given in
Section~\ref{sec:rectangular-sampling}.

Several letters are used with different meanings in different parts
of the paper.
We list them here, with the places where they are defined. The terminal resolution is $\delta=2^{-n}$, except in
Lemma~\ref{rel-lem:RW-linear-C} and its proof, where $\delta<\Delta$
are two auxiliary dyadic scales.
In Part~\ref{part:geometry}, $\eta$ denotes the side length
of the dyadic squares, called source parents, into which
the source is grouped. In Section~\ref{sec:relation-upper} it also
denotes a small exponent loss. The dyadic scales $\ell_j$ are used
to classify the angular separations of source vertices
about a pin. They determine the candidate widths
$\rho_j=\frac{\delta}{\ell_j}$ of the tubes about perpendicular
bisectors. We write $\rho$ for the selected width
(Sections~\ref{sec:circle-rows} and~\ref{sec:finite-incidence}).
In the proof of Lemma~\ref{lem:transfer}, $\ell_j$ instead denotes a
precision. The pins are selected at resolution $\delta$ and later thinned to
resolution $\rho$. The letters $\beta,\beta_0$ denote deficits of
lower envelopes in Sections~\ref{sec:refined},
\ref{sec:unpinned} and~\ref{sec:reciprocal}, while in Part~\ref{part:geometry} the
letter $\beta$ is the exponent loss in the source mass cap
$b=\delta^{\lambda-\beta}$ (Section~\ref{sec:circle-rows}). The letter
$\lambda$ denotes the transfer level $1+\Delta$ of Sections~\ref{sec:plateau} and~\ref{sec:unpinned}. Written without a set
argument, $\Delta$ is this excess or the auxiliary scale above, never a
distance set. The letters $b$, $k$, $B$ and $Q$ also have local
meanings, fixed where they are introduced. We use $\varrho$ for a
distance value, such as $|x-y|$, in the finite-precision complexities
defined below relative to the oracle $(A,x)$. The superscript $A$ denotes the
fixed oracle. Scalar cutoffs such as $A_0$ are local to the
statements defining them. The constant $\eta_0$ is reserved for
the uniform gain in Theorem~\ref{thm:main}.
The plateau parameters
$u_-,u_+,W,\varsigma,\chi,\mathfrak e$ are defined in
Proposition~\ref{prop:pin-plateau}. The subscripts in $H_{f,g}$ and $\widehat H_{f,g}$ identify
the source profile $f$ and the pin profile $g$, respectively
(Sections~\ref{sec:transfer} and~\ref{sec:pinned-obstruction}).

\subsection{Complexity at finite precision}\label{sec:complexity}
Use the fixed universal prefix-free oracle machine $U$ of
Section~\ref{sec:intro-proof-outline}, with the usual optimality
convention. Simulating another prefix-free machine adds at most a
constant depending on that machine. The following are standard
prefix-free complexity conventions~\cite{LiVitanyi}.
Their
finite-precision versions are as in~\cite[\S2]{S,FS}.
Write $\{0,1\}^*$ for the set of finite binary strings.
For finite binary strings
$\sigma,\tau$, the conditional prefix-free Kolmogorov complexity of
$\sigma$ given $\tau$ is
\[
K(\sigma\mid\tau)=\min\{|\pi|:\pi\in\{0,1\}^*,\ U(\pi,\tau)=\sigma\},
\]
the length of a shortest program that outputs $\sigma$ when $\tau$ is
supplied as auxiliary input, and $K(\sigma)$ is the same minimum with
the empty string as auxiliary input. For an oracle $A\subseteq\mathbb N$, the
quantities $K^A(\sigma)$ and $K^A(\sigma\mid\tau)$ are defined in the
same way for a machine that may query membership in $A$. Omitting the
oracle superscript means that the oracle is empty. Rational
points $q\in\mathbb Q^m$ are identified with their binary encodings
under a fixed computable bijection, so that $K^A(q)$ and
$K^A(p\mid q)$ are defined. Thus the finite-precision complexity defined in the introduction is
$K_r^A(z)=\min\{K^A(q):q\in\mathbb Q^m\cap B(z,2^{-r})\}$.
The convention $K_0^A(z)=0$ is used only at the initial profile scale.
The join $A\oplus B$ of two oracles encodes $A$ in the even
positions and $B$ in the odd positions. A real point $x$ used as
an oracle is encoded by the binary expansions of its coordinates,
with terminating expansions chosen at dyadic rationals and with
signs and integer parts included. Thus $K^{A,x}$ uses the join of
$A$ with this encoding and supplies arbitrarily precise rational
approximations to $x$. An oracle extends $A$ if it computes $A$.
Its set need not contain $A$. For $z\in\mathbb R^m$, $w\in\mathbb R^n$,
define finite conditioning by
\[
K_{r,s}^A(z\mid w)
=\max_{q\in\mathbb Q^n\cap B(w,2^{-s})}
\min_{p\in\mathbb Q^m\cap B(z,2^{-r})}K^A(p\mid q).
\]
Equal-precision conditioning is abbreviated by
$K_r^A(z\mid w):=K_{r,r}^A(z\mid w)$. For conditioning points
$w_i\in\mathbb R^{n_i}$, simultaneous finite conditioning means
\[
K_r^A(z\mid w_1,\ldots,w_k)
=\max_{\substack{q_i\in\mathbb Q^{n_i}\cap B(w_i,2^{-r})\\1\le i\le k}}
\min_{p\in\mathbb Q^m\cap B(z,2^{-r})}
K^A(p\mid\langle q_1,\ldots,q_k\rangle),
\]
where $\langle q_1,\ldots,q_k\rangle$ is a fixed computable encoding
of the ordered tuple as a finite string. The maximum makes this
complexity a bound for every admissible collection of conditioning
approximations. In contrast, real points in the oracle superscript
supply all their binary digits. The quantity $K_{r,s}^A(z\mid z)$ is
the number of bits needed to pass from precision $s$ to precision $r$
at the point $z$. A dyadic square $Q$ is identified with the finite string that
encodes its level and position.
Thus $K^A(Q)$ is defined, and
$K_n^A(z\mid Q)$ denotes the minimum of $K^A(p\mid Q)$ over rational
$p$ within $2^{-n}$ of $z$. Joins of several oracles, as in
$K^{A,G,x}$, are formed in the same way as $K^{A,x}$. We use the standard
finite-precision chain rules, symmetry of information, and oracle
monotonicity (adding an oracle can only decrease a complexity, up to a
fixed simulation cost). The notation $K_r^A(z,w)$ treats the pair as
a point in the product Euclidean space. In particular, for fixed points
and $1\le s\le r$, the chain rules used below include
\begin{align*}
K_r^A(z,w)&=K_r^A(z)+K_r^A(w\mid z)+O(\log r),\\
K_{r,s}^A(z\mid z)&=K_r^A(z)-K_s^A(z)+O(\log r).
\end{align*}
Interchanging $z,w$ in the first identity is the
symmetry-of-information relation. The errors have the same order
with any fixed additional oracles.
See~\cite[\S2]{S,FS}. Finite conditioning on an
approximation to a point is a weaker thing than full access to that point
as an oracle, and we will need both.

The effective Hausdorff dimension of $z$ relative to $A$ is
\[
\dim^A z=\liminf_{r\to\infty}\frac{K_r^A(z)}{r}.
\]
Its upper effective dimension, or effective packing dimension, is
$\operatorname{Dim}^A z=\limsup_{r\to\infty}\frac{K_r^A(z)}{r}$
\cite[\S2]{FS}.
The same definitions apply to joined oracles. In particular,
\[
\dim^{A,x}z=\liminf_{r\to\infty}\frac{K_r^{A,x}(z)}{r},
\qquad
\operatorname{Dim}^{A,x}z=\limsup_{r\to\infty}\frac{K_r^{A,x}(z)}{r}.
\]
Here the machine has access to both $A$ and the full binary encoding
of $x$. The same convention applies to any finite list of additional
oracles.
A quantity is computable relative to $A$ if an oracle algorithm approximates it to every prescribed rational
accuracy. It is lower semicomputable if such an algorithm produces
an increasing rational sequence converging to it. A collection is
computably enumerable relative to $A$ if an oracle algorithm lists
its elements, with no requirement to decide nonmembership.
The point-to-set principle~\cite{LL} states that, for every $F\subseteq\mathbb R^m$,
\[
\dim_{\mathrm H} F=\min_{A\subseteq\mathbb N}\sup_{z\in F}\dim^A z.
\]
A compact set $F\subseteq\mathbb R^m$ is effectively compact relative
to $A$ if the collection of finite families of rational open balls
whose union covers $F$ is computably enumerable relative to $A$.
We also use the effectively compact form of the principle. If a
compact set $F$ is effectively compact relative to $A$, then
$\dim_{\mathrm H} F=\sup_{z\in F}\dim^A z$.
See~\cite[Theorem~5]{S}. If $F$ is
effectively compact relative to $A$, then $\Delta_xF$ is effectively
compact relative to $(A,x)$~\cite[Proposition~7]{S}. Compact sets and
their measures are always encoded into the oracle before the
points to which information estimates are applied are selected.

For a unit vector $v$, write $p_vz=z\cdot v$. Effective projection
estimates are developed in~\cite{LS,S,FS}. A nonintegral subscript
of $K$ is rounded down to the nearest integer. A bounded change in
precision changes complexity by at most $O(\log(r+2))$ at terminal
precision $r$. The convention at precision zero is as above.

\subsection{The pointwise hypotheses}\label{sec:pointwise-hypotheses}
For fixed distinct points $x,y$, put $\mathbf e=\frac{y-x}{|y-x|}$. The pointwise
hypotheses are the conditions of~\cite[\S4]{FS} for both choices of pin.
\begin{enumerate}[label=(H\arabic*)]
\item $\dim^A x>1$ and $\dim^A y>1$.
\item $K_r^{A,x}(\mathbf e)\ge r-O(\log r)$ and $K_r^{A,y}(\mathbf e)\ge r-O(\log r)$.
\item $K_r^{A,x}(y)\ge K_r^A(y)-O(\log r)$ and $K_r^{A,y}(x)\ge K_r^A(x)-O(\log r)$.
\item $K_r^A(\mathbf e\mid x)\ge r-o(r)$ and $K_r^A(\mathbf e\mid y)\ge r-o(r)$. These
finite-precision bounds follow from (H2), up to logarithmic precision costs.
\end{enumerate}
The constants may depend on the fixed pair and base oracle.
The directional complexity deficiency at precision $r$ relative
to $(A,x)$ means $r-K_r^{A,x}(\mathbf e)$. Thus (H2) says that this
quantity is at most logarithmic in $r$.
This is the directional
randomness referred to below. Condition (H3) is the full-oracle
independence condition. Supplying the other endpoint saves at
most logarithmically many bits.

To obtain the bound $Y(\dim_{\mathrm H}E)$ in \eqref{eq:target},
where $Y$ and $S_0$ are defined in Theorem~\ref{thm:main}, we prove
the following pointwise estimate. Let $0\le\kappa<\kappa_0:=S_0-1$.
If $x,y$ satisfy (H1)--(H4) and $\dim^Ax,\dim^Ay>1+\kappa$, then
\begin{equation}\label{eq:pointtarget}
\liminf_{r\to\infty}\frac{K_r^A(|x-y|)}{r}\ge Y(1+\kappa).
\end{equation}
Here $K_r^A(|x-y|)$ is computed using only the background oracle $A$,
with neither $x$ nor $y$ supplied as an additional oracle.
Supplying either endpoint can only shorten a description, up to
a fixed simulation cost. Consequently
\begin{equation}\label{eq:pin-conditioning}
\max\{K_r^{A,x}(|x-y|),K_r^{A,y}(|x-y|)\}
\le K_r^A(|x-y|)+O(1).
\end{equation}
Thus $K_r^A(|x-y|)$ bounds both pin-conditioned distance complexities
from above, up to a fixed additive constant. Which of these two conditioned
complexities is larger may depend on the precision. Our unpinned
argument allows this dependence.
It does not require the same choice
of pin to give the stronger lower bound at every scale.

\subsection{Information profiles and the two choices of pin}\label{sec:profiles}

A point in a bounded part of the plane has
$K_r^A(z)\le2r+O(\log r)$. More generally, increasing the precision
from $m$ to $n$ costs at most $2(n-m)+O(\log n)$ bits.
These bounds allow us to retain the information at intermediate
scales, as follows.

Fix $A,x,y$ satisfying the pointwise hypotheses and a sequence
$r_j\to\infty$. The four normalized functions used to define
$f,g,d_x,d_y$ in Section~\ref{sec:intro-proof-outline} have a common
subsequence converging uniformly on $[0,1]$. The initial value is
zero, and logarithmic errors disappear after division by $r_j$.
The endpoint profiles are nondecreasing and $2$-Lipschitz, and
\begin{equation}\label{eq:profileclass}
u\le f(u),\,g(u)\le2u\qquad\text{for } 0\le u\le1.
\end{equation}
The distance profiles are nondecreasing and $1$-Lipschitz.
In
particular $0\le d_x(u),d_y(u)\le u$. When these functions are
considered only through their displayed inequalities, without
requiring a realization by points, we refer to them as scalar
profiles.
The compactness assertion follows from the preceding precision
increment bounds by linear interpolation and the usual uniform
compactness argument for functions with a common Lipschitz bound,
allowing errors tending to zero.

\begin{lemma}[Joint extraction]\label{lem:joint-extraction}
Every sequence of terminal precisions tending to infinity has one
subsequence along which the four profiles $f,g,d_x,d_y$ above
converge uniformly. Along that same subsequence, the prefix and
interval-transfer inequalities hold for both choices of pin.
\end{lemma}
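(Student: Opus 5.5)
The plan is to use Arzelà--Ascoli on the normalized discrete functions, and then to pass the finite-precision inequalities to the limit. For each $j$ I define four functions on $[0,1]$: $F_j(u)=K^A_{\lfloor ur_j\rfloor}(y)/r_j$, $G_j(u)=K^A_{\lfloor ur_j\rfloor}(x)/r_j$, $D^x_j(u)=K^{A,x}_{\lfloor ur_j\rfloor}(|x-y|)/r_j$ and $D^y_j(u)=K^{A,y}_{\lfloor ur_j\rfloor}(|x-y|)/r_j$. I replace each by its piecewise linear interpolation at the nodes $u=k/r_j$. The precision increment bounds recalled before the lemma give two facts. For $m\le n$, $0\le K_n(z)-K_m(z)\le 2(n-m)+O(\log n)$ in the plane, and $\le (n-m)+O(\log n)$ for a real number such as the distance; monotonicity holds up to $O(\log n)$ because a precision-$n$ approximation is a precision-$m$ approximation after a bounded change. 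Consequently the interpolants are uniformly bounded by $2$ and are $2$-Lipschitz, respectively $1$-Lipschitz, up to an additive error $O(\log r_j/r_j)$ that is uniform in $u$. Moreover, each interpolant differs from the step function by at most $O(\log r_j/r_j)$.

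The key step is the compactness extraction. A sequence of functions on $[0,1]$ that is uniformly bounded and equicontinuous up to errors tending to zero is precompact in $\|\cdot\|_\infty$. I would prove this either by the standard diagonal argument over rationals followed by a $3\varepsilon$ argument, or by correcting each function by a vanishing amount to make it exactly Lipschitz and then citing Arzelà--Ascoli. I apply this to the $\mathbb R^4$-valued map $(F_j,G_j,D^x_j,D^y_j)$, so that a single subsequence works for all four profiles at once, which is what the lemma requires. The limits inherit several properties. They are nondecreasing, and $f,g$ are $2$-Lipschitz while $d_x,d_y$ are $1$-Lipschitz. They vanish at $0$ by the convention $K_0=0$. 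The lower bounds $f(u),g(u)\ge u$ follow from (H1): $\dim^Ax,\dim^Ay>1$ gives $K_r\ge r$ for large $r$, and the $\lfloor ur_j\rfloor$ rounding disappears in the limit. The upper bound $2u$ comes from $K_r(z)\le 2r+O(\log r)$. This gives \eqref{eq:profileclass}.

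For the second assertion, the prefix and interval-transfer inequalities are finite-precision statements established in \cite{S,FS} at every precision pair (for pin $x$, and symmetrically for pin $y$ by (H1)--(H4) applied with the roles swapped). They take the form $K^{A,x}_{\lfloor ur\rfloor}(|x-y|)\ge \tfrac12\lfloor ur\rfloor+\tfrac14K^A_{\lfloor ur\rfloor}(y)-o(r)$, and similarly for the transfer bounds involving the competitor costs defining $H_{f,g}$. After dividing by $r_j$, I pass to the limit along the extracted subsequence. Terms that are linear in the profiles converge by uniform convergence. The one delicate point, and the step I expect to be the main obstacle, is the transfer threshold $H_{f,g}(a,b)$. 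It is an infimum over competitor scales of expressions in $f$ and $g$, so I must check that the finite-precision version converges to, or at least dominates, the limit functional. I would handle this by showing that the defining expression is jointly continuous in $(f,g)$ for the sup norm, being an inf/min of Lipschitz-controlled evaluations. Hence it is lower semicontinuous, and the inequality $d_x(b)-d_x(a)\ge[H_{f,g}(a,b)-f(a)]_+$ survives the limit with errors $o(1)$. Because the $o(r)$ errors in (H4) are uniform along the fixed pair, no further subsequence is needed. The same subsequence therefore serves both orientations.
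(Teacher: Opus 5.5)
Your proof is correct. The compactness half is the paper's argument: extracting for the $\mathbb R^4$-valued map is the same as its successive extraction, and the profile properties you list are the ones recorded just before the lemma.

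The second half takes a different route. The paper never passes a finite-precision transfer to the limit. Lemma~\ref{lem:transfer} is formulated and proved directly for the limiting profiles, along any uniformly convergent subsequence. It fixes $c<H_{f,g}(a,b)$ and caps the source with the lowering oracle at $\lfloor cr_j\rfloor$. It then uses the strict margin $H-c$, the fixed ratio $\frac{a}{b}$, and uniform convergence to verify the competitor hypothesis of \cite[Lemma~15]{FS}. Combined with Lemma~\ref{lem:prefix}, which holds at every precision, the second assertion is immediate, and no continuity property of $H$ is needed.

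Your route instead presupposes a finite-precision transfer whose threshold $H_{F_j,G_j}(a,b)$ is computed from the normalized complexity functions, and then closes with continuity of $H$. That last step is fine: $H_{f,g}(a,b)$ changes by at most $\|f-f'\|_\infty+\|g-g'\|_\infty$.

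Two corrections to how you justify that input.
\begin{itemize}
\item The $H$-form of the transfer is not in \cite{S,FS}. Only its ingredients, \cite[Lemmas~14, 15, 30]{FS}, are there; the combination is this paper's Lemma~\ref{lem:transfer}.
\item The uniformity you need is not the $o(r)$ in (H4). It is uniformity over all competitors, that is, over the separations $s$ and the directions they determine. This comes from the direction-independent threshold in Lemma~\ref{lem:projection}, which applies because $\frac{a}{b}$ is fixed. You also need a fixed margin $h>0$ below the finite threshold when invoking \cite[Lemma~15]{FS}, letting $h\searrow0$ only after $j\to\infty$.
\end{itemize}
With these in place the finite version holds at every large precision, and your argument goes through.

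What each route buys: yours gives the inequality at every large precision, independently of the subsequence. The paper's limit formulation is what extends to the full partition transfer with $\widehat H$ (Lemma~\ref{lem:fullpartition}). There, finitely many partition templates, with a bounded number of pieces and a strict margin, must be fixed around the limiting profile before the precision tends to infinity (Lemma~\ref{lem:uniform-partition-witnesses}).
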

\begin{proof}
Apply the compactness argument just given to the four functions
successively, retaining each preceding convergence. Lemma~\ref{lem:prefix} holds at every precision, and
Lemma~\ref{lem:transfer} holds along every subsequence on which the
profiles converge, so both conclusions hold along this one.
\end{proof}

\begin{table}[htbp]
\centering
\begin{tabular}{@{}lllll@{}}
\toprule
Pin & Source point & Source profile & Pin profile & Distance profile\\
\midrule
$x$ & $y$ & $f$ & $g$ & $d_x$\\
$y$ & $x$ & $g$ & $f$ & $d_y$\\
\bottomrule
\end{tabular}
\caption{The profiles retain their definitions when the endpoints exchange roles.}
\label{tab:roles}
\end{table}
Table~\ref{tab:roles} records how the profiles are relabelled when the
endpoints exchange roles.

By \eqref{eq:pin-conditioning}, an upper bound for $K_r^A(|x-y|)$ controls both distance
profiles. For a prescribed pin only $d_x$ is controlled, so the unpinned
argument cannot be used without change. Throughout the paper the points and oracle
are chosen before the precision varies.

In the arguments below, we partition intervals of precision scales
into smaller intervals. Let $F$ denote either a complexity function,
extended to real precision by linear interpolation between successive
precision levels, or a limiting profile. Put $h(u)=F(u)-u$.
Thus
\[
h(b)-h(a)=F(b)-F(a)-(b-a)
\]
measures the growth of $F$ on $[a,b]$ relative to growth at rate one.

Following the color conventions of~\cite[\S3]{S}
and~\cite[\S\S3--4]{FS}, we call an interval $[a,b]$:
\begin{itemize}
\item yellow if $h$ attains its minimum at the left endpoint:
      $h(u)\ge h(a)$ for every $u\in[a,b]$;
\item teal if $h$ attains its minimum at the right endpoint:
      $h(u)\ge h(b)$ for every $u\in[a,b]$;
\item green if both conditions hold:
      $h(a)=h(b)=\min_{u\in[a,b]}h(u)$.
\end{itemize}
These conditions specify where the minimum is attained. They do not
require $h$ to be monotone. In particular, a green interval need not
be an interval on which $h$ is constant.

A labelled yellow--teal partition assigns each interval one of the
labels yellow or teal, according to the conditions above. A green
interval may receive either label. The yellow loss is
\[
\sum_{\substack{[a,b]\text{ in the partition}\\
                [a,b]\text{ labelled yellow}}}
\bigl(h(b)-h(a)\bigr).
\]
Every summand is nonnegative, and green intervals contribute zero.

For the auxiliary piecewise linear constructions, we also call a
piece red when $h$ is increasing on it and blue when
$h$ is decreasing on it. A red--green--blue sequence consists of
pieces with these colors occurring in that order as the precision
increases. In each application, the partition intervals must also
satisfy the length restrictions stated there.

\subsection{From dyadic masses to descriptions}

The following elementary coding fact will be used both in the geometric
reduction and in the optional application in the appendix. A measure
is computable relative to $A$ here when its dyadic masses can be
approximated by oracle programs with any prescribed rational error.

\begin{lemma}[Coding a computable measure]\label{lem:measure-coding}
Let $A\subseteq\mathbb N$ be an oracle, and let $\mu$ be a
compactly supported probability measure on $\mathbb R^2$ whose
dyadic masses are uniformly computable relative to $A$. That is,
a single algorithm with access to $A$, given a dyadic square $Q$
and an integer $m\ge1$, approximates $\mu(Q)$ with error at most
$2^{-m}$.

Then, for $\mu$-almost every $z$,
\begin{equation}\label{eq:measure-coding}
K_n^A(z)=-\log_2\mu(Q_n(z))+O_z(\log(n+2)).
\end{equation}
Moreover, for each fixed oracle $B$ such that $A$ is computable
relative to $B$, one has
\[
K_n^B(z)\ge-\log_2\mu(Q_n(z))-O_{B,z}(\log(n+2))
\]
for $\mu$-almost every $z$. The exceptional null set may depend
on $B$.

In particular, for each fixed $x\in\mathbb R^2$,
\[
K_n^{A,x}(y)\ge K_n^A(y)-O_{x,y}(\log(n+2))
\]
for $\mu$-almost every $y$, with an exceptional null set that may
depend on $x$.
\end{lemma}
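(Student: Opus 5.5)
The proof splits into an upper bound via Shannon--Fano coding of the computable dyadic masses and a lower bound via a Borel--Cantelli counting argument over short programs; the second assertion and the final displayed consequence then follow by repeating the lower bound with a larger oracle.

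\emph{Upper bound.} Since the masses $\mu(Q)$ for $Q\in\mathcal D_{2^{-n}}$ are uniformly computable relative to $A$, the plan is to build an $A$-computable semimeasure on dyadic squares. The weights are
\[
m(Q)\approx 2^{-2}\,\mu(Q)\,(n+1)^{-2}
\]
at level $n$, computed to a few bits of precision. Because the approximations are only to accuracy $2^{-k}$, I would add a small floor so that the weights stay lower semicomputable and summable; this loses only constants. By the Kraft--Chaitin (coding) theorem relative to $A$, every $Q$ at level $n$ then has $K^A(Q)\le-\log_2\mu(Q)+O(\log(n+2))$. A rational point in $Q_n(z)$ is computable from the code of $Q_n(z)$ and lies within $\sqrt2\,2^{-n}$ of $z$. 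Shifting the precision by a bounded amount costs $O(\log n)$, so
\[
K_n^A(z)\le-\log_2\mu(Q_n(z))+O(\log(n+2))
\]
for every $z$ with $\mu(Q_n(z))>0$. This holds for $\mu$-almost every $z$, since squares of zero mass form a null set.

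\emph{Lower bound.} Fix an oracle $B$ computing $A$ (including $B=A$). For each $n$ and constant $c$, let $R_n$ be the set of $z$ with $K_n^B(z)<-\log_2\mu(Q_n(z))-c\log(n+2)$. If $z\in R_n$, there is a rational $p$ with $K^B(p)<k$ and $|p-z|<2^{-n}$, where $k=-\log_2\mu(Q_n(z))-c\log(n+2)$. The argument then groups by $k$. There are fewer than $2^k$ programs of length less than $k$, and each output $p$ meets $O(1)$ squares of level $n$ within distance $2^{-n}$. Those squares whose mass satisfies $-\log_2\mu(Q)\ge k+c\log(n+2)$ each have mass at most $2^{-k}(n+2)^{-c}$. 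Summing over the integer values of $k$, with only $O(n)$ relevant values since masses below $2^{-O(n)}$ can be handled by bucketing, gives
\[
\mu(R_n)\lesssim n\,(n+2)^{-c}.
\]
This is summable for $c>2$. By Borel--Cantelli, $\mu$-almost every $z$ lies in only finitely many $R_n$, which yields the lower bound with a $z$-dependent constant.

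The delicate point, where I would spend the care, is the bookkeeping across values of $k$. The set of $k$ is infinite in principle, but $k$ is bounded by $O(n)$ whenever $\mu(Q)\ge 2^{-3n}$, say. Points lying in squares of smaller mass already form a set of measure at most $2^{2n}\cdot2^{-3n}$, which is also summable over $n$. The identity \eqref{eq:measure-coding} is then the combination of the two bounds with $B=A$. The final claim follows by taking $B=(A,x)$: then $K_n^{A,x}(y)\ge-\log_2\mu(Q_n(y))-O(\log n)=K_n^A(y)-O(\log n)$ for $\mu$-almost every $y$, using the $A$-upper bound already established. Here the null set depends on $x$.
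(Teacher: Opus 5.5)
Your proposal is correct and follows essentially the paper's route. The upper bound is an effective Kraft--Chaitin (coding theorem) argument relative to $A$. The lower bound is a union bound over short descriptions followed by Borel--Cantelli, and the final claim comes from taking $B=(A,x)$, exactly as in the paper.

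The only real difference is in the lower-bound bookkeeping. The paper applies Kraft's inequality $\sum_Q 2^{-K^B(Q)}\le1$ directly to the level-$n$ square complexities. This bounds the exceptional mass by $(n+2)^{-4}$ in one line, so your bucketing over $k$ and your truncation of squares of mass below $2^{-3n}$ are not needed. Also, no floor is needed in your upper bound, since computable masses are automatically lower semicomputable.
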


\begin{proof}
Kraft's inequality states that a prefix-free set of binary strings
$w$ satisfies $\sum_w2^{-|w|}\le1$, and conversely that lengths
$\ell_i$ with $\sum_i2^{-\ell_i}\le1$ are the codeword lengths of a
prefix-free code~\cite[Chapter~5]{CoverThomas}. We use its effective
form, the Kraft--Chaitin theorem~\cite[Chapter~4]{LiVitanyi}, as
follows. Given $n$, for each level-$n$ square $Q$ and integer
$k\ge0$, request a codeword of length $k+1$ for $Q$ once a rational
mass approximation proves $\mu(Q)>2^{-k}$. Each request is made
once. These requests can be enumerated relative to $A$, and
\[
\sum_Q\sum_{k:2^{-k}<\mu(Q)}2^{-(k+1)}
\le\sum_Q\mu(Q)=1.
\]
For every square of positive mass the shortest requested length
is at most $-\log_2\mu(Q)+2$. The theorem therefore produces a
prefix-free code with this bound by one procedure relative to $A$
taking $n$ as input. This avoids having to decide whether a
computable mass equals a dyadic threshold. Given the level $n$ and a codeword assigned to a square $Q$,
the decoding algorithm, with access to $A$, outputs the label
of $Q$. Thus the codeword, together with a description of $n$,
provides a description of the square. Since describing $n$
costs $O(\log(n+2))$ bits, we obtain
\[
K^A(Q_n(z))\le-\log_2\mu(Q_n(z))+O(\log(n+2)).
\]

For the reverse inequality, the shortest programs of distinct
level-$n$ squares relative to $B$ form a prefix-free set, so the
direct form of Kraft's inequality implies $\sum_Q2^{-K^B(Q)}\le1$. Every
square in the set below has $\mu(Q)<(n+2)^{-4}2^{-K^B(Q)}$, hence,
for each fixed $B$,
\[
\mu\{z\in\mathbb R^2:K^B(Q_n(z))< -\log_2\mu(Q_n(z))-4\log_2(n+2)\}
\le(n+2)^{-4}.
\]
The sum of these upper bounds is finite. The Borel--Cantelli
lemma therefore says that almost every $z$ belongs to only
finitely many of the displayed exceptional sets, proving the
reverse bound.
A precision-$n$ approximation to $z$ identifies $Q_n(z)$ among a
bounded number of squares. Conversely, the label of $Q_n(z)$
together with an index among its fixed-depth descendants specifies
a center within $2^{-n}$ of $z$. The index has bounded length.
It
need not be determined by the square alone. Thus point and square
complexities differ by $O(\log(n+2))$. This proves both assertions.
The last follows by taking $B=(A,x)$ and using the first assertion.
The exceptional null set may depend on $B$. In particular, we do
not intersect the full-measure sets over all real pins. We first
fix a pin, apply the almost-everywhere assertion in the source
variable, and then use Fubini for the two fixed measures.
\end{proof}

\part{Information at the two endpoints}\label{part:exchanging-pin-source}

\section{Retaining information in the projection estimates}
\label{sec:borrowed}

We derive the estimates below by retaining intermediate complexity
quantities in the arguments of Stull and Fiedler--Stull.
One involves the source, and the other involves the pin. Keeping
these quantities allows the two endpoint profiles to interact.
We identify the relevant inequalities in the earlier proofs and
check uniformity in the projection direction. Appendix~\ref{app:retained-prefix}
gives the source-increment summation and the modifications needed
at upper effective dimension two. Directional uniformity matters because, in
Section~\ref{sec:transfer}, that direction is determined by a
competitor, a point $z$ at the same distance from the pin as $y$,
which a short description of the distance cannot distinguish from
$y$.

\begin{lemma}[Prefix estimate]\label{lem:prefix}
Under (H1)--(H4), with $x$ as the pin,
\begin{equation}\label{eq:prefix}
K_r^{A,x}(|x-y|)\ge\frac{r}{2}+\frac{1}{4}K_r^A(y)-o(r).
\end{equation}
The same bound holds with $x$ and $y$ exchanged.
\end{lemma}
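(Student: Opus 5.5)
This estimate is Stull's pinned bound $\frac12+\frac{\dim}{4}$ in pointwise form, keeping $K_r^A(y)$ itself instead of its $\liminf$. I would therefore not prove it from scratch. The plan is to rerun the argument of \cite[\S3--4]{S} and \cite[\S4]{FS} at one terminal precision $r$, tracking the source complexity as a function of precision rather than through $\dim^A y$.

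The first step is the standard reduction of the distance to a projection. Given $x$ as an oracle and $|x-y|$ at precision $r$, the point $y$ lies on a circle about $x$. Locally that circle is approximated by the line orthogonal to $\mathbf e$. Hence the distance complexity at precision $r$ dominates $K^{A,x}$ of the projection $p_{\mathbf e}(y-x)$ at the corresponding precisions, up to $O(\log r)$. The same identification is made at each intermediate precision $s\le r$: knowing $\mathbf e$ to precision about $r-s$ lets the circle be straightened at scale $2^{-s}$.

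The second step is the projection theorem of Lutz--Stull in its partitioned form. One partitions $[1,r]$ into intervals $[a_i,a_{i+1}]$ on which the function $h(s)=K_s^A(y)-s$ is yellow or teal. On a teal interval the projection captures all of the increment $K_{a_{i+1},a_i}(y\mid y)$. On a yellow interval it captures at least the length $a_{i+1}-a_i$ minus the yellow loss. Two ingredients make this work. Condition (H2) and its form (H4) guarantee that $\mathbf e$ is random relative to $(A,x)$, which the Lutz--Stull lemma needs, and this randomness must hold uniformly in the direction. Condition (H3) lets us replace $K^{A,x}$ of $y$ by $K^A$ of $y$.

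Summing over the partition gives
\[
K_r^{A,x}(|x-y|)\ge\sum_{\text{teal}}\bigl(K_{a_{i+1}}(y)-K_{a_i}(y)\bigr)+\sum_{\text{yellow}}(a_{i+1}-a_i)-o(r).
\]
The remaining task is a purely scalar optimization. The $2$-Lipschitz bound and $K_s(y)\ge s$ give the yellow intervals length restrictions, with each yellow interval at most doubling its left endpoint, as in Stull's construction. Given these restrictions, the right-hand side is at least $\frac r2+\frac14K_r^A(y)$ for every admissible profile.

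I would verify this by the extremal argument of \cite[Lemma on the good partition]{S}, with $K_r^A(y)$ kept in place of its $\liminf$. The worst case is a profile that alternates between growing at rate $2$ and rate $1$ in geometric proportions. For that profile the bound is checked directly, and Appendix~\ref{app:retained-prefix} handles the endpoint case where the upper effective dimension is two. Exchanging $x$ and $y$ requires nothing new, because (H1)--(H4) are symmetric in the two points.

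The main obstacle I expect is the bookkeeping. One must confirm that Stull's summation yields the terminal value $K_r^A(y)$ at the specific precision $r$, and not merely $r\dim^A y$. This matters especially when $K_r^A(y)/r$ exceeds the $\liminf$, since the partition must then be chosen relative to this $r$. I would handle it by constructing the partition greedily backwards from $r$, so that the final interval always ends exactly at $K_r^A(y)$.
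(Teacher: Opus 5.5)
There is a genuine gap: the step you call a purely scalar optimization lacks its decisive input.

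\textbf{What your sum already gives.} Your summed expression $\sum_{\text{teal}}\bigl(F(b)-F(a)\bigr)+\sum_{\text{yellow}}(b-a)$, with $F(m)=K_m^A(y)$, is the right object. Since yellow source increments are at most twice their lengths, it is at least $K_r^A(y)-L_r-o(r)$, where $L_r$ is the total yellow length. This is one of the paper's two inequalities. The terminal value at the given $r$ appears automatically by telescoping the source increments and applying (H3), so the bookkeeping you single out is not where the difficulty lies.

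\textbf{The missing input.} From constraints on the yellow pieces alone one only gets $\max\{L_r,F(r)-L_r\}\ge\frac{1}{2}F(r)$. This is below $\frac{r}{2}+\frac{1}{4}F(r)$ whenever $F(r)<2r$. What you need is a lower bound on the source growth rate along the teal blocks:
$F(b)-F(a)\ge c_d(b-a)-o(b)$, with $c_d=\min\{1,\frac{d(3-d)}{5-2d}\}\ge\frac{2}{3}$ and $1<d<\min\{\dim^A x,\dim^A y,\frac{5}{4}\}$.
This rate comes from the specific Fiedler--Stull choice of $a$: the larger of the last roots of $F(t)=F(b)-\frac{d}{2}b+\frac{d-1}{2}t$ and $F(t')=F(b)-(b-t')$. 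With $F(a)\le 2a$ and $F(b)\ge db$ the root identity gives $\frac{a}{b}\ge\frac{d}{5-d}-o(1)$, and hence the rate. It yields $K_r^{A,x}(|x-y|)\ge L_r+c_d(r-L_r)-o(r)\ge\frac{2r+L_r}{3}-o(r)$. Averaging this with $K_r^A(y)-L_r-o(r)$, with weights $\frac{3}{4}$ and $\frac{1}{4}$, cancels the unknown $L_r$. That cancellation is the whole proof, and your proposal does not contain it.

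\textbf{Why the teal transfer needs the same choice.} The root construction is also what justifies your claim that the distance captures the whole source increment on a teal block. For distances, unlike orthogonal projections, a circle competitor $z$ at separation $2^{-s}$ is only guaranteed to cost $F(s)+\min\{b-s,\frac{d}{2}b-\frac{d-1}{2}s\}-o(b)$. The second branch is the information about the pin carried by its projection onto the normal of the perpendicular bisector of $[y,z]$. Full transfer via \cite[Lemma~15]{FS} therefore needs $F(s)\ge F(b)-\frac{d}{2}b+\frac{d-1}{2}s$ on $[a,b]$, in addition to the teal inequality $F(s)\ge F(b)-(b-s)$. The teal property of $F(u)-u$ does not imply this for $s<\frac{2-d}{3-d}b$. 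The root construction guarantees both, and its ratio bound is what lets Lemma~\ref{lem:projection} apply at $D=2$.

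\textbf{The extremal profile.} Your proposed worst case is off. A profile alternating rates $2$ and $1$ has $F(u)-u$ nondecreasing, so it is entirely yellow and gives $K_r^{A,x}(|x-y|)\ge r-o(r)$. The bound is essentially attained by $F(u)=\min\{2u,(1+\epsilon)r\}$, that is, rate $2$ followed by rate $0$. There a yellow block of length about $\frac{r}{4}$ is followed by a teal block of rate about $\frac{2}{3}$.
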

\begin{proof}
The argument of \cite[\S4.3]{FS} partitions the range of precisions
$[1,r]$ into blocks of two kinds, called yellow and teal. On a
yellow block the complexity of the distance grows at least as fast
as the precision, and on a teal block it grows at least as fast as the
complexity of the source. Let $L_r$ be the total length of the
yellow blocks. Proposition~\ref{prop:appendix-prefix}, proved in
Appendix~\ref{app:retained-prefix}, establishes, for this $L_r$, the two
retained inequalities
\[
K_r^{A,x}(|x-y|)\ge\frac{2r+L_r}{3}-o(r), \qquad 
K_r^{A,x}(|x-y|)\ge K_r^A(y)-L_r-o(r).
\]
Their weighted average with coefficients $\frac{3}{4}$ and
$\frac{1}{4}$ is the asserted bound. The appendix constructs the
partition, proves the transfer on its teal blocks at $D=2$, and
sums the errors before removing the pin from the source complexity.
\end{proof}

The point of retaining the source complexity is visible in the
weighted average because the unknown yellow length cancels exactly.
\[
\frac{3}{4}\frac{2r+L}{3}+\frac{1}{4}(K_r^A(y)-L)
=\frac{r}{2}+\frac{K_r^A(y)}{4}.
\]

The second borrowed estimate is Stull's projection theorem, read so as to
keep the terminal complexity of the projected point.
Here $K_R^A(x\mid p_vx,v)$ uses precision-$R$ rational approximations
to both $p_vx=x\cdot v$ and $v$ as finite auxiliary input, taking the
worst case over these approximations as in Section~\ref{sec:complexity}.
The background oracle is $A$.

\begin{lemma}[Projection estimate retaining terminal complexity]\label{lem:projection}
Let $x$ satisfy $\dim^A x>1$. Fix $\tau>0$. Suppose $\tau R\le t\le R$ and $v\in S^1$ satisfies
\[
K_s^{A,x}(v)\ge s-C\log R \quad \text{ for } 1\le s\le t
\]
with a fixed constant $C$. This is the directional-randomness input in
\cite[Theorem~13]{S}, in its local uniform form. Then
\begin{equation}\label{eq:projection}
K_R^A(x\mid p_vx,v)
\le\max\left\{K_R^A(x)-R,\frac{K_R^A(x)-t}{2}\right\}+o(R).
\end{equation}
The difference $K_R^A(x)-K_R^A(x\mid p_vx,v)$ is the amount of
information about $x$ that its projection determines.
We call it the
retained projection information. Equivalently, \eqref{eq:projection}
says that it is at least
\begin{equation}\label{eq:gain}
\min\left\{R,\frac{K_R^A(x)+t}{2}\right\}-o(R).
\end{equation}
More precisely, for each $\xi>0$ the error is at most $\xi R$ once $R\ge R_0(A,x,\tau,C,\xi)$, where $C$ is the common directional-deficiency constant. This threshold is independent of $v$ and $t$.
\end{lemma}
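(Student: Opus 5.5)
The plan is to reread the proof of \cite[Theorem~13]{S}, keeping $K_R^A(x)$ as a free parameter instead of replacing it by a dimension bound, and to track which constants depend on $v$. The proof has two steps. First, an oracle-lowering step: by the standard lemma of Lutz--Stull \cite{LS} there is an oracle $D$ such that the profile $s\mapsto K_s^{A,D}(x)$ is lowered to a prescribed target. Second, an enumeration step. Given $p_vx$, $v$ and a bound on $K_s^{A,D}(x)$, a program enumerates rational points $w$ with $p_vw$ close to $p_vx$ and low complexity. Any such $w$ far from $x$ at scale $2^{-s}$ determines $v$ to precision roughly $s-\log(1/|x-w|)$. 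The directional hypothesis $K_s^{A,x}(v)\ge s-C\log R$ then forces $w$ to have high complexity. So low-complexity candidates all lie near $x$, and $x$ is recovered from the projection at a cost equal to the lowered complexity minus what the projection supplies.

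I would carry this out in three steps.

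\emph{Step 1 (partition).} Partition $[1,R]$ into intervals on which the profile $s\mapsto K_s^A(x)$ grows either at rate at most one (the ``yellow'' case, where $x$ behaves like a point of dimension at most one) or at rate at least one (the ``teal'' case). This is as in \cite[\S3]{S}, with the color conventions of Section~\ref{sec:profiles}. Only the range $s\le t$ has a usable directional bound; above $t$ we simply pay the full increment of $x$.

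\emph{Step 2 (interval bounds).}
\begin{itemize}
\item On a yellow interval $[a,b]\subseteq[1,t]$, the enumeration with the lowered oracle gives
\[
K_{b,a}^A(x\mid p_vx,v,x)\le K_{b,a}^A(x\mid x)-(b-a)+O(\log R).
\]
\item On a teal interval, lowering to slope one gives the halving
\[
K_{b,a}^A(x\mid p_vx,v,x)\le\tfrac12\,K_{b,a}^A(x\mid x)-\tfrac12(b-a)+O(\log R).
\]
\end{itemize}

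\emph{Step 3 (sum and optimize).} Sum the interval bounds with the chain rule of Section~\ref{sec:complexity}, and add the cost $K_{R,t}^A(x\mid x)$ of the range above $t$. The total has the form $K_R^A(x)-\min\{R,\ \cdot\,\}$. Optimizing over the possible yellow lengths, in the same way that the unknown yellow length cancels in Lemma~\ref{lem:prefix}, yields
\[
\max\left\{K_R^A(x)-R,\ \frac{K_R^A(x)-t}{2}\right\}+o(R).
\]
This is \eqref{eq:projection}, and \eqref{eq:gain} is its rewriting.

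I expect the main obstacle to be the uniformity statement: the threshold $R_0(A,x,\tau,C,\xi)$ must not depend on $v$ or $t$. In \cite{S} the errors come from three sources:
\begin{itemize}
\item the oracle-lowering construction, which depends only on $x$, $A$ and the interval lengths;
\item the number of intervals, which is at most $O(1/\xi)$ once each has length at least $\xi R$, with $\tau R\le t$ ensuring this is compatible with the lower range;
\item the directional deficiency, which enters only through the constant $C$.
\end{itemize}
I would check that the enumeration lemma's error is $O(C\log R)$ per interval with an absolute implied constant. I would also check that the $o(R)$ losses from the partition (sublinear interval errors such as $\varepsilon s$ in $K_s^A(x)\ge\dim^A x\cdot s-\varepsilon s$) are controlled by $x$ alone. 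With $O(1/\xi)$ intervals the total error is then at most $\xi R$ for $R\ge R_0$, independently of $v$ and $t$.
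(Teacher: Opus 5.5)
There is a genuine gap: Steps~1 and~2 rest on claims that are false, and the scheme cannot yield \eqref{eq:projection}.

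First, the directional hypothesis is not confined to precisions in $[1,t]$. On a piece $[a,b]$, a competitor $w$ with $|w-x|\lesssim2^{-a}$ and $p_vw$ close to $p_vx$ at precision $b$ determines $v$ only to precision about $b-a$. Stull's local lemmas therefore require each piece to have \emph{length} at most $t$, wherever it lies in $[1,R]$. This is why his partition, like the loss $\mathcal L_t$ of Section~\ref{sec:pinned-obstruction}, covers all of $[0,R]$ by pieces of length at most $t$. If you pay the full increment $K_{R,t}^A(x\mid x)$ above $t$, you discard exactly the information that produces the gain. For instance, if $K_s^A(x)\approx2s$, the range above $t$ alone costs about $2(R-t)$. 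The right side of \eqref{eq:projection} is $\max\{R,R-\frac{t}{2}\}=R$, so your bound fails as soon as $t<\frac{R}{2}$.

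Second, the local bounds in Step~2 are misassigned. On pieces of length at most $t$ the correct costs are as follows. Where the complexity grows at rate at most one toward the right endpoint (the paper's teal, your yellow), the cost is $o(b)$; your bound $K_{b,a}^A(x\mid x)-(b-a)$ can be negative there. Where it grows at rate at least one from the left endpoint (the paper's yellow), the cost is $K_{b,a}^A(x\mid x)-(b-a)$, with no local halving. Your halved bound is in fact false. Take $t=R$ and $K_s^A(x)\approx2s$. Every piece is then of your teal type, and Step~2 would give $K_R^A(x\mid p_vx,v)\lesssim\frac{R}{2}$. But the directional hypothesis and symmetry of information give $K_R^A(x\mid v)\ge K_R^A(x)-O(\log R)$, and $p_vx$ supplies at most $R+O(\log R)$ bits, so this cost is at least $R-O(\log R)$.

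The factor $\frac12$ is global, not local. Write $h(s)=K_s^A(x)-s$; the total cost is the yellow loss $U$ of a partition into pieces of length at most $t$. If $h$ has a red--green--blue sequence, one keeps a green interval, or two adjacent ones, of total length at least $t$. Let $V$ be the total teal decrease. Then $U-V=h(R)-h(0)$, and the $1$-Lipschitz bound gives $U+V\le R-t$, so $U\le\frac{K_R^A(x)-t}{2}$. Without such a sequence, $U\le K_R^A(x)-R$. These are equations~(16) and~(19) in Stull's proof, which the paper quotes directly; compare Lemma~\ref{lem:partition-endpoints} with $h(0)=0$. No cancellation of an unknown yellow length, as in Lemma~\ref{lem:prefix}, is involved.

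Your uniformity argument also needs two corrections. The piece count comes from Stull's greedy construction, in which every two successive pieces advance by more than $t-O(\log R)\ge\frac{\tau R}{2}$, giving $O(1/\tau)$ pieces. It does not come from forcing pieces to have length at least $\xi R$, which a yellow--teal partition cannot in general achieve. In addition, pieces with right endpoint below $\sqrt R$ must be discarded, so that the deficiency $C\log R$ is at most $2C\log b$ on the remaining pieces.
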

\begin{proof}
We read the proof of \cite[Theorem~13]{S} before its last simplification. In the case with no red--green--blue sequence, equation~(16) and the line immediately following it give
\[
K_R^A(x\mid p_vx,v)\le K_R^A(x)-R+o(R).
\]
In the other case, the printed equation~(19) yields, for a nonnegative partition parameter $B$,
\[
K_R^A(x\mid p_vx,v)
\le \min\{K_R^A(x)-B-t,B\}+o(R).
\]
Since $\min\{c-B,B\}\le \frac{c}{2}$, this is at most $\frac{K_R^A(x)-t}{2}+o(R)$. Combining the two cases proves the assertion. To justify the uniform threshold, the remark after \cite[Lemma~14]{S} makes its dependence on the tolerance, deficiency constant, and $|x|$ explicit, with no direction dependence. The local lemmas use the same uniform geometric and coding estimates. The partition depends on the interpolated complexity of $x$,
the terminal precision $R$, and the length comparison $t$.
It does
not depend on $v$. Write $T=t-O(\log R)$ for the length comparison after rounding,
chosen so that $T\ge\frac{t}{2}$ for all sufficiently large $R$.
In the greedy construction of \cite[Lemma~18]{S}, every two
successive pieces advance by more than $T$, except at the terminal
endpoint. Each complementary range therefore uses at most
$2\lceil\frac{R}{T}\rceil+2$ pieces. The proof of its Theorem~13
keeps one green interval, or two adjacent green intervals, and
applies this greedy construction on the two complementary ranges.
Including the green pieces leaves at most
\[
4\lceil\frac{R}{T}\rceil+6
\le8\lceil\frac{1}{\tau}\rceil+6.
\]
The case without a red--green--blue sequence has no larger count.
Thus a common upper bound is
\[
m_\tau=20\lceil\frac{1}{\tau}\rceil+20.
\]
Use the fixed local tolerance $\frac{\xi}{4m_\tau}$ in each estimate.
Discard partition pieces whose right endpoints are below
$h_R=\lceil\sqrt R\rceil$. Their total length is at most $h_R$ and
their full complexity costs $o(R)$. For every remaining piece with
right endpoint $b\ge h_R$, $C\log R\le2C\log b$. Thus the local
lemmas apply with the same deficiency constant $2C$ and the same
fixed tolerance, independently of $v,t$. Summing over the bounded
number of pieces costs at most $\frac{\xi R}{4}$ plus
$O_{C,m_\tau}(\log R)$. The remaining initial cutoff from
$\dim^A x>1$ depends only on $A,x$. This provides one
$R_0(A,x,\tau,C,\xi)$ for all the permitted $v,t$.
\end{proof}

There is one endpoint issue to address before using these estimates.
In~\cite{FS}, the parameter $D$ bounds the upper effective dimensions
of the two points. Some partition counts there contain
$\frac{1}{2-D}$. We allow $D=2$, so those counts do not apply.
What we actually need is a positive lower bound for the ratio of two
precisions. The next lemma supplies it, both here and in
Section~\ref{sec:asymmetric}.

\begin{lemma}[Bounded precision ratio at the endpoint $D=2$]\label{lem:endpoint-ratio}
Let $1<d<\frac{3}{2}$ and let $[a,b]$ be a teal interval of the partition of
\cite[\S4.3]{FS} with lower parameter $d$ and upper parameter $D=2$. Then
for $b$ sufficiently large,
\[
 \frac{a}{b}\ge\frac{d}{5-d}-o(1)>0.
\]
For a competing point $z\ne y$ with $|x-z|=|x-y|$, put
$t=-\log_2|y-z|$. At terminal precision $b$, the non-tangent range
means $t<\frac{b}{2}$, and the tangent range means $t\ge \frac{b}{2}$.
Consequently, in every non-tangent application of the collision
estimate \cite[Lemma~30]{FS} in the proof of \cite[Lemma~31]{FS}
on such an interval, where $t<\frac{b}{2}$,
the directional precision $t$
satisfies $t\ge a-O(1)$ and the projection precision is $R=b-t$, so that
$\frac{t}{R}$ is bounded below by a positive constant depending only on $d$.
The
projection partition of Lemma~\ref{lem:projection} has uniformly bounded
size.
Lemma~\ref{lem:projection} applies with the information gain
$\min\{R,\frac{dR+t}{2}\}-o(R)$. No division by $2-D$ occurs.
\end{lemma}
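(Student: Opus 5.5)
The plan is to read off the endpoint of a teal interval from the defining inequalities of the Fiedler--Stull partition, with the upper dimension bound $D=2$ replaced by the trivial Lipschitz bound on complexity.

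\textbf{Step 1: the ratio bound.} Let $[a,b]$ be teal for the function $h(u)=K_u^{A,x}(y)-u$ used in \cite[\S4.3]{FS}, and write $K(u)=K_u^{A,x}(y)$. The lower parameter gives, for $b$ large,
\[
K(b)\ge db-o(b).
\]
The upper bound comes from the precision increment estimate of Section~\ref{sec:profiles}, which is the only use of $D=2$:
\[
K(b)\le K(a)+2(b-a)+O(\log b).
\]
The teal intervals in \cite[\S4.3]{FS} are built so that their left endpoint is a point where the distance-transfer inequality is saturated. That is, the teal interval begins where $K(a)$ can be controlled from above in terms of $a$ and the teal length. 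I expect this to take the form
\[
K(a)\le\frac{a+b}{2}+o(b)
\qquad\text{or}\qquad
K(a)\le a+\frac{b-a}{2}\,.
\]
I would extract the exact form from the construction (their Lemma~31 setup) and then combine the three displays. With the constants matched, eliminating $K(a)$ and $K(b)$ should give
\[
db\le 2b-a+\frac{3(b-a)}{\ldots},
\]
and the paper's target $\frac{a}{b}\ge\frac{d}{5-d}$ corresponds to the linear relation
\[
(5-d)a\ge db,
\]
which I will reverse-engineer to fix which saturation inequality to use. \emph{This identification is the main obstacle}: the precise saturation condition defining the left endpoint in \cite{FS} has to be recovered so that the combination is exactly $(5-d)a\ge db-o(b)$. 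Positivity of $\frac{d}{5-d}$ is then immediate from $d>1$.

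\textbf{Step 2: non-tangent collisions.} In the collision estimate \cite[Lemma~30]{FS}, the competitor $z$ with $|y-z|=2^{-t}$ determines $p_{\mathbf e'}x$ to precision $R=b-t$, and only in the non-tangent range $t<\frac{b}{2}$. Here I need $t\ge a-O(1)$. The reason is that the teal case of Lemma~31 only considers competitors agreeing with $y$ to precision $a$: coarser competitors are excluded, since $y$ is already specified to precision $a$ within the teal block. Then
\[
\frac{t}{R}\ge\frac{a}{b}-o(1)\ge\frac{d}{5-d}-o(1),
\]
so with $\tau=\frac{d}{5-d}/2$ the hypothesis $\tau R\le t$ holds.

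\textbf{Step 3: applying Lemma~\ref{lem:projection}.} The partition count $m_\tau$ depends only on $\tau$, hence only on $d$, so no $\frac{1}{2-D}$ factor enters. Directional randomness follows from (H2) and (H4). Using $K_R\ge dR$ from $\dim^Ax>d$, the gain \eqref{eq:gain} gives
\[
\min\Bigl\{R,\frac{dR+t}{2}\Bigr\}-o(R),
\]
uniformly in the direction by the uniform threshold $R_0$.
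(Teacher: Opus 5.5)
Your Steps~2 and~3 are fine and match the paper. Competitors satisfy $t\ge a-O(1)$ because the transfer lemma conditions on $y$ at precision $a$. Since $R=b-t\le b$, this gives $\frac tR\ge\frac tb\ge\frac ab-o(1)$. With $K_R^A(x)\ge dR-o(R)$, the gain \eqref{eq:gain} becomes $\min\{R,\frac{dR+t}{2}\}-o(R)$, and the piece count depends only on $\tau$, hence only on $d$.

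The gap is in Step~1, which is the actual content of the lemma. It is not just a matter of recovering constants from \cite{FS}: the inequalities you propose to combine point the wrong way. You use $D=2$ through the increment bound $K(b)\le K(a)+2(b-a)$. Together with $K(b)\ge db-o(b)$ this gives a \emph{lower} bound $K(a)\ge(d-2)b+2a-o(b)$, with slope $2$ in $a$. You then look for an \emph{upper} bound on $K(a)$ of the form $\frac{a+b}{2}$ (your two guesses are the same expression). Any combination of this shape bounds $a$ from above. Your guess, for instance, yields $a\le\frac{5-2d}{3}b$. So no ``saturation inequality'' of the kind you describe can produce $(5-d)a\ge db$.

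The missing ingredient is the defining equation of the teal left endpoint, equations (10)--(11) of \cite{FS} at $D=2$. One takes $a=\max\{t,t'\}$, where $F(m)=K_m^A(y)$, $t<b$ is the largest root of $F(t)=F(b)-\frac d2b+\frac{d-1}{2}t$, and $t'$ is the largest root of $F(t')=F(b)-(b-t')$. Note that $\frac d2 b-\frac{d-1}{2}s=\frac{d(b-s)+s}{2}$ is exactly the projection gain $\frac{dR+t}{2}$ at $R=b-s$, $t=s$. So the endpoint is where coarse source information plus this gain falls to $F(b)$.

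At $s=b$ the line lies $\frac b2$ below $F$, so $F$ stays above it on $(t,b]$, and in particular at $a\ge t$. This gives a \emph{lower} bound
$F(a)\ge F(b)-\frac d2b+\frac{d-1}{2}a\ge\frac d2b+\frac{d-1}{2}a-o(b)$,
using $F(b)\ge db-o(b)$. The endpoint $D=2$ enters through the absolute planar bound $F(a)\le2a+O(\log b)$, not through the increment bound. Combining the two gives $\frac{5-d}{2}a\ge\frac d2 b-o(b)$. This is the first estimate of \cite[Lemma~26]{FS} that the paper cites, and it covers the case $a=t'$ as well, since still $a\ge t$.
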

\begin{proof}
The lower bound on $\frac{a}{b}$ is the first estimate of \cite[Lemma~26]{FS},
which remains valid throughout the stated range. The second estimate
of that lemma is not needed here. The
bound $S<\frac{2}{d(2-D)}$ on the number of red--green--blue sequences
in the proof of \cite[Theorem~16]{FS} is not used at $D=2$.
Instead the bounded ratio $\frac{t}{R}$ bounds the size of the
projection partition, and the retained projection estimate
Lemma~\ref{lem:projection} supplies exactly the input required in the
proof of \cite[Lemma~31]{FS}. Thus that proof applies with uniform errors
in the actual applications. Appendix~\ref{app:retained-prefix}
writes out the selected teal transfer and lists each replacement
of a bound involving $2-D$.
\end{proof}

\section{The interval transfer lemma}\label{sec:transfer}

Suppose the distance from $y$ to $x$ is known, as is a coarse
approximation to $y$. How many further bits are needed to locate
$y$ accurately? A competitor $z$ must lie near that coarse location
and on the same circle about $x$. If every such competitor requires
a long description, a short description of the distance cannot hide
much source information. This is the transfer principle used by
Stull and Fiedler--Stull.

We retain the dependence on $|z-y|$. This separation determines the
precision at which the competitor reveals a projection of the pin,
and hence the part of the pin profile that enters the estimate.

Suppose $|x-z|=|x-y|$ and $s=-\log_2|y-z|$. We consider
$0<s\le r$, the separation range required by the transfer lemma
below. The exact identity
\[
\left|\mathbf e\cdot\frac{y-z}{|y-z|}\right|=\frac{|y-z|}{2|x-y|}
\]
shows that the unit normal $v=\frac{y-z}{|y-z|}$ to the perpendicular bisector lies within $C_{x,y}2^{-s}$ of one of the two perpendiculars to $\mathbf e$. The sign costs at most one bit. Thus (H2) supplies a common logarithmic directional-deficiency bound through precision $s-c_0$, where $c_0$ depends only on the fixed pair. 

The equal-distance condition also places $x$ on the perpendicular
bisector of $[y,z]$. Consequently,
\[
p_vx=p_v\left(\frac{y+z}{2}\right)
=\frac{|y|^2-|z|^2}{2|y-z|}.
\]
Thus the two source points determine a projection of the pin.
Since $|y-z|=2^{-s}$, precision-$r$ approximations to $y$ and $z$
determine both $v$ and $p_vx$ at precision $r-s-O(1)$. Indeed,
normalizing the difference $y-z$ loses approximately $s$ bits
of precision.

The complexity bound below combines two contributions.
The term $K_s^A(y)$ accounts for the coarse position information
shared by $y$ and $z$. The minimum term accounts for the additional
information about the pin revealed by the perpendicular-bisector
constraint, using Lemma~\ref{lem:projection} at precision $r-s$
and the directional bound established above.
For every fixed $\theta\in(0,\frac{1}{2})$, uniformly for competitors with
$\theta r\le s<\frac{r}{2}-O(\log r)$, combine the
complexity bound for a competing point in \cite[Lemma~30]{FS}
with Lemma~\ref{lem:projection} to obtain

\begin{equation}\label{eq:collision}
K_r^A(z)\ge K_s^A(y)+
\min\left\{r-s,\frac{K_{r-s}^A(x)+s}{2}\right\}-o(r).
\end{equation}
For $s\ge\frac{r}{2}-O(\log r)$, put $r'=r-O(\log r)$
so that the identity
\[
|p_{\mathbf e}z-p_{\mathbf e}y|=\frac{|z-y|^2}{2|x-y|}
\]
makes the two projections indistinguishable at precision $r'$.
Lemma~13 of \cite{FS} and the directional information in (H2)--(H4)
then give
\begin{align*}
K_r^A(z)&\ge K_s^A(y)+K_{r'-s,r'}^A(\mathbf e\mid y)-O(\log r)\\
&\ge K_s^A(y)+K_{r'-s}^{A,y}(\mathbf e)-O(\log r)\\
&\ge K_s^A(y)+r-s-o(r).
\end{align*}
Here finite conditioning supplies no more information than the
full oracle $y$. If $r-s=O(\log r)$, closeness to $y$ alone
implies the same bound with logarithmic error. Thus the terminal
range does not require a direction-dependent threshold.

The transfer argument uses an auxiliary device from
\cite[Lemma~14]{FS}. For a rational $\eta$ with
$\eta r\le K_r^A(y)$, that lemma constructs from
$y$, $r$, and $\eta$ an oracle $G=D(r,y,\eta)$, which we call the
lowering oracle, with the following properties, all up to
$O(\log r)$ errors. Relative to $(A,G)$ the complexity of $y$ at
precision $r$ equals $\eta r$, its complexity at each precision
$s\le r$ is at least $\min\{\eta r,K_s^A(y)\}$, and complexities
conditioned on $y$ are unchanged. Thus $G$ caps the
source complexity at $\eta r$. This lets us test whether a proposed
amount of source information must pass to the distance. With this
oracle the positive source term becomes
$\min\{\eta r,K_s^A(y)\}$. Item~(ii) of that lemma preserves the pin information in the
complexity bound for a competing point.
\[
K_{r-s,r}^{A,G}(x\mid y)=K_{r-s,r}^A(x\mid y)+O(\log r),
\]
whereas
\[
K_{r-s}^{A,G}(x\mid p_vx,v)
\le K_{r-s}^A(x\mid p_vx,v)+O(1).
\]
More explicitly, with $R=r-s$, item~(ii), finite versus full
conditioning, and (H3) give
\[
K_{R,r}^{A,G}(x\mid y)
=K_{R,r}^A(x\mid y)+O(\log r)
\ge K_R^{A,y}(x)-O(\log r)
\ge K_R^A(x)-O(\log r).
\]
The unconditional pin information is preserved as well.
\[
K_R^{A,G}(x)\ge K_R^{A,G,y}(x)-O(1)
\ge K_R^{A,y}(x)-O(\log r)
\ge K_R^A(x)-O(\log r).
\]
These are preservation properties of the lowering oracle.
No
algorithm computing a shortest-description oracle from $y$ is
needed. For the tangent bound, the conditioning precision in Lemma~13
of \cite{FS} is $r'$, whereas the lowering oracle was constructed at
precision $r$. We therefore use the full-oracle assertion of
Lemma~14(ii), rather than identify these two finite conditioning
precisions. When $r'-s\ge1$, it implies
\begin{align*}
K_{r'-s,r'}^{A,G}(\mathbf e\mid y)
&\ge K_{r'-s}^{A,G,y}(\mathbf e)-O(\log r)\\
&=K_{r'-s}^{A,y}(\mathbf e)+O(\log r)\\
&\ge r'-s-O(\log r).
\end{align*}
Together with the capped value of $K_s^{A,G}(y)$, this proves
\[
K_r^{A,G}(z)
\ge\min\{\eta r,K_s^A(y)\}+r-s-O(\log r)
\]
through the tangent range. If $s\ge r'$, then $r-s=O(\log r)$,
and the same conclusion follows by using the closeness of $z$ to
$y$ at precision $s$. Thus the terminal end of this range is
covered as well.
The negative projection residual can only decrease after $G$
is supplied. Consequently the capped complexity bound for a competing point uses
the projection theorem relative to the original oracle $A$,
without assuming new directional randomness relative to $G$.

We now use the joint limiting profiles defined in
Section~\ref{sec:preliminaries}. All interval endpoints below are
fixed proportions of the terminal precision.
No error estimate is
optimized over a growing collection of microscopic intervals.
By Lemma~\ref{lem:prefix},
\begin{equation}\label{eq:limitprefix}
d_x(u)\ge \frac{u}{2}+\frac{f(u)}{4},\qquad  d_y(u)\ge \frac{u}{2}+\frac{g(u)}{4}.
\end{equation}

Put
\begin{align}
p_g(s;b)&=\min\left\{b-s,\, \frac{g(b-s)+s}{2}\right\},\label{eq:p}\\
H_{f,g}(a,b)&=\min\left\{f(b),\, \inf_{a\le s\le b}\big(f(s)+p_g(s;b)\big)\right\}.
\label{eq:H}
\end{align}
Because of \eqref{eq:profileclass}, $p_g(s;b)\ge\min\{\frac{b}{2},b-s\}$.
For $s\ge \frac{b}{2}$ it equals $b-s$. We call $p_g$ the closed projection gain, referring to its
explicit formula, and $H_{f,g}$ the transfer threshold. For a
separation parameter $s$, the quantity $f(s)+p_g(s;b)$ is the
normalized competitor cost. The other entry $f(b)$ is the terminal
cap. Thus $H_{f,g}(a,b)$ is the least of these costs on $[a,b]$.

\begin{lemma}[Transfer in the limit]\label{lem:transfer}
For every fixed $0<a<b\le1$,
\begin{equation}\label{eq:transfer}
d_x(b)-d_x(a)\ge\left[H_{f,g}(a,b)-f(a)\right]_+.
\end{equation}
With $y$ as the pin and $x$ as the source, the corresponding inequality is
\[
d_y(b)-d_y(a)\ge[H_{g,f}(a,b)-g(a)]_+.
\]
\end{lemma}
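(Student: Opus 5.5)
The plan is to prove a finite-precision version of \eqref{eq:transfer} at each terminal precision $r=r_j$, and then pass to the limit along the joint subsequence of Lemma~\ref{lem:joint-extraction}. Put $r_a=\lfloor ar\rfloor$, $r_b=\lfloor br\rfloor$ and $\eta=H_{f,g}(a,b)-\varepsilon$, with $\varepsilon>0$ rational and small. If $\eta\le f(a)$ there is nothing to prove, since $d_x$ is nondecreasing. Otherwise I choose a rational $\eta_r\le K_{r_b}^A(y)/r_b$ approximating $\eta b^{-1}$ (possible because $\eta\le f(b)$ up to the error). I then introduce the lowering oracle $G=D(r_b,y,\eta_r)$ of \cite[Lemma~14]{FS}. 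With this oracle, $K_{r_b}^{A,G}(y)=\eta r+O(\log r)$, and $K_{r_a}^{A,G}(y)\le K_{r_a}^A(y)+O(1)$.

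The core step is a symmetry-of-information count relative to $(A,G,x)$. By the chain rule,
\[
K_{r_b}^{A,G,x}(y)\le K_{r_b}^{A,G,x}(|x-y|)-K^{A,G,x}_{r_a}(|x-y|)+K_{r_a}^{A,G}(y)+K_{r_b}^{A,G,x}\bigl(y\mid Q_{r_a}(y),|x-y|\bigr)+O(\log r).
\]
Here the distance increment between precisions $r_a$ and $r_b$ is bounded by the corresponding increment of the un-$G$ complexities. I expect to justify this from the preservation properties of $G$, in the same way as in \cite[Lemma~31]{FS}, possibly writing a finite-conditioned version $K^{A,x}_{r_b,r_a}(|x-y|\mid |x-y|)$. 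For the last term, I enumerate rational points $z$ with $|x-z|\approx|x-y|$ at precision $r_b$, $z$ inside the $2^{-r_a}$-neighbourhood, and $K^{A,G}_{r_b}(z)\le\eta r+\varepsilon r$. Every such $z$ with $s=-\log|y-z|\in[r_a,r_b]$ obeys the capped competitor bound. That bound is \eqref{eq:collision} in the non-tangent range and the tangent bound derived above with $G$, both uniform in the direction by Lemma~\ref{lem:projection} and Lemma~\ref{lem:endpoint-ratio}. It gives $K^{A,G}_{r_b}(z)\ge\min\{\eta r, K_s^A(y)+p\}$ with normalized value at least $H_{f,g}(a,b)r-o(r)>\eta r$ whenever $s\le r_b-O(\log r)$. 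So every enumerated competitor is within $2^{-r_b+O(\log r)}$ of $y$, and the last term is $O(\log r)$. On the other side, (H3) together with the lowering-oracle properties gives $K^{A,G,x}_{r_b}(y)\ge \eta r-O(\log r)$. Rearranging and dividing by $r$ yields
\[
d_x(b)-d_x(a)\ge \eta-f(a)-o(1)
\]
along the subsequence. Letting $\varepsilon\to0$ gives \eqref{eq:transfer}.

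The main obstacle is uniformity of the competitor bound over all separations $s\in[ar,br]$. The infimum in $H_{f,g}$ is taken over a continuum, while the finite estimate must hold for every competitor simultaneously, with $o(r)$ errors independent of $s$ and of the direction $v$. I would handle this with the threshold $R_0$ of Lemma~\ref{lem:projection}, which is independent of $v$ and $t$; Lemma~\ref{lem:endpoint-ratio} keeps $t/R$ bounded below. I would also use uniform convergence of $f,g$ to replace $K_s^A(y)/r$ and $K_{r-s}^A(x)/r$ by $f(s/r)$ and $g((r-s)/r)$ uniformly in $s$. Separations $s<\theta r$ are excluded because $s\ge r_a$ and $a>0$. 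The rounding of $r'$ near the tangent range costs only $O(\log r)$.

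The reversed inequality is identical with the roles of $x,y$ and of $f,g$ exchanged, following Table~\ref{tab:roles}. This is allowed because (H1)--(H4) are symmetric in the two endpoints.
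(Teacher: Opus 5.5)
Your route is the paper's. You cap the source with the lowering oracle of \cite[Lemma~14]{FS} and bound every circle competitor by the capped collision and tangent estimates. You get $K^{A,G,x}_{r_b}(y)\ge\eta r-O(\log r)$ from Lemma~14(iii) and (H3), transfer the capped increment to the distance, drop $G$, and apply the chain rule. The paper cites \cite[Lemma~15]{FS} for the transfer, whereas you re-derive its enumeration, and that is where a step fails as written.

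With the cap active, the competitor bound is $\min\{\eta r,K_s^A(y)\}+p$; your $\min\{\eta r,K_s^A(y)+p\}$ can never exceed $\eta r$. In the tangent range $p=r_b-s$. So for separations with $K_s^A(y)\ge\eta r$ and $r_b-s$ smaller than your enumeration slack, the lower bound is only $\eta r+(r_b-s)$. Nothing excludes such competitors from the enumeration. Hence the claims that all enumerated points lie within $2^{-r_b+O(\log r)}$ of $y$, and that the last chain-rule term is $O(\log r)$, are unjustified. Moreover, taking $\eta=H-\varepsilon$ with threshold $\eta r+\varepsilon r=Hr$ does not even exclude uncapped competitors, whose lower bound is only $Hr-o(r)$.

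The repair is the terminal allowance built into \cite[Lemma~15]{FS}. First fix the gap $h=H_{f,g}(a,b)-\eta>0$. Then choose a separate slack $0<\varepsilon'<\min\{\frac{b}{4},\frac{h}{4}\}$; this is the paper's $\varepsilon<\min\{\frac14,\frac{h}{4b}\}$ measured relative to $r_b$. Every competitor then costs at least $\eta r+\min\{p_g(s/r;b),h\}r-o(r)$. So an enumerated one must satisfy $s\ge r_b-\varepsilon' r-o(r)$, and recovering $y$ at precision $r_b$ from it costs $O(\varepsilon' r)+O(\log r)$ bits. This is the $3\varepsilon n_j$ loss in the paper's proof. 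You then obtain $d_x(b)-d_x(a)\ge\eta-f(a)-O(\varepsilon')$. Letting $\varepsilon'\searrow0$ and then $\eta\nearrow H_{f,g}(a,b)$ gives \eqref{eq:transfer}.

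Two smaller points. First, the removal of $G$ must be done on the conditional quantity $K^{A,G,x}_{r_b,r_a}(\varrho\mid y)$. Pass from it to $K^{A,x}_{r_b,r_a}(\varrho\mid\varrho)$, up to a bounded precision shift, and only then to a difference of unconditional complexities. The "possibly" in your proposal is in fact necessary: adding $G$ lowers both terms of the increment, so oracle monotonicity gives no inequality between the two increments. Second, the positive ratio $s/(r_b-s)\ge a/b$ in the non-tangent range comes from $a>0$ being fixed. It does not come from Lemma~\ref{lem:endpoint-ratio}, which concerns the teal blocks of the prefix partition.
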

\begin{proof}
If $H_{f,g}(a,b)\le f(a)$, monotonicity proves the claim.
Otherwise choose
\[
f(a)<c<H=H_{f,g}(a,b),\qquad  h=H-c>0,
\]
and put
\[
n_j=\lfloor br_j\rfloor,\qquad 
\ell_j=\lceil ar_j\rceil,\qquad  m_j=\lfloor cr_j\rfloor.
\]
We cap the source complexity at $m_j$ using
$G_j=D(n_j,y,\frac{m_j}{n_j})$ relative to $A$.
The strict margin $h$ will make the competitor estimates uniform.
The rational cap parameter costs $O(\log r_j)$ bits to describe.
it is specified by the two integers $m_j,n_j=O(r_j)$.
All uses of the lowering lemma and the chain rule are uniform in
the auxiliary oracle. The machine constructions relativize to
the same universal oracle machine, and the cap parameters are
included in the description. Thus the error bounds remain valid
although $G_j$ changes with $j$.
We suppress integer roundings in the following estimates.

For competitors at separations $s r_j$ with $a\le s<\frac{b}{2}$, the capped complexity bound for a competing point is
\[
K_{br_j}^{A,G}(z)\ge r_j\min\{c,f(s)\}+r_jp_g(s;b)-o(r_j).
\]
Together with the tangent estimate for $s\ge \frac{b}{2}$, this is bounded below uniformly by
\[
cr_j+\min\{p_g(s;b),h\}r_j-o(r_j).
\]
Choose any fixed rational $0<\varepsilon<\min\{\frac{1}{4},\frac{h}{4b}\}$, and then take $j$ large enough that the total error is at most $\frac{\varepsilon br_j}{2}$. For $s<\frac{b}{2}$, $p_g(s;b)\ge \frac{b}{2}$ ensures the required $\varepsilon br_j$ surplus. For $s\ge \frac{b}{2}$, $p_g(s;b)=b-s$, and
\[
\min\{b-s, \, h\}-\frac{\varepsilon b}{2}
\ge\min\{\varepsilon b, \, b-s-\varepsilon b\}.
\]
This is exactly the terminal allowance in \cite[Lemma~15]{FS}, including $s\nearrow b$. Its hypothesis only requires separations $S=-\log_2|y-z|\le n_j$.
Points closer than terminal precision need not be considered. The $O(\log r_j)$ transition about $\frac{b}{2}$ is absorbed by the fixed margin. Uniform convergence and the fixed ratio $\frac{a}{b}>0$ supply common error bounds over all required competitors. Its other hypothesis holds since $K_{n_j}^{A,G_j}(y)=m_j+O(\log r_j)\le m_j+\frac{\varepsilon n_j}{2}$.

Explicitly, \cite[Lemma~14(iii)]{FS}, (H3), and the chain rule give
\begin{align*}
K_{n_j}^{A,G_j,x}(y)&\ge m_j-O(\log r_j),\\
K_{\ell_j}^{A,G_j,x}(y)&\le K_{\ell_j}^A(y)+O(1),\\
K_{n_j,\ell_j}^{A,G_j,x}(y\mid y)&\ge m_j-K_{\ell_j}^A(y)-O(\log r_j).
\end{align*}
Lemma~15 of \cite{FS} transfers this increment with loss $3\varepsilon n_j+O(\log r_j)$, including the description of its rational parameters. Removing $G_j$ only increases the conditional distance complexity up to a fixed simulation cost. With $\varrho=|x-y|$ and a fixed integer $c_1$, the worst-case definition of finite conditioning implies
\[
K_{n,\ell}^{A,x}(\varrho\mid y)
\le K_{n,\ell-c_1}^{A,x}(\varrho\mid\varrho)+O(\log n):
\]
every precision-$\ell$ approximation to $y$, with oracle $x$, computes a precision-$(\ell-c_1)$ approximation to $\varrho$. The chain rule and the bounded scalar precision shift identify the expression on the right with
\[
K_{br_j}^{A,x}(|x-y|)-K_{ar_j}^{A,x}(|x-y|)+O(\log r_j).
\]
Let $j\to\infty$ to obtain $d_x(b)-d_x(a)\ge c-f(a)-3\varepsilon b$. This holds for every sufficiently small fixed $\varepsilon>0$.
Let $\varepsilon\searrow0$, and then $c\nearrow H$. This proves \eqref{eq:transfer}. The partition count is bounded because $\frac{a}{b}$ is fixed.
No precision-dependent collection of intervals is summed.
\end{proof}

The geometric and information-theoretic input is now contained in
three groups of inequalities: the profile bounds
\eqref{eq:profileclass}, the prefix estimates \eqref{eq:limitprefix},
and the interval transfers \eqref{eq:transfer}. The next section shows how to choose a fixed pair satisfying these
hypotheses. The elementary consequences in
Appendices~\ref{sec:thirteen-seventeen} and~\ref{sec:refined} give a
short introduction to the scalar argument. The stronger estimates
are proved in Sections~\ref{sec:unpinned} and~\ref{sec:reciprocal}.

\section{From a pair of points to a set}
\label{sec:classical}

The scalar inequalities concern one fixed pair of points. To use
it for a distance set, we must choose a pair that has enough
complexity and enough independence in both directions.
The point-to-set principle requires this for every initial oracle.
We first enlarge that oracle to include the fixed geometric data,
and only then choose the pair. All precisions will use this same
choice.

For a compact set, we can encode a sequence of finite rational sets
whose Hausdorff distances from it are at most $2^{-j}$ at stage $j$.
This supplies approximations with a known error, and makes the compact
set effectively compact relative to the encoding oracle. For a finite
measure, enumerate the dyadic cubes and encode rational approximations
to every cube mass, each with error at most $2^{-j}$ at stage $j$.
Include the total mass and a bound for the support. These are countably
many finite strings, so a fixed coding of finite strings and indices
places all the data in one oracle $A_0$. Positive total masses can
then be divided out to compute the normalized measures. The coordinate
boundaries have measure zero for the measures used below, since their
Frostman exponents exceed one.

Given an arbitrary oracle $B$, use the join
$A=B\oplus A_0$: even positions encode $B$ and odd positions encode
$A_0$. Thus a computation with oracle $A$ has access to all the fixed
geometric data and can simulate a computation with oracle $B$.
No effective procedure for constructing $A_0$ from an arbitrary
analytic set is required. The point-to-set principle permits every
oracle, and the data are fixed before any point is selected.

\begin{proposition}[A fixed pair for every oracle]\label{prop:fixed-pair}
Let $E\subset\mathbb R^2$ be analytic with $\dim_{\mathrm H}E>1$, and let
$1<s<\dim_{\mathrm H}E$. For every oracle $B$ there are an oracle
$A=B\oplus A_0$ and points $x,y\in E$ satisfying (H1)--(H4) relative
to $A$, with $\dim^Ax\ge s$ and $\dim^Ay\ge s$.
\end{proposition}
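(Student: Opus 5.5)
We follow the standard route of Stull and Fiedler--Stull, taking care to build the geometric data into the oracle before any point is chosen. By Frostman's lemma, pick $s<s'<\dim_{\mathrm H}E$ and disjoint compact sets $E_1,E_2\subseteq E$ at positive distance from each other, each carrying an $s'$-Frostman probability measure, $\mu$ on $E_1$ and $\nu$ on $E_2$. To get the two pieces, apply Frostman's lemma to $E$, then split the resulting measure's support by a line so that both sides keep positive mass, and restrict. Encode $E_1,E_2,\mu,\nu$ into $A_0$ as described before the proposition, and set $A=B\oplus A_0$. Then $\mu,\nu$ have dyadic masses uniformly computable relative to $A$. Lemma~\ref{lem:measure-coding}, together with the Frostman bound $\mu(Q_n(z))\lesssim2^{-s'n}$, gives $K_n^A(z)\ge s'n-O_z(\log n)$ for $\mu$-a.e.\ $z$, and likewise for $\nu$. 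Hence $\dim^Ay\ge s'>s>1$ for $\mu$-a.e.\ $y$ and $\dim^Ax\ge s'$ for $\nu$-a.e.\ $x$, which gives (H1).

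For (H3), we use the last assertion of Lemma~\ref{lem:measure-coding}. For each fixed $x$, it gives $K_n^{A,x}(y)\ge K_n^A(y)-O_{x,y}(\log n)$ for $\mu$-a.e.\ $y$. The set of pairs where this holds is Borel, since it is a countable combination of conditions on finite approximations. Fubini therefore shows that it has full $\nu\times\mu$ measure. Exchanging roles gives the symmetric statement for $\mu\times\nu$-a.e.\ pair. Intersecting the two full-measure relations yields (H3) on a set of pairs of full product measure.

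The main step is (H2), directional randomness in both orientations. Here we invoke Orponen's radial projection theorem for $s'$-Frostman measures with $s'>1$ on separated supports. For $\nu$-a.e.\ $x$, the pushforward $(\pi_x)_*\mu$ is absolutely continuous with an $L^p$ density. After restricting to a positive-measure relation $G$ we may assume the density is bounded by $L$, and symmetrically for $(\pi_y)_*\nu$. Fix $x$, with the data of $G$ placed into the oracle if needed. The directions $\mathbf e$ with $K_r^{A,x}(\mathbf e)<r-2\log r$ lie, at scale $r$, in at most $2^{r-2\log r}$ arcs of length $2^{-r}$, so their total angular measure is $\lesssim r^{-2}$. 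The bounded density makes their $\mu$-mass $\lesssim Lr^{-2}$. Borel--Cantelli then gives (H2) for $\mu$-a.e.\ $y$ in $G_x$. Fubini applied to both orientations produces a full-measure subset of $G$ on which (H2) holds. Since (H4) follows from (H2), any pair in the intersection of these full-measure sets satisfies (H1)--(H4).

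The obstacle we expect is making the restriction to bounded angular density compatible with computability relative to $A$. Density restriction produces a relation $G$ that need not be computable. Our first attempt avoids this: Borel--Cantelli only needs the measure bound, not computability, so we apply it with the unrestricted $\mu$, controlling mass through $\int_{\text{arcs}}h\,d\theta$. That integral is small by absolute continuity, although not at a rate, so we would instead use the $L^p$ density and H\"older to get the bound $r^{-2(1-1/p)}$. We then sum along $r=2^k$ and interpolate, since directional complexity changes by $O(1)$ between nearby precisions after adjusting the logarithmic margin. Finally, because both measures are supported in $E$, the selected $x,y$ belong to $E$.
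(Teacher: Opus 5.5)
Your proposal is correct and has the same skeleton as the paper's proof: Frostman-type measures on two separated compact pieces of $E$, all geometric data placed in $A_0$ before any point is chosen, the coding lemma (the paper cites the equivalent \cite[Lemma~36(2)]{S}) plus Fubini for (H3), and Orponen's theorem plus Fubini for (H2).

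The paper takes its measures to be normalized restrictions of $\mathcal H^s$, obtained via the density theorem. It gets (H1) from the $\mathcal H^s$-nullity of $\{z:\dim^Az<s\}$. Your use of Frostman's lemma at $s'>s$ together with Lemma~\ref{lem:measure-coding} works equally well. To get positive separation after splitting by a line, pass to compact positive-mass subsets of the two open half-planes.

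The real difference is in (H2), where you work harder than necessary. For a fixed pin, let $D_r$ be the set of directions with $K_r^{A,x}(\mathbf e)<r-2\log r$. Since $\sum_r|D_r|\lesssim\sum_r r^{-2}<\infty$, Borel--Cantelli applied to \emph{arclength} shows that $\limsup_rD_r$ is arclength-null; this is \cite[Lemma~36(1)]{S}. Mere absolute continuity of $(\pi_x)_*\mu$ then gives $\mu(\pi_x^{-1}(\limsup_rD_r))=0$. No bounded density, no $L^p$ norm, and no rate are needed, and this is exactly the paper's argument.

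So the obstacle you anticipate does not exist. A restriction $G$ would act only on the measure side. In fact you must not place $G$ in the oracle, since the statement fixes $A=B\oplus A_0$.

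Your H\"older variant is still valid, provided the interpolation runs downward along the circle: $K_r^{A,x}(\mathbf e)\ge K_R^{A,x}(\mathbf e)-(R-r)-O(\log R)$ for $r\le R\le2r$. Alternatively, a margin $C\log r$ with $C(1-\frac{1}{p})>1$ makes the series summable over all $r$, so no interpolation is needed.
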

\begin{proof}
Choose separated compact pieces $E_1,E_2\subset E$ with positive finite $s$-dimensional Hausdorff measure.
By the Hausdorff density theorem~\cite[Theorem~6.2]{Mattila},
the upper $s$-density of $\mathcal H^s|_{E_i}$ is finite almost
everywhere. A countable restriction followed by inner regularity
provides compact positive-measure subsets $E_i'\subset E_i$ and
common constants $C,r_0>0$ such that
$\mathcal H^s(E_i\cap B(z,r))\le Cr^s$ for $z\in E_i'$ and $0<r<r_0$.
If $B(w,r)$ meets $E_i'$ at $z$, it lies in $B(z,2r)$.
Enlarging $C$ for $r\ge\frac{r_0}{2}$ proves that
$\mathcal H^s|_{E_i'}$ is $s$-Frostman. Let $\mu_i$ be its
normalization and fix $1<t<s$.
Both measures have finite $t$-energy.

Start with an arbitrary oracle $B$, and use the preceding construction
to form $A=B\oplus A_0$, encoding both compact pieces, their finite
Hausdorff restrictions, the total masses, and all dyadic cell masses.
This supplies the hypotheses of \cite[Lemma~36(2)]{S}, also after
normalization. For each fixed endpoint $x$, that lemma establishes
full-oracle independence for almost every opposite endpoint $y$.
Apply it for both choices of pin and then use Fubini to obtain (H3)
for product-almost every pair. For every fixed $A$, the set
$\{z\in\mathbb R^2:\dim^A z<s\}$ has zero $\mathcal H^s$ measure by the usual
low-complexity covers. Thus (H1) also holds almost everywhere, even
though the background oracle contains the measure data.

Orponen's radial absolute-continuity theorem \cite[Theorem~1.11]{O} has an exceptional pin set of dimension at most $2-t<1$. The opposite $s$-dimensional measure assigns this set zero mass. For each fixed pin, the angular complexity deficiency set relative to that pin and $A$ has zero arclength by \cite[Lemma~36(1)]{S}. Radial absolute continuity and Fubini give (H2) for product-almost every pair, with either endpoint as the pin. The bad-pair relations here are Borel because the finite-precision oracle computation conditions are countable combinations of finite-query conditions. The finite conditional directional conditions follow directly from the full-oracle bounds. Fix one pair in the intersection of these full-measure sets.

The pair satisfies $\dim^Ax,\dim^Ay\ge s$, since the set
$\{z\in\mathbb R^2:\dim^Az<s\}$ is $\mathcal H^s$-null. Thus the quantifier order is
\begin{align*}
&\forall B \, \exists A=B\oplus A_0 \, \exists(x,y)\in E_1\times E_2 \\
& \forall\text{ sufficiently large precisions, with (H1)--(H4) satisfied by this same pair. }
\end{align*}
Choosing different pairs
at different scales would not suffice. The scalar bounds for this pair,
$Y$ and the scalar bound $\frac{4}{5}$, are proved in Sections~\ref{sec:unpinned}
and~\ref{sec:reciprocal}, after the partition estimates of
Part~\ref{part:prescribed-pin}. Together with the point-to-set
principle, these bounds establish the $Y(t)$ term and the lower bound
$\frac{4}{5}$ in Theorem~\ref{thm:main}. The uniform gain $\eta_0$ is
proved in Sections~\ref{sec:new-geometry} and~\ref{sec:new-universal}. The term
$R(\dim_{\mathrm H}E)$ follows from Theorem~\ref{thm:pinnedmain}.
\end{proof}

There is no pinned conclusion from this argument alone. A low
unconditioned distance complexity forces both endpoint-conditioned
complexities to be low simultaneously. A low complexity conditioned on the prescribed pin $x$ alone does
not provide \eqref{eq:pinenvelope}, whose derivation used the upper
bound for the distance complexity conditioned on $y$. Part~\ref{part:prescribed-pin} identifies the resulting scalar obstruction
and the critical profile structure used in the geometric proof.

\section{The asymmetric pinned refinement}
\label{sec:asymmetric}

What can the information estimates prove when the pin is prescribed?
We first separate the lower dimension of the pin from that of the
source. Fiedler and Stull use one parameter for both, and note in
their footnote~7 that this restriction can be relaxed. Keeping the
two parameters separate yields an explicit exceptional-set estimate.
The proof uses the partition argument alone.

\begin{proposition}[Separate pin and source parameters]
\label{prop:asymmetric-pinned}
Let $1<p\le s<\frac{3}{2}$, let $A$ be an oracle, and let $x\ne y\in\mathbb R^2$.
Suppose $\dim^A x>p$, $\dim^A y>s$, and, with
$\mathbf e=\frac{y-x}{|y-x|}$, suppose the Fiedler--Stull conditions with $x$ as the pin hold:
\[
K_r^{A,x}(\mathbf e)\ge r-O(\log r),\qquad 
K_r^{A,x}(y)\ge K_r^A(y)-O(\log r),\qquad 
K_r^A(\mathbf e\mid y)\ge r-o(r).
\]
Then
\begin{equation}\label{eq:asymmetric-pinned-bound}
\dim^{A,x}|x-y|\ge
B(p,s):=\min\left\{1,\, \frac{p+s(3-p)}{5-p}\right\}.
\end{equation}
\end{proposition}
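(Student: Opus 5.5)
We will follow the partition argument of \cite[\S4.3]{FS}, but keep the pin's lower dimension $p$ separate from the source's lower dimension $s$ at every place where that argument merges them into one parameter $d$. We fix a large precision $r$ at which $K_r^{A,x}(|x-y|)$ is close to $r\dim^{A,x}|x-y|$. Then we run the yellow--teal partition of $[1,r]$ built from the interpolated source complexity $K^A_\cdot(y)$, with upper parameter $D=2$. Let $L$ be the total yellow length and $T=r-L$ the total teal length.

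The proof rests on two retained inequalities, as in Proposition~\ref{prop:appendix-prefix} and the proof of Lemma~\ref{lem:prefix}. On yellow blocks the distance complexity grows at least at the rate of the precision. On a teal block $[a,b]$, the transfer argument (Lemma~\ref{lem:transfer}, in its finite form from \cite[Lemma~31]{FS}) passes the increment of the source complexity to the distance, provided every competitor $z$ costs enough. The competitor cost contains a projection term, and only there does the pin enter. By Lemma~\ref{lem:projection} with $R=b-t$, the gain is $\min\{R,\frac{K_R^A(x)+t}{2}\}$, and we bound $K_R^A(x)\ge pR$ using $\dim^Ax>p$. Lemma~\ref{lem:endpoint-ratio} makes $t/R$ bounded below, so the projection partition has bounded size and the errors are uniform even at $D=2$. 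The source lower bound $K_\cdot^A(y)\ge s\,\cdot$ enters only through the terminal value $K_r^A(y)\ge sr$ and through the length of the teal blocks. Replacing $d$ by $p$ in the competitor cost, and by $s$ in the source terms, should yield
\[
K_r^{A,x}(|x-y|)\ge r-\tfrac{p}{?}\,\cdots,
\]
which concretely takes two forms. The teal-block transfer gives
\[
K_r^{A,x}(|x-y|)\ge K_r^A(y)-L'-o(r),
\]
where $L'$ is the yellow length weighted by the teal-block loss. The yellow and competitor bounds give
\[
K_r^{A,x}(|x-y|)\ge\frac{(3-p)\,r+\ldots}{\ldots}.
\]

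We then eliminate the unknown yellow length, as in the cancellation identity after Lemma~\ref{lem:prefix}. We take the convex combination of the two inequalities whose coefficients make $L$ drop out. Substituting $K_r^A(y)\ge sr$ should give exactly
\[
\frac{p+s(3-p)}{5-p}\,r-o(r).
\]
As a check, when $p=s=d$ this must reproduce the Fiedler--Stull value $\frac{d(4-d)}{5-d}$, and indeed $p+p(3-p)=p(4-p)$. The minimum with $1$ comes from the trivial cap, or from the case with no teal blocks, where the yellow bound alone gives $K_r^{A,x}(|x-y|)\ge r-o(r)$. Dividing by $r$ and taking $\liminf$ proves \eqref{eq:asymmetric-pinned-bound}.

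The main obstacle is the bookkeeping inside \cite[\S4.3]{FS}. We must locate every occurrence of $d$ and decide whether it comes from the pin, through projection gains, or from the source, through block lengths and terminal complexity. In particular, the block ratio $\frac{a}{b}\ge\frac{d}{5-d}$ of Lemma~\ref{lem:endpoint-ratio} must be recomputed with the correct parameter. We expect the teal-block loss to be governed by $p$, since the competitor cost $\frac{pR+t}{2}$ sets where teal blocks can end. That would explain why $5-p$ appears in the denominator while $s$ enters only linearly in the numerator. We would first verify this on a single teal block followed by a yellow block, and then sum over blocks.
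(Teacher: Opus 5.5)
\textbf{Verdict: right architecture, but the step that produces $B(p,s)$ is missing, and your guess for it gives a smaller constant.}

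Your architecture is the one the paper uses. You run the Fiedler--Stull partition with auxiliary lower parameter $d=p$, where the pin enters only through the projection gain $\frac{pR+t}{2}$, and with $D=2$. You use Lemma~\ref{lem:endpoint-ratio} for uniform errors, keep two inequalities in the yellow length $L$, eliminate $L$, and substitute $K_r^A(y)\ge sr$.

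The problem is the teal rate. Your two displayed inequalities leave it blank, and you say you expect the teal blocks to be governed by $p$ alone, i.e.\ $\frac{a}{b}\ge\frac{p}{5-p}$. That gives teal source rate $c(p,p)=\frac{p(3-p)}{5-2p}$, and the elimination then yields
\[
\frac{c(p,p)+(1-c(p,p))s}{2-c(p,p)}=\frac{p+s(3-p)}{5-p}-\frac{s-p}{(5-p)(2-p)}.
\]
This is strictly below the claim whenever $s>p$. For example, at $p=1.1$, $s=1.3$ it gives about $0.858$ instead of $0.915$.

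\textbf{The missing idea.} In a non-green teal block $[a,b]$, the root identity of \cite[\S4.3]{FS} with $d=p$ is
\[
f(b)-f(a)=\tfrac{p}{2}b-\tfrac{p-1}{2}a .
\]
This is where $p$, and hence $5-p$, enters. Its slope $\frac{p-(p-1)k}{2(1-k)}$ increases in $k=\frac{a}{b}$. The lower bound on $k$ comes from the \emph{source} bound $f(b)\ge sb-o(b)$ together with $f(a)\le 2a$, giving $k\ge\frac{2s-p}{5-p}$. Consequently every teal has source increment at least $c_0(b-a)-o(b)$, with
\[
c_0=\min\{1,c(p,s)\},\qquad c(p,s)=\frac{p(2-s)+s}{5-2s},
\]
and this increment passes in full to the distance by \cite[Lemma~31]{FS}.

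The two inequalities are then $Q_r\ge L+c_0(r-L)-o(r)$ and $Q_r\ge K_r^A(y)-L-o(r)$. No weighted $L'$ is needed: yellow source increments total at most $2L$, yellow distance increments total at least $L$, and teal increments transfer entirely. Eliminating $L$ gives $\frac{s+c(1-s)}{2-c}$, and a common factor $2-s$ cancels to leave $B(p,s)$. The linearity in $s$ that you noticed comes from this cancellation; it does not mean $s$ is absent from the teal rate.

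Finally, the minimum with $1$ does not come from a trivial cap. A teal block has source slope at most $1$. So when $c(p,s)\ge1$, which is equivalent to $\frac{p+s(3-p)}{5-p}\ge1$, every teal has rate $1-o(1)$, and the first inequality gives $Q_r\ge r-o(r)$.
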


\begin{proof}
Use the partition of \cite[\S4.3]{FS} with its auxiliary lower parameter
$d=p$ and upper parameter $D=2$. These parameters satisfy all the lower and
upper dimension assumptions used in that construction. Let $f$ be its
piecewise linear interpolation of $K_u^A(y)$, and let $L$ be the total
yellow length in the partition of $[1,r]$.

Consider a teal interval $[a,b]$. In the green case in the construction,
\[
f(b)-f(a)=b-a.
\]
Otherwise, $a$ is the root selected by equations~(10)--(11) of \cite{FS}.
With $d=p,D=2$, its exact defining identity is
\begin{equation}\label{eq:asymmetric-teal-identity}
f(b)-f(a)=\frac{p}{2} b-\frac{p-1}{2}a.
\end{equation}
Retain the stronger source bound $f(b)\ge sb-o(b)$, rather than replacing
it by $pb-o(b)$. The planar upper bound $f(a)\le2a+O(\log b)$ implies
\[
2a+O(\log b)
\ge \left(s-\frac{p}{2}\right)b+\frac{p-1}{2}a-o(b),
\]
and hence
\begin{equation}\label{eq:asymmetric-left-ratio}
\frac{a}{b}\ge\frac{2s-p}{5-p}-o(1).
\end{equation}
For $k=\frac{a}{b}$, the slope in \eqref{eq:asymmetric-teal-identity} is
\[
C_p(k)=\frac{p-(p-1)k}{2(1-k)},\qquad 
C_p'(k)=\frac{1}{2(1-k)^2}>0.
\]
Consequently, a non-green teal has source growth at least
\[
\left(c(p,s)-o(1)\right)(b-a),\qquad 
c(p,s)=\frac{p(2-s)+s}{5-2s}.
\]
A green interval has slope one. If $c(p,s)>1$, a sufficiently large
non-green teal would contradict its teal property.
If $c(p,s)=1$, the
same calculation yields slope at least $1-o(1)$. Thus every teal satisfies
\begin{equation}\label{eq:asymmetric-teal-rate}
K_{b,a}^A(y\mid y)\ge
\min\{1, \, c(p,s)\}(b-a)-o(b).
\end{equation}
Independence and the finite-precision chain rule allow the same estimate
relative to $(A,x)$, at logarithmic cost per interval.

The transfer on each teal remains exactly \cite[Lemma~31]{FS}.
The
partition itself has not been changed. At the endpoint $D=2$ the bound
$S<\frac{2}{d(2-D)}$ on the number of red--green--blue sequences
in the proof of \cite[Theorem~16]{FS} is not usable.
Lemma~\ref{lem:endpoint-ratio} supplies the replacement, with
$d=p$, and shows that the proof of Lemma~31 applies with uniform errors.

Write $Q_r=K_r^{A,x}(|x-y|)$ and $c_0=\min\{1,c(p,s)\}$.
The first summation in \cite[\S4.4]{FS}, using
\eqref{eq:asymmetric-teal-rate}, reads
\[
Q_r\ge L+c_0(r-L)-o(r).
\]
For its second summation retain the source lower bound $sr$:
the yellow source increments total at most $2L+o(r)$, whereas the
yellow distance increments total at least $L-o(r)$ and all teal
source increments transfer to distance increments. Therefore
\[
sr\le K_r^{A,x}(y)+o(r)\le L+Q_r+o(r).
\]
Combining,
\begin{equation}\label{eq:asymmetric-two-bounds}
Q_r\ge\max\{L+c_0(r-L), sr-L\}-o(r).
\end{equation}
If $c_0=1$, this proves full dimension. Otherwise the two affine functions
of $L$ meet in $[0,r]$, and their common value is
\[
\frac{s+c(p,s)(1-s)}{2-c(p,s)} r
=\frac{p+s(3-p)}{5-p} r.
\]
The condition $c(p,s)\ge1$ is equivalent to the last fraction being at
least one, proving the stated cap.

Fix an arbitrarily small error tolerance before taking
$r$ large. Successive teal intervals at least halve the precision, and
yellow blocks are maximal, so there are $O(\log r)$ blocks and the sum
of their upper endpoints is $O(r)$. The interval errors therefore total
$O(\varepsilon r)+O((\log r)^2)$. A fixed initial range has negligible
cost. The positive lower precision ratio of Lemma~\ref{lem:endpoint-ratio}
ensures the projection estimates are uniform. Letting $r\to\infty$ and then
$\varepsilon\searrow0$ proves \eqref{eq:asymmetric-pinned-bound}.
\end{proof}

The two inequalities \eqref{eq:asymmetric-two-bounds} contain more
than the cap $B(p,s)$. Keeping the terminal source complexity in the
second one, instead of the lower bound $sr$, yields a prefix estimate
that improves Lemma~\ref{lem:prefix} whenever an endpoint has
dimension larger than one.

\begin{corollary}[Prefix estimate with the retained teal rate]
\label{cor:prefix-rate}
Under the hypotheses of Proposition~\ref{prop:asymmetric-pinned},
suppose $c=c(p,s)<1$. Then
\begin{equation}\label{eq:prefix-rate-finite}
K_r^{A,x}(|x-y|)\ge\frac{cr+(1-c)K_r^A(y)}{2-c}-o(r),
\end{equation}
and the limiting profiles $f,d_x$ of Section~\ref{sec:preliminaries},
along any common sequence of precisions, satisfy
\begin{equation}\label{eq:prefix-rate}
d_x(u)\ge(1-2b)u+bf(u) \quad \text{for } 0\le u\le1,\qquad 
b=b(p,s)=\frac{1-c(p,s)}{2-c(p,s)}\in\Bigl(0,\frac{1}{4}\Bigr].
\end{equation}
\end{corollary}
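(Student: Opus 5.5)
The plan is to return to the two affine lower bounds \eqref{eq:asymmetric-two-bounds} from the proof of Proposition~\ref{prop:asymmetric-pinned}. The only change is that in the second summation we keep the terminal source complexity $K_r^A(y)$ in place of its lower bound $sr$. Write $Q_r=K_r^{A,x}(|x-y|)$, let $L$ be the total yellow length, and note that $c_0=c$ because $c<1$.

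The first summation is unchanged and gives $Q_r\ge L+c(r-L)-o(r)$. For the second, the argument there shows that yellow source increments total at most $2L+o(r)$, yellow distance increments total at least $L-o(r)$, and all teal source increments pass to the distance. Together with (H3) this gives $K_r^A(y)\le K_r^{A,x}(y)+O(\log r)\le L+Q_r+o(r)$, that is, $Q_r\ge K_r^A(y)-L-o(r)$. This is an identity-level rewriting of the earlier inequality. The one point to check is that the earlier proof did not already use $sr$ at this step. It did not: $sr$ entered only through $K_r^A(y)\ge sr$ and through the teal rate $c(p,s)$, and that rate was established independently.

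Next we eliminate the unknown $L$ by a weighted average, exactly as in Lemma~\ref{lem:prefix}. The first bound has $L$-coefficient $1-c>0$ and the second has coefficient $-1$. Taking weights $\frac{1}{2-c}$ and $\frac{1-c}{2-c}$ cancels $L$ and yields
\[
Q_r\ge\frac{cr+(1-c)K_r^A(y)}{2-c}-o(r),
\]
which is \eqref{eq:prefix-rate-finite}.

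The main obstacle is passing to the limit profiles at intermediate scales $u$, since the proposition is stated at terminal precision only. We will apply the finite estimate at precision $\lfloor ur_j\rfloor$ in place of $r$. The hypotheses of Proposition~\ref{prop:asymmetric-pinned} are pointwise and hold for all large precisions, so the estimate holds at every sufficiently large precision with an $o(\cdot)$ error. That error is $o(r_j)$ uniformly for $u\ge u_0>0$. For $u$ near $0$ the claim is trivial by continuity, because both sides tend to $0$; alternatively, the precision-zero convention handles $u=0$. Dividing by $r_j$ and passing to the limit along the common subsequence gives $d_x(u)\ge\frac{cu+(1-c)f(u)}{2-c}$. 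With $b=\frac{1-c}{2-c}$ one has $\frac{c}{2-c}=1-2b$, which is the form stated in \eqref{eq:prefix-rate}. Finally, $b\in(0,\frac14]$ follows from $c<1$ together with $c(p,s)\ge\frac23$: at $p=1$ we have $c=\frac{2}{5-2s}$, which is at least $\frac23$ once $s\ge1$, and $c$ increases in $p$ because $2-s>0$. Since $b$ is decreasing in $c$, $c\ge\frac23$ gives $b\le\frac{1/3}{4/3}=\frac14$.
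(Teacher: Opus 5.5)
Your proposal is correct and follows the paper's proof essentially step for step. You reuse the two summations from Proposition~\ref{prop:asymmetric-pinned}, keeping $K_r^A(y)$ instead of $sr$ and removing the pin via the independence hypothesis. You then eliminate $L$ with the weights $\frac{1}{2-c}$ and $\frac{1-c}{2-c}$, apply the finite estimate at precision $\lfloor ur_j\rfloor$, use $\frac{c}{2-c}=1-2b$, and obtain $b\le\frac14$ from $c\ge\frac23$. The paper gets that last bound from monotonicity of $c(p,s)$ in both variables; your evaluation at $p=1$ is an equivalent route.
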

\begin{proof}
In the proof of the proposition, $Q_r\ge L+c(r-L)-o(r)$, and the
second summation shows that $K_r^{A,x}(y)\le L+Q_r+o(r)$ before the source
lower bound $sr$ is inserted.
The independence hypothesis
$K_r^{A,x}(y)\ge K_r^A(y)-O(\log r)$ then implies
$K_r^A(y)\le L+Q_r+o(r)$. Multiply the first inequality by
$\frac{1}{2-c}$ and the second by $\frac{1-c}{2-c}$ and add.
The
terms in $L$ cancel, which is \eqref{eq:prefix-rate-finite}. For
$u>0$ apply \eqref{eq:prefix-rate-finite} at the precision
$\lfloor ur_j\rfloor$, divide by $r_j$, and let $j\to\infty$.
The
identity $\frac{c}{2-c}=1-2b$ proves \eqref{eq:prefix-rate}, and at
$u=0$ both sides vanish. Since $c(p,s)$ increases in $p$ and in $s$,
$c\ge c(1,1)=\frac{2}{3}$, so $b\le\frac{1}{4}$, and $b>0$ because
$c<1$.
\end{proof}

At $p=s=1$, \eqref{eq:prefix-rate} is the prefix estimate
\eqref{eq:limitprefix}.
Since $b\le\frac{1}{4}$ and $f(u)\le2u$,
\eqref{eq:prefix-rate} implies \eqref{eq:limitprefix} in every case.
With $p=1+v$ and $s=1+\kappa$, $0\le v\le\kappa$,
\begin{equation}\label{eq:b-general}
b(1+v,1+\kappa)=\frac{1-2\kappa-v+\kappa v}{(1-\kappa)(4-v)},
\end{equation}
which is positive exactly when $c<1$. We will use the following two cases.
\begin{equation}\label{eq:b-pin-sym}
\begin{gathered}
b_{\mathrm{pin}}(\kappa)=\frac{1-2\kappa}{4(1-\kappa)}
 \quad \text{for } 0\le\kappa<\frac{1}{2},\\
b_{\mathrm{sym}}(\kappa)=b(1+\kappa,1+\kappa)
=\frac{1-3\kappa+\kappa^{2}}{(1-\kappa)(4-\kappa)}
 \quad \text{for } 0\le\kappa<\frac{3-\sqrt5}{2}.
\end{gathered}
\end{equation}
The first is $b(1,1+\kappa)$. For $\kappa>0$, if the pin has
dimension larger than one and the source has dimension larger than
$1+\kappa$, choose $p>1$ sufficiently close to one and
$s<1+\kappa$ sufficiently close to $1+\kappa$, with
$p\le s$ and $p<\dim^A x$. The corollary applies, and continuity
establishes \eqref{eq:prefix-rate} with $b=b_{\mathrm{pin}}(\kappa)$
after letting $p\searrow1$ and $s\nearrow1+\kappa$.
For $\kappa=0$, choose $p=s>1$ below both endpoint dimensions
and let $p=s\searrow1$, obtaining $b=\frac{1}{4}$.
The second coefficient is obtained, for $\kappa>0$, by taking
$p=s\nearrow1+\kappa$ when both endpoint dimensions are larger
than $1+\kappa$. Its case $\kappa=0$ follows from the same
$p=s\searrow1$ argument. It holds for either choice of pin.

\begin{proof}[Proof of Theorem~\ref{thm:pinnedmain}]
First suppose $1<\dim_{\mathrm H}E\le\frac{3}{2}$ and
$1\le p<\dim_{\mathrm H}E$. Write $t=\dim_{\mathrm H}E$.
Suppose, to obtain a contradiction, that the exceptional set in the
statement has dimension greater than $p$. Since
\[
 \{x\in\mathbb R^2:\dim_{\mathrm H}\Delta_xE<B(p,t)\}
 =\bigcup_{\substack{q\in\mathbb Q\\0<q<B(p,t)}}
   \{x\in\mathbb R^2:\dim_{\mathrm H}\Delta_xE<q\},
\]
countable stability supplies $q<B(p,t)$ for which the latter set
has dimension greater than $p$.

Choose $p'>p$, with $p'>1$, smaller than both this exceptional
dimension and $t$. Since $B$ is nondecreasing in its first parameter,
$B(p',t)\ge B(p,t)>q$. By continuity in its second parameter,
choose $p'<s<t$ such that $q<B(p',s)$. Here $s<\frac{3}{2}$.
Choose a compact source subset $K\subset E$ with
$\dim_{\mathrm H}K>s$. Every selected bad pin for $E$ is also a
bad pin for $K$, so the Borel set
\[
 B_q(K)=\{x\in\mathbb R^2:\dim_{\mathrm H}\Delta_xK<q\}
\]
has dimension greater than $p'$. Its Borel measurability follows
by expressing small Hausdorff content through finite rational
open covers of the compact set $\Delta_xK$.

Take a source Frostman measure $\mu$ of exponent greater than $s$
on a compact subset of $K$, and a pin Frostman measure $\nu$ of
exponent greater than $p'$ on a compact subset of $B_q(K)$.
Positive-mass restrictions arrange separated compact supports.
Fix one oracle $A$ encoding the supports and both measures,
including their dyadic masses, as in Section~\ref{sec:classical}.
Lemma~\ref{lem:measure-coding} and Fubini's theorem imply full-oracle
independence for product-almost every pair in both orientations.
The Frostman bounds also imply
$\dim^A x>p'$ and $\dim^A y>s$ for product-almost every pair.

Apply Orponen's radial absolute-continuity theorem~\cite{O} to
each measure. In either orientation its exceptional set has
dimension less than one, and hence has zero mass for the opposite
measure. The angular complexity exceptional sets have zero
arclength. Radial absolute continuity and Fubini therefore imply
\[
 K_r^{A,x}(\mathbf e)\ge r-O(\log r),\qquad
 K_r^{A,y}(\mathbf e)\ge r-O(\log r)
\]
for product-almost every pair, where $\mathbf e=\frac{y-x}{|y-x|}$.
Changing the sign of the direction in the second orientation
costs only a constant. The second inequality implies the finite
conditional bound required in
Proposition~\ref{prop:asymmetric-pinned}.
Select one pair satisfying these conclusions. That proposition yields
\[
 \dim^{A,x}|x-y|\ge B(p',s)>q.
\]
On the other hand, the compact source support is effectively
compact relative to $A$. Its distance image is effectively compact
relative to $(A,x)$, so the restricted point-to-set principle implies
\[
 \dim^{A,x}|x-y|\le\dim_{\mathrm H}\Delta_xK<q.
\]
This is a contradiction. Notice that the two directional estimates
were obtained separately. Randomness relative to $x$ alone would
not supply the estimate conditioned on $y$.

If $\dim_{\mathrm H}E\ge\frac{3}{2}$, choose compact source subsets
with dimensions below $\frac{3}{2}$ and increasing to it. The
$p=1$ conclusion just proved supplies lower bounds increasing to
one, outside a countable union of exceptional sets of dimension at
most one. Thus the pinned distance sets have full dimension there.
This also proves the assertion for every $1\le p<\dim_{\mathrm H}E$,
since $B(p,\dim_{\mathrm H}E)=1$ in this range.
\end{proof}

\part{Partition estimates and the two endpoint profiles}\label{part:prescribed-pin}

\section{Why the scalar method stops at three quarters}
\label{sec:pinned-obstruction}

The estimates just proved still approach $\frac{3}{4}$ as the
source dimension decreases to one. Is this only a weakness in how
we optimized them? The following construction shows that it is a
limitation of the pointwise scalar system without a common-source
mass bound. The later annular cap records information about one
measure across many pairs, which is an additional kind of constraint.

We allow every finite yellow--teal partition from Stull's projection
argument, so the obstruction persists even after the projection
gain is strengthened. The resulting profiles satisfy all the
displayed inequalities, have endpoint dimensions greater than one,
and have $d_x(1)$ arbitrarily close to $\frac{3}{4}$. The opposite
distance profile has $d_y(1)=1$, which explains why the unpinned
argument leads to no contradiction.

The stronger partition functional has a second use. Near the
barrier, it forces the pin profile to contain a plateau.
That plateau will provide the pin spacing estimates in
Part~\ref{part:geometry}.

For a profile $g$, put $h_g(u)=g(u)-u$. For a labelled
yellow--teal partition $\mathcal P$, define its loss by
\[
\operatorname{loss}_g(\mathcal P)=
 \sum_{[v,w]\in\mathcal P\text{ yellow}}
       \bigl(h_g(w)-h_g(v)\bigr).
\]
Let $\mathcal L_t(g;R)$ be the infimum of this loss over finite
partitions of $[0,R]$ with piece lengths at most $t$. The colors
have the endpoint-minimum meanings given in
Section~\ref{sec:profiles}. A partition template records its
ordered endpoints and color labels.
It can be transported to a
nearby interval as in the next lemma. Define
\[
 P_g(t;R)=g(R)-\mathcal L_t(g;R),\qquad 
 \widehat p_g(t;b)=\max\left\{
   \min\left\{b-t,\frac{g(b-t)+t}{2}\right\}, 
   P_g(t;b-t)\right\},
\]
and set $P_g(t;0)=0$, so $\widehat p_g(b;b)=0$.
For $0<a\le b\le1$, define
\[
\widehat H_{f,g}(a,b)=
\min\left\{f(b),
\inf_{a\le t\le b}\bigl(f(t)+\widehat p_g(t;b)\bigr)\right\}.
\]
Thus $\widehat H_{f,g}$ is $H_{f,g}$ with $p_g$ replaced by
$\widehat p_g$. Telescoping the increments of $h_g$ shows
that $\mathcal L_t(g;R)\ge h_g(R)$, because teal increments are
nonpositive. Consequently
\begin{equation}\label{eq:pinned-gain-cap}
 \widehat p_g(t;b)\le b-t.
\end{equation}
Lemma~\ref{lem:partition-endpoints} below shows that
$P_g(t;b-t)\ge p_g(t;b)$, so in fact $\widehat p_g(t;b)=P_g(t;b-t)$
for $t<b$. We keep the closed gain in the notation because $p_g$,
unlike $P_g$ (Remark~\ref{rem:partition-nonmonotonicity}), is
monotone in $g$, which the envelope arguments of
Sections~\ref{sec:thirteen-seventeen}, \ref{sec:refined},
and~\ref{sec:unpinned} use.

\begin{lemma}[Uniform finite witnesses for the partition gain]
\label{lem:uniform-partition-witnesses}
The map $(g,t,R)\mapsto P_g(t;R)$ is lower semicontinuous for the
uniform topology on profiles and for $t,R>0$.
Fix $0<a<\frac{b}{2}\le\frac{1}{2}$, profiles $f,g$, and
$c<\widehat H_{f,g}(a,b)$. There exist $\xi>0$, an integer $m$,
and finitely many finite partition templates with the following
property. If $\|f'-f\|_\infty+\|g'-g\|_\infty<\xi$, then
$f'(b)>c+\xi$ and, for every $t\in[a,\frac{b}{2}]$, either the
closed projection gain or a partition obtained from one of the
templates yields
\[
f'(t)+\max\left\{
\min\left\{b-t,\frac{g'(b-t)+t}{2}\right\},
g'(b-t)-\operatorname{loss}_{g'}(\mathcal P_t)
\right\}>c+\xi.
\]
The partition $\mathcal P_t$ has at most $m$ pieces, covers
$[0,b-t]$, and has all piece lengths at most $t$.
Templates are transported by linear rescaling, trimming, and
splitting at minima. Thus the number of pieces and the strict
margin are uniform in the competitor separation.
\end{lemma}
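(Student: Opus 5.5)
The plan has two parts: first lower semicontinuity of $P_g(t;R)$, then a compactness argument over $t\in[a,\frac b2]$ that produces finitely many templates.

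\emph{Lower semicontinuity.} Since $P_g=g(R)-\mathcal L_t(g;R)$ and $g\mapsto g(R)$ is continuous, it suffices to show that $\mathcal L_t(g;R)$ is upper semicontinuous in $(g,t,R)$. Fix a partition $\mathcal P$ of $[0,R]$ with piece lengths at most $t$ and loss within $\epsilon$ of the infimum. Take nearby data $(g',t',R')$ and transport $\mathcal P$ to $[0,R']$ by the linear rescaling $u\mapsto uR'/R$. If $t'<t$ and some pieces become too long, split them, placing each new cut at a minimum of $h_{g'}$ on the piece. The point of this choice is that splitting a yellow piece $[v,w]$ at a minimum point $m$ of $h_{g'}$ yields $[v,m]$ teal and $[m,w]$ yellow. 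The yellow increment becomes $h_{g'}(w)-h_{g'}(m)\le h_{g'}(w)-h_{g'}(v)+\operatorname{osc}$, where the oscillation term is small because we are only correcting a slightly perturbed piece.

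The main difficulty is that the colors are endpoint-minimum conditions, so they need not survive a perturbation. A yellow piece for $g$ may fail to be yellow for $g'$. I would repair such a piece by trimming: replace $[v,w]$ by splitting at the $g'$-minimizer $m$ of $h_{g'}$ on $[v,w]$. Then $[v,m]$ is teal and $[m,w]$ is yellow. Because $h_g(v)$ is the minimum of $h_g$ on $[v,w]$ and $\|h_g-h_{g'}\|_\infty<\xi$, we get $h_{g'}(m)\ge h_{g'}(v)-2\xi$. So the new yellow increment exceeds the old one by at most $O(\xi)$ per piece. Teal pieces are handled symmetrically: split at the minimizer, with the left part teal and the right part yellow, and the yellow part has increment $\le 2\xi$. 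The Lipschitz bound (the constant 2 for profiles) controls the effect of the rescaling on endpoint values. The total error is therefore $O(m\xi)$, where $m$ is the number of pieces, and this tends to zero. This gives upper semicontinuity of $\mathcal L$, with the same number of pieces up to a factor of 2 plus the number of splits forced by the length constraint. That number is bounded by $\lceil R/t'\rceil$ per piece and is uniform near $(t,R)$.

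\emph{Compactness in $t$.} Fix $c<\widehat H_{f,g}(a,b)$ and set $\epsilon=\widehat H-c$. Then $f(b)>c+\epsilon$. For each $t_0\in[a,\frac b2]$ we have $f(t_0)+\widehat p_g(t_0;b)\ge c+\epsilon$, so at least one of the following holds.
\begin{enumerate}
\item The closed gain satisfies $f(t_0)+\min\{b-t_0,\frac{g(b-t_0)+t_0}{2}\}>c+\frac{\epsilon}{2}$. This expression is jointly continuous in $(f,g,t)$, so the inequality persists on a neighbourhood of $(f,g,t_0)$ with margin $\frac\epsilon4$.
\item There is a finite partition $\mathcal P_{t_0}$ of $[0,b-t_0]$ with pieces of length $\le t_0$ and $f(t_0)+g(b-t_0)-\operatorname{loss}_g(\mathcal P_{t_0})>c+\frac\epsilon2$. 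Take $\mathcal P_{t_0}$ as a template. By the transport step above, for $(f',g',t)$ near $(f,g,t_0)$ the transported partition has length $\le t$ and loss increased by at most $\frac\epsilon8$. The inequality therefore persists with margin $\frac\epsilon8$.
\end{enumerate}
The interval $[a,\frac b2]$ is compact, so finitely many such neighbourhoods cover it. Take $\xi$ smaller than the minimum of the neighbourhood radii, of $\frac\epsilon8$, and of $f(b)-c-\epsilon$'s slack, and let $m$ be the maximum piece count of the transported templates. The bound $t\ge a$ together with $b-t\le b$ bounds the splitting count by $O(b/a)$ times the template size, which makes $m$ uniform.

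\emph{Remaining checks.} Two points need verifying in the write-up. First, the transported partition still covers exactly $[0,b-t]$. This is ensured by rescaling by the factor $(b-t)/(b-t_0)$. Second, the transported pieces are genuinely yellow or teal for $g'$. This is ensured because every label is reassigned after splitting at minimizers of $h_{g'}$, which is the ``splitting at minima'' operation named in the statement.
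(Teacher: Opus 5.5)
Your overall route is the paper's. You transport a near-optimal partition by the rescaling $u\mapsto uR'/R$ and restore the endpoint-minimum colors by cutting each piece at a minimizer of $h_{g'}$; your bookkeeping there, $O(\xi)$ extra for a yellow piece and at most $2\xi$ for a teal one, is fine. You then cover $[a,\frac b2]$ by finitely many neighbourhoods on which either the closed gain or one transported template keeps a margin.

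The gap is in enforcing the length bound. You fix overlong pieces by ``placing each new cut at a minimum of $h_{g'}$ on the piece''. A minimizer need not be an interior point that shortens the piece: on a yellow piece the minimum sits at the left endpoint, and on a teal piece at the right endpoint. So this cut may do nothing, or may leave a piece still longer than $t'$.

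If instead you cut at an arbitrary point, the loss is not controlled. Take a green piece $[v,w]$ on which $h$ rises with slope one to the midpoint and falls back. Its cost is $0$, but a midpoint cut costs $\frac{w-v}{2}$, however little the piece exceeded the cap. Your total error ``$O(m\xi)$'' counts only the recoloring, not these cuts. The restriction ``if $t'<t$'' also misses that rescaling with $R'>R$ lengthens pieces.

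The missing observation is that a transported piece exceeds the new cap by at most $e=\frac{t}{R}|R'-R|+|t'-t|$, which tends to zero. So a single cut at the correct end suffices:
\begin{itemize}
\item For a yellow piece $[v,w]$, keep $[v,v+t']$. It is still yellow, since its left endpoint is still the minimizer.
\item For a teal piece, keep $[w-t',w]$. It is still teal, since its right endpoint is still the minimizer.
\end{itemize}
The leftover piece has length at most $e$. After repair at a minimum it costs at most $e$, by the $1$-Lipschitz bound on $h_{g'}$, and shortening a yellow piece changes its increment by at most $e$. This gives loss $B+O\bigl(\|g'-g\|_\infty+|R'-R|+|t'-t|\bigr)$ with at most $6k$ pieces. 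This is the ``trimming'' named in the statement. Your compactness step needs exactly this, since it transports a template from $[0,b-t_0]$ with cap $t_0$ to $[0,b-t]$ with cap $t$, where both the interval and the cap change.
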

\begin{proof}
Put $h=g-u$. Take a witnessing partition of $[0,R]$ with $k$
pieces and loss $B$. If $(R',t',g')$ is close to $(R,t,g)$, rescale
its endpoints by $\frac{R'}{R}$. The transported function and
$h'=g'-u$ differ by at most
\[
\|g'-g\|_\infty+2|R'-R|
\]
on corresponding points. Split each transported piece at a minimum
of $h'$. For an originally yellow piece, its new loss is at most
its old loss plus twice this error.
For an originally teal piece
the new loss is at most twice this error. Indeed, if $v$ is the
new minimum on $[u,w]$, the repaired loss is $h'(w)-h'(v)$.

A transported piece exceeds $t'$ by at most
$e=\frac{t}{R}|R'-R|+|t'-t|$. In a sufficiently small
neighborhood $e<t'$. For a yellow piece, split off its terminal
excess and repair that short piece at a minimum. For a teal
piece, keep its terminal interval of length $t'$ and repair the
short initial piece. The $1$-Lipschitz bound charges at most
$2e$ per piece. Hence a repaired partition has at most $6k$
pieces and loss
\[
B+O_{k,R,t}\bigl(\|g'-g\|_\infty+|R'-R|+|t'-t|\bigr).
\]
Taking a partition within any positive tolerance of the infimum
proves upper semicontinuity of $\mathcal L_t(g;R)$, and therefore
lower semicontinuity of $P_g(t;R)$.

Now put $\eta_{\mathrm{gap}}=\widehat H_{f,g}(a,b)-c>0$. At every
$t\in[a,\frac{b}{2}]$, the closed gain or one finite partition
witnesses a sum greater than $c+\frac{3\eta_{\mathrm{gap}}}{4}$.
The terminal cap
satisfies $f(b)\ge c+\eta_{\mathrm{gap}}$. The preceding repair argument and
continuity of the closed gain preserve a margin greater than
$\frac{\eta_{\mathrm{gap}}}{2}$ in a neighborhood of that separation and of the
two profiles. A finite subcover of the compact separation interval
provides finitely many templates. Take the maximum of their repaired
piece counts and the minimum of their neighborhood radii, and
decrease $\xi$ below $\frac{\eta_{\mathrm{gap}}}{2}$. These choices prove the claim.
\end{proof}

\begin{lemma}[The full partition transfer]\label{lem:fullpartition}
Let $x,y$ satisfy (H1)--(H4) relative to $A$. Along every sequence
of precisions on which the four profiles converge uniformly,
the limiting profiles satisfy
\[
d_x(b)-d_x(a)\ge[\widehat H_{f,g}(a,b)-f(a)]_+
 \quad \text{for } 0<a<b\le1,
\]
and, with $y$ as the pin,
\[
d_y(b)-d_y(a)\ge[\widehat H_{g,f}(a,b)-g(a)]_+.
\]
\end{lemma}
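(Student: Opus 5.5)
The plan is to repeat the proof of Lemma~\ref{lem:transfer} with the closed projection gain $p_g$ replaced by $\widehat p_g$. The only change is in the lower bound for the complexity of a competitor. We fix $0<a<b\le1$ and $f(a)<c<\widehat H_{f,g}(a,b)$, cap the source complexity at $m_j=\lfloor cr_j\rfloor$ with the lowering oracle $G_j=D(n_j,y,\frac{m_j}{n_j})$, and aim for a bound
\[
K_{br_j}^{A,G_j}(z)\ge cr_j+\min\{\widehat p_g(s;b),h\}r_j-o(r_j)
\]
for every competitor at separation $sr_j$ with $a\le s\le b$. Once this is available, the tangent range, \cite[Lemma~15]{FS}, and the passage from conditional source increments to distance increments are used verbatim as in the proof of Lemma~\ref{lem:transfer}. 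This gives $d_x(b)-d_x(a)\ge c-f(a)$. The statement with $y$ as the pin follows by exchanging roles according to Table~\ref{tab:roles}. If $a\ge\frac{b}{2}$, then $\widehat p_g(t;b)\le b-t$ by \eqref{eq:pinned-gain-cap}. In that range the closed gain already equals $b-t$, so the tangent estimate suffices and nothing new is needed.

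The key new input is a partitioned projection estimate. For $t\in[a,\frac{b}{2}]$, we apply Lemma~\ref{lem:uniform-partition-witnesses} with this $c$. It supplies $\xi>0$, a piece bound $m$, and finitely many templates. At each separation $t$, either the closed gain suffices, or a partition $\mathcal P_t$ of $[0,b-t]$ into at most $m$ pieces of length at most $t$ gives
\[
f'(t)+g'(b-t)-\operatorname{loss}_{g'}(\mathcal P_t)>c+\xi
\]
for the finite-precision interpolated profiles $f',g'$, once $j$ is large. We then rerun Stull's argument in the proof of \cite[Theorem~13]{S}, bounding $K_{R}^{A}(x\mid p_vx,v)$ with $R=(b-t)r_j$. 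Instead of the greedy partition of \cite[Lemma~18]{S}, we use this prescribed partition rescaled to precision $R$. On a teal piece, the local projection lemma recovers the full increment of $x$. On a yellow piece of length at most $t$ (the directional precision), the local lemma loses at most the increment of $h_g$ on that piece. Summing over the pieces bounds the retained projection information below by $(g(b-t)-\operatorname{loss}_g(\mathcal P_t))r_j-o(r_j)$. This is the needed term $P_g(t;b-t)$, and inserting it into \cite[Lemma~30]{FS} with the lowering oracle gives the displayed competitor bound.

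Uniformity is handled as in Lemma~\ref{lem:projection}. The number of pieces is bounded by $m$, independently of the direction $v$ and the separation. The directional deficiency is controlled through precision $t\ge ar_j$ by (H2) via the bisector-normal identity. Pieces below $\sqrt R$ are discarded. The piece-length restriction ``at most $t$'' is exactly what the local lemmas need, since they compare each piece with the directional precision available. Thus one threshold $j_0$ serves all competitors.

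I expect the main obstacle to be the justification that Stull's local lemmas in \cite[\S3]{S} apply to an \emph{arbitrary} yellow--teal partition with pieces of length at most $t$, with yellow loss exactly $h_g(w)-h_g(v)$. In Stull's proof these lemmas are applied only to the greedy partition. The colour conditions of Section~\ref{sec:profiles} were defined to match the hypotheses of the local lemmas, so I would check this piece by piece, first for the teal estimate (his Lemma~19 type) and then for the yellow one. A second obstacle is that these hypotheses must hold for the finite-precision profiles, while the template was built for the limit. Here I rely on the splitting-at-minima repair in Lemma~\ref{lem:uniform-partition-witnesses}, which makes the colour conditions hold exactly for $g'$ at the cost of a $\xi$-small loss.
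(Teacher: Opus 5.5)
Your proposal is correct and follows essentially the same route as the paper. Stull's greedy partition is replaced by the finitely many templates of Lemma~\ref{lem:uniform-partition-witnesses}: yellow pieces cost their increment of $h_g$, teal pieces cost $o(1)$, and the bounded piece count makes the estimates uniform over competitors. Separations of at least $\frac{b}{2}$ go through the tangent estimate, and the resulting gain is substituted into the capped collision argument of Lemma~\ref{lem:transfer}.

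The one minor difference is how the mismatch between the finite-precision and limiting colour conditions is handled. The paper uses the additive slack already allowed in Stull's local lemmas on pieces of terminal precision at least $\kappa N$. It discards $[0,\kappa N]$, encodes the template endpoints at cost $O(m\log N)$, and then lets $\kappa\searrow0$. You instead make the colours exact for the finite profile through the splitting-at-minima repair, and either approach works.
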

\begin{proof}
Fix a finite yellow--teal partition of $[0,R]$ with interval lengths at
most $t$. Uniform convergence of the normalized pin complexity makes
its intervals approximately yellow or teal, with errors tending to zero.
The hypotheses in \cite[Lemmas~14--15]{S} already allow an additive
error proportional to the terminal precision of the interval. If the
uniform profile error is $\varepsilon_N\to0$, the yellow or teal
condition has slack at most $2\varepsilon_NN+O(\log N)$ after
rounding and applying the chain rule. On pieces whose terminal
precision is at least $\kappa N$, this slack is $o(1)$ times that
precision, for each fixed $\kappa>0$. Thus those lemmas apply.
A yellow interval costs its increment of $g-u$, and a teal interval costs
$o(1)$ after normalization. The chain rule sums these costs over the
fixed finite partition, as in \cite[Lemma~17]{S}. Directional randomness
through $tN-O(\log N)$ suffices.
At a length boundary, trim an $O(\log N)$ precision segment from
the right endpoint of a yellow interval or the left endpoint of a
teal interval, preserving its color condition, and charge the
removed segment by its full complexity.
Thus this partition retains $g(R)$ minus its yellow loss. Take the
infimum over finite partitions after the large-precision limit.
No effective procedure for recovering a limiting profile or an
optimal partition is needed. Once the cap and its strict margin
are fixed, the next paragraph selects finitely many templates
with a fixed bound on their numbers of pieces. At precision $N$,
encode their rounded integer endpoints and color labels as part
of the description. This costs $O(m\log(N+2))$ bits for at most
$m$ pieces, and hence disappears after division by $N$.
The template count is fixed before $N$ tends to infinity.

Lemma~\ref{lem:uniform-partition-witnesses} supplies finitely many
partition templates and a uniform strict margin for a cap below
$\widehat H_{f,g}(a,b)$ on the early separation interval
$[a,\frac{b}{2}]$. If that interval is empty, only the tangent
estimate is needed. To avoid
any threshold issue from tiny pieces, discard an initial precision
range $[0,\kappa N]$, costing at most $2\kappa N+o(N)$, and charge newly
trimmed tiny pieces by their full complexity. Hold $\kappa>0$ fixed as
$N\to\infty$, then let $\kappa\searrow0$. Every remaining local
projection estimate is applied at a uniformly positive macroscopic
precision. The direction-uniform bounds in \cite[Lemmas~14--15]{S}
therefore apply to all competitors.

For $t\ge \frac{b}{2}$, the tangent estimate yields $b-t$ as before. Substituting
these uniform projection bounds into the capped collision argument of
Lemma~\ref{lem:transfer} completes the proof. The positive pin information
is preserved by the lowering oracle and its negative projection
residual can only decrease.
No randomness relative to that extra oracle
is assumed.
\end{proof}

\begin{proposition}[A scalar obstruction for a prescribed pin]
\label{prop:pinned-obstruction}
For every $0<\theta<\frac{1}{4}$ there are nondecreasing, $2$-Lipschitz
profiles $f_\theta,g_\theta$ vanishing at zero, with
\[
 (1+\theta)u\le f_\theta(u),g_\theta(u)\le2u,
\]
and nondecreasing, $1$-Lipschitz functions $d_x,d_y$ vanishing at zero,
such that both prefix inequalities, the refined prefix inequalities
\eqref{eq:prefix-rate} for every coefficient $b(p,s)$ with
$1<p\le s\le1+\theta$, and both strengthened transfer
inequalities with $\widehat H$ hold, but
\[
 d_x(1)=\frac{3}{4}+\theta,\qquad  d_y(1)=1.
\]
Thus these scalar constraints alone cannot force
$d_x(1)\ge\frac{3}{4}+\varepsilon$ for any fixed universal
$\varepsilon>0$, when $x$ is the prescribed pin.
\end{proposition}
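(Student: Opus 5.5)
The plan is to construct the profiles explicitly as piecewise linear functions with a minimal pin, $g_\theta(u)=(1+\theta)u$, and a source $f_\theta$ that grows slowly at first and at the maximal rate later. Two observations make this tractable.

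First, the opposite distance profile costs nothing. Set $d_y(u)=u$. It is $1$-Lipschitz and nondecreasing, and every lower bound imposed on $d_y$ is at most $u$ or at most $b-a$. The prefix bound is $u/2+g(u)/4\le u$. The refined prefix bound is $(1-2b)u+bg(u)\le u$ because $b\le\frac14$. The transfer bound satisfies $\widehat H_{g,f}(a,b)-g(a)\le \widehat p_f(a;b)\le b-a$ by \eqref{eq:pinned-gain-cap}, and also $\le g(b)-g(a)$ through the terminal cap. Hence all constraints on $d_y$ hold, and $d_y(1)=1$. This is exactly why the unpinned contradiction is unavailable.

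Second, for linear $g$ the partition gain is explicit. Here $h_g(u)=\theta u$ is increasing, so every piece is yellow and $\operatorname{loss}_g=\theta R$. This gives $P_g(t;R)=R$, and hence $\widehat p_g(t;b)=b-t$, which is no larger than the closed gain in the relevant range. I would double-check against Lemma~\ref{lem:partition-endpoints}, since the closed gain $\frac{(1+\theta)(b-t)+t}{2}$ is smaller than $b-t$ only for $t<\frac{(1-\theta)b}{2-\theta}$. If the maximum in $\widehat p_g$ really equals $b-t$ throughout, then the competitor cost is $f(t)+b-t$. Otherwise it is the smaller closed gain for small $t$.

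If the cost is $f(t)+b-t$, the design principle is to make $h_f=f-u$ as small as possible at the separations that matter. Transfer on $[a,b]$ then forces only $\inf_{t\ge a}(h_f(t)-h_f(a))+(b-a)$, capped by $f(b)-f(a)$. I would take
\[
f_\theta(u)=\begin{cases}(1+\theta)u,& u\le u_0,\\ (1+\theta)u_0+2(u-u_0),&u\ge u_0,\end{cases}
\]
with $u_0$ chosen so that the prefix value $\frac12+\frac{f(1)}4$ and the forced transfer increments total exactly $\frac34+\theta$ at $u=1$. Given $f$, I would define $d_x$ as the least nondecreasing $1$-Lipschitz majorant of all the lower bounds. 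Concretely,
\[
d_x(u)=\max\Bigl\{\tfrac u2+\tfrac{f(u)}4,\ \sup_b\bigl[(1-2b)u+bf(u)\bigr],\ \sup_{a<u}\bigl(d_x(a)+[\widehat H_{f,g}(a,u)-f(a)]_+\bigr)\Bigr\},
\]
computed recursively. I would then check that $d_x$ stays $1$-Lipschitz, which follows if every lower bound has slope at most one. If the bookkeeping gives $d_x(1)<\frac34+\theta$, I would raise $d_x$ linearly near $u=1$ to hit the value exactly.

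The main obstacle is verifying that the transfer increments do not accumulate beyond the target, uniformly over all $0<a<b\le1$. The danger is that chaining short transfer intervals inside the rate-two region forces slope one there, because $h_f$ increases and the infimum sits at $t=a$. If that happens, I would move the fast-growth region to where the terminal cap $f(b)$ or the small-$t$ closed gain limits the forced increment. Alternatively, I would reverse the order: fast growth first, with $h_f$ then decreasing, so that $\inf_{t\ge a}h_f(t)$ sits near $b$ and the forced increment $h_f(b)-h_f(a)+(b-a)=f(b)-f(a)$ is controlled by a small total amount of source growth. Choosing between these orderings and tuning $u_0$ so that the sum is $\frac34+\theta$ is the core computation; everything else is routine piecewise linear casework. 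Finally, the refined prefix coefficients $b(p,s)$ for $s\le1+\theta$ lie near $\frac14$ and only perturb the endpoint value by $O(\theta)$, which the choice of $u_0$ absorbs.
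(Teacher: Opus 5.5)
Your treatment of $d_y(u)=u$ is correct and matches the paper. The minimal pin $g_\theta(u)=(1+\theta)u$, however, is fatal, and no choice of $f_\theta$ or $d_x$ can repair it.

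You computed correctly that $h_g(u)=\theta u$ makes every piece yellow, so $P_g(t;R)=R$. Since $\widehat p_g$ is the \emph{maximum} of the closed gain and $P_g(t;b-t)$, and the closed gain never exceeds $b-t$, one has $\widehat p_g(t;b)=b-t$ for every $t<b$. There is no range of $t$ on which the smaller closed gain takes over. Consequently, for any admissible $f$, $\widehat H_{f,g}(a,1)=1+\inf_{a\le t\le1}(f(t)-t)\ge1$. The single transfer on $[a,1]$ then gives $d_x(1)\ge1-f(a)\ge1-2a$ for every $a>0$, so $d_x(1)=1$. This is exactly the affine example at the end of Remark~\ref{rem:partition-nonmonotonicity}.

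So the obstacle you anticipate, accumulation of many short transfers to be cured by relocating the fast-growth region of $f$ or tuning $u_0$, is not the real one. One long transfer starting near zero already forces full dimension, because the threshold is at least $1$ independently of $f$. The terminal cap $f(1)\ge1$ cannot help either.

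The missing idea is that the pin must be far from minimal: its excess $g-u$ has to carry a plateau so that the full partition gain is small. The paper takes $h_0$ rising with slope one on $[0,\frac14]$, constant on $[\frac14,\frac12]$, and falling to zero on $[\frac12,\frac34]$, and sets $g_\theta=(1-\theta)(u+h_0)+2\theta u$.

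A partition piece of length less than $\frac14$ that meets the initial rise ends before $\frac12$ and cannot be teal. Hence $\mathcal L_t(g_\theta;R)\ge\frac14$ for $t<\frac14<\frac12<R$, which gives $P_{g_\theta}(a;b-a)\le\frac34+\theta-a$ whenever $a<\frac14$ and $b>\frac34+\theta$. The paper pairs this pin with the source $f_\theta=\min\{2u,1+\theta\}$, which is your ``fast growth first'' alternative; its terminal cap handles $a\ge\frac14$. With $d_x=\min\{u,\frac34+\theta\}$, every transfer inequality is checked by evaluating the infimum in $\widehat H$ at $t=a$. The refined prefixes follow from $b(p,s)\ge b_{\mathrm{sym}}(\theta)$.

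This plateau is exactly the structure that Proposition~\ref{prop:pin-plateau} shows any nearly critical pinned profile must have. That is why a linear pin cannot appear in the countermodel.
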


\begin{proof}
Put $F=1+\theta$, $Q=\frac{3}{4}+\theta$, and define
\[
 h_0(u)=
 \begin{cases}
 u,&0\le u\le\frac{1}{4},\\
 \frac{1}{4},&\frac{1}{4}\le u\le\frac{1}{2},\\
 \frac{3}{4}-u,&\frac{1}{2}\le u\le\frac{3}{4},\\
 0,&\frac{3}{4}\le u\le1,
 \end{cases}
   g_0(u)=u+h_0(u).
\]
Take
\[
 f_\theta(u)=\min\{2u,F\},\qquad 
 g_\theta(u)=(1-\theta)g_0(u)+2\theta u,\qquad
   d_x(u)=\min\{u,Q\},\qquad  d_y(u)=u.
\]
The stated monotonicity, Lipschitz and dimension bounds are immediate.
The prefix inequality for $d_x$ holds when $u\le Q$ since $f_\theta(u)\le2u$,
and when $u\ge Q$ since
\[
 \frac{u}{2}+\frac{f_\theta(u)}{4}
 \le\frac{1}{2}+\frac{F}{4}=\frac{3}{4}+\frac{\theta}{4}\le Q.
\]
The prefix inequality for $d_y$ follows from $g_\theta(u)\le2u$.
For the refined prefixes, $c(p,s)$ increases in both parameters,
so $b(p,s)\ge b_{\mathrm{sym}}(\theta)>0$. Their right side is
$u-b(2u-f_\theta(u))\le u$ when $u\le Q$. When $u\ge Q$,
$f_\theta(u)=1+\theta$, and that right side is at most
\[
1-b_{\mathrm{sym}}(\theta)(1-\theta)
=\frac{(1+\theta)(3-\theta)}{4-\theta}<Q,
\qquad Q-\frac{(1+\theta)(3-\theta)}{4-\theta}
=\frac{5\theta}{4(4-\theta)}>0.
\]
For $d_y$, the refined prefixes again follow from $g_\theta(u)\le2u$.

We first record the relevant obstruction to projection partitions.  Write
$h_\theta=g_\theta-u$.  It equals $u$ on $[0,\frac{1}{4}]$ and is
nondecreasing on $[0,\frac{1}{2}]$.  If $0<t<\frac{1}{4}$ and $R>\frac{1}{2}$, every
partition interval whose interior meets $[0,\frac{1}{4}]$ ends before $\frac{1}{2}$.
It cannot be teal while containing a positive part of that initial
increase.  Therefore its yellow costs, summed over the initial rise,
are at least $\frac{1}{4}$.  Hence
\begin{equation}\label{eq:pinned-loss-barrier}
 \mathcal L_t(g_\theta;R)\ge\frac{1}{4}
  \quad \text{for } 0<t<\frac{1}{4}, R>\frac{1}{2}.
\end{equation}
Also $g_\theta(R)\le g_0(R)+\theta$ for $0\le R\le1$.

Consider the transfer inequality for $d_x$ on $0<a<b\le1$.  If $b\le Q$, the
distance increment is $b-a$, and \eqref{eq:pinned-gain-cap} proves the
required inequality by evaluating the infimum at $a$.
Suppose $b>Q$.  If $a\ge\frac{1}{4}$, the terminal cap implies
$\widehat H_{f_\theta,g_\theta}(a,b)\le F$.  When $a<\frac{F}{2}$,
\[
 F-f_\theta(a)=F-2a\le Q-a;
\]
when $a\ge \frac{F}{2}$, this source increment is zero.  These statements
give the required transfer, including the case $a\ge Q$.

It remains to consider $a<\frac{1}{4}$ and $b>Q$.  Set $R=b-a>\frac{1}{2}$.
By \eqref{eq:pinned-loss-barrier},
\[
 P_{g_\theta}(a;R)\le g_0(R)+\theta-\frac{1}{4}\le Q-a.
\]
For the last inequality, use $g_0(R)=R$ when $R\ge\frac{3}{4}$, and
$g_0(R)=\frac{3}{4}$ when $\frac{1}{2}<R<\frac{3}{4}$.
The other gain in the maximum defining $\widehat p$ is also at most
$Q-a$: when $R\ge\frac{3}{4}$ it is at most $\frac{b+\theta}{2}$, and otherwise
it is at most $\frac{\frac{3}{4}+\theta+a}{2}$.
Both are bounded by $Q-a$ for
$a<\frac{1}{4}$ and $b\le1$.
Evaluating the competitor infimum at $a$ now yields
\[
 \widehat H_{f_\theta,g_\theta}(a,b)
 \le f_\theta(a)+Q-a.
\]
This is the required transfer inequality for $d_x$, because $d_x(a)=a$ and $d_x(b)=Q$.

Finally, the transfer inequality for $d_y(u)=u$ always holds.
\[
 \widehat H_{g_\theta,f_\theta}(a,b)
 \le g_\theta(a)+\widehat p_{f_\theta}(a;b)
 \le g_\theta(a)+b-a.
\]
This completes the verification of every interval for both choices of pin.
\end{proof}

\begin{remark}[The partition gain is not monotone]
\label{rem:partition-nonmonotonicity}
Let $g_1(u)=u$ and $g_2(u)=u+h_0(u)$, with $h_0$ as in the preceding
proof. Then $g_1\le g_2$, but
\[
P_{g_1}\left(\frac{1}{4};\frac{3}{4}\right)=\frac{3}{4},\qquad
P_{g_2}\left(\frac{1}{4};\frac{3}{4}\right)=\frac{1}{2}.
\]
For $g_1$, every partition has zero loss. For $g_2$, a partition
piece of length at most $\frac{1}{4}$ meeting the initial increase in a
positive-length interval ends before $\frac{1}{2}$ and cannot be teal.
The yellow costs therefore sum to at least $\frac{1}{4}$. The partition
with endpoints $0,\frac{1}{4},\frac{1}{2},\frac{3}{4}$ attains that loss.
Thus a pointwise pin lower envelope cannot be substituted inside
$P_g$. The envelope updates in Sections~\ref{sec:thirteen-seventeen}
and~\ref{sec:refined} use only the closed gain $p_g$.
The plateau
argument below and the proof of Theorem~\ref{thm:main} in
Section~\ref{sec:unpinned} work directly with partition losses.

An affine pin profile $g(u)=\sigma u$, $1\le\sigma\le2$, illustrates
the information recovered by the full partition gain. Its loss is
$(\sigma-1)R$ for every partition, so $P_g(t;R)=R$.
The full transfer then implies $\widehat H_{f,g}(a,1)\ge1$, since
$f(u)\ge u$. Consequently $d_x(1)\ge1-f(a)\ge1-2a$ for every
$a>0$, and $d_x(1)=1$ follows by letting $a$ tend to zero.
\end{remark}

The countermodel is a statement about the displayed pointwise
inequalities, without the common-source mass bound of
Proposition~\ref{prop:conditional-parent-cap}.
It does not assert that a Euclidean pair realizes all four profiles.
The next section derives the common structure of nearly critical
profiles.
The geometric argument will use information shared by
several actual pairs.

\section{The plateau forced by a small pinned distance profile}
\label{sec:plateau}

The scalar obstruction does more than explain the limitation of the
previous argument. It suggests what must happen if a pinned distance
profile is nearly as small as $\frac{3}{4}$. We now prove that the
source must acquire information quickly at the early scales, and
that the pin must retain an excess of nearly $\frac{1}{4}$ over the
trivial profile on an interval of length nearly $\frac{1}{4}$. Both
statements are proved at a general transfer level, because a source
of dimension larger than one admits a larger level
(Section~\ref{sec:bootstrap}), and the level enters the final
exponent comparison.

\subsection{The certificate and the first hitting scale}

Throughout this section $f,g,d_x$ satisfy the profile bounds
\eqref{eq:profileclass}, the prefix estimate \eqref{eq:limitprefix}
with $x$ as the pin, and the transfer estimate for $d_x$ with
$\widehat H$ (Lemma~\ref{lem:fullpartition}). Fix
$0\le\Delta<\frac{1}{2}$ and put
\begin{equation}\label{eq:level-notation}
\lambda=1+\Delta,\qquad  F=\frac{1}{2}+\Delta,\qquad 
a_c=\frac{1}{4}+\frac{\Delta}{2},\qquad 
W_0=\frac{1-2\Delta}{4},\qquad 
q=\frac{3}{4}+\frac{\Delta}{2}+\omega,
\end{equation}
where $\omega\ge0$. Let $a_*$ be the first point
with $f(a_*)=F$.
It exists when $f(1)\ge F$. We say that the pair
has the certificate at level $\lambda$ if
\begin{equation}\label{eq:certificate}
H_{f,g}(a_*,1)\ge\lambda,
\end{equation}
where $H_{f,g}$ is the closed transfer functional \eqref{eq:H}. In
particular $f(1)\ge\lambda$. For $\Delta=0$ the certificate holds
for every admissible pair. For $a_*\le s\le\frac{1}{2}$ one has
$f(s)\ge\frac{1}{2}$ and $p_g(s;1)\ge\frac{1}{2}$, for
$s\ge\frac{1}{2}$ one has $f(s)\ge s$ and $p_g(s;1)=1-s$, and
$f(1)\ge1$. Section~\ref{sec:bootstrap} obtains $\Delta>0$ from a
source dimension larger than one.

\begin{lemma}[The first hitting scale]\label{lem:first-hit}
If the pair has the certificate at level $\lambda$ and
$d_x(1)\le q$, then
\[
a_c\le a_*\le a_c+2\omega .
\]
\end{lemma}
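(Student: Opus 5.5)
The lower bound comes from the Lipschitz profile bound alone. The upper bound comes from combining the transfer inequality on $[a_*,1]$ with the prefix estimate at $a_*$, after which the two bounds add to a quantity that $d_x(1)\le q$ controls.

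For the lower bound, $a_*$ is a point with $f(a_*)=F$, and \eqref{eq:profileclass} gives $f(u)\le2u$. Hence
\[
\frac{1}{2}+\Delta=F=f(a_*)\le2a_*,
\]
that is, $a_*\ge\frac{F}{2}=\frac{1}{4}+\frac{\Delta}{2}=a_c$. In particular $a_*>0$, so the transfer estimate may be applied on $[a_*,1]$.

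For the upper bound I would use Lemma~\ref{lem:fullpartition} with $a=a_*$ and $b=1$. First I would check that $\widehat H_{f,g}\ge H_{f,g}$. The maximum defining $\widehat p_g(t;b)$ contains the entry $\min\{b-t,\frac{g(b-t)+t}{2}\}=p_g(t;b)$, so $\widehat p_g\ge p_g$ for $t<b$. At $t=b$ both gains vanish. Both functionals use the same terminal cap $f(b)$, so the inequality follows. The certificate \eqref{eq:certificate} then gives
\[
d_x(1)-d_x(a_*)\ge\bigl[\widehat H_{f,g}(a_*,1)-f(a_*)\bigr]_+\ge\lambda-F=\frac{1}{2}.
\]

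Next I would bound the other term using the prefix estimate \eqref{eq:limitprefix} at $u=a_*$:
\[
d_x(a_*)\ge\frac{a_*}{2}+\frac{f(a_*)}{4}=\frac{a_*}{2}+\frac{1}{8}+\frac{\Delta}{4}.
\]
Adding the two bounds and using the hypothesis $d_x(1)\le q$ yields
\[
\frac{3}{4}+\frac{\Delta}{2}+\omega=q\ge d_x(1)\ge\frac{5}{8}+\frac{\Delta}{4}+\frac{a_*}{2}.
\]
Rearranging gives $\frac{a_*}{2}\le\frac{1}{8}+\frac{\Delta}{4}+\omega$, which is $a_*\le a_c+2\omega$.

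No step is a genuine obstacle; the only point needing care is the comparison $\widehat H\ge H$. It is needed because the certificate is stated for the closed functional $H$, while the standing hypothesis of this section is the transfer estimate with $\widehat H$. If one prefers, Lemma~\ref{lem:transfer} applies directly with $H$ and gives the same inequality.
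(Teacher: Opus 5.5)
Your proposal is correct and is essentially the paper's proof: the lower bound from $f(u)\le 2u$, and the upper bound from the prefix estimate at $a_*$ plus the transfer on $[a_*,1]$ and the certificate, giving $q\ge\frac{a_*}{2}+\frac{F}{4}+\frac{1}{2}$. Your check that $\widehat H_{f,g}\ge H_{f,g}$ is a harmless extra; as you note, Lemma~\ref{lem:transfer} gives the same inequality directly.
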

\begin{proof}
The bound $f(u)\le2u$ implies $a_*\ge\frac{F}{2}=a_c$. The prefix
estimate at $a_*$, the transfer estimate on $[a_*,1]$, and
\eqref{eq:certificate} give
\[
q\ge d_x(1)\ge\frac{a_*}{2}+\frac{F}{4}+\lambda-F
=\frac{a_*}{2}+\frac{F}{4}+\frac{1}{2},
\]
which is $a_*\le2q-\frac{F}{2}-1=a_c+2\omega$.
\end{proof}

This first-hitting estimate holds for every $\omega\ge0$. In
particular, suppose both orientations satisfy the prefix and closed
transfer estimates, and $d_x(1),d_y(1)\le Q$ with
$\frac{3}{4}\le Q<1$. At level $\lambda=1$, both closed transfer
thresholds then have height at least one at
\[
A_{\mathrm{init}}=\frac{1}{4}+2\left(Q-\frac{3}{4}\right)
=2Q-\frac{5}{4}.
\]
Indeed, apply the lemma with either endpoint as the pin and use
the monotonicity of $H$ in its lower endpoint. This is the form
needed in Section~\ref{sec:reciprocal}.

For the plateau arguments that follow, we impose the additional
restriction
\[
0<\omega<\frac{1-2\Delta}{24}.
\]

\subsection{Partition bounds}

For a continuous function $h$ on a compact interval $I=[u,v]$
and $t>0$, define
\[
L_t(h;I)=\inf_{\mathcal P}
\sum_{\substack{[a,b]\in\mathcal P\\
                 [a,b]\text{ labelled yellow}}}
\bigl(h(b)-h(a)\bigr),
\]
where $\mathcal P$ ranges over finite yellow--teal partitions of $I$
with piece lengths at most $t$, as in
Section~\ref{sec:pinned-obstruction}. A yellow piece has its minimum at
its left endpoint, and a teal piece has its minimum at its right
endpoint. Either label is allowed when both endpoints minimize $h$.
Set $L_t(h;[u,u])=0$. Thus
$\mathcal L_t(g;R)=L_t(h_g;[0,R])$. The next lemma is the
elementary partition bound behind Stull's projection argument.

\begin{lemma}[A terminal-minimum partition bound]
\label{lem:sixth-terminal-minimum}
Let $h:[0,L]\to[0,\infty)$ be $1$-Lipschitz, with $h(L)=0$ and
$h(0)=p$.  For every $t>0$,
\begin{equation}\label{eq:sixth-terminal-minimum}
 L_t(h;[0,L])\le \max\{0,\frac{L-t-p}{2}\}.
\end{equation}
The right side is attained as an upper bound by a partition with at most
$\lceil\frac{2L}{t}\rceil+8$ intervals.
\end{lemma}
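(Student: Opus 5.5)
The plan is to build the partition explicitly from the right-to-left running minimum $m(u)=\min_{[u,L]}h$, and to bound the yellow loss by positive variation of $h$. Throughout, I use the elementary fact that a yellow piece $[a,b]$ costs $h(b)-h(a)$, which is at most the total increase of $h$ on $[a,b]$. Teal pieces cost nothing. Write $I(J)$ and $D(J)$ for the total increase and total decrease of $h$ on an interval $J$. Since $h$ is $1$-Lipschitz, $I(J)+D(J)\le|J|$. On any $J=[u,v]$ one also has $D(J)-I(J)=h(u)-h(v)$. For $J=[0,L]$ this gives $I\le\frac{L-p}{2}$. The target $\frac{L-t-p}{2}$ says that, in addition, an amount of ``variation length'' at least $t$ can be covered by teal pieces without being charged. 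The case $L\le t+p$ should give loss zero.

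\textbf{Step 1: the teal skeleton.} The set $\{u:h(u)=m(u)\}$ is closed and contains $L$. On it $h$ is nonincreasing, and $h\ge0=h(L)$ makes $L$ a global minimum. Each complementary component $(a,b)$ is an excursion with $h(a)=h(b)=\min_{[a,b]}h$, so its closure is green. Between excursions, $h$ is monotone nonincreasing, so those stretches are teal. Hence, ignoring the length restriction, $[0,L]$ is a concatenation of teal and green pieces with zero loss.

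\textbf{Step 2: imposing the length restriction.} Only pieces of length greater than $t$ must be cut. A monotone nonincreasing stretch can be cut into teal pieces of length at most $t$ at no cost. A long excursion $[a,b]$ of length $\ell>t$ I would cut as follows. Keep a terminal piece $[b',b]$ of length $t$; it is teal because $h(b)$ is the minimum on $[a,b]$. Choose the other cut points at left-to-right running minima of $h$ on $[a,b']$, so that each remaining piece has its minimum at the left endpoint and is yellow. The loss on the excursion is then at most $I([a,b'])$. Now $h(a)=h(b)$ and $h(b')\ge h(b)$, so
\[
I([a,b'])=\frac{|b'-a|-D([a,b'])-I([a,b'])}{2}\cdot(-1)+\frac{|b'-a|+(h(b')-h(a))}{2}
\]
reduces to at most $\frac{\ell-t}{2}$ plus a term controlled by the final descent on $[b',b]$. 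The aim is to organize this accounting so that, summed over all pieces, the yellow charges total at most the global increase $I$ minus $\frac t2$. Combined with $I\le\frac{L-p}{2}$, that yields $\frac{L-t-p}{2}$. The charge is clipped at $0$ when the terminal teal piece absorbs everything.

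\textbf{Step 3: counting pieces.} Merge adjacent pieces whenever their union still satisfies the colour condition and has length at most $t$. After merging, every two consecutive pieces have combined length greater than $t$. This gives at most $\lceil\frac{2L}{t}\rceil$ pieces plus a bounded number of boundary pieces (terminal teal pieces, pieces at excursion ends), within the stated $\lceil\frac{2L}{t}\rceil+8$.

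\textbf{Main obstacle.} The hard step is the bookkeeping in Step 2. The saving of $\frac t2$ must be realized only once, globally, rather than lost across many excursions; otherwise each excursion contributes its own slack. I expect to handle this with a single distinguished teal block of length $t$: take it to be the terminal block of the last point where $h$ exceeds $m$, or the block $[L-t,L]$. On that block, either the decrease fills length $t$, saving $\frac t2$ against the variation identity, or the increase there is absorbed into a green piece. Everything else is charged only by its positive variation. If this direct accounting fails, I would fall back on an induction on the number of excursions whose length exceeds $t$, using the $1$-Lipschitz bound to compare each excursion with a tent function.
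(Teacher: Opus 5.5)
Your proposal has a genuine gap at the step you flag yourself: nothing in Step~2 produces the saving of $\frac t2$, and the mechanism in your last paragraph cannot produce it. Since $h\ge0$ and $h(L)=0$, the running minimum $m$ is identically $0$. So your skeleton is just the zero set of $h$, and the whole difficulty already sits inside a single excursion. Take the tent $h(u)=\min\{u,L-u\}$ with $p=0$ and $t\le\frac L2$, which is one excursion. Your terminal teal block $[L-t,L]$ is a pure descent. Your estimate $I([a,b'])=\frac{I+D+h(b')-h(a)}{2}\le\frac{\ell-t+h(b')}{2}$ has $h(b')=t$, so it gives $\frac L2$, not $\frac{L-t}{2}$: the ``final descent term'' cancels the saving exactly. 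The alternative ``the decrease fills length $t$, saving $\frac t2$'' is false. A teal piece on which $h$ only decreases absorbs none of the increase of $h$ and contributes its full length to $I+D$. The all-yellow cutting of $[a,b']$ also does not exist as described. Inside an excursion $h>h(a)$, so $a$ is the only left-to-right running minimum. More generally, wherever $h$ decreases no yellow piece can start, so teal pieces must be interleaved.

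The missing idea is that the uncharged block must have \emph{zero increment}, that is, be green, and must sit around a peak of $h$. It should be either one green interval of length $t$, or two adjacent green intervals, each of length at most $t$, with combined length at least $t$; for the tent, take $[\frac{L-t}{2},\frac{L+t}{2}]$. Keep that block and partition the two complementary intervals by the greedy yellow--teal rule. Let $U$ be the yellow loss and $V$ minus the sum of the teal increments. Then $U-V=h(L)-h(0)=-p$. Also $U+V$ is the sum of the absolute increments, which is at most $L-t$: the green block has increment zero and every other piece has $|h(b)-h(a)|\le b-a$. Hence $U\le\frac{L-t-p}{2}$.

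The paper obtains the block from Stull's red--green--blue construction, applied to polygonal $h$ with slopes $\pm1$. If there is no red--green--blue sequence, there is no red interval, and an all-teal partition has loss zero. It then passes to general $h$ by uniform polygonal approximation, the uniform piece bound $\lceil\frac{2L}{t}\rceil+8$, and compactness of the endpoints and colours.

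Two smaller points. First, your concern that the saving can be realized only once is misplaced. A green block at the peak of each long excursion gives that excursion the bound $[\frac{\ell_i-t-p_i}{2}]_+$, where $p_i$ is the value of $h$ at its left end, and these bounds sum to at most $[\frac{L-t-p}{2}]_+$. Second, in Step~3 a teal piece followed by a yellow piece in general cannot be merged, because the union has its minimum in the interior. The spacing claim therefore needs the furthest-admissible-endpoint greedy rule rather than merging.
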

\begin{proof}
When $L\le t$ the whole interval is teal.  First suppose that $h$ is
piecewise linear with slopes $1$ and $-1$.  Apply the red--green--blue
construction preceding \cite[Observation~20]{S} to $u+h(u)$.
In terms of $h$, red means increasing, blue means decreasing, and green
means an interval of length at most $t$ whose two endpoints are equal
and minimize $h$.  This construction uses only the finite polygonal
profile.
The point-dimension assumption used later in
\cite[Lemma~21]{S} is not involved here.

If there is a red--green--blue sequence, the construction supplies one
or two adjacent green intervals of combined length at least $t$.
Keep these intervals and partition the two complementary intervals by
the ordinary greedy yellow--teal rule. This rule
chooses the furthest admissible next endpoint at distance at most $t$,
labelling the resulting interval teal whenever possible.  A non-teal
yellow interval has length $t$, unless it reaches the terminal endpoint.
otherwise continuity permits a further yellow extension.  Two successive
teal intervals can be merged whenever their combined length is at most
$t$.  Consequently successive greedy endpoints obey
$a_{i+2}>a_i+t$, except at the last endpoint, and a subinterval of length
$\ell$ needs at most $\lceil\frac{2\ell}{t}\rceil+2$ pieces.  The partition
retaining the green interval or pair therefore has at most
$\lceil\frac{2L}{t}\rceil+8$ pieces.

Let $U$ be its yellow loss and let $V$ be minus the sum of its teal
increments, including the green intervals with increment zero.  Then
$U-V=-p$.  The $1$-Lipschitz bound, and the green length of at least
$t$, give $U+V\le L-t$.  Thus $U\le\frac{L-t-p}{2}$.

If there is no red--green--blue sequence, there can be no red interval
in the auxiliary partition.  Indeed, after a red interval a later blue
interval would give a red--green--blue sequence by choosing the last
red preceding the first subsequent blue.  The absence of later blue
intervals would make the final height strictly greater than a preceding
height, since green intervals have zero endpoint increment.  This
contradicts $h(L)=\min h$.  The auxiliary partition thus consists only
of blue and green intervals.  It can be coarsened to an all-teal
partition. Greedily choose the furthest teal endpoint at distance at
most $t$ lying outside the interiors of the auxiliary green intervals.
Such a next endpoint always exists, because each green interval has
length at most $t$ and the heights at all eligible endpoints are
nonincreasing.  Two consecutive resulting intervals could be merged
if their combined length were at most $t$.  The same endpoint spacing
therefore bounds their number by $\lceil\frac{2L}{t}\rceil+2$.  Their loss is zero.

Finally approximate a general $h$ uniformly by nonnegative finite
polygonal functions $h_j$ with slopes $1$ and $-1$, preserving both
endpoint values.  One construction replaces each linear interpolation
segment by a rising-then-falling roof with the same endpoints.  The
$1$-Lipschitz assumption makes that roof possible.
Its height above the
linear segment is at most the mesh size.  The preceding partitions
have a uniform cardinality bound.  After padding by zero-length
intervals, pass to a subsequence on which every endpoint and every
colour converges.  The endpoint-minimum properties are closed under
uniform convergence, as are the length constraints, and the total
yellow loss converges.  Remove zero-length intervals.  The resulting
finite partition proves both assertions.
\end{proof}

\begin{lemma}[Endpoint estimates for the partition loss]
\label{lem:partition-endpoints}
Let $h:[0,L]\to[0,\infty)$ be $1$-Lipschitz with $h(0)=p_0$ and
$h(L)=p_1$. For every $t>0$,
\begin{equation}\label{eq:partition-positive-endpoint}
 L_t(h;[0,L])\le p_1+\frac{1}{2}\left[L-t-p_0-p_1\right]_+.
\end{equation}
If moreover $p_0=0$ and
\[
 L_t(h;[0,L])\ge p_1+W,\qquad  W>\frac{L-p_1-2t}{2},
\]
then, for every $u\in[0,L]$,
\begin{equation}\label{eq:partition-inversion}
 h(u)\ge\min\{W, u+2W-L+p_1+t, 2W+t-u\}.
\end{equation}
\end{lemma}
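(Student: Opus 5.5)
The first assertion will come from splitting at a global minimum and using Lemma~\ref{lem:sixth-terminal-minimum} on each side, once as stated and once after reversing orientation. The second assertion will follow by splitting at the point $u$ and applying the first assertion to both halves. Throughout I use two facts. Concatenating partitions of adjacent intervals gives a partition of their union, and the losses add, so $L_t(h;[0,L])\le L_t(h;[0,c])+L_t(h;[c,L])$. For any partition, writing $U$ for the yellow loss and $-V$ for the sum of the teal increments, telescoping gives $U-V=h(\text{right})-h(\text{left})$.

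\emph{Step 1: the bound \eqref{eq:partition-positive-endpoint}.} Let $c\in[0,L]$ be a point where $h$ attains its minimum $m\ge0$.

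On $[0,c]$, apply Lemma~\ref{lem:sixth-terminal-minimum} to $h-m$. This function is nonnegative, $1$-Lipschitz, vanishes at $c$ and equals $p_0-m$ at $0$. The bound is $\frac{1}{2}[c-t-p_0+m]_+$.

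On $[c,L]$, reflect by $u\mapsto L+c-u$. The reflection exchanges yellow and teal, and it turns the reversed function into one whose minimum is at the right endpoint. Lemma~\ref{lem:sixth-terminal-minimum}, applied to the reflected $h-m$, bounds the reflected yellow loss, which is $V$ for the original orientation, by $\frac{1}{2}[L-c-t-(p_1-m)]_+$. The identity $U=p_1-m+V$ then gives
\[
L_t(h;[c,L])\le p_1-m+\tfrac{1}{2}[L-c-t-p_1+m]_+ .
\]

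Adding the two bounds, it remains to check
\[
\tfrac{1}{2}[c-t-p_0+m]_+-m+\tfrac{1}{2}[L-c-t-p_1+m]_+\le\tfrac{1}{2}[L-t-p_0-p_1]_+ .
\]
I will do this in cases, using two consequences of the Lipschitz bound: $c\ge p_0-m$ and $L-c\ge p_1-m$.
\begin{itemize}
\item If both brackets are positive, the sum of the two brackets is $L-2t-p_0-p_1+2m$. Subtracting $m$ leaves $\frac{1}{2}(L-2t-p_0-p_1)$, which is at most the right side.
\item If only the second bracket is positive, $c\ge p_0-m$ gives $L-c-t-p_1+m\le L-t-p_0-p_1+2m$. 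Halving and subtracting $m$ gives the claim.
\item If only the first bracket is positive, use $c\le L-p_1+m$ in the same way.
\item If neither bracket is positive, the left side is $-m\le0$.
\end{itemize}
Degenerate cases, with $c\in\{0,L\}$ or pieces of zero length, are handled by discarding the empty side.

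\emph{Step 2: the inversion \eqref{eq:partition-inversion}.} Suppose $p_0=0$ and $L_t(h;[0,L])\ge p_1+W$, and suppose some $u$ has $h(u)=\kappa$ smaller than the stated minimum. Split at $u$ and apply Step 1 to $[0,u]$, with endpoint values $0$ and $\kappa$, and to $[u,L]$, with endpoint values $\kappa$ and $p_1$. This gives
\[
p_1+W\le\kappa+\tfrac{1}{2}[u-t-\kappa]_++p_1+\tfrac{1}{2}[L-u-t-\kappa-p_1]_+ .
\]
Cancel $p_1$ and examine which brackets are active.
\begin{itemize}
\item Both brackets active: the right side becomes $\frac{1}{2}(L-2t-p_1)$, contradicting the hypothesis $W>\frac{L-p_1-2t}{2}$.
\item Only the first active: we get $W\le\frac{\kappa+u-t}{2}$, that is $\kappa\ge2W+t-u$, contradicting $\kappa<2W+t-u$.
\item Only the second active: we get $W\le\frac{\kappa+L-u-t-p_1}{2}$, which contradicts $\kappa<u+2W-L+p_1+t$.
\item Neither active: we get $W\le\kappa$, contradicting $\kappa<W$.
\end{itemize}
Each case therefore contradicts one of the three entries of the minimum or the hypothesis on $W$. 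Hence $h(u)$ is at least the minimum for every $u$.

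\emph{Main obstacle.} I expect the delicate point to be the reflection step in Step 1. I must verify that reversing the orientation exchanges the yellow and teal endpoint-minimum conditions exactly, including the green intervals, which may carry either label. I must also check that the piece-length constraint $\le t$ is preserved, so that Lemma~\ref{lem:sixth-terminal-minimum} applies verbatim to the reflected function. After that, the bracket case analysis is routine bookkeeping.
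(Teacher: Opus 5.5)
Your proof is correct and follows the paper's own argument. The paper also splits at a global minimum and applies Lemma~\ref{lem:sixth-terminal-minimum} on $[0,z]$, and on $[z,L]$ it applies the same lemma via reversal, using the identity that forward loss minus reversed loss equals the total increment of $h$. It then runs the same four-case bracket check with $p_0-m\le z$ and $p_1-m\le L-z$, and proves the inversion by splitting at $u$ and going through the active-bracket cases. The only cosmetic difference is on $[0,u]$, where the paper uses reversal plus Lemma~\ref{lem:sixth-terminal-minimum} directly, since $h(0)=0$ is the minimum there, instead of the first assertion; this gives the identical bound $\kappa+\frac{1}{2}[u-t-\kappa]_+$.
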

\begin{proof}
Reversal $u\mapsto L-u$ exchanges yellow and teal intervals and is a
bijection between admissible partitions. For any partition, the
forward loss minus the reversed loss is the sum of all signed
increments, namely $p_1-p_0$.
Hence the same identity holds for the
two infima. Partition losses are also subadditive under
concatenation, since partitions with losses arbitrarily close to
their infima can be concatenated.

Choose $z$ where $h$ attains its minimum $m$. Apply
\eqref{eq:sixth-terminal-minimum} to $h-m$ on $[0,z]$ and to the
reversal of $h-m$ on $[z,L]$. Combining this with the reversal identity, we obtain
\[
 L_t(h;[0,L])\le p_1-m+\frac{1}{2}[z-t-p_0+m]_+
 +\frac{1}{2}[L-z-t-p_1+m]_+.
\]
If both positive parts vanish, the right side is at most $p_1$.
If both are positive, it equals $\frac{1}{2}(L-2t-p_0+p_1)$.
If only the first is positive, it equals
$\frac{1}{2}(z-t-p_0+2p_1-m)\le\frac{1}{2}(L-t-p_0+p_1)$, since
$p_1-m\le L-z$. If only the second is positive, it equals
$\frac{1}{2}(L-z-t+p_1-m)\le\frac{1}{2}(L-t-p_0+p_1)$, since
$p_0-m\le z$. Thus the loss is at most
$\max\{p_1,\frac{1}{2}(L-t-p_0+p_1)\}$, which is
\eqref{eq:partition-positive-endpoint}.

For the second assertion fix $u$ and put $p=h(u)$. The reversal
identity and \eqref{eq:sixth-terminal-minimum} on $[0,u]$, and
\eqref{eq:partition-positive-endpoint} on $[u,L]$, give
\begin{align*}
 L_t(h;[0,L])-p_1
 &\le p+\frac{1}{2}[u-t-p]_++\frac{1}{2}[L-p_1-u-t-p]_+\\
 &\le\max\left\{p, \frac{p+u-t}{2}, \frac{p+L-p_1-u-t}{2},
 \frac{L-p_1-2t}{2}\right\}.
\end{align*}
The last entry is strictly below $W$, so one of the first three is
at least $W$. Solving each for $p$ results in
\eqref{eq:partition-inversion}.
\end{proof}

\begin{lemma}[A long interval above a prescribed height]
\label{lem:plateau-abstract}
Let $h:[0,L]\to[0,\infty)$ be $1$-Lipschitz with $h(0)=0$ and
$h(L)=p_1$. Suppose $t>0$ and
\[
 L_t(h;[0,L])\ge p_1+W,\qquad 
 W>\max\left\{p_1,\frac{L-p_1-2t}{2}\right\}.
\]
Then there is a $u_-$ with
\begin{equation}\label{eq:plateau-abstract}
 W\le u_-\le L-p_1-t-W,\qquad  h(u)\ge W \quad \text{for } u_-\le u\le u_-+t.
\end{equation}
\end{lemma}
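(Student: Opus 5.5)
The plan is to argue by contradiction, using the two partition bounds of Lemma~\ref{lem:partition-endpoints} together with subadditivity of $L_t$ under concatenation and the reversal identity. That identity, proved inside Lemma~\ref{lem:partition-endpoints}, says that the forward loss minus the reversed loss equals $h(L)-h(0)$. Throughout put
\[
U=\{u\in[0,L]:h(u)\ge W\}.
\]

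The first step uses only the Lipschitz bound. Since $h(0)=0$, we have $h(u)<W$ for $u<W$, so $U\subseteq[W,L]$. This already gives the lower constraint $u_-\ge W$ for free.

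The right-hand constraint $u_-+t\le L-p_1-W$ is stronger than the trivial bound $L-W+p_1$ coming from $h(L)=p_1$. I plan to obtain it by applying the same reasoning to the reversed function $\tilde h(u)=h(L-u)-p_1$, which starts at $0$. Its loss is $L_t(h;[0,L])-p_1\ge W$ by the reversal identity. However, $\tilde h$ may be negative, so I would instead rerun the inversion argument directly. Fix $u$, split $[0,L]$ at $u$, and bound the loss on $[u,L]$ by \eqref{eq:partition-positive-endpoint}. This yields the inequality displayed in the proof of Lemma~\ref{lem:partition-endpoints}:
\[
W\le L_t-p_1\le\max\left\{h(u),\ \frac{h(u)+u-t}{2},\ \frac{h(u)+L-p_1-u-t}{2},\ \frac{L-p_1-2t}{2}\right\}.
\]
The last entry is excluded by hypothesis, and the first is excluded whenever $h(u)<W$.

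The main step is to show that if $U$ contains no interval of length $t$ inside $[W,L-p_1-W]$, then $L_t(h;[0,L])<p_1+W$. Here I would use concatenation rather than a single cut. Choose cut points $0=c_0<c_1<\dots<c_k=L$ with $h(c_i)<W$ for $0<i<k$ and $c_{i+1}-c_i\le t$ on every stretch that lies in $[W,L-p_1-W]$. Such points exist precisely because no $t$-window there is contained in $U$. On the short middle pieces, a single yellow or teal piece has loss at most $[h(c_{i+1})-h(c_i)]_+$ after splitting at a minimum. These contributions telescope against the teal drops, so their total stays below $W$ plus what the end pieces contribute. On the end pieces $[0,W]$ and $[L-p_1-W,L]$, apply \eqref{eq:partition-positive-endpoint} and \eqref{eq:sixth-terminal-minimum}, using the Lipschitz bound to control their endpoint heights. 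Adding the pieces, and using $W>p_1$ to absorb the boundary value $p_1$, should give total loss strictly less than $p_1+W$, which is the desired contradiction.

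I expect the bookkeeping in this last step to be the obstacle. One must check that the middle pieces' yellow losses sum to at most $\sup h-\inf$ over the relevant range minus the drops, rather than accumulating. The cleanest route I would try first is to cut at a global minimizer $z$ of $h$ restricted to the complement of $U$ near each gap, and recurse as in the proof of \eqref{eq:partition-positive-endpoint}. The strict inequality $W>\frac{L-p_1-2t}{2}$ is exactly what should make the four-case maximum fall below $W$ when every window fails. If the direct count is awkward, I would fall back to taking $u_-$ to be the left endpoint of the longest component of $U\cap[W,L-p_1-W]$. Then I would apply \eqref{eq:partition-inversion} on each side of that component to show that the component has length at least $t$.
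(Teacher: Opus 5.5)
Your contrapositive reformulation is fine, but the main step does not go through. A short piece $[c_i,c_{i+1}]$ split at a minimum costs $h(c_{i+1})-\min_{[c_i,c_{i+1}]}h$, not $[h(c_{i+1})-h(c_i)]_+$, so nothing telescopes. More to the point, cutting at arbitrary points where $h<W$ produces no piece of zero loss. It also leaves the positive parts in \eqref{eq:partition-positive-endpoint} nonzero on the end pieces, so nothing forces the total below $p_1+W$. The fallback has the same defect, because \eqref{eq:partition-inversion} is only a pointwise bound. By itself it gives $h\ge W$ on $[L-p_1-t-W,\,W+t]$, an interval of length $2W+2t-L+p_1\le t$; the hypothesis together with \eqref{eq:partition-positive-endpoint} forces $2W+t\le L-p_1$. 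Applying it again on either side of a component cannot show that the component is long.

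What is missing is a two-cut partition around one specific component, at a level strictly below $W$. Put $c=\frac{L-p_1}{2}$, so that $h(c)\ge W$ by \eqref{eq:partition-inversion}, and fix $p_1<m<W$. This is where $W>p_1$ is used: then $h(0),h(L)<m$, so the component $(a_m,b_m)$ of $\{h>m\}$ containing $c$ has $h(a_m)=h(b_m)=m$. Because this component contains $(L-p_1-t-2W+m,\,2W+t-m)$, one has $a_m\le L-p_1-t-2W+m$ and $b_m\ge 2W+t-m$. Now suppose $b_m-a_m\le t$. Then $[a_m,b_m]$ is a green piece of zero loss. On $[0,a_m]$ and $[b_m,L]$ the positive parts in \eqref{eq:partition-positive-endpoint} vanish, since their arguments are at most $L-p_1-2t-2W<0$. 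Concatenation gives $L_t(h;[0,L])\le m+p_1<p_1+W$, a contradiction. Working at level $W$ itself, as your set $U$ suggests, only yields $L_t(h;[0,L])\le p_1+W$, which contradicts nothing, so the strict level $m$ is essential. Finally let $m\nearrow W$. The components are nested, and their closures shrink to an interval $[u_-,u_+]$ with $h\ge W$ and $u_+-u_-\ge t$. The bound $u_-\le L-p_1-t-W$ follows from the bound on $a_m$, and $u_-\ge W$ is your Lipschitz observation.
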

\begin{proof}
Put $c=\frac{L-p_1}{2}$. The last two terms in
\eqref{eq:partition-inversion} coincide at $u=c$ and equal
$2W+t-c\ge W$, so $h(c)\ge W$. For $p_1<m<W$, the same estimate
shows that
\[
 (L-p_1-t-2W+m, 2W+t-m)\cap[0,L]\subseteq\{h>m\}.
\]
This interval contains $c$, and its endpoints lie in $[0,L]$, since
otherwise the estimate at $0$ or at $L$ would contradict
$h(0)=0<m$ or $h(L)=p_1<m$. Let $(a_m,b_m)$ be the component of
$\{h>m\}$ containing $c$. Then $h(a_m)=h(b_m)=m$ and
\[
 a_m\le L-p_1-t-2W+m,\qquad  b_m\ge2W+t-m.
\]
Suppose $b_m-a_m\le t$. Then $[a_m,b_m]$ is an admissible teal
interval with loss zero. On the two complementary intervals,
\eqref{eq:partition-positive-endpoint} implies
\[
 L_t(h;[0,a_m])\le m+\frac{1}{2}[a_m-t-m]_+=m,\qquad 
 L_t(h;[b_m,L])\le p_1+\frac{1}{2}[L-b_m-t-m-p_1]_+=p_1,
\]
because both positive-part arguments are at most
$L-p_1-2t-2W<0$. Concatenation would imply
$L_t(h;[0,L])\le m+p_1<W+p_1$, a contradiction. Thus $b_m-a_m>t$.

As $m$ increases to $W$, the components $(a_m,b_m)$ are nested,
since all contain $c$. Their closures intersect in an interval
$[u_-,u_+]$ with $u_+-u_-\ge t$, and $h\ge W$ on it by continuity. The
bound for $a_m$ implies $u_-\le L-p_1-t-W$, and $h(0)=0$ with the
Lipschitz bound implies $u_-\ge W$. The interval $[u_-,u_-+t]$ has the
asserted properties.
\end{proof}

\subsection{The pin plateau and its excess}

The plateau proposition records, besides the plateau, the amount
$\mathfrak e\omega$ by which the full transfer threshold
$\widehat H_{f,g}(a,1)$ falls short of $\lambda$ for lower
endpoints $a$ after the critical competitor. This excess enters
the finite comparison in two ways, and Lemma~\ref{lem:source-envelope}
turns it into a lower envelope for the source profile.

\begin{proposition}[The pin plateau]\label{prop:pin-plateau}
Suppose the pair has the certificate at level $\lambda$ and
$d_x(1)\le q$. Then there are numbers
$\chi,\varsigma,\mathfrak e$ and $u_-,u_+,W$ with
\begin{equation}\label{eq:plateau-parameters}
\begin{gathered}
 -1\le\chi\le2,\qquad  \varsigma\ge\max\{0,-2\chi\},\qquad 
 3\varsigma+2\chi\le4,\qquad  0\le\mathfrak e\le\varsigma,\\
 u_+-u_-=a_c+\chi\omega\le a_*,\qquad  f(u_+-u_-)=F-\varsigma\omega,\\
 W=W_0-(\varsigma+\chi-\mathfrak e)\omega,\qquad 
 0\le u_--W\le(2\varsigma-2\mathfrak e)\omega,
\end{gathered}
\end{equation}
such that
\begin{equation}\label{eq:plateau}
 g(u)-u\ge W \quad \text{for } u_-\le u\le u_+,
\end{equation}
and such that, for every $a>u_+-u_-$,
\begin{equation}\label{eq:excess-threshold}
 \widehat H_{f,g}(a,1)\ge\lambda-\mathfrak e\omega .
\end{equation}
In particular
\begin{equation}\label{eq:plateau-consequences}
 W\ge W_0-2\omega,\qquad 
 (u_+-u_-)+2W=\frac{3}{4}-\frac{\Delta}{2}-(2\varsigma+\chi-2\mathfrak e)\omega,\qquad
   u_+\le\frac{1}{2}+(\varsigma-\mathfrak e)\omega .
\end{equation}
\end{proposition}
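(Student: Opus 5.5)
The plan is to run the argument of Lemma~\ref{lem:first-hit} with the full transfer threshold $\widehat H_{f,g}$ in place of $H_{f,g}$, and at lower endpoints before $a_*$. The competitor that nearly realizes the infimum defining $\widehat H_{f,g}(a,1)$ will supply the plateau length $t=u_+-u_-$. The partition loss at that competitor is then fed into Lemma~\ref{lem:plateau-abstract}.

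\emph{Step 1: the basic inequality at an arbitrary lower endpoint.} For any $0<a\le 1$, combine the prefix estimate \eqref{eq:limitprefix} at $a$ with Lemma~\ref{lem:fullpartition} on $[a,1]$ to get
\[
q\ge d_x(1)\ge \frac a2-\frac34 f(a)+\widehat H_{f,g}(a,1).
\]
Take $a$ slightly below $a_*$, so that $f(a)<F$ while $a\ge a_c-O(\omega)$. The right side then forces $\widehat H_{f,g}(a,1)\le \lambda+O(\omega)-(F-f(a))$.

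\emph{Step 2: the critical competitor and the parameters.} Define the critical competitor separation $t$ as the infimum of the lower endpoints $a$ at which $\widehat H_{f,g}(a,1)$ reaches $\lambda-O(\omega)$. Equivalently, $t$ is the smallest separation at which the infimum defining $\widehat H$ is nearly attained. By lower semicontinuity of $P_g$ (Lemma~\ref{lem:uniform-partition-witnesses}) this infimum is attained, so there is a $t\le a_*$ with
\[
f(t)+\widehat p_g(t;1)\le\lambda-\mathfrak e\omega
\qquad\text{and}\qquad
\widehat H_{f,g}(a,1)\ge\lambda-\mathfrak e\omega\quad\text{for } a>t.
\]
The second inequality is \eqref{eq:excess-threshold}. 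The remaining parameters are defined by
\[
t=a_c+\chi\omega,\qquad f(t)=F-\varsigma\omega .
\]
The constraints $-1\le\chi\le2$, $\varsigma\ge\max\{0,-2\chi\}$ and $3\varsigma+2\chi\le4$ should follow from three facts:
\begin{itemize}
\item the bounds $u\le f\le 2u$ and the $2$-Lipschitz property of $f$ near $a_c$, with $f(a_*)=F$;
\item Lemma~\ref{lem:first-hit}, giving $a_*\le a_c+2\omega$;
\item the certificate at $a_*$, which bounds the closed gain $p_g(t;1)$ from below.
\end{itemize}
The bound $\mathfrak e\le\varsigma$ should come from $\widehat p_g\ge p_g$ together with the certificate $H_{f,g}(a_*,1)\ge\lambda$.

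\emph{Step 3: converting to a partition-loss bound and extracting the plateau.} From $\widehat p_g(t;1)=P_g(t;1-t)\le \lambda-\mathfrak e\omega-f(t)$, we get
\[
\mathcal L_t(g;1-t)=L_t(h_g;[0,1-t])\ \ge\ g(1-t)-\frac12-(\varsigma-\mathfrak e)\omega-\dots .
\]
At the same time, $p_g(t;1)\le\widehat p_g(t;1)$ controls $g(1-t)$ from above. So $p_1:=h_g(1-t)$ is $O(\omega)$, and the loss is at least $p_1+W$ with
\[
W=W_0-(\varsigma+\chi-\mathfrak e)\omega .
\]
The hypotheses of Lemma~\ref{lem:plateau-abstract} are then to be checked with $L=1-t$:
\begin{itemize}
\item $W>p_1$, which uses $W_0-2\omega>0$ and the restriction $\omega<\frac{1-2\Delta}{24}$;
\item $W>\frac{L-p_1-2t}{2}$, an arithmetic check near $t\approx a_c$.
\end{itemize}
The lemma then yields $u_-$ with $W\le u_-\le L-p_1-t-W$ and $h_g\ge W$ on $[u_-,u_-+t]$. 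This gives \eqref{eq:plateau} and the bound $0\le u_--W\le(2\varsigma-2\mathfrak e)\omega$ after substituting $L$, $t$ and $W$. The identities and inequalities in \eqref{eq:plateau-consequences} are then direct algebra from \eqref{eq:plateau-parameters}.

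\emph{Main obstacle.} I expect the delicate step to be Step 2: choosing $t$ so that both the excess inequality \eqref{eq:excess-threshold} holds for all $a>t$ and the constraint $t\le a_*$ holds, while keeping $\mathfrak e\le\varsigma$. I would first try defining $\mathfrak e\omega=\lambda-\inf_{a>t}\widehat H_{f,g}(a,1)$, with $t$ the last separation where the certificate-level competitor cost is not exceeded. I would then check, via the closed gain and $f(a_*)=F$, that this deficiency cannot exceed $\varsigma\omega$.
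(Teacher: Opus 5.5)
You have the paper's architecture: a near-minimizing competitor $t<a_*$ for $\widehat H_{f,g}(\cdot,1)$, reading off $\chi,\varsigma$ from it, converting $P_g(t;1-t)\le\lambda-\mathfrak e\omega-f(t)$ into a lower bound for the partition loss, and applying Lemma~\ref{lem:plateau-abstract}. Your Step~3 is essentially the second half of the paper's proof. The gap is in Steps~1--2, where the competitor is located.

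\textbf{Step~1 does not produce a sub-$\lambda$ competitor.} Near $a_*$ your inequality only gives $\widehat H_{f,g}(a,1)\le q-\frac a2+\frac34 f(a)$. At $a=a_*$ the right side equals $\lambda+\omega-\frac{a_*-a_c}{2}$, which is at least $\lambda$ by Lemma~\ref{lem:first-hit}, so it yields nothing below $\lambda$.

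\textbf{The polytope inequalities do not follow from the facts you list.} Put $\Phi(u)=\frac34 f(u)-\frac u2$. The inequality $3\varsigma+2\chi\le4$ is exactly $\Phi(t)\ge a_c-\omega$, and $\chi\ge-1$ follows from it together with $f(t)\le 2t$. Your three inputs cannot give this. Take $(\chi,\varsigma)=(0,2)$, that is, $f(a_c)=F-2\omega$ with slope one up to $a_*=a_c+2\omega$. This is compatible with $u\le f\le2u$, the Lipschitz bound, Lemma~\ref{lem:first-hit}, and the certificate, which only sees competitors at or after $a_*$. Yet it has $3\varsigma+2\chi=6$. Your excess threshold does not rescue this either. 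Prefix plus transfer at $a\searrow t$ with $\widehat H_{f,g}(a,1)\ge\lambda-\mathfrak e\omega$ only give $\Phi(t)\ge a_c-(1+\mathfrak e)\omega$, that is, $3\varsigma+2\chi\le4+4\mathfrak e$.

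\textbf{How the paper handles this.} It decouples the level used to locate the competitor from the deficiency $\mathfrak e$.
\begin{itemize}
\item For $0<\vartheta\le\frac18$, let $a_0$ be the last point before $a_*$ with $\Phi(a_0)=\ell=a_c-(1+\vartheta)\omega$. It exists because $\Phi(0)=0<\ell<\Phi(a_*)$.
\item Prefix and transfer at $a_0$ give $\widehat H_{f,g}(a_0,1)\le q+\ell=\lambda-\vartheta\omega$. So a near-minimizing competitor $s$ lies in $[a_0,a_*)$, where $\Phi\ge\ell$.
\item This yields $\chi\ge-1-\vartheta$ and $3\varsigma+2\chi\le4+4\vartheta$. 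Meanwhile $\mathfrak e\omega:=\lambda-\widehat H_{f,g}(a_0,1)\ge\vartheta\omega$ may be much larger than $\vartheta\omega$.
\item \eqref{eq:excess-threshold} is then just monotonicity of $\widehat H_{f,g}(a,1)$ in $a$.
\item Letting $\vartheta\searrow0$ along a convergent subsequence gives the stated polytope.
\end{itemize}

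\textbf{Your last-paragraph idea also works, but with $\mathfrak e=0$.} Let $t$ be the largest point of $(0,a_*]$ with $f(t)+\widehat p_g(t;1)\le\lambda$.
\begin{itemize}
\item Such points exist: otherwise $\widehat H_{f,g}(a,1)\ge\lambda$ for every $a>0$, and prefix plus transfer would force $\Phi(a)\ge a_c-\omega$ as $a\searrow0$, contradicting $\Phi(0)=0$.
\item The largest one is attained, by the lower semicontinuity in Lemma~\ref{lem:uniform-partition-witnesses}.
\item Then $\widehat H_{f,g}(a,1)\ge\lambda$ for all $a>t$. So the deficiency you define is nonpositive, not positive, and you should take $\mathfrak e=0$.
\item Prefix plus transfer at $a\searrow t$ give precisely $\Phi(t)\ge\lambda-q=a_c-\omega$, which is the missing inequality. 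Your Step~3 then goes through with $\mathfrak e=0$.
\end{itemize}
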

\begin{proof}
Write $h=g-u$. By Lemma~\ref{lem:first-hit},
$a_c\le a_*\le a_c+2\omega$, and by \eqref{eq:certificate} every
competitor $s\ge a_*$ satisfies $f(s)+\widehat p_g(s;1)\ge\lambda$.

For every $s$ and $R=1-s$, the partition gain
is at least the closed gain. Indeed
\eqref{eq:partition-positive-endpoint} with $p_0=0$ implies
$L_s(h;[0,R])\le h(R)+\frac{1}{2}[R-s-h(R)]_+$, hence
\[
 P_g(s;R)=g(R)-L_s(h;[0,R])
 \ge\min\left\{R,\frac{g(R)+s}{2}\right\}=p_g(s;1),
\]
so that $\widehat p_g(s;1)=P_g(s;1-s)$.

Fix $0<\vartheta\le\frac{1}{8}$ and put
$\Phi(u)=\frac{3f(u)}{4}-\frac{u}{2}$ and
$\ell=a_c-(1+\vartheta)\omega$. Since $\Phi(0)=0<\ell$ and
$\Phi(a_*)=\frac{3F}{4}-\frac{a_*}{2}\ge a_c-\omega>\ell$, there
is a last point $a_0<a_*$ with $\Phi(a_0)=\ell$, and $\Phi\ge\ell$
on $[a_0,a_*]$.
Since $\Phi(u)\le u$, also $a_0\ge\ell>0$. The
prefix estimate implies $f(a_0)-d_x(a_0)\le\Phi(a_0)=\ell$, so the
transfer estimate yields
\[
 \Lambda_\vartheta:=\widehat H_{f,g}(a_0,1)
 \le d_x(1)+f(a_0)-d_x(a_0)\le q+\ell=\lambda-\vartheta\omega .
\]
As $f(1)\ge\lambda$ and every competitor at or after $a_*$ costs at
least $\lambda$, there is a competitor $s\in[a_0,a_*)$ with
\[
 f(s)+P_g(s;1-s)=\Lambda_\vartheta+e,\qquad 
 0\le e\le\vartheta^2\omega .
\]
Write
\[
 s=a_c+\chi\omega,\qquad  f(s)=F-\varsigma\omega,\qquad 
 R=1-s,\qquad  h(R)=\varphi\omega,\qquad 
 \mathfrak e=\frac{\lambda-\Lambda_\vartheta}{\omega}\ge\vartheta,
\]
where $\chi,\varsigma,\varphi,\mathfrak e$ depend on $\vartheta$.
The bounds $a_0\le s<a_*$, $f(s)\le F$, $f(s)\le2s$, and
$\Phi(s)\ge\ell$ give
\begin{equation}\label{eq:plateau-prelimit}
 -1-\vartheta\le\chi\le2,\qquad  \varsigma\ge\max\{0,-2\chi\},\qquad 
 3\varsigma+2\chi\le4+4\vartheta,\qquad  \varsigma\le2+2\vartheta .
\end{equation}
Consequently $\varsigma+\chi\le\frac{7}{3}$,
$\varsigma-\frac{\chi}{2}\le\frac{5}{2}(1+\vartheta)\le\frac{45}{16}$,
$3\varsigma+\chi\le5(1+\vartheta)\le\frac{45}{8}$, and
$\widehat p_g(s;1)=\Lambda_\vartheta+e-f(s)
=\frac{1}{2}+(\varsigma-\mathfrak e)\omega+e$ is smaller than the
length cap $R=\frac{3}{4}-\frac{\Delta}{2}-\chi\omega$, because
$(\varsigma+\chi)\omega<W_0$ and
$e\le\vartheta^2\omega<\mathfrak e\omega$. Hence the closed gain
$p_g(s;1)$ equals $\frac{g(R)+s}{2}=\frac{1+\varphi\omega}{2}$, and
$p_g\le\widehat p_g$ implies
\[
 \mathfrak e\le\varsigma-\frac{\varphi}{2}+\frac{e}{\omega}.
\]
The identity $\widehat p_g(s;1)=P_g(s;R)=R+h(R)-L_s(h;[0,R])$ now
reads
\[
 L_s(h;[0,R])=h(R)+W_\vartheta,\qquad 
 W_\vartheta=W_0-(\varsigma+\chi-\mathfrak e)\omega-e .
\]
The hypotheses of Lemma~\ref{lem:plateau-abstract} hold for $h$ on
$[0,R]$ with $t=s$ and $p_1=h(R)$:
\begin{align*}
 W_\vartheta-\frac{R-h(R)-2s}{2}
 &=\frac{1+2\Delta}{8}
 -\left(\varsigma-\frac{\chi}{2}-\frac{\varphi}{2}-\mathfrak e\right)\omega-e
 \ge\frac{1}{8}-\frac{45}{16}\omega-e>0,\\
 W_\vartheta-h(R)
 &=W_0-(\varsigma+\chi+\varphi-\mathfrak e)\omega-e
 \ge W_0-\frac{45}{8}\omega-3e>0,
\end{align*}
using $\varphi\ge0$ in the first line and
$\varphi\le2\varsigma-2\mathfrak e+\frac{2e}{\omega}$ in the second,
together with $\omega<\frac{W_0}{6}\le\frac{1}{24}$ and
$e\le\vartheta^2\omega\le\frac{\omega}{64}$. The lemma supplies
$h\ge W_\vartheta$ on $[u_{-,\vartheta},u_{-,\vartheta}+s]$ with
\[
 W_\vartheta\le u_{-,\vartheta}\le R-h(R)-s-W_\vartheta
 =W_\vartheta+(2\varsigma-\varphi-2\mathfrak e)\omega+2e .
\]
Moreover, for every $a>s$ we have $a>a_0$, so
$\widehat H_{f,g}(a,1)\ge\Lambda_\vartheta=\lambda-\mathfrak e\omega$.

Let $\vartheta\searrow0$ along a sequence on which
$(\chi,\varsigma,\varphi,\mathfrak e,u_{-,\vartheta})$ converges, and
denote the limits by the same letters. Put $t=a_c+\chi\omega$,
$W=W_0-(\varsigma+\chi-\mathfrak e)\omega$, $u_-=\lim u_{-,\vartheta}$,
and $u_+=u_-+t$. Continuity of $f$ and $h$ ensures $f(t)=F-\varsigma\omega$,
$t\le a_*$, and $h\ge W$ on $[u_-,u_+]$.
The limiting bounds give
\eqref{eq:plateau-parameters}, with
$0\le\mathfrak e\le\varsigma-\frac{\varphi}{2}\le\varsigma$.
The monotonicity of $\widehat H_{f,g}$ in its lower endpoint implies
\eqref{eq:excess-threshold}. Finally
$\varsigma+\chi\le\frac{4+\chi}{3}\le2$ gives $W\ge W_0-2\omega$, the
formula for $(u_+-u_-)+2W$ is a substitution, and
$u_+\le W+(2\varsigma-2\mathfrak e)\omega+a_c+\chi\omega
=\frac{1}{2}+(\varsigma-\mathfrak e)\omega$.
\end{proof}

We write
\begin{equation}\label{eq:u0-varpi}
 u_0=\varsigma+2\chi\ge0,\qquad 
 \varpi=\min\{2,1+u_0\},\qquad 
 e_0=\mathfrak e+[u_0-1]_+
\end{equation}
for the combinations of the plateau parameters that enter the
geometry. The inequalities defining the polytope \eqref{eq:plateau-parameters} imply
\begin{equation}\label{eq:plateau-polytope}
 \varsigma\le2-\frac{u_0}{2},\qquad 
 2\varsigma+\chi\le3-\frac{u_0}{4},\qquad 
 \varsigma+\chi\le2,\qquad 
 \varpi+\varsigma\le\frac{7}{2},\qquad 
 2\varsigma+\varpi\le5 .
\end{equation}
Indeed $3\varsigma+2\chi=2\varsigma+u_0\le4$ proves the first two.
For the last two use $\varpi=1+u_0$ when $u_0\le1$ and
$2\varsigma\le4-u_0\le3$ when $u_0\ge1$.

\subsection{The source profile near the first hitting scale}

The source parents of the finite argument will have side
$\delta^{a}$ with $a$ slightly larger than $a_*$. The next two
lemmas locate $a_*$ in terms of the plateau parameters and bound the
source profile at the scales between $a_c-\omega$ and $a_*$.

\begin{lemma}[The first hitting scale and the plateau]
\label{lem:compatible-hit}
Under the hypotheses of Proposition~\ref{prop:pin-plateau},
\[
 a_*\le a_c+\varpi\omega .
\]
\end{lemma}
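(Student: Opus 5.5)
The case $u_0\ge1$ is immediate: then $\varpi=2$, and the claim is exactly the upper bound $a_*\le a_c+2\omega$ of Lemma~\ref{lem:first-hit}. So the content is the case $0\le u_0<1$, where we must show $a_*\le a_c+(1+u_0)\omega$. The plan is to rerun the argument of Lemma~\ref{lem:first-hit}, but to start the prefix estimate at the competitor scale $t=u_+-u_-=a_c+\chi\omega$ rather than at $a_*$, and to pay for the passage from $t$ to $a_*$ with a transfer estimate. The useful identity is
\[
2t-f(t)=2a_c+2\chi\omega-F+\varsigma\omega=u_0\omega .
\]
Thus $u_0\omega$ is exactly the deficit of $f$ at $t$ relative to maximal growth. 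Since $f$ is $2$-Lipschitz and $f(t)=F-\varsigma\omega$, $f$ cannot reach $F$ before $t+\frac{\varsigma\omega}{2}$. Our task is the opposite inequality: it must reach $F$ not much later.

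The key steps, in order, are as follows.

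\emph{Step 1.} Write $q\ge d_x(1)=d_x(t)+(d_x(a_*)-d_x(t))+(d_x(1)-d_x(a_*))$.

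\emph{Step 2.} Bound the first term by the prefix estimate \eqref{eq:limitprefix}: $d_x(t)\ge\frac{t}{2}+\frac{f(t)}{4}$.

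\emph{Step 3.} Bound the last term by Lemma~\ref{lem:fullpartition} on $[a_*,1]$ together with the certificate: $d_x(1)-d_x(a_*)\ge\lambda-F$.

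\emph{Step 4.} For the middle term, apply the transfer on $[t,a_*]$ with terminal scale $b=a_*$. The terminal cap is $f(a_*)=F$. For a competitor $s\in[t,a_*]$, bound its cost $f(s)+p_g(s;a_*)$ from below, using $p_g(s;a_*)\ge\min\{\frac{a_*}{2},a_*-s\}$, the bound $f(s)\ge f(t)$, and the $2$-Lipschitz bound written backwards from $a_*$, namely $f(s)\ge F-2(a_*-s)$ when this is the stronger one. The aim is a middle increment of at least $F-f(t)-O(a_*-t-\frac{\varsigma\omega}{2})$. The first-order content here is that $f$ must climb $\varsigma\omega$ on $[t,a_*]$, and most of this climb is transferred to the distance.

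\emph{Step 5.} Substitute $t=a_c+\chi\omega$, $f(t)=F-\varsigma\omega$ and $q=\frac{3}{4}+\frac{\Delta}{2}+\omega$. The constant terms cancel exactly as in Lemma~\ref{lem:first-hit}. What remains should be a linear inequality in $a_*-a_c$, $\chi$ and $\varsigma$, which I expect to reduce to $a_*-a_c\le(1+\varsigma+2\chi)\omega$. Where the direct substitution is not sharp enough, the plateau constraints \eqref{eq:plateau-parameters} are available: $3\varsigma+2\chi\le4$ and $\varsigma\ge-2\chi$, the latter being $u_0\ge0$.

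\emph{Main obstacle.} Step 4 is the hardest part: obtaining a lower bound for the transfer threshold $H_{f,g}(t,a_*)$ on a short interval whose length is of order $\omega$. On such an interval the $\frac{a_*}{2}$ branch of the closed gain is irrelevant, and everything depends on the branch $a_*-s$ competing against the growth of $f$. My first attempt is to show that the infimum over $s$ is attained at $s=t$, which gives $H\ge\min\{F,f(t)+a_*-t\}$. The resulting middle increment is $\min\{\varsigma\omega,a_*-t\}$, and one checks which branch forces the claimed bound. If this estimate is too weak, the fallback is to use the excess threshold \eqref{eq:excess-threshold} for $a$ slightly larger than $t$. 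This gives $d_x(1)-d_x(a)\ge\lambda-\mathfrak e\omega-f(a)$, and it can be compared with the prefix estimate at $a_*$, using $\mathfrak e\le\varsigma$ to absorb the excess.
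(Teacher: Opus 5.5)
Your reduction of the case $u_0\ge1$ to Lemma~\ref{lem:first-hit} is correct, and $2t-f(t)=u_0\omega$ is the right quantity to work with. However, the decomposition in Steps 1--4 cannot produce any upper bound on $a_*$. The transfer on $[t,a_*]$ gives $d_x(a_*)-d_x(t)\ge[H_{f,g}(t,a_*)-f(t)]_+$, and the terminal cap forces $H_{f,g}(t,a_*)\le f(a_*)=F$. So the middle bound is at most $F-f(t)=\varsigma\omega$, whatever $a_*$ is. Inserting even this best value into Steps 1--3 gives
\[
q\ge\frac{t}{2}+\frac{f(t)}{4}+\varsigma\omega+\lambda-F
\quad\Longleftrightarrow\quad 3\varsigma+2\chi\le4,
\]
which is already one of the constraints in \eqref{eq:plateau-parameters}. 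Thus $a_*$ drops out, no contradiction is available, and the inequality you expect in Step~5 does not materialize. Your stated aim in Step~4, a middle increment of at least $F-f(t)-O(a_*-t-\frac{\varsigma\omega}{2})$, actually decreases in $a_*$, which is the wrong direction for an upper bound. The fallback through \eqref{eq:excess-threshold} does not help either. Combined with the prefix at $a\in(t,a_*)$, it gives an inequality in $\chi,\varsigma,\mathfrak e$ alone as $a\searrow t$. As $a\nearrow a_*$ it gives only $a_*\le a_c+2\omega+2\mathfrak e\omega$, which is weaker than Lemma~\ref{lem:first-hit}.

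The missing idea is to push the deficit backwards and to start the prefix well below $t$, where the source still has an increment of order $a_*$ to hand over. By the $2$-Lipschitz bound, $f(u)\ge2u-u_0\omega$ for $u\le t$. Take the prefix at $b_0=\frac{a_*}{2}$, which lies below $t$ because $a_*\le a_c+2\omega$ and $\omega<\frac{a_c}{6}$. On $[b_0,a_*]$ every closed gain is saturated, that is, $p_g(u;a_*)=a_*-u$. Competitors in $[b_0,t]$ cost at least $a_*+b_0-u_0\omega$. Competitors in $[t,a_*]$ cost at least $F-\frac{\varsigma\omega}{2}$, by averaging the two affine bounds $f(u)\ge F-\varsigma\omega$ and $f(u)\ge F-2(a_*-u)$ that you already wrote down. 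The prefix at $b_0$, the transfer on $[b_0,a_*]$, and $f(b_0)\le2b_0$ then give
\[
d_x(a_*)\ge H_{f,g}(b_0,a_*)-b_0\ge\min\Bigl\{a_*-u_0\omega,\ F-\frac{a_*}{2}-\frac{\varsigma\omega}{2}\Bigr\}.
\]
This lower bound grows with slope one in $a_*$ and is not capped. Compare it with the upper bound $d_x(a_*)\le d_x(1)-(\lambda-F)\le q-\frac{1}{2}=a_c+\omega$, which comes from \eqref{eq:certificate} on $[a_*,1]$, rather than with the prefix at $t$. The second branch exceeds $a_c+\omega$, since $a_*\le a_c+2\omega$, $\varsigma\le2$ and $\omega<\frac{a_c}{6}$. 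Hence the first branch is at most $a_c+\omega$, which is $a_*\le a_c+(1+u_0)\omega$.
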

\begin{proof}
Write $s=u_+-u_-=a_c+\chi\omega$ and $D_0=2s-f(s)=u_0\omega$, and put
$b_0=\frac{a_*}{2}$. Since $s\ge a_c-\omega$ and
$b_0\le\frac{a_c}{2}+\omega$, we have $b_0<s$. For $b_0\le t\le s$
the $2$-Lipschitz bound implies $f(t)-t\ge f(s)-2s+t=t-D_0\ge b_0-D_0$.
For $s\le t\le a_*$, monotonicity and the Lipschitz bound give
$f(t)\ge\max\{F-\varsigma\omega,F-2(a_*-t)\}$, and the mean of the
two affine bounds yields
$f(t)-t\ge F-a_*-\frac{\varsigma\omega}{2}$. On $[b_0,a_*]$ every
closed gain is saturated because $t\ge\frac{a_*}{2}$ and $g(R)\ge R$ give
$p_g(t;a_*)=a_*-t$. Hence
\[
 H_{f,g}(b_0,a_*)\ge\min\left\{a_*+b_0-D_0, F-\frac{\varsigma\omega}{2}\right\}.
\]
Prefix at $b_0$, the transfer on $[b_0,a_*]$, and $f(b_0)\le2b_0$
give
\[
 d_x(a_*)\ge H_{f,g}(b_0,a_*)-b_0
 \ge\min\left\{a_*-D_0, F-\frac{a_*}{2}-\frac{\varsigma\omega}{2}\right\}.
\]
The second branch exceeds $a_c+\omega$, because $a_*\le a_c+2\omega$,
$\varsigma\le2$, and $\omega<\frac{a_c}{6}$ give
$F-\frac{a_*}{2}-\frac{\varsigma\omega}{2}-a_c-\omega
\ge\frac{a_c}{2}-3\omega>0$. On the other hand the transfer on
$[a_*,1]$ and \eqref{eq:certificate} give
$d_x(a_*)\le d_x(1)-(\lambda-F)\le q-\frac{1}{2}=a_c+\omega$. Hence
the first branch is at most $a_c+\omega$, that is,
$a_*\le a_c+(1+u_0)\omega$. Together with
Lemma~\ref{lem:first-hit} this is the claim.
\end{proof}

\begin{lemma}[A source envelope at the row scales]
\label{lem:source-envelope}
Under the hypotheses of Proposition~\ref{prop:pin-plateau}, if
$\mathfrak e\le\frac{1}{8}$, then
\begin{equation}\label{eq:source-envelope}
 f(a)\ge a+a_c-(1+u_0+\mathfrak e)\omega
  \quad \text{for } a_c-\omega\le a\le a_c+\varpi\omega.
\end{equation}
\end{lemma}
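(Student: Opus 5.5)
The plan is to split the range $[a_c-\omega,a_c+\varpi\omega]$ at the plateau length $s=u_+-u_-=a_c+\chi\omega$ from Proposition~\ref{prop:pin-plateau}. For $a\ge s$ I will use the excess threshold \eqref{eq:excess-threshold}; for $a\le s$ I will use only the $2$-Lipschitz bound for $f$ together with the identity $f(s)=F-\varsigma\omega$.

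\emph{Scales at or below $s$.} Write $D_0=2s-f(s)=u_0\omega$, as in the proof of Lemma~\ref{lem:compatible-hit}. For $a\le s$, the $2$-Lipschitz bound gives
\[
f(a)\ge f(s)-2(s-a)=2a-u_0\omega .
\]
Since $a\ge a_c-\omega$, this is at least $a+a_c-(1+u_0)\omega$. That is stronger than \eqref{eq:source-envelope}, because $\mathfrak e\ge0$. So this range needs no information beyond the plateau parameters.

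\emph{Scales above $s$.} Fix $a\in(s,a_c+\varpi\omega]$. By \eqref{eq:excess-threshold}, $\widehat H_{f,g}(a,1)\ge\lambda-\mathfrak e\omega$. Lemma~\ref{lem:fullpartition} on $[a,1]$ together with $d_x(1)\le q$ then gives
\[
f(a)\ge d_x(a)+\lambda-\mathfrak e\omega-q=d_x(a)+a_c-(1+\mathfrak e)\omega .
\]
It therefore suffices to show $d_x(a)\ge a-u_0\omega$. For this I will copy the argument of Lemma~\ref{lem:compatible-hit} with $a$ in place of $a_*$. Put $b_0=\frac a2$. On $[b_0,a]$ every closed gain is saturated, $p_g(t;a)=a-t$, because $t\ge\frac a2$ and $g(R)\ge R$. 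For $b_0\le t\le s$, the Lipschitz bound from $s$ gives $f(t)-t\ge t-D_0$. For $s\le t\le a$, monotonicity gives $f(t)\ge F-\varsigma\omega$. Hence
\[
H_{f,g}(b_0,a)\ge\min\{a+b_0-D_0,\;F-\varsigma\omega+(a-a)\},
\]
where the second entry is the terminal cap, or the cost of a competitor past $s$, bounded below using $t\le a$. Combining the prefix estimate at $b_0$, $f(b_0)\le 2b_0$, and the closed transfer (Lemma~\ref{lem:transfer}) on $[b_0,a]$ yields
\[
d_x(a)\ge\min\left\{a-D_0,\;F-\tfrac a2-\tfrac{\varsigma\omega}{2}\right\}.
\]
The first branch is exactly what is needed.

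\emph{Main obstacle: the second branch.} The key step is to show the second branch does not bind. If $d_x(a)$ equals the second branch, the displayed inequality gives
\[
f(a)\ge F-\tfrac a2-\tfrac{\varsigma}{2}\omega+a_c-(1+\mathfrak e)\omega .
\]
Using $a\le a_c+\varpi\omega$, $F=2a_c$, and the polytope bounds \eqref{eq:plateau-polytope} (in particular $\varpi+\varsigma\le\frac72$ and $2\varsigma+\varpi\le5$), I expect this to exceed $a+a_c-(1+u_0+\mathfrak e)\omega$ by roughly $\frac{a_c}{2}-O(\omega)$. That margin is positive because $\omega<\frac{1-2\Delta}{24}\le\frac{a_c}{6}$.

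The hypothesis $\mathfrak e\le\frac18$ should enter exactly here, to keep the $O(\omega)$ constant below $\frac{a_c}{2}$. If that comparison turns out too tight, my fallback is the cruder bound $f(a)\ge F-\varsigma\omega$. I would combine it with the Lipschitz estimate from $a_*$ given by Lemma~\ref{lem:compatible-hit}, namely $f(a)\ge F-2(a_*-a)\ge 2a-2\varpi\omega$, and take the larger of the available lower bounds for $f(a)$ in each subcase.
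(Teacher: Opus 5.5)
There is a genuine gap at the step you identify as the main obstacle.

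The competitor costs you actually write down are monotonicity $f(t)\ge F-\varsigma\omega$ on $[s,a]$ plus the terminal cap. They give $H_{f,g}(b_0,a)\ge\min\{a+b_0-D_0,\,F-\varsigma\omega\}$, hence $d_x(a)\ge\min\{a-D_0,\,F-\varsigma\omega-\frac a2\}$. That is not the branch $F-\frac a2-\frac{\varsigma\omega}{2}$ that you display. The halved term in Lemma~\ref{lem:compatible-hit} comes from averaging monotonicity with the line $f(t)\ge F-2(a_*-t)$, which rests on $f(a_*)=F$. Your argument at a general $a$ never uses $a_*$.

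With the branch you can justify, you need $F-\varsigma\omega-\frac a2\ge a-u_0\omega$ for $a$ up to $a_c+\varpi\omega$, i.e.\ $\frac{a_c}{2}\ge(\frac32\varpi+\varsigma-u_0)\omega$. Take the admissible template $\varsigma=2$, $\chi=-1$, so $u_0=0$ and $\varpi=1$. There the coefficient is $\frac72$, so for $\Delta=0$ you would need $\omega\le\frac1{28}$, while the hypotheses only give $\omega<\frac1{24}$.

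The condition $\mathfrak e\le\frac18$ cannot rescue this comparison. The term $\mathfrak e\omega$ enters your bound $f(a)\ge d_x(a)+a_c-(1+\mathfrak e)\omega$ and the target identically, so it cancels.

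The fallback does not close the gap either. At the same template with $\mathfrak e=0$, write $a=a_c+x\omega$. The bounds $2a_c-2\omega$ and $2a_c+(2x-2)\omega$ reach the target $2a_c+(x-1)\omega$ only for $x\le-1$ and $x\ge1$ respectively. The branch comparison holds only for $x\le\frac{a_c}{3\omega}-\frac43$, which tends to $\frac23$ as $\omega\to\frac{a_c}{6}$. So an interval of $a$ below $a_c+\omega$ is left uncovered.

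What is missing is a Lipschitz line inside the competitor costs on $[s,a]$, not only at the point $a$. The paper proceeds as follows.
\begin{itemize}
\item For $a\ge a_*$ it argues directly: $f(a)\ge F$, and the right side of \eqref{eq:source-envelope} is at most $F$ because $\varpi\le1+u_0$.
\item For $s<a<a_*$ it averages $f(t)\ge f(s)$ with $f(t)\ge f(a)-2(a-t)$. Every competitor on $[s,a]$ then costs at least $\frac{f(s)+f(a)}{2}$, and $d_x(a)\ge\min\{a-D_0,\frac{f(s)+f(a)-a}{2}\}$.
\item Suppose the second branch were at most your upper bound $f(a)-a_c+(1+\mathfrak e)\omega$ for $d_x(a)$. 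Then $f(a)\ge f(s)+2a_c-a-2(1+\mathfrak e)\omega$, so $f(a)-F\ge a_c-(\varpi+\varsigma+2+2\mathfrak e)\omega\ge a_c-\frac{23}{4}\omega>0$. This contradicts $f(a)<F$, and it is where $\varpi+\varsigma\le\frac72$ and $\mathfrak e\le\frac18$ are used.
\item Hence the first branch is at most the upper bound, which is exactly \eqref{eq:source-envelope}.
\end{itemize}

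A repair closer to your plan also works. For $s<a<a_*$, average $f(t)\ge F-\varsigma\omega$ with $f(t)\ge F-2(a_*-t)$ inside the costs. Each competitor on $[s,a]$ then costs at least $F-\frac{\varsigma\omega}{2}-(a_*-a)$, and your second branch becomes $F-\frac{\varsigma\omega}{2}-a_*+\frac a2$. By $a<a_*\le a_c+\varpi\omega$ and \eqref{eq:plateau-polytope}, this exceeds $a-u_0\omega$ by at least $\frac{a_c}{2}-\frac{11}{4}\omega>0$. This variant does not even use $\mathfrak e\le\frac18$.
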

\begin{proof}
Let $s=u_+-u_-$ and $D_0=u_0\omega$ as above. For $a\le s$ the
Lipschitz bound implies $f(a)\ge2a-D_0$, and
$(2a-D_0)-[a+a_c-(1+u_0+\mathfrak e)\omega]
=a-a_c+(1+\mathfrak e)\omega\ge\mathfrak e\omega\ge0$. For
$a\ge a_*$ we have $f(a)\ge F$, while the right side of
\eqref{eq:source-envelope} is at most $F$ because
$a\le a_c+\varpi\omega$ and $\varpi\le1+u_0$.

Let $s<a<a_*$ and put $b=\frac{a}{2}\le s$. As in the preceding
proof, $f(t)-t\ge b-D_0$ on $[b,s]$,
$f(t)-t\ge\frac{f(s)+f(a)}{2}-a$ on $[s,a]$, and the closed gains
on $[b,a]$ are saturated, so
\[
 H_{f,g}(b,a)\ge\min\left\{a+b-D_0, \frac{f(s)+f(a)}{2}\right\},\qquad
 d_x(a)\ge\min\left\{a-D_0, \frac{f(s)+f(a)-a}{2}\right\}.
\]
On the other hand \eqref{eq:excess-threshold} and the transfer on
$[a,1]$ give $d_x(a)\le q-\lambda+\mathfrak e\omega+f(a)
=f(a)-a_c+(1+\mathfrak e)\omega$. If the second branch of the lower
bound were at most this upper bound, then
$f(a)\ge f(s)+2a_c-a-2(1+\mathfrak e)\omega
\ge3a_c-(\varpi+\varsigma+2+2\mathfrak e)\omega$, so
$f(a)-F\ge a_c-\frac{23}{4}\omega>0$ by
\eqref{eq:plateau-polytope} and $\mathfrak e\le\frac{1}{8}$,
contradicting $f(a)\le F$. Hence the first branch is at most the
upper bound, which is \eqref{eq:source-envelope}.
\end{proof}

\subsection{Larger certificates from the source dimension}
\label{sec:bootstrap}

If the source profile satisfies $f(u)\ge(1+\kappa)u$, which is the
case for a source measure with Frostman exponent $1+\kappa$, the
certificate holds at a level above one, and the level can be
improved by iterating the prefix and transfer estimates. The
iteration is carried out with the prefix estimate in the form
\begin{equation}\label{eq:general-prefix}
 d_x(u)\ge(1-2b)u+bf(u) \quad \text{for } 0\le u\le1,
\end{equation}
where $0<b\le\frac{1}{4}$: the case $b=\frac{1}{4}$ is
\eqref{eq:limitprefix}, and Corollary~\ref{cor:prefix-rate} yields
smaller coefficients when the endpoints have dimension larger than
one.

\begin{lemma}[Bootstrapped certificate]\label{lem:bootstrap}
Suppose $f(u)\ge(1+\kappa)u$ on $[0,1]$ with $0<\kappa<1$, that
\eqref{eq:general-prefix} holds for some $0<b\le\frac{1}{4}$, and
that $d_x(1)\le q$ with
\begin{equation}\label{eq:bootstrap-range}
 1-b(1-\kappa)\le q<1-\frac{b(1-\kappa)}{2}.
\end{equation}
Put
\[
 \mathcal D=\frac{\kappa}{b(1-\kappa)},\qquad 
 \lambda_*=1+\mathcal D(1-q),\qquad 
 \Delta_*=\lambda_*-1 .
\]
Then the first point $a_*$ with $f(a_*)=\lambda_*-\frac{1}{2}$
exists, and
\[
 H_{f,g}(a,1)\ge\lambda_* \quad \text{for } a_*\le a\le1,\qquad 
 a_*\le\frac{q-\frac{1}{2}-b(\lambda_*-\frac{1}{2})}{1-2b},\qquad 
 a_*\le\frac{1}{4}+2\Bigl(q-\frac{3}{4}\Bigr)-\frac{\Delta_*}{2}.
\]
\end{lemma}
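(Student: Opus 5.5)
The plan is a self-improving (bootstrap) argument. The level $\lambda_*$ is the fixed point of the estimate "prefix at a competitor scale plus transfer from that scale to $1$", and the Frostman bound $f(u)\ge(1+\kappa)u$ is the only lower bound on $f$ that is used. Put $F_*=\lambda_*-\frac12$. First, $a_*$ exists. The prefix \eqref{eq:general-prefix} at $u=1$ gives no lower bound on $f(1)$, so existence must come from the competitor estimate below applied at $a=1$: it will give $f(1)\ge\lambda_*>F_*$ as part of the conclusion. Alternatively, one can show directly that $f(1)<F_*$ would force $d_x(1)>q$. Continuity of $f$ then produces the first hitting point $a_*$.

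The core step is to show that every competitor $s\in[a_*,1]$ has cost $f(s)+p_g(s;1)\ge\lambda_*$, together with $f(1)\ge\lambda_*$. For $s\le\frac12$ this is immediate, since $p_g(s;1)\ge\frac12$ by \eqref{eq:profileclass} and $f(s)\ge F_*$ by monotonicity. For $s\ge\frac12$ we have $p_g(s;1)=1-s$. Here I argue by contradiction. Let $s_0$ be the supremum of the bad scales, where the cost is below $\lambda_*$ (this set is closed by continuity). Take a bad $s$ near $s_0$, so that all competitors beyond it cost nearly $\lambda_*$. Then the transfer Lemma~\ref{lem:transfer} on $[s,1]$ gives $d_x(1)-d_x(s)\ge\lambda_*-f(s)-o(1)$. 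Adding the prefix at $s$, I obtain
\[
q\ge d_x(1)\ge(1-2b)s+bf(s)+\lambda_*-f(s)=\lambda_*+(1-2b)s-(1-b)f(s).
\]
Badness means $f(s)<\lambda_*-1+s$. On the other side, $f(s)\ge(1+\kappa)s$. To combine these, I would split according to whether the upper bound $\lambda_*-1+s$ or the Frostman bound binds. A direct computation at the crossing $1+\kappa s=\lambda_*$, where $f(s)=(1+\kappa)s$, gives
\[
q\ge1-sb(1-\kappa),\qquad s=\frac{\lambda_*-1}{\kappa}=\frac{1-q}{b(1-\kappa)},
\]
so this is exactly the equality case. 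This is what dictates $\mathcal D=\frac{\kappa}{b(1-\kappa)}$, and I expect a strict inequality to follow in all other configurations. The range \eqref{eq:bootstrap-range} enters at two points. Its upper end ensures $s\ge\frac12$ in the relevant case, so that the tangent formula $p_g=1-s$ is the one used. Its lower end keeps $s\le1$, equivalently $\lambda_*\le1+\kappa$, so the crossing lies in $[0,1]$.

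If the supremum argument proves awkward, I would replace it by an explicit iteration. Start from the level $\lambda_0=1$, which holds for every admissible pair as noted after \eqref{eq:certificate}. At each step, feed the current certificate $H_{f,g}(s,1)\ge\lambda_k$ into the computation above to get a level $\lambda_{k+1}$. The map $\lambda_k\mapsto\lambda_{k+1}$ is affine with slope less than one and fixed point $\lambda_*$, so $\lambda_k\nearrow\lambda_*$. I expect the main obstacle to be this circularity: a lower bound for the transfer on $[s,1]$ requires knowing the costs of later competitors, which is the very statement being proved. Handling it with uniform $o(1)$ errors, including the terminal cap $f(1)$, is the delicate point.

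Finally, the two bounds on $a_*$ are short. The prefix at $a_*$ together with the transfer on $[a_*,1]$, using the certificate just proved, gives
\[
q\ge(1-2b)a_*+bF_*+\lambda_*-F_*=(1-2b)a_*+bF_*+\tfrac12 .
\]
Solving for $a_*$ gives the first bound. The second bound is the same computation with the weaker prefix $b=\frac14$, which the general prefix implies since $b\le\frac14$ and $f\le2u$. It reproduces Lemma~\ref{lem:first-hit}'s bound $a_*\le a_c+2\omega$ with $\Delta=\Delta_*$ and $\omega=q-\frac34-\frac{\Delta_*}{2}$.
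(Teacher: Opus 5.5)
Your concluding step (the two bounds on $a_*$, and the reduction of the second to $b=\frac{1}{4}$) matches the paper. Existence of $a_*$ and the terminal cap need no competitor estimate: they follow at once from $f(1)\ge1+\kappa\ge\lambda_*$, which is exactly the lower end of \eqref{eq:bootstrap-range}.

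The gap is in the core step. Your supremum argument cannot produce a contradiction at the level $\lambda_*$ itself.
\begin{itemize}
\item The bad set $\{s\in[\frac{1}{2},1]:f(s)+1-s<\lambda_*\}$ is open, not closed. So at its supremum $s_0<1$ the cost equals $\lambda_*$, and all later costs are at least $\lambda_*$.
\item Prefix plus transfer from points just above $s_0$ then give $q\ge1+b\Delta_*-bs_0$. The Frostman bound gives $s_0\le\frac{\Delta_*}{\kappa}$.
\item But $\frac{1-q}{b}+\Delta_*=\frac{1-q}{b(1-\kappa)}=\frac{\Delta_*}{\kappa}$ identically, so these constraints are compatible. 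They force $s_0=\frac{\Delta_*}{\kappa}$ and $f(s_0)=(1+\kappa)s_0$, which is exactly the equality case you computed.
\item Bad scales accumulating at $\frac{\Delta_*}{\kappa}$ from below violate none of the inequalities you use.
\end{itemize}
So the strictness you expect ``in all other configurations'' does not occur: the supremum is pinned to the extremal configuration. The loss comes from writing the transfer threshold as the constant $\lambda_*$. That leaves a term $-(1-b)f(s)$, which can only be bounded using badness, never using the Frostman floor.

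Your fallback iteration is the paper's proof. The paper runs $\lambda_{j+1}=1+\frac{\kappa}{b+(1-b)\kappa}(\lambda_j-q)$ from $\lambda_0=1+\frac{\kappa}{2}$, using $(1-b)f(s)\ge\lambda_j-q+(1-2b)s$ and $f(s)\ge(1+\kappa)s$ on $[\frac{1}{2},1]$; the upper end of the range serves only to make $\lambda_0\le\lambda_*$. This involves no circularity and no $o(1)$ errors, since each level is established before it is used. What your sketch omits is the check, made in the paper at $s=\frac{1}{2}$, that each new first hitting point stays at or before $\frac{1}{2}$. That is what lets the previous certificate license the transfer at every $s\in[\frac{1}{2},1]$.

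Alternatively, your one-shot idea works if you evaluate at a \emph{minimizer} $s_1$ of $c(s)=f(s)+1-s$ on $[\frac{1}{2},1]$, rather than at the edge of the bad set.
\begin{itemize}
\item If $s_1=1$, use $f(1)\ge\lambda_*$.
\item If $s_1<1$, then $H_{f,g}(s_1,1)=c(s_1)$, so the transfer gain is exactly $1-s_1$. Now $f(s_1)$ enters only through the prefix, with a positive sign: $q\ge1-2bs_1+bf(s_1)\ge1-b(1-\kappa)s_1$. Hence $s_1\ge\frac{\Delta_*}{\kappa}$, and so $\min c=c(s_1)\ge1+\kappa s_1\ge\lambda_*$.
\end{itemize}
Competitors in $[a_*,\frac{1}{2}]$ are handled by $p_g\ge\frac{1}{2}$, as you said. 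This gives the certificate in one step, without iteration and without the upper end of the range. Running your supremum argument at each level $\lambda'<\lambda_*$ and letting $\lambda'\nearrow\lambda_*$ also works: the same algebra then yields the contradiction $\lambda'-1\ge\Delta_*$.
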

\begin{proof}
Put $\lambda_0=1+\frac{\kappa}{2}$ and let $a_0$ be the first point
with $f(a_0)=\lambda_0-\frac{1}{2}=\frac{1+\kappa}{2}$.
It lies at
or before $\frac{1}{2}$. For $a_0\le s\le\frac{1}{2}$,
$p_g(s;1)\ge\frac{1}{2}$ implies $f(s)+p_g(s;1)\ge\lambda_0$.
For
$s\ge\frac{1}{2}$, $f(s)+1-s\ge1+\kappa s\ge\lambda_0$, and $f(1)\ge1+\kappa\ge\lambda_0$. Thus $H_{f,g}(a,1)\ge\lambda_0$ for
$a\ge a_0$.

Suppose $H_{f,g}(a,1)\ge\lambda_j$ for $a\ge a_j$, where $a_j$ is
the first point with $f(a_j)=\lambda_j-\frac{1}{2}$, $a_j\le\frac{1}{2}$,
and $\lambda_0\le\lambda_j\le\lambda_*$. The prefix estimate
\eqref{eq:general-prefix} at $s\ge a_j$ and the transfer on $[s,1]$
give $q\ge(1-2b)s+bf(s)+\lambda_j-f(s)$, that is,
$(1-b)f(s)\ge\lambda_j-q+(1-2b)s$. Hence, for
$\frac{1}{2}\le s\le1$,
\[
 f(s)+1-s\ge1+\max\left\{\kappa s, \frac{\lambda_j-q-bs}{1-b}\right\}
 \ge\lambda_{j+1}:=1+\frac{\kappa}{b+(1-b)\kappa}(\lambda_j-q),
\]
the minimum over $s$ of the maximum being attained where the two
affine functions meet, at $s_j=\frac{\lambda_j-q}{b+(1-b)\kappa}$.
Here $s_j\ge\frac{1}{2}$ is the upper inequality in
\eqref{eq:bootstrap-range} at $\lambda_j=\lambda_0$, and
$s_j\le1$ follows from $\lambda_j\le\lambda_*\le1+\kappa$, which is
the lower inequality. At $s=\frac{1}{2}$ this shows that the first
point $a_{j+1}$ with $f(a_{j+1})=\lambda_{j+1}-\frac{1}{2}$ is at
or before $\frac{1}{2}$.
For $a_{j+1}\le s\le\frac{1}{2}$ the gain
$p_g(s;1)\ge\frac{1}{2}$ gives $f(s)+p_g(s;1)\ge\lambda_{j+1}$, and $f(1)\ge1+\kappa\ge\lambda_{j+1}$. Thus $H_{f,g}(a,1)\ge\lambda_{j+1}$
for $a\ge a_{j+1}$.

The map $\lambda\mapsto1+\frac{\kappa}{b+(1-b)\kappa}(\lambda-q)$ has
slope less than one and fixed point $\lambda_*$, and
$\lambda_0\le\lambda_*$ is the upper inequality in
\eqref{eq:bootstrap-range}. Hence
$\lambda_j\nearrow\lambda_*$, the points $a_j$ increase to some
$a_*\le\frac{1}{2}$ with $f(a_*)=\lambda_*-\frac{1}{2}$ by
continuity, and the inequalities $H_{f,g}(a,1)\ge\lambda_j$ for
$a\ge a_*$ pass to the limit. The prefix estimate
\eqref{eq:general-prefix} and the transfer at $a_*$ give
$q\ge(1-2b)a_*+b(\lambda_*-\frac{1}{2})+\frac{1}{2}$, which is the
first bound for $a_*$.
The second follows in the same way from
\eqref{eq:limitprefix}, which is implied by
\eqref{eq:general-prefix} because $b\le\frac{1}{4}$ and $f(u)\le2u$.
\end{proof}

At $b=\frac{1}{4}$ the lemma says that if $\frac{\kappa}{4}\le\varepsilon<\frac{1+\kappa}{8}$
and $q=\frac{3}{4}+\varepsilon$, then $\lambda_*=\frac{1-4\kappa\varepsilon}{1-\kappa}$.
With $b=b_{\mathrm{pin}}(\kappa)$ from \eqref{eq:b-pin-sym}, the
range is $\frac{\kappa}{2}\le\varepsilon<\frac{1+2\kappa}{8}$ and
\begin{equation}\label{eq:D-pin-sym}
 \mathcal D=\frac{4\kappa}{1-2\kappa},\qquad 
 \Delta_*=\frac{\kappa(1-4\varepsilon)}{1-2\kappa};
  \text{with } b=b_{\mathrm{sym}}(\kappa),\qquad 
 \mathcal D=\frac{\kappa(4-\kappa)}{1-3\kappa+\kappa^{2}} .
\end{equation}
With $\lambda=\lambda_*$ and $\Delta=\Delta_*$, the pair has the
certificate \eqref{eq:certificate}, and the target
$q=\frac{3}{4}+\varepsilon$ has the form \eqref{eq:level-notation}
with $\omega=\varepsilon-\frac{\Delta_*}{2}$. The two bounds for
$a_*$ imply $\Delta_*\le2\varepsilon$. Since $q<1$, we obtain
$0\le\Delta_*<\frac{1}{2}$ and $\omega\ge0$. This is how the source
dimension enters Theorems~\ref{thm:universal-main}
and~\ref{thm:main}: through the prefix coefficient $b$ and the
lower bound $1+\kappa$, it raises the level of the certificate, and
the arguments of Section~\ref{sec:unpinned} and of
Part~\ref{part:geometry} are carried out at that level.

\section{Barriers for the reciprocal estimates}
\label{sec:reciprocal-barriers}

A small distance profile forces both endpoints to carry information
before a common cutoff. We first follow that information to later
scales. The two barriers proved here will allow us to move the cutoff
backwards. The reciprocal argument in the next section retains the
separate endpoint values when this common lower bound is no longer enough.

Throughout this section $f,g,d_x,d_y$ satisfy \eqref{eq:profileclass},
the prefix estimate \eqref{eq:limitprefix}, the transfer estimate of
Lemma~\ref{lem:transfer}, and the full partition transfer of
Lemma~\ref{lem:fullpartition}, for both choices of pin, and
$d_x(1),d_y(1)\le Q$. We write $h_g=g-u$ as in
Section~\ref{sec:pinned-obstruction}, and $\ell=\min\{f,g\}$.

\subsection{The correlated prefix--transfer estimate}

For $0<a\le1$ and $z\ge0$ put
\begin{equation}\label{eq:phi}
 \phi_a(z)=z-\frac{3}{4}\min\{z,2a\}=\max\Bigl\{\frac{z}{4}, z-\frac{3a}{2}\Bigr\},
\end{equation}
and, for a continuous nondecreasing $\ell_0:[0,1]\to[0,\infty)$,
\begin{equation}\label{eq:G}
 G_{\ell_0}(a,b)=\frac{a}{2}+\inf_{a\le t\le b}
 \bigl\{\phi_a(\ell_0(t))+p_{\ell_0}(t;b)\bigr\}
  \quad \text{for } 0<a\le b\le1;
\end{equation}
the endpoint $t=b$ contributes $\frac{a}{2}+\phi_a(\ell_0(b))$, since
$p_{\ell_0}(b;b)=0$. The right side depends only on $\ell_0$ on
$[0,b]$ and is nondecreasing in $\ell_0$. The condition $\ell_0\le f,g$
is used in the following lemma.

\begin{lemma}[Correlated prefix and transfer]\label{lem:correlated}
If $\ell_0\le f$ and $\ell_0\le g$, then $d_x(b),d_y(b)\ge G_{\ell_0}(a,b)$
for $0<a\le b\le1$.
\end{lemma}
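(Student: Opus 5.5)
By the symmetry recorded in Table~\ref{tab:roles} it suffices to prove $d_x(b)\ge G_{\ell_0}(a,b)$. The bound for $d_y$ is the same argument with $f$ and $g$ exchanged, using the second halves of \eqref{eq:limitprefix} and Lemma~\ref{lem:transfer}. The symmetric hypothesis $\ell_0\le f,g$ is exactly what makes this exchange harmless. The plan is to chain the prefix estimate at the left endpoint $a$ with the closed transfer on $[a,b]$, and then weaken each term monotonically to one expressed through $\ell_0$.

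\emph{Step 1 (chaining).} By \eqref{eq:limitprefix}, $d_x(a)\ge \frac{a}{2}+\frac{f(a)}{4}$. Dropping the positive part in Lemma~\ref{lem:transfer} gives $d_x(b)-d_x(a)\ge H_{f,g}(a,b)-f(a)$. Adding these yields
\[
d_x(b)\ge \frac{a}{2}+H_{f,g}(a,b)-\frac{3}{4}f(a).
\]
Since $H_{f,g}(a,b)$ is a minimum of the terminal cap $f(b)$ and of the competitor costs $f(t)+p_g(t;b)$ for $a\le t\le b$, it suffices to bound each of these, minus $\frac34 f(a)$, from below by the corresponding term of the infimum in \eqref{eq:G}.

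\emph{Step 2 (absorbing $f(a)$).} Fix $t\in[a,b]$. Since $f$ is nondecreasing, $f(a)\le f(t)$, and \eqref{eq:profileclass} gives $f(a)\le 2a$. Hence $f(a)\le\min\{f(t),2a\}$, and therefore
\[
f(t)-\frac34 f(a)\ge f(t)-\frac34\min\{f(t),2a\}=\phi_a(f(t)).
\]
The same bound with $t=b$ handles the terminal cap: $f(b)-\frac34 f(a)\ge\phi_a(f(b))$.

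\emph{Step 3 (replacing $f,g$ by $\ell_0$).} The function $\phi_a$ is nondecreasing in $z$, since it is the maximum of two nondecreasing affine functions, with slopes $\frac14$ and $1$. Hence $\phi_a(f(t))\ge\phi_a(\ell_0(t))$. The closed gain $p_g(t;b)=\min\{b-t,\frac{g(b-t)+t}{2}\}$ is nondecreasing in the value $g(b-t)$, so $p_g(t;b)\ge p_{\ell_0}(t;b)$. The terminal cap gives $\phi_a(f(b))\ge\phi_a(\ell_0(b))=\phi_a(\ell_0(b))+p_{\ell_0}(b;b)$, which is the $t=b$ term of the infimum. Taking the minimum over all terms then gives
\[
H_{f,g}(a,b)-\frac34 f(a)\ge \inf_{a\le t\le b}\{\phi_a(\ell_0(t))+p_{\ell_0}(t;b)\},
\]
and together with Step~1 this is the claim.

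I do not expect a genuine obstacle here. The only point needing care is Step~2: the coefficient $\frac34$ must be absorbed \emph{inside} each competitor term, using both $f(a)\le f(t)$ and $f(a)\le 2a$, rather than by first taking the minimum over $t$. Otherwise the function $\phi_a$ would not appear. Everything else is monotonicity of $\phi_a$ and of $p_g$ in $g$. This also explains why the closed gain $p$, and not the partition gain, is used, in line with Remark~\ref{rem:partition-nonmonotonicity}.
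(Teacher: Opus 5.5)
Your proof is correct and follows the paper's argument essentially line by line. You chain the prefix estimate at $a$ with the closed transfer on $[a,b]$, absorb $\frac34 f(a)$ inside each competitor term using $f(a)\le\min\{f(t),2a\}$, and then use that $\phi_a$ is nondecreasing and that $p_g(t;b)\ge p_{\ell_0}(t;b)$.

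The only detail the paper treats separately is $a=b$, where Lemma~\ref{lem:transfer} is not stated. There the claim is just the prefix estimate, and your Step~1 still holds because $H_{f,g}(b,b)=f(b)$, so the transfer inequality reads $0\ge 0$.
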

\begin{proof}
For $a<b$, the transfer estimate without its positive part and the
prefix estimate give
\[
 d_x(b)\ge\frac{a}{2}+\frac{f(a)}{4}+H_{f,g}(a,b)-f(a)
 =\frac{a}{2}+\inf_{a\le t\le b}\bigl\{f(t)-\frac{3}{4}f(a)+p_g(t;b)\bigr\},
\]
where the endpoint $t=b$ accounts for the term $f(b)$ in
\eqref{eq:H}. Since $f$ is nondecreasing and $f(a)\le2a$,
\[
 f(t)-\frac{3}{4}f(a)\ge f(t)-\frac{3}{4}\min\{f(t),2a\}
 =\phi_a(f(t))\ge\phi_a(\ell_0(t)),
\]
because $\phi_a$ is nondecreasing, and $p_g(t;b)\ge p_{\ell_0}(t;b)$.
For $a=b$ the claim is the prefix estimate, as $\ell_0(b)\le2b$.
Exchanging the roles of $x$ and $y$ proves the bound for $d_y$.
\end{proof}

\subsection{Two barriers}

Fix $\lambda\ge1$ and put $k=\lambda-Q$. We say that the tails have
height $\lambda$ at $A$ if $H_{f,g}(A,1)\ge\lambda$ and
$H_{g,f}(A,1)\ge\lambda$. If the tails have height $\lambda$ at $A$,
then for $b\ge A$ the transfer estimate on $[b,1]$ implies
$Q\ge d_x(1)\ge d_x(b)+\lambda-f(b)$, and likewise for $g$, so
Lemma~\ref{lem:correlated} with $\ell_0=\ell$ implies
\begin{equation}\label{eq:tail-common}
 \ell(b)\ge k+G_\ell(a,b) \quad \text{for } b\ge A, 0<a\le b.
\end{equation}
Moreover, if $H_{f,g}(a,1)\ge\lambda$, then
$Q\ge\frac{a}{2}+\frac{f(a)}{4}+\lambda-f(a)$, that is,
\begin{equation}\label{eq:cutoff-defect}
 f(a)\ge\frac{4}{3}\Bigl(k+\frac{a}{2}\Bigr),\qquad \text{hence}\quad
 a\ge k\quad\text{and}\quad f(u)\ge2u-\frac{4}{3}(a-k) \quad \text{for } 0\le u\le a,
\end{equation}
by $f(a)\le2a$ and the $2$-Lipschitz bound. We call $\frac{4}{3}(a-k)$
the defect of the cutoff $a$.

\begin{lemma}[Two barriers]\label{lem:two-barriers}
Let $k>0$, $0\le\beta<k$, $4k\le1$, and $A=k+\frac{3}{4}\beta$.
Suppose that $\ell(t)\ge2t-\beta$ for $0\le t\le A$ and that
\eqref{eq:tail-common} holds. Then
\begin{align}
 \ell(t)&\ge t+k-\frac{\beta}{2}&&\text{for } A\le t\le3k,
 \label{eq:first-barrier}\\
 \ell(t)&\ge\frac{t}{2}+\frac{5k}{2}-\frac{\beta}{2}&&\text{for } 3k\le t\le4k.
 \label{eq:second-barrier}
\end{align}
\end{lemma}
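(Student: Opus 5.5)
The plan is a continuity (first-failure) argument. Both barriers are lower bounds for $\ell$ on $[A,4k]$, and \eqref{eq:tail-common} bounds $\ell(b)$ below by an infimum of $\ell$ over $[a,b]$. So I assume the piecewise barrier
\[
\beta(t)=\begin{cases}2t-\beta,&0\le t\le A,\\ t+k-\frac{\beta}{2},&A\le t\le3k,\\ \frac{t}{2}+\frac{5k}{2}-\frac{\beta}{2},&3k\le t\le4k\end{cases}
\]
holds on $[0,b)$ and prove it at $b$ with a strict margin. Since $\ell$ is continuous, the set where $\ell\ge\beta(\cdot)-\epsilon$ is then closed and open in $[0,4k]$, and letting $\epsilon\searrow0$ finishes. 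The pieces of $\beta(\cdot)$ agree at $A$ (both equal $2k+\frac{\beta}{2}$, using $A=k+\frac34\beta$) and at $3k$. Hence the barrier is a continuous nondecreasing lower envelope. The main computation is to check that $k+G_{\beta(\cdot)}(a,b)\ge\beta(b)$ for a suitable $a=a(b)$. This suffices because $G$ is nondecreasing in $\ell_0$, as noted after \eqref{eq:G}.

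Next I evaluate $G$ for a piecewise linear envelope. The infimum in \eqref{eq:G} runs over $t\in[a,b]$. I split it into the endpoint $t=b$ and the interior $a\le t<b$. At the endpoint the term is $\frac a2+\phi_a(\ell(b))$, which involves the unknown value $\ell(b)$. The branch $\phi_a(z)\ge\frac z4$ makes the inequality self-improving: $\ell(b)\ge k+\frac a2+\frac{\ell(b)}4$ gives $\ell(b)\ge\frac43(k+\frac a2)$. The branch $z-\frac{3a}{2}$ gives nothing at the endpoint, because $\ell(b)$ cancels. So the endpoint is handled as in \eqref{eq:cutoff-defect}, and it suffices to choose $a$ so that $\frac43(k+\frac a2)\ge\beta(b)$, or to check that the interior term is smaller. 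For interior $t$ with $t\ge\frac b2$, the gain is saturated: $p_\ell(t;b)=b-t$, since $\ell(R)\ge R$. Taking $a\ge\frac b2$ therefore gives the clean expression
\[
G\ge\frac a2+\inf_{a\le t\le b}\bigl\{\max\{\tfrac{\beta(t)}4,\beta(t)-\tfrac{3a}2\}+b-t\bigr\}.
\]
The function inside the infimum is piecewise linear in $t$, so the infimum is attained at $t=a$, $t=b$, or a breakpoint $A$ or $3k$. For the first barrier, a trial choice is $a=\frac b2$. On $t\le A$ it already gives $k+b-\beta$, which is more than needed. On the piece $t\ge A$, however, it only gives $2k+\frac b2-\frac\beta2$, which suffices only for $b\le2k$.

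The main obstacle is choosing $a$ as a function of $b$ so that the estimate reaches $b=3k$ and then $b=4k$. On the barrier piece of slope one, the branch $\beta(t)-\frac{3a}{2}$ makes $t\mapsto\beta(t)+b-t$ flat. There the inequality reads $\ell(b)\ge2k-\frac{\beta}{2}+b-a$, which is good when $a$ is small. The constraint $a\ge\frac b2$ used for saturation then fails, so for $a<\frac b2$ I will use the unsaturated gain $p_\ell(t;b)\ge\min\{b-t,\frac{\ell(b-t)+t}{2}\}$. In that gain I insert the barrier at $b-t$, where the first piece $2(b-t)-\beta$ is available whenever $b-t\le A$. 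The first thing I would try is $a=b-2k$ for $b\in[2k,3k]$, together with the branch $\frac z4$ near $t=a$. I would then verify, piece by piece, that each candidate minimizer gives at least $t+k-\frac\beta2$, and this is where the hypotheses $\beta<k$ and $4k\le1$ enter. For the second barrier I would repeat this computation with $a\in[k,\frac b2]$ chosen so that the endpoint bound $\frac43(k+\frac a2)$ equals $\frac b2+\frac{5k}2-\frac\beta2$. This check is the delicate step and may force adjusting $a$.
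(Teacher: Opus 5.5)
Your first-failure framework has the right shape. Using \eqref{eq:tail-common} together with the monotonicity of $G$ in its envelope is also what the paper does. The quantitative core does not go through, however.

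\textbf{There is no strict margin at the exact barrier.} Take $b\le2k$ and $a=\frac b2$. If you evaluate $\phi_a$ as a maximum, the competitor $t=a+\frac\beta2$ contributes exactly $b-\frac\beta2$. So $k+G$ reproduces $b+k-\frac\beta2$ with no slack. Your value $k+b-\beta$ is below the target, not above it.

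\textbf{Your cutoffs fail beyond $2k$.} Take $b\in[2k,3k)$ and $a=b-2k$; note that $a=0$ is not admissible at $b=2k$. Consider the competitor $t=a$ with the gain branch $\frac{\ell(b-t)+t}{2}$. Here $b-t=2k$, and the envelope only gives $\ell(2k)\ge3k-\frac\beta2$. The resulting bound is $k+\frac a2+\frac{2a-\beta}{4}+\frac{3k-\frac\beta2+a}{2}=\frac{3b}{2}-\frac k2-\frac\beta2$. This is short of $b+k-\frac\beta2$ by $\frac{3k-b}{2}$. When $a<\beta$, using $\ell(a)\ge a$ instead still leaves a deficit $\frac{4k-b-\beta}{4}>0$.

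\textbf{The second-barrier plan is infeasible.} Equating $\frac43(k+\frac a2)$ with $\frac b2+\frac{5k}{2}-\frac\beta2$ forces $a=\frac{3b}{4}+\frac{7k}{4}-\frac{3\beta}{4}>b$. Worse, any $a\ge k$ caps what the tail inequality can yield at $\frac43(k+\frac a2)$. Indeed, that value of $\ell(b)$ is consistent with the endpoint term $t=b$. On $[3k,4k]$ this cap is at most $\frac43(k+\frac b2)<\frac b2+\frac{5k}{2}-\frac\beta2$, since $\beta<k$. Any fixed $a<k$ instead leaves a positive deficit at $t=a$: exactly $k-a$ when $a\ge\beta$, via the gain branch with $\ell(b-a)\ge b-a+k-\frac\beta2$. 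So with the barrier envelope alone, no fixed cutoff reproduces the second barrier, let alone with a margin. Only the limit $a\to k^-$ does.

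\textbf{The missing idea is to tie the cutoff to the current defect.} This is what makes the continuity argument contract.

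For the first barrier, the paper lets $\mathfrak d=\max_{A\le t\le B}[t+k-\frac\beta2-\ell(t)]_+$ be small and takes $a=k-\mathfrak d$. Then $\ell\ge L$ on $[0,B]$, where $L(t)=\max\{t,\min\{2t-\beta,t+a-\frac\beta2\}\}$. A direct check of both gain branches gives $G_L(a,b)\ge b-\frac\beta2$ whenever $\beta\le2a$ and $a\le b\le3a$. The check splits at $t_0=a+\frac\beta2$ and compares $b-t$ with $t_0$. Hence $\ell(b)\ge b+k-\frac\beta2$ at a maximizer, so the defect is absorbed completely. Since $a<k$, the endpoint term $t=b$ never binds.

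For the second barrier, take $a=k-\frac{\mathfrak D}{2}$ with $a>\beta$ and $B-a<3k$. Insert the already proved first barrier at $b-t<3k$. This gives $G_\ell(a,b)\ge\frac b2+\frac{3k}{2}-\frac\beta2-\frac{\mathfrak D}{2}$, so the defect is halved. The choice equalizes the loss $k-a$ at $t=a$ with the loss $\mathfrak D-(k-a)$ from competitors $t\ge3k$; both equal $\frac{\mathfrak D}{2}$.

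A minor point: your pieces do not meet at $A$. There $2A-\beta=2k+\frac\beta2$, while $A+k-\frac\beta2=2k+\frac\beta4$; the affine pieces actually meet at $k+\frac\beta2$. This is harmless, because the early bound dominates at $A$.
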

\begin{proof}
Put $m=k-\frac{\beta}{2}$.

We first prove the lower bound on the middle interval. For $0\le\mathfrak d<k$ put $a=k-\mathfrak d$
and
\[
 L(t)=\max\bigl\{t, \min\{2t-\beta, t+a-\frac{\beta}{2}\}\bigr\}.
\]
We claim that
\begin{equation}\label{eq:first-auxiliary}
 G_L(a,b)\ge b-\frac{\beta}{2}
  \quad\text{if }\beta\le2a\text{ and }a\le b\le3a .
\end{equation}
The two affine pieces of $L$ meet at $t_0=a+\frac{\beta}{2}$, and
$L(t)=t+a-\frac{\beta}{2}\ge2a$ for $t\ge t_0$. Put
$F(t)=\frac{a}{2}+\phi_a(L(t))$. For $t\ge t_0$,
$\phi_a(L(t))=L(t)-\frac{3a}{2}$, so $F(t)=t-\frac{\beta}{2}$. For
$a\le t\le t_0$, $L(t)\ge2t-\beta$ implies
$F(t)\ge\frac{a+t}{2}-\frac{\beta}{4}$. The branch $b-t$ of
$p_L(t;b)$ therefore yields $F(t)+b-t\ge b-\frac{\beta}{2}$ in both
cases, using $t\le t_0$ in the second. For the other branch put
$u=b-t$ and consider
$F(t)+\frac{L(u)+t}{2}-(b-\frac{\beta}{2})$. If $t\le t_0$ this is at
least $\frac{a}{2}+\frac{\beta}{4}-u+\frac{L(u)}{2}$, which is
nonnegative for $u\le t_0$ by $L(u)\ge2u-\beta$ and $\beta\le2a$, and
for $u\ge t_0$ by $L(u)=u+a-\frac{\beta}{2}$ and $u\le b-a\le2a$. If
$t\ge t_0$, twice the difference is $b-3u+L(u)$, which is at least
$t-\beta\ge0$ for $u\le t_0$ and at least $3a+\frac{\beta}{2}-b\ge0$
for $u\ge t_0$, by $u\le b-t_0$. This proves the claim.

For $A\le B<3k$ let
$\mathfrak d(B)=\max_{A\le t\le B}[t+m-\ell(t)]_+$. This is
continuous in $B$, and $\mathfrak d(A)=0$ because
$\ell(A)\ge2A-\beta=A+m+\frac{\beta}{4}$. Suppose
$\mathfrak d=\mathfrak d(B)>0$ is so small that $B<3(k-\mathfrak d)$
and $\beta\le2(k-\mathfrak d)$, and put $a=k-\mathfrak d$. Then
$\ell\ge L$ on $[0,B]$: on $[0,A]$ by hypothesis, and on $[A,B]$
because $\ell(t)\ge t+m-\mathfrak d=t+a-\frac{\beta}{2}$. At a point
$b\in[A,B]$ where the maximum $\mathfrak d$ is attained,
\eqref{eq:tail-common}, the monotonicity of $G$ in its envelope, and
\eqref{eq:first-auxiliary} give $\ell(b)\ge k+b-\frac{\beta}{2}=b+m$,
contradicting $\ell(b)=b+m-\mathfrak d$. Hence the set of
$B\in[A,3k)$ with $\mathfrak d(B)=0$ is open and closed in $[A,3k)$,
so it is all of $[A,3k)$, and continuity covers the endpoint $t=3k$.

We now extend the lower bound to the next interval. Put
$L_0(t)=\max\{t,\min\{2t-\beta,t+m\}\}$ and
$P(t)=\frac{t}{2}+\frac{5k}{2}-\frac{\beta}{2}$. By the hypothesis and
the first barrier, $\ell\ge L_0$ on $[0,3k]$, and
$\ell(3k)\ge3k+m=P(3k)$. Fix $3k\le B<4k$, let
$\mathfrak D=\max_{3k\le t\le B}[P(t)-\ell(t)]_+$, and suppose
$\mathfrak D>0$ is so small that $a=k-\frac{\mathfrak D}{2}$ satisfies
$a>\beta$ and $B-a<3k$. Let $b\in[3k,B]$ attain the maximum. We claim
\begin{equation}\label{eq:second-auxiliary}
 G_\ell(a,b)\ge\frac{b}{2}+\frac{3k}{2}-\frac{\beta}{2}-\frac{\mathfrak D}{2}.
\end{equation}
Write $e=\frac{\mathfrak D}{2}$ and $t_0=k+\frac{\beta}{2}$, where the
two affine pieces of $L_0$ meet. For $a\le t\le3k$ we use
$\ell\ge L_0$.
With $F(t)=\frac{a}{2}+\phi_a(L_0(t))$,
\[
 F(t)=
 \begin{cases}
 \frac{a+t}{2}-\frac{\beta}{4},& a\le t\le a+\frac{\beta}{2},\\[2pt]
 2t-\beta-a,& a+\frac{\beta}{2}\le t\le t_0,\\[2pt]
 t+m-a,& t_0\le t\le3k,
 \end{cases}
\]
because $L_0(t)=2t-\beta$ on $[a,t_0]$, $L_0(t)=t+m\ge2a$ on
$[t_0,3k]$, and $2t-\beta\ge2a$ exactly when $t\ge a+\frac{\beta}{2}$.
In each case $F(t)\ge t-\frac{\beta}{2}$, so the branch $b-t$ gives at
least $b-\frac{\beta}{2}$, which suffices since $b\ge3k$. For the
other branch put $u=b-t<3k$.
After subtracting $\frac{b}{2}$ the
quantity to bound is $F(t)+\frac{L_0(u)-u}{2}$. If $t\le t_0$, then
$u\ge3k-t_0\ge t_0$, so $L_0(u)=u+m$ and the quantity is at least
$F(a)+\frac{m}{2}=\frac{3k}{2}-\frac{\beta}{2}-e$. If $t\ge t_0$ and
$u\ge t_0$, it is at least $t_0+m-a+\frac{m}{2}=\frac{3k}{2}-\frac{\beta}{4}+e$.
If $t\ge t_0$ and $\beta\le u\le t_0$, then $L_0(u)=2u-\beta$ and the
quantity equals $b-\frac{u}{2}+m-a-\frac{\beta}{2}\ge b-\frac{k}{2}-\frac{5\beta}{4}+e$,
which suffices since $b\ge3k$ and $\beta<k$. If $u\le\beta$, then
$L_0(u)\ge u$ and the quantity is at least
$b-\beta+m-a=b-\frac{3\beta}{2}+e$, which suffices as well. Finally let
$3k\le t\le b$. Then $\ell(t)\ge P(t)-\mathfrak D\ge2a$, so
$\frac{a}{2}+\phi_a(\ell(t))=\ell(t)-a\ge\frac{t}{2}+\frac{3k}{2}-\frac{\beta}{2}-e$.
Adding $b-t\ge\frac{b}{2}-\frac{t}{2}$ for the first branch, or
$\frac{\ell(u)+t}{2}\ge\frac{u+t}{2}=\frac{b}{2}$ for the other, yields
at least $\frac{b}{2}+\frac{3k}{2}-\frac{\beta}{2}-e$. This proves the
claim. Now \eqref{eq:tail-common} implies
$\ell(b)\ge P(b)-\frac{\mathfrak D}{2}$, contradicting
$\ell(b)=P(b)-\mathfrak D$. The same open-closed argument propagates
$\mathfrak D(B)=0$ from $B=3k$ to every $B<4k$, and continuity covers the endpoint
$t=4k$.
\end{proof}

\begin{lemma}[Updating a common cutoff]\label{lem:cutoff-update}
Suppose $1\le\lambda<\frac{9}{8}$, $\frac{1}{5}<k<\frac{1}{4}$,
$k<\frac{4\lambda-1}{14}$, and $0\le v<k$.
If the tails have height $\lambda$ at
$A'=k+\frac{3v}{4}<\frac{1}{2}$, they also have that height at
\[
a(v)=\max\left\{\lambda-1+v,
\frac{2\lambda-1-k}{4}+\frac{5v}{8},
\frac{4\lambda-1-5k}{9}+\frac{5v}{9}\right\}.
\]
The new defect is
\begin{equation}\label{eq:general-cutoff-update}
\frac{4}{3}(a(v)-k)=\max\left\{
\frac{4v}{3}-\frac{4(k-\lambda+1)}{3},
\frac{5v}{6}+\frac{2\lambda-1-5k}{3},
\frac{20v}{27}+\frac{4(4\lambda-1-14k)}{27}\right\}.
\end{equation}
\end{lemma}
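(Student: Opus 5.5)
The plan is to feed the defect of the old cutoff into Lemma~\ref{lem:two-barriers}, and then verify the competitor inequalities directly on the short range $[a(v),A']$.

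\emph{Step 1: apply the barriers.} Since the tails have height $\lambda$ at $A'=k+\frac{3v}{4}$, both $H_{f,g}(A',1)\ge\lambda$ and $H_{g,f}(A',1)\ge\lambda$ hold. By \eqref{eq:cutoff-defect}, applied in each orientation, $f(u),g(u)\ge2u-\frac{4}{3}(A'-k)=2u-v$ for $0\le u\le A'$. Hence $\ell(t)\ge2t-v$ on $[0,A']$. The tail condition gives \eqref{eq:tail-common} for $b\ge A'$. Lemma~\ref{lem:two-barriers} applies with $\beta=v<k$, since $4k<1$. It yields
\[
\ell(t)\ge t+k-\tfrac{v}{2}\ \ (A'\le t\le3k),\qquad \ell(t)\ge\tfrac{t}{2}+\tfrac{5k}{2}-\tfrac{v}{2}\ \ (3k\le t\le4k).
\]

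\emph{Step 2: reduce to competitors in $[a(v),A']$.} The terminal cap $f(1),g(1)\ge\lambda$ is inherited from height $\lambda$ at $A'$. Competitors $s\ge A'$ are already covered. If $a(v)\ge A'$, monotonicity of $H$ in its lower endpoint finishes the proof. Otherwise fix $s\in[a(v),A']$ and put $u=1-s$. Then $u>\frac12>A'$, and $u\le1-a(v)$. We check $u\le4k$ as follows: $a(v)\ge\frac{4\lambda-1-5k}{9}\ge\frac{3-5k}{9}$, and this is at least $1-4k$ because $k>\frac15>\frac{6}{31}$. So $u$ lies in the range of one of the two barriers. We use $f(s)\ge2s-v$, together with the barrier bound on $g(u)\ge\ell(u)$, on each branch of $p_g(s;1)$. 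The same computation with $f,g$ exchanged gives the other orientation, since the barriers bound $\ell=\min\{f,g\}$.

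\emph{Step 3: the three inequalities.}
\begin{itemize}
\item The branch $1-s$ gives $f(s)+1-s\ge s+1-v\ge\lambda$ exactly when $s\ge\lambda-1+v$.
\item On the branch $\frac{g(u)+s}{2}$ with $u\le3k$, the first barrier gives
\[
2s-v+\tfrac{1+k}{2}-\tfrac{v}{4}\ge\lambda \iff s\ge\tfrac{2\lambda-1-k}{4}+\tfrac{5v}{8}.
\]
\item On the same branch with $3k\le u\le4k$, the second barrier gives
\[
2s-v+\tfrac14+\tfrac{s}{4}+\tfrac{5k}{4}-\tfrac{v}{4}\ge\lambda \iff s\ge\tfrac{4\lambda-1-5k}{9}+\tfrac{5v}{9}.
\]
\end{itemize}
Taking the maximum of the three thresholds shows that every competitor in $[a(v),A']$ costs at least $\lambda$ in both orientations. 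Hence the tails have height $\lambda$ at $a(v)$.

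\emph{Step 4: the defect formula.} Formula \eqref{eq:general-cutoff-update} is the direct substitution $\frac43(a(v)-k)$ into each of the three entries.

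The main obstacle is the range bookkeeping in Step 2. One must make sure that $u=1-s$ always falls inside $[A',4k]$, where the barriers are valid. One must also confirm that the hypotheses $\lambda<\frac98$ and $k<\frac{4\lambda-1}{14}$ keep the relevant thresholds ordered, so that $a(v)$ is a genuine update and not vacuous. I would check these inequalities first at $v=0$, then use the fact that all entries are affine in $v$ with slopes at most $\frac34$-compatible growth against $A'$.
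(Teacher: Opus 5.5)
Your proposal is correct and follows the paper's proof essentially step for step. You take the early bound $\ell(t)\ge2t-v$ from \eqref{eq:cutoff-defect}, apply Lemma~\ref{lem:two-barriers} with $\beta=v$, and check the competitors in $[a(v),A']$ branch by branch, which produces exactly the three entries of $a(v)$. The only difference is in the range check $1-a(v)\le4k$: you use $\lambda\ge1$ and $k>\frac{6}{31}$, while the paper uses $a(v)\ge k>1-4k$, which is where $k<\frac{4\lambda-1}{14}$ enters. Your closing worry about ordering the thresholds is therefore not needed for the lemma as stated.
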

\begin{proof}
The cutoff defect \eqref{eq:cutoff-defect} and
Lemma~\ref{lem:two-barriers} give the early bound and both barriers,
with $v$ in place of $\beta$. The third entry of $a(v)$ implies
$a(v)-k\ge\frac{4\lambda-1-14k+5v}{9}>0$.
If $a(v)\ge A'$ there is nothing to prove, so
let $a(v)<A'$. Competitors $t\ge A'$ and the terminal cap
are covered by the tails at $A'$. For $a(v)\le t\le A'$, the branch
$1-t$ of $p_g(t;1)$ gives $f(t)+1-t\ge1+t-v\ge\lambda$, since
$t\ge\lambda-1+v$. For the other branch, $1-t\in[A',4k]$, because
$A'<\frac{1}{2}$ and $a(v)\ge k>1-4k$.
So the two barriers for $g$
and the early bound for $f$ give
\[
 f(t)+\frac{g(1-t)+t}{2}
 \ge\min\Bigl\{\frac{1}{2}+2t+\frac{k}{2}-\frac{5v}{4},
 \frac{1}{4}+\frac{9t}{4}+\frac{5k}{4}-\frac{5v}{4}\Bigr\}\ge\lambda,
\]
the last inequality being $t\ge a(v)$ for the second and third
entries of $a(v)$. Thus $H_{f,g}(a(v),1)\ge\lambda$, and
$H_{g,f}(a(v),1)\ge\lambda$ by symmetry.

The formula for the new defect follows by subtracting $k$ from
each entry and multiplying by $\frac{4}{3}$.
\end{proof}

\section{Reciprocal estimates at a general transfer level}
\label{sec:reciprocal}\label{sec:new-level}

We now pass the estimates from one endpoint to the other and back
again. The level-one conclusion is the scalar bound $\frac{4}{5}$.
Keeping the transfer level as a parameter will also account for the
source dimension. The essential point is to retain the separate
excesses at the early cutoff until they return to the same contact
equation at the end of the proof.

In this and the following scalar arguments, a scalar system means
four profiles satisfying \eqref{eq:profileclass}, the prefix bounds
\eqref{eq:limitprefix}, and the closed and full partition transfers
of Lemmas~\ref{lem:transfer} and~\ref{lem:fullpartition} in both
orientations. In particular, the endpoint profiles are nondecreasing
and $2$-Lipschitz; the distance profiles are nondecreasing and
$1$-Lipschitz; all four vanish at zero. The notation
\begin{equation}\label{eq:new-system}
d_x(u)\ge\frac{u}{2}+\frac{f(u)}{4},\qquad
d_x(b)-d_x(a)\ge[\widehat H_{f,g}(a,b)-f(a)]_+
\end{equation}
will refer to these inequalities together with their reverse orientation.
We write $\mathcal L_v(g;[a,b])=L_v(g-u;[a,b])$ for the
partition loss of Section~\ref{sec:pinned-obstruction} and abbreviate
$\mathcal L_v(g;R)=\mathcal L_v(g;[0,R])$.
The full transfer implies the closed transfer. Lemma~\ref{lem:uniform-partition-witnesses}
allows strict caps to pass to uniformly converging systems; no
continuity of the partition infimum is needed.

Restriction to $[0,r]$ followed by division of both axes by $r$
preserves this system. At level one its certificate is automatic.
At the first hit $f(a)=\frac{1}{2}$ we have $a\le\frac{1}{2}$.
For competitors below $\frac{1}{2}$, use $f\ge\frac{1}{2}$ and
$p_g\ge\frac{1}{2}$; for later competitors, use
$f(u)+1-u\ge1$. Thus $H_{f,g}(a,1)\ge1$, and likewise
with the endpoints exchanged.

\begin{proposition}[Reciprocal estimate at level $\lambda$]\label{newlev:prop:level}
Assume the scalar system \eqref{eq:new-system} in both orientations. Let $\lambda=1+\Delta$ with
$0\le\Delta<\frac{1}{8}$. Suppose there are cutoffs $a_f,a_g\le\frac{1}{2}$ such that
\begin{align*}
 f(a_f)&=\lambda-\frac{1}{2},&H_{f,g}(a_f,1)&\ge\lambda,\\
 g(a_g)&=\lambda-\frac{1}{2},&H_{g,f}(a_g,1)&\ge\lambda.
\end{align*}
Then
\begin{equation}\label{newlev:eq:level-result}
 \max\{d_x(1),d_y(1)\}\ge\frac{4}{5}+\frac{3}{5}\Delta.
\end{equation}
\end{proposition}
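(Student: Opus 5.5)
We argue by contradiction. Suppose $d_x(1),d_y(1)\le Q$ with $Q<\frac{4}{5}+\frac{3}{5}\Delta$, and put $k=\lambda-Q$. Then $k>\frac{1}{5}+\frac{2}{5}\Delta$, and $k<\frac{1}{4}$ unless the claim is trivial.

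\emph{Step 1: a common cutoff.} Replace $a_f,a_g$ by $A=\max\{a_f,a_g\}$. The transfer threshold $H$ is monotone in its lower endpoint, so the tails have height $\lambda$ at $A$ in both orientations. By \eqref{eq:cutoff-defect}, the common envelope $\ell=\min\{f,g\}$ satisfies $\ell(u)\ge 2u-\beta$ on $[0,A]$, where $\beta=\frac{4}{3}(A-k)$. This places us in the setting of Lemma~\ref{lem:two-barriers}.

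\emph{Step 2: move the cutoff backwards to a least cutoff.} Let $a^\sharp$ be the infimum of cutoffs $a\ge k$ at which both tails have height $\lambda$. This set is closed by continuity of $H$ in $a$. Write $v=\frac{4}{3}(a^\sharp-k)$ for its defect. Lemma~\ref{lem:cutoff-update} at $A'=a^\sharp$ produces a new admissible cutoff $a(v)$. Minimality forces $a(v)\ge a^\sharp$, which bounds $v$ from below by each of the three affine branches in \eqref{eq:general-cutoff-update}. The hypotheses $k<\frac{4\lambda-1}{14}$ and $\lambda<\frac{9}{8}$ must be checked. The first follows from $k<\frac14$ when $\Delta$ is not tiny; otherwise it must be handled separately, either by a small subcase or because $k>\frac{4\lambda-1}{14}$ already makes the third branch give the contradiction. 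The fixed-point inequalities alone should reproduce only a weaker bound, so we need more.

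\emph{Step 3: use the contact equation at $a^\sharp$.} By minimality, in at least one orientation (say $f$ as the source) there is a competitor $t$ slightly below or at $a^\sharp$ whose cost equals $\lambda$. That is, $f(t)+p_g(t;1)=\lambda$, or $f(1)=\lambda$ for the terminal cap. We do not use the common envelope $\ell$ here. Instead we keep the separate excesses: $f$ satisfies the early bound $f(u)\ge 2u-v$ near $t$, while $g(1-t)$ lies in the range $[A',4k]$ controlled by the barriers \eqref{eq:first-barrier}--\eqref{eq:second-barrier}.

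We then apply the full partition transfer of Lemma~\ref{lem:fullpartition} in the reverse orientation. This means using $\widehat p$ with the loss $\mathcal L_t(g;1-t)$ bounded through Lemma~\ref{lem:partition-endpoints}. It improves the lower bound for $g$ at $1-t$, and the improvement in turn raises the competitor cost strictly above $\lambda$ once $k>\frac{1}{5}+\frac{2}{5}\Delta$. That contradicts the equality.

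The concrete computation is as follows. Insert $g(1-t)\ge\frac{1-t}{2}+\frac{5k}{2}-\frac{v}{2}$, or the first-barrier version, together with $f(t)\ge 2t-v$ into $f(t)+\frac{g(1-t)+t}{2}$. Compare the result with $\lambda$ at the minimal $t$, which gives $t\approx a^\sharp=k+\frac{3v}{4}$. Combined with the lower bounds on $v$ from Step 2, this should reduce to $5k\le 1+2\Delta$, i.e. $Q\ge\frac{4}{5}+\frac{3}{5}\Delta$.

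The main obstacle is Step 3. Equality can occur in either orientation and on either branch of $p_g$ (including the terminal cap and tangent branch $1-t$), and the closed gain alone is likely exactly tight at $\frac{4}{5}$. The partition-gain improvement must therefore be extracted carefully from the barrier shape of $g$ through the yellow losses. I would first try the case where the active branch is $\frac{g(1-t)+t}{2}$ with $1-t\in[3k,4k]$, since that is where the $\frac{4}{5}$ balance arises. The other branches should give slack directly from $t\ge\lambda-1+v$.
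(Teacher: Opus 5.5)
Your Steps~1--2 reproduce the paper's setup: the least common closed-tail cutoff $A_*=k+\frac{3\beta}{4}$, the early, middle and late barriers, and $\beta\le\beta_{\max}=\frac{12+16\Delta}{7}-8k$. Two details need correcting.
\begin{itemize}
\item Minimality gives $\frac{4}{3}(a(v)-k)\ge v$. This is an \emph{upper} bound $v\le\beta_{\max}$, not a lower bound. The slope-$\frac{4}{3}$ branch is excluded because the initial defect $1+2\Delta-4k$ is below $4(k-\Delta)$.
\item The hypothesis $k<\frac{4\lambda-1}{14}=\frac{3+4\Delta}{14}$ does not follow from $k<\frac{1}{4}$; it is the stronger condition. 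You get it by choosing $Q>\frac{11+10\Delta}{14}$. This is possible exactly because $\Delta<\frac{1}{8}$.
\end{itemize}

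The genuine gap is Step~3. Your concrete computation cannot close. Use $f(A_*)\ge 2A_*-\beta$ and the late barrier at $1-A_*\in(3k,4k)$. The contact cost is then bounded below by $\frac{1}{4}+\frac{7k}{2}+\frac{7\beta}{16}$, which exceeds $\lambda$ only when $\beta>\beta_{\max}$. This is just the third branch of Lemma~\ref{lem:cutoff-update} evaluated at the least cutoff, which Step~2 has already exhausted. It encodes $Q\ge\frac{11+10\Delta}{14}$, not $5k\le 1+2\Delta$. The paper also does not use the partition gain to raise the contact cost.

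The missing idea is a coupled system for the two endpoints that never passes through $\ell=\min\{f,g\}$.
\begin{itemize}
\item The paper records the separate excesses $P=f(x_0)-2k$ and $p=g(x_0)-2k$ at $x_0=k+\frac{\beta}{2}$, and the values $F=f(T)$, $G=g(T)$ at $T=1-2k$.
\item It proves cones $f\ge u+C_f$ and $g\ge u+C_g$ on $[T,1-k]$, where $C_f=\min\{k-\frac{\beta}{2},\frac{7k}{2}-1-\frac{\beta}{4}+\frac{P}{4}+\frac{G}{2}\}$. Thus the cone of $f$ depends on the value of $g$.
\item The full partition transfer is applied at cutoffs $a\nearrow k$ from \emph{below}, not at the contact. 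There $\widehat H_{f,g}(a,1)\le Q+a-\frac{3}{4}e_a<\lambda$, so some competitor $s\in[a,A_*)$ has $L_s(h_g;[0,1-s])\ge h_g(1-s)+s+k-a-e_s+\frac{3}{4}e_a-\Delta-o(1)$.
\item Bound that loss from above on $[0,x_0]$, $[x_0,T]$, $[T,1-s]$, using Lemma~\ref{lem:partition-endpoints}, the middle barrier and the cone for $g$. This gives a lower bound for $G$ in terms of $C_g$, and symmetrically for $F$.
\item The result is $C_f\ge\Phi_\xi(C_g)$ and $C_g\ge\Phi_{-\xi}(C_f)$, with the skew $\xi=P-p$ retained. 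Eliminating gives a six-branch lower bound.
\end{itemize}

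Only then is the contact $f(A_*)+p_g(A_*;1)=\lambda$ used, and it is used for \emph{upper} bounds. The identity $g(1-A_*)=2+2\Delta-2f(A_*)-A_*$ gives $C_g\le\min\{J,K-2P\}$, where $K=1+2\Delta-4k$ and $J=K-\beta$. It also gives an upper bound on $G$, hence $C_f\le k+\Delta-\beta$. The six-branch bound then forces $\xi\ge 6a_1+2\Delta$ and $C_f\ge J+4a_1+\Delta$, where $a_1=5k-1-2\Delta$. The two bounds on $C_f$ differ by $3a_1>0$, which is where $k>\frac{1+2\Delta}{5}$ enters.
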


\begin{proof}
Suppose the conclusion fails. Choose
\[
 \max\left\{d_x(1),d_y(1),\frac{11+10\Delta}{14}\right\}
 <Q<\frac{4}{5}+\frac{3}{5}\Delta.
\]
Such a choice is possible because $\Delta<\frac{1}{8}$. Put $k=\lambda-Q$.
Then
\begin{equation}\label{newlev:eq:krange}
 \frac{1+2\Delta}{5}<k<\frac{3+4\Delta}{14}<\frac{1}{4}.
\end{equation}
We say that a cutoff is common if both closed tails have height
at least $\lambda$ there.

We begin by locating the least common cutoff.
The prefixes and transfer at $a_f,a_g$ show that both cutoffs are
at most
\[
 A_{\rm init}=2Q-\frac{\lambda}{2}-\frac{3}{4}
             =\frac{3}{4}+\frac{3\Delta}{2}-2k.
\]
The initial defect is $v_0=1+2\Delta-4k$. Lemma~\ref{lem:cutoff-update} has defect maps
\[
 \frac{4}{3}v-\frac{4}{3}(k-\Delta),\quad
 \frac{5}{6}v+\frac{1+2\Delta-5k}{3},\quad
 \frac{20}{27}v+\frac{4(3+4\Delta-14k)}{27}.
\]
Their fixed points are $4(k-\Delta)$, $2+4\Delta-10k<0$, and
\[
 \beta_{\max}=\frac{12+16\Delta}{7}-8k.
\]
The range \eqref{newlev:eq:krange} ensures
$0<\beta_{\max}<v_0<k$ and $v_0<4(k-\Delta)$.
Iteration therefore yields a common cutoff
$k+\frac{3\beta_{\max}}{4}$.
The common cutoffs form a nonempty closed interval. Its least element has the form
\begin{equation}\label{newlev:eq:cutoff}
 A_*=k+\frac{3\beta}{4},\qquad 0\le\beta\le\beta_{\max}.
\end{equation}
\eqref{eq:cutoff-defect} and Lemma~\ref{lem:two-barriers} give,
with $\ell=\min(f,g)$, $x_0=k+\frac{\beta}{2}$, and $c=k-\frac{\beta}{2}$,
\begin{align}
 \ell(u)&\ge2u-\beta&& (0\le u\le A_*),\label{newlev:eq:early}\\
 \ell(u)&\ge u+c&& (x_0\le u\le3k),\label{newlev:eq:middle}\\
 \ell(u)&\ge \frac{u}{2}+\frac{5k}{2}-\frac{\beta}{2}&& (3k\le u\le4k).
 \label{newlev:eq:late}
\end{align}
The orderings implied by \eqref{newlev:eq:krange} are
\[
 k\le x_0\le A_*<2k<T:=1-2k<3k<1-A_*\le b_*:=1-k<4k<1.
\]
In addition $\beta<k$ and $k-\beta>\Delta$.

We next improve the bounds on $[T,b_*]$.
For $b\ge A_*$, the oriented form of Lemma~\ref{lem:correlated}
and the transfer from $b$ to $1$ imply
\begin{equation}\label{rec-eq:correlated}
f(b)\ge k+\frac{a}{2}+\inf_{a\le u\le b}
\{\phi_a(f(u))+p_g(u;b)\}.
\end{equation}
Only $k=\lambda-Q$ enters this estimate. We use it to prove the
following bounds without replacing the two endpoint excesses by their minimum.

Put
\[
 P=f(x_0)-2k,\qquad  p=g(x_0)-2k.
\]
Then $0\le P,p\le\beta$, and the $2$-Lipschitz bound implies
\begin{equation}\label{rec-eq:backwardexcess}
 f(t)\ge2t-\beta+P \quad \text{for } t\le x_0,\qquad
   g(t)\ge2t-\beta+p \quad \text{for } t\le x_0.
\end{equation}
Set $F=f(T)$ and $G=g(T)$, and define
\begin{equation}\label{rec-eq:cones}
 \begin{split}
 C_f&=\min\{c,a_0+\frac{P}{4}+\frac{G}{2}\},\\
 C_g&=\min\{c,a_0+\frac{p}{4}+\frac{F}{2}\},\qquad 
 a_0=\frac{7k}{2}-1-\frac{\beta}{4}.
 \end{split}
\end{equation}

\begin{equation}\label{newlev:eq:cones} f(u)\ge u+C_f,\quad g(u)\ge u+C_g\quad(T\le u\le b_*).\end{equation}

We prove the first assertion. Monotonicity and Lipschitz continuity
give $g(u)\ge G-2[T-u]_+$. Let $d_f=a_0+\frac{P}{4}+\frac{G}{2}$, and put
$M_f=b_*+d_f$.
For $T\le b\le b_*$ and $k\le t\le T$ we claim
\begin{equation}\label{rec-eq:earlyg}
 k+\frac{k}{2}+\phi_k(f(t))+p_g(t;b)\ge b+C_f.
\end{equation}
On $[k,x_0]$ use \eqref{rec-eq:backwardexcess} and split at
$y_0=k+\frac{\beta-P}{2}$. The term before the projection gain is bounded below by
$\frac{3k}{2}+\frac{t}{2}-\frac{\beta-P}{4}$ for $t\le y_0$, and by
$2t-\beta+P$ for $t\ge y_0$.
The branch $b-t$ contributes at least $b+c$.
For the other branch, the cone for $g$ yields at least
$\min\{M_f,b+d_f\}$ on the first interval. On the second interval
it yields at least
$\min\{M_f+\frac{\beta-P}{2},b+d_f\}$.
Since $b+d_f\le M_f$, both are at least $b+d_f$.

For $x_0\le t\le T$ we have
$f(t)\ge\max\{t+c,2k+P\}$. The first gain branch again contributes
$b+c$. The other gain branch uses $b-t\le T$ and is at least
\[
 b+\frac{G}{2}-T+f(t)-\frac{t}{2}
 \ge b+\frac{G}{2}-T+\frac{3k}{2}-\frac{\beta}{4}+\frac{P}{2}
 \ge b+d_f.
\]
This proves \eqref{rec-eq:earlyg}.

Replace $k$ in the cutoff by $a=k-\epsilon>0$. The cost on
$[a,T]$ is at least $b+C_f-3\epsilon$. On $[k,T]$, changing $a$
loses at most $\frac{\epsilon}{2}$, since $\phi_a$ increases as $a$
decreases. On $[a,k]$, compare with $t=k$. The $2$-Lipschitz source
profile loses at most $2\epsilon$ inside the $1$-Lipschitz function
$\phi_k$, the additive cutoff loses $\frac{\epsilon}{2}$, and monotonicity
of $g$ implies $p_g(t;b)\ge p_g(k;b)-\frac{\epsilon}{2}$.

Let $B_f(u)=\max\{F,u+C_f\}$ on $[T,b_*]$. It is nondecreasing
and $1$-Lipschitz. The middle barrier \eqref{newlev:eq:middle} implies
$F\ge T+c>2k$, and $C_f\le c$, so $B_f(T)=F$.
If $D_0=\max[B_f-f]_+>0$, choose a maximizer $b$ and
$0<\epsilon\le\min\{\frac{D_0}{4},\frac{k}{2}\}$. At that maximizer
$B_f(b)=b+C_f$. Early competitors have cost at least
$B_f(b)-3\epsilon>f(b)$.
For $T\le t\le b$, we have $t\ge \frac{b}{2}$, so $p_g(t;b)=b-t$.
Also $f(t)\ge F>2k>2a$. Their cost in \eqref{rec-eq:correlated} is
\[
 f(t)+\epsilon+b-t
 \ge B_f(b)-D_0+\epsilon>f(b).
\]
This contradicts \eqref{rec-eq:correlated}. Hence $f\ge B_f$, proving
the assertion. Exchange the orientations for $g$.

The full partition estimate in the reverse orientation improves
the cone bounds. Each of the two possible losses requires a
correction by $\Delta$.

The endpoint values satisfy
\begin{equation}\label{rec-eq:feedback}
 \begin{split}
 G\ge\min\{&2C_g+3k+\frac{\beta}{2}-p+\frac{P}{2}-2\Delta,\\
             &C_g+1-k-\frac{\beta}{2}-p+\frac{P}{2}-\Delta\},
 \end{split}
\end{equation}
and the symmetric inequality for $F$.

Let $a<k$ tend to $k$, put $\epsilon=k-a$, and
$e_t=2t-f(t)$. Prefix and full transfer give a competitor
$s\in[a,A_*)$, with $R=1-s$, for which
\begin{equation}\label{rec-eq:losslower}
 L_s(h_g;[0,R])\ge
 h_g(R)+s+(k-a)-e_s+\frac{3}{4}e_a-\Delta-o(1).
\end{equation}
Indeed their threshold is at most
$Q+a-\frac{3e_a}{4}<\lambda$.
Competitors at or above $A_*$ and the terminal
cap have cost at least $\lambda$. The full partition gain is
$g(R)-L_s(h_g;[0,R])$, which proves \eqref{rec-eq:losslower}.

On $[0,x_0]$, \eqref{eq:partition-positive-endpoint} bounds the loss by $h_g(x_0)$, whose
positive part vanishes because $s>\beta$ for $a$ close to $k$.
On $[x_0,T]$, subtract the floor $c$ from \eqref{newlev:eq:middle} and
use \eqref{eq:partition-positive-endpoint}. Adding results in
\begin{equation}\label{rec-eq:twoloss}
 L_s(h_g;[0,x_0])+L_s(h_g;[x_0,T])
 \le\max\left\{G-T+p,\frac{G-s-\beta+p}{2}\right\}.
\end{equation}
On $[T,R]$ use $h_g\ge C_g$ from \eqref{newlev:eq:cones}.
If $R>b_*$, monotonicity extends this to
$h_g\ge C_g-\epsilon$ throughout $[T,R]$.
After subtracting that floor, the positive-part argument in
\eqref{eq:partition-positive-endpoint} is at most $2\epsilon$, because
$R-T-s=2k-2s\le2\epsilon$. Consequently
\begin{equation}\label{rec-eq:tailloss}
 L_s(h_g;[T,R])\le h_g(R)-C_g+2\epsilon.
\end{equation}
Thus passing slightly beyond $b_*$ produces a vanishing error only.

Write $\mathfrak v=s-a$. The source excess sharpens both elementary
optimizations needed to combine the losses. We claim
\begin{equation}\label{rec-eq:optimizations}
 3\mathfrak v-2e_s+\frac{3}{2}e_a\ge-\frac{\beta-P}{2},\qquad 
 \mathfrak v-e_s+\frac{3}{4}e_a\ge-\frac{\beta-P}{2}.
\end{equation}
For $s\le x_0$, put $B=\beta-P$. Then
$0\le e_a\le B$ and $e_s\le\min\{B,e_a+2\mathfrak v\}$.
Each expression is minimized at the intersection
$\mathfrak v=\frac{B-e_a}{2}$. The resulting values are
$-\frac{B}{2}$ and $-\frac{B}{2}+\frac{e_a}{4}$, respectively.
For $s\ge x_0$, use
$e_s\le\min\{\beta,2\mathfrak v-2\epsilon-P\}$ and $e_a\ge0$.
The minimum over $\mathfrak v\ge0$ is therefore bounded below by
$-\frac{\beta}{2}+3\epsilon+\frac{3P}{2}$ and
$-\frac{\beta}{2}+\epsilon+\frac{P}{2}$.
These are at least $-\frac{\beta-P}{2}$, proving both bounds.

If the second entry of \eqref{rec-eq:twoloss} is used, combining
\eqref{rec-eq:losslower}--\eqref{rec-eq:tailloss} yields
\[
 a+2k+3\mathfrak v-2e_s+\frac{3}{2}e_a-2\Delta
 \le G-\beta+p-2C_g+o(1).
\]
The first bound in \eqref{rec-eq:optimizations} and $a\nearrow k$
give the first entry of \eqref{rec-eq:feedback}.
If the first entry of \eqref{rec-eq:twoloss} is used, the second bound
in \eqref{rec-eq:optimizations} proves the second entry of
\eqref{rec-eq:feedback}. One case occurs along a sequence
$a\nearrow k$. This proves the feedback estimate.

Put $\xi=P-p$, $A=5k-1-\Delta$, and
$B=3k-\frac{1}{2}-\frac{\beta}{2}-\frac{\Delta}{2}$. The cone formulas and
\eqref{rec-eq:feedback} imply
\[
 C_f\ge\min\{c,C_g+A+\frac{\xi}{2},\frac{C_g}{2}+B+\frac{\xi}{2}\},
\]
and its symmetric version with $\xi$ replaced by $-\xi$.
To eliminate $C_g$, define the nondecreasing function
\[
\Phi_\xi(u)=\min\left\{c,u+A+\frac{\xi}{2},\frac{u}{2}+B+\frac{\xi}{2}\right\}.
\]
The two inequalities are $C_f\ge\Phi_\xi(C_g)$ and
$C_g\ge\Phi_{-\xi}(C_f)$. Monotonicity implies
$C_f\ge\Phi_\xi(\Phi_{-\xi}(C_f))$. Expanding this composition results in
\[
\begin{aligned}
\Phi_\xi(\Phi_{-\xi}(u))=\min\Bigl\{&c,c+A+\frac{\xi}{2},
\frac{c}{2}+B+\frac{\xi}{2},u+2A,\\
&\frac{u}{2}+A+B,\frac{u}{2}+\frac{A}{2}+B+\frac{\xi}{4},
\frac{u}{4}+\frac{3B}{2}+\frac{\xi}{4}\Bigr\}.
\end{aligned}
\]
Since $A>0$, the entry $u+2A$ is strictly larger than $u$.
The remaining three nonconstant entries are strictly larger than
$u$ precisely when, respectively,
\[
u<2(A+B),\qquad  u<2B+A+\frac{\xi}{2},\qquad 
u<2B+\frac{\xi}{3}.
\]
Thus, if $C_f$ were below the three constant entries and these
three thresholds, every entry in the composition would exceed
$C_f$, contradicting $C_f\ge\Phi_\xi(\Phi_{-\xi}(C_f))$. We obtain
\begin{equation}\label{newlev:eq:six}
\begin{split}
 C_f\ge\min\Bigl\{&c,c+A+\frac{\xi}{2},\frac{c}{2}+B+\frac{\xi}{2},\\
 &2(A+B),2B+A+\frac{\xi}{2},2B+\frac{\xi}{3}\Bigr\}.
\end{split}
\end{equation}
Exchanging $f$ and $g$ replaces $\xi$ by $-\xi$ and proves the six
bounds for $C_g$. No sign assumption on $\xi$ is used in this elimination.

We now return to the contact at the least cutoff.
Minimality of $A_*$ forces a contact in at least one orientation;
label it so that
\[
 f(A_*)+p_g(A_*;1)=1+\Delta.
\]
Set $K=1+2\Delta-4k$ and $J=K-\beta$. The cone for $g$ at
$1-A_*$ implies
\begin{equation}\label{newlev:eq:weakupper}
 C_g\le\min\{J,K-2P\}.
\end{equation}
Indeed, $p_g(A_*;1)\ge\frac{1+C_g}{2}$ and
$f(A_*)\ge\max\{2k+\frac{\beta}{2},2k+P\}$.

Let
\[
 a_1=5k-1-2\Delta>0,
\]
so $A=a_1+\Delta$. The six entries in \eqref{newlev:eq:six}, minus $J$,
are
\begin{equation}\label{newlev:eq:differences}
 \begin{gathered}
 a_1+\frac{\beta}{2},\qquad
 2a_1+\Delta+\frac{\beta+\xi}{2},\qquad
 \frac{3a_1}{2}+\frac{\Delta}{2}+\frac{\beta}{4}+\frac{\xi}{2},\\
 4a_1+3\Delta,\qquad
 3a_1+2\Delta+\frac{\xi}{2},\qquad
 2a_1+\Delta+\frac{\xi}{3}.
 \end{gathered}
\end{equation}
If $\xi\le0$, the six differences for $C_g$ are all positive,
contrary to \eqref{newlev:eq:weakupper}. Thus $0<\xi\le\beta$.
For the weaker floor $C_g$, its first, second, and fourth entries
are positive. At least one of the other three must be nonpositive.
The third requires $\xi\ge3a_1+\Delta+\frac{\beta}{2}$ and hence
$\xi\ge6a_1+2\Delta$. The fifth and sixth give still stronger
bounds. Therefore
\[
 \beta\ge\xi\ge6a_1+2\Delta.
\]
Substitute this into \eqref{newlev:eq:differences}. Every entry is at
least $4a_1+\Delta$, so
\begin{equation}\label{newlev:eq:stronglower}
 C_f\ge J+4a_1+\Delta.
\end{equation}

The branch $1-A_*$ cannot attain the contact, because
$f(A_*)+1-A_*\ge1+k-\frac{\beta}{4}>1+\Delta$.
Consequently
\[
 g(1-A_*)=2+2\Delta-2f(A_*)-A_*.
\]
Monotonicity and the two lower bounds for $f(A_*)$ yield
\[
 G\le\min\{2+2\Delta-5k-\frac{7\beta}{4},
                 2+2\Delta-5k-\frac{3\beta}{4}-2P\}.
\]
In the formula for $C_f$, these give
\begin{align}
 C_f&\le\min\{k+\Delta-\frac{9\beta}{8}+\frac{P}{4},
                     k+\Delta-\frac{5\beta}{8}-\frac{3P}{4}\}\notag\\
    &\le k+\Delta-\beta.\label{newlev:eq:strongupper}
\end{align}
Use the first entry when $P\le\frac{\beta}{2}$ and the second when
$P\ge\frac{\beta}{2}$. Finally,
\[
 (J+4a_1+\Delta)-(k+\Delta-\beta)=3a_1>0.
\]
This contradicts \eqref{newlev:eq:stronglower}--\eqref{newlev:eq:strongupper}
and proves the proposition.
\end{proof}

\begin{corollary}[The level-one scalar bound]\label{prop:reciprocal-scalar}
Every scalar system satisfies
$\max\{d_x(1),d_y(1)\}\ge\frac{4}{5}$.
\end{corollary}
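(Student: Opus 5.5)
The plan is to specialize Proposition~\ref{newlev:prop:level} to $\Delta=0$, i.e.\ $\lambda=1$. That proposition gives $\max\{d_x(1),d_y(1)\}\ge\frac{4}{5}$ directly, provided we supply cutoffs $a_f,a_g\le\frac{1}{2}$ with
\[
f(a_f)=\tfrac{1}{2},\qquad H_{f,g}(a_f,1)\ge1,\qquad g(a_g)=\tfrac{1}{2},\qquad H_{g,f}(a_g,1)\ge1.
\]
So the only work is to verify the hypotheses at level one, using only the properties of a scalar system.

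First we produce $a_f$. Since $f(0)=0$, $f(1)\ge1$ and $f$ is continuous, let $a_f$ be the first point with $f(a_f)=\frac{1}{2}$. The bound $f(u)\ge u$ from \eqref{eq:profileclass} gives $f(\frac{1}{2})\ge\frac{1}{2}$, hence $a_f\le\frac{1}{2}$.

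Next we check $H_{f,g}(a_f,1)\ge1$ term by term in \eqref{eq:H}, exactly as in the remark preceding the proposition.
\begin{itemize}
\item The terminal cap satisfies $f(1)\ge1$.
\item For a competitor $s\in[a_f,\frac{1}{2}]$ we have $f(s)\ge\frac{1}{2}$ by monotonicity. Also $p_g(s;1)=\min\{1-s,\frac{g(1-s)+s}{2}\}\ge\frac{1}{2}$, because $1-s\ge\frac{1}{2}$ and $g(1-s)\ge1-s$.
\item For $s\ge\frac{1}{2}$ we have $p_g(s;1)=1-s$ and $f(s)\ge s$, so the competitor cost is at least $1$.
\end{itemize}
Hence the infimum is at least $1$. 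Exchanging the roles of $f$ and $g$, which the symmetric system permits (Table~\ref{tab:roles}), gives $a_g$ with the same properties.

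With these cutoffs in hand, Proposition~\ref{newlev:prop:level} applies with $\Delta=0<\frac{1}{8}$ and yields the bound. There is no real obstacle, since all the difficulty is contained in the proposition. The one point to watch is that the scalar system hypotheses (prefix, closed and full transfers in both orientations) are precisely those assumed by the proposition; by definition of a scalar system they are.
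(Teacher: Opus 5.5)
Your proposal is correct and is essentially the paper's proof: the paper likewise verifies the level-one certificates (first hit $f(a)=\frac{1}{2}$ with $a\le\frac{1}{2}$, then $f\ge\frac{1}{2}$ and $p_g\ge\frac{1}{2}$ for competitors below $\frac{1}{2}$, $f(u)+1-u\ge1$ for later ones, and the terminal cap $f(1)\ge1$, in both orientations). It then applies Proposition~\ref{newlev:prop:level} with $\Delta=0$.
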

\begin{proof}
The level-one certificates were verified before
Proposition~\ref{newlev:prop:level}. Apply that proposition with $\Delta=0$.
\end{proof}

\section{The rational Hausdorff bound}\label{sec:new-rational}\label{sec:unpinned}
The source dimension raises the certificate above level one. The
reciprocal estimate converts this higher certificate into an explicit
distance bound by one linear inequality.

\begin{proof}[The bound $Y$ in Theorem~\ref{thm:main}]
Fix $0<\kappa<S_0-1$. Consider the information profiles of a pair satisfying (H1)--(H4), with both effective dimensions greater than $1+\kappa$. Corollary~\ref{cor:prefix-rate}, in its symmetric case, has the coefficient
\[
 b=\frac{1-3\kappa+\kappa^2}{(1-\kappa)(4-\kappa)},
 \qquad d_x(u)\ge(1-2b)u+bf(u),
\]
and the analogous bound for $d_y$. Lemma~\ref{lem:bootstrap} raises the tail level to
\[
 \lambda=1+\Delta,\qquad
 \Delta=D(1-Q),\qquad
 D=\frac{\kappa(4-\kappa)}{1-3\kappa+\kappa^2},
\]
when both terminal distance profiles are at most $Q$ in the
permitted prefix range. Proposition~\ref{newlev:prop:level} then forces
\[
 Q\ge\frac{4}{5}+\frac{3}{5}D(1-Q),\qquad
 Q\ge1-\frac{1}{5+3D}
       =\frac{4+\kappa^2}{5-3\kappa+2\kappa^2}=Y(1+\kappa).
\]

Here is the parameter check needed for this argument by
contradiction. If the proposed scalar bound fails, choose $Q$ just
below $Y(1+\kappa)$ and above both terminal values. The lower end of
the bootstrap range is
$R(1+\kappa)=1-b(1-\kappa)<Y(1+\kappa)$, as follows from
the positive difference computed below. The upper end
$1-\frac{b(1-\kappa)}{2}$ is at least $\frac{7}{8}$, while
$Y(1+\kappa)<\frac{7}{8}$. At the limiting value $Q=Y(1+\kappa)$,
\[
 \Delta=\frac{\kappa(4-\kappa)}{5-3\kappa+2\kappa^2}<\frac{1}{8}
 \quad\Longleftrightarrow\quad
 \kappa<\frac{7-\sqrt{41}}{4}.
\]
The inequalities remain valid when $Q$ is sufficiently close from
below. Thus the certificate and Proposition~\ref{newlev:prop:level} apply.

This proves the scalar improvement. Apply Proposition~\ref{prop:fixed-pair} and the point-to-set argument of Section~\ref{sec:classical}, using an auxiliary Frostman exponent strictly below $t=\dim_{\mathrm H} E$. Let $1+\kappa$ increase to $\min\{t,S_0\}$. Continuity proves the endpoint and the constant continuation; the independent bound $R$ remains available.

For the comparison with the bootstrap range, write $t=1+\kappa$.
Direct subtraction yields
\[
Y(t)-R(t)=\frac{1-5\kappa+9\kappa^2-8\kappa^3+2\kappa^4}{(5-3\kappa+2\kappa^2)(4-\kappa)}>0.
\]
Indeed $0\le\kappa\le S_0-1<\frac{3}{20}$, and the numerator
is at least $1-5\kappa-8\kappa^3>1-\frac{3}{4}-\frac{27}{1000}>0$.
\end{proof}

\section{Matching profiles and the minimum-ratio argument}\label{sec:new-matching}

When the endpoint profiles agree, a single conditioned distance
profile satisfies the scalar inequalities in both orientations.
We can then choose a scale at which its information per unit
precision is nearly smallest. At that scale, every earlier
distance prefix has a useful linear lower bound. This additional
bound lets the tail iteration reach the full source floor.

\begin{proposition}[A stronger bound for a matching pair]\label{newmatch:prop:matched}
Suppose the scalar system has $f=g$ and
$f(u)\ge(1+\kappa)u$, with $0<\kappa<\frac{1}{8}$. Then, separately for
each orientation,
\begin{equation}\label{newmatch:eq:matched}
 d_x(1),d_y(1)\ge\frac{4}{5}+\frac{3}{5}\kappa.
\end{equation}
\end{proposition}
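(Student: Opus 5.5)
The approach is to fix one orientation, say $x$ as the pin, and argue by contradiction. Suppose $d_x(1)<\frac{4}{5}+\frac{3}{5}\kappa$. The first step selects a scale at which the distance profile has nearly minimal information per unit precision. Put $m=\inf_{0<u\le1}\frac{d_x(u)}{u}$. The prefix estimate \eqref{eq:limitprefix} together with $f(u)\ge(1+\kappa)u$ gives $\frac{d_x(u)}{u}\ge\frac{3}{4}+\frac{\kappa}{4}$ for every $u>0$. Continuity on compact subintervals $[\epsilon,1]$ then lets me choose $u_0\in(0,1]$ with $\frac{d_x(u_0)}{u_0}\le m+\epsilon'$, and in particular $\frac{d_x(u_0)}{u_0}\le d_x(1)$.

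Next I restrict to $[0,u_0]$ and divide both axes by $u_0$. As noted before Proposition~\ref{newlev:prop:level}, this rescaling preserves the scalar system. It also preserves $f=g$ and the floor $f(u)\ge(1+\kappa)u$. After rescaling I write $Q$ for the new value $d_x(1)\le m+\epsilon'$. The rescaled profile then satisfies the linear prefix bound $d_x(u)\ge(Q-\epsilon')u$ on $[0,1]$, and this is the additional information unavailable in the general case.

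The second step uses $f=g$ to dispense with the other orientation. Every estimate in Sections~\ref{sec:reciprocal-barriers} and~\ref{sec:reciprocal} involves $f,g$ only through $H_{f,g}$, $H_{g,f}$, $\ell=\min\{f,g\}$ and the two cone constants. When $f=g$ these coincide: $H_{f,f}$ serves both orientations, $C_f=C_g$, $P=p$, and $\xi=0$. The places where $d_y(1)\le Q$ was used were the derivations of tail heights and of the contact at the least cutoff. There I would substitute the $d_x$ inequalities, since the transfer for $d_x$ is governed by the same functional $H_{f,f}$ that appears in the $y$-orientation. So the first thing I would check, line by line, is that Lemmas~\ref{lem:correlated}, \ref{lem:two-barriers}, \ref{lem:cutoff-update} and the proof of Proposition~\ref{newlev:prop:level} go through with only $d_x(1)\le Q$ when $f=g$. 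This would give $Q\ge\frac{4}{5}+\frac{3}{5}\Delta$ whenever the certificate holds at level $1+\Delta$ with $\Delta<\frac{1}{8}$.

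The third step is a bootstrap modeled on Lemma~\ref{lem:bootstrap}, with the linear bound $d_x(s)\ge Qs$ (up to $\epsilon'$) in place of the prefix estimate \eqref{eq:general-prefix}. Suppose the tails have height $\lambda_j$ after the first hit of $\lambda_j-\frac{1}{2}$. Transfer on $[s,1]$ gives $Q\ge Qs+\lambda_j-f(s)$, that is, $f(s)\ge\lambda_j-Q+Qs$. For $s\ge\frac{1}{2}$ this yields
\[
f(s)+1-s\ge1+\max\{\kappa s,\ \lambda_j-Q-(1-Q)s\}.
\]
Minimizing over $s$ gives $\lambda_{j+1}-1=\frac{\kappa(\lambda_j-Q)}{\kappa+1-Q}$. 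This map has slope less than one, and its fixed point satisfies $(\lambda-1)(1-Q)=\kappa(1-Q)$, so $\lambda_*=1+\kappa$. The early competitors $s\le\frac{1}{2}$ are handled as in Lemma~\ref{lem:bootstrap}, using $p_g\ge\frac{1}{2}$ and $f(1)\ge1+\kappa$. One must also check that the crossing point lies in $[\frac{1}{2},1]$ throughout the iteration. Since $Q<\frac{4}{5}+\frac{3}{5}\kappa<\frac{7}{8}$ and $\kappa<\frac18$, I expect this to be a routine range check.

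Hence the certificate holds at every level $\lambda<1+\kappa$, that is, with $\Delta$ arbitrarily close to $\kappa<\frac{1}{8}$. The single-orientation form of Proposition~\ref{newlev:prop:level} then gives $Q\ge\frac{4}{5}+\frac{3}{5}\Delta$. Letting $\Delta\nearrow\kappa$ and $\epsilon'\to0$ gives $m\ge\frac{4}{5}+\frac{3}{5}\kappa$, and therefore $d_x(1)\ge m$ contradicts the assumption. The argument for $d_y$ is identical.

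The main obstacle is the second step. I must verify that the reciprocal proof never needs a genuinely independent bound on $d_y(1)$ once $f=g$. Particular care is needed at the contact at the least cutoff, whose orientation was chosen there. If some step does require $d_y$, my fallback is to feed in the linear bound $d_x(u)\ge Qu$ in place of the prefix wherever the $y$-orientation's prefix was used.
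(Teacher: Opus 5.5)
Your proposal is correct and follows the paper's proof. You choose a scale of nearly minimal ratio $d(r)/r$ and rescale to get the linear bound $d(u)\ge qu$. You then run the bootstrap of Lemma~\ref{lem:bootstrap} with that bound in place of the prefix, which raises the tail height to $1+\kappa$, and conclude with Proposition~\ref{newlev:prop:level}.

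The step you flag as the main obstacle is immediate. When $f=g$, the quadruple $(f,f,d_x,d_x)$ satisfies the scalar system in both orientations verbatim, so Proposition~\ref{newlev:prop:level} applies as stated with no line-by-line check. The paper also passes to a uniform limit of rescalings to get $d(1)=q$ exactly; keeping an $\epsilon'$ slack instead, as you do, is a harmless alternative.
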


\begin{proof}
Use one of the two distance profiles and call it $d$. Because
$f=g$, the quadruple $(f,f,d,d)$ satisfies every scalar inequality.
The system is invariant under restriction to $[0,r]$ and the
rescaling $F(u)=\frac{f(ru)}{r}$, $D(u)=\frac{d(ru)}{r}$.

Let $q=\inf_{0<r\le1}\frac{d(r)}{r}$. The level-one reciprocal bound,
applied after each rescaling, implies $q\ge\frac{4}{5}$. If $d(1)$ were
below the claimed bound, so would $q$. Choose $r_j$ with
$\frac{d(r_j)}{r_j}\to q$ and extract a uniformly convergent subsequence
of the rescaled profiles. The limiting scalar system still
satisfies all the transfers. For the full transfer, use Lemma~\ref{lem:uniform-partition-witnesses} to pass every strict cap to the approximating
profiles. Rename the limits $f,d$. They satisfy
\begin{equation}\label{newmatch:eq:minratio}
 d(1)=q,\qquad d(u)\ge qu,\qquad f(u)\ge(1+\kappa)u.
\end{equation}

We now raise the tail height to $1+\kappa$.
Initially take $\lambda_0=1+\frac{\kappa}{2}$ and let $a_0$ be the first
point with $f(a_0)=\lambda_0-\frac{1}{2}$. The floor implies $a_0\le\frac{1}{2}$.
For $a_0\le r\le\frac{1}{2}$, use $p_f(r;1)\ge\frac{1}{2}$; for $r\ge\frac{1}{2}$,
use $f(r)+1-r\ge1+\kappa r$. Thus
$H_{f,f}(a,1)\ge\lambda_0$ for $a\ge a_0$.

Suppose the tail has height $\lambda_j$ from its first hitting
point $a_j\le\frac{1}{2}$, where
$\lambda_0\le\lambda_j\le1+\kappa$. The transfer and
\eqref{newmatch:eq:minratio} imply
\[
 f(r)\ge\lambda_j-q+qr\qquad(r\ge a_j).
\]
For $\frac{1}{2}\le r\le1$ it follows that
\begin{align*}
 f(r)+1-r
 &\ge1+\max\{\kappa r,\lambda_j-q-(1-q)r\}\\
 &\ge\lambda_{j+1}:=
 1+\frac{\kappa}{\kappa+1-q}(\lambda_j-q).
\end{align*}
The two affine functions intersect inside $[\frac{1}{2},1]$ because
$\lambda_0\le\lambda_j\le1+\kappa$ and $q<1$.
At $r=\frac{1}{2}$, the inequality puts the next first hitting point at
or before $\frac{1}{2}$. Earlier competitors are handled by
$p_f(r;1)\ge\frac{1}{2}$, and the terminal cap by $f(1)\ge1+\kappa$.
Hence the induction continues.

The affine update has slope less than one and fixed point
$1+\kappa$. Therefore $\lambda_j\uparrow1+\kappa$. The first
hitting points converge, giving the certificate in
Proposition~\ref{newlev:prop:level} at $\Delta=\kappa$ for $(f,f,d,d)$.
That proposition forces $q\ge\frac{4}{5}+\frac{3\kappa}{5}$, the contradiction.
\end{proof}

The linear lower bound in \eqref{newmatch:eq:minratio} is not an additional
assumption on the original distance profile. It is obtained by
choosing a scale where the normalized distance complexity is
nearly minimal and taking a limit. That choice of scale is the reason the tail height reaches the full source floor.

\section{Rigidity at the reciprocal endpoint}\label{sec:new-rigidity}

The scalar inequalities alone admit endpoint models at four fifths
(Proposition~\ref{prop:direct-terminal-flat}). What matters for the
geometric argument is that every such model has a long interval
of excess information. We prove this for two possibly different
endpoint profiles. Figure~\ref{fig:rigid-envelope} shows the forced lower envelope. In this section put
\[
 k=\frac{1}{5},\qquad T=\frac{3}{5},\qquad b_*=\frac{4}{5}.
\]
\begin{proposition}[Two-profile endpoint rigidity]\label{newrig:prop:rigidity}
Every system \eqref{eq:new-system} with $d_x(1),d_y(1)\le\frac{4}{5}$ satisfies
\begin{align}
 f(u)=g(u)&=2u &&(0\le u\le\frac{1}{5}),\label{newrig:eq:initial}\\
 f(u),g(u)&\ge u+\frac{1}{5} &&(\frac{1}{5}\le u\le\frac{3}{5}).\label{newrig:eq:plateau}
\end{align}
Consequently
\begin{equation}\label{newrig:eq:tail03}
 H_{f,g}(\frac{3}{10},1),\ H_{g,f}(\frac{3}{10},1)\ge1.
\end{equation}
\end{proposition}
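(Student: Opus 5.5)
The plan is to rerun the proof of Proposition~\ref{newlev:prop:level} at $\Delta=0$ with $Q=\frac{4}{5}$ exactly, so $k=\frac{1}{5}$, and to read off which inequalities are forced to be equalities. At $k=\frac15$ one has $a_1=5k-1=0$ and $\beta_{\max}=\frac{12}{7}-\frac{8}{5}=\frac{4}{35}>0$. The strict contradiction of that proof is therefore replaced by equalities that pin down $\beta$, $P$ and $p$. The level-one certificates hold automatically, as verified before Proposition~\ref{newlev:prop:level}. Lemma~\ref{lem:first-hit} then gives $A_{\rm init}=2Q-\frac54=\frac{7}{20}$, and iterating the defect maps of Lemma~\ref{lem:cutoff-update} produces the least common cutoff $A_*=k+\frac{3\beta}{4}$ with $0\le\beta\le\beta_{\max}$.

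First, I will show that $\beta=0$. The contact estimates \eqref{newlev:eq:weakupper} and \eqref{newlev:eq:strongupper} hold verbatim with $\Delta=0$. The elimination \eqref{newlev:eq:six}, together with the differences \eqref{newlev:eq:differences} at $a_1=0$, yields $\xi\ge0$ and $\xi\le\beta$. It also gives $C_f\ge J=K-\beta$, where $K=1-4k=\frac15$. The upper bound \eqref{newlev:eq:strongupper} gives $C_f\le k-\beta=\frac15-\beta$ in the contact orientation, so the two bounds agree and produce no contradiction. To force $\beta=0$, I will keep the $P$-terms in \eqref{newlev:eq:strongupper} rather than discarding them: $C_f\le\frac15-\frac{9\beta}{8}+\frac P4$ and $C_f\le\frac15-\frac{5\beta}8-\frac{3P}4$. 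Comparing these with $C_f\ge\frac15-\beta+\min(\ldots)$, where the entries of \eqref{newlev:eq:differences} at $a_1=\Delta=0$ are $\frac\beta2,\frac{\beta+\xi}2,\frac\beta4+\frac\xi2,0,\frac\xi2,\frac\xi3$, leads to $\beta\le 0$. The case $\xi=0$ uses the symmetric floor for $C_g$ together with \eqref{newlev:eq:weakupper}. I expect this bookkeeping to be the main obstacle. The minimum of the six entries can be $0$ through the fourth entry $4a_1+3\Delta$, so the weak inequality need not propagate. If that happens, I would add the contact equation $g(1-A_*)=2-2f(A_*)-A_*$ and the feedback \eqref{rec-eq:feedback} to show that equality in every step forces $P=p=\beta$ and then $\beta=0$.

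Once $\beta=0$, we have $A_*=k=\frac15$, and \eqref{newlev:eq:early} gives $\ell(u)\ge2u$ on $[0,\frac15]$. Combined with \eqref{eq:profileclass}, this yields \eqref{newrig:eq:initial}. With $\beta=0$ one also has $x_0=k$ and $c=k=\frac15$, so \eqref{newlev:eq:middle} gives $f,g\ge u+\frac15$ on $[\frac15,\frac35]$, which is exactly \eqref{newrig:eq:plateau} since $3k=\frac35$.

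For \eqref{newrig:eq:tail03}, I will verify $H_{f,g}(\frac3{10},1)\ge1$ directly from these envelopes; the other orientation follows by symmetry. For competitors $s\in[\frac3{10},\frac12]$, note $f(s)\ge s+\frac15\ge\frac12$ and $p_g(s;1)\ge\frac12$. For $s\in[\frac12,1]$, use $f(s)+1-s\ge1$. The terminal cap is handled by $f(1)\ge f(\frac35)\ge\frac45$. That last bound is too weak on its own, so instead I use that common cutoffs were shown to exist at $A_*=\frac15\le\frac3{10}$. Monotonicity of $H$ in its lower endpoint then gives the tail bound directly.
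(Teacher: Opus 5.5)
There is a genuine gap at exactly the step you flag: nothing you invoke forces $\beta=0$.

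First, the six-branch bound \eqref{newlev:eq:six} is not available at $k=\frac15$. Its derivation discards the entry $u+2A$ of $\Phi_\xi(\Phi_{-\xi}(u))$ because $A=5k-1-\Delta>0$. At the endpoint $A=0$, so that entry equals $u$, and $C_f\ge\Phi_\xi(\Phi_{-\xi}(C_f))$ carries no information. This is the caveat stated right after \eqref{newrig:eq:Gidentity}.

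Second, even if you grant $C_f\ge J=\frac15-\beta$, it does not give $\beta\le0$. Comparing it with the two refined upper bounds $C_f\le\frac15-\frac{9\beta}{8}+\frac P4$ and $C_f\le\frac15-\frac{5\beta}{8}-\frac{3P}{4}$ yields only $P=\frac{\beta}{2}$. Your claim that the contact equation and \eqref{rec-eq:feedback} force $P=p=\beta$ is not supported by anything you cite. The paper itself never proves $\beta=0$: your deduction from \eqref{newlev:eq:early}--\eqref{newlev:eq:middle} is precisely its easy case, and $\beta>0$ is handled by a different mechanism.

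The missing idea is to treat $\beta>0$ directly, using the deficit $e_u=2u-f(u)$ and the full partition transfer at competitors in $[a,A)$.
\begin{itemize}
\item Since $C_f,C_g\le J<c$ when $\beta>0$, \eqref{newrig:eq:coupled} collapses to the exact skew identity \eqref{newrig:eq:skew}, which gives \eqref{newrig:eq:Gidentity}.
\item With these identities, Lemma~\ref{newrig:lem:push} shows that for $k<a<A$ the hypothesis $d_x(a)\ge a-(1-\sigma)e_a$ implies $a-k\ge\sigma e_a$. Otherwise some competitor has full cost below one, and the resulting lower bound for the partition loss contradicts the piecewise upper bounds \eqref{newrig:eq:twoloss}.
\item The ordinary prefix is the case $\sigma=\frac34$. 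It gives $e_a\le\frac43(a-k)$, hence $f=g=2u$ on $[0,\frac15]$.
\item This in turn forces $d_x(u)=u$ on $[0,A]$, so $\sigma=1$ applies and gives $h_f,h_g\ge\frac15$ on $[\frac15,A]$.
\item The correlated estimate with $a=k-\varepsilon$ then extends $f,g\ge u+\frac15$ to $[\frac15,\frac35]$.
\end{itemize}

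Two smaller points. At the endpoint the initial defect equals $k$. Lemmas~\ref{lem:two-barriers} and~\ref{lem:cutoff-update} require $\beta<k$ and $k>\frac15$, so they cannot be iterated verbatim; the paper inserts a direct check that produces the common cutoff $\frac{17}{50}$ first. Also, for \eqref{newrig:eq:tail03} the terminal cap is simply $f(1)\ge1$ by \eqref{eq:profileclass}, so you do not need a cutoff at $\frac15$.
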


\begin{figure}[ht]
\centering
\begin{tikzpicture}[x=9cm,y=4.3cm]
 \fill[blue!9] (.2,.2)--(.2,.4)--(.6,.8)--(.6,.6)--cycle;
 \draw[->,gray] (0,0)--(1.06,0) node[right] {$u$};
 \draw[->,gray] (0,0)--(0,1.12) node[above] {profile};
 \draw[gray,dashed] (0,0)--(1,1);
 \draw[very thick,blue!75!black] (0,0)--(.2,.4)--(.6,.8)--(.8,.8)--(1,1);
 \foreach \x/\lab in {.2/{\frac{1}{5}},.6/{\frac{3}{5}},.8/{\frac{4}{5}},1/{1}}
 {\draw (\x,.015)--(\x,-.015) node[below] {$\lab$};}
 \node[blue!70!black] at (.38,.90) {$f,g\ge u+\frac{1}{5}$};
 \node[gray] at (.82,.62) {$f,g\ge u$};
\end{tikzpicture}
\caption{A common lower envelope forced at the endpoint. The shaded
strip has height $\frac{1}{5}$. This envelope is a lower bound, not an assertion
that the profiles coincide or that the displayed function is an extremizer.}
\label{fig:rigid-envelope}
\end{figure}

\subsection{Endpoint cutoff data and the exact skew identity}

We first verify the cutoff, cone and feedback estimates at $k=\frac{1}{5}$.
The strict reciprocal proof must be adjusted at this endpoint.
The least common closed-tail cutoff is
\begin{equation}\label{newrig:eq:cutoff}
 A=k+\frac{3\beta}{4},\qquad 0\le\beta\le\frac{4}{35}.
\end{equation}
Set
\begin{gather*}
 x_0=k+\frac{\beta}{2},\quad c=k-\frac{\beta}{2},\quad J=k-\beta,\\
 P=f(x_0)-2k,\quad p=g(x_0)-2k,\quad
 F=f(T),\quad G=g(T),\quad \xi=P-p,\\
 C_f=\min\{c,-\frac{3}{10}-\frac{\beta}{4}+\frac{P}{4}+\frac{G}{2}\},\quad
 C_g=\min\{c,-\frac{3}{10}-\frac{\beta}{4}+\frac{p}{4}+\frac{F}{2}\}.
\end{gather*}
Here $0\le P,p\le\beta$. The barriers and cones are
\begin{align}
 f(u),g(u)&\ge2u-\beta &&(0\le u\le A),\label{newrig:eq:early}\\
 f(u),g(u)&\ge u+c &&(x_0\le u\le T),\label{newrig:eq:middle}\\
 f(u)&\ge u+C_f,\quad g(u)\ge u+C_g &&(T\le u\le b_*).
 \label{newrig:eq:cones}
\end{align}
The reciprocal feedback reads
\begin{align}
 C_f&\ge\min\{c,C_g+\frac{\xi}{2},\frac{C_g+J}{2}+\frac{\xi}{2}\},\label{newrig:eq:coupled}\\
 C_g&\ge\min\{c,C_f-\frac{\xi}{2},\frac{C_f+J}{2}-\frac{\xi}{2}\}.\notag
\end{align}

To justify \eqref{newrig:eq:cutoff} at the endpoint, the first-hit argument
starts at $A_0=\frac{7}{20}$ with defect $\beta_0=k$. The first barrier's
proof still works at that defect. Its requirements are
$\beta_0\le2(k-\mathfrak d)$ and $b<3(k-\mathfrak d)$ for
sufficiently small positive maximum defect $\mathfrak d$.
It yields $\min(f(T),g(T))\ge\frac{7}{10}$. For
$\frac{17}{50}\le u\le\frac{7}{20}$, the early bound and this last estimate give
\[
 f(u)+\frac{g(1-u)+u}{2}\ge\frac{5}{2}u+\frac{3}{20}\ge1,
 \qquad f(u)+1-u\ge1+u-k\ge1,
\]
and similarly in the opposite orientation. Thus $\frac{17}{50}$ is a
common cutoff with defect $\frac{14}{75}<k$. The usual update now applies;
its defect maps at $k=\frac{1}{5}$ are
\[
 v\mapsto\frac{4v}{3}-\frac{4}{15},\qquad v\mapsto\frac{5v}{6},\qquad
 v\mapsto\frac{20v}{27}+\frac{4}{135}.
\]
Their maximum decreases to $\frac{4}{35}$. In the proof of the update,
the strict inequality formerly written $k>1-4k$ is replaced by
$a(v)>k=1-4k$, which follows from its third entry. The other
endpoint equalities, $T=3k$ and $b_*=4k$, cause no change to the
barrier, cone, or feedback proofs.

Minimality of $A$ forces a contact in at least one orientation;
label it so that $f(A)+p_g(A;1)=1$. The length branch cannot
attain the contact, because $f(A)-A\ge k-\frac{\beta}{4}>0$. Hence
\[
 g(1-A)=2-2f(A)-A.
\]
The cone for $g$ at $1-A$ and
$f(A)\ge\max\{2k+\frac{\beta}{2},2k+P\}$ give
\begin{align}
 C_g&\le\min\{J,k-2P\},\label{newrig:eq:upperg}\\
 G&\le\min\{1-\frac{7\beta}{4},1-\frac{3\beta}{4}-2P\}.\notag
\end{align}
Substituting the latter two bounds in the formula for $C_f$ gives
\begin{equation}\label{newrig:eq:upperf}
 C_f\le\min\{k-\frac{9\beta}{8}+\frac{P}{4},k-\frac{5\beta}{8}-\frac{3P}{4}\}\le J.
\end{equation}
For the last inequality use the first entry if $P\le\frac{\beta}{2}$ and
the second otherwise.

If $\beta=0$, \eqref{newrig:eq:early} and \eqref{newrig:eq:middle} already prove
the proposition. Suppose henceforth $\beta>0$. Since
$C_f,C_g\le J<c$, the half-slope branch in the first inequality
of \eqref{newrig:eq:coupled} is no smaller than $C_g+\frac{\xi}{2}$.
The ceiling $c$ exceeds $C_f$. Thus $C_f\ge C_g+\frac{\xi}{2}$.
The second inequality implies the reverse bound. We obtain
the identity
\begin{equation}\label{newrig:eq:skew}
                   C_f-C_g=\frac{P-p}{2}.
\end{equation}
Both cone minima are on their second branches, so also
\begin{equation}\label{newrig:eq:Gidentity}
 G=2C_f+3k+\frac{\beta-P}{2},\qquad
 F=2C_g+3k+\frac{\beta-p}{2}.
\end{equation}

We have not substituted $k=\frac{1}{5}$ in the
six-branch lower bound from the strict reciprocal argument. Its derivation discards a term
$u+2(5k-1)$ using strict positivity, which fails at this endpoint.
The direct identity \eqref{newrig:eq:skew} is what is needed instead.

\subsection{A cutoff inequality that can be used twice}

Write $e_u=2u-f(u)$. This information deficit is nonnegative and
nondecreasing. Euler's number is written $\mathrm e$, and
$\mathfrak e$ denotes the plateau parameter of
Proposition~\ref{prop:pin-plateau}.
The same arguments will be used for $g$.

\begin{lemma}[Cutoff inequality]\label{newrig:lem:push}
Let $k<a<A$ and $\sigma\in\{\frac{3}{4},1\}$. If
\[
                    d_x(a)\ge a-(1-\sigma)e_a,
\]
then $a-k\ge\sigma e_a$.
\end{lemma}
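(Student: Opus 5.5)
The plan is to squeeze $d_x(a)$ between the hypothesis and an upper bound that comes from the closed tail at level one. Throughout, $Q=\frac45$, $\lambda=1$ and $k=\frac15$. Suppose we can show, for every $a\in(k,A)$,
\[
d_x(a)\le f(a)-k=2a-e_a-k .
\]
Then the hypothesis $d_x(a)\ge a-(1-\sigma)e_a$ gives $a-(1-\sigma)e_a\le 2a-e_a-k$. This rearranges to $a-k\ge\sigma e_a$, which is the claim. The value of $\sigma$ plays no role in this last step; it only records how much of the deficit the caller has already fed into the lower bound for $d_x(a)$. So everything reduces to the displayed upper bound, and I would prove it from the transfer estimate (Lemma~\ref{lem:transfer}) on $[a,1]$.

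\emph{Step 1: the easy source of the upper bound.} If $H_{f,g}(a,1)\ge1$, the transfer on $[a,1]$ together with $d_x(1)\le\frac45$ gives $\frac45\ge d_x(a)+1-f(a)$, which is exactly $d_x(a)\le f(a)-k$. I would check the competitors $t\in[a,A)$ that are not already covered by the tail at $A$.
\begin{itemize}
\item The length branch is harmless: by \eqref{newrig:eq:early}, $f(t)+1-t\ge 1+t-\beta>1$, since $\beta\le\frac4{35}<k<t$.
\item The difficulty is the half-gain branch $f(t)+\frac{g(1-t)+t}{2}$ with $1-t\in(1-A,\frac45]$. The cone \eqref{newrig:eq:cones} alone gives only about $2t-\beta+\frac{1+C_g}{2}$. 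At $t$ near $k$ this falls below $1$ unless $C_g$ is large.
\end{itemize}

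\emph{Step 2: avoid the bad branch by splitting at $A$.} Since $a>k$ and $A<2k$, we have $A<2a$. Hence on $[a,A]$ every closed gain is saturated: $p_g(t;A)=A-t$, exactly as in the proofs of Lemmas~\ref{lem:compatible-hit} and~\ref{lem:source-envelope}. Using the transfer on $[a,A]$ and then the tail at $A$ gives
\[
\frac45\ \ge\ d_x(a)+\Bigl[\min\Bigl\{f(A),\ \inf_{a\le t\le A}\bigl(f(t)+A-t\bigr)\Bigr\}-f(a)\Bigr]+1-f(A).
\]
Two cases:
\begin{itemize}
\item If the minimum is $f(A)$, this is the desired bound.
\item Otherwise the competitor cost $f(t)+A-t$ has to be compared with $f(A)$. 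I would use the $2$-Lipschitz bound $f(A)\le f(t)+2(A-t)$ together with the explicit structure at $A$. That structure is the contact identity $f(A)+p_g(A;1)=1$ in the labelled orientation, plus $f(A)\ge 2k+\max\{\frac\beta2,P\}$ and the early bound $f\ge 2u-\beta$. The aim is to show that the deficit $e_t$ cannot jump on $[a,A]$ by more than the slack available, so that $f(t)+A-t\ge f(a)+k-\ldots$ closes the estimate.
\end{itemize}

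\emph{Main obstacle.} I expect the obstacle to be exactly this intermediate range, i.e.\ controlling competitors strictly between $a$ and the least common cutoff $A$. That is where minimality of $A$ says the tail can fail. If the direct comparison in Step 2 is too lossy, the fallback is as follows. For each $a<A$ use that $a$ is not a common cutoff only in the non-contact orientation. In the orientation used for $d_x$, argue via the feedback identities \eqref{newrig:eq:skew} and \eqref{newrig:eq:Gidentity}, which pin $G$ and $C_g$ down exactly, that the half-gain branch still has cost at least $1$ for $t\ge a$. This would give $H_{f,g}(a,1)\ge1$ and return us to Step 1. The same argument with $f$ and $g$ exchanged handles the $g$ version.
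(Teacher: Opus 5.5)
Your closing rearrangement is correct: $d_x(a)\le f(a)-k=2a-e_a-k$ together with the hypothesis gives $a-k\ge\sigma e_a$. However, all of the lemma's content now sits in that unconditional bound, and neither of your routes proves it.

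\emph{Step~1 and the fallback.} Both aim at $H_{f,g}(a,1)\ge1$, and this is false in the orientation that matters. The set $\{a:H_{f,g}(a,1)\ge1\}$ is a closed up-interval $[a_f,1]$, and likewise $[a_g,1]$ for $H_{g,f}$. Hence $A=\max\{a_f,a_g\}$, so in at least one orientation the closed tail has height strictly below $1$ at \emph{every} $a<A$. That orientation has a contact at $A$, so your premise that $a$ fails to be common ``only in the non-contact orientation'' is backwards. The lemma is applied to both $f$ and $g$, so you cannot avoid that orientation.

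\emph{Step~2.} It is correct as far as it goes: the gains on $[a,A]$ are saturated, and you obtain $d_x(a)\le f(a)-k+\max_{a\le t\le A}[h_f(A)-h_f(t)]_+$. But the closing step is only stated as an aim, and the facts you list do not control this defect. The $2$-Lipschitz bound gives only $A-a=\frac{3\beta}{4}-\theta$, where $\theta=a-k$. Even granting the negated conclusion, this is smaller than $\sigma e_a-\theta$ only when $\sigma e_a>\frac{3\beta}{4}$. Your unconditional target needs the defect to vanish, i.e.\ $[a,A]$ teal for $h_f$, and \eqref{newrig:eq:early} does not force this: it allows slope $2$ on $[a,A]$.

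The missing idea is the full partition transfer, applied while keeping both the hypothesis and the negation $\theta=a-k<\sigma e_a$. Together they give $\widehat H_{f,g}(a,1)\le\frac45+a-\sigma e_a<1$. So some competitor $v\in[a,A)$ has full cost below $1$, and this becomes the lower bound \eqref{newrig:eq:losslower} for the partition loss $\mathcal L_v(g;1-v)$. An upper bound for the same loss comes from concatenating \eqref{eq:partition-positive-endpoint} on $[0,x_0]$, $[x_0,T]$ and $[T,1-v]$, using the middle barrier and the cone for $g$. Comparing the two bounds yields two branch inequalities for $G$. Then \eqref{newrig:eq:skew}, \eqref{newrig:eq:Gidentity} and $C_g\le J$ turn them into four quantities that are strictly positive; the two with $v\ge x_0$ are positive precisely because $\sigma e_a>\theta$.

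Thus the hypothesis on $d_x(a)$ and the negated conclusion are used inside the competitor analysis. Discarding them at the outset commits you to proving more than the lemma asserts. Your bound $d_x(a)\le f(a)-k$ on $(k,A)$ is in fact true, but only a posteriori: it follows from \eqref{newrig:eq:linear-distance} and \eqref{newrig:eq:earlyplateau}, which are themselves derived from this lemma.
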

\begin{proof}
Suppose instead that $\theta=a-k<\sigma e_a$. The transfer and
the upper bound on $d_x(1)$ give
\[
 \widehat H_{f,g}(a,1)\le\frac{4}{5}+a-\sigma e_a<1.
\]
Choose competitors $v\in[a,A)$ with full cost below this upper
bound plus an arbitrarily small error. Competitors at or beyond
$A$ and the terminal cap have cost at least one. Put $R=1-v$,
$w=v-a$, and $B=\beta-P$. The partition loss must satisfy
\begin{equation}\label{newrig:eq:losslower}
 \mathcal L_v(g;R)\ge h_g(R)+v+k-a-e_v+\sigma e_a-o(1).
\end{equation}
The endpoint partition bound, using \eqref{newrig:eq:middle}--\eqref{newrig:eq:cones},
gives
\begin{align}
 \mathcal L_v(g;[0,x_0])+\mathcal L_v(g;[x_0,T])
 &\le\max\{G-T+p,\frac{G-v-\beta+p}{2}\},\label{newrig:eq:twoloss}\\
 \mathcal L_v(g;[T,R])&\le h_g(R)-C_g.\notag
\end{align}
Here $v>k>\beta$, $T<R<b_*$, and $R-T-v=2k-2v<0$.
Concatenating these finite partitions bounds the full loss.
Consequently at least one of the inequalities
\begin{align}
 G&\ge C_g+1-k+U_1-p-o(1),\label{newrig:eq:branches}\\
 G&\ge2C_g+a+2k+U_2+\beta-p-o(1),\notag
\end{align}
holds, where $U_1=w-e_v+\sigma e_a$ and
$U_2=3w-2e_v+2\sigma e_a$.

For $v\le x_0$ use $e_v\le\min\{B,e_a+2w\}$, which gives
\[
 U_1\ge-\frac{B}{2}+(\sigma-\frac{1}{2})e_a,\qquad
 U_2\ge-\frac{B}{2}+(2\sigma-\frac{3}{2})e_a.
\]
For $v\ge x_0$ use $e_v\le\min\{\beta,2w+2\theta-P\}$,
which gives
\[
 U_1\ge-\frac{\beta}{2}+\frac{P}{2}-\theta+\sigma e_a,\quad
 U_2\ge-\frac{\beta}{2}+\frac{3P}{2}-3\theta+2\sigma e_a.
\]
These bounds are obtained at the meeting points of the two
affine bounds for $e_v$; allowing all $w\ge0$ only weakens them.

Subtract $G$ from the two right sides of \eqref{newrig:eq:branches}, and
use \eqref{newrig:eq:skew}--\eqref{newrig:eq:Gidentity}. The lower bounds are
\[
\begin{array}{c|cc}
 &\text{first branch}&\text{second branch}\\ \hline
 v\le x_0&J-C_g+(\sigma-\frac{1}{2})e_a
          &\theta+(2\sigma-\frac{3}{2})e_a\\
 v\ge x_0&J-C_g-\theta+\sigma e_a
          &P-2\theta+2\sigma e_a .
\end{array}
\]
All four are strictly positive because $C_g\le J$, $P\ge0$,
$\theta>0$, and $\sigma e_a>\theta$. The errors can be taken
smaller than their minimum, a contradiction. The proof applies
also when $a\ge x_0$, in which case the first row is unnecessary.
\end{proof}

\subsection{Recovering the plateau}

The ordinary prefix is $d_x(a)\ge a-\frac{e_a}{4}$. Apply
Lemma~\ref{newrig:lem:push} with $\sigma=\frac{3}{4}$ for every $k<a<A$:
\begin{equation}\label{newrig:eq:weakdefect}
 e_a\le\frac{4}{3}(a-k),\qquad h_f(a)\ge\frac{4k-a}{3}.
\end{equation}
By continuity $e_k=0$, and monotonicity of $e$ forces $f(u)=2u$
for $0\le u\le k$. The same holds for $g$.

Let $a_0=\frac{A}{2}<k$. On $[a_0,k]$ one has $h_f(u)=u\ge a_0$.
On $[k,A]$, \eqref{newrig:eq:weakdefect} implies
\[
 h_f(u)\ge k-\frac{\beta}{4},\qquad
 k-\frac{\beta}{4}-a_0=\frac{k}{2}-\frac{5\beta}{8}\ge\frac{1}{35}>0.
\]
For $a_0\le u\le A$, the projection gain is saturated because
$p_g(u;A)=A-u$, since $u\ge \frac{A}{2}$ and $g\ge u$.
It follows that
\[
 H_{f,g}(a_0,A)=A+a_0.
\]
Indeed, the cost is $A+h_f(u)\ge A+a_0$, with equality at
$u=a_0$; this also includes the terminal cap. The prefix forces
$d_x(a_0)=a_0$. The closed transfer then implies
\[
 d_x(A)\ge a_0+(A+a_0)-f(a_0)=A.
\]
The $1$-Lipschitz upper bound forces equality and then
\begin{equation}\label{newrig:eq:linear-distance}
                 d_x(u)=u\quad(0\le u\le A).
\end{equation}
The same reasoning applies to $d_y$.

Now use Lemma~\ref{newrig:lem:push} a second time, with $\sigma=1$ and
\eqref{newrig:eq:linear-distance}. We obtain $e_a\le a-k$, hence
\begin{equation}\label{newrig:eq:earlyplateau}
              h_f(u),h_g(u)\ge k\quad(k\le u\le A).
\end{equation}
The first application forced exact information growth before $k$.
That exact growth made the distance profile linear up to $A$, so the
second application can retain the entire source defect.

The plateau propagates to $3k$ as follows.
The oriented version of Lemma~\ref{lem:correlated}, combined with the transfer from $b$ to $1$, at $b\ge A$
and $a=k-\varepsilon$, reduces to
\begin{equation}\label{newrig:eq:correlated}
 f(b)\ge\varepsilon+\inf_{a\le u\le b}\{f(u)+p_g(u;b)\},
\end{equation}
because $f(a)=2a$ and monotonicity makes the linear branch of
$\phi_a(f(u))$ active. If the barrier $f(u)\ge u+k$ fails on
$[k,2k]$, let $D>0$ be its maximum defect, attained at $b>A$.
For $u\ge k\ge \frac{b}{2}$, the gain is $b-u$ and the cost is at least
$b+k-D=f(b)$. For $a\le u\le k$, comparison with $u=k$ loses
at most $3\varepsilon$ and leaves a cost of at least $b+k-3\varepsilon$.
Choose $0<2\varepsilon<D$ in \eqref{newrig:eq:correlated} to get a contradiction.
Exchange the two profiles to establish both barriers on $[k,2k]$.

Next suppose there is a positive maximum defect on $[2k,3k]$.
Late competitors $u\ge \frac{b}{2}$ are handled in the same way.
For $k\le u\le \frac{b}{2}$, both $u$ and $b-u$ belong to $[k,2k]$,
where both barriers now hold. The two gain branches give costs
at least $b+k$ and $\frac{b}{2}+\frac{5k}{2}\ge b+k$, respectively.
The interval $[a,k]$ again loses at most $3\varepsilon$.
This proves \eqref{newrig:eq:plateau} for both profiles.

Finally, if $\frac{3}{10}\le u\le\frac{1}{2}$, then $f(u)\ge\frac{1}{2}$ and
$p_g(u;1)\ge\frac{1}{2}$. If $u\ge\frac{1}{2}$, then $p_g(u;1)=1-u$
and $f(u)\ge u$. The terminal cap is at least one.
This proves \eqref{newrig:eq:tail03} and Proposition~\ref{newrig:prop:rigidity}.

\begin{lemma}[Qualitative stability]\label{lem:new-stability}
For every $\varepsilon>0$ there is $\eta(\varepsilon)>0$ such that
every scalar system \eqref{eq:new-system} with
$d_x(1),d_y(1)\le\frac{4}{5}+\eta(\varepsilon)$ satisfies
\[
 \begin{aligned}
 f(u),g(u)&\ge2u-\varepsilon
       &&(0\le u\le\frac{1}{5}),\\
 f(u),g(u)&\ge u+\frac{1}{5}-\varepsilon
       &&(\frac{1}{5}\le u\le\frac{3}{5}),
 \end{aligned}
\]
and
$H_{f,g}(\frac{3}{10},1),H_{g,f}(\frac{3}{10},1)\ge1-\varepsilon$.
The constant depends only on $\varepsilon$.
\end{lemma}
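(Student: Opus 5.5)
The plan is a compactness argument by contradiction, reducing the statement to Proposition~\ref{newrig:prop:rigidity}. Suppose that for some $\varepsilon>0$ no $\eta(\varepsilon)$ works. Then there are scalar systems $(f_j,g_j,d_{x,j},d_{y,j})$ satisfying \eqref{eq:new-system} in both orientations, with $d_{x,j}(1),d_{y,j}(1)\le\frac{4}{5}+\frac1j$. Each of them violates one of the three conclusions by at least $\varepsilon$, at some point $u_j$ or in one of the tail thresholds.

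The first step is extraction. All four functions vanish at zero and are uniformly Lipschitz (constant $2$ or $1$), so Arzel\`a--Ascoli gives a subsequence converging uniformly to a quadruple $(f,g,d_x,d_y)$. The class bounds \eqref{eq:profileclass}, the monotonicity and Lipschitz properties, and the prefix bounds \eqref{eq:limitprefix} are closed under uniform limits. In the limit we also get $d_x(1),d_y(1)\le\frac45$.

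The second step, which I expect to be the main obstacle, is to check that the limit is again a scalar system. This means passing the closed transfer and the full partition transfer to the limit.

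\begin{itemize}
\item For the closed functional $H_{f,g}(a,b)$, continuity of $p_g$ and of the infimum over the compact interval $[a,b]$ gives $H_{f_j,g_j}(a,b)\to H_{f,g}(a,b)$. The closed transfer then passes to the limit directly.
\item For $\widehat H$, continuity of the partition infimum is not available; Remark~\ref{rem:partition-nonmonotonicity} shows that $P_g$ is delicate. Instead I will use Lemma~\ref{lem:uniform-partition-witnesses}. For any $c<\widehat H_{f,g}(a,b)$, it supplies finitely many templates and a margin $\xi$ such that $\widehat H_{f_j,g_j}(a,b)>c$ for all large $j$. This holds on the early separation range $[a,\frac b2]$.
\item On the late range $t\ge\frac b2$ the gain is just $b-t$, which is continuous.
\item Hence $d_x(b)-d_x(a)\ge c-f(a)$ in the limit. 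Letting $c\nearrow\widehat H_{f,g}(a,b)$ gives the full transfer. The case $a\ge\frac b2$ is handled by the tangent branch alone.
\end{itemize}

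This lower semicontinuity of $\widehat H$ is exactly what the lemma was built for, in the same way it was used in Proposition~\ref{newmatch:prop:matched}.

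The third step applies Proposition~\ref{newrig:prop:rigidity} to the limit system. It gives $f=g=2u$ on $[0,\frac15]$ and $f,g\ge u+\frac15$ on $[\frac15,\frac35]$. By uniform convergence, for large $j$ we then have $f_j,g_j\ge 2u-\varepsilon$ and $f_j,g_j\ge u+\frac15-\varepsilon$ on the respective intervals. This contradicts the violation whenever the violation occurred in the envelope conclusions.

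For the tails there are two routes.

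\begin{itemize}
\item \emph{Continuity of $H$.} The convergence $H_{f_j,g_j}(\frac3{10},1)\to H_{f,g}(\frac3{10},1)\ge1$ forces $H_{f_j,g_j}(\frac3{10},1)>1-\varepsilon$ eventually.
\item \emph{Direct argument.} One can reprove the tail bound from the envelope bounds just obtained, exactly as at the end of the rigidity proof, with $\varepsilon$-losses: $f\ge\frac12-\varepsilon$ and $p_g\ge\frac12$ on $[\frac3{10},\frac12]$, and $f(u)+1-u\ge1$ beyond.
\end{itemize}

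Either route contradicts the remaining case of the violation. The resulting $\eta(\varepsilon)$ depends only on $\varepsilon$, since no data other than the scalar inequalities were used.
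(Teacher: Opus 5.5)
Your proposal is correct and follows the paper's own argument. Both suppose the lemma fails, extract a uniformly convergent subsequence, and pass the prefix and closed transfers directly while passing the full transfer via Lemma~\ref{lem:uniform-partition-witnesses}; they then apply Proposition~\ref{newrig:prop:rigidity} to the limiting endpoint system and conclude by uniform convergence and continuity of the closed functional $H$. Your extra details are accurate, namely that $\widehat p_g(t;b)=b-t$ on $t\ge\frac b2$ and the direct derivation of the tail bound from the envelopes.
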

\begin{proof}
If this failed, choose systems with terminal bounds decreasing
to $\frac{4}{5}$ and violating one of the conclusions at the fixed
tolerance $\varepsilon$. The uniform Lipschitz bounds give a
jointly uniformly convergent subsequence. The prefix and closed
transfers pass to the limit directly, and
Lemma~\ref{lem:uniform-partition-witnesses} passes the full transfer.
The limit is an endpoint system. Proposition~\ref{newrig:prop:rigidity}
satisfies all the asserted bounds with zero error. Uniform convergence,
and continuity of the closed functional $H$, give a contradiction.
\end{proof}

\section{The source parent and the annular cap}\label{sec:parent-cap}

The finite geometry will require a bound on the source mass that
can lie both in a small parent square and near one circle. The
interval-transfer proof already contains the needed information.
Before its last chain rule, the coarse source location is still
available as conditioning data. We keep it.

A lower bound for this conditional complexity becomes an upper bound
for annular mass by measure coding. The cancellation of the parent
mass leaves a bound for every occupied
parent and radius bin with respect to the same original source
measure.

\begin{lemma}[Conditional source-parent transfer]
\label{lem:conditional-parent-transfer}
Let $(x,y)$ satisfy the pointwise hypotheses of the interval-transfer
lemma, relative to $A$, and put $\varrho=|x-y|$.  Along any common limiting
profile sequence $r_j$, for fixed $0<a<b\le1$, write
$C_j=Q_{\lceil ar_j\rceil}(y)$.  Then
\[
 \liminf_{j\to\infty}
 \frac{K_{\lfloor br_j\rfloor}^{A,x}(\varrho\mid C_j)}{r_j}
 \ge[H_{f,g}(a,b)-f(a)]_+.
\]
The notation on the left means ordinary finite-string conditioning on
the encoded dyadic square, while retaining the full oracle $x$.
\end{lemma}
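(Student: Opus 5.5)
The plan is to rerun the proof of Lemma~\ref{lem:transfer} verbatim, but stop one step before its final chain rule, so that the coarse source square $C_j$ remains as conditioning data rather than being converted into a difference of distance complexities. If $H_{f,g}(a,b)\le f(a)$ there is nothing to prove. Otherwise fix $f(a)<c<H_{f,g}(a,b)$, put $n_j=\lfloor br_j\rfloor$, $\ell_j=\lceil ar_j\rceil$, $m_j=\lfloor cr_j\rfloor$, and form the lowering oracle $G_j=D(n_j,y,\frac{m_j}{n_j})$ of \cite[Lemma~14]{FS}. The capped competitor bounds, for separations $sr_j$ in the non-tangent range and in the tangent range, are exactly those already established before Lemma~\ref{lem:transfer}. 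They give every competitor $z$ with $|x-z|=|x-y|$ and $z$ within the relevant distance of $y$ complexity at least $m_j+\varepsilon n_j$ relative to $(A,G_j)$, uniformly in the separation. This uses the fixed ratio $\frac{a}{b}>0$ and the uniform projection threshold of Lemma~\ref{lem:projection}.

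The key step is to apply \cite[Lemma~15]{FS} in its conditional form. That lemma bounds the number of bits needed to recover $y$ at precision $n_j$ from a precision-$\ell_j$ approximation of $y$ together with a precision-$n_j$ approximation of $\varrho$, relative to $(A,G_j,x)$. The enumeration argument inside its proof uses the coarse approximation of $y$ only as finite input. It therefore works equally well when that input is replaced by the label of the dyadic square $C_j$, because such a label and a precision-$\ell_j$ rational point determine each other up to a bounded choice among neighbouring squares. This yields
\[
K^{A,G_j,x}_{n_j}(y\mid C_j)\le K^{A,G_j,x}_{n_j}(\varrho\mid C_j)+3\varepsilon n_j+O(\log r_j).
\]
For the left side, the chain rule together with \cite[Lemma~14(iii)]{FS} and (H3) gives the lower bound $m_j-K^{A,G_j,x}(C_j)-O(\log r_j)$. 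Oracle monotonicity gives $K^{A,G_j,x}(C_j)\le K^A_{\ell_j}(y)+O(\log r_j)$. Removing $G_j$ from the right side costs only $O(1)$.

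Combining these, dividing by $r_j$, and taking $\liminf$ gives at least $c-f(a)-3\varepsilon b$, since $K^A_{\ell_j}(y)/r_j\to f(a)$. Letting $\varepsilon\searrow0$ and then $c\nearrow H_{f,g}(a,b)$ proves the claim.

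The main obstacle is the conditional form of Lemma~15 of \cite{FS}. I must check that its hypothesis, namely competitors are expensive, is phrased in unconditional complexities relative to $(A,G_j)$, while its conclusion allows the coarse location as a finite string rather than a worst-case rational approximation. The worst-case finite-conditioning convention of Section~\ref{sec:complexity} and the identification of dyadic squares with strings should make this a bookkeeping matter. I would first write out the enumeration step: list candidate points $p$ inside $C_j$ whose distance from $x$ is near $\varrho$ and whose complexity is below $m_j$. Then I would confirm that all logarithmic overheads, including the description of the integers $n_j,m_j$, remain $O(\log r_j)$.
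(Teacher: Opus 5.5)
Your argument is correct, but it is organized differently from the paper's. The paper never reopens \cite[Lemma~15]{FS}. It applies that lemma exactly as stated, with the worst-case precision-$\ell_j$ approximation of $y$ as conditioning data. After dropping $G_j$ this gives $K^{A,x}_{n_j,\ell_j}(\varrho\mid y)\ge m_j-K^A_{\ell_j}(y)-3\varepsilon n_j-O_\varepsilon(\log r_j)$, and only then does the paper pass to square conditioning. For that step it uses the same observation you make, in the favorable direction. Every admissible precision-$\ell_j$ rational approximation of $y$ determines $C_j$ up to an index of bounded length, so a program conditioned on $C_j$ can be simulated. This gives $K^{A,x}_{n_j,\ell_j}(\varrho\mid y)\le K^{A,x}_{n_j}(\varrho\mid C_j)+O(\log r_j)$, and the maximum in the worst-case convention is harmless here.

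Your route instead pushes the square inside the lemma. It replaces the chain-rule lower bound for $K_{n_j,\ell_j}(y\mid y)$ by $K(y\mid C_j)\ge K(y)-K(C_j)-O(1)$ together with $K^A(C_j)\le K^A_{\ell_j}(y)+O(\log r_j)$. This avoids worst-case finite conditioning entirely, but the price is re-verifying the internals of Lemma~15. The search region becomes an $O(1)$-enlargement of $C_j$, so you need the competitor bounds for separations down to $\ell_j-O(1)$; the strict margin $H-c>0$ and continuity of $f$ and $p_g$ absorb this. The paper's ordering keeps Lemma~15 a black box and isolates the only new point in a one-line simulation. Your own remark about square labels and rational points already shows that reopening the lemma is unnecessary.

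One small correction: the competitor bounds do not give complexity $m_j+\varepsilon n_j$ uniformly in the separation. For competitors near $y$ only the terminal allowance $m_j+\min\{\varepsilon n_j,n_j-S-\varepsilon n_j\}$ holds, where $S=-\log_2|y-z|$. That allowance is exactly the hypothesis of Lemma~15. It matters in your version because $C_j$ contains competitors arbitrarily close to $y$.
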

\begin{proof}
Put $H=H_{f,g}(a,b)$. If $H\le f(a)$, the assertion follows
from nonnegativity.  Otherwise
choose $f(a)<c<H$ and put
\[
n_j=\lfloor br_j\rfloor,\qquad
\ell_j=\lceil ar_j\rceil,\qquad m_j=\lfloor cr_j\rfloor.
\]
Use the source-lowering oracle $G_j=D(n_j,y,\frac{m_j}{n_j})$.
For each sufficiently small fixed rational $\varepsilon>0$, the
competitor bounds in the proof of Lemma~\ref{lem:transfer} verify
both hypotheses of \cite[Lemma~15]{FS}, uniformly for all large $j$.
The two finite-precision inequalities being combined are
\begin{align*}
m_j-K_{\ell_j}^A(y)-O(\log r_j)
&\le K_{n_j,\ell_j}^{A,G_j,x}(y\mid y),\\
K_{n_j,\ell_j}^{A,G_j,x}(y\mid y)
&\le K_{n_j,\ell_j}^{A,G_j,x}(\varrho\mid y)
       +3\varepsilon n_j+O_\varepsilon(\log r_j).
\end{align*}
The first follows from Lemma~14(iii) of \cite{FS}, (H3), and the
chain rule. The second is Lemma~15, with the rational cap parameter
included in its description cost. Dropping $G_j$ from the last
conditional complexity increases it up to an absolute simulation
constant. Hence
\[
 K_{n_j,\ell_j}^{A,x}(\varrho\mid y)
 \ge m_j-K_{\ell_j}^A(y)-3\varepsilon n_j
       -O_\varepsilon(\log r_j).
\]
Every allowed precision-$\ell_j$ rational approximation
to $y$ leaves only a bounded number of possibilities for $C_j$.
For each such approximation, specify the correct square by an
index of bounded length. The index may depend on the approximation.
Its length does not. A program supplied with that index and the
precision parameters can therefore simulate a program conditioned
on $C_j$.  Consequently
\[
 K_{\lfloor br_j\rfloor,\lceil ar_j\rceil}^{A,x}(\varrho\mid y)
 \le K_{\lfloor br_j\rfloor}^{A,x}(\varrho\mid C_j)+O(\log r_j).
\]
This is the required direction for the worst-case coarse conditioning.
No maximum is replaced by a minimum.  Take the profile limit, then
$\varepsilon\searrow0$ and $c\nearrow H$.
\end{proof}

\begin{proposition}[An occupied-parent annular mass cap]
\label{prop:conditional-parent-cap}
Let $q>0$, $\lambda\ge1$, and $0<a<1$.
Let $\mu$ be a compactly supported $s$-Frostman probability measure,
$s>1$, whose dyadic masses are computable relative to $A$.
Suppose the source--pin pairs under consideration satisfy the
pointwise prefix and transfer hypotheses and the almost-sure source
coding identity
\[
 K_k^A(y)=-\log_2\mu(Q_k(y))+O_y(\log(k+2)),
\]
and that every limiting profile of the pair along precisions $n$
with $K_n^{A,x}(|x-y|)\le qn$ satisfies $H_{f,g}(a,1)\ge\lambda$.
Then for each such fixed pair $(x,y)$ and every $\zeta>0$, all
sufficiently large $n$ with $K_n^{A,x}(|x-y|)\le qn$ satisfy
\begin{equation}\label{eq:conditional-critical-parent}
 K_n^{A,x}(|x-y|\mid Q_{\lfloor an\rfloor}(y))
 \ge\lambda n-K_{\lfloor an\rfloor}^A(y)-\zeta n.
\end{equation}
In particular, if $C=Q_{\lfloor an\rfloor}(y)$ and $J$ is the
half-open dyadic radius interval of length $2^{-n}$ containing $|x-y|$, then
\begin{equation}\label{eq:occupied-parent-mass-cap}
 \mu\{z\in C:|x-z|\in J\}\le2^{-(\lambda-\zeta)n}
\end{equation}
for all sufficiently large such scales, after decreasing the error
tolerance in~\eqref{eq:conditional-critical-parent} if necessary.
If the original relation is Borel, then for any fixed pin
probability measure $\nu$, the thresholds can
be made uniform on a fixed Borel subrelation, that is, a Borel
subset of the original set of pairs, losing arbitrarily little
$(\nu\times\mu)$-measure.  Therefore, on that subrelation,
\eqref{eq:occupied-parent-mass-cap} holds simultaneously for every
source parent and every terminal radius bin occupied by a retained
low-complexity pair.  The mass on its left is the full source mass,
not the pruned mass.
\end{proposition}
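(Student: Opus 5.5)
The plan has three parts, each resting on earlier results. The conditional bound \eqref{eq:conditional-critical-parent} is Lemma~\ref{lem:conditional-parent-transfer} combined with a compactness argument over the precision sequences. The annular cap \eqref{eq:occupied-parent-mass-cap} comes from the measure-coding argument of Lemma~\ref{lem:measure-coding}, applied to the normalized restriction $\mu|_C/\mu(C)$ pushed forward by the radial distance map. Uniformity is then obtained by Egorov-type exhaustion.

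\emph{Step 1: the conditional bound.} I argue by contradiction. Suppose \eqref{eq:conditional-critical-parent} fails for a fixed pair and $\zeta>0$ along infinitely many $n$ with $K_n^{A,x}(|x-y|)\le qn$. By Lemma~\ref{lem:joint-extraction}, pass to a subsequence $r_j=n_j$ on which $f,g,d_x,d_y$ converge uniformly. By hypothesis the limit satisfies $H_{f,g}(a,1)\ge\lambda$. Apply Lemma~\ref{lem:conditional-parent-transfer} with $b=1$. The rounding $\lfloor an\rfloor$ versus $\lceil ar_j\rceil$ changes the square by bounded data, at cost $O(\log n)$. This gives
\[
\liminf_j \frac{K_{n_j}^{A,x}(\varrho\mid C_j)}{n_j}\ge\lambda-f(a).
\]
Since $K^A_{\lfloor an_j\rfloor}(y)/n_j\to f(a)$, this contradicts the assumed failure. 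The positive part in the lemma only helps, because $\lambda-f(a)\le[\lambda-f(a)]_+$.

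\emph{Step 2: the annular cap.} With the oracle $(A,x)$ and a precision-$n$ description of $C$, the conditional measure $\mu_C=\mu|_C/\mu(C)$ has computable dyadic masses. So does its pushforward under $z\mapsto|x-z|$, because $x$ is in the oracle and $\mu$ gives zero mass to circles, by the Frostman condition with $s>1$. Apply the Kraft--Chaitin construction of Lemma~\ref{lem:measure-coding} to the radius bins at level $n$, relative to $(A,x)$ and conditioned on $C$. This gives
\[
K_n^{A,x}(\varrho\mid C)\le-\log_2 p_{C,J}+\log_2\mu(C)+O(\log n).
\]
The coding identity gives $K^A_{\lfloor an\rfloor}(y)=-\log_2\mu(C)+O(\log n)$. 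Combining these with Step 1, the $\log_2\mu(C)$ terms cancel and leave $p_{C,J}\le 2^{-(\lambda-\zeta)n}$ once the tolerance is halved. The bin $J$ is determined by a precision-$n$ approximation of $\varrho$ up to a bounded choice, which costs $O(1)$.

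\emph{Step 3: uniformity.} For each pair let $N(x,y)$ be the least threshold beyond which the cap holds at all relevant scales, for a countable sequence of tolerances. These are countable conditions on computable quantities relative to the oracle, so the relevant sets are Borel. Then $\{N\le N_0\}$ exhausts the relation as $N_0\to\infty$. Choosing $N_0$ large loses arbitrarily little $(\nu\times\mu)$-mass, and inner regularity produces a Borel subrelation. On that subrelation, every parent and radius bin occupied by a retained pair at scale $n\ge N_0$ satisfies the cap. The cap is a statement about the full measure $\mu$, which is the one that was coded.

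The main obstacle is Step 1's reduction from all large $n$ to limiting profiles. The hypothesis is phrased for limits along low-complexity precisions, so every bad subsequence must itself be refined to a convergent one. The errors in Lemma~\ref{lem:conditional-parent-transfer} must also be uniform enough that its $\liminf$ contradicts a fixed $\zeta n$ deficit. The compactness argument handles this, but the rounding of $a n$ and the conditioning on encoded squares need care.
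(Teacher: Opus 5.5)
Your proposal is correct and follows the paper's proof essentially step for step. You argue the conditional bound by contradiction, using joint extraction and Lemma~\ref{lem:conditional-parent-transfer}; you bound the annular mass with a Kraft--Chaitin code for the radial pushforward of $\mu|_C/\mu(C)$ that takes the label of $C$ as conditional input, so that $\log_2\mu(C)$ cancels against the source coding identity; and you obtain uniform thresholds by an increasing-union exhaustion of the Borel relation. The only difference is cosmetic: the paper needs only lower semicomputable bin masses for the Kraft--Chaitin requests, while you assert full computability, which also holds because the boundary circles and lines are $\mu$-null, so upper and lower approximations converge to the same value.
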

\begin{proof}
Suppose~\eqref{eq:conditional-critical-parent} fails along an infinite
sequence of low-distance scales.  Extract uniformly convergent
endpoint and distance profiles on a subsequence.  By hypothesis
$H_{f,g}(a,1)\ge\lambda$, so
Lemma~\ref{lem:conditional-parent-transfer} contradicts the assumed
failure after division by $n$.  Fixed rounding changes do not affect
this argument.

For the mass conclusion, write
$p_{C,J}=\mu\{z\in C:|x-z|\in J\}$.
Since $s>1$, dyadic coordinate lines and circles have zero source
mass.  For $\mu(C)>0$, its computability allows division by $\mu(C)$
uniformly from the label of $C$. The normalized restriction
$\mu_C=\frac{\mu|_C}{\mu(C)}$ has a radial pushforward about
$x$ whose dyadic interval masses are lower semicomputable relative
to $(A,x,C)$: inner
approximations by dyadic squares converge from below because the
boundaries have zero mass, although no rate of convergence is
available. This is enough to construct the required code. For each
fixed input $(C,n)$ and every level-$n$ radius interval $J$, request
a codeword of length $k+1$ as soon as the enumeration proves
$\frac{p_{C,J}}{\mu(C)}>2^{-k}$. Make each request once. The total
requested weight satisfies
\[
\sum_J\sum_{k:\,2^{-k}<\frac{p_{C,J}}{\mu(C)}}2^{-(k+1)}
\le\sum_J\frac{p_{C,J}}{\mu(C)}=1.
\]
The Kraft--Chaitin theorem \cite[Chapter~4]{LiVitanyi} supplies a
prefix-free code by one algorithm taking $(C,n)$ as input.
The shortest requested length for $J$ is at most
$-\log_2\frac{p_{C,J}}{\mu(C)}+2$. Its decoder outputs the midpoint
of $J$, which is within $2^{-n}$ of $|x-y|$. Describing $n$ costs
$O(\log(n+2))$ bits. Thus, when $p_{C,J}>0$,
\[
 K_n^{A,x}(|x-y|\mid C)
 \le-\log_2\frac{p_{C,J}}{\mu(C)}+O(\log(n+2)).
\]
The coding algorithm takes the square label $C$ as conditional
input, so its program index and its additive constant are uniform
over all occupied parents.
There is no additional $K^A(C)$ term. The zero-mass case is immediate.
Combining this with
\eqref{eq:conditional-critical-parent} and
$K_{\lfloor an\rfloor}^A(y)=-\log_2\mu(C)+O_y(\log n)$
leaves the two inequalities
\[
 \lambda n+\log_2\mu(C)-\zeta_0n-O_y(\log n)
 \le K_n^{A,x}(|x-y|\mid C)
 \le-\log_2p_{C,J}+\log_2\mu(C)+O(\log n),
\]
where $\zeta_0>0$ is the tolerance used in the conditional bound.
Subtracting $\log_2\mu(C)$ from both sides proves
$-\log_2p_{C,J}\ge\lambda n-\zeta_0n-O_y(\log n)$ along the
low-distance scales. Choose $\zeta_0<\zeta$ and then increase the
threshold to absorb the logarithmic error.
This proves the stated error-tolerance formulation.

Finally, the conditions are Borel.  For each fixed error tolerance,
the increasing union of the sets on which the required inequalities
hold for every $n\ge N$ is the full admissible relation.  Choose one
$N$ retaining all but any prescribed amount of its product measure.
Every occupied parent and radial-bin then contains a pair to which the uniform
bound applies, proving the simultaneous assertion. This restriction
changes only the relation of pairs. The oracle $A$ and the original
measures $\mu,\nu$ remain fixed. The pin oracle is never truncated.
\end{proof}

The hypothesis on $H_{f,g}$ is supplied by Section~\ref{sec:plateau}.
If the pair has the certificate \eqref{eq:certificate} at level
$\lambda$ and $d_x(1)\le q$, then $H_{f,g}(a,1)\ge\lambda$ for every
$a\ge a_*$, because $H_{f,g}(a,1)$ is nondecreasing in $a$, and
Lemma~\ref{lem:compatible-hit} shows that $a_*\le a_c+\varpi\omega$. For
$\lambda=1$ the certificate holds for every admissible pair, and
Lemma~\ref{lem:first-hit} implies $a_*\le\frac{1}{4}+2\omega$.
So the
cap holds at every parent scale $a>\frac{1}{4}+2\omega$.

\Needspace{13\baselineskip}
\begin{corollary}[Keeping all parents]
\label{cor:all-parents-kept}
Let $\nu$ be the pin probability measure, let $G$ be one of the
relations with uniform thresholds in
Proposition~\ref{prop:conditional-parent-cap}, and put
\[
 R_n=\{(x,y)\in G:K_n^{A,x}(|x-y|)\le qn\},\qquad
   \mu_{x,n}=\mu|_{R_{n,x}},\qquad
   P_n=\int\mu_{x,n}(\mathbb R^2) d\nu(x).
\]
For all sufficiently large $n$, with $\ell=\lfloor an\rfloor$,
every level-$\ell$ dyadic parent $C$ and terminal
radius bin $J$ satisfy
\[
 \mu_{x,n}\{z\in C:|x-z|\in J\}\le2^{-(\lambda-\zeta)n}.
\]
This uses the original common source measure in all parents
simultaneously.
\end{corollary}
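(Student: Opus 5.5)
The plan is to derive the corollary directly from the simultaneous form of Proposition~\ref{prop:conditional-parent-cap}, together with the pointwise inequality $\mu_{x,n}\le\mu$. By definition $\mu_{x,n}=\mu|_{R_{n,x}}$, where $R_{n,x}=\{y:(x,y)\in R_n\}$ is the fiber of the low-complexity relation at the pin $x$. Hence, for every Borel set $S$,
\[
 \mu_{x,n}(S)=\mu(S\cap R_{n,x})\le\mu(S).
\]
It therefore suffices to bound $\mu\{z\in C:|x-z|\in J\}$. We also note that the left side vanishes unless $C\times J$ is \emph{occupied}, meaning there is some $y\in R_{n,x}\cap C$ with $|x-y|\in J$. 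Indeed, if no such $y$ exists, then $\{z\in C:|x-z|\in J\}\cap R_{n,x}=\emptyset$ and the inequality holds trivially.

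In the occupied case, the witness $y$ satisfies $(x,y)\in G$ and $K_n^{A,x}(|x-y|)\le qn$. Also $C=Q_\ell(y)$ with $\ell=\lfloor an\rfloor$, and $J$ is the terminal radius bin containing $|x-y|$. These are exactly the data in \eqref{eq:occupied-parent-mass-cap}. Because $G$ was chosen with uniform thresholds, there is a single $N$, independent of the pair, such that \eqref{eq:occupied-parent-mass-cap} holds for every $(x,y)\in G$ and every $n\ge N$ with $K_n^{A,x}(|x-y|)\le qn$. Thus for $n\ge N$,
\[
 \mu_{x,n}\{z\in C:|x-z|\in J\}\le\mu\{z\in C:|x-z|\in J\}\le2^{-(\lambda-\zeta)n},
\]
simultaneously for all pins $x$, all level-$\ell$ parents $C$, and all bins $J$.

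The main point to check is this uniformity. The threshold must not depend on the witness $y$, since different parents and bins are witnessed by different source points. Proposition~\ref{prop:conditional-parent-cap} supplies exactly this: the Borel subrelation $G$ was chosen so that one $N$ works for all retained pairs. The only care needed is to take the parent at level $\lfloor an\rfloor$ consistently with that proposition, and to use its hypothesis $H_{f,g}(a,1)\ge\lambda$ for all limiting profiles, which the earlier discussion verifies for $a>a_*$.

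Finally, the right side is the cap for the original measure $\mu$, not for the pruned measure $\mu_{x,n}$. This is why the bound holds in every parent at once: the cap was proved for the full source mass, and pruning only decreases it. No summation over parents is involved, so no additional loss occurs.
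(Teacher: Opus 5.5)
Your proposal is correct and follows the paper's own argument. An unoccupied parent and radius bin carries zero restricted mass; an occupied one is witnessed by a retained low-complexity pair, so $\mu_{x,n}\le\mu$ and the uniform-threshold cap \eqref{eq:occupied-parent-mass-cap} for the full source measure bound it. Your emphasis that the threshold must not depend on the witness $y$ is exactly what the uniformization in Proposition~\ref{prop:conditional-parent-cap} supplies.
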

\begin{proof}
An unoccupied cell and radial-bin has zero restricted mass.
An occupied one is
bounded by the full source mass in
\eqref{eq:occupied-parent-mass-cap}.
\end{proof}

At the critical scales, the cap bounds each occupied parent--radius
mass by $\delta^{\lambda}$, up to the stated power loss. Finite selection will
retain only source cells carrying a specified fraction of their
original mass. Dividing the cap by the least retained cell mass then
bounds the number of assigned vertices in each parent on a circle.
This is the parent-incidence hypothesis used in
Section~\ref{sec:circle-rows}. Figure~\ref{fig:parent-annulus}
shows the intersection whose mass is controlled.

\begin{figure}[htbp]
\centering
\begin{tikzpicture}[x=1cm,y=1cm]
\fill[blue!12] (10:4.06)
arc (10:42:4.06)--(42:4.30) arc (42:10:4.30)--cycle;
\draw[blue!65!black] (10:4.06) arc (10:42:4.06);
\draw[blue!65!black] (10:4.30) arc (10:42:4.30);
\fill[gray!6] (3,.5) rectangle (5.2,2.7);
\begin{scope}
\clip (3,.5) rectangle (5.2,2.7);
\fill[blue!25,even odd rule] (0,0) circle (4.30) (0,0) circle (4.06);
\draw[step=.22,gray!45,very thin] (3,.5) grid (5.2,2.7);
\draw[blue!65!black] (0,0) circle (4.06);
\draw[blue!65!black] (0,0) circle (4.30);
\end{scope}
\draw[thick] (3,.5) rectangle (5.2,2.7);
\fill (0,0) circle (.045) node[below] {$x$};
\draw[dashed] (0,0)--(3.88,1.55);
\fill (3.88,1.55) circle (.04) node[above left,fill=white,inner sep=1pt] {$y$};
\node[above] at (4.1,2.9) {source parent $C$};
\draw[<->] (3,.12)--(5.2,.12);
\node[below,font=\small] at (4.1,.06) {$\operatorname{side}(C)=2^{-\lfloor an\rfloor}$};
\node[align=left,anchor=west] at (6,2.45)
{$J$: radius interval\\of length $\delta=2^{-n}$};
\node[align=left,anchor=west] at (6,1.35)
{Shaded intersection:\\$C\cap\{z\in\mathbb R^2:|x-z|\in J\}$};
\node[align=left,anchor=west] at (6,.25)
{Fine source cells\\have side $\delta$.};
\end{tikzpicture}
\caption{The occupied-parent annular cap. Supplying the coarse square
$C$ leaves a lower bound for the information in the radius. Coding
the radial distribution of $\frac{\mu|_C}{\mu(C)}$ cancels the
parent mass and bounds the full mass of the shaded intersection.
The annulus and fine cells are enlarged for visibility.
Their
drawn widths are not a scale comparison. In the finite selection
the parent side is $\eta=2^{-\lfloor a_{\mathrm{par}}n\rfloor}$, with
$a_{\mathrm{par}}$ slightly larger than the first hitting scale $a_*$
of the source profile.}
\label{fig:parent-annulus}
\end{figure}

\part{Circle graphs, bisectors, and the pinned improvement}\label{part:geometry}

\section{Perpendicular bisectors and their size}\label{sec:bisector-geometry}

This section separates two notions of size that occur in the proof.
Hausdorff dimension measures a fixed family of lines through all
small scales. A finite count measures a line family at one
specified scale. The proposition below concerns Hausdorff
dimension.
The finite counts are the subject of
Sections~\ref{sec:circle-rows}--\ref{sec:rectangular-sampling}.
The identity after the proposition explains how repeated circle
incidences produce rich tubes. Both follow directly from
elementary geometry.

\begin{proposition}[Bisectors with one endpoint fixed]
\label{prop:bisector-dimension}
Let $y\in\mathbb R^2$ and $F\subset\mathbb R^2\setminus\{y\}$.
In the line-space metric of Section~\ref{sec:preliminaries},
\[
\dim_{\mathrm H}\{\mathcal B(y,z):z\in F\}
=\dim_{\mathrm H}F.
\]
Consequently, if $E$ has positive Hausdorff dimension and
$\mathcal B(E)=\{\mathcal B(y,z):y,z\in E, y\ne z\}$, then
\[
\dim_{\mathrm H}\mathcal B(E)\ge\dim_{\mathrm H}E.
\]
\end{proposition}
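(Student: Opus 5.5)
The plan is to realize the map $z\mapsto\mathcal B(y,z)$ as a locally bi-Lipschitz map from $\mathbb R^2\setminus\{y\}$ into the space of lines, and then invoke countable stability. First compute the line parameters explicitly. For $z\ne y$, the bisector is $L_{n,c}$ with
\[
n=\frac{z-y}{|z-y|},\qquad c=n\cdot\frac{y+z}{2}=n\cdot y+\frac{|z-y|}{2}.
\]
Write $z=y+\rho n$ with $\rho=|z-y|>0$. In these polar coordinates about $y$ the map is $(\rho,n)\mapsto(n,\,n\cdot y+\frac{\rho}{2})$. This is injective on unoriented lines: the representative with $c-n\cdot y=\frac{\rho}{2}>0$ is unique, since $(-n,-c)$ gives $-c+n\cdot y=-\frac{\rho}{2}<0$. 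Equivalently, the inverse is the reflection of $y$ across the line, namely $z=2\,\mathrm{proj}(y)$ onto the line $-y$, which is exactly the ``reflection map'' mentioned in the introduction.

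The next step is local bi-Lipschitz control. Cover $\mathbb R^2\setminus\{y\}$ by the countably many closed annular sectors $F_{j,i}=\{z:2^{-j-1}\le|z-y|\le 2^{-j},\ \arg(z-y)\in I_i\}$, with $j\in\mathbb Z$ and $I_i$ arcs of length less than $\pi/2$, and also truncate to bounded regions, using a countable exhaustion in $|z-y|$ and $|y|$. On each such piece, $\theta$ ranges over an interval shorter than $\pi$ and $c$ stays bounded, so by the comparison in Section~\ref{sec:preliminaries} the line distance is $\asymp|\theta-\theta'|+|c-c'|$. The map $(\rho,\theta)\mapsto(\theta,\,(\cos\theta,\sin\theta)\cdot y+\rho/2)$ is smooth with Jacobian determinant $\frac12$, and it is injective. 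On a compact convex parameter box it is therefore bi-Lipschitz. The polar chart $z\mapsto(\rho,\theta)$ is also bi-Lipschitz on each sector, since $\rho$ is bounded below there. Bi-Lipschitz maps preserve Hausdorff dimension, so $\dim_{\mathrm H}\{\mathcal B(y,z):z\in F\cap F_{j,i}\}=\dim_{\mathrm H}(F\cap F_{j,i})$. Taking suprema over the countable cover and applying countable stability on both sides proves the identity.

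For the consequence, suppose $\dim_{\mathrm H}E>0$. Then $E$ has at least two points, and indeed infinitely many. Fix $y\in E$ and set $F=E\setminus\{y\}$. We have $\dim_{\mathrm H}F=\dim_{\mathrm H}E$, since removing a point does not change dimension when the dimension is positive. The family $\{\mathcal B(y,z):z\in F\}$ is contained in $\mathcal B(E)$, and monotonicity then gives the inequality.

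I expect the only real obstacle to be the handling of the unoriented-line identification $(n,c)\sim(-n,-c)$ together with the line-space metric. Two issues need care. First, the local bi-Lipschitz comparison must use one chart in which both lines are represented with normals in the same short arc. Second, nearby $z,z'$ must not produce lines whose closest representatives are the flipped ones. Restricting to sectors with arcs shorter than $\pi/2$ settles both issues, because flipped representatives are then at distance at least $\sqrt2$ in the normal coordinate. The positivity $c-n\cdot y>0$ also gives global injectivity.
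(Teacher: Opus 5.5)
Your proof is correct and is essentially the paper's argument. The paper likewise writes $z-y=rn$, takes $c=n\cdot y+\frac{r}{2}$, uses the reflection $z=y+2(c-n\cdot y)n$ as the representative-independent inverse, restricts to countably many bounded angular charts with $r$ bounded away from zero on which both maps are bi-Lipschitz, concludes by countable stability, and derives the consequence exactly as you do by applying the identity to $F=E\setminus\{y\}$ and using monotonicity.
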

\begin{proof}
Write $z-y=rn$, with $r>0$ and $n\in S^1$. Its bisector has
normal $n$ and offset
\[
c=n\cdot y+\frac{r}{2}.
\]
Conversely, reflecting $y$ across $L_{n,c}$ recovers the other
endpoint:
\[
z=y+2(c-n\cdot y)n.
\]
The second formula is independent of the choice between $(n,c)$
and $(-n,-c)$. The first map and its inverse are locally Lipschitz
away from $r=0$. Cover the punctured plane by countably many
bounded angular charts with $r$ bounded above and bounded away
from zero. On sufficiently small closed subcharts the maps are
bi-Lipschitz. Hausdorff dimension is preserved there, and countable
stability proves the equality for $F$.

For the final assertion fix $y\in E$ and apply the equality to
$F=E\setminus\{y\}$. Removing one point does not change a
positive Hausdorff dimension, and this pinned bisector family is
contained in $\mathcal B(E)$.
\end{proof}

The approximate bisector relation follows from the identity
\[
|x-y|^2-|x-z|^2
=|y|^2-|z|^2-2x\cdot(y-z).
\]
If the two distances agree to error $O(\delta)$ and remain bounded,
then $x$ is within $O(\frac{\delta}{|y-z|})$ of
$\mathcal B(y,z)$. Source separation $|y-z|\ge c\eta$ makes this a
tube of width $O(\frac{\delta}{\eta})$, and separation at a scale
$\delta^{c}$ with $c\le a_{\mathrm{par}}$ results in the width
$O(\delta^{1-c})$ used in Sections~\ref{sec:circle-rows}
and~\ref{sec:finite-incidence}.
This calculation explains why the circle graph must produce
separated pairs rather than just distinct vertices.

\begin{figure}[htbp]
\centering
\begin{tikzpicture}[scale=1.25]
\fill[blue!8] (-.1,-1.9) rectangle (.1,2.05);
\draw[dashed,blue!65!black] (0,-1.9)--(0,2.05);
\draw[gray] (0,0) circle (1.72047);
\draw[gray] (0,.65) circle (1.25);
\draw[thick] (-1,1.4)--(1,1.4);
\fill[white] (-1,1.4) circle (.065);
\fill[white] (1,1.4) circle (.065);
\draw[thick] (-1,1.4) circle (.065);
\draw[thick] (1,1.4) circle (.065);
\node[above left] at (-1,1.4) {$y$};
\node[above right] at (1,1.4) {$z$};
\fill (0,0) circle (.035) node[right=3pt] {$x$};
\fill (0,.65) circle (.035) node[right=3pt] {$x'$};
\draw[dotted] (0,0)--(-1,1.4);
\draw[dotted] (0,0)--(1,1.4);
\node[align=left,anchor=west] at (2.15,1.25)
{The same pair occurs on\\circles about several pins.};
\node[align=left,anchor=west] at (2.15,.15)
{Their centers lie near the\\perpendicular bisector.};
\node[align=left,anchor=west] at (2.15,-.95)
{Tube width: $O(\frac{\delta}{|y-z|})$\\For $|y-z|\ge c\eta$: $O(\frac{\delta}{\eta})$.};
\end{tikzpicture}
\caption{Repeated circle incidences produce a rich bisector tube.
The disks represent terminal source cells.
Their radii and the tube
width are enlarged for visibility. Actual incidences have error
$O(\delta)$, so the centers lie in a tube rather than necessarily on
the exact bisector.}
\label{fig:bisectors}
\end{figure}

\section{Drawing approximate circle incidences}\label{sec:circle-drawing}

In a multigraph, distinct edges may join the same pair of vertices.
These are parallel edges, and their number is the multiplicity of
the pair. A loop joins a vertex to itself. A simple graph has
neither loops nor parallel edges. A complete graph joins every
pair of distinct vertices. An independent set of vertices has no
edge between any two of its members. These are the graph
conventions used below.

Join consecutive assigned source vertices along each circle.
This produces a graph with many edges and few crossings, because
two distinct circles meet at most twice. The approximate incidences
require some care. A circle can enter a source disk without being
assigned to its vertex.

Inside each disk we route assigned strands through the vertex and
other strands past it. The latter create no new graph incidences.
The extra crossings are controlled by the number of circle strands
entering the disks. At our cardinalities, this cost is smaller than
the main circle-pair count.

A drawing below represents each edge by a simple arc joining
its two vertices. Its interior avoids every graph vertex.
Overlapping arcs are separated by arbitrarily small perturbations.
All interior intersections are transverse, and no three edge
interiors meet at one point. A proper intersection is a crossing.
Edges may share an endpoint, and that shared endpoint is not counted.
The crossing number $\operatorname{cr}(G)$ is the least number of
crossings over these drawings of $G$. We write $V(G)$ and $E(G)$
for its numbers of vertices and edges when a graph argument uses
these as numerical quantities.

\begin{lemma}[Drawing prescribed circle incidences]
\label{lem:eighth-robust-circle-drawing}
Fix $R\ge1$ and $0<d_0<D_0$.  Let $0<\delta\ll \frac{d_0}{R}$, and let
$V$ source vertices $p_v$ have mutual distances greater than $5R\delta$.
Let $\mathcal C$ be a finite set of distinct circles, all with radii
in $[d_0,D_0]$, and write $C=\#\mathcal C$.
Choose pairwise disjoint vertex disks
$D_v=B(p_v,r_v)$, with $R\delta<r_v<2R\delta$.
Specify an arbitrary set of incidences $(v,\gamma)$ such that
$\gamma$ meets $D_v$.  Let $I$ denote their number.

There is a loop-free multigraph $G$ on the source vertices, with at least
$I-C$ edges, obtained by joining consecutive prescribed visits on
each circle, and a drawing satisfying
\begin{equation}\label{eq:eighth-disk-local-crossing}
 \operatorname{cr}(G)
 \le 2\binom C2+4\sum_v\binom{k_v}2,\qquad
   k_v=\#\{\gamma\in\mathcal C:\gamma\cap D_v\ne\varnothing\}.
\end{equation}
The disk radii may be chosen generically inside the indicated ranges.
No prescribed visit is lost if its circle already meets the smaller
disk of radius $R\delta$.

If the circles are centered at $M$ distinct pins and, for each pin,
their radii are $\delta$-separated, then
\begin{equation}\label{eq:eighth-robust-crossing-upper}
 \operatorname{cr}(G)\le C_R(C^2+VM^2).
\end{equation}
One may take
\[
C_R=2(\lfloor4R\rfloor+2)^2.
\]
Every graph edge retains its original circle label and has both
endpoints among that circle's prescribed incidences.
\end{lemma}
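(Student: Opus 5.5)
We build the drawing directly from the circles, modified only inside the vertex disks. First choose the radii $r_v\in(R\delta,2R\delta)$ generically, so that no disk boundary is tangent to a circle, no boundary passes through an intersection point of two circles, and the finitely many circles meet each boundary transversally. The separation $|p_v-p_w|>5R\delta$ keeps the disks pairwise disjoint, and $\delta\ll d_0/R$ makes every circle essentially straight at the disk scale. Each circle $\gamma$ therefore meets $D_v$ in at most one chord: two chords of a circle of radius at least $d_0$ inside a disk of radius $2R\delta$ are impossible.

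Fix a circle $\gamma$ and order its prescribed visits $(v,\gamma)$ cyclically along $\gamma$. For each visit, replace the chord $\gamma\cap D_v$ by two radial segments joining its entry and exit points to $p_v$. Every other chord of $\gamma$ that meets a disk, whether or not the visit is prescribed, is rerouted inside the disk as a curve avoiding $p_v$; the unchanged chord already avoids $p_v$ after a small perturbation. The arcs of $\gamma$ between consecutive prescribed visits then become edges joining consecutive prescribed vertices. If $\gamma$ has $j\ge2$ prescribed visits, this gives $j$ edges; if $j=1$ it gives one loop, which we delete. Hence there are at least $I-C$ edges. An edge joining a vertex to itself would need two visits of $\gamma$ to the same disk, which the single-chord property rules out. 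Each edge keeps its circle label and has prescribed endpoints. If $\gamma$ meets the smaller disk $B(p_v,R\delta)$, it meets $D_v$, so no visit is lost.

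Next count crossings. Outside the disks, edges are pieces of distinct circles, and two distinct circles meet in at most two points. Each such point is a crossing of at most one pair of edges, giving at most $2\binom C2$. Inside $D_v$, at most $k_v$ curves are present, one per circle meeting the disk. Each curve is a chord, a passing curve, or a pair of radial spokes, and the curves can be drawn so that two of them meet at most four times: two straight-ish curves, or spokes against a chord, cross $O(1)$ times. Summing over pairs gives $4\sum_v\binom{k_v}2$. Shared endpoints at $p_v$ are not counted. Afterwards we perturb slightly to make all crossings transverse and simple, which establishes \eqref{eq:eighth-disk-local-crossing}.

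For \eqref{eq:eighth-robust-crossing-upper}, bound $k_v$. Fix a pin. Its circles that meet $D_v$ have radii within $2R\delta$ of $|p_v-\text{pin}|$, and these radii are $\delta$-separated, so there are at most $4R+2$ of them. Hence $k_v\le(\lfloor4R\rfloor+2)M$ and $\binom{k_v}{2}\le\frac12(\lfloor4R\rfloor+2)^2M^2$. Summing over the $V$ vertices, the crossing total is at most $C^2+2(\lfloor4R\rfloor+2)^2VM^2\le C_R(C^2+VM^2)$.

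The step I expect to be the main obstacle is the local routing inside the disks. The spokes to $p_v$, the passing curves, and possible tangencies must be arranged so that each pair of curves meets at most four times while all arcs stay simple and avoid the other vertices. I would handle this with the near-straightness of the chords together with the generic choice of $r_v$.
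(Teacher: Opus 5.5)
Your proposal is correct and follows essentially the same route as the paper: generic radii (the paper perturbs the given $r_v$ only slightly, so every prescribed visit survives), one arc per circle in each disk, spokes to $p_v$ for prescribed visits and detours for the rest, cyclically consecutive edges with one-visit circles discarded, at most $2\binom C2$ exterior crossings, and $k_v\le(\lfloor4R\rfloor+2)M$ from the $\delta$-separated radii. For the local step you flagged, the paper needs no near-straightness: it replaces every strand inside $D_v$ by a polygonal path with at most two straight segments (two spokes for a prescribed visit, and for a passage a straight chord or a generic two-segment path avoiding $p_v$), so any two local paths have at most $2\cdot2=4$ proper intersections, while spokes meet only at the shared endpoint $p_v$.
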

\begin{proof}
Perturb each disk radius sufficiently slightly inside the indicated
range, preserving exactly the same circle visits, and avoid the
finitely many values that produce a circle tangency to a disk boundary
or put a circle--circle intersection on such a boundary.  Since $r_v<d_0$, a circle meeting
$D_v$ has a single proper arc inside it.  This follows also directly
by writing the disk condition on a circle as one inequality for the
cosine of its angular parameter.  There is therefore one local
strand for each of the $k_v$ circles.

Inside $D_v$, replace an incident strand by the two straight spokes
from its boundary points to $p_v$.  Replace a nonincident strand by
a straight chord, or by a generic two-segment polygonal path if a
chord would pass through $p_v$.  All these paths stay in the convex
disk.  A nonincident path is chosen to avoid $p_v$, overlaps, and
nonrequired multiple intersections.  Each pair of local strands
has at most four proper intersections, since each uses at most two
straight segments.  In particular, incident strands meeting at the
common graph vertex do not contribute proper crossings.

Outside the disks retain the original circle arcs.  Distinct circles
have at most two intersection points, so these portions contribute
at most $2\binom C2$ crossings.  If necessary, an arbitrarily small
generic perturbation of the circles, preserving all disk visits,
puts their exterior intersections in general position.
The same
bound holds.  Such a perturbation exists because the finite list of
boundary incidences was made transverse.  The local paths can then
be chosen in general position as above.

A circle with $k\ge2$ prescribed visits now contributes $k$ graph edges
between successive prescribed vertices in cyclic order.  When $k=2$
these are two parallel edges, which are allowed.  Discard circles
with zero or one prescribed visit.  The total number of retained
edges is at least $I-C$.  A nonprescribed disk passage never creates
a graph endpoint.  The preceding crossing count proves
\eqref{eq:eighth-disk-local-crossing}.

For a pin $x$, a circle of radius $r$ can meet $D_v$ only if
$||x-p_v|-r|\le r_v<2R\delta$.  A $\delta$-separated set of radii
has at most $\lfloor4R\rfloor+2$ members in this interval.  Thus
$k_v\le(\lfloor4R\rfloor+2)M$, proving
\eqref{eq:eighth-robust-crossing-upper} with the stated constant:
the two terms in \eqref{eq:eighth-disk-local-crossing} are at most
$C^2$ and $2V(\lfloor4R\rfloor+2)^2M^2$.
\end{proof}

The angular span of a row about a pin $x$ is the smallest interval
in its chosen angular chart containing the angles $\arg(v-x)$ of
its entries $v$. Its length is the angular diameter. An angular
window is an interval in that chart.

\begin{corollary}[Clique edges in disjoint circle rows]
\label{cor:eighth-circle-row-cliques}
Assume the hypotheses of
Lemma~\ref{lem:eighth-robust-circle-drawing}, including that the circles
are centered at $M$ distinct pins and have $\delta$-separated radii
at each pin. On each circle choose disjoint rows of at most $s$
prescribed source vertices, in circular order, such that their angular
spans have disjoint interiors. Draw all edges of the complete graph on
each row, retaining parallel edges from different rows or circles.  For each fixed $s$ this multigraph
has a drawing satisfying
\begin{equation}\label{eq:eighth-row-clique-crossing}
 \operatorname{cr}(G_s)\le C_{R,s}(C^2+VM^2).
\end{equation}
For example, the construction below permits
\[
C_{R,s}=32\binom{s}{2}^2(\lfloor4R\rfloor+2)^2.
\]
No incidence with an unselected vertex is added.
\end{corollary}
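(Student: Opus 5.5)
We reuse the drawing built in the proof of Lemma~\ref{lem:eighth-robust-circle-drawing}: choose generic vertex disks $D_v$ and, inside each disk, route incident strands through $p_v$ by two spokes and nonincident strands past it. Outside the disks, keep the circle arcs. The difference is that we now need a drawing of every edge of the complete graph on each row, not only of the consecutive edges. Each clique edge is therefore realised as a curve that follows the circle arc between its two endpoints. Fix a row $\{v_1,\dots,v_m\}$ on a circle $\gamma$, with $m\le s$, listed in circular order inside its angular span. The edge $v_iv_j$ is drawn as a parallel copy of the subarc of $\gamma$ from $v_i$ to $v_j$, displaced slightly. Concretely, it runs along a thin offset of $\gamma$ (a concentric circle at a tiny distance depending on the pair $(i,j)$). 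Near each endpoint it turns into the vertex along a short path inside the disk, and it passes through intermediate disks $D_{v_l}$ as a nonincident strand. The edge interior then avoids all graph vertices, as required.

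Crossings are counted in two groups. \emph{Exterior} crossings are those between parts of two clique edges lying outside all disks. Each clique edge lies within an arbitrarily small neighbourhood of a single circle. Two edges on distinct circles $\gamma,\gamma'$ can therefore meet only near the at most two points of $\gamma\cap\gamma'$, with transversality supplied by the earlier genericity. Near such a point, each pair consisting of a curve near $\gamma$ and a curve near $\gamma'$ crosses at most once. Every circle carries at most $\binom s2$ clique edges from a given row over any point, because the rows' angular spans have disjoint interiors, so a point of $\gamma$ lies in the span of at most one row (up to endpoints, handled by the perturbation). Exterior crossings from distinct circles thus number at most $2\binom s2^2\binom C2$. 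Edges lying near the same circle are drawn on distinct nested offsets, so their exterior parts never cross. \emph{Disk-local} crossings are those inside $D_v$. Through $D_v$ pass the strands of the $k_v$ circles meeting it, and each circle contributes at most $\binom s2$ edge-strands there, by the same disjoint-span argument. Each local strand uses at most two straight segments, so two local strands meet at most four times. This gives at most $4\binom{s\choose2 k_v}{2}\le 2\binom s2^2k_v^2$ local crossings.

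The final count combines the two groups with the bound $k_v\le(\lfloor4R\rfloor+2)M$ from the lemma's proof. This yields $\operatorname{cr}(G_s)\le \binom s2^2\bigl(C^2+2V(\lfloor4R\rfloor+2)^2M^2\bigr)$, which is within the stated $C_{R,s}(C^2+VM^2)$, with room to absorb the crude constants above. By construction, no edge ends at an unselected vertex, so no incidence with an unselected vertex is added.

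The main obstacle is the same-circle bookkeeping near a vertex. Edges from one row on different offsets must enter and leave $p_v$ without creating unbounded crossings, and an edge that merely passes $v_l$ must not be counted as incident there. I would first try offsets ordered by the nesting of the intervals $[i,j]$ (laminar within a row). Under this ordering, same-row edges that do not share an endpoint cross at most a bounded number of times inside the disks, and these crossings are already charged to the $4\binom{\cdot}{2}$ local term.
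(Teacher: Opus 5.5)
Your proposal is correct and essentially reproduces the paper's construction: one concentric track per label $(i,j)$, spokes and chords or two-segment paths inside the vertex disks, at most two exterior crossings per pair of tracks, at most four crossings per pair of local paths, and $k_v\le(\lfloor4R\rfloor+2)M$. Your count of at most $\binom{s}{2}$ strands per circle in each disk is even slightly tighter than the paper's generous $4\binom{s}{2}$. Your final paragraph is unnecessary, and the intervals $[i,j]$ are not laminar (for example $[1,3]$ and $[2,4]$), but the two-segment local count already bounds every crossing inside a disk however the tracks are ordered.
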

\begin{proof}
Within a row, label an edge by the ordered pair $(i,j)$ of the
positions of its endpoints.  For each circle there are at most
$L=\binom s2$ such labels.  For a fixed label, the supporting arcs
in different row spans have disjoint interiors.

Use $L$ arbitrarily close concentric circle tracks for each original
circle, one per label.  The tracks can be chosen to preserve all
disk visits, as in the proof of
Lemma~\ref{lem:eighth-robust-circle-drawing}. Support each selected edge
on its track between its two endpoint disks.
It may pass other disks
without being incident to their vertices.  Distinct tracks are
circles, hence have at most two exterior intersections, and at most
one selected edge occupies any given track arc.  Their total
exterior crossing count is therefore $O_s(C^2)$.

Fix one disk visit of one track. Its exterior support is a union
of arcs with disjoint interiors: the edge label is fixed and the
row spans on this circle have disjoint interiors. At each of
the two boundary points of the visit, at most one supported edge
arc can enter the disk. Thus the visit carries at most two
incident ends, or one passage, or a combination bounded by these
two boundary ports. Route each incident end to $p_v$ by a spoke,
and each passage by a chord or a generic two-segment path avoiding
$p_v$. Figure~\ref{fig:local-routing} illustrates these rules.
With $L$ tracks for each of the $k_v$ original circle visits,
there are at most $2Lk_v$ local paths. The more generous bound
$4Lk_v$ permits endpoint splitting and generic perturbations. The same two-segment construction produces
at most
\[
4\binom{4Lk_v}{2}=2(4Lk_v)(4Lk_v-1)\le32L^2k_v^2
\]
local crossings.
The exterior crossing count is at most $2\binom{LC}{2}\le L^2C^2$.
Since $k_v\le(\lfloor4R\rfloor+2)M$, summation proves
\eqref{eq:eighth-row-clique-crossing} with the displayed choice
of $C_{R,s}$.
All local replacements stay inside their respective disks.
If a replacement creates a self-intersection of an edge,
erase the loop.
This removes intersections and adds no vertex.
A final arbitrarily small perturbation produces simple edge arcs
in general position without increasing the displayed bound.
All graph endpoints were prescribed row vertices throughout.
\end{proof}

\begin{figure}[htbp]
\centering
\begin{tikzpicture}[x=1.1cm,y=1.1cm]
\begin{scope}
\fill[gray!5] (0,0) circle (1.25);
\draw[gray] (0,0) circle (1.25);
\draw[blue!65!black,thick] (-1.083,-.625) .. controls (-.55,-.15) and (.45,.18) .. (1.083,.625);
\draw[blue!65!black,thick] (-1.225,-.249) .. controls (-.55,.08) and (.52,.47) .. (.875,.893);
\draw[gray!80,dashed,thick] (-.75,-1) .. controls (-.18,-.4) and (.55,-.12) .. (1.225,.249);
\fill (0,0) circle (.04) node[below right] {$p_v$};
\node[above] at (0,1.5) {Before local routing};
\node[below,font=\small,align=center] at (0,-1.45)
{Prescribed disk visits};
\end{scope}
\draw[->,thick] (1.65,0)--(2.5,0);
\begin{scope}[xshift=4.7cm]
\fill[gray!5] (0,0) circle (1.25);
\draw[gray] (0,0) circle (1.25);
\draw[blue!65!black,thick] (-1.083,-.625)--(0,0)--(1.083,.625);
\draw[blue!65!black,thick] (-1.225,-.249)--(0,0)--(.875,.893);
\draw[gray!80,dashed,thick] (-.75,-1)--(1.225,.249);
\fill (0,0) circle (.045) node[below right] {$p_v$};
\node[above] at (0,1.5) {After local routing};
\node[below,font=\small,align=center] at (0,-1.45)
{Spokes and passages};
\end{scope}
\end{tikzpicture}
\caption{Routing inside a source disk. Each solid spoke ends at the
graph vertex.
A dashed path passes through the disk while avoiding
the vertex. The boundary ports are unchanged. Two assigned visits
are routed through the vertex, while the unassigned visit is
routed past it. In the
clique construction there are $\binom{s}{2}$ tracks per circle,
one for each pair of row positions. Each local path uses at most
two straight segments, so two paths have at most four proper
intersections. A shared graph endpoint is not a crossing.}
\label{fig:local-routing}
\end{figure}

\begin{lemma}[Parallel edges give approximate bisector richness]
\label{lem:eighth-parallel-bisector-richness}
In the first lemma, if vertices $p\ne q$ have graph multiplicity
$m_G(p,q)$, the number of edges joining them, then at least $c_Rm_G(p,q)$ distinct pins lie within
distance
\[
 C_{R,D_0}\frac{\delta}{|p-q|}
\]
of the perpendicular bisector of $p,q$.
For the row-clique graph the same statement holds with constants
also depending on $s$.
\end{lemma}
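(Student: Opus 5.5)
The plan is to convert each parallel edge into a circle that passes through both disks and is centred at a pin, bound how many edges one pin can contribute to the pair, and then apply the approximate bisector identity from Section~\ref{sec:bisector-geometry}.

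\textbf{Step 1: each edge gives a pin near the bisector.} By Lemma~\ref{lem:eighth-robust-circle-drawing}, every edge joining $p$ and $q$ carries a circle label $\gamma$, and both endpoints are prescribed incidences of $\gamma$. So $\gamma$ meets $D_p$ and $D_q$. Write $x$ for the centre of $\gamma$ and $r\in[d_0,D_0]$ for its radius. Since the disks have radius less than $2R\delta$,
\[
\bigl||x-p|-r\bigr|<2R\delta,\qquad \bigl||x-q|-r\bigr|<2R\delta,
\]
and therefore $\bigl||x-p|-|x-q|\bigr|<4R\delta$. Now use the identity $|x-p|^2-|x-q|^2=|p|^2-|q|^2-2x\cdot(p-q)$. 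Dividing by $2|p-q|$ gives
\[
\operatorname{dist}(x,\mathcal B(p,q))=\frac{\bigl||x-p|^2-|x-q|^2\bigr|}{2|p-q|}\le\frac{4R\delta\,(2D_0+4R\delta)}{2|p-q|}.
\]
This is at most $C_{R,D_0}\frac{\delta}{|p-q|}$, using $\delta\ll\frac{d_0}{R}$. So every edge yields a pin in the required tube.

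\textbf{Step 2: each pin accounts for boundedly many edges.} For a fixed pin $x$, a circle contributing an edge to $(p,q)$ must have radius in an interval of length $4R\delta$ around $|x-p|$. The radii at $x$ are $\delta$-separated, so there are at most $\lfloor4R\rfloor+2$ such circles. Each circle contributes at most two edges between $p$ and $q$: the cyclic consecutive-visit graph can join a given pair twice only when the circle has exactly two prescribed visits, and a circle has at most one visit in each disk. Hence one pin accounts for at most $2(\lfloor4R\rfloor+2)$ edges. The number of distinct pins is therefore at least $c_Rm_G(p,q)$ with $c_R=\bigl(2(\lfloor4R\rfloor+2)\bigr)^{-1}$.

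\textbf{Step 3: the row-clique graph of Corollary~\ref{cor:eighth-circle-row-cliques}.} Step 1 is unchanged, since every clique edge keeps its circle label and joins two prescribed incidences of that circle. In Step 2, one circle can join $p$ and $q$ in at most $\binom{s}{2}$ ways per row. The rows on a circle have disjoint spans, and $p$ and $q$ each occur at most once on that circle, so only one row can contain both. A single circle therefore contributes at most $\binom s2$ parallel edges, and $c_{R,s}=\bigl(\binom s2(\lfloor4R\rfloor+2)\bigr)^{-1}$ works.

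\textbf{Main obstacle.} The delicate point is the multiplicity count in Step 2: we must ensure that a vertex cannot appear twice on one circle. This follows because each circle meets a vertex disk in a single arc, so it has a single local strand there, as established in the proof of Lemma~\ref{lem:eighth-robust-circle-drawing}. The distance estimate in Step 1 itself is routine.
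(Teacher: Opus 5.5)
Your proposal is correct and follows the paper's proof essentially step for step. The bound $\bigl||x-p|-|x-q|\bigr|\le 4R\delta$, fed into the squared-distance identity, places each supporting pin within $C_{R,D_0}\frac{\delta}{|p-q|}$ of the bisector. The $\delta$-separated radii (at most $\lfloor4R\rfloor+2$ circles per pin meeting $D_p$), together with at most two $p$--$q$ edges per circle, convert multiplicity into distinct pins. In Step~3 your per-circle bound $\binom{s}{2}$ is looser than necessary: rows on a circle are disjoint, so each circle gives at most one $p$--$q$ edge. This is harmless, since the constant is allowed to depend on $s$.
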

\begin{proof}
A circle contributes at most two edges to a fixed pair in the
cyclic graph.  Each pin has at most $C_R$ circle radii meeting the
disk of $p$, so at least $c_Rm_G(p,q)$ distinct pins support the
parallel edges.  The row-clique graph has a similarly bounded
number of contributions per circle and pair, with a constant
depending only on $s$.

If a circle centered at $x$ meets both disks, then
\[
 \big||x-p|-|x-q|\big|\le r_p+r_q\le4R\delta.
\]
Both distances are at most $D_0+2R\delta$.  Consequently
$\big||x-p|^2-|x-q|^2\big|\le C_{R,D_0}\delta$.
The expression on the left equals twice $|p-q|$ times the distance
of $x$ from the bisector.  This proves the claim.  The labels in the
drawing refer to the original circles, so auxiliary track
perturbations introduce no change in this distance calculation.
\end{proof}

\begin{lemma}[Localized drawing]\label{lem:localized-drawing}
In the setting of Corollary~\ref{cor:eighth-circle-row-cliques},
suppose in addition that every row has angular diameter at most
$Cw$ about its pin, where $\delta\ll w\ll1$, and that each source
vertex belongs to at most $C$ rows at any one pin. For a square $Q$
of a grid of mesh $w$, let $V_Q$ be the number of source vertices in
a fixed enlargement of $Q$. Then the drawing can be chosen with
\begin{equation}\label{eq:localized-drawing}
 \operatorname{cr}(G_s)\le C_{R,s}M^2\Bigl(\sum_QV_Q^2+V\Bigr)
 \le C_{R,s}M^2V\Bigl(1+\max_QV_Q\Bigr).
\end{equation}
\end{lemma}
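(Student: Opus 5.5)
We reuse the drawing of Corollary~\ref{cor:eighth-circle-row-cliques}, but bound its exterior crossings differently. In the Corollary, exterior crossings were bounded crudely by pairs of circles, giving the $C^2$ term. Here each selected edge is supported on a short arc. That arc has angular diameter at most $Cw$ about its pin and radius at most $D_0$, so it lies in a fixed enlargement of the grid square $Q$ containing one of its endpoints. We therefore drop the full tracks outside selected arcs: the track arcs not supporting a selected edge are never drawn. The drawn curves are then the supporting arcs of the selected edges, together with the local spokes and passages inside the vertex disks. Two supporting arcs can cross only if they lie in overlapping enlarged squares, and two arcs on distinct circles meet at most twice.

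The exterior crossing count is then at most $2$ times the number of pairs of selected edges whose supports lie in a common enlarged square, summed over the boundedly overlapping enlargements. To count edges supported near $Q$, note that each edge has an endpoint among the $V_Q$ vertices there. Each vertex lies in at most $C$ rows per pin, hence in at most $CM$ rows overall, and each row contributes at most $\binom s2$ edges. So the number of edges supported near $Q$ is $O_s(MV_Q)$, and the squared count summed over $Q$ gives $O_{s}(M^2\sum_Q V_Q^2)$. Pairs of edges that share a track are handled as in the Corollary: for a fixed label, supporting arcs on one circle have disjoint interiors, so they create no crossings among themselves.

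The local crossings inside the disks are treated exactly as in the Corollary. Each disk $D_v$ carries at most $4Lk_v$ local paths with $k_v\le(\lfloor4R\rfloor+2)M$, contributing $O_{R,s}(M^2)$ per vertex, hence $O_{R,s}(VM^2)$ in total. Only disk visits by drawn arcs matter now, which can only reduce this count. Adding the exterior and local contributions gives the first inequality in \eqref{eq:localized-drawing}. The second follows from $\sum_QV_Q^2\le\max_QV_Q\sum_QV_Q$, together with $\sum_QV_Q\le C V$ because the enlargements have bounded overlap.

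The main obstacle is the first step, localizing the supports. The arcs must remain compatible with the disk-visit structure after truncation, and a truncated arc must not acquire new disk passages near its ends. I would first handle this by terminating each support exactly at the boundary ports of its endpoint disks and then applying the generic perturbation used in Lemma~\ref{lem:eighth-robust-circle-drawing}. I would also check that the hypothesis $w\gg\delta$ keeps every arc within a bounded enlargement, using the angular bound $|\text{arc}|\le D_0Cw$.
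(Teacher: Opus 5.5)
Your argument is correct and is essentially the paper's proof. Each exterior edge lies within $O(w)$ of its endpoints, so exterior crossings can be charged to mesh squares with at most $O_s(MV_Q)$ edges competing near each $Q$, giving $O(M^2\sum_QV_Q^2)$ exterior crossings, and the $O_{R,s}(M^2)$ local cost per disk carries over from Corollary~\ref{cor:eighth-circle-row-cliques}. The truncation issue you flag as the main obstacle does not arise: the corollary's drawing already supports each selected edge only on the short track arc between its two endpoint disks, and the full tracks are never drawn, so no re-perturbation is needed.
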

\begin{proof}
Use the drawing of Corollary~\ref{cor:eighth-circle-row-cliques}.
Every exterior edge is the short track arc between its endpoint
disks inside its row span.
Since the radii are bounded and the span
is $O(w)$, the arc lies within $O(w)$ of either endpoint, and the
local routings change this by $O(\delta)$. Charge each exterior
crossing to the mesh square containing it.
Both crossing edges then
have an endpoint in a fixed enlargement of that square. At most
$CMV_Q$ edge occurrences have an endpoint there, because a vertex
belongs to $O(1)$ rows at each of the $M$ pins and a row has
$\binom{s}{2}$ edges. Distinct tracks meet at most twice and arcs
of different rows on one track have disjoint interiors, so the
crossings charged to $Q$ number at most $C(MV_Q)^2$. Inside a
source disk the local routing costs $O_{R,s}(M^2)$ crossings, as in
the corollary. Summing proves the first bound in
\eqref{eq:localized-drawing}, and $\sum_QV_Q\lesssim V$ implies the
second.
\end{proof}

For source representatives in distinct terminal $\delta$-squares,
the required disk separation can be arranged by a fixed finite
coloring of the grid.  A color retaining a fixed fraction of the
prescribed incidences is sufficient.  In the configuration of
Section~\ref{sec:circle-rows}, $V\lesssim\frac{p}{b}$,
$M\lesssim\upsilon_f^{-1}$, and $C\lesssim\upsilon_f^{-1}\delta^{-q}$,
so the two terms in \eqref{eq:eighth-row-clique-crossing} are
$O(\upsilon_f^{-2}\delta^{-2q})$ and $O(p\upsilon_f^{-2}b^{-1})$.
The
second is the smaller one because $p\delta^{2q}\le b$.
Thus local detours do not introduce a new loss.

\section{Circle graphs produce rows with rich bisectors}\label{sec:circle-rows}

The reason for drawing the graph can be seen from the counts. After
a thinning of the source described below, the source vertices number
about $\frac{p}{b}$, the circles about $\upsilon_f^{-1}\delta^{-q}$,
and the rows about $\frac{p^2}{nb\upsilon_f}$, where the symbols are
introduced next. A simple graph with these many vertices and edges
would have more crossings than the drawing of
Section~\ref{sec:circle-drawing} allows. Hence many edges must be
repeated.

For a source pair, repetition means that many circles from different
pins visit both endpoints. Those pins lie near the perpendicular
bisector. We need this conclusion for every pair in a row of fixed
length, so we draw all pairs within each row and discard rows
containing a pair of low multiplicity. Two further steps sharpen the
conclusion. An ordered selection inside each row extracts shorter
rows whose pair separations lie between two consecutive scales.
The
drawing of these rows can then be localized, which increases the
multiplicity of their pairs. Finally the pins are thinned to the
resolution of the resulting bisector tubes.

\subsection{The finite inputs}

The finite configuration is produced from the measures in
Section~\ref{sec:discretization}. We list here the properties that
the circle argument uses, so that this section and the next two are
finite statements. The hypotheses below govern the pinned applications
through Section~\ref{sec:exceptional}.
Proposition~\ref{prop:endpoint-finite-geometry} states a separate
set of finite hypotheses and verifies the relevant counting steps
directly for the rigid endpoint configuration. In the present application,
$\Delta,\omega,\lambda,F,a_c,W_0,q$ are as in
\eqref{eq:level-notation}, and $\theta$ is a fixed template in the
sense of Section~\ref{sec:discretization}: a point
$(u_-,W,\varsigma,\chi,\mathfrak e)$ of the polytope
\eqref{eq:plateau-parameters}, with $u_+=u_-+a_c+\chi\omega$ and with
$u_0,\varpi,e_0$ given by \eqref{eq:u0-varpi}, together with a branch
index, where branch~1 requires $\mathfrak e\le\frac{1}{8}$ and
branch~2 requires $\mathfrak e\ge\frac{1}{8}$. Fixed
tolerances $\beta,\xi,\xi_{\mathrm{par}},\tau_0>0$ are chosen in
Section~\ref{sec:universal-pinned}. Put
\begin{equation}\label{eq:finite-scales}
 a_{\mathrm{par}}=a_c+\varpi\omega+\xi_{\mathrm{par}},\qquad 
 \delta=2^{-n},\qquad 
 \eta=2^{-\lfloor a_{\mathrm{par}}n\rfloor},\qquad 
 b=\delta^{\lambda-\beta},
\end{equation}
where $\xi_{\mathrm{par}}$ is a selectable positive tolerance in the
parent exponent. The integer rounding in $\eta$ changes its side
length by at most a factor two. Fix candidate exponents
\begin{equation}\label{eq:candidate-exponents}
\begin{gathered}
 a_0=a_c-\omega-\beta-\xi<a_1<\cdots<a_{\mathfrak n}=a_{\mathrm{par}},\\
 0<a_j-a_{j-1}\le\xi \quad \text{for } 1\le j\le\mathfrak n,
\end{gathered}
\end{equation}
with $\mathfrak n$ fixed, together with dyadic lengths
$\ell_j\asymp\delta^{a_j}$ such that $\ell_0\ge4\delta^{a_0}$ and
$\ell_{\mathfrak n}<c_0\eta$, where $c_0$ is the source-parent
separation constant of Proposition~\ref{prop:rows-common-richness}.
The candidate tube widths are $\rho_j=\frac{\delta}{\ell_j}\asymp\delta^{\zeta_j}$,
$\zeta_j=1-a_j$. All source and pin sets lie in fixed bounded regions
separated by a fixed positive distance, and all circle radii lie in a
fixed interval $[d_0,D_0]\subset(0,\infty)$. The numbers
$m,\upsilon_f,\upsilon_1,\ldots,\upsilon_{\mathfrak n}\in(0,1]$ are
cell masses of the source and pin measures $\mu,\nu$, and
$p\in(0,1]$ is a relation mass with $p\ge n^{-C}$ for a fixed $C$.
The notation $X\lesssim_{\mathrm{pol}}Y$ means
$X\le Cn^{C}p^{-C}Y$ with $C$ fixed. We use
$X\gtrsim_{\mathrm{pol}}Y$ for $Y\lesssim_{\mathrm{pol}}X$, and
$X\asymp_{\mathrm{pol}}Y$ when both inequalities hold.
A quantity denoted by $O_{\mathrm{pol}}(Y)$ has absolute value
$\lesssim_{\mathrm{pol}}Y$.
Such factors are
$\delta^{-o(1)}$.

Lemma~\ref{lem:seventh-measure-finite-selection}, applied in
Corollary~\ref{cor:finite-inputs}, supplies a common
$\delta$-separated source set $S$ of cell centers, a
$\delta$-separated set $X$ of actual pins, with at most one pin in each
dyadic $\delta$-cell and $N=\#X$, and marked
incidences $(x,v,J)$ between pins, source vertices, and radial bins
of length $\delta$, called assigned incidences in that lemma, with
the following properties.
\begin{enumerate}[label=(F\arabic*)]
\item\label{F:counts} $\#S\le m^{-1}$,
$\frac{cp}{\upsilon_f}\le N\le\upsilon_f^{-1}$, and the number $I$
of marked incidences is at least $\frac{cp}{m\upsilon_f}$. At each
pin at most $C\delta^{-q}$ radial bins are marked and their radii
are $\delta$-separated.
Every marked vertex is within $C\delta$ of
its circle, and a vertex has $O(1)$ marked radial labels at any one
pin.
\item\label{F:angular} The number $A_x(J)$ of source vertices marked
at $x$ with direction in an angular interval $J$, and the number
$A_x(C_\eta,J_\delta)$ marked at $x$ in a source square $C_\eta$ of
side $\eta$ with radial bin $J_\delta$, satisfy
\[
 A_x(J)\le\frac{C(|J|+\delta)}{pm},\qquad 
 A_x(C_\eta,J_\delta)\le\frac{Cb}{pm};
\]
the second bound holds also for annuli of width $O(\delta)$ about
$x$.
\item\label{F:source-mass} Every source $\delta$-cell containing a
vertex of $S$ has mass in $[m,2m)$, with $m\le b$. Every source
square of side $\eta$ containing a vertex of $S$ has mass at most
$\delta^{F-\tau_0}$. If $\theta$ has branch~1, then for every
$a\in[a_c-\omega,a_{\mathrm{par}}]$ every dyadic square of side
$\asymp\delta^{a}$ containing a vertex of $S$ has mass at most
$\delta^{a+a_c-(1+u_0+\mathfrak e)\omega-\tau_0}$.
\item\label{F:pin-mass} Every $\delta$-cell containing a pin of $X$
has mass in $[\upsilon_f,2\upsilon_f)$, and for each $j$ its
ancestor of side $\rho_j$ has mass in $[\upsilon_j,2\upsilon_j)$.
Every ancestor of side $2^{-k}$, $0\le k\le n$, of a pin of $X$ has
mass at most $\delta^{\frac{k}{n}+T_\theta(\frac{k}{n})-\tau_0}$, where
$T_\theta(u)=\max\{0,W-\operatorname{dist}(u,[u_-,u_+])\}$. Moreover
$\nu(B(z,r))\le Cr^{1+\varkappa}$ for all balls, with a fixed
$\varkappa>0$.
\end{enumerate}
The angular density bound $L$ of
Lemma~\ref{lem:seventh-measure-finite-selection} is a fixed constant
and has been absorbed into $C$. All cardinalities are polynomial in
$\delta^{-1}$. These bounds refer to the original measures.
No source
law is renormalized at different pins.

Fix also an angular cutoff
\begin{equation}\label{eq:angular-cutoff}
 \rho_{\min}=\delta^{1-a_c+\omega+\beta+2\xi}.
\end{equation}
We assume
\begin{equation}\label{eq:tolerance-conditions}
 2\beta+2\xi<a_c-\omega,\qquad 
 \xi_{\mathrm{par}}+3\tau_0\le\beta,\qquad 
 8\xi_{\mathrm{par}}\le\tau_0,\qquad
 a_{\mathrm{par}}<\frac{1}{2},
\end{equation}
which hold once the tolerances are small, since
$a_{\mathrm{par}}\le\frac{1}{2}-W_0+2\omega+\xi_{\mathrm{par}}$. Then
$\rho_{\min}\le\rho_j\le\delta^{\frac{1}{2}}$ for every $j$, $\frac{\delta}{\rho_{\min}}\to0$,
and, with
\[
\varepsilon_{\mathrm{ang}}=a_c+\Delta-\omega-2\beta-2\xi>0,
\]
we have $\frac{\rho_{\min}}{b}=2^{\varepsilon_{\mathrm{ang}}n}$.
The positivity follows from $2\beta+2\xi<a_c-\omega$ and
$\Delta\ge0$. Every tolerance is fixed before $n$ tends to infinity.

\subsection{One thinning of the source for all pins}

A thinning retains each point independently with a specified
probability. We write $\mathbb P$ for probability, $\mathbb E Z$
for expectation, and $\operatorname{Var}(Z)=\mathbb E(Z-\mathbb E Z)^2$
for variance. We use Markov's inequality
$\mathbb P\{Z\ge t\}\le\frac{\mathbb E Z}{t}$ for $Z\ge0$, and
Chebyshev's inequality
$\mathbb P\{|Z-\mathbb E Z|\ge t\}\le\frac{\operatorname{Var}(Z)}{t^2}$.
For a sum $Z$ of independent zero-one variables with mean $m$, the
Chernoff bounds used below are
\[
\mathbb P\{Z<\frac{1}{2}m\}\le\mathrm e^{-\frac{m}{8}},\qquad
\mathbb P\{Z\ge m+t\}\le
\exp\left(-\frac{t^2}{2(m+\frac{t}{3})}\right) \quad \text{for } t>0.
\]
See~\cite[Chapters~2, 4 and Appendix~A]{AlonSpencer}.
The union bound adds the failure probabilities over all events.
With at most $2^{Cn}$ events, the upper-tail estimate ensures
$Z\le C'(m+n)$ for all of them with probability tending to one,
by taking $C'$ sufficiently large. These are the simultaneous
sampling bounds used in this and the subsequent sections.

The simultaneous requirement is essential. A related probabilistic
selection appears in Shmerkin--Yavicoli~\cite[Lemmas~3.4 and~3.6]{SYUniversality},
where one small set must work for a whole family of sumsets.
Their discrete argument is adapted from Ruzsa, as explained there.
Here the selected vertices must preserve both the total incidence
count and the upper occupancy bounds at every retained pin. The
following proof establishes these properties for one source sample.

\begin{lemma}[One simultaneous source restriction]
\label{lem:source-thinning}
For every sufficiently large $n$, there is a subset $S'\subset S$
such that, retaining the marked incidences with vertices in $S'$,
the following hold.
\begin{align}
 \#S'&\le\frac{Cp}{b},&
 I'&\ge\frac{cp^{2}}{b\upsilon_f},
 \label{eq:thinned-counts}\\
 A'_x(C_\eta,J_\delta)&\le Cn,&
 A'_x(J)&\le\frac{C|J|}{b} \quad \text{for } |J|\ge\rho_{\min}.
 \label{eq:thinned-angular}
\end{align}
Every square $Q$ of side $\asymp\delta^{a}$ with
$a\in[a_0,a_{\mathrm{par}}]$, and every fixed enlargement of it,
satisfies
\begin{equation}\label{eq:thinned-occupancy}
 \#(S'\cap Q)\lesssim_{\mathrm{pol}}\frac{\mu_a}{b},\qquad 
 \mu_a=\delta^{F-\tau_0}\Bigl(\frac{\delta^{a}}{\eta}\Bigr)^{2},
\end{equation}
and, if $\theta$ has branch~1 and $a\ge a_c-\omega$, also with
$\mu_a=\delta^{a+a_c-(1+u_0+\mathfrak e)\omega-\tau_0}$.
The parent bound in \eqref{eq:thinned-angular} holds for annuli of
width $O(\delta)$ as well.
\end{lemma}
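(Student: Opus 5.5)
Retain each vertex of $S$ independently with probability $\pi=\frac{m}{b}\le1$, which is admissible by \ref{F:source-mass}, and show that the random subset $S'$ satisfies every listed property simultaneously with positive probability. The sampling is done once for the common source, so the retained incidences at all pins come from the same sample. All events below are indexed by pins in $X$, by dyadic angular intervals at dyadic lengths $\ge\rho_{\min}$, by source squares and parents of the fixed scales, and by radial bins. There are at most $2^{Cn}$ of them, because all cardinalities are polynomial in $\delta^{-1}$. A union bound over these events therefore costs only the Chernoff tails.

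The two counts come first. By \ref{F:counts}, $\mathbb E\#S'\le\pi m^{-1}=b^{-1}$. By \ref{F:source-mass}, the source cells containing vertices of $S$ have mass at least $m$ and their total mass is at most one, so in fact $\#S\le m^{-1}$ and $\mathbb E\#S'\le b^{-1}$. The stated bound $\frac{Cp}{b}$ needs slightly more. I would use that only vertices carrying a marked incidence matter, discard the unmarked ones, and bound the marked vertex set by the relation mass $p$ through the fact that marked cells carry the low-complexity relation. If this is unavailable, Markov's inequality gives $\frac{C}{b}$, and the proof must use the relation-mass statement hidden in the finite selection.

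For incidences, $\mathbb E I'=\pi I\ge\frac{cp}{b\upsilon_f}$. Writing $I'=\sum_v d_v1_{v\in S'}$, the variance is at most $\pi\sum_v d_v^2\le\pi N\max_v d_v\cdot I$, and $d_v\le CN$ because each vertex has $O(1)$ labels per pin. Chebyshev's inequality then gives $I'\ge\frac12\mathbb E I'$ with probability at least $1-O\bigl(\frac{N}{\mathbb E I'}\bigr)$. The extra factor $p$ in \eqref{eq:thinned-counts} leaves room to instead take a lower-probability event and intersect. Concretely, I would require $I'\ge\frac{p}{2}\mathbb E I'$, which by a Paley--Zygmund or reverse-Markov argument, using $I'\le I\pi\cdot O(\cdot)$, holds with probability $\gtrsim p$. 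This is the place where the factor $p^2$ comes from.

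For the upper bounds, the mean of $A'_x(C_\eta,J_\delta)$ is $\pi\frac{Cb}{pm}=\frac{C}{p}\le n^{C}$ by \ref{F:angular}. For $A'_x(J)$ with $|J|\ge\rho_{\min}$ the mean is $\pi\frac{C(|J|+\delta)}{pm}\lesssim\frac{|J|}{pb}$. The square counts have mean $\pi\,\#(S\cap Q)\le\pi\frac{\mu(Q)}{m}\le\frac{\mu_a}{b}$. Here I use the \ref{F:source-mass} mass bound: for branch~1 it is applied directly, and otherwise $\mu(Q)\le\delta^{F-\tau_0}$ for the parents of side $\eta$, while smaller squares are bounded by the parent bound and larger ones by summing $(\delta^a/\eta)^2$ parents. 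The upper Chernoff tail together with the union bound gives counts at most $C'(\text{mean}+n)$ for all events at once. This yields $Cn$ for the parent–annulus counts, after absorbing $p^{-1}$, and the polynomial losses in \eqref{eq:thinned-occupancy}. For the angular bound the mean dominates $n$ since $\frac{\rho_{\min}}{b}=2^{\varepsilon_{\mathrm{ang}}n}$. The factor $p^{-1}$ there must also be removed, which again needs the relation-mass refinement. Intersecting the high-probability upper-tail events with the probability-$\gtrsim p$ lower event for $I'$ gives the required $S'$.

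The main obstacle is bookkeeping the powers of $p$. The lower incidence count has only a weak concentration, and the stated upper bounds lack the $p^{-1}$ that the naive means carry. I would first try rescaling $\pi$ by a factor $p$, which lowers the means to match \eqref{eq:thinned-angular}, and accept the resulting $p^2$ in $I'$.
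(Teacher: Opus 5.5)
The proof is the one you mention only in your last sentence: retain each vertex of $S$ independently with probability $t=\frac{c_0pm}{b}$, which is at most one because $m\le b$. The route you actually carry out, at rate $\frac{m}{b}$, does not prove the lemma, and the repairs you propose for it fail.

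At rate $\frac{m}{b}$ the means are
\[
\mathbb E\#S'\le b^{-1},\qquad \mathbb E A'_x(C_\eta,J_\delta)\le\frac{C}{p},\qquad \mathbb E A'_x(J)\le\frac{C|J|}{pb}.
\]
Chernoff and the union bound then give only $O(p^{-1}+n)$ and $O(\frac{|J|}{pb})$. Since $p$ may be as small as $n^{-C}$, the factor $p^{-1}$ cannot be absorbed into $Cn$ or into $\frac{C|J|}{b}$.

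Nor can the factor $p$ in $\#S'\le\frac{Cp}{b}$ come from the relation mass. The finite inputs give only $\#S\le m^{-1}$. A source cell may be marked by pins carrying very little $\nu$-mass: if the relation consists of a pin set of mass $p$ whose fibers are the whole source, essentially every cell is marked. So the marked vertices need not number $O(\frac{p}{m})$. That factor can only be supplied by the sampling rate.

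Your concern about the incidence count is misplaced, and the Paley--Zygmund step should go. The input $I'\le I\pi\cdot O(\cdot)$ is not a deterministic bound you have. In any case the second moment already gives probability tending to one, once you evaluate your own Chebyshev estimate. From $d_v\le CN$ you get $\operatorname{Var}(I')\le CN\,\mathbb EI'$. With $N\le\upsilon_f^{-1}$ and $\mathbb EI'=tI\ge\frac{cc_0p^2}{b\upsilon_f}$,
\[
\frac{\operatorname{Var}(I')}{(\mathbb EI')^2}\le\frac{CN}{\mathbb EI'}\le\frac{Cb}{p^2}\longrightarrow0,
\]
because $b=\delta^{\lambda-\beta}$ decays exponentially while $p\ge n^{-C}$. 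At your rate $\frac mb$ the ratio is even at most $\frac{Cb}{p}$. So the $p^2$ is simply one factor from the rate times one from $I\ge\frac{cp}{m\upsilon_f}$, not the price of weak concentration.

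With $t=\frac{c_0pm}{b}$, the upper bounds follow as you outline.
\begin{itemize}
\item Vertex count: $\mathbb E\#S'\le\frac{c_0p}{b}$, and Markov's inequality finishes.
\item Parent--bin counts: the means are $O(c_0)$, so Chernoff and the union bound give $O(n)$ uniformly. An annulus of width $O(\delta)$ meets $O(1)$ bins.
\item Angular counts: the means are at most $\frac{Cc_0(|J|+\delta)}{b}$, and the additive $n$ is absorbed by $\frac{\rho_{\min}}{b}=2^{\varepsilon_{\mathrm{ang}}n}$, as you noted.
\end{itemize}

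One check you omitted concerns \eqref{eq:thinned-occupancy}. The additive $O(n)$ from Chernoff is absorbed there only because $\frac{\mu_a}{b}$ grows like a positive power of $\delta^{-1}$. Its exponent of $\delta$ is at most $-\frac12+\xi_{\mathrm{par}}+\beta<-\frac18$, by \eqref{eq:tolerance-conditions}.
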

\begin{proof}
Keep each source vertex independently with probability
$t=\frac{c_0pm}{b}\le1$, with one choice serving every pin. Each
parent and radial-bin occupancy has expectation at most $Cc_0$. The binomial
tail bound and a union bound over the polynomially many parent and radial-bin
pairs and pins show that all these occupancies are at most $Cn$
with probability tending to one. For a dyadic angular interval of
length at least a fixed multiple of $\rho_{\min}$, the expected
count is at most $\frac{C|J|}{b}$, since
$\frac{\delta}{\rho_{\min}}\to0$. Chernoff's inequality and a
second union bound give $A'_x(J)\le C(\frac{|J|}{b}+n)$
simultaneously. A cover by a bounded number of dyadic intervals
of comparable length, in the fixed angular charts, extends this
bound to every $|J|\ge\rho_{\min}$. The additive term is absorbed
uniformly over all such intervals, because
\[
\sup_{|J|\ge\rho_{\min}}\frac{bn}{|J|}
\le\frac{bn}{\rho_{\min}}
=n2^{-\varepsilon_{\mathrm{ang}}n}\longrightarrow0.
\]
This proves the angular bound for every fixed choice of tolerances
satisfying \eqref{eq:tolerance-conditions}. The threshold for $n$
may depend on these tolerances, but is uniform in the pins and intervals.

A square $Q$ of side $\asymp\delta^{a}$ meets $O((\frac{\delta^{a}}{\eta})^2)$
squares of side $\eta$, each of mass at most $\delta^{F-\tau_0}$ if
it contains a vertex of $S$, and in branch~1 with $a\ge a_c-\omega$
it is covered by $O(1)$ dyadic squares of side $\asymp\delta^{a}$, each of
mass at most $\delta^{a+a_c-(1+u_0+\mathfrak e)\omega-\tau_0}$ if it
contains a vertex of $S$, both by \ref{F:source-mass}. Since the
$\delta$-cells of $S$ have mass at least $m$, $Q$ contains at most
$\frac{C\mu_a}{m}$ vertices of $S$, and the expected number retained
is at most $\frac{C\mu_a}{b}$. Chernoff's inequality and a union bound
over the dyadic scales in the stated range and their polynomially many
squares, followed by a bounded dyadic cover of each $Q$, give
\eqref{eq:thinned-occupancy} with an additive
$O(n)$, which is absorbed because $\frac{\mu_a}{b}$ grows faster than
every power of $n$: the exponent of $\delta$ in $\frac{\mu_a}{b}$ is
at most
\[
F+\xi_{\mathrm{par}}-\lambda+\beta
=-\frac{1}{2}+\xi_{\mathrm{par}}+\beta
<-\frac{a_c}{2}\le-\frac{1}{8}.
\]
Indeed $\xi_{\mathrm{par}}<\frac{1}{2}-a_c$ follows from
$a_{\mathrm{par}}<\frac{1}{2}$ and $\varpi\ge0$, while
$\beta<\frac{a_c}{2}$ follows from
\eqref{eq:tolerance-conditions}. For the covering bound,
$a\le a_{\mathrm{par}}$ implies $\delta^a\ge\frac{\eta}{2}$.
Dyadic rounding therefore changes only a fixed factor. For the
branch~1 bound, use
$a+a_c-(1+u_0+\mathfrak e)\omega\le F+\xi_{\mathrm{par}}$,
which follows from $a\le a_{\mathrm{par}}$ and $\varpi\le1+u_0$.

Let $d_v$ be the number of marked incidences at the vertex $v$.
Then $d_v\le CN$. Hence
\[
 \mathbb EI'=tI\ge\frac{cp^{2}}{b\upsilon_f},\qquad 
 \operatorname{Var}(I')\le CN\mathbb EI',\qquad 
 \frac{\operatorname{Var}(I')}{(\mathbb EI')^{2}}
 \le\frac{Cb}{p^{2}}\longrightarrow0,
\]
so the incidence bound holds with probability tending to one.
Finally $\mathbb E\#S'\le\frac{c_0p}{b}$, and Markov's inequality
bounds the probability of violating the vertex bound by
$\frac{1}{10}$ after choosing its constant. All requirements hold
simultaneously with positive probability. An annulus of width
$O(\delta)$ meets $O(1)$ radial bins, which proves the last
assertion.
\end{proof}

\subsection{Rows with rich bisectors}

The degree of a pin in a collection of rows means the number of
rows assigned to it. To regularize degrees dyadically, divide
positive degrees into intervals $[2^j,2^{j+1})$ and retain one
class. When there are $O(n)$ classes, one carries at least an
$O(n)^{-1}$ fraction of the total count being divided.

\begin{lemma}[Multigraph edge bound]\label{lem:multigraph-edge-bound}
A loop-free multigraph with $V$ vertices, $E$ edges, maximum edge
multiplicity at most $k_1\ge1$, and a drawing with $X$
crossings satisfies
\begin{equation}\label{eq:eighth-multigraph-edge-bound}
 E\le C\bigl(k_1V+
                  k_1^{\frac{1}{3}}V^{\frac{2}{3}}X^{\frac{1}{3}}\bigr).
\end{equation}
\end{lemma}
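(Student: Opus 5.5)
This is the multigraph form of the crossing lemma, and I will derive it from the simple-graph crossing lemma, $\operatorname{cr}(H)\ge c\frac{E(H)^3}{V(H)^2}$ whenever $E(H)\ge4V(H)$. The argument is the standard one, in the spirit of Sz\'ekely~\cite{Szekely}.

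\textbf{Step 1: reduce to a simple graph.} Given the drawing with $X$ crossings, I keep one representative edge from each class of parallel edges. This gives a simple graph $H$ whose drawing, inherited from the original, has at most $X$ crossings. Since every multiplicity is at most $k_1$, $H$ has at least $E/k_1$ edges.

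\textbf{Step 2: the case of few edges in $H$.} If $E(H)<4V$, then $E\le k_1E(H)<4k_1V$. This is absorbed by the first term of \eqref{eq:eighth-multigraph-edge-bound}.

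\textbf{Step 3: the case $E(H)\ge4V$.} The simple crossing lemma applied to $H$ gives
\[
X\ge\operatorname{cr}(H)\ge c\frac{(E/k_1)^3}{V^2}.
\]
Solving for $E$ yields $E\le Ck_1V^{2/3}X^{1/3}$. This has exponent $k_1$ on the multiplicity, whereas the target asks for $k_1^{1/3}$.

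\textbf{Step 4: recover the exponent $k_1^{1/3}$ (the main obstacle).} Discarding parallel edges forgets the crossings between distinct parallel copies, so Step 3 alone is too lossy; I need to use the full multigraph drawing. I will run Sz\'ekely's probabilistic deletion on the multigraph itself. Keep each vertex independently with probability $\pi=\frac{Ck_1V}{E}$, which is at most one if $E\ge Ck_1V$; otherwise the first term already wins. In the induced submultigraph, apply the bound $\operatorname{cr}\ge E'-Ck_1V'$. This inequality comes from the weak bound $E\le Ck_1V$ for planar multigraphs with multiplicity at most $k_1$: the underlying simple graph has at most $3V$ edges, and then one deletes an edge per crossing.

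The expectations are $\pi^2E$ for the surviving edges, $\pi V$ for the surviving vertices, and $\pi^4X$ for the surviving crossings. A crossing between edges that share an endpoint is not counted, so every counted crossing involves four distinct vertices, which justifies $\pi^4$. Taking expectations gives
\[
\pi^4X\ge\pi^2E-Ck_1\pi V.
\]
With the chosen $\pi$ this becomes $X\ge c\frac{E^3}{k_1^2V^2}$, that is,
\[
E\le Ck_1^{2/3}V^{2/3}X^{1/3}.
\]
This is still not $k_1^{1/3}$.

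To sharpen it, I replace the planar input by the better weak bound
\[
X\ge\frac{E^2}{Ck_1V}\quad\text{when }E\ge Ck_1V.
\]
This should follow by deleting crossings while counting parallel classes: each edge class contributes multiplicity, and pairs of copies of one class that cross are counted. The main step is to establish $\operatorname{cr}\ge\frac{E^2}{4k_1V}$-type bounds, or alternatively the known multigraph crossing lemma $\operatorname{cr}\ge c\frac{E^3}{k_1V^2}$ (Sz\'ekely). Assuming that form, solving gives
\[
E\le Ck_1^{1/3}V^{2/3}X^{1/3},
\]
which is the second term of \eqref{eq:eighth-multigraph-edge-bound}. I expect that verifying this multigraph crossing lemma under the paper's drawing conventions (shared endpoints not counted, parallel edges allowed, the $k_1$ dependence) is the delicate point. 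I would try first to cite Sz\'ekely's multigraph version directly and check that its hypotheses match these conventions.
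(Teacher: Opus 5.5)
There is a genuine gap, at exactly the point you flag: the $k_1^{1/3}$ dependence is never proved. Your Step~3 correctly gives only $E\le Ck_1V^{2/3}X^{1/3}$, and your vertex-sampling argument in Step~4 gives only $E\le Ck_1^{2/3}V^{2/3}X^{1/3}$.

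The last step then assumes one of two things:
\begin{itemize}
\item an unproved weak bound $X\ge E^2/(Ck_1V)$, or
\item the multigraph crossing lemma $\operatorname{cr}\ge cE^3/(k_1V^2)$.
\end{itemize}
The second, after rearrangement, \emph{is} the statement to be proved. So the argument is circular unless you are simply citing it.

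Nor can vertex sampling supply the missing factor. Deleting vertices never lowers edge multiplicities, so any planar-type input for the sample still carries the factor $k_1$. The admissible choice $\pi\asymp k_1V/E$ then returns only $X\gtrsim E^3/(k_1^2V^2)$, even if the weak quadratic bound is granted.

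There is also a smaller issue. Under the paper's convention only the shared endpoint is exempt from counting. Two adjacent or parallel edges may still cross at an interior point, and such a crossing survives vertex sampling with probability $\pi^3$ or $\pi^2$, not $\pi^4$. Your $\pi^4$ therefore needs a passage to a crossing-minimal drawing first.

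The missing idea is to sample \emph{edges}, so that the multiplicity disappears while crossings decay quadratically. Proceed as follows:
\begin{itemize}
\item Independently for each class of $k\le k_1$ parallel edges, keep the class with probability $k/k_1$, and then keep one of its edges chosen uniformly.
\item Each edge survives with probability exactly $1/k_1$, so the expected edge count is $E/k_1$, and the sample is simple.
\item Two edges of one class never both survive, while edges of distinct classes survive independently. Hence the inherited drawing has at most $X/k_1^2$ crossings in expectation.
\end{itemize}
Choosing one edge per class without the retention step would not work, since singleton classes would then survive with probability one.

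The global simple crossing lemma $\operatorname{cr}(H)\ge cE(H)^3/V^2-CV$, combined with Jensen's inequality, gives $X/k_1^2\ge cE^3/(k_1^3V^2)-CV$. Equivalently $E^3\le C(k_1V^2X+k_1^3V^3)$, which is \eqref{eq:eighth-multigraph-edge-bound}. This is the paper's proof.
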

\begin{proof}
Independently for each bundle of $k\le k_1$ parallel edges,
retain the bundle with probability $\frac{k}{k_1}$ and then
choose one of its edges uniformly. The resulting random graph is
simple, with expected edge count $\frac{E}{k_1}$ and expected
crossing count at most $\frac{X}{k_1^2}$: each edge survives
with probability $\frac{1}{k_1}$, edges from distinct bundles
are independent, and two edges of one bundle cannot both survive.
Apply the crossing lemma~\cite[Theorem~1]{Szekely} in its global form
$\operatorname{cr}(G_{\mathrm{samp}})\ge c \frac{E(G_{\mathrm{samp}})^3}{V^2}-CV$,
then Jensen's inequality. Multiplication by $k_1^2$ and
rearrangement give \eqref{eq:eighth-multigraph-edge-bound}.
\end{proof}

\begin{proposition}[Circle graphs force rows with rich bisectors]
\label{prop:rows-common-richness}
Fix an integer $s_0\ge3$. Under the hypotheses of this section there
are rows of $s_0$ marked source vertices of $S'$, each row lying on
one original circle, with the following properties.
\begin{enumerate}
\item At least $\frac{cp}{n^{2}\upsilon_f}$ pins carry rows, and each
of them carries between $\mathfrak d$ and $2\mathfrak d$ rows, where
$\frac{cp^{2}}{nb}\le\mathfrak d\le\frac{Cp}{b}$.
\item On each circle the angular spans of its rows have disjoint
interiors, and every row lies in one member of a fixed angular
partition into windows of width
$w_0=\frac{C_{s_0}nb}{p^{2}\delta^{q}}$. The source angles in a
row are pairwise separated by at least $c_0\eta$, with $c_0>0$
fixed. Each source vertex belongs to at most $C_{s_0}$ rows at any
one pin.
At a retained pin, any angular interval of length
$r\ge\rho_{\min}$ contains at most $\frac{Cr}{b}$ row entries, and
each window contains at most $\frac{Cw_0}{b}$ rows.
\item Every pair of vertices in every row has at least
\begin{equation}\label{eq:row-richness}
 k_1=\frac{cp^{4}\delta^{2q}}{n^{3}b\upsilon_f}
\end{equation}
distinct pins $x\in X$ with $\bigl||x-v|-|x-z|\bigr|\le C\delta$.
\end{enumerate}
The constants depend only on $s_0$ and the fixed geometric bounds.
\end{proposition}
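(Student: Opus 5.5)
We start from the thinned configuration of Lemma~\ref{lem:source-thinning}: $\#S'\le Cp/b$, $I'\ge cp^2/(b\upsilon_f)$, and the occupancy bounds \eqref{eq:thinned-angular}. First we regularize the pins. Using $N\le\upsilon_f^{-1}$, the pigeonhole over $O(n)$ dyadic degree classes gives a class of at least $cp/(n\upsilon_f)$ pins, each of degree comparable to $D\ge cp^2/(nb)$.

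Next we cut each pin's circles into rows. At each retained pin we fix an angular partition into windows of width $w_0$. The point of \eqref{eq:thinned-angular} is that a window holds at most $Cw_0/b$ entries; the choice $w_0=C_{s_0}nb/(p^2\delta^q)$ is made so that, once the incidences are spread over the $\le C\delta^{-q}$ marked circles, a typical circle–window cell holds at least $s_0$ entries. Inside each such cell we greedily choose disjoint rows of $s_0$ consecutive vertices in circular order, and we keep only vertices whose angular gaps are at least $c_0\eta$. Two facts make this safe. First, the annular parent cap $A'_x(C_\eta,J_\delta)\le Cn$ implies that one circle carries at most $Cn$ vertices in any source square of side $\eta$. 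Second, thinning along arcs of length $c_0\eta$ therefore costs only a factor $O(n)$. Rows built in different cells have disjoint spans. A vertex lies in $O(1)$ rows at a given pin, because it has $O(1)$ radial labels there. After one further dyadic regularization of the row degree, these choices yield (1) and (2). The loss is at most one more power of $n$, which gives the bounds $\mathfrak d\ge cp^2/(nb)$ and $\mathfrak d\le Cp/b$ once we account for the counts.

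For (3) we apply Corollary~\ref{cor:eighth-circle-row-cliques} to the clique multigraph on the rows. Its edge count is $E\gtrsim$ (number of rows), which is at least $\frac{p}{n^2\upsilon_f}\cdot\frac{p^2}{nb}$. Its crossing count is at most $C(C^2+VM^2)\lesssim \upsilon_f^{-2}\delta^{-2q}$, because the second term is smaller, as noted after Lemma~\ref{lem:localized-drawing}. We then prune iteratively. Fix the threshold $k_1$ and let $H$ be the set of edges whose pair multiplicity is below $k_1$. Apply Lemma~\ref{lem:multigraph-edge-bound} to $H$ with $V\le Cp/b$. This gives
\[
\#H\le C\bigl(k_1V+k_1^{1/3}V^{2/3}X^{1/3}\bigr).
\]
With $k_1$ as in \eqref{eq:row-richness}, both terms are at most half the row count divided by a large constant. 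The check is that $k_1V\approx p^5\delta^{2q}/(n^3b^2\upsilon_f)$ is small compared with $p^3/(n^3b\upsilon_f)$, since $p^2\delta^{2q}\le b$ (the cubic term balances similarly). We then discard every row that contains a pair of low multiplicity. Each low edge kills at most $C_{s_0}$ rows, so a constant fraction of the rows survives. Removing rows can lower other multiplicities, so we iterate, or more simply take $H$ to be the low edges of the final graph and argue by a fixed-point or halving step. At most $O(n)$ rounds are needed, each with polynomial loss, and we then re-regularize the pin degrees. Finally, Lemma~\ref{lem:eighth-parallel-bisector-richness} turns multiplicity $m_G(v,z)\ge k_1$ into at least $c_Rk_1$ distinct pins with $\bigl||x-v|-|x-z|\bigr|\le C\delta$, since each pin contributes $O(1)$ edges per pair.

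The main obstacle is the pruning loop. Discarding rows lowers multiplicities, and it also lowers pin degrees, so the degree window in (1) and the per-pin count have to be restored. I would run the standard deletion argument: delete low-multiplicity rows repeatedly until none remain, and bound the total number deleted by applying the crossing estimate once to the union of all deleted low edges. Each deleted edge had multiplicity below $k_1$ in a subgraph of the original drawing, so its multiplicity in that union is at most $k_1$. A last dyadic pigeonhole over the surviving pins then restores (1), at a cost of one more factor of $n$.
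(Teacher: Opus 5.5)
Your outline matches the paper's proof: one representative per $\eta$-parent on each circle, consecutive $s_0$-rows inside $w_0$-windows, the clique drawing of Corollary~\ref{cor:eighth-circle-row-cliques}, Lemma~\ref{lem:multigraph-edge-bound} applied to the low-multiplicity edges, and a final dyadic regularization of pin degrees. Your greedy thinning to $c_0\eta$-gaps, justified by the cap $A'_x(C_\eta,J_\delta)\le Cn$, is an equivalent way to pay the factor $Cn$. The gap is in the bookkeeping, and your check ``the cubic term balances similarly'' is false. With $k_1$ as in \eqref{eq:row-richness}, $V\le Cp/b$ and $X\lesssim\upsilon_f^{-2}\delta^{-2q}$, the available bound is
\[
 k_1^{1/3}V^{2/3}X^{1/3}\lesssim c^{1/3}\,\frac{p^{2}}{nb\,\upsilon_f}.
\]
This is $\frac{n^2}{p}$ times the count $\frac{p^3}{n^3b\upsilon_f}$ you compare it with. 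The count you actually have before pruning does not suffice either. You pigeonhole pin degrees before forming rows (a factor $n$) and then thin by parents (another factor $n$), which leaves about $\frac{p^2}{n^2b\upsilon_f}$ rows, still a factor $n$ short. The same premature loss breaks two other steps:
\begin{itemize}
\item The window step: the discards, at most $s_0\#\mathcal C(1+2\pi/w_0)\lesssim\frac{p^2}{C_{s_0}nb\upsilon_f}$, are small only relative to $\frac{p^2}{nb\upsilon_f}$.
\item The lower bounds in item~(1): pins with fewer than $\frac{cp^2}{nb}$ rows can carry up to $\frac{cp^2}{nb\upsilon_f}$ rows, which is more than you have in total.
\end{itemize}
As written you would get $n^6$ in place of $n^3$ in $k_1$, and weaker powers in (1) and (2). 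That loss is harmless downstream, where only $\lesssim_{\mathrm{pol}}$ matters, but it does not prove the proposition as stated.

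The fix, as in the paper, is to regularize no pins until after pruning.
\begin{itemize}
\item Form rows from all pins. There are then at least a constant times $T_0=\frac{cp^2}{nb\upsilon_f}$ of them, and $k_1$ is calibrated exactly so that the cubic term is at most $(Cc)^{1/3}T_0$, a small fraction of the rows once $c$ is small. Also $k_1V=o(T_0)$.
\item No iteration is needed. Compute multiplicities once in the full graph $G$; each row containing a pair of multiplicity below $k_1$ contributes a distinct edge occurrence to $G_{\mathrm{lo}}$. Item~(3) asks only for witnesses in $X$, not for pins carrying surviving rows, so later deletions are irrelevant. Your union argument in the last paragraph is valid and proves a stronger in-graph statement. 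The variant with $O(n)$ rounds, each losing a polynomial factor, would lose further powers.
\item Only then delete pins with fewer than $\frac{cp^2}{nb}$ rows. They carry at most $N\frac{cp^2}{nb}\le\frac{cp^2}{nb\upsilon_f}$ rows, a small fraction. Pigeonhole the rest into $O(n)$ classes; this gives at least $\frac{cT_0}{n\mathfrak d}\ge\frac{cp}{n^2\upsilon_f}$ pins, with exactly the stated powers.
\end{itemize}
Finally, the drawing lemma requires all vertices to be $5R\delta$-separated, not only those on a common circle. You therefore also need the bounded coloring of source points at scale $O(\delta)$ that the paper performs first.
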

\begin{proof}
Apply Lemma~\ref{lem:source-thinning} and retain its marked
incidences. First color the source points in boundedly many colors
so that points of one color have pairwise distance greater than
$100(C_0+1)\delta$, where $C_0\delta$ bounds the incidence error.
This is possible by the bounded degree of the corresponding
proximity graph. Independently color the dyadic side-$\eta$
parents according to their two indices modulo $4$. Distinct parents
of one color have distance at least $\eta$. Keep a combined color
carrying a fixed fraction of the marked incidences. Thus all
retained source disks of radius $O(\delta)$ can be made disjoint,
and the bounds of Lemma~\ref{lem:source-thinning} persist.

On each circle keep one marked vertex from each occupied parent.
Since the parent occupancy is at most $Cn$, the number of
representatives is at least
\[
 T_0=\frac{cp^{2}}{nb\upsilon_f}.
\]
Representatives on a common circle are at distance at least
$\eta$, their radii about the pin differ by $O(\delta)$, and the
radii are bounded above and below.
So their angles are separated
by at least $c_0\eta$. A vertex has $O(1)$ marked radial labels at
a pin, hence occurs in $O_{s_0}(1)$ rows there.

The number of circles is at most
$\mathcal C_0=C\upsilon_f^{-1}\delta^{-q}$. Partition each circle into
angular windows of width $w_0$, order the representatives in each
window, and group consecutive representatives into disjoint rows
of $s_0$, discarding fewer than $s_0$ at the end of each window. The
discarded count is at most $C_{s_0}\mathcal C_0(1+w_0^{-1})$. Since
$\frac{nb}{p^{2}\delta^{q}}=\frac{n}{p^{2}}\delta^{a_c-\omega-\beta}\to0$,
the term $C_{s_0}\mathcal C_0$ is negligible against $T_0$ and
$w_0\to0$.
The term $C_{s_0}\mathcal C_0w_0^{-1}$ is a small fraction of
$T_0$ once the constant in $w_0$ is large. We obtain at least $cT_0$
rows.

Form the multigraph on the retained vertices with all
$\binom{s_0}{2}$ pairs of every row, one edge per row occurrence.
Within a circle, the angular spans of its rows have disjoint
interiors. Corollary~\ref{cor:eighth-circle-row-cliques} provides a
drawing with
\[
 \operatorname{cr}(G)\le C_{s_0}\bigl(\mathcal C_0^{2}+\#S'N^{2}\bigr)
 \le C_{s_0}X_0,\qquad  X_0=\upsilon_f^{-2}\delta^{-2q},
\]
where the last inequality uses $\#S'\le\frac{Cp}{b}$ and
$\frac{\delta^{2q}}{b}=\delta^{\frac{1}{2}+2\omega+\beta}\to0$. Let
$G_{\mathrm{lo}}$ consist of the edge occurrences of pairs whose
total multiplicity is less than $k_1$. Its crossing number is at
most that of $G$, so Lemma~\ref{lem:multigraph-edge-bound} implies
\[
 E(G_{\mathrm{lo}})\le C_{s_0}\bigl(k_1\#S'
 +k_1^{\frac{1}{3}}(\#S')^{\frac{2}{3}}X_0^{\frac{1}{3}}\bigr).
\]
Since $(\#S')^{2}X_0\le\frac{Cp^{2}}{b^{2}\upsilon_f^{2}\delta^{2q}}$,
the second term is at most $(Cc)^{\frac{1}{3}}T_0$, where $c$ is the
constant in \eqref{eq:row-richness}, and $k_1\#S'\le CT_0\frac{p^{3}\delta^{2q}}{n^{2}b}=o(T_0)$.
Choosing $c$ small makes $E(G_{\mathrm{lo}})$ a small fraction of
the row count. Every row containing a low-multiplicity pair
contributes a distinct edge occurrence to $G_{\mathrm{lo}}$: the
rows are disjoint on each circle, and occurrences from different
circles retain their separate circle labels. Consequently at least
$cT_0$ rows have every pair of multiplicity at least $k_1$. A fixed
unordered source pair contributes $O_{s_0}(1)$ edge copies at any one
pin, since only $O(1)$ of its $\delta$-separated radius bins can
meet the two source disks and each vertex is used at most once in a
row per circle. Thus every pair in a surviving row is nearly
equidistant, to error $C\delta$, from at least $c_{s_0}k_1$ distinct
pins.
Renaming $c$ proves item~3.

Keep exactly a fixed multiple of $T_0$ surviving rows. A pin has
at most $C_{s_0}\#S'\le\frac{C_{s_0}p}{b}$ rows. Delete the pins with
fewer than $\frac{cp^{2}}{nb}$ rows.
For a small constant they
account for fewer than half the rows, since $N\le\upsilon_f^{-1}$.
Among the $O(n)$ remaining dyadic degree classes, one carries at
least $\frac{cT_0}{n}$ rows.
Its pins have degree between
$\mathfrak d$ and $2\mathfrak d$ and number at least
$\frac{cT_0}{n\mathfrak d}\ge\frac{cp}{n^{2}\upsilon_f}$. The sampled
angular bound \eqref{eq:thinned-angular}, with the bounded row
participation of each vertex, bounds the number of row entries in
an interval of length $r\ge\rho_{\min}$ by $\frac{Cr}{b}$ and the
window population by $\frac{Cw_0}{b}$.
\end{proof}

A witness of a source pair is a pin supporting an assigned
occurrence of both vertices on one circle. The witnesses in
item~3 can be chosen among the assigned occurrences
of the pair on the original circles: this is how their multiplicity
was counted. We make that choice throughout, and retain the same
assignment when rows are restricted or pins are coarsened.
The witnesses of item~3 lie within $O(\frac{\delta}{|v-z|})$ of the
perpendicular bisector of the pair, by the squared-distance identity
of Section~\ref{sec:bisector-geometry}.
The width of this tube
depends on the separation of the pair. The next step selects rows
whose pair separations all lie between two consecutive candidate
scales.

\subsection{Rows at one pair-separation scale}

\begin{lemma}[Ordered selection between consecutive scales]
\label{lem:ordered-row-scales}
Let $s_1\ge2$ and let $\ell_0>\ell_1>\cdots>\ell_{\mathfrak n}>0$ be
lengths whose consecutive ratios are powers of two. Consider the
nested partitions of one interval of length $\ell_0$ into cells of
length $\ell_j$ at depth $j$. Let $F_0$ be a finite subset of that
interval with at most one point in each cell of depth $\mathfrak n$.
If $\#F_0>(2s_1-2)^{\mathfrak n}$, then for some
$j\in\{1,\ldots,\mathfrak n\}$ there are $s_1$ points of $F_0$ in one
cell of depth $j-1$ whose pairwise distances all lie between
$\ell_j$ and $\ell_{j-1}$.
\end{lemma}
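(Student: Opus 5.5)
The plan is a tree-counting (pigeonhole) argument on the nested partitions. A cell $Z$ is \emph{occupied} if it meets $F_0$, and the \emph{children} of a depth-$(j-1)$ cell are the depth-$j$ cells it contains. We will show that if some occupied depth-$(j-1)$ cell has at least $2s_1-1$ occupied children, then the required $s_1$ points exist in that cell at that $j$. Otherwise the tree of occupied cells branches too little to reach the assumed cardinality of $F_0$.

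First, the counting. Suppose that for every $j\in\{1,\ldots,\mathfrak n\}$, every occupied cell of depth $j-1$ has at most $2s_1-2$ occupied children. The single depth-$0$ cell is the whole interval. By induction on depth, there are at most $(2s_1-2)^{j}$ occupied cells of depth $j$. At depth $\mathfrak n$ each occupied cell contains exactly one point of $F_0$, so $\#F_0\le(2s_1-2)^{\mathfrak n}$. This contradicts the hypothesis. Hence there are some $j$ and some occupied depth-$(j-1)$ cell $Z$ with at least $2s_1-1$ occupied children.

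Second, the selection of points inside that $Z$. List the occupied children of $Z$ in increasing order along the interval as $Z_1,\ldots,Z_r$, with $r\ge2s_1-1$. Keep the alternate ones $Z_1,Z_3,\ldots,Z_{2s_1-1}$, and pick one point of $F_0$ in each, giving $s_1$ points in $Z$.

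The upper bound on distances holds because all $s_1$ points lie in the single cell $Z$, of length $\ell_{j-1}$; for half-open cells the pairwise distances are strictly less than $\ell_{j-1}$. The lower bound holds because between two kept cells $Z_i$ and $Z_{i'}$ with $i<i'$ lies the whole occupied child $Z_{i+1}$, which is a full cell of length $\ell_j$ separating them. So any point of $Z_i$ and any point of $Z_{i'}$ are at distance at least $\ell_j$.

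The main point needing care is exactly this lower bound. Taking all occupied children would not work, since points in adjacent depth-$j$ cells can be arbitrarily close. This is why we skip every other occupied child and why the threshold is $2s_1-1$ rather than $s_1$. That threshold is what produces the base $2s_1-2$ in the hypothesis $\#F_0>(2s_1-2)^{\mathfrak n}$.

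A secondary check is that the construction also works when consecutive occupied children are not adjacent cells: an intermediate full cell, occupied or not, still separates the kept cells. The fact that the ratios are powers of two is used only to guarantee the nesting of the partitions, so that each depth-$j$ cell lies in a unique depth-$(j-1)$ cell.
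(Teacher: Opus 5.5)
Your proposal is correct and is essentially the paper's argument: the pigeonhole count on the tree of occupied cells produces a depth-$(j-1)$ cell with at least $2s_1-1$ occupied children, and you then take one point from each alternate child, so that a whole child cell separates any two chosen points while the common parent bounds their distance by $\ell_{j-1}$.
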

\begin{proof}
If every occupied cell had at most $2s_1-2$ occupied children, there
would be at most $(2s_1-2)^{\mathfrak n}$ occupied cells of depth
$\mathfrak n$. Thus some cell of depth $j-1$ has at least $2s_1-1$
occupied children. Order these children from left to right and
choose one point from the first, third, fifth, and so on, through
the $(2s_1-1)$st. Between two chosen children lies a whole child
cell, so the chosen points are at least $\ell_j$ apart, and their
common parent bounds their separation by $\ell_{j-1}$.
\end{proof}

\begin{proposition}[Rows with a common pair-separation scale]
\label{prop:homogeneous-rows}
Fix $s_1\ge3$ and apply Proposition~\ref{prop:rows-common-richness}
with $s_0=(2s_1-2)^{\mathfrak n}+1$. Then there is an index
$1\le j\le\mathfrak n$, and there are rows of $s_1$ vertices of $S'$
on one original circle each, with the following properties.
Write
$a=a_{j-1}$, $c=a_j$, $\zeta=\zeta_j=1-c$, $\rho=\rho_j$, and
$w=\ell_{j-1}\asymp\delta^{a}$.
\begin{enumerate}
\item At least $\gtrsim_{\mathrm{pol}}\upsilon_f^{-1}$ pins carry
rows, each with degree between $\mathfrak d'$ and $2\mathfrak d'$,
where $\mathfrak d'\gtrsim_{\mathrm{pol}}b^{-1}$.
\item Every row lies in one cell of a fixed angular partition into
windows of width $w$.
Every pair in a row has angular separation
between $\ell_j$ and $\ell_{j-1}$.
On each circle the spans of the
rows have disjoint interiors.
A vertex belongs to $O_{s_1}(1)$ rows at
any one pin.
At a retained pin an angular interval of length
$r\ge\rho$ contains at most $\frac{Cr}{b}$ row entries and a window at
most $\frac{Cw}{b}$ rows.
\item Every pair $v,z$ in a row has the $k_1$ witnesses of
Proposition~\ref{prop:rows-common-richness}, and each witness lies
within $C\rho$ of the perpendicular bisector of $v,z$, whose direction
differs from the half-sum of the two source angles, modulo $\pi$ and
up to a fixed rotation, by $O(\rho)$.
\end{enumerate}
\end{proposition}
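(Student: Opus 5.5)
The plan is to apply Lemma~\ref{lem:ordered-row-scales} inside every row produced by Proposition~\ref{prop:rows-common-richness}, with $s_0=(2s_1-2)^{\mathfrak n}+1$. Then pigeonhole over the scale index $j$, over the $O(n)$ dyadic degree classes, and over a bounded number of chart choices, and check that the lost factors are polynomial.

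\emph{Step 1: extracting a sub-row.} Fix a row $r$ of $s_0$ representatives on a circle about a retained pin $x$. Its angles lie in one window of width $w_0$ and are pairwise separated by at least $c_0\eta$. Since $\ell_{\mathfrak n}<c_0\eta$, after choosing a dyadic angular chart each depth-$\mathfrak n$ cell of length $\ell_{\mathfrak n}$ contains at most one entry. The window width is $w_0=C_{s_0}nb/(p^2\delta^q)\asymp_{\mathrm{pol}}\delta^{\lambda-\beta-q}$, and $\lambda-q\ge\frac14+\frac{\Delta}{2}-\omega>a_0+\beta$. Hence $w_0\le\ell_0/4$, and the whole row lies in one depth-$0$ cell, or in two adjacent cells. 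In the second case I shift the grid by half a cell, using one of two fixed charts chosen by a pigeonhole on the number of rows.

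Lemma~\ref{lem:ordered-row-scales} then gives an index $j(r)\in\{1,\dots,\mathfrak n\}$ and a sub-row of $s_1$ entries in one depth-$(j-1)$ cell. Their pairwise separations lie in $[\ell_j,\ell_{j-1}]$. Since $\mathfrak n$ is fixed, one index $j$ is shared by at least a $1/\mathfrak n$ fraction of the rows. I keep those sub-rows, one per row.

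\emph{Step 2: item 2.} The depth-$(j-1)$ cells form the fixed window partition of width $w=\ell_{j-1}$. Disjointness of spans on a circle is inherited, because a sub-row's span lies in its parent row's span. The bound of $O_{s_1}(1)$ rows per vertex at a pin is also inherited. The angular counting bound for $r\ge\rho\ge\rho_{\min}$ is the bound $Cr/b$ from Proposition~\ref{prop:rows-common-richness}, applied to the sub-row entries. The bound of $Cw/b$ rows per window then follows, since each row has at least one entry in the window.

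\emph{Step 3: item 3.} Richness is inherited pair by pair, with the same witnesses. The squared-distance identity of Section~\ref{sec:bisector-geometry} puts each witness within $C\delta/|v-z|$ of $\mathcal B(v,z)$. Separation at least $\ell_j\asymp\delta^{c}$ makes this $C\rho$. For the direction, the exact-circle relation says that the chord $v-z$ of a circle about $x$ is perpendicular to the half-sum direction $\frac{\theta_v+\theta_z}{2}$. The radii differ by $O(\delta)$, so the chord direction is perturbed by $O(\delta/|v-z|)=O(\rho)$. The bisector normal is the chord direction, which gives the fixed rotation.

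\emph{Step 4: item 1, the main bookkeeping obstacle.} Restricting to index $j$ may unbalance the pin degrees, so I regularize again. After Step 1 the surviving sub-rows number $\gtrsim T_0/(n\mathfrak n)\gtrsim_{\mathrm{pol}}(b\upsilon_f)^{-1}$. Each pin carries at most $2\mathfrak d\lesssim p/b$ of them. I delete pins with fewer than a small polynomial multiple of $b^{-1}$ sub-rows; these account for at most half the rows, using $N\le\upsilon_f^{-1}$. Then I take one dyadic degree class among $O(n)$. This gives $\mathfrak d'\gtrsim_{\mathrm{pol}}b^{-1}$ and at least $\gtrsim_{\mathrm{pol}}\upsilon_f^{-1}$ pins, by dividing the row count by $2\mathfrak d'\lesssim p/b$. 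All losses are powers of $n$ and $p^{-1}$, which is consistent with the $\lesssim_{\mathrm{pol}}$ convention.
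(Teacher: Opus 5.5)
Your proposal is correct and follows the paper's proof essentially step for step. The shared steps are the window-width check $w_0\lesssim_{\mathrm{pol}}\delta^{a_0+\xi}\le\frac{\ell_0}{4}$, translated root grids so that each row lies in one root cell, Lemma~\ref{lem:ordered-row-scales} applied row by row, and pigeonholing over the finitely many choices of grid and index. Item~2 is inherited from the parent rows, item~3 comes from the squared-distance identity together with radius averaging for the half-sum direction, and item~1 comes from deleting low-degree pins and then regularizing degrees dyadically. The differences are cosmetic. You use two grids offset by $\frac{\ell_0}{2}$ where the paper uses three offset by $\frac{\ell_0}{3}$; both work, since an interval of length at most $\frac{\ell_0}{4}$ meets at most one grid's boundary either way. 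You also delete pins below a polynomial multiple of $b^{-1}$ rather than below a fixed multiple of $\frac{\mathfrak d}{n}$.
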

\begin{proof}
An $s_0$-row lies in a window of width
$w_0=\frac{C_{s_0}nb}{p^{2}\delta^{q}}=C_{s_0}\frac{n}{p^{2}}\delta^{a_c-\omega-\beta}
\le\frac{1}{4}\delta^{a_0}$ for small $\delta$, and its entries are
separated by at least $c_0\eta>\ell_{\mathfrak n}$. Use the three root
grids of mesh $\ell_0$ with origins $0,\frac{\ell_0}{3},\frac{2\ell_0}{3}$,
each with its nested dyadic descendants. An interval of length at
most $\frac{\ell_0}{4}$ meets the boundary of at most one of the
three grids, so every row lies in one root cell of some grid, and
each terminal cell contains at most one of its entries.
Lemma~\ref{lem:ordered-row-scales} provides an $s_1$-entry subrow, a
grid, and an index $j$ such that all pair separations lie in
$[\ell_j,\ell_{j-1}]$ and the subrow lies in one cell of length
$\ell_{j-1}$. Among the $3\mathfrak n$ choices of grid and index,
retain one carrying a fixed fraction of the rows. The depth-$(j-1)$
cells of the chosen grid are the asserted angular partition. Remove
the pins carrying fewer than a small fixed multiple of
$\frac{\mathfrak d}{n}$ subrows and regularize the remaining degrees
dyadically.
Item~1 follows from item~1 of
Proposition~\ref{prop:rows-common-richness}. Subrow spans lie inside
the original row spans, so their disjoint interiors, the bounded
participation of each vertex, and the angular counts persist.

For a pair $v,z$ in a subrow and one of its witnesses $x$,
$\bigl||x-v|^{2}-|x-z|^{2}\bigr|\le C\delta$ and $|v-z|\ge c\ell_j$,
so $x$ lies within $\frac{C\delta}{\ell_j}=C\rho$ of the bisector.
Replacing the two radii by their mean changes the chord by
$O(\delta)$ and its direction by $O(\rho)$, which proves the half-sum
formula.
\end{proof}

\subsection{Localized drawing and a second pruning}

The rows of Proposition~\ref{prop:homogeneous-rows} have angular
diameter at most $w\asymp\delta^{a}$, so their drawing can be
localized. This increases the multiplicity of their pairs when
$a$ is close to $a_{\mathrm{par}}$. Put
\begin{equation}\label{eq:gap-and-psi}
 \mathfrak h=a_{\mathrm{par}}-a\ge0,\qquad 
 \psi(\mathfrak h)=
 \begin{cases}
 \min\{2\mathfrak h, \mathfrak h+e_0\omega\},&\theta\text{ of branch 1 and }a\ge a_c-\omega,\\
 2\mathfrak h,&\text{otherwise}.
 \end{cases}
\end{equation}

\begin{proposition}[Richness from the shorter rows]
\label{prop:second-pruning}
A fraction $\gtrsim_{\mathrm{pol}}1$ of the rows of
Proposition~\ref{prop:homogeneous-rows} can be retained, and the
pin degrees regularized, so that items~1 and~2 there persist and
every pair $v,z$ in every retained row has at least
\begin{equation}\label{eq:fine-richness}
 k_f\gtrsim_{\mathrm{pol}}N\max\left\{\frac{\delta^{2q}}{b},
 b\delta^{-F+\tau_0+\psi(\mathfrak h)}\right\}
\end{equation}
distinct pins $x\in X$ with $\bigl||x-v|-|x-z|\bigr|\le C\delta$.
\end{proposition}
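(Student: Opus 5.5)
We repeat the argument of Proposition~\ref{prop:rows-common-richness}, but draw the rows of Proposition~\ref{prop:homogeneous-rows} with the localized drawing of Lemma~\ref{lem:localized-drawing} at mesh $w\asymp\delta^{a}$. The hypotheses of that lemma hold: each row has angular diameter $O(w)$ about its pin, and each vertex lies in $O_{s_1}(1)$ rows at any pin, by item~2 of Proposition~\ref{prop:homogeneous-rows}. Both parts of the maximum in \eqref{eq:fine-richness} come from the same edge-counting inequality. The first part uses the global crossing bound. The second uses the localized bound with the occupancy estimate \eqref{eq:thinned-occupancy}.

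\emph{Counts entering the edge bound.} The number of rows is $\gtrsim_{\mathrm{pol}}N b^{-1}$, since there are $\gtrsim_{\mathrm{pol}}\upsilon_f^{-1}\ge N$ pins, each of degree $\gtrsim_{\mathrm{pol}}b^{-1}$. We have $V=\#S'\lesssim p b^{-1}$ and $M\le N$. Lemma~\ref{lem:localized-drawing} gives
\[
\operatorname{cr}(G)\lesssim M^2V\Bigl(1+\max_QV_Q\Bigr),
\]
where $Q$ ranges over squares of side $\asymp\delta^a$. By \eqref{eq:thinned-occupancy}, $V_Q\lesssim_{\mathrm{pol}}\frac{\mu_a}{b}$. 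In the generic case $\mu_a=\delta^{F-\tau_0}(\delta^a/\eta)^2=\delta^{F-\tau_0-2\mathfrak h}$. In branch~1 with $a\ge a_c-\omega$, the alternative $\mu_a=\delta^{a+a_c-(1+u_0+\mathfrak e)\omega-\tau_0}$ is available. Writing $a=a_{\mathrm{par}}-\mathfrak h$ and substituting $a_{\mathrm{par}}=a_c+\varpi\omega+\xi_{\mathrm{par}}$ and $F=2a_c$ gives an exponent $F-\tau_0-\mathfrak h-e_0\omega$, up to the absorbed $\xi_{\mathrm{par}}$. Indeed $\varpi-(1+u_0)=-[u_0-1]_+$, which accounts for $e_0=\mathfrak e+[u_0-1]_+$. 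Taking the better of the two bounds yields $V_Q\lesssim_{\mathrm{pol}} b^{-1}\delta^{F-\tau_0-\psi(\mathfrak h)}$. I expect this exponent bookkeeping, and checking that the $\xi_{\mathrm{par}}$ slack is harmless, to be the fiddly step.

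\emph{Pruning.} Let $G_{\mathrm{lo}}$ be the edge occurrences whose pairs have multiplicity below a threshold $k$. Lemma~\ref{lem:multigraph-edge-bound} bounds $E(G_{\mathrm{lo}})$ by
\[
C\bigl(kV+k^{1/3}V^{2/3}X^{1/3}\bigr).
\]
Requiring this to be a small fraction of the row count $R\asymp_{\mathrm{pol}}Nb^{-1}$ is the main constraint; the first term is harmless for $k$ in the claimed range. Using $X=M^2V\max_QV_Q$, the requirement is
\[
k\lesssim_{\mathrm{pol}}\frac{R^3}{V^3M^2\max_QV_Q}\asymp_{\mathrm{pol}}\frac{N^3b^{-3}}{b^{-3}N^2\,b^{-1}\delta^{F-\tau_0-\psi}},
\]
that is, $k\asymp_{\mathrm{pol}} Nb\,\delta^{-F+\tau_0+\psi(\mathfrak h)}$. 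With the global bound $X\lesssim\upsilon_f^{-2}\delta^{-2q}$ of Corollary~\ref{cor:eighth-circle-row-cliques}, the same computation gives $k\asymp_{\mathrm{pol}} N\delta^{2q}/b$, recovering \eqref{eq:row-richness} in the form stated. Take $k$ equal to the maximum of the two thresholds, with a small constant, and run the argument with the corresponding drawing.

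\emph{Conclusion.} As in Proposition~\ref{prop:rows-common-richness}, every row containing a low-multiplicity pair contributes a distinct occurrence to $G_{\mathrm{lo}}$, so all but a small fraction of the rows survive with every pair of multiplicity at least $k$. A pair receives $O_{s_1}(1)$ edge copies per pin, so $k$ converts into $\gtrsim k$ distinct witnessing pins with $\bigl||x-v|-|x-z|\bigr|\le C\delta$. The surviving fraction is $\gtrsim_{\mathrm{pol}}1$. Finally, delete pins of small degree and regularize the remaining degrees dyadically, losing an $O(n)$ factor. Since subrows are kept intact, items~1 and~2 of Proposition~\ref{prop:homogeneous-rows} persist.
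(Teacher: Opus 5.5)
Your proposal is correct and follows the paper's proof essentially step for step. The localized drawing of Lemma~\ref{lem:localized-drawing} at mesh $w$, combined with \eqref{eq:thinned-occupancy}, gives the second threshold, and the global crossing bound gives the first. Lemma~\ref{lem:multigraph-edge-bound} then prunes every row containing a low-multiplicity pair, with the same count $T^3/(V^2X)$ and the check $k_fV\ll T$. The $\xi_{\mathrm{par}}$ term you flagged enters the branch~1 exponent with a favorable sign: the exponent is $F-\mathfrak h-e_0\omega+\xi_{\mathrm{par}}\ge F-\mathfrak h-e_0\omega$, so it needs no absorption.
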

\begin{proof}
The retained rows number $T\gtrsim_{\mathrm{pol}}\frac{N}{b}$, by
item~1 of Proposition~\ref{prop:homogeneous-rows} and
$N\asymp_{\mathrm{pol}}\upsilon_f^{-1}$, and their vertices number
$V\le\#S'\lesssim_{\mathrm{pol}}b^{-1}$. Lemma~\ref{lem:localized-drawing}
with mesh $w$, together with \eqref{eq:thinned-occupancy} at the
exponent $a$, provides a drawing with
\[
 X\lesssim_{\mathrm{pol}}N^{2}V\max_QV_Q
 \lesssim_{\mathrm{pol}}\frac{N^{2}\mu_a}{b^{2}},
\]
and $\mu_a\le\delta^{F-\tau_0-\psi(\mathfrak h)}$:
indeed $\delta^{a}\le\eta\delta^{-\mathfrak h}$ implies
$\mu_a\le\delta^{F-\tau_0-2\mathfrak h}$, and in branch~1 with
$a\ge a_c-\omega$,
\[
 a+a_c-(1+u_0+\mathfrak e)\omega
 =F-\mathfrak h-(1+u_0+\mathfrak e-\varpi)\omega+\xi_{\mathrm{par}}
 \ge F-\mathfrak h-e_0\omega,
\]
by \eqref{eq:finite-scales}, $F=2a_c$, and \eqref{eq:u0-varpi}.
In the remaining cases $\psi(\mathfrak h)=2\mathfrak h$. The
same drawing also obeys the bound $X\lesssim N^{2}\delta^{-2q}$ of
Proposition~\ref{prop:rows-common-richness}. Let $k_f$ be the right
side of \eqref{eq:fine-richness} with a small constant. Then
\[
 \frac{T^{3}}{V^{2}X}\gtrsim_{\mathrm{pol}}
 N\max\left\{\frac{\delta^{2q}}{b}, b\delta^{-F+\tau_0+\psi(\mathfrak h)}\right\},\qquad
 k_fV\ll T,
\]
the last because $\frac{\delta^{2q}}{b}\to0$ and
$b\delta^{-F+\psi}=\delta^{\frac{1}{2}+\psi-\beta}\to0$. As in the
proof of Proposition~\ref{prop:rows-common-richness},
Lemma~\ref{lem:multigraph-edge-bound} bounds the edge occurrences of
pairs of multiplicity below $k_f$ by a small fraction of $T$.
Deleting every row containing such a pair, converting multiplicity
into distinct pins by the bounded participation at each pin, and
regularizing degrees proves the claim.
\end{proof}

At leading exponent order the two candidate thresholds in
\eqref{eq:fine-richness} cross at $\psi(\mathfrak h)=2\omega$.
With the tolerances retained, equality occurs at
$\psi(\mathfrak h)=2\omega+2\beta-\tau_0$. The first is the multiplicity available
at every scale.
The second uses the source occupancy of
\eqref{eq:thinned-occupancy} and is better when the row scale is
close to the parent scale. When $a<a_c-\omega$, the gap satisfies
$\mathfrak h>a_{\mathrm{par}}-a_c+\omega=(\varpi+1)\omega+\xi_{\mathrm{par}}\ge2\omega$,
so $\min\{2\omega,\psi(\mathfrak h)\}=2\omega$ whichever branch the
template has.
Only this minimum enters the exponents of
Section~\ref{sec:universal-pinned}.

\subsection{Pins at the resolution of the tubes}

The witnesses lie within $C\rho$ of the bisectors, but they are only
$\delta$-separated. We thin them to one pin per dyadic $\rho$-cell,
keeping the original pin measure, so that the resulting set is
spread like a set of dimension one at all scales between $\rho$ and
one.

\begin{lemma}[Pin selection at the tube resolution]
\label{lem:pin-coarsening}
Let $X'\subset X$ be the pins carrying rows in
Proposition~\ref{prop:second-pruning}, so that
$\#X'\gtrsim_{\mathrm{pol}}\upsilon_f^{-1}$, and let $\mathcal E$ be
the set of source pairs in the retained rows, each $e\in\mathcal E$
with its witness set $\mathcal W(e)\subset X$, $\#\mathcal W(e)\ge k_f$.
Write $\upsilon_\zeta=\upsilon_j$ for the pin ancestor mass at the
scale $\rho=\rho_j$. If
\begin{equation}\label{eq:coarsening-margin}
 W_0-4\omega-\beta-\xi_{\mathrm{par}}>0,
\end{equation}
then for all sufficiently small $\delta$ there is a set $P\subset X$
with at most one point in each dyadic $\rho$-cell such that
\begin{align}
 \#P&\le\upsilon_\zeta^{-1},\qquad 
 \#(P\cap X')\gtrsim_{\mathrm{pol}}\upsilon_\zeta^{-1},
 \label{eq:coarse-counts}\\
 \#(P\cap\mathcal W(e))&\gtrsim_{\mathrm{pol}}
 \frac{k_f\upsilon_f}{\upsilon_\zeta} \quad \text{for } e\in\mathcal E,
 \label{eq:coarse-richness}
\end{align}
and, for every $z$ and $r\ge\rho$,
\begin{equation}\label{eq:coarse-ball-count}
 \#(P\cap B(z,r))\le\frac{\nu(B(z,r+\sqrt2\rho))}{\upsilon_\zeta}.
\end{equation}
The rows at the pins of $P\cap X'$ are unchanged. Choose one of the nine
classes of $\rho$-cells with fixed indices modulo three which contains
at least a ninth of $P\cap X'$, and let $P'$ be the points of
$P\cap X'$ in that class. Then
\begin{equation}\label{eq:coarse-frostman}
 \frac{\#(P'\cap B(z,r))}{\#P'}\lesssim_{\mathrm{pol}}r^{1+\varkappa}\le r
  \quad \text{for } \rho\le r\le1.
\end{equation}
\end{lemma}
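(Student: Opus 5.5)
I would construct $P$ by a single random selection of one pin in each dyadic $\rho$-cell. Since every pin of $X$ lies in a $\delta$-cell of mass in $[\upsilon_f,2\upsilon_f)$ and in a $\rho$-ancestor of mass in $[\upsilon_\zeta,2\upsilon_\zeta)$ (property \ref{F:pin-mass}), each occupied $\rho$-cell contains at most $2\upsilon_\zeta/\upsilon_f$ pins of $X$. Its $\nu$-mass is at least $\upsilon_\zeta$. The occupied cells are disjoint, so there are at most $\upsilon_\zeta^{-1}$ of them. This already gives $\#P\le\upsilon_\zeta^{-1}$ and \eqref{eq:coarse-ball-count}: every selected point of $P\cap B(z,r)$ lies in a $\rho$-cell contained in $B(z,r+\sqrt2\rho)$ of mass at least $\upsilon_\zeta$. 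These bounds hold for any one-per-cell choice.

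In each occupied cell I would pick one of its pins uniformly at random, independently across cells. Each pin is then retained with probability at least $\upsilon_f/(2\upsilon_\zeta)$. Hence $\mathbb E\#(P\cap X')\ge\#X'\upsilon_f/(2\upsilon_\zeta)\gtrsim_{\mathrm{pol}}\upsilon_\zeta^{-1}$, and likewise $\mathbb E\#(P\cap\mathcal W(e))\ge k_f\upsilon_f/(2\upsilon_\zeta)$.

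Concentration must hold simultaneously for all $e\in\mathcal E$. There are polynomially many pairs, so a Chernoff lower tail plus a union bound suffices, provided each mean exceeds a large multiple of $n$ and each count is a sum of independent indicators over distinct cells. The witnesses of $e$ may cluster, with several in one $\rho$-cell, so I would count occupied cells of $\mathcal W(e)$ instead of points. The number of cells hit is at least $\#\mathcal W(e)\,\upsilon_f/(2\upsilon_\zeta)$, and the selection probability within a cell containing $j$ witnesses is $\ge j\upsilon_f/(2\upsilon_\zeta)$. This gives a sum of independent Bernoulli variables per cell with the correct mean.

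The main obstacle is checking that $k_f\upsilon_f/\upsilon_\zeta\ge\delta^{-\epsilon}$. This is where \eqref{eq:coarsening-margin} enters. Using the first branch of \eqref{eq:fine-richness}, $k_f\upsilon_f\gtrsim_{\mathrm{pol}}\delta^{2q}/b$. Then $\upsilon_\zeta\le\delta^{\zeta+T_\theta(\zeta)-\tau_0}$ by the plateau ancestor bound in \ref{F:pin-mass}, with $\zeta\ge\frac12$. Comparing exponents with $q=\frac34+\frac\Delta2+\omega$ and $\lambda=1+\Delta$ should leave a margin equal to $W_0-4\omega-\beta-\xi_{\mathrm{par}}$ up to $\tau_0$, using the plateau height $W\ge W_0-2\omega$ near $\zeta$. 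I expect the bookkeeping of which plateau position $\zeta=1-a_j$ occupies to be the delicate point. If it fails I would fall back on the second branch of \eqref{eq:fine-richness}.

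Finally, take the modulo-three class carrying a ninth of $P\cap X'$. Points of $P'$ are then $3\rho$-separated in their class. By \eqref{eq:coarse-ball-count} and the Frostman bound $\nu(B)\le Cr^{1+\varkappa}$, we get $\#(P'\cap B(z,r))\le C(2r)^{1+\varkappa}/\upsilon_\zeta$. Dividing by $\#P'\gtrsim_{\mathrm{pol}}\upsilon_\zeta^{-1}$ gives \eqref{eq:coarse-frostman}. The rows at retained pins are untouched, since only the pin set changes.
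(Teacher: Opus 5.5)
Your construction matches the paper's: one random pin per occupied $\rho$-cell, the deterministic bounds for $\#P$ and \eqref{eq:coarse-ball-count}, per-cell independent indicators with Chernoff and a union bound, and the modulo-three class for \eqref{eq:coarse-frostman}. The paper draws a uniform slot from $\{1,\dots,L\}$ with $L=\lceil 2\upsilon_\zeta/\upsilon_f\rceil$, so each pin survives with probability exactly $1/L$; your uniform choice among the pins actually present works equally well.

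The gap is in the one quantitative step, showing that the witness mean $k_f\upsilon_f/\upsilon_\zeta$ is a positive power of $\delta^{-1}$, and the justification you sketch for it is wrong. With only $\zeta\ge\frac12$, the exponent $\zeta-(2q-\lambda+\beta)=\zeta-\frac12-2\omega-\beta$ can be negative. The plateau cannot rescue it, because $\zeta$ is never near the plateau. By \eqref{eq:plateau-consequences}, $u_+\le\frac12+(\varsigma-\mathfrak e)\omega$, whereas $\zeta\ge\frac12+W_0-\varpi\omega-\xi_{\mathrm{par}}$. Hence at $u=\zeta$ the tent has almost decayed: $T_\theta(\zeta)\le(\varpi-\chi)\omega+\xi_{\mathrm{par}}\le3\omega+\xi_{\mathrm{par}}$, not roughly $W\ge W_0-2\omega$. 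In addition, the ancestor bound of \ref{F:pin-mass} costs $\tau_0$, which \eqref{eq:coarsening-margin} does not budget for.

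The missing input is the lower bound on $\zeta$ itself. Since $c=a_j\le a_{\mathfrak n}=a_{\mathrm{par}}$, you have $\zeta=1-c\ge1-a_{\mathrm{par}}$, as in \eqref{eq:zeta-range}. Combine this with the Frostman bound $\upsilon_\zeta\le C\rho^{1+\varkappa}\le C\delta^{\zeta}$, which holds because the occupied $\rho$-cell has mass at least $\upsilon_\zeta$ and lies in a ball of radius $2\rho$; this bound has no $\tau_0$ loss. Since $\delta^{2q}/b=\delta^{\frac12+2\omega+\beta}$ and $\frac12-a_c=W_0$, the first branch of \eqref{eq:fine-richness} gives a mean of at least $\delta^{o(1)}\delta^{-e}$, where
\[
e=\zeta-\tfrac12-2\omega-\beta\ge W_0-(\varpi+2)\omega-\beta-\xi_{\mathrm{par}}\ge W_0-4\omega-\beta-\xi_{\mathrm{par}}>0,
\]
using $\varpi\le2$. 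This positive power is what lets the Chernoff lower tail beat the union bound over the polynomially many pairs, so no fallback to the second branch is needed.
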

\begin{proof}
By \ref{F:pin-mass}, every occupied $\rho$-cell has mass below
$2\upsilon_\zeta$ and each of its occupied $\delta$-cells has mass at
least $\upsilon_f$, so it contains at most $L=\lceil\frac{2\upsilon_\zeta}{\upsilon_f}\rceil$
pins of $X$, and $2\upsilon_\zeta>\upsilon_f$ by nesting, so
$L\asymp\frac{\upsilon_\zeta}{\upsilon_f}$. Label the pins of each
$\rho$-cell by distinct slots in $\{1,\ldots,L\}$, choose one uniform
slot in every occupied cell independently, and retain its pin if the
slot is occupied. Each pin survives with probability $\frac{1}{L}$.
For a fixed pair $e$, the number of retained witnesses is a sum of
independent indicators, one per occupied $\rho$-cell, with mean
$\frac{\#\mathcal W(e)}{L}\gtrsim_{\mathrm{pol}}\frac{k_f\upsilon_f}{\upsilon_\zeta}$.
This mean is a positive power of $\delta^{-1}$: by
\eqref{eq:fine-richness} and $N\upsilon_f\gtrsim_{\mathrm{pol}}1$ it is
at least $\delta^{o(1)}\frac{\delta^{2q}}{b\upsilon_\zeta}$, and
$\upsilon_\zeta\le C\rho^{1+\varkappa}\le C\delta^{\zeta}$ with
$\zeta=1-c\ge1-a_{\mathrm{par}}$ gives the exponent
\[
 \zeta-(2q-\lambda+\beta)
 \ge1-a_c-\varpi\omega-\xi_{\mathrm{par}}-\frac{1}{2}-2\omega-\beta
 \ge W_0-4\omega-\beta-\xi_{\mathrm{par}}>0 .
\]
Chernoff's lower-tail bound
$\mathbb P\{Z<\frac{1}{2}\mathbb EZ\}\le\mathrm e^{-\frac{\mathbb E Z}{8}}$ and a
union bound over the polynomially many pairs show that all the
witness counts are at least half their means with probability
tending to one. The number of retained pins of $X'$ is a sum of the
same kind with mean $\frac{\#X'}{L}\gtrsim_{\mathrm{pol}}\upsilon_\zeta^{-1}$,
again a positive power of $\delta^{-1}$, and is at least half its
mean with probability tending to one. Fix a sample realizing these
events. There are at most $\upsilon_\zeta^{-1}$ occupied
$\rho$-cells, each having mass at least $\upsilon_\zeta$, which implies
$\#P\le\upsilon_\zeta^{-1}$.

The points stay at their original locations. The $\rho$-cells of
the points of $P\cap B(z,r)$ lie in $B(z,r+\sqrt2\rho)$ and have
mass at least $\upsilon_\zeta$, which proves
\eqref{eq:coarse-ball-count}. Each of the nine classes modulo three
is $\rho$-separated.
Choose such a class containing at least a ninth of $P\cap X'$,
and let $P'$ be its intersection with $P\cap X'$. Then
$\#P'\gtrsim_{\mathrm{pol}}\upsilon_\zeta^{-1}$.
Dividing \eqref{eq:coarse-ball-count} by this lower bound and using
$\nu(B(z,3r))\le Cr^{1+\varkappa}$ proves
\eqref{eq:coarse-frostman}.
\end{proof}

Only the pins of $P'$ are used to generate tubes.
The whole set $P$
remains the witness set. The pin ancestor masses at scales between
$\rho$ and one are those of \ref{F:pin-mass}.
Dividing them by
$\upsilon_\zeta$ bounds the number of points of $P$ in the
corresponding squares. This is the input to the rectangular bound of
Section~\ref{sec:rectangular-sampling}, while
\eqref{eq:coarse-frostman} is the input to the Furstenberg bound of
Section~\ref{sec:finite-incidence}.

\section{Counting the bisector directions}
\label{sec:finite-incidence}

The circle argument produces, at each pin of $P'$, rows in which every
pair has a rich bisector. We now choose one pair from every row so
that the chosen directions are spread at every scale between
$\rho$ and one. The restricted-sum estimate controls the number of
distinct directions that any set of rows can produce, and a
max-flow argument turns this into a simultaneous choice. The
Furstenberg estimate of Ren and Wang then provides a lower bound for
the number of distinct tubes. All results in this section are finite
and make no information-theoretic assumptions.

\subsection{The restricted-sum estimate}

For an $n\times s$ real matrix $A=(a_{ij})$ let
$S(A)=\{a_{ij}+a_{ik}:i,j,k\in\mathbb N,\ 1\le i\le n, 1\le j<k\le s\}$ be the set of
pairwise sums of entries from a common row, and let $f_s(n)$ be the
minimum of $|S(A)|$ over all such matrices whose $sn$ entries are
pairwise distinct. The defining requirement in \cite[\S1]{KT}
is \lq\lq all its $sn$ entries being pairwise distinct.\rq\rq{} In
this statement $n$ is the number of rows.

\begin{theorem}[Katz--Tardos, {\cite[Corollary~5]{KT}}]
For any $\epsilon>0$ there exists an $s>0$ such that for all $n>0$ we
have $f_s(n)\ge n^{\frac{10-3\mathrm e}{24-7\mathrm e}-\epsilon}$.
\end{theorem}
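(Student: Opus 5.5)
Since this is \cite[Corollary~5]{KT}, quoted verbatim, the plan is not to reprove it from scratch. We check that the statement we use matches the one proved there, and record the structure of the Katz--Tardos argument so the reader knows which hypotheses matter. First, the normalization. In \cite{KT} an $n\times s$ matrix has $n$ rows of length $s$, all $sn$ entries are pairwise distinct, and $S(A)$ collects $a_{ij}+a_{ik}$ with $j<k$ within a row. This is exactly the definition above, so $f_s(n)$ has the same meaning. The only property used later is the form displayed: for every $\epsilon>0$ there is a row length $s=s(\epsilon)$ giving the exponent $\alpha_*-\epsilon$ uniformly in the number of rows. Distinctness of all entries, not just within rows, is the hypothesis we must supply; that is the point of ``removing repeated angular entries'' before applying the estimate.

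For orientation, the proof in \cite{KT} runs as follows. Suppose $|S(A)|\le n^{\alpha}$. Each row then realizes $\binom{s}{2}$ sums inside a small set, so many pairs of rows share sums. A shared sum $a_{ij}+a_{ik}=a_{i'j'}+a_{i'k'}$ produces a coincidence of differences $a_{ij}-a_{i'j'}=a_{i'k'}-a_{ik}$. By Cauchy--Schwarz, this yields many additive coincidences among the distinct entries. A sums--differences inequality of Katz--Tao type then bounds how concentrated such a configuration can be. The key step is an induction on the row length: one passes from rows of length $s$ to rows of length $s-1$ (or to a large subfamily of rows) while tracking how the exponent improves. This gives a recursion for the best exponent $\alpha_s$ with $\alpha_s\to\alpha_\infty$.

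The main obstacle, and the source of the constant $\mathrm e$, is solving this recursion in the limit $s\to\infty$. The optimal allocation of losses across the $s$ stages becomes a continuous optimization whose extremal profile is exponential. This is what produces $\alpha_\infty=\frac{10-3\mathrm e}{24-7\mathrm e}$. Since we do not alter that optimization, we adopt the corollary as stated and verify its hypotheses at the point of application in the row-selection argument.
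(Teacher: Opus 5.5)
Your proposal matches how the paper treats this result: the statement is imported verbatim from \cite[Corollary~5]{KT}, and the only check needed is that $f_s(n)$ has the same normalization there ($n$ rows, all $sn$ entries pairwise distinct across the whole matrix, sums taken within a row). The paper makes exactly this check before applying the estimate to subrows with globally distinct rounded entries. Your orientation sketch of the Katz--Tardos proof plays no role and should not be presented as their argument, since their proof goes through a new entropy inequality; this does not affect the correctness of the citation.
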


Write
\[
 \alpha_* = \frac{10-3\mathrm e}{24-7\mathrm e}=0.371107\ldots.
\]
Distinctness is required across the whole matrix. Katz and Tardos
point out that the bounds of~\cite{Tardos} hold whenever no two rows
share more than one entry, whereas their argument uses that all entries
are distinct~\cite[\S4]{KT}. The explicit cases stated after
\cite[Theorem~4]{KT} are
\[
 f_5(n)\ge n^{\frac{7}{19}},\qquad 
 f_7(n)\ge n^{\frac{33}{89}},\qquad 
 f_9(n)\ge n^{\frac{59}{159}}.
\]

Fix $\frac{1}{3}<\alpha<\alpha_*$ and choose an integer $s\ge3$ such
that $f_s(n)\ge c_\alpha n^\alpha$ for all $n$.
Katz--Tardos supply
one, and $s=5$ permits $\alpha=\frac{7}{19}$, $s=9$ permits
$\alpha=\frac{59}{159}$. All constants below can depend on $\alpha,s$
and fixed geometric bounds, but not on $\delta$.

\subsection{A simultaneous choice of one direction per row}

At each retained pin discard surplus rows so that exactly
$\mathfrak d'$ rows remain. All upper bounds and pair-witness
lower bounds persist, and the row count decreases by at most a
factor two. Fix a pin $x\in P'$ and these rows of
Proposition~\ref{prop:second-pruning}.
Recall $a\le c\le a+\xi$,
$\zeta=1-c$, $\rho\asymp\delta^{\zeta}$, $w\asymp\delta^{a}$, and
$2c<1$. The direction of the bisector of a pair is determined, up to
$O(\rho)$ and a fixed rotation, by the half-sum of the two source
angles. A direction bin is an interval of length $\rho$ in the
angular chart. The bins form a fixed partition of that chart.
When defining a measure on the outputs, we identify each bin with
its midpoint.
The output of a pair is the bin containing its half-sum.

This description also explains the connection with the near-additive-energy
estimate of~\cite[Lemma~19]{MIKRYSumsets}, which counts pairs of sums
that differ by at most a prescribed error. Such an estimate for the
angular rows could control clustering of their directions. The
lacunarity used in that lemma is unavailable here. Moreover, the
selection below requires bounds for every subfamily of rows and
at each relevant scale. A replacement would have to provide that
uniform control while retaining the number of rows needed by the
incidence argument.

\begin{lemma}[A simultaneous choice of directions]
\label{lem:flow}
Fix small $\xi_{\mathrm{cut}}>0$ and let $s_1$ be the
$\mathfrak c$-color Ramsey number of the complete graph on $s$
vertices, where $\mathfrak c=\lceil\frac{1}{\xi_{\mathrm{cut}}}\rceil+4$.
This is the least integer such that every coloring of the pairs
of that many vertices with $\mathfrak c$ colors has $s$ vertices
all of whose pairs have one color.
Ramsey's theorem~\cite{Ramsey} makes it finite. Then one can choose one pair from every row at $x$ so that the
probability measure $\sigma_x$ giving mass $\frac{1}{\mathfrak d'}$ to
the output of each chosen pair satisfies
\begin{equation}\label{eq:flow-measure}
 \sigma_x(J)\lesssim_{\mathrm{pol}}
 \begin{cases}
 |J|,& w\le|J|\le1,\\
 \delta^{-\tau_{\mathrm{dir}}}|J|^{\alpha}w^{1-\alpha},&\rho\le|J|\le w,
 \end{cases}
  \tau_{\mathrm{dir}}=\alpha\max\{\xi,\xi_{\mathrm{cut}}\},
\end{equation}
for every angular interval $J$. Consequently, with
\begin{equation}\label{eq:s-act}
 s_{\mathrm{act}}=\alpha+(1-\alpha)\frac{a}{\zeta}<1,
\end{equation}
one has $\sigma_x(J)\lesssim_{\mathrm{pol}}\delta^{-\tau_{\mathrm{dir}}}|J|^{s_{\mathrm{act}}}$
for $\rho\le|J|\le1$.
\end{lemma}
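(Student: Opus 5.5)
The plan is to reduce the selection to a bipartite flow problem between rows and direction bins, with capacities forced by the target bound \eqref{eq:flow-measure}. The feasibility of that flow will follow from Hall-type inequalities, supplied by Katz--Tardos. First, handle scales $|J|\ge w$ directly. Every row lies in one window of width $w$, and half-sums of its angles lie within $O(w)$ of twice the window position, modulo $\pi$. By item~2 of Proposition~\ref{prop:homogeneous-rows}, an angular interval of length $r\ge\rho$ carries at most $\frac{Cr}{b}$ row entries, and $\mathfrak d'\gtrsim_{\mathrm{pol}}b^{-1}$. So whatever pair is chosen, the outputs landing in $J$ with $|J|\ge w$ come from rows in an interval of length $O(|J|)$. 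This gives $\sigma_x(J)\lesssim_{\mathrm{pol}}|J|$ with no choice needed.

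For $\rho\le|J|\le w$, define capacities: each dyadic interval $J$ of length in $[\rho,w]$ receives capacity $\kappa(J)=\delta^{-\tau_{\mathrm{dir}}}|J|^{\alpha}w^{1-\alpha}\mathfrak d'$ up to polynomial factors. The nested dyadic intervals form a tree, so the problem is a flow from rows through the tree of direction bins to a sink. Each row sends one unit to one of its $\binom{s_1}{2}$ output bins, and each tree edge carries the capacity $\kappa$. By max-flow min-cut \cite{FF}, a full flow exists unless some cut of value below $\mathfrak d'$ separates the rows. Integrality of max flow then gives one pair per row. A minimal cut is a family $\mathcal R$ of rows whose outputs all lie in a union of disjoint dyadic intervals $J_1,\dots,J_m$ with $\sum\kappa(J_i)<\#\mathcal R$. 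The fractional loss in $\tau_{\mathrm{dir}}$ and the Ramsey setup will absorb the window-boundary effects.

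The main obstacle is ruling out such cuts. Rows in $\mathcal R$ whose outputs fall in $J_i$ must, by the angular count, have entries in a window-sized region containing $O(w/b)$ rows. To apply Katz--Tardos I need distinct entries and an $s$-subrow with all pairwise half-sums landing in bins that are not artificially merged. Here I will use Ramsey: color each pair of a row according to the dyadic scale (quantized in steps of $\xi_{\mathrm{cut}}$) at which its half-sum separates from other outputs, plus the few extra colors. An $s$-subset of one color then behaves homogeneously. Entries are distinct after the vertex multiplicity bound (each vertex lies in $O(1)$ rows) and a further bounded coloring. Katz--Tardos applied to the $n_i$ rows confined to $J_i$ gives at least $c n_i^{\alpha}$ distinct half-sums at resolution $\rho$. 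These occupy at most $|J_i|/\rho$ bins, but after rescaling by the window I will compare against $\kappa(J_i)$. Concretely, I expect $n_i\lesssim_{\mathrm{pol}}(|J_i|/\rho)^{1/\alpha}$ combined with $n_i\lesssim w/b$ to yield $n_i\le\kappa(J_i)$, possibly after an interpolation in which the $\delta^{-\tau_{\mathrm{dir}}}$ loss absorbs the $\xi$ mismatch between $a$ and $c$. I am least sure this exponent bookkeeping closes exactly at $|J|^{\alpha}w^{1-\alpha}$.

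Summing over $i$ contradicts $\sum\kappa(J_i)<\#\mathcal R$. Finally, \eqref{eq:s-act} follows by interpolation. Write $|J|=\delta^{t}$ with $a\le t\le\zeta$; then $|J|^{\alpha}w^{1-\alpha}\le|J|^{s_{\mathrm{act}}}$ because the exponent $\alpha t+(1-\alpha)a\ge s_{\mathrm{act}}t$ holds exactly when $a\ge\frac{a}{\zeta}t$. And $s_{\mathrm{act}}<1$ since $a<\zeta$.
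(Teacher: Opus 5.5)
Your outer architecture matches the paper's proof: the direct bound for $|J|\ge w$, a dyadic tree of output intervals with capacities $\asymp\delta^{-\tau_{\mathrm{dir}}}|I|^{\alpha}w^{1-\alpha}b^{-1}$, max-flow min-cut with integrality, and the closing interpolation that yields $s_{\mathrm{act}}$. The gap is in the one step that carries the content. You must show that $\sum_i\kappa(J_i)\ge\#U$ for every set $U$ of rows and every antichain $\{J_i\}$ covering all their outputs, and your per-interval plan for this does not work.

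\begin{itemize}
\item A row in the cut can have its $\binom{s_1}{2}$ outputs spread over several $J_i$ of different lengths. So there is no family of rows \emph{confined to} $J_i$ to which Katz--Tardos can be applied and whose sizes add up to $\#U$.
\item Counting sums at resolution $\rho$ cannot match the capacity, even for rows whose outputs all lie in one $J_i$. Katz--Tardos needs globally distinct entries, and up to $C\rho/b$ row entries can share a $\rho$-cell, so a bounded coloring does not give distinctness. The bound you can actually prove is $(n_ib/\rho)^{\alpha}\lesssim|J_i|/\rho$, not $n_i\lesssim(|J_i|/\rho)^{1/\alpha}$.
\item This honest bound, even combined with $n_i\lesssim w/b$, fails to give $n_i\lesssim|J_i|^{\alpha}w^{1-\alpha}b^{-1}$ whenever $(\rho w^{\alpha})^{1/(1+\alpha)}<|J_i|<w$.
\item Your Ramsey coloring (by the scale at which a half-sum separates from other outputs) is not the right invariant. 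The coloring has to be defined relative to the cut.
\end{itemize}

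Here is what closes the step.
\begin{itemize}
\item Color each pair of a row by the length class $|J_i|\in(r_{i+1},r_i]$, with $r_i=r_0\delta^{i\xi_{\mathrm{cut}}}$, of the antichain interval containing its output.
\item Ramsey gives each row a monochromatic $s$-subrow. Pigeonholing then gives $U'\subset U$ with $\#U'\ge\#U/\mathfrak c$ in a single class of scale $r$.
\item Beforehand, dispose of cuts containing an interval of length at least $r_0=\frac{\ell_j}{100}$. One such capacity already exceeds the $R\lesssim\frac{w}{b}$ rows of a window, because $c-a\le\xi$ and $\tau_{\mathrm{dir}}\ge\alpha\xi$.
\item Round the subrow entries to multiples of $r\le r_0$. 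They stay distinct within a subrow, since entries are $\ell_j$-separated.
\item Pass to an independent set of $\gtrsim\frac{\#U'b}{r}$ subrows in the conflict graph, so that rounded entries are globally distinct. This uses item~2's bound of $\frac{Cr}{b}$ entries per $r$-interval.
\item Katz--Tardos gives $\gtrsim(\frac{\#Ub}{r})^{\alpha}$ distinct rounded sums. Each lies within $O(r)$ of its output, so each cut interval of this class accounts for $O(1)$ of them.
\item Hence there are $\gtrsim(\frac{\#Ub}{r})^{\alpha}$ cut intervals of length greater than $r\delta^{\xi_{\mathrm{cut}}}$. Their capacities total $\gtrsim\delta^{-\tau_{\mathrm{dir}}+\alpha\xi_{\mathrm{cut}}}(\#U)^{\alpha}(\frac{w}{b})^{1-\alpha}\ge\#U$, using $\#U\lesssim\frac{w}{b}$.
\end{itemize}
The independent-set loss $(\frac{b}{r})^{\alpha}$ exactly cancels the factor $r^{\alpha}$ in the capacity. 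That cancellation is why the bound closes at $|J|^{\alpha}w^{1-\alpha}$, which is the exponent bookkeeping you were unsure of.
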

\begin{proof}
Work in one window, containing $R\le\frac{Cw}{b}$ rows. Unwrap the
angular chart, subtract the fixed rotation, and double the
directions, so that each pair $(i,j)$ of a row with angles
$\vartheta_i,\vartheta_j$ has a real output $o_{ij}$ with
$|o_{ij}-(\vartheta_i+\vartheta_j)|\le C\rho$.
Rounding the outputs to
a mesh comparable to $\rho$ changes this by a constant. All outputs
lie in an interval of length $O(w)$. Choose a dyadic root interval of
comparable length, using one of three translated grids, and the
finite dyadic tree below it, whose nodes are intervals and
whose children are their two halves. Its leaves are the terminal
intervals, of length comparable to $\rho$.
The root intervals of different windows have bounded overlap.
For a tree interval $I$ prescribe the capacity
\[
 c(I)=C_0\delta^{-\tau_{\mathrm{dir}}}|I|^{\alpha}w^{1-\alpha}b^{-1}.
\]

We first verify the cut inequality needed for the flow. We claim that if $U$ is a set of rows and
$\mathcal A$ is an antichain, a collection of tree intervals none
containing another, covering every output
of every row of $U$, then
$\sum_{I\in\mathcal A}c(I)\ge\#U$, provided $C_0$ is large. Put
$r_0=\frac{\ell_j}{100}$, so that entries of one row are separated
by at least $100r_0$. If $\mathcal A$ contains an interval of length
at least $r_0$, its capacity is at least
$c_1C_0\delta^{-\tau_{\mathrm{dir}}}\delta^{\alpha(c-a)}\frac{w}{b}\ge c_1C_0\frac{w}{b}\ge R\ge\#U$,
with an absolute constant $c_1>0$, because $c-a\le\xi$ and $\tau_{\mathrm{dir}}\ge\alpha\xi$. Otherwise
every interval of $\mathcal A$ is shorter than $r_0$. Put
$r_i=r_0\delta^{i\xi_{\mathrm{cut}}}$ and sort the intervals of
$\mathcal A$ into bins according to the $i$ with
$|I|\in(r_{i+1},r_i]$.
Since the leaves have length comparable to
$\rho=\delta^{\zeta}$, $r_0\asymp\delta^{c}$, and $\zeta-c<1$, at
most $\mathfrak c$ bins occur. Color every pair of every row of $U$
by the bin of the interval of $\mathcal A$ containing its output.
By Ramsey's theorem each row contains $s$ entries all of whose pairs
have one color, and one color is used by a subfamily $U'$ of at
least $\frac{\#U}{\mathfrak c}$ rows.
Let $r=r_i$ be its scale.
Round the entries of these $s$-subrows to multiples of $r$.
Entries
within a subrow remain distinct because $r\le r_0$. Every rounded
value belongs to at most $\frac{Cr}{b}$ subrows, by item~2 of
Proposition~\ref{prop:homogeneous-rows}, so the conflict graph on
$U'$, joining two subrows when they share a rounded entry, has an
independent set of at least $\frac{c\#U'b}{r}$ subrows
with globally distinct rounded entries, or a single subrow if this
number is below one. Katz--Tardos give at least
$c(\frac{\#Ub}{r})^{\alpha}$ distinct pair sums of rounded entries
in this independent set. A rounded pair sum differs by $O(r)$
from its original output, since every bin contains a tree interval
of length comparable to or greater than $\rho$, so $\rho\lesssim r$.
Thus only
$O(1)$ distinct rounded sums can correspond to any one output
bin at scale $r$. There are consequently at least a fixed
multiple of this many occupied output bins. An interval of the
color has length at most $r$ and meets $O(1)$ rounded outputs, so
at least $c(\frac{\#Ub}{r})^{\alpha}$ intervals of $\mathcal A$ carry
this color, each of length greater than $r\delta^{\xi_{\mathrm{cut}}}$.
Their capacities sum to at least
\[
 cC_0\delta^{-\tau_{\mathrm{dir}}+\alpha\xi_{\mathrm{cut}}}
 (\#U)^{\alpha}\Bigl(\frac{w}{b}\Bigr)^{1-\alpha}
 \ge cC_0\#U,
\]
by $\#U\le\frac{Cw}{b}$ and $\tau_{\mathrm{dir}}\ge\alpha\xi_{\mathrm{cut}}$.
This proves the claim.

A flow on a directed network assigns a nonnegative amount to each
arc, no larger than its capacity, and conserves total inflow and
outflow except at the source and sink. Its value is the net flow
out of the source. A cut separates the source from the sink.
Its
capacity is the sum of capacities of arcs leaving the source side.
The original max-flow min-cut theorem appears in \cite{FF}.
For the directed networks with integer capacities used here, start
with zero flow. Each arc has a forward residual arc of capacity
equal to its unused capacity and a reverse residual arc of capacity
equal to its flow. Retain only arcs of positive residual capacity.
Augment along a residual source--sink path by
its minimum residual capacity, increasing flow on forward arcs and
decreasing it on reverse arcs. Flows remain integral, and every
augmentation increases the value by at least one. The value is
bounded by the total capacity leaving the source, so the process
terminates. When no residual path remains, the vertices reachable
from the source determine a cut whose outgoing arcs are saturated
and whose incoming arcs carry zero flow. The flow value equals
this cut capacity, and every flow value is at most any cut capacity.
This proves maximality and integrality.

Build a network with a source joined to every row
by an arc of capacity one, every row joined to each of its output
leaves by an arc of capacity $R$, every tree interval $I$ joined to
its parent by an arc of capacity $\lceil c(I)\rceil$, and the root
joined to the sink by an arc of capacity $R$. Every cut of capacity
less than $R$ leaves some rows $U$ joined to the source and must
separate all their leaves from the root by tree arcs.
The highest
such arcs on the leaf--root paths form a covering antichain, so the
cut has capacity at least $R-\#U+\sum_{\mathcal A}\lceil c(I)\rceil\ge R$.
By the integral-flow argument above there is an
integral flow of value $R$, that is, one output per row, with at
most $\lceil c(I)\rceil$ chosen outputs in each tree interval $I$.
Since $w\gtrsim\rho\gg b$, every $c(I)$ is at least a fixed
constant, so the ceilings change only constants, and an arbitrary
interval is covered by $O(1)$ tree intervals of comparable length.
Dividing by $\mathfrak d'\gtrsim_{\mathrm{pol}}b^{-1}$ proves the
second line of \eqref{eq:flow-measure}, after accounting for the
bounded overlap of the windows. For $|J|\ge w$, every row with
chosen output in $J$ has its entries in a fixed enlargement of $J$,
so there are $O(\frac{|J|}{b})$ such rows, which proves the first
line.

To obtain the stated exponent, for $\rho\le|J|\le w$ write $|J|=\delta^{u}$
with $a\le u\le\zeta$.
Then
$\alpha u+(1-\alpha)a-s_{\mathrm{act}}u=(1-\alpha)a(1-\frac{u}{\zeta})\ge0$,
so $|J|^{\alpha}w^{1-\alpha}\le|J|^{s_{\mathrm{act}}}$.
For $|J|\ge w$,
$|J|\le|J|^{s_{\mathrm{act}}}$.
\end{proof}

\subsection{The Furstenberg estimate and the tube family}

We use the discretized Furstenberg estimate of Ren and Wang.
For a dyadic square $Q$ of side $\rho$ in slope--intercept parameter
space, the corresponding dyadic $\rho$-tube is
\[
\bigcup_{(a,b)\in Q}
\{(u,v)\in\mathbb R^2:v=au+b\},
\]
restricted to the fixed bounded spatial region under consideration.
Its central line has the parameters of the midpoint of $Q$.
We work in bounded-slope charts. In the Ren--Wang chart the slopes
lie in $[-1,1]$. These tubes have width comparable to $\rho$ on
the bounded regions used here \cite[Definition~2.3]{RW}.
For dyadic $\rho$, a finite family $\mathcal F$ of dyadic
$\rho$-squares or dyadic $\rho$-tubes is a $(\rho,s,C)$-set if
$\#\{\text{members of }\mathcal F\text{ meeting }B\}
\le Cr^s\#\mathcal F$ for every ball $B$ of radius
$r\in[\rho,1]$ in the respective parameter space
\cite[Definitions~1.3 and~2.3]{RW}.
For tubes, this count is taken over their defining squares in
slope--intercept parameter space. A pair $(\mathcal P,\mathcal T)$ is
a $(\rho,s,C,M)$-nice configuration if every $p\in\mathcal P$ is
assigned a $(\rho,s,C)$-set $\mathcal T(p)\subset\mathcal T$ with
$\#\mathcal T(p)\sim M$ and $T\cap p\ne\varnothing$ for all
$T\in\mathcal T(p)$ \cite[Definition~3.1]{RW}.

\begin{theorem}[Ren--Wang, {\cite[Theorem~4.1]{RW}}]\label{thm:RW}
Let $s\in(0,1]$, $t\in(0,2]$, and $\epsilon>0$. There is
$\eta_{\mathrm{RW}}>0$ such that, for all sufficiently small $\rho$,
every $(\rho,s,\rho^{-\eta_{\mathrm{RW}}},M)$-nice configuration
$(\mathcal P,\mathcal T)$ in which $\mathcal P$ is a
$(\rho,t,\rho^{-\eta_{\mathrm{RW}}})$-set satisfies
\[
 \#\mathcal T\gtrsim_\epsilon\rho^{-\min\{t,\frac{s+t}{2},1\}+\epsilon}M .
\]
\end{theorem}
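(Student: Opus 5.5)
This statement is quoted from \cite[Theorem~4.1]{RW} and is not reproved here. The plan is therefore to check that the version above is a faithful transcription of Ren and Wang's discretized Furstenberg estimate, rather than to rederive it. First I would match the definitions: dyadic $\rho$-tubes in slope--intercept charts with slopes in $[-1,1]$ \cite[Definition~2.3]{RW}, $(\rho,s,C)$-sets \cite[Definitions~1.3 and~2.3]{RW} counted over defining parameter squares, and nice configurations \cite[Definition~3.1]{RW}. Next I would confirm the quantifier order: $\eta_{\mathrm{RW}}$ depends only on $(s,t,\epsilon)$, the implied constant depends only on $\epsilon$, and the smallness of $\rho$ is uniform over configurations. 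This uniformity is exactly what the application requires, since the nonconcentration constants supplied by \eqref{eq:coarse-frostman} and \eqref{eq:flow-measure} are only of size $\delta^{-o(1)}$ or $\delta^{-\tau}$, and they must be absorbed into $\rho^{-\eta_{\mathrm{RW}}}$ once the tolerances are fixed.

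If a self-contained account of why the estimate holds were wanted, I would follow Ren--Wang's outline. One reduces by duality and standard pigeonholing to a statement about $(s,t)$-Furstenberg sets. In the regime $s+t\le2$, one proves the sharp bound $\rho^{-\frac{s+t}{2}}M$ by an induction on scales. At each step one uses the high--low frequency method of Guth--Solomon--Wang to handle the well-spaced case. The complementary concentrated case is treated with Orponen--Shmerkin's ABC sum--product/projection theorem, which controls the remaining two-ends and rescaling structure. The branches $\rho^{-t}M$ and $\rho^{-1}M$ come from the trivial bounds. These are, respectively, the case in which tubes through distinct points are essentially disjoint, and the saturation of a $1$-dimensional family at each point.

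The main obstacle in such a derivation is the concentrated case, which is precisely where the ABC estimate is needed. For this paper, however, the only genuine step is the bookkeeping of passing between the two parameterizations. Bounded-slope charts must be interchanged with the line-space metric of Section~\ref{sec:preliminaries} at the cost of fixed constants. A $(\rho,s,C)$ condition must be converted across bounded overlaps of three translated dyadic grids, again costing only $O(1)$ factors. I would verify both conversions explicitly before invoking the theorem in the tube count that follows.
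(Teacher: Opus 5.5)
Your proposal is correct and matches the paper, which likewise quotes this as an external input from Ren--Wang's Theorem~4.1 without proof. As you note, it relies only on $\eta_{\mathrm{RW}}$ depending on $(s,t,\epsilon)$ alone, so that the $\delta^{-o(1)}$ and $\delta^{-\tau_{\mathrm{dir}}}$ losses can be absorbed into $\rho^{-\eta_{\mathrm{RW}}}$; the paper also takes $s'$ from a fixed finite grid to make that tolerance uniform.

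One correction to your optional sketch: the branch $\rho^{-1}M$ is not a trivial bound. It follows from the middle branch by lowering $t$ to $2-s$, since a $(\rho,t,C)$-set is also a $(\rho,2-s,C)$-set when $t\ge 2-s$. By contrast, the $\rho^{-t}M$ branch for $t\le s$ does follow from an elementary Cauchy--Schwarz count.
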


We apply this with $t=1$ in the proof of
Theorem~\ref{thm:universal-main} and with $t=1+\varkappa$ in the
proof of Theorem~\ref{thm:exceptional-quantitative}.
For $t=1$ the
exponent is $\frac{1+s}{2}$.

\begin{proposition}[The tube family]\label{prop:tube-family}
Fix $\vartheta>0$ and let $s'$ be a point of a fixed finite grid in
$(0,1)$ with $\vartheta\le s_{\mathrm{act}}-s'\le2\vartheta$.
Suppose
$\tau_{\mathrm{dir}}<\frac{\vartheta\zeta}{2}$. For all sufficiently
large $n$ there exist a family
$\mathcal T$ of $\rho'$-tubes, $\rho'=2^{j_0}\rho$ with $j_0$ fixed,
and subfamilies $\mathcal T(x)$, $x\in P''$, for a subset
$P''\subset P'$ with $\#P''\ge c\#P'$, such that $(P'',\mathcal T)$
is a $(\rho',s',C_n\delta^{-\tau_{\mathrm{dir}}},M)$-nice
configuration with $M=\lceil\rho^{-s'}\rceil$ and
$C_n\lesssim_{\mathrm{pol}}1$, after the
identification of $P''$ with the set of dyadic $\rho'$-squares
containing its points.
Every tube of $\mathcal T$ contains a fixed
$C\rho$-neighborhood of the bisector of one of the chosen pairs at
one of its pins, and hence at least $k$ points of $P$, where
\begin{equation}\label{eq:k-from-kf}
 k\gtrsim_{\mathrm{pol}}\frac{k_f\upsilon_f}{\upsilon_\zeta};
\end{equation}
and $\mathcal T$ is contained in a parameter-separated family of
width-$C\rho$ tubes of the same cardinality up to a constant.
\end{proposition}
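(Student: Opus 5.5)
The plan is to build $\mathcal T$ directly from the chosen pairs of Lemma~\ref{lem:flow}, pin by pin, and then check the Ren--Wang hypotheses. At each $x\in P'$, Lemma~\ref{lem:flow} supplies one pair per row and the measure $\sigma_x$ on direction bins of length $\rho$, with $\sigma_x(J)\lesssim_{\mathrm{pol}}\delta^{-\tau_{\mathrm{dir}}}|J|^{s_{\mathrm{act}}}$ for $\rho\le|J|\le1$. For each chosen pair, item~3 of Proposition~\ref{prop:homogeneous-rows} places $x$ and all witnesses within $C\rho$ of its bisector, whose direction is the half-sum up to $O(\rho)$. I would work in a bounded number of slope--intercept charts, rotating so that slopes lie in $[-1,1]$, and keep one chart carrying a fixed fraction of the pins, weighted by $\sigma_x$-mass. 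For a fixed $j_0$, a fattened bisector neighbourhood of width $C\rho$ lies in one of $O(1)$ dyadic $\rho'$-tubes with $\rho'=2^{j_0}\rho$. Choose one such tube for each chosen pair, and let $\mathcal T$ be the union of these tubes over all $x$. Each tube then contains the $C\rho$-neighbourhood of a bisector, and so contains the witnesses retained in $P$. By \eqref{eq:coarse-richness} this gives at least $k\gtrsim_{\mathrm{pol}}\frac{k_f\upsilon_f}{\upsilon_\zeta}$ points of $P$, which is \eqref{eq:k-from-kf}. Since dyadic $\rho'$-tubes correspond to a grid in parameter space, $\mathcal T$ sits inside a parameter-separated width-$C\rho$ family of comparable cardinality.

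Next I would construct $\mathcal T(x)$ with $\#\mathcal T(x)\sim M=\lceil\rho^{-s'}\rceil$ tubes and verify the $(\rho',s',C_n\delta^{-\tau_{\mathrm{dir}}})$ condition. All tubes through the $\rho'$-square of $x$ with direction in an interval of length $r$ have their parameters in a ball of radius $\asymp r$, since the intercept is fixed by the slope up to $O(\rho')$. Ball counts in parameter space therefore reduce to direction counts. Let $D_x$ be the set of occupied direction bins. The bound $\sigma_x(\text{bin})\lesssim\delta^{-\tau_{\mathrm{dir}}}\rho^{s_{\mathrm{act}}}$ gives $\#D_x\gtrsim\delta^{\tau_{\mathrm{dir}}}\rho^{-s_{\mathrm{act}}}$. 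The hypothesis $\tau_{\mathrm{dir}}<\frac{\vartheta\zeta}{2}$ together with $s_{\mathrm{act}}-s'\ge\vartheta$ makes this much larger than $\rho^{-s'}$, with room $\rho^{-\vartheta/2}$. I would then select a subset of $M$ bins by random sampling weighted by $\sigma_x$, drawing $M$ times from $\sigma_x$ and discarding repeats. Chernoff bounds and a union bound over the polynomially many dyadic intervals $J$ would give
\[
\#\{T\in\mathcal T(x):\text{direction in }J\}\lesssim M\bigl(\sigma_x(J)+n\,M^{-1}\bigr)\lesssim_{\mathrm{pol}}\delta^{-\tau_{\mathrm{dir}}}|J|^{s_{\mathrm{act}}}M.
\]
The additive term $n$ is absorbed, because $|J|^{s'}M\ge1$ for $|J|\ge\rho$. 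Since $|J|^{s_{\mathrm{act}}}\le|J|^{s'}$, this is the $(\rho',s')$-set condition with constant $C_n\delta^{-\tau_{\mathrm{dir}}}$, using balls of radius $r\ge\rho'$ and covering by $O(1)$ dyadic intervals.

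The repeat rate must be controlled so that $\gtrsim M$ distinct bins remain. Since every bin has mass at most $\delta^{-\tau_{\mathrm{dir}}}\rho^{s_{\mathrm{act}}}\ll M^{-1}$, collisions affect only a small fraction of the draws. Pins where the sample fails, or that fall in a minority chart, are discarded; keeping the majority gives $P''$ with $\#P''\ge c\#P'$. Because the points of $P'$ are $\rho$-separated by \eqref{eq:coarse-frostman}, passing from points to dyadic $\rho'$-squares merges only $O(1)$ pins.

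The main obstacle is the exact size $\#\mathcal T(x)\sim M$ together with the upper ball bound holding at every scale simultaneously. Truncating $\sigma_x$ to $M$ atoms can concentrate mass at fine scales, and sampling gives the upper bound only up to additive $\log$ terms, which must be checked against $|J|^{s'}M$ down to $|J|=\rho'$. A further technical point is that distinct chosen pairs at one pin can land in the same $\rho'$-tube. This only helps the upper bound, but it could reduce the count below $M$. That is exactly why the slack $\vartheta$ in $s'$ and the condition $\tau_{\mathrm{dir}}<\frac{\vartheta\zeta}{2}$ are needed.
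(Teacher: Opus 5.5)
Your argument is correct in substance and follows the paper's overall structure. Both proofs keep one chosen pair per selected direction bin at each pin and place its bisector neighbourhood in a dyadic $\rho'$-tube. Both read off richness from \eqref{eq:coarse-richness}, reduce parameter-ball counts to direction counts, and keep one pin per $\rho'$-square.

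The genuine difference is how the $M$ bins are extracted from $\sigma_x$. The paper does this deterministically, with the same max-flow device as in Lemma~\ref{lem:flow}. On the dyadic tree of bins it gives each leaf capacity one and each tree interval $I$ capacity $\lceil M\sigma_x(I)\rceil$. The fractional flow $M\sigma_x(\mathrm{leaf})$ is feasible because a single bin carries $M\sigma_x$-mass at most $C_n\delta^{-\tau_{\mathrm{dir}}}\rho^{s_{\mathrm{act}}-s'}<1$. Integrality then yields exactly $M$ distinct supported bins, with at most $\lceil M\sigma_x(I)\rceil$ of them in every $I$.

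Your randomized rounding uses Chernoff and a union bound over the $O(\rho^{-1})$ dyadic intervals, and Markov for the number of coincident draws. The mean of that number is at most $MC_n\delta^{-\tau_{\mathrm{dir}}}\rho^{s_{\mathrm{act}}-s'}=o(M)$, so you spend the margin $\zeta\vartheta-\tau_{\mathrm{dir}}$ in exactly the same place as the paper. Your two losses are harmless. You get at least $M/2$ distinct bins rather than $M$, which $\#\mathcal T(x)\sim M$ allows. You also get an additive $O(n)$ per interval, which goes into $C_n$ because $n\le n|J|^{s'}M$ for $|J|\ge\rho$. The flow version is sharper and probability-free; yours is more elementary.

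Two small corrections. Your displayed estimate should end in $|J|^{s'}M$, not $|J|^{s_{\mathrm{act}}}M$, since the additive term is not dominated by the latter at $|J|=\rho$. Also, merging bins into one dyadic $\rho'$-parameter square costs only a factor $O(2^{j_0})$ at a fixed pin, so that is not what the slack $\vartheta$ is for.

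One step needs the device the paper supplies. In a single fixed parameter grid, no choice of $j_0$ makes the $C\rho$-neighbourhood of a line lie in one dyadic $\rho'$-tube when the line's slope--intercept parameter is within $O(\rho)$ of the boundary of its dyadic square: part of the neighbourhood then lies only in an adjacent tube. Your phrase ``lies in one of $O(1)$ dyadic $\rho'$-tubes'' therefore needs justification. The paper uses finitely many translated parameter grids. For each selected line it chooses a translation in which the parameter lies well inside its square. It then pigeonholes first at each pin and then over pins to one common translation, which amounts to a fixed shear and translation of the plane. This costs only constants in $\#\mathcal T(x)$, in the $(\rho',s')$ bound, and in $\#P''$. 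With this added, your proof goes through. Finally, the $\rho$-separation of $P'$ comes from the residue-class choice in Lemma~\ref{lem:pin-coarsening}, not from \eqref{eq:coarse-frostman}.
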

\begin{proof}
In this proof $C_n\lesssim_{\mathrm{pol}}1$ denotes a common
upper bound for the polynomial factors in Lemma~\ref{lem:flow}.
These factors cannot be treated as fixed constants. At each
$x\in P'$ consider the measure $M\sigma_x$, of total mass $M$. By
\eqref{eq:flow-measure}, a single direction bin then carries mass at
most $C_n\delta^{-\tau_{\mathrm{dir}}}\rho^{s_{\mathrm{act}}-s'}<1$, and an
interval of length $r\ge\rho$ carries mass at most
$C_n\delta^{-\tau_{\mathrm{dir}}}(\frac{r}{\rho})^{s'}$, since
$r^{s_{\mathrm{act}}}\le r^{s'}$. On the dyadic tree of direction bins,
join a source to every leaf by an arc of capacity one, every nonroot
tree interval $I$ to its parent by an arc of capacity
$\lceil M\sigma_x(I)\rceil$, and the root to a sink by an arc of
capacity $M$. The flow sending $M\sigma_x(\text{leaf})\le1$ through
every leaf is feasible and has value $M$. By integrality there is an
integral flow of value $M$. It selects exactly $M$ supported distinct
direction bins, and every tree interval $I$ contains at most
$\lceil M\sigma_x(I)\rceil$ of them. An interval of length $r\ge\rho$
meets at most two tree intervals of length less than $2r$, so it
contains at most $6C_n\delta^{-\tau_{\mathrm{dir}}}(\frac{r}{\rho})^{s'}$
of them. For each selected bin keep one chosen pair with output in
it and the line through $x$ in the bisector direction of that pair.
It lies within $C\rho$ of the bisector and of $x$.

The leaf bound is strictly smaller than one for large $n$:
the positive power margin
$\zeta(s_{\mathrm{act}}-s')-\tau_{\mathrm{dir}}
>\frac{\vartheta\zeta}{2}$ absorbs $C_n$.

Restrict to one of finitely many angular charts, in which direction
and slope are bi-Lipschitz.
At each pin one chart carries a fixed
fraction of the selected bins, and one chart is chosen by a fixed
fraction of the pins. Write lines as $y=ax+b$ in a fixed bounded
region. For a sufficiently large fixed $j_0$, in one of finitely
many translated dyadic grids in the $(a,b)$-plane, the dyadic
$\rho'$-square containing the parameter of a selected line contains
a $C\rho$-neighborhood of it, so the corresponding dyadic tube
contains the original width-$C\rho$ tube within the region: keep the
slope and vary the intercept over the error interval. The tube
therefore contains the pin, the bisector neighborhood, and all
witnesses. Choose a translation for each selected line for which this
containment holds. At each pin retain a translation carrying a
fixed fraction of its lines, then retain a fixed fraction of pins
with the same translation. Translating the slope and intercept
grid amounts to one fixed shear and translation in the plane,
which preserve the required incidence and spacing bounds up to
constants.
A dyadic $\rho'$-square in the parameter space absorbs $O(1)$
selected bins at a fixed pin, so the selected tubes at each pin
still number $\sim M$ and satisfy the $(\rho',s',C_n\delta^{-\tau_{\mathrm{dir}}})$
condition with a new constant. A dyadic $\rho'$-square of the plane
contains $O(1)$ points of $P'$.
Keep one of them per square. This
is the nice configuration.
\eqref{eq:k-from-kf} is
\eqref{eq:coarse-richness}. Finally, associate to each selected dyadic parameter square
its central line and a tube of sufficiently large fixed width
$C\rho$. In the bounded region this tube contains the entire
dyadic tube, hence all its witnesses. Distinct parameter squares
have separated centers. A bounded coloring accommodates any
additional fixed dilation. This constructs a family of comparable
cardinality while preserving the richness of every member.
\end{proof}

The lower bound for $\#\mathcal T$ now follows from
Theorem~\ref{thm:RW} with $\mathcal P=P''$, which is a
$(\rho',1,\delta^{-o(1)})$-set by \eqref{eq:coarse-frostman}, and
$s=s'$:
\begin{equation}\label{eq:RW-lower}
 \#\mathcal T\gtrsim\rho^{-\frac{1+s'}{2}+\epsilon_{\mathrm{RW}}}M
 \ge\rho^{-\frac{1+3s'}{2}+\epsilon_{\mathrm{RW}}}
 \ge\delta^{-\frac{\zeta+3s_{\mathrm{act}}\zeta}{2}+3\vartheta+\epsilon_{\mathrm{RW}}},
\end{equation}
where $\epsilon_{\mathrm{RW}}>0$ is fixed and the enlargement
$\rho'=2^{j_0}\rho$ and the constants $\delta^{-o(1)}$ change no
exponent. The tolerance $\eta_{\mathrm{RW}}$ depends on $s'$ and
$\epsilon_{\mathrm{RW}}$.
Since $s'$ ranges
over a fixed finite grid, the minimum of these tolerances is a fixed
positive number, and the losses $\delta^{-\tau_{\mathrm{dir}}}$ and
$\delta^{-o(1)}$ are below $\rho^{-\eta_{\mathrm{RW}}}$ once
$\tau_{\mathrm{dir}}$ is small and $\delta$ is small.

\section{Rectangular sampling and rich incidences}
\label{sec:rectangular-sampling}

The upper bound for the rich tubes comes from an incidence theorem
of Fu, Gan and Ren for balls that are spread on a rectangular grid.
In this section a tube is a fixed-length neighborhood of width
$C\rho$ of a line in a bounded region, and separation refers to
the line parameters of Section~\ref{sec:preliminaries} in a fixed
finite collection of charts. 
For $\rho>0$, a family of $\rho$-balls, or of $\rho$-tubes, is
essentially distinct if the intersection of two members has at most
half the area of either member.
$T_k(\mathcal B)$ denotes the set of
essentially distinct $\rho$-tubes meeting more than $k$ balls of
$\mathcal B$.

\begin{theorem}[Fu--Gan--Ren, {\cite[Theorem~3]{FGR}}]
Let $1\le U\le V\le\rho^{-1}$, and divide $[0,1]^2$ into
$U^{-1}\times V^{-1}$ rectangles. Let $\mathcal B$ be a family of
$\rho$-balls with at most one ball in each rectangle. For every
$\sigma>0$ and every $k>\max\{\rho^{1-2\sigma}UV,1\}$,
\begin{equation}\label{eq:FGR-input}
 \#T_k(\mathcal B)\lesssim_\sigma
 \rho^{-\sigma}\#\mathcal BUVk^{-2}\bigl(k^{-1}+U^{-1}\bigr).
\end{equation}
\end{theorem}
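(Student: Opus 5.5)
The statement is imported verbatim from \cite[Theorem~3]{FGR}, and in the paper it serves only as a black box. The honest plan is therefore to cite it, after checking that our normalizations agree with theirs: essentially distinct balls and tubes, at most one ball per $U^{-1}\times V^{-1}$ rectangle, and richness counted as meeting more than $k$ balls. If I had to reconstruct the argument, I would use the high--low method for rich tubes, in the form used by Guth--Solomon--Wang for well-spaced tubes, with the rectangular spacing playing the role of the well-spacedness hypothesis.

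\emph{Setup.} Let $f=\sum_{B\in\mathcal B}1_B$ and $T=T_k(\mathcal B)$. The incidence count is $I=\sum_{T\in T}\int_T f\,\rho^{-2}$, and by richness $I\gtrsim k\,\#T$.

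\emph{Splitting at an intermediate scale.} Fix $r\in[\rho,1]$ and split $f=f_{\mathrm{low}}+f_{\mathrm{high}}$, where $f_{\mathrm{low}}=f*\phi_r$ and $\phi_r$ is an $L^1$-normalized bump at scale $r$.

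\begin{itemize}
\item \emph{Low part.} A $\rho$-tube sees $f_{\mathrm{low}}$ as an average of the ball density over $r\times1$ rectangles. The spacing hypothesis bounds the number of balls in an $r$-ball by
\[
\lesssim\max\{1,rU\}\max\{1,rV\}.
\]
Consequently the low contribution per tube is at most about $(\rho/r)\cdot r^{-1}\max\{1,rU\}\max\{1,rV\}$. Choosing $r$ makes this at most $k/2$; the hypothesis $k>\rho^{1-2\sigma}UV$ is exactly what permits such a choice. The low part can then be absorbed into the left side.
\item \emph{High part.} Here $\widehat{f_{\mathrm{high}}}$ is supported where $|\xi|\gtrsim r^{-1}$. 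Decomposing the tubes into wave packets and using essential distinctness, one gets $L^2$ almost-orthogonality. This gives
\[
k\,\#T\lesssim\Bigl(\sum_{T}|T|\Bigr)^{1/2}\,\|f_{\mathrm{high}}\|_{2}\,(\text{overlap factor})^{1/2},
\]
up to normalization. The overlap factor counts how many tube directions share a frequency region at scale $r^{-1}$, and this is where the $\rho^{-\sigma}$ loss enters. Since $\|f_{\mathrm{high}}\|_2^2\lesssim\#\mathcal B\,\rho^2$, squaring and rearranging should produce $\#\mathcal B\,UV\,k^{-2}\cdot(\cdots)$.
\end{itemize}

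\emph{Two regimes.} The factor $(k^{-1}+U^{-1})$ reflects two regimes for the optimal $r$: one where $r\le V^{-1}$ and one where $V^{-1}\le r\le U^{-1}$. In the second regime the balls look like a one-dimensional family along the short side. I would handle the two ranges of $k$ separately and iterate the high--low step over dyadic $r$, using induction on scales to absorb the $\rho^{-\sigma}$ loss.

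\emph{Main obstacle.} I expect the hard part to be the high-frequency estimate in the anisotropic regime $U<V$. There a plain $L^2$ bound loses the $U^{-1}$ improvement. I would first rescale each $U^{-1}$-column into a square so that the balls become well spaced, and then apply the isotropic Guth--Solomon--Wang bound inside each column. Tubes crossing many columns would be summed by Cauchy--Schwarz. The remaining issue is the bookkeeping needed to recover the $k^{-1}$ versus $U^{-1}$ dichotomy, and that is the step I would check against \cite{FGR}.
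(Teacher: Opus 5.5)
Your plan coincides with the paper's treatment. The paper gives no proof of this statement: it imports it verbatim from \cite[Theorem~3]{FGR} and adds only a normalization remark, namely that parameter-separated tube families and balls about separated sets split into boundedly many essentially distinct families, and that fixed dilations and similarities cost only constants. This is the kind of normalization check you propose.

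Do not rely on your fallback high--low sketch as written. Your high-frequency step uses $\|f_{\mathrm{high}}\|_2^2\lesssim\#\mathcal B\rho^2$ and $(\sum_T|T|)^{1/2}=(\rho\,\#T_k(\mathcal B))^{1/2}$, with the incidence normalization $\rho^{-2}$. That yields only $\#T_k(\mathcal B)\lesssim\rho^{-1}\,\#\mathcal B\,k^{-2}$, times an overlap factor that is at least one. For $k\le U$ this exceeds $\#\mathcal B\,UVk^{-3}$ by the factor $\rho^{-1}k/(UV)$, which is larger than $\rho^{-2\sigma}$ throughout the range $k>\rho^{1-2\sigma}UV$. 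So a single split with the low part absorbed cannot give the estimate. The content of the theorem lies precisely in beating this one-scale $L^2$ bound, and that requires a genuine multiscale, induction-on-scales argument.
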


A parameter-separated family splits
into a bounded number of essentially distinct families, since overlap
exceeding half the area forces parameter distance $O(\rho)$, and
separation bounds the degree of this conflict graph. The same holds
for balls about a separated set. These bounded colorings, fixed
dilations, and fixed similarities affect only constants, and allow
\eqref{eq:FGR-input} to be used for the separated pins and tubes of
the circle construction.

\begin{lemma}[Rectangular sampling]\label{lem:rectangular-sampling}
Let $X$ be a $\rho$-separated set of $N$ points in a fixed bounded
region. Suppose $\rho\le c_0h\le c_0R\le c_0$, with $c_0>0$
sufficiently small, and
\begin{equation}\label{eq:reciprocal-hypothesis}
 \sum_{x'\in X, |x-x'|\le3R}
 \min\left\{1,\frac{h}{|x-x'|}\right\}\le H \quad \text{for } x\in X,
\end{equation}
where the diagonal term is one. Fix $0<\sigma<\frac{1}{4}$, and let
$\mathcal T$ be a parameter-separated family of $\rho$-tubes, each
containing at least $k$ points of $X$. If
\begin{equation}\label{eq:sampling-threshold}
 k\ge CH\max\left\{\frac{\rho^{1-2\sigma}}{Rh},1\right\}
\end{equation}
for a sufficiently large fixed $C$, then
\begin{align}
 \#\mathcal T&\lesssim_\sigma
 \frac{\rho^{-\sigma}N}{Rh}\left(H^{2}k^{-3}+RHk^{-2}\right),
 \label{eq:rich-tubes}\\
 I(X,\mathcal T)&\lesssim_\sigma
 \frac{\rho^{-\sigma}N}{Rh}\left(H^{2}k^{-2}+RHk^{-1}\right),
 \label{eq:rich-incidences}
\end{align}
where $I(X,\mathcal T)$ is the number of pairs $(x,T)$ with
$x\in X\cap T$.
\end{lemma}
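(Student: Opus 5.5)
The plan is to reduce the statement to the Fu--Gan--Ren theorem by a random rectangular sampling of $X$. The sampling is designed so that each sample has at most one point in each $U^{-1}\times V^{-1}$ rectangle, with $U=R^{-1}$ and $V=h^{-1}$, while most tubes keep a fixed fraction of their richness relative to the sampling rate. Tubes are $\rho$-thin and of fixed length. After a bounded partition into charts and a rotation, a tube crossing a rectangle of dimensions $R\times h$ (long side $R$, short side $h$) meets it in a set controlled by the tube direction. The reciprocal hypothesis \eqref{eq:reciprocal-hypothesis} is exactly the expected number of collisions of a point $x$ with other points in a randomly rotated or translated $R\times h$ rectangle. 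Indeed, a point $x'$ at distance $d\le 3R$ lands in the same randomly placed rectangle with probability $\asymp\min\{1,h/d\}$ when the orientation is random, and with probability zero when $d>3R$.

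First I partition the plane into $R\times h$ rectangles at a random orientation (one of $O(R/h)$ orientations, or uniformly random) and with a random translation. For each point $x$ the expected number of other points of $X$ in its rectangle is at most $CH$ by \eqref{eq:reciprocal-hypothesis}. In each occupied rectangle I keep one point uniformly at random, giving a set $\mathcal B$ with at most one ball per rectangle, as required in \eqref{eq:FGR-input}. The retention probability of $x$ is $1/(1+\text{occupancy})$. By Jensen's inequality its expectation is $\gtrsim 1/H$, and it is $\gtrsim 1/H$ with probability $\gtrsim1$ by Markov's inequality. For a tube $T$ with $k$ points, the expected number of retained points is therefore $\gtrsim k/H$. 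Since points of $T$ sharing a rectangle compete with each other, I handle this by counting distinct occupied rectangles rather than points. A second-moment or Markov argument then shows that a fixed fraction of the tubes $T\in\mathcal T$ satisfy $\#(T\cap\mathcal B)\ge ck/H=:k'$ with probability $\gtrsim 1$. Averaging over the randomness, I fix a realization in which a constant fraction of the tubes, and of the incidences, survive with richness $k'$.

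Next I apply \eqref{eq:FGR-input} with $k'$ in place of $k$, with $\#\mathcal B\le N$, and with $UV=(Rh)^{-1}$. The threshold $k'>\max\{\rho^{1-2\sigma}UV,1\}$ is precisely \eqref{eq:sampling-threshold} once the constant $C$ is large. Substituting $k'=ck/H$ into $\rho^{-\sigma}\#\mathcal B\,UV\,k'^{-2}(k'^{-1}+R)$ gives \eqref{eq:rich-tubes}. The incidence bound \eqref{eq:rich-incidences} then follows by dyadic pigeonholing in the richness. Tubes with richness in $[2^jk,2^{j+1}k)$ number at most the right side of \eqref{eq:rich-tubes} with $2^jk$ in place of $k$, and each contributes at most $2^{j+1}k$ incidences. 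The resulting series in $j$ is geometric and dominated by its $j=0$ term. The conversion from parameter-separated tubes to essentially distinct tubes uses the bounded coloring remark above.

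I expect the main obstacle to be the sampling step. The points of one tube may cluster inside a few rectangles; the worst case is a rectangle aligned with the tube. In that case the retained count is governed by the number of distinct rectangles the tube meets, not by $k/H$. The tool I would try first is to bound, for each tube, the number of points of $X\cap T$ sharing a rectangle with another point of $T$, using the same collision sum \eqref{eq:reciprocal-hypothesis} restricted to pairs inside $T$. This shows that $T$ meets $\gtrsim k/H$ distinct rectangles on average over orientations. A uniform-in-tube statement would then follow from the first-moment bound over $\mathcal T$, keeping only tubes that achieve this count.
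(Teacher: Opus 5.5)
\textbf{Gap: the sample-size bound in your final substitution is off by a factor $H$.}

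Several parts of your plan match the paper: the reduction to Fu--Gan--Ren through a randomly rotated and translated $R\times h$ grid, the use of \eqref{eq:reciprocal-hypothesis} to bound the expected occupancy of a point's cell by $CH$, the threshold check, and the dyadic summation for \eqref{eq:rich-incidences}.

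The failure is in the final step. Your sample keeps one point in every occupied rectangle, so the only size bound you have is $\#\mathcal B\le N$. With $k'=ck/H$, \eqref{eq:FGR-input} then gives
\[
\rho^{-\sigma}\frac{N}{Rh}\Bigl(\frac{H}{k}\Bigr)^{2}\Bigl(\frac{H}{k}+R\Bigr)
=\frac{\rho^{-\sigma}N}{Rh}\bigl(H^{3}k^{-3}+RH^{2}k^{-2}\bigr).
\]
This is worse than \eqref{eq:rich-tubes} by a factor $H$. To reach \eqref{eq:rich-tubes} you need both $\#\mathcal B\lesssim N/H$ and tube richness $\gtrsim k/H$ at once. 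One-point-per-occupied-cell sampling cannot give both. Suppose most of $X$ sits in cells of occupancy about one, while the tubes run through cells of occupancy about $H$; the hypothesis allows this. Then $\#\mathcal B\approx N$, but the tubes keep only about $k/H$ points.

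\textbf{What is missing: sampling every point at one common rate $\asymp 1/H$.} The paper does this in four steps.
\begin{enumerate}
\item It restricts to a richness class $[k,2k)$.
\item It deletes all points in cells of occupancy greater than $L=\lceil C_2H\rceil$. It also deletes points within $C_1\rho$ of a cell boundary, so that the $\rho$-balls lie inside their cells; you omitted this, but it is minor. By Markov's inequality the expected number of deleted incidences is at most $\frac{1}{16}k\#\mathcal T$, so for a suitable grid at least $\frac{7}{8}$ of the tubes keep at least $k/2$ points.
\item It labels the remaining points of each cell by distinct slots in $\{1,\ldots,L\}$ and picks one uniform slot per cell. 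Every remaining point then survives with probability exactly $1/L$, so $\mathbb E\#X'\le N/L$.
\item For a tube, the number of sampled points is a sum of independent Bernoulli indicators, one per cell, with mean at least $k/(2L)$ and variance at most the mean. Chebyshev together with \eqref{eq:sampling-threshold} shows most tubes survive with richness $\gtrsim k/L$.
\end{enumerate}
This is what makes $\#X'\lesssim N/H$ and richness $\gtrsim k/H$ hold together.

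\textbf{The clustering obstacle you identify is not a real one.} By linearity, the expected number of sampled points of $T$ is the sum of the individual retention probabilities, however $T$'s points are distributed among cells. Each cell contributes a Bernoulli variable, so the variance is at most the mean. No count of distinct rectangles met by $T$ is needed.

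What does need an argument in your version is the passage from ``each point has retention probability $\gtrsim 1/H$ with probability $\gtrsim 1$ over the grid'' to ``for one grid, a fixed fraction of the tubes have conditional mean $\gtrsim k/H$''. A reverse-Markov argument on the number of points of $T$ in cells of occupancy at most $2CH$ would do it. The cap-and-slot construction handles this step and the size bound at the same time.
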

\begin{proof}
It suffices to prove \eqref{eq:rich-tubes} for a family whose
members contain between $k$ and $2k$ points: the classes with
richness in $[2^{j}k,2^{j+1}k)$ satisfy the hypothesis at every
larger richness, summing their bounds proves
\eqref{eq:rich-tubes}, and multiplying each bound by $2^{j+1}k$
before summing yields \eqref{eq:rich-incidences}. The grid and the
sample below may depend on the class.

Round $R$ and $h$ to comparable dyadic lengths. Rotate the
$R\times h$ rectangular grid uniformly and translate it uniformly
modulo one cell. Two points $x,x'$ lie in one cell only if
$|x-x'|\le3R$ and their displacement perpendicular to the long
direction is at most $h$, which has probability
$O(\min\{1,\frac{h}{|x-x'|}\})$. By \eqref{eq:reciprocal-hypothesis}
the expected occupancy of the cell containing a given point is at
most $CH$. Delete every point within $C_1\rho$ of a grid boundary.
This has probability $O(\frac{\rho}{R}+\frac{\rho}{h})\le\frac{1}{64}$
after reducing $c_0$. Delete also all points in cells of occupancy
greater than $L=\lceil C_2H\rceil$.
Markov's inequality bounds this
probability by $\frac{1}{64}$ for $C_2$ large. The expected number
of deleted incidences is at most
$\frac{1}{32}I(X,\mathcal T)\le\frac{1}{16}k\#\mathcal T$. Fix a grid
attaining this bound. Fewer than $\frac{1}{8}\#\mathcal T$ tubes lose
more than $\frac{k}{2}$ points, so a subfamily $\mathcal T_0$ with
$\#\mathcal T_0\ge\frac{7}{8}\#\mathcal T$ retains at least
$\frac{k}{2}$ points per tube, and the $\rho$-balls about the
retained points lie inside their cells.

Label the retained points of each cell by distinct slots in
$\{1,\ldots,L\}$. Independently in every cell select a uniform
slot and keep the point occupying it, if any. The sample $X'$ has
at most one point per cell and $\mathbb E\#X'\le\frac{N}{L}$. For a
tube $T\in\mathcal T_0$, its number of sampled points is a sum of
independent indicators, one per cell, with mean $m_T\ge\frac{k}{2L}$
and variance at most $m_T$.
Hence
\[
 \mathbb P\left\{\#(X'\cap T)<\frac{k}{4L}\right\}
 \le\frac{4}{m_T}\le\frac{8L}{k}\le\frac{1}{16},
\]
by \eqref{eq:sampling-threshold}. Also
$\mathbb P\{\#X'>\frac{16N}{L}\}\le\frac{1}{16}$. By the union
bound, the expected number of tubes of $\mathcal T_0$ with at least
$\frac{k}{4L}$ sampled points on a sample with
$\#X'\le\frac{16N}{L}$ is at least
$\frac{7}{8}\#\mathcal T_0>\frac{1}{2}\#\mathcal T$. Fix such a
sample.

In coordinates parallel to the grid, a grid-aligned square of a
fixed dyadic side contains the configuration, and a fixed
similarity sends it to $[0,1]^2$.
Restriction to finitely many
charts and a bounded coloring preserve a fixed fraction of the
relevant incidence counts. Thus \eqref{eq:FGR-input} applies to the
balls about $X'$ with $U\asymp R^{-1}$, $V\asymp h^{-1}$, and
threshold $\frac{k}{8L}$. In the rescaled coordinates the balls are
contained in their $U^{-1}\times V^{-1}$ rectangles, with at most
one ball per rectangle and $1\le U\le V\le\rho^{-1}$, where $\rho$
is also rescaled by the same fixed similarity. The last inequality
uses $\rho\le c_0h$. Its hypothesis follows from
\eqref{eq:sampling-threshold}. With $\#X'\lesssim\frac{N}{L}$ and
$L\asymp H\ge1$ it gives
\[
 \#\mathcal T\lesssim_\sigma
 \frac{\rho^{-\sigma}N}{Rh}\left(H^{2}k^{-3}+RHk^{-2}\right),
\]
which is \eqref{eq:rich-tubes} for one richness class.
\end{proof}

\section{From measures to finite configurations}\label{sec:discretization}

We now return to the measures and construct the finite sets needed
by the incidence theorem. The main requirement is that all pins
see vertices from one common source set. Separate discretizations
for separate pins would not give a graph on which the crossing
argument could be used.

We keep the original measures and the original oracle throughout.
The pins are actual points, with their full oracles available.
Source vertices are centers of small dyadic squares, so their
incidences with circles have error $O(\delta)$. An assigned incidence
records a relation retained by the selection.
A circle merely
passing close to a source vertex need not be assigned to it.

\begin{lemma}[Measure-to-finite selection with occupied-cell bounds]
\label{lem:seventh-measure-finite-selection}
Let $\mu,\nu$ be probability measures on fixed bounded source and pin
regions at positive distance from one another.  Let $G$ be a Borel
relation, and suppose that for every relevant pin $x$ the angular
pushforward of $\mu|_{G_x}$ is bounded by $L$ times angular Lebesgue
measure.  Fix dyadic scales $\delta\le\rho\le\eta$ and a Borel
subrelation $R\subset G$ of pair mass $P>0$.
Assume the following bounds whenever the indicated cells or bins
are occupied by $R$ with positive source-fiber mass.
\begin{enumerate}
\item Every occupied source $\delta$-square $Q$ satisfies
$m_-\le\mu(Q)\le m_+$.
\item Every occupied pin $\rho$-square $S$ satisfies
$v_-\le\nu(S)\le v_+$.
For every dyadic scale $r=2^{-k}\ge\rho$, each ancestor containing
a relevant pin satisfies $\nu(Q_k(x))\le U_k$.
\item For every source $\eta$-square $C$ and terminal dyadic radial
interval $J$ of length $\delta$ which contains a pair from $R_x$,
\[
 \mu(R_x\cap C\cap\{y\in\mathbb R^2:|x-y|\in J\})\le b.
\]
\item At each pin, at most $M$ terminal radial intervals are occupied.
\end{enumerate}
Then there are a $\rho$-separated finite set $X$ of actual pins, a
common set $V$ of source $\delta$-square centers, and assigned
incidences $(x,v,J)$, with circles centered at $x$ and radii equal
to the midpoints of $J$, such that, writing $N=\#X$, $I$ for the
number of assigned incidences, and $\mathcal C$ for the circle family,
\begin{align}
 \#V&\le m_-^{-1},& N&\le v_-^{-1},&
 I&\ge \frac{cP}{m_+v_+},& N&\ge\frac{I}{C\#V},
 \label{eq:seventh-finite-counts}\\
 \#\mathcal C&\le NM,&
 \bigl||x-v|-\operatorname{mid}(J)\bigr|&\le C\delta.
 \label{eq:seventh-finite-circles}
\end{align}
Each source vertex has at most $C$ assigned radial labels at one pin.
At every selected pin, assigned source vertices in an angular
interval $A$ obey
\begin{equation}\label{eq:seventh-finite-angular}
 \#\{v\in V:\arg(v-x)\in A, v\text{ assigned at }x\}
 \le \frac{CL(|A|+\delta)}{P m_-}.
\end{equation}
For every source $\eta$-parent and every assigned radial bin, the
number of assigned vertices in that parent is at most
\begin{equation}\label{eq:seventh-finite-parent-cap}
 \frac{Cb}{P m_-}.
\end{equation}
The same conclusion holds, with another constant, for vertices in
any annulus of width $C_0\delta$ about the pin.  Finally,
\begin{equation}\label{eq:seventh-finite-pin-balls}
 \#(X\cap B(z,2^{-k}))\le\frac{C U_k}{v_-}
  \quad \text{for } 2^{-k}\ge\rho.
\end{equation}
The constants depend only on the fixed geometric bounds, including
the source--pin separation and $C_0$.
\end{lemma}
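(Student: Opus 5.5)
The plan is a direct discretization: thin the pins first, then the source, weighting every choice by the original measures so that each counting bound in the statement becomes a mass bound divided by a minimal cell mass. We work with the measure $\nu\times\mu$ restricted to $R$. Group pins by occupied dyadic $\rho$-squares $S$. In each such square choose one actual pin $x_S$, selected at random according to $\nu|_S$ weighted by the fiber mass $\mu(R_x)$, so that in expectation the chosen pin carries at least the average fiber mass. Then discard squares whose chosen pin has fiber mass below $cP$, and keep the squares of one of nine classes modulo three. The mass bounds $v_-\le\nu(S)\le v_+$ give $N\le v_-^{-1}$. Since distinct squares of one class are separated, $X$ is $\rho$-separated. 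The ball bound \eqref{eq:seventh-finite-pin-balls} then follows exactly as in Lemma~\ref{lem:pin-coarsening}: every square of a point of $X\cap B(z,2^{-k})$ lies in $O(1)$ ancestors of mass at most $U_k$, and each square has mass at least $v_-$.

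For each retained pin $x$ and each occupied radial bin $J$, we assign to $(x,J)$ those occupied source $\delta$-squares $Q$ with $\mu(R_x\cap Q\cap\{|x-y|\in J\})\ge c\,m_-$. Their centers are within $C\delta$ of the circle of radius $\operatorname{mid}(J)$, because $Q$ meets the annulus. Each $Q$ meets $O(1)$ bins of length $\delta$ at a given pin, which gives the bounded number of radial labels. Let $V$ be the union of all assigned centers; we have $\#V\le m_-^{-1}$ because the occupied squares are disjoint and each has mass at least $m_-$. The circle count satisfies $\#\mathcal C\le NM$ by hypothesis~(4).

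The incidence lower bound is the main point. Summing $\mu(R_x\cap Q)\le m_+$ over all squares and bins, the retained fiber mass at each pin is at least $cP$ after the threshold, up to the part discarded in cells of small mass. The discarded part has total mass $O(c\,m_-)$ times the number of cells meeting the annuli, and this needs control. The honest version weights by mass: the assigned pieces carry at least half of $\mu(R_x)$ only if the small pieces total less than half. This step is where I expect the main obstacle. I would handle it by assigning squares according to whether the piece $R_x\cap Q$ is nonempty, rather than thresholding, and counting $I\ge\sum_x\mu(R_x)/m_+$. Summing over pins gives $\sum_x\mu(R_x)\ge cP\cdot\#(\text{squares})\ge cP/v_+$, and hence $I\ge cP/(m_+v_+)$. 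The relation $N\ge I/(C\#V)$ is immediate, since each pin has $O(\#V)$ incidences.

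For the upper bounds, the angular bound at a pin comes from the hypothesis that the angular pushforward is bounded by $L$. Each assigned vertex accounts for fiber mass, but a lower bound per vertex requires the threshold again. I would therefore combine the two schemes: keep a square only if its piece has mass at least $\frac{1}{2}Pm_-$, which preserves half of the fiber mass $\ge cP$ when $\#V\le m_-^{-1}$. Every assigned vertex then carries mass $\gtrsim Pm_-$. Summing over an angular interval $A$, enlarged by $O(\delta)$, gives \eqref{eq:seventh-finite-angular}. The same division applied to hypothesis~(3) yields the parent cap \eqref{eq:seventh-finite-parent-cap}, and covering an annulus of width $C_0\delta$ by $O(1)$ bins gives its annular version. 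The separation constants depend only on the geometric bounds, which completes the plan.
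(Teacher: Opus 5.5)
Your upper bounds ($\#V\le m_-^{-1}$, $N\le v_-^{-1}$, $\#\mathcal C\le NM$, the bounded number of radial labels, the angular and parent caps obtained by dividing by a per-vertex mass floor $\gtrsim Pm_-$, and the ball count via ancestors) are essentially the paper's arguments.

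The gap is in the step you single out yourself: the lower bound $I\ge cP/(m_+v_+)$. As written, that step does not go through, for three reasons.
\begin{enumerate}
\item Your chain $\sum_x\mu(R_x)\ge cP\cdot\#(\text{squares})\ge cP/v_+$ needs $\#(\text{squares})\ge v_+^{-1}$, which is false. Fibers have mass at most one, so the occupied pin squares are only known to carry $\nu$-mass $\ge P$, hence $\#\gtrsim P/v_+$. Even if every square survived, this route gives only $cP^2/(m_+v_+)$. The factor $P/v_+$ must come from averaging inside each square, not from a count of squares times a floor.
\item Your threshold $\tfrac12 Pm_-$ can discard up to $B_0\cdot m_-^{-1}\cdot\tfrac12 Pm_-=\tfrac{B_0}{2}P$ at a pin, where $B_0=O(1)$ is the number of terminal bins a $\delta$-square meets. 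That exceeds the floor $cP$ you wanted to keep half of. The threshold must be $\lesssim cPm_-/B_0$.
\item You choose the pin first and discard squares with $\mu(R_{x_S})<cP$ afterwards. The naive accounting then loses up to $cP\cdot\#\{S\}\le cP/v_-$, which is not dominated by $P/v_+$ unless $c\lesssim v_-/v_+$. Your constants would then depend on $v_+/v_-$, contrary to the statement.
\end{enumerate}
Your fiber-weighted random choice can in fact be rescued. With $\phi(x)=\mu(R_x)$, Cauchy--Schwarz on $\{\phi\ge cP\}$ together with $\phi<cP$ off that set gives $\frac{\int_S\phi^2 1_{\{\phi\ge cP\}}\,d\nu}{\int_S\phi\,d\nu}\ge\frac{1}{\nu(S)}\int_S\phi\,1_{\{\phi\ge cP\}}\,d\nu$. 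Summing over squares gives an expected surviving sum $\ge(1-c)P/v_+$. The proposal neither states nor proves this.

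The paper avoids all three problems by reversing your order.
\begin{itemize}
\item It thresholds first, at every pin. It keeps $(Q,J)$ only when $\mu(R_x\cap Q\cap\{|x-y|\in J\})\ge\tau\mu(Q)$, with $\tau=\frac{P}{4B_0}$.
\item The loss at each pin is then at most $B_0\tau\sum_Q\mu(Q)=\frac{P}{4}$, uniformly in $x$. Integrating against $\nu$ leaves pair mass $\ge\frac{P}{2}$.
\item Let $F(x)$ count the retained assignments. Each has mass $\le m_+$, so $\int F\,d\nu\ge\frac{P}{2m_+}$.
\item In each occupied pin square it picks a pin with $F(x_S)\ge\frac{1}{2\nu(S)}\int_S F\,d\nu$. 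Summing gives $\sum_S F(x_S)\ge\frac{P}{4m_+v_+}$.
\item Finally it keeps the residue class modulo three that carries at least a ninth of this sum. You should specify this class, not an arbitrary one.
\end{itemize}
This route needs no fiber-mass floor on individual pins, and $v_-$ never enters the lower bound. Every assigned vertex still carries mass $\ge\tau m_-\gtrsim Pm_-$, which is exactly what your angular and parent-cap divisions use.
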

\begin{proof}
Write
\[
 \lambda_{x,j}(Q)=
 \mu\{y\in R_x\cap Q:|x-y|\in J_j\}.
\]
One source $\delta$-square meets at most a fixed number $B_0$ of
terminal radial bins about any pin, since distance is $1$-Lipschitz.
Choose $\tau=\frac{P}{4B_0}$ and retain a cell and radial-bin assignment only when
$\lambda_{x,j}(Q)\ge\tau\mu(Q)>0$.
The total discarded source mass at each pin is at most
$B_0\tau\sum_Q\mu(Q)=B_0\tau$.  Hence the retained relation has
pair mass at least $\frac{P}{2}$.

Let $F(x)$ be the number of retained cell and radial-bin assignments at $x$.
Each assignment has mass at most $m_+$, so
\[
 \int F(x) d\nu(x)\ge \frac{P}{2m_+}.
\]
For each pin $\rho$-square $S$ with positive integral, choose an
actual relevant pin $x_S\in S$ such that
\[
 F(x_S)\ge\frac{1}{2\nu(S)}\int_S F d\nu.
\]
Such a choice exists because the integral is positive. The pin
has a positive retained fiber in the already uniformized relation:
its assigned cells and bins satisfy the cell-mass and parent
bounds, its occupied ancestors satisfy the pin-mass bounds, and
the angular bound holds on the containing fiber $G_{x_S}$.
These properties refer to the same original measures $\mu,\nu$,
which are not replaced when $x_S$ is selected.  Its occupied square has
mass between $v_-$ and $v_+$.  Therefore
\[
 \sum_S F(x_S)\ge\frac{P}{4m_+v_+},\qquad 
 \#\{S\}\le v_-^{-1}.
\]
Colour the $\rho$-grid by its two coordinate indices modulo three.
One of the nine colours carries at least one ninth of the displayed
incidence sum.  Keep that colour.  The selected actual pins are
$\rho$-separated.  Take as $V$ the centers of all source squares
used by their retained assignments.  Disjointness and the lower
source-cell mass give $\#V\le m_-^{-1}$.  There are at most $B_0$
assignments per source vertex and pin, proving the last bound in
\eqref{eq:seventh-finite-counts}.  The radial count and approximate
circle incidence follow directly from the construction.

For the angular bound, every good point in a source square whose
center has direction in $A$ has direction in the $C\delta$-enlargement
of $A$.  This uses the fixed positive source--pin separation.
The active cell and radial-bin sets are disjoint and each has mass at least
$\tau m_-$.  Their total mass is at most $CL(|A|+\delta)$ by the
original angular bound on $\mu|_{G_x}$.  Division proves
\eqref{eq:seventh-finite-angular}.

For one parent and bin, sum the disjoint active masses and use the
restricted parent and radial-bin bound $b$.  Division by $\tau m_-$ proves
\eqref{eq:seventh-finite-parent-cap}.
An annulus of width $C_0\delta$ for the source centers can correspond
to only a bounded number of their actual radial bins, so the same
bound applies after changing the constant.

For \eqref{eq:seventh-finite-pin-balls}, a ball of radius $2^{-k}$
meets only a bounded number of dyadic squares of that side length.
Keep only those squares containing selected pins.  Each such square
has full $\nu$-mass at most $U_k$, witnessed by one of its selected
relevant pins.  Its selected $\rho$-squares are disjoint, nested
inside it, and each has mass at least $v_-$.  This proves the count.
No global mass bound on unoccupied ancestors is required.
\end{proof}

\subsection*{A common template for the plateau and the source parent}

Let $\Omega$ be the compact set of
$\theta=(u_-,W,\varsigma,\chi,\mathfrak e)$ satisfying the constraints
of \eqref{eq:plateau-parameters} that do not involve $f$:
\begin{gather*}
 -1\le\chi\le2,\qquad  \varsigma\ge\max\{0,-2\chi\},\qquad 
 3\varsigma+2\chi\le4,\qquad  0\le\mathfrak e\le\varsigma,\\
 W=W_0-(\varsigma+\chi-\mathfrak e)\omega,\qquad 
 0\le u_--W\le(2\varsigma-2\mathfrak e)\omega .
\end{gather*}
Put $u_+=u_-+a_c+\chi\omega$, define the tent
\[
 T_\theta(u)=\max\{0, W-\operatorname{dist}(u,[u_-,u_+])\}
  \quad \text{for } 0\le u\le1,
\]
and let $u_0,\varpi,e_0$ be given by \eqref{eq:u0-varpi}.
All of
these depend continuously on $\theta$. A template is a point of
$\Omega$ together with a branch index: branch~1 requires
$\mathfrak e\le\frac{1}{8}$ and branch~2 requires
$\mathfrak e\ge\frac{1}{8}$. By Proposition~\ref{prop:pin-plateau},
Lemma~\ref{lem:compatible-hit}, and the $1$-Lipschitz property of
$g(u)-u\ge0$, every limiting profile of a pair with the certificate
at level $\lambda$ and $d_x(1)\le q$ satisfies
\[
 g(u)\ge u+T_\theta(u) \quad \text{for } 0\le u\le1,\qquad 
 a_*\le a_c+\varpi\omega,
\]
for some $\theta\in\Omega$, and if $\mathfrak e\le\frac{1}{8}$ also
the envelope \eqref{eq:source-envelope}. The next lemma replaces
these limiting statements by finitely many tests at one precision.

\begin{lemma}[Finite selection of a common template]
\label{lem:joint-template}
Fix $\tau_0>0$ and $0<\xi_{\mathrm{par}}\le\frac{\tau_0}{4}$. There
is a finite set $\Theta$ of templates, independent of the pair, with
the following property. Let $(x,y)$ satisfy the profile hypotheses
relative to the oracle $A$, and suppose that every limiting profile
of the pair along precisions $n$ with $K_n^{A,x}(|x-y|)\le qn$ has
the certificate \eqref{eq:certificate} at level $\lambda$. Then all
sufficiently large $n$ with $K_n^{A,x}(|x-y|)\le qn$ admit a template
$\theta\in\Theta$ such that, writing
$a_{\mathrm{par}}=a_c+\varpi\omega+\xi_{\mathrm{par}}$ and
$C_n=Q_{\lfloor a_{\mathrm{par}}n\rfloor}(y)$,
\begin{align}
 K_n^A(y)&\ge(\lambda-\tau_0)n,
 \label{eq:template-terminal}\\
 \frac{K_k^A(x)}{n}&\ge\frac{k}{n}+T_\theta\left(\frac{k}{n}\right)-\tau_0
  \quad \text{for } 0\le k\le n,
 \label{eq:template-pin}\\
 K_{\lfloor a_{\mathrm{par}}n\rfloor}^A(y)&\ge(F-\tau_0)n,
 \label{eq:template-parent}\\
 K_n^{A,x}(|x-y|\mid C_n)
 &\ge\lambda n-K_{\lfloor a_{\mathrm{par}}n\rfloor}^A(y)-\tau_0n,
 \label{eq:template-conditional}
\end{align}
and, if $\theta$ has branch~1,
\begin{equation}\label{eq:template-source}
 \frac{K_k^A(y)}{n}\ge\frac{k}{n}+a_c-(1+u_0+\mathfrak e)\omega-\tau_0
  \quad \text{for } a_c-\omega\le\frac{k}{n}\le a_{\mathrm{par}}.
\end{equation}
On any fixed positive-measure Borel relation of such pairs, the
threshold and the coding errors of Lemma~\ref{lem:measure-coding}
can be made uniform on a fixed positive-measure subrelation, and at
each such $n$ one common template is valid on at least
$\frac{1}{\#\Theta}$ of the low-distance pair mass.
\end{lemma}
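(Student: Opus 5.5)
The proof is a compactness argument over the template space $\Omega\times\{1,2\}$, combined with the limiting statements already proved. I would argue by contradiction along subsequences: every sequence of low-distance precisions has a subsequence along which the four profiles converge (Lemma~\ref{lem:joint-extraction}). Along that subsequence the limit has the certificate at level $\lambda$ and $d_x(1)\le q$. Proposition~\ref{prop:pin-plateau}, Lemma~\ref{lem:compatible-hit} and, when $\mathfrak e\le\frac18$, Lemma~\ref{lem:source-envelope} then produce a template $\theta_\infty\in\Omega$, with the corresponding branch, such that:
\begin{itemize}
\item $g(u)\ge u+T_{\theta_\infty}(u)$;
\item $a_*\le a_c+\varpi\omega$;
\item $f(a_c+\varpi\omega)\ge F$ and $f(1)\ge\lambda$;
\item the source envelope holds in branch~1.
\end{itemize}

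The finite set $\Theta$ comes from a net. All the limiting inequalities hold with zero error, while the finite tests allow error $\tau_0$. Since $T_\theta$, $u_0$, $\varpi$ and $e_0$ depend continuously on $\theta$, and $\Omega$ is compact, I choose a finite net $\Theta$ with mesh small enough that replacing $\theta_\infty$ by a net point changes:
\begin{itemize}
\item $T_\theta$ uniformly by less than $\tau_0/4$;
\item $\varpi\omega$ by less than $\xi_{\mathrm{par}}$;
\item the right side of \eqref{eq:source-envelope} by less than $\tau_0/4$.
\end{itemize}
The parent scale moves by at most $\xi_{\mathrm{par}}$, and $f$ is $2$-Lipschitz, so \eqref{eq:template-parent} loses at most $2\xi_{\mathrm{par}}\le\tau_0/2$. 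The branch constraint $\mathfrak e\le\frac18$ or $\ge\frac18$ is kept by including both branch labels for net points near $\mathfrak e=\frac18$ and choosing, among nearby net points, one whose branch matches that of $\theta_\infty$. This net depends only on $\tau_0$, $\xi_{\mathrm{par}}$ and the fixed parameters, not on the pair.

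Next I pass from limits to large finite $n$, arguing by contradiction. Suppose infinitely many low-distance $n$ admit no $\theta\in\Theta$ satisfying \eqref{eq:template-terminal}--\eqref{eq:template-source}. Extract a convergent subsequence and take the net point near the resulting $\theta_\infty$. Uniform convergence of $K_{\lfloor ur_j\rfloor}/r_j$ then gives \eqref{eq:template-terminal}, \eqref{eq:template-pin}, \eqref{eq:template-parent} and \eqref{eq:template-source} for large $j$, with the remaining $\tau_0/4$ margin absorbing rounding and the convergence error. For the pin test, the precision index $k$ ranges over all of $[0,n]$, so I use uniform convergence on $[0,1]$. The conditional bound \eqref{eq:template-conditional} is \eqref{eq:conditional-critical-parent} of Proposition~\ref{prop:conditional-parent-cap}, applied at $a=a_{\mathrm{par}}$. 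Its hypothesis $H_{f,g}(a_{\mathrm{par}},1)\ge\lambda$ holds because $a_{\mathrm{par}}>a_*$, the certificate holds, and $H$ is monotone in its lower endpoint. There is one subtlety: $a_{\mathrm{par}}$ depends on the selected template. Since $\Theta$ is finite, I apply that proposition once for each of the finitely many values of $a_{\mathrm{par}}$, which exceed every possible $a_*$ by at least $\xi_{\mathrm{par}}$ minus the mesh. This yields a contradiction.

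Finally, uniformity on a relation. Each test is a countable Boolean combination of finite-query conditions, so the set of pairs for which all tests hold with a given threshold $N_0$ for every $n\ge N_0$ is Borel. These sets increase to the whole relation as $N_0\to\infty$. As in the last step of Proposition~\ref{prop:conditional-parent-cap}, I fix $N_0$ retaining most of the product mass, and likewise fix the coding constant of Lemma~\ref{lem:measure-coding}. At each $n$, the pigeonhole principle over the $\#\Theta$ templates gives one template valid on at least a $\frac1{\#\Theta}$ fraction of the low-distance pair mass.

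I expect the main obstacle to be the branch bookkeeping near $\mathfrak e=\frac18$, together with the dependence of $a_{\mathrm{par}}$ on $\theta$ in \eqref{eq:template-conditional}. My first attempt is the one above: allow both labels on a boundary strip, and use finiteness of $\Theta$ to make the parent scale take only finitely many values.
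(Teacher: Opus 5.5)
Your overall route is the paper's: a finite net on the compact template space, a contradiction argument along an extracted subsequence using Proposition~\ref{prop:pin-plateau} and Lemmas~\ref{lem:compatible-hit} and~\ref{lem:source-envelope}, then Borel uniformization and a pigeonhole over $\Theta$.

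\textbf{The gap: your step for \eqref{eq:template-conditional}.} As written it does not work. Proposition~\ref{prop:conditional-parent-cap} at a fixed parent exponent $a$ assumes $H_{f,g}(a,1)\ge\lambda$ for \emph{every} limiting profile of the pair along low-distance precisions. Your justification is that each of the finitely many values $a_{\mathrm{par}}(\theta)$ exceeds every possible $a_*$, and this is false. Since $\varpi\in[1,2]$, a template with $\varpi_\theta=1$ (for instance $\varsigma=2$, $\chi=-1$) has $a_{\mathrm{par}}(\theta)=a_c+\omega+\xi_{\mathrm{par}}$. Lemma~\ref{lem:first-hit}, however, only gives $a_*\le a_c+2\omega$, and one fixed pair may have several different subsequential limits. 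Take a limit whose first hitting scale, and the critical competitor of cost below $\lambda$ found beneath it in the proof of Proposition~\ref{prop:pin-plateau}, both lie above $a_{\mathrm{par}}(\theta)$. For that limit $H_{f,g}(a_{\mathrm{par}}(\theta),1)\le\widehat H_{f,g}(a_{\mathrm{par}}(\theta),1)<\lambda$, so the proposition cannot be invoked at that exponent for all large $n$.

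\textbf{The repair.} Do what the paper does. Inside your contradiction argument, apply Lemma~\ref{lem:conditional-parent-transfer} directly along the extracted subsequence, with $a=a_{\mathrm{par}}(\theta)$ and $b=1$. That lemma needs $H_{f,g}(a_{\mathrm{par}}(\theta),1)\ge\lambda$ only for the one limit at hand, and you have this from $a_{\mathrm{par}}(\theta)>a_*$ and monotonicity of $H$ in its lower endpoint. The lemma conditions on $Q_{\lceil a_{\mathrm{par}}n\rceil}(y)$. That square computes its ancestor $C_n$, so the lower bound passes to $K_n^{A,x}(|x-y|\mid C_n)$ at cost $O(1)$. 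The convergence $K^A_{\lfloor a_{\mathrm{par}}n\rfloor}(y)/n\to f(a_{\mathrm{par}}(\theta))$ then handles the right side.

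\textbf{Two smaller points.} First, the limiting envelope of Lemma~\ref{lem:source-envelope} covers only $[a_c-\omega,a_c+\varpi_{\theta_\infty}\omega]$. The test \eqref{eq:template-source} runs up to $a_{\mathrm{par}}(\theta)$, which with your mesh can lie up to $2\xi_{\mathrm{par}}$ beyond that interval. On the extra piece you must use monotonicity of $f$, which loses at most its length. Your margins absorb this, but it has to be said. By contrast, the $2\xi_{\mathrm{par}}$ loss you charge to \eqref{eq:template-parent} is unnecessary: $a_{\mathrm{par}}(\theta)\ge a_*$ already gives $f(a_{\mathrm{par}}(\theta))\ge F$.

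Second, the branch bookkeeping you expected to be the main obstacle disappears with separate nets. Take one finite net in each of the two compact sets $\Omega\cap\{\mathfrak e\le\frac18\}$ and $\Omega\cap\{\mathfrak e\ge\frac18\}$, and approximate $\theta_\infty$ within its own piece. Attaching both labels to net points that are merely near $\mathfrak e=\frac18$ would produce labelled points that are not templates.
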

\begin{proof}
The tents, the number $\varpi$, and the coefficient
$(1+u_0+\mathfrak e)\omega$ are continuous on the compact branches
$\Omega\cap\{\mathfrak e\le\frac{1}{8}\}$ and
$\Omega\cap\{\mathfrak e\ge\frac{1}{8}\}$. Choose a finite net in
each branch such that every point of the branch has a net point with
tent and coefficient within $\frac{\tau_0}{3}$ and with
$\varpi\omega$ within $\frac{\xi_{\mathrm{par}}}{3}$.
Let $\Theta$ be
the union of the two nets.

Suppose a fixed pair failed all templates at infinitely many
low-distance precisions. Extract uniformly convergent source, pin,
and distance profiles along them, as in
Lemma~\ref{lem:joint-extraction}. The limiting profiles satisfy the
prefix and transfer estimates, $d_x(1)\le q$, and the certificate.
So they have the plateau, the bound $a_*\le a_c+\varpi_{\theta_0}\omega$,
and, if $\mathfrak e_{\theta_0}\le\frac{1}{8}$, the envelope
\eqref{eq:source-envelope}, for some $\theta_0\in\Omega$. Let
$\theta$ be a net point of the branch containing $\theta_0$ within
the prescribed errors. Its parent scale satisfies
$a_{\mathrm{par}}\ge a_c+\varpi_{\theta_0}\omega+\frac{2}{3}\xi_{\mathrm{par}}>a_*$,
hence $f(a_{\mathrm{par}})\ge F$, and
$H_{f,g}(a_{\mathrm{par}},1)\ge H_{f,g}(a_*,1)\ge\lambda$ because
$H_{f,g}(a,1)$ is nondecreasing in $a$. Also $f(1)\ge\lambda$.
Uniform convergence implies
\eqref{eq:template-terminal}--\eqref{eq:template-parent} at all
large precisions of the sequence, and
Lemma~\ref{lem:conditional-parent-transfer}, with
$a=a_{\mathrm{par}}$ and $b=1$, establishes \eqref{eq:template-conditional}.
Indeed, from the finer square $Q_{\lceil a_{\mathrm{par}}n\rceil}(y)$
and one extra bit one computes $C_n=Q_{\lfloor a_{\mathrm{par}}n\rfloor}(y)$.
Thus the complexity conditioned on $C_n$ is at least that conditioned
on the finer square minus $O(1)$. The source complexities at these
two precisions differ by $O(\log n)$, absorbed by the fixed tolerance.
For a template of branch~1, the envelope \eqref{eq:source-envelope}
holds on $[a_c-\omega,a_c+\varpi_{\theta_0}\omega]$.
On the remaining
part of the tested interval, of length at most
$\frac{4}{3}\xi_{\mathrm{par}}\le\frac{\tau_0}{3}$, monotonicity of $f$
loses at most this length. Including the net error proves
\eqref{eq:template-source} at all large precisions of the sequence.
This contradicts the assumed failures.

The eventual conditions are Borel, so the threshold and the
pair-dependent coding constants can be bounded on a
positive-measure subrelation, losing an arbitrarily small prescribed
amount of mass. Assign each pair to its first valid template in a
fixed ordering and retain a largest class at each precision.
\end{proof}

\begin{corollary}[The finite inputs]
\label{cor:finite-inputs}
In the setting of Lemma~\ref{lem:seventh-classical-recurrence}, let
$\lambda,F,a_c,W_0,q$ be as in \eqref{eq:level-notation}, suppose
that every limiting profile of every pair of $G$ along
low-distance precisions has the certificate \eqref{eq:certificate}
at level $\lambda$, and let the tolerances satisfy
\eqref{eq:tolerance-conditions}. After one fixed restriction of $G$
of positive pair mass, every sufficiently large low-distance
precision $n$ with mass $P_n$ admits a template $\theta\in\Theta$ and
masses $m,\upsilon_f,\upsilon_1,\ldots,\upsilon_{\mathfrak n}$ such
that the subrelation of $R_n$ on which $\theta$ is valid, the source
$\delta$-cell mass lies in $[m,2m)$, the pin $\delta$-cell mass lies
in $[\upsilon_f,2\upsilon_f)$, and the pin ancestor mass at side
$\rho_j$ lies in $[\upsilon_j,2\upsilon_j)$ for every $j$, has mass
\[
 p\ge\frac{cP_n}{n^{\mathfrak n+2}} .
\]
Lemma~\ref{lem:seventh-measure-finite-selection}, applied to this
subrelation with $\rho=\delta$, $m_-=m$, $m_+=2m$, $v_-=\upsilon_f$,
$v_+=2\upsilon_f$, the cap $b=\delta^{\lambda-\beta}$,
$M\le C\delta^{-q}$, and
$U_k=\delta^{\frac{k}{n}+T_\theta(\frac{k}{n})-\tau_0}$, supplies the finite
inputs \ref{F:counts}--\ref{F:pin-mass} of
Section~\ref{sec:circle-rows} with this $p$.
\end{corollary}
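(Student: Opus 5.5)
The plan is to verify the finite inputs in two stages: first produce a positive-mass subrelation at each low-distance precision on which one template and all cell masses are fixed, and then apply Lemma~\ref{lem:seventh-measure-finite-selection} and translate its conclusions into \ref{F:counts}--\ref{F:pin-mass}.

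For the first stage, start from the relation $G$ of Lemma~\ref{lem:seventh-classical-recurrence}. Lemma~\ref{lem:joint-template} gives one fixed Borel restriction of positive pair mass on which three things are uniform over pairs: the template threshold, the coding errors of Lemma~\ref{lem:measure-coding} (for both $\mu$ and $\nu$), and the thresholds of Proposition~\ref{prop:conditional-parent-cap} with parent scale $a_{\mathrm{par}}$, level $\lambda$ and tolerance $\zeta<\beta$. Now fix a large low-distance $n$ with mass $P_n$. Lemma~\ref{lem:joint-template} supplies a template $\theta\in\Theta$ valid on at least a $\frac{1}{\#\Theta}$ fraction of $R_n$. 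By the Frostman bounds, the masses of occupied source $\delta$-cells lie in $[\delta^{C},1]$, so they take only $O(n)$ dyadic values; the same holds for the pin $\delta$-cell masses and for each of the $\mathfrak n$ pin ancestor masses at sides $\rho_j$. Pigeonholing over these $O(n)^{\mathfrak n+2}$ classes leaves a subrelation of mass $p\ge cP_n n^{-\mathfrak n-2}$. Since $P_n\ge n^{-2}$, this gives $p\ge n^{-C}$.

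For the second stage, apply the selection lemma with the stated parameters and read off each input. \ref{F:counts}: the bounds $\#V\le m^{-1}$, $N\le\upsilon_f^{-1}$ and $I\ge cp/(m\upsilon_f)$ come directly from \eqref{eq:seventh-finite-counts}, and $N\ge cp/\upsilon_f$ follows from $N\ge I/(C\#V)$. At each pin there are at most $2^{qn+O(\log n)}$ short programs, so at most that many radial bins are occupied; this gives $M\le C\delta^{-q}$ after absorbing the logarithm into the tolerance $\beta$. \ref{F:angular}: this is \eqref{eq:seventh-finite-angular} together with \eqref{eq:seventh-finite-parent-cap}. The hypothesis $b$ of the selection lemma comes from Corollary~\ref{cor:all-parents-kept} with $b=\delta^{\lambda-\beta}$, where we use $\zeta\le\beta$.

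\ref{F:source-mass}: each retained source point carries the uniform coding identity. Combining it with \eqref{eq:template-parent} bounds every occupied $\eta$-square by $\delta^{F-\tau_0}$, up to a polynomial loss absorbed by shrinking $\tau_0$ slightly. In branch~1, \eqref{eq:template-source} gives the corresponding bound at every intermediate scale $a$. The inequality $m\le b$ follows from the cap applied to the single cell. \ref{F:pin-mass}: \eqref{eq:template-pin} and pin coding give $U_k$, and \eqref{eq:seventh-finite-pin-balls} then transfers it to the selected pins; the Frostman bound on balls with exponent $1+\varkappa$ is inherited from $\nu$.

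I expect the main obstacle to be uniformity. The coding estimates hold only almost surely with pair-dependent constants, and the template can change with $n$. The way around this is to fix once, before letting $n$ vary, one Borel subrelation on which all constants and thresholds are uniform. Then, at each $n$, we select only among finitely many templates and polynomially many mass classes, and we never renormalize $\mu$ or $\nu$.
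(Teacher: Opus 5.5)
Your overall plan is the paper's. You make one uniformizing restriction of $G$ via Lemma~\ref{lem:joint-template}, pigeonhole over the finitely many templates and the $\mathfrak n+2$ dyadic mass classes, and read \ref{F:counts}--\ref{F:pin-mass} off Lemma~\ref{lem:seventh-measure-finite-selection}. The gap is in how you verify hypothesis~(3) of that lemma, the parent--annulus cap at side $\eta=2^{-\lfloor a_{\mathrm{par}}n\rfloor}$.

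You take this cap from Proposition~\ref{prop:conditional-parent-cap} and Corollary~\ref{cor:all-parents-kept}. Those results are stated at one fixed parent exponent $a$. They require that every limiting profile of the pair, along \emph{all} low-distance precisions, satisfies $H_{f,g}(a,1)\ge\lambda$. Here, however, $a=a_{\mathrm{par}}(\theta)=a_c+\varpi_\theta\omega+\xi_{\mathrm{par}}$ depends on the template chosen at the current $n$. Another low-distance subsequence of the same pair can converge to a profile with a different template. The only bound on $a_*$ valid for all limits is $a_*\le a_c+2\omega$, from Lemma~\ref{lem:first-hit}. So when $\varpi_\theta<2$ the hypothesis of Proposition~\ref{prop:conditional-parent-cap} is unavailable, and you only obtain the cap at parent exponents above $a_c+2\omega$. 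This matches the remark after that proposition. The template's parent is coarser, so its cap is stronger and cannot be recovered from the finer one without a power loss.

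The missing idea is that the template and its parent scale must be chosen together at each precision, from the limit along that very subsequence. This is why \eqref{eq:template-conditional} is one of the tests defining validity of $\theta$ in Lemma~\ref{lem:joint-template}. Take a retained pair occupying $(C,J)$ and use that inequality at the given $n$. Combine it with the Kraft--Chaitin code for the radial distribution of $\mu|_C/\mu(C)$ and with the uniform base coding identity at the parent. The parent mass then cancels, giving $\mu\{z\in C:|x-z|\in J\}\le\delta^{\lambda-2\tau_0}\le b$ for every occupied parent and bin, since $3\tau_0\le\beta$.

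There are three smaller corrections.
\begin{enumerate}
\item The lower bound $\delta^{2+\tau_0}$ on occupied cell and ancestor masses is what gives each mass class only $O(n)$ values. It comes from the coding identity together with $K_k^A(z)\le 2k+O(\log k)$, not from the Frostman bounds, which only bound masses from above.
\item The inequality $m\le b$ follows most directly from \eqref{eq:template-terminal} and source coding.
\item The radial count has no logarithmic loss. Programs of length at most $qn$ number fewer than $2^{qn+1}$, so $M\le C\delta^{-q}$ exactly.
\end{enumerate}
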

\begin{proof}
Apply Lemma~\ref{lem:joint-template} with tolerance
$\frac{\tau_0}{2}$. Its uniformization, together with the fixed
restrictions of Lemma~\ref{lem:seventh-classical-recurrence}, produces
the relation $G$ on which the tests hold for all $n\ge N$. At each
low-distance precision retain a largest template class.

The coding identities and the bound $K_k^A(z)\le2k+O(\log k)$
give cell masses at least $\delta^{2+\tau_0}$ at every occupied
cell and ancestor for large $n$, and all masses are at most one, so
each of the $\mathfrak n+2$ dyadic mass classes has $O(n)$ values.
One product class carries mass at least $\frac{cP_n}{n^{\mathfrak n+2}}$.

The tests of Lemma~\ref{lem:joint-template} and the coding
identities give the mass bounds of \ref{F:source-mass}
and~\ref{F:pin-mass}: at an occupied source cell, parent, or
tested ancestor, and at an occupied pin ancestor, one retained pair
witnesses the bound for the original measure, and the coding error
$O(\log n)$ is below $\frac{\tau_0}{2}n$. In particular
$m\le\delta^{\lambda-\tau_0}\le b$ by
\eqref{eq:template-terminal} and $3\tau_0\le\beta$, and the parents
have mass at most $\delta^{F-\tau_0}$ by \eqref{eq:template-parent}, and the ancestor bound is \eqref{eq:template-pin}. The Frostman
bound in \ref{F:pin-mass} is the exponent of $\nu$.
The conditional test \eqref{eq:template-conditional} implies the
parent--annulus cap: the coding argument in the proof of
Proposition~\ref{prop:conditional-parent-cap} uses only the
conditional bound and the source coding identity, and yields
$\mu\{z\in C_n:|x-z|\in J\}\le\delta^{\lambda-2\tau_0}\le b$ for every
occupied parent and terminal radius bin, as in
Corollary~\ref{cor:all-parents-kept}. At most $2^{qn+O(1)}$
programs have length at most $qn$, and each output determines a
bounded number of terminal radial bins, so at most $C\delta^{-q}$
bins are occupied at each pin.
Throughout this count the entire
original pin remains available as an oracle. Every pin selected by
Lemma~\ref{lem:seventh-measure-finite-selection} belongs to the
retained subrelation and therefore inherits the template and all
mass classes. The remaining assertions are the conclusions of that
lemma.
The angular bound $L$ is fixed and is absorbed into the
constants. The selections are finite measure-theoretic operations
and introduce no new oracle.
\end{proof}

\begin{lemma}[Classical bad pins give one recurrent product relation]
\label{lem:seventh-classical-recurrence}
Let $K\subset\mathbb R^2$ be compact with $\dim_{\mathrm H} K>1$, and let
$q>0$.  If
\[
 B_q=\{x\in\mathbb R^2:\dim_{\mathrm H}\Delta_xK<q\}
\]
has Hausdorff dimension greater than one, then there are separated
compact source and pin supports, Frostman probability measures
$\mu,\nu$ on them whose exponents $1+\kappa<\dim_{\mathrm H}K$ and
$1+\varkappa<\dim_{\mathrm H}B_q$ may be prescribed, with
$\kappa,\varkappa>0$, one
oracle $A$, and a positive-measure Borel relation $G$ with all the
fixed-pair hypotheses, such that
\[
 G\subseteq\limsup_{n\to\infty}
 \{(x,y)\in\mathbb R^2\times\mathbb R^2:K_n^{A,x}(|x-y|)\le qn\}.
\]
The angular-density and fixed-threshold restrictions used above can
be imposed on $G$ while preserving positive measure.  For the resulting
relations $R_n$ and masses $P_n$, one has $\sum_nP_n=\infty$ and
$P_n\ge n^{-2}$ for infinitely many $n$. Every limiting source
profile of a pair of $G$ satisfies $f(u)\ge(1+\kappa)u$, every pin $x$
of a pair of $G$ satisfies $\dim^Ax\ge1+\varkappa$, and
$\nu(B(z,r))\le Cr^{1+\varkappa}$ for every ball.
\end{lemma}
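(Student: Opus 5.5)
We will first pick the measures and the oracle, then produce a Borel relation of bad pairs, and finally show that it recurs at common precisions.

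\textbf{Measures and oracle.} Since $\dim_{\mathrm H}K>1$ and $\dim_{\mathrm H}B_q>1$, Frostman's lemma gives compact subsets carrying Frostman probability measures $\mu$ and $\nu$ with the prescribed exponents $1+\kappa$ and $1+\varkappa$. The set $B_q$ is Borel, as shown in the proof of Theorem~\ref{thm:pinnedmain}. Restricting both measures to small positive-mass balls makes the supports separated, and renormalizing keeps them probability measures. Following Section~\ref{sec:classical}, let $A$ encode the supports and all dyadic masses. The source support is then effectively compact relative to $A$, and $\Delta_x$ of it is effectively compact relative to $(A,x)$~\cite[Proposition~7]{S}. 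For every pin $x$ in the support of $\nu$ we have $\dim_{\mathrm H}\Delta_x(\operatorname{supp}\mu)<q$, so the restricted point-to-set principle gives $\dim^{A,x}|x-y|<q$ for every source $y$. In particular $K_n^{A,x}(|x-y|)\le qn$ for infinitely many $n$, so every pair lies in the displayed $\limsup$.

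\textbf{The relation $G$.} As in the proof of Theorem~\ref{thm:pinnedmain}, Lemma~\ref{lem:measure-coding}, Orponen's theorem and Fubini give (H1)--(H4) for $(\nu\times\mu)$-almost every pair. Orponen's theorem also gives the angular density bound: the radial pushforward of $\mu$ has an $L^1$ density for $\nu$-a.e.\ pin, so truncating at a large level $L$ keeps most of the mass, and the truncated fibers have bounded angular density. The almost-sure coding identity gives $K_k^A(y)\ge-\log\mu(Q_k(y))-O(\log k)\ge(1+\kappa)k-O(\log k)$, so every limiting source profile satisfies $f(u)\ge(1+\kappa)u$. The same identity gives $\dim^Ax\ge1+\varkappa$. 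Because every condition involved is a countable Boolean combination of finite computations, it is Borel. Making the thresholds uniform on an increasing union and keeping one large stage yields a Borel $G$ of positive measure. The uniform-threshold restrictions of Proposition~\ref{prop:conditional-parent-cap} and Lemma~\ref{lem:joint-template} are imposed in the same way.

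\textbf{Recurrence.} Let $P_n=(\nu\times\mu)(R_n)$. Fatou's lemma applied to $\sum_n 1_{R_n}$, which is infinite on all of $G$, gives $\sum_nP_n=\infty$. Indeed, if the sum were finite, Borel--Cantelli would say that almost every pair lies in only finitely many $R_n$, contradicting $G\subseteq\limsup R_n$ with $(\nu\times\mu)(G)>0$. If instead $P_n<n^{-2}$ for all large $n$, the sum would converge. So $P_n\ge n^{-2}$ for infinitely many $n$.

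\textbf{Main obstacle.} The delicate step is the quantifier order: the oracle $A$, and hence $G$, must be fixed before the restrictions, and the point-to-set bound has to hold for \emph{every} pin of the support, not only almost every one. That forces us to use the effectively compact form of the principle relative to $(A,x)$ rather than the general one. We also need to check that the bad-pin property passes to compact subsets of $K$ and of $B_q$; it does, because every bad pin for $K$ is a bad pin for any subset of $K$.
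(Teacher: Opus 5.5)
Your proposal is correct and follows essentially the same route as the paper. Both arguments use the same ingredients: Borel measurability of $B_q$, Frostman measures on separated compact pieces, and one oracle encoding both supports and all dyadic masses. They then apply the effectively compact point-to-set principle relative to $(A,x)$ at every pin, and use the coding lemma, Orponen's theorem and Fubini for (H1)--(H4) and the angular truncation. Your Fatou/Borel--Cantelli step is equivalent to the paper's Tonelli identity $\sum_nP_n=\int_G\sum_n1_{R_n}\,d(\nu\times\mu)=\infty$.
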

\begin{proof}
The map $x\mapsto\Delta_xK$ is continuous into the space of compact
sets with the Hausdorff metric.  The condition that a compact set
has dimension less than $q$ is Borel. Express it using a rational
exponent below $q$ and, for every integer $m$, the existence of a
finite rational open cover with diameters and total exponent cost
less than $\frac{1}{m}$.  Thus $B_q$ is Borel.
Frostman's lemma and compact extraction give probability measures
with the prescribed exponents on compact subsets of $K$ and of
$B_q$.  By restricting to two disjoint small balls carrying positive
mass, and then to compact positive-mass subsets, arrange separated
supports.
The exponents are unchanged.  Denote the compact source
support by $K'\subset K$.

Use the encoding described in Section~\ref{sec:classical} to choose
one oracle $A$ making the fixed compact supports effectively compact
and both measures computable, including their dyadic masses. The same
oracle is used for every pin and every precision. Lemma~\ref{lem:measure-coding}, first with
$B=(A,x)$ and then with the endpoints exchanged, establishes both
full-oracle independence estimates on a product-full Borel relation,
by Fubini. It also proves the base-oracle dyadic mass identities.
The Frostman bounds give both effective dimensions
greater than one.
More precisely, the source coding identity and
$\mu(Q_k(y))\le C2^{-(1+\kappa)k}$ give
$K_k^A(y)\ge(1+\kappa)k-O_y(\log(k+2))$, so every limiting source
profile satisfies $f(u)\ge(1+\kappa)u$. The same argument with $\nu$
gives $K_k^A(x)\ge(1+\varkappa)k-O_x(\log(k+2))$, hence
$\dim^Ax\ge1+\varkappa$, for $\nu$-almost every pin. Include this
condition in the product-full relation. Radial absolute continuity and the arclength-null
angular complexity deficiency sets give the directional hypotheses
for both choices of pin, as in Proposition~\ref{prop:fixed-pair}.
This establishes the fixed-pair hypotheses for arbitrary computable
Frostman measures, not only for normalized Hausdorff restrictions.
For each pin in the chosen support, $\Delta_xK'$ is effectively
compact relative to $(A,x)$.  The restricted point-to-set principle
therefore implies, for every $y\in K'$,
\[
 \dim^{A,x}|x-y|\le\dim_{\mathrm H}\Delta_xK'
 \le\dim_{\mathrm H}\Delta_xK<q.
\]
Consequently every such pair has infinitely many scales with
$K_n^{A,x}(|x-y|)\le qn$.  This uses one common oracle $A$.
No family of separately chosen pin-dependent point-to-set oracles
is being encoded.

Intersect the full-measure good-pair relation with the bounded radial
density restriction and then the fixed-threshold restrictions
described above.  The resulting fixed relation $G$ has positive pair
measure, and every pair still has infinitely many low-distance scales.
Tonelli's theorem implies
$\sum_nP_n=\int_G\sum_n1_{R_n} d(\nu\times\mu)=\infty$.
Since $\sum_n n^{-2}<\infty$, infinitely many $P_n$ are at least
$n^{-2}$.
\end{proof}

We have obtained a common finite source set, separated actual pins,
and assigned circle incidences with the required angular and parent
bounds. The preceding circle and bisector estimates now apply to this configuration.
We complete the parameter calculation in the next section.

\section{The universal pinned improvement}
\label{sec:universal-pinned}

All parts of the pinned argument are now available. We first
record the exponents of the configuration produced by
Corollary~\ref{cor:finite-inputs} and the selections of
Sections~\ref{sec:circle-rows} and~\ref{sec:finite-incidence}, then
show that they are incompatible with the two tube counts. The
parameters are chosen before the set and its measures. The measures
determine how far out in the precision sequence we must go, but do
not change the threshold.

\subsection{The exponents of the configuration}

Fix $\frac{1}{3}<\alpha<\alpha_*$ with a row length $s$ as in
Section~\ref{sec:finite-incidence}, and put
\begin{equation}\label{eq:c-alpha}
 c_\alpha=3\alpha-1\in\left(0,\frac{1}{4}\right),\qquad 
 r_\alpha=\frac{3\alpha}{40-12\alpha}<\frac{1}{24}.
\end{equation}
Let $0\le\Delta<\frac{1}{2}$ and $0<\omega<\frac{1-2\Delta}{24}$.
Then Section~\ref{sec:plateau} applies,
and $\lambda,F,a_c,W_0,q$ are given by \eqref{eq:level-notation}.
Let $\theta$ be the template, $a\le c\le a+\xi$ the row exponents,
$\zeta=1-c$, $\rho\asymp\delta^{\zeta}$, and
$\mathfrak h=a_{\mathrm{par}}-a$, as in
Propositions~\ref{prop:homogeneous-rows}
and~\ref{prop:second-pruning}.
Put $t=u_+-u_-$ and write the pin ancestor
mass at the scale $\rho$ as $\upsilon_\zeta=\delta^{\gamma}$. Let
$\tau>0$ absorb the template, mass-class, coding, and polynomial
losses: we take $\tau_0\le\frac{\tau}{2}$ and
$\xi,\xi_{\mathrm{par}}\le\tau$, and use that every factor
$\lesssim_{\mathrm{pol}}$ is at most $\delta^{-\tau}$ for large $n$
along the recurrent precisions. Let $P$ be the pin set of
Lemma~\ref{lem:pin-coarsening}, $\mathcal T$ the tube family of
Proposition~\ref{prop:tube-family}, and $k$ the number of points of
$P$ in each tube. For all sufficiently large recurrent $n$,
\begin{align}
 \#P&\le\delta^{-\gamma},&
 \gamma&\ge\max\{\zeta,u_++W\}-\tau,
 \label{eq:N-exponent}\\
 H&\le\delta^{-\iota_H},&
 \iota_H&=\gamma-u_+-W+\tau\ge0,
 \label{eq:H-exponent}\\
 k&\ge\delta^{-\iota_k},&
 \iota_k&=\gamma-\mathfrak r-\beta-\tau,\qquad 
 \mathfrak r=\frac{1}{2}+\min\{2\omega,\psi(\mathfrak h)\},
 \label{eq:k-exponent}\\
 1-a_{\mathrm{par}}&\le\zeta\le1-a_0,
 \label{eq:zeta-range}\\
 \#\mathcal T&\gtrsim\delta^{-\Lambda},&
 \Lambda&=\frac{(1+3s_{\mathrm{act}})\zeta}{2}-3\vartheta-\epsilon_{\mathrm{RW}},
 \label{eq:lower-count}
\end{align}
where $H$ is the reciprocal sum \eqref{eq:reciprocal-hypothesis} for
$P$ at $R=\delta^{u_-}$ and $h=\delta^{u_+}$, and $s_{\mathrm{act}}$ is
given by \eqref{eq:s-act}.

For \eqref{eq:N-exponent}, $\#P\le\upsilon_\zeta^{-1}$ is
\eqref{eq:coarse-counts}. The $\rho$-cell of a pin has mass at least
$\upsilon_\zeta$.
It lies in a ball of radius $2\rho$, so
$\upsilon_\zeta\le C\rho^{1+\varkappa}\le C\delta^{\zeta}$ by
\ref{F:pin-mass}, and it is nested in the ancestor of side
$2^{-\lfloor u_+n\rfloor}$, because
$1-a_{\mathrm{par}}-u_+\ge W_0-(\varpi+\varsigma-\mathfrak e)\omega-\xi_{\mathrm{par}}>0$
by \eqref{eq:plateau-consequences} and \eqref{eq:plateau-polytope},
and that ancestor has mass at most $\delta^{u_++W-\tau_0-o(1)}$ by
\ref{F:pin-mass}. For \eqref{eq:H-exponent}, let $z\in P$ and
$\rho\le r\le1$. The ball $B(z,r+\sqrt2\rho)$ is covered by $O(1)$
dyadic squares of side comparable to $r$, and those containing a
point of $P$ are ancestors of pins of $X$.
The $\rho$-cells of the counted points are disjoint, each has mass
at least $\upsilon_\zeta=\delta^\gamma$, and each lies in one of
these occupied ancestor squares, whose masses are at most
$C\delta^{u+T_\theta(u)-\tau_0}$ by \ref{F:pin-mass}. Summing only
their masses, we obtain
$\#(P\cap B(z,r))\le C\delta^{u+T_\theta(u)-\tau_0-\gamma}$ for
$r=\delta^{u}$, which is at most $Cr\delta^{W-\tau_0-\gamma}$ for
$h\le r\le3R$. Summing over the dyadic annuli
$2^{i}h\le|z-z'|<2^{i+1}h$, $0\le i\le n$, and the ball $B(z,h)$,
the reciprocal sum is at most $Cnh\delta^{W-\tau_0-\gamma}$,
which is \eqref{eq:H-exponent}.
$\iota_H\ge0$ by
\eqref{eq:N-exponent}. For \eqref{eq:k-exponent},
\eqref{eq:k-from-kf}, \eqref{eq:fine-richness}, and
$N\upsilon_f\ge cp$ from \ref{F:counts} give
\[
 k\gtrsim_{\mathrm{pol}}\frac{1}{\upsilon_\zeta}
 \max\Bigl\{\frac{\delta^{2q}}{b}, b\delta^{-F+\tau_0+\psi(\mathfrak h)}\Bigr\}
 =\delta^{-\gamma}\max\bigl\{\delta^{\frac{1}{2}+2\omega+\beta},
 \delta^{\frac{1}{2}+\psi(\mathfrak h)-\beta+\tau_0}\bigr\},
\]
since $2q-\lambda=\frac{1}{2}+2\omega$ and $\lambda-F=\frac{1}{2}$.
For $0<\delta<1$, the map $u\mapsto\delta^u$ is decreasing, so
\[
\max\{\delta^{u_1},\delta^{u_2}\}=\delta^{\min\{u_1,u_2\}}.
\]
Thus the maximum in the richness bound corresponds to the minimum in
$\mathfrak r$. Since $\tau_0\le\frac{\beta}{3}$ by
\eqref{eq:tolerance-conditions}, we have
\[
\min\{2\omega+\beta,\psi(\mathfrak h)-\beta+\tau_0\}
\le\min\{2\omega,\psi(\mathfrak h)\}+\beta.
\]
Consequently $k\gtrsim_{\mathrm{pol}}\delta^{-\gamma+\mathfrak r+\beta}$.
Absorbing the polynomial loss into $\delta^\tau$ yields
\eqref{eq:k-exponent}.
Estimate \eqref{eq:zeta-range} is $a_0<c\le a_{\mathrm{par}}$, and
\eqref{eq:lower-count} is \eqref{eq:RW-lower}.

\begin{proposition}[Strict exponent comparison]\label{prop:strict-budget}
The finite configuration is impossible for all sufficiently small
$\delta$ whenever the fixed positive parameters satisfy
\begin{align}
 \frac{3\alpha}{4}(1-2\Delta)-(10-3\alpha)\omega
 &>3\beta+7\tau+\sigma+3\vartheta+\epsilon_{\mathrm{RW}},
 \label{eq:strict-budget}\\
 W_0-6\omega&>\beta+4\tau+2\sigma.
 \label{eq:threshold-budget}
\end{align}
Both hold for every $\omega<r_\alpha(1-2\Delta)$ once
$\sigma,\beta,\tau,\vartheta,\epsilon_{\mathrm{RW}}>0$ are
sufficiently small.
\end{proposition}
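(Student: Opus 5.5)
The plan is to compare the Ren--Wang lower count \eqref{eq:lower-count} with an upper count for the same tube family from Lemma~\ref{lem:rectangular-sampling}. We apply that lemma to the set $P$ of Lemma~\ref{lem:pin-coarsening} with $R=\delta^{u_-}$ and $h=\delta^{u_+}$. The tubes of Proposition~\ref{prop:tube-family} are parameter-separated and each contains at least $k$ points of $P$. We also use $H\le\delta^{-\iota_H}$ from \eqref{eq:H-exponent}, $k\ge\delta^{-\iota_k}$ from \eqref{eq:k-exponent}, and $\#P\le\delta^{-\gamma}$.

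The first step is the sampling threshold \eqref{eq:sampling-threshold}. Since $Rh=\delta^{u_-+u_+}$ and $\rho\asymp\delta^{\zeta}$, it suffices that
\[
\iota_k-\iota_H>\max\{0,(1-2\sigma)\zeta-u_--u_+\}+\tau.
\]
Here $\iota_k-\iota_H=u_++W-\mathfrak r-\beta-2\tau$, and $\mathfrak r\le\frac{1}{2}+2\omega$. We also have $\zeta\le1-a_0$, and $u_-+u_+\ge 2W+t$ with $t=a_c+\chi\omega$, so the bound reduces to a linear inequality in $W_0,\omega$ and the tolerances. Using \eqref{eq:plateau-parameters} and \eqref{eq:plateau-consequences}, namely $W\ge W_0-2\omega$ and $u_++2W-\frac{1}{2}$ bounded below, this inequality should become exactly \eqref{eq:threshold-budget}. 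If that reduction closes, the condition $\rho\le c_0h\le c_0R$ also follows, from $\zeta\ge 1-a_{\mathrm{par}}>u_+$, which was already shown in the proof of \eqref{eq:N-exponent}.

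The second step bounds the tube count. Lemma~\ref{lem:rectangular-sampling} gives
\[
\#\mathcal T\lesssim\delta^{-\sigma\zeta}\,\delta^{-\gamma+u_-+u_+}\bigl(\delta^{-2\iota_H+3\iota_k}+\delta^{u_--\iota_H+2\iota_k}\bigr).
\]
Both terms are decreasing in $\gamma$ once $\iota_H,\iota_k$ are substituted: the exponents of $\delta$ are $2\gamma-(\ldots)$ and $\gamma-(\ldots)$, and they increase with $\gamma$. We may therefore put $\gamma$ at its least value $\max\{\zeta,u_++W\}-\tau$. We then need the resulting exponent to exceed $\Lambda=\frac{(1+3s_{\mathrm{act}})\zeta}{2}-O(\vartheta+\epsilon_{\mathrm{RW}})$. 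With $s_{\mathrm{act}}=\alpha+(1-\alpha)\frac{a}{\zeta}$, we have $\Lambda\approx\frac{(1+3\alpha)\zeta}{2}+\frac{3(1-\alpha)a}{2}$, which is linear in $\zeta,a$. The contradiction becomes a family of linear inequalities in $(\zeta,a,u_-,W,\chi,\varsigma,\mathfrak e,\mathfrak h)$, constrained by the polytope \eqref{eq:plateau-parameters}, by \eqref{eq:zeta-range}, and by $a=a_{\mathrm{par}}-\mathfrak h$ together with the two cases of $\mathfrak r$.

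The main obstacle is this linear-programming verification: showing that over the whole compact polytope the minimum of (upper exponent) minus $\Lambda$ is at least the left side of \eqref{eq:strict-budget} minus the listed losses. My first attempt would be to check vertices. The expression is a maximum of two affine functions from the FGR terms, taken against an affine $\Lambda$, so the worst case should sit at $\zeta$ extreme ($\zeta=1-a_{\mathrm{par}}$, $a=a_{\mathrm{par}}$), $\mathfrak h=0$, $\mathfrak r=\frac{1}{2}+\min\{2\omega,\psi\}$, and $\chi,\varsigma$ at the corners allowed by \eqref{eq:plateau-polytope}. In the critical case $\omega=0$ the gap should come out to $\frac{3\alpha}{4}(1-2\Delta)$, with each unit of $\omega$ costing at most $10-3\alpha$, which gives \eqref{eq:strict-budget}. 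The tolerances enter linearly: $3\beta$ comes from $\iota_k$, $7\tau$ from the various absorptions, $\sigma$ from FGR, and $3\vartheta+\epsilon_{\mathrm{RW}}$ from $\Lambda$. The final claim is then immediate. At $\omega=r_\alpha(1-2\Delta)$ one has $\frac{3\alpha}{4}(1-2\Delta)-(10-3\alpha)\omega=r_\alpha(1-2\Delta)\cdot\frac{3\alpha}{4r_\alpha}-\cdots$, which vanishes by the definition of $r_\alpha$, so the left side of \eqref{eq:strict-budget} is positive for smaller $\omega$. Also $W_0-6\omega>W_0-\frac{1-2\Delta}{4}=0$ since $r_\alpha<\frac{1}{24}$, so both strict inequalities survive once the small parameters are small.
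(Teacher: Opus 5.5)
Your architecture matches the paper's: Lemma~\ref{lem:rectangular-sampling} is applied to a $\rho$-separated residue class of $P$ with $R=\delta^{u_-}$, $h=\delta^{u_+}$, and compared with \eqref{eq:lower-count}. But the step you defer as the main obstacle is the entire content of the proposition, and your plan for it would fail. After correct substitutions everything reduces to
\[
L-U_1=\frac{3\alpha}{4}(1-2\Delta)-C_1\omega,\qquad
C_1=3f_0(\mathfrak u)+2\varsigma+\chi-2\mathfrak e+c_\alpha(\varpi-\mathfrak u).
\]
Here $\mathfrak u=\mathfrak h/\omega$, $f_0(\mathfrak u)=\omega^{-1}\min\{2\omega,\psi(\mathfrak h)\}$, $U_1=3\mathfrak r-t-2W$, and $L$ is the Ren--Wang exponent. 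You must show $C_1\le10-3\alpha$ on every template, and this needs two things your plan lacks.

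First, you need the coupling $\zeta=1-c\ge1-a-\xi$ of Proposition~\ref{prop:homogeneous-rows}. It turns $\Lambda$ into $\frac{1+3\alpha}{2}+(1-3\alpha)a$, which gains $c_\alpha\mathfrak h$. If $\zeta$ is constrained only by \eqref{eq:zeta-range}, as in your list, the worst case loses $\frac{1+3\alpha}{2}\mathfrak h$. The coefficient then reaches $11$ at $(\varsigma,\chi,\mathfrak e,\mathfrak u)=(2,-1,0,2)$.

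Second, you need the template branches. Your guessed extremum $\mathfrak h=0$ is the most favourable point, since there $\mathfrak r=\frac{1}{2}$ and $C_1\le3+2c_\alpha$. Because $3f_0-c_\alpha\mathfrak u$ increases until $f_0$ reaches $2$, the binding point is the first such $\mathfrak u_*$. The template $(2,-1,0,2)$ attains exactly $10-3\alpha$. If you used $\psi(\mathfrak h)=2\mathfrak h$ for all templates, $(\varsigma,\chi,\mathfrak e)=(2,-1,0)$ would give $\mathfrak u_*=1$ and $C_1=9>10-3\alpha$. So the claimed coefficient is false without further input.

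The paper gets that input by splitting into branches.
In branch~1 the source envelope of Lemma~\ref{lem:source-envelope} enters through \eqref{eq:thinned-occupancy} as $\psi(\mathfrak h)=\min\{2\mathfrak h,\mathfrak h+e_0\omega\}$. This pushes saturation to $\mathfrak u_*=\max\{1,2-e_0\}$. There are two subcases: $e_0\le1$, using $\varpi-\mathfrak u_*=u_0+\mathfrak e-1$ and $2\varsigma+\chi\le3-\frac{u_0}{4}$; and $e_0>1$, using $\varpi=2$ and $u_0\ge2-\mathfrak e$.
In branch~2 the constraint $\mathfrak e\ge\frac{1}{8}$ supplies the needed slack.
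A vertex check at $\mathfrak h=0$ sees none of this.

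Your bookkeeping also needs correcting.
\begin{enumerate}
\item Since $N\le\delta^{-\gamma}$ and $Rh=\delta^{u_-+u_+}$, the prefactor in \eqref{eq:rich-tubes} is $\delta^{-\gamma-u_--u_+}$, not $\delta^{-\gamma+u_-+u_+}$.
\item After inserting $\iota_H=\gamma-u_+-W+\tau$ and $\iota_k=\gamma-\mathfrak r-\beta-\tau$, the variable $\gamma$ cancels identically; there is no $\gamma$-dependence to optimize. What remains are the exponents $U_1+3\beta+5\tau$ and $U_2+2\beta+3\tau$, with $U_2=2\mathfrak r-W$.
\item Since $a_c+W_0=\frac{1}{2}$, one has $U_1-U_2=\mathfrak r-t-W=\min\{2\omega,\psi(\mathfrak h)\}+(\varsigma-\mathfrak e)\omega\ge0$. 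So only $U_1$ needs comparing.
\item In \eqref{eq:sampling-threshold} the relevant exponent is $u_-+u_+-(1-2\sigma)\zeta$, not its negative. The lower bound $u_-+u_+\ge2W+t$ and upper bound $\zeta\le1-a_0$ you propose therefore bound the wrong quantity. What you need is the upper bound $u_--W\le(2\varsigma-2\mathfrak e)\omega$, together with $\zeta\ge1-a_{\mathrm{par}}+\mathfrak h-\xi$, the inequality $2\varsigma+\varpi\le5$ from \eqref{eq:plateau-polytope}, and $\mathfrak h-\min\{2\omega,\psi(\mathfrak h)\}\ge-\omega$. These give the margin $W-u_-+\zeta-\mathfrak r-\beta-2\tau\ge W_0-6\omega-\beta-4\tau$, which is where \eqref{eq:threshold-budget} comes from.
\end{enumerate}
Your final check that both inequalities hold for $\omega<r_\alpha(1-2\Delta)$ is correct.
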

\begin{proof}
We begin with the upper count. Split $P$ into the nine classes of
$\rho$-cells with fixed indices modulo three. Each class is
$\rho$-separated, every tube of $\mathcal T$ contains at least
$\frac{k}{9}$ points of one class, and one class does so for at least
a ninth of the tubes. Apply Lemma~\ref{lem:rectangular-sampling} to
that class, with $\rho$, $R=\delta^{u_-}$, $h=\delta^{u_+}$, richness
$\frac{k}{9}$, and the bound $H$, which dominates the reciprocal sum
of the class.
The parameter-separated family of width-$C\rho$ tubes
containing $\mathcal T$ has cardinality $\gtrsim\#\mathcal T$ by
Proposition~\ref{prop:tube-family}. The two exponents of
$\delta^{-1}$ in \eqref{eq:rich-tubes} are
\[
 \gamma+u_-+u_++2\iota_H-3\iota_k=U_1+3\beta+5\tau,\qquad 
 \gamma+u_++\iota_H-2\iota_k=U_2+2\beta+3\tau,
\]
where
\[
 U_1=3\mathfrak r-t-2W,\qquad  U_2=2\mathfrak r-W;
\]
the exponent $\gamma$ cancels from both. Moreover
$U_1-U_2=\mathfrak r-t-W=\min\{2\omega,\psi(\mathfrak h)\}+(\varsigma-\mathfrak e)\omega\ge0$
by \eqref{eq:plateau-parameters} and $a_c+W_0=\frac{1}{2}$. Therefore
\begin{equation}\label{eq:tube-upper}
 \#\mathcal T\lesssim_\sigma\delta^{-U_1-3\beta-5\tau-\sigma}.
\end{equation}
The hypotheses of the lemma have positive power margins. By
\eqref{eq:plateau-parameters}, \eqref{eq:plateau-consequences},
\eqref{eq:plateau-polytope}, \eqref{eq:zeta-range}, and
$\mathfrak r\le\frac{1}{2}+2\min\{\omega,\mathfrak h\}$,
\begin{align*}
 \zeta-u_+&\ge W_0-(\varpi+\varsigma-\mathfrak e)\omega+\mathfrak h
 -\xi_{\mathrm{par}}-\xi\ge W_0-4\omega-2\tau,\\
 \iota_k-\iota_H&=u_++W-\mathfrak r-\beta-2\tau
 \ge W_0-5\omega-\beta-2\tau,\\
 \iota_k-\iota_H-(u_-+u_+-\zeta)&=W-u_-+\zeta-\mathfrak r-\beta-2\tau
 \ge W_0-6\omega-\beta-4\tau,
\end{align*}
using $u_++W\ge t+2W$, $2\varsigma+\chi\le3$, $\varpi+\varsigma\le\frac{7}{2}$,
$2\varsigma+\varpi\le5$, and $\mathfrak h-2\min\{\omega,\mathfrak h\}\ge-\omega$.
The first implies $\frac{\rho}{h}\to0$, the second
$\frac{k}{H}\to\infty$, and the third with
\eqref{eq:threshold-budget} implies
$\frac{kRh}{H\rho^{1-2\sigma}}\to\infty$, which is
\eqref{eq:sampling-threshold} with room for the constants, the
factor $\frac{1}{9}$, and dyadic rounding.

For the lower count, by \eqref{eq:lower-count}, \eqref{eq:s-act},
$\zeta\ge1-a-\xi$, and $a=a_c+\varpi\omega+\xi_{\mathrm{par}}-\mathfrak h$,
\[
 \Lambda\ge\frac{1+3\alpha}{2}+(1-3\alpha)a-\frac{1+3\alpha}{2}\xi
 -3\vartheta-\epsilon_{\mathrm{RW}}
 \ge L-2\tau-3\vartheta-\epsilon_{\mathrm{RW}},
\]
where
\begin{equation}\label{eq:L}
 L=\frac{3}{4}(1+\alpha)+\Bigl(\frac{1}{2}-\frac{3\alpha}{2}\Bigr)\Delta
 -c_\alpha\varpi\omega+c_\alpha\mathfrak h .
\end{equation}

We compare the two exponents. Put $\mathfrak u=\frac{\mathfrak h}{\omega}\ge0$ and
$f_0(\mathfrak u)=\frac{1}{\omega}\min\{2\omega,\psi(\mathfrak h)\}$, so that
$f_0(\mathfrak u)=\min\{2,2\mathfrak u,\mathfrak u+e_0\}$ in branch~1 and $f_0(\mathfrak u)=\min\{2,2\mathfrak u\}$ in
branch~2.
In branch~1 with $a<a_c-\omega$ both expressions equal $2$,
since then $\mathfrak u>2$, as noted after Proposition~\ref{prop:second-pruning}.
By \eqref{eq:plateau-consequences},
\[
 U_1=\frac{3}{4}+\frac{\Delta}{2}+[3f_0(\mathfrak u)+2\varsigma+\chi-2\mathfrak e]\omega,\qquad
 L-U_1=\frac{3\alpha}{4}(1-2\Delta)-C_1(\mathfrak u)\omega,
\]
with
\[
 C_1(\mathfrak u)=3f_0(\mathfrak u)+2\varsigma+\chi-2\mathfrak e+c_\alpha(\varpi-\mathfrak u).
\]
We claim that $C_1(\mathfrak u)\le9-c_\alpha=10-3\alpha$ for all $\mathfrak u\ge0$. The
function $3f_0(\mathfrak u)-c_\alpha \mathfrak u$ increases until $f_0$ reaches $2$ and
decreases afterwards, so its maximum is at the first point $\mathfrak u_*$ with
$f_0(\mathfrak u_*)=2$, and $C_1(\mathfrak u)\le6+2\varsigma+\chi-2\mathfrak e+c_\alpha(\varpi-\mathfrak u_*)$.
In branch~1, $\mathfrak u_*=\max\{1,2-e_0\}$. If $e_0\le1$, then
$\varpi-\mathfrak u_*=\varpi-2+e_0=u_0+\mathfrak e-1$ by \eqref{eq:u0-varpi},
and $2\varsigma+\chi\le3-\frac{u_0}{4}$ yields
\[
 C_1(\mathfrak u)\le9-c_\alpha+\Bigl(c_\alpha-\frac{1}{4}\Bigr)u_0+(c_\alpha-2)\mathfrak e
 \le9-c_\alpha .
\]
If $e_0>1$, then $u_0>1$, $\varpi=2$, $\mathfrak u_*=1$, and
$u_0=e_0+1-\mathfrak e\ge2-\mathfrak e$, so
\[
 C_1(\mathfrak u)\le9-\frac{u_0}{4}-2\mathfrak e+c_\alpha
 \le9-\frac{1}{2}+c_\alpha\le9-c_\alpha ,
\]
because $2c_\alpha<\frac{1}{2}$. In branch~2, $\mathfrak u_*=1$ and
$\varpi-1=\min\{1,u_0\}$, so
\[
 C_1(\mathfrak u)\le9-\frac{u_0}{4}-2\mathfrak e+c_\alpha\min\{1,u_0\}
 \le9-2\mathfrak e\le9-\frac{1}{4}\le9-c_\alpha,
\]
because $c_\alpha\min\{1,u_0\}\le\frac{u_0}{4}$ and
$\mathfrak e\ge\frac{1}{8}$. This proves the claim, hence
\begin{equation}\label{eq:leading-margin}
 L-U_1\ge\frac{3\alpha}{4}(1-2\Delta)-(10-3\alpha)\omega .
\end{equation}
Comparing \eqref{eq:lower-count} with \eqref{eq:tube-upper}, the
exponent of $\delta^{-1}$ in the lower bound exceeds that in the
upper bound by at least the left side of \eqref{eq:strict-budget}
minus its right side. This is impossible as $\delta\to0$. Finally,
if $\omega<r_\alpha(1-2\Delta)$, the left side of
\eqref{eq:strict-budget} is positive, and $\omega<\frac{W_0}{6}$
makes $W_0-6\omega$ positive.
Choosing the loss parameters
successively small proves the last assertion.
\end{proof}

The coefficient $10-3\alpha$ is sharp for these scalar inequalities:
$\varsigma=2$, $\chi=-1$, $u_0=\mathfrak e=0$, $\varpi=1$, and $\mathfrak u=2$
give $C_1(\mathfrak u)=9-c_\alpha$. Whether a configuration with these
parameters exists is a different question, which we do not address.

\subsection{Proof of the theorem}

\begin{proof}[Proof of Theorem~\ref{thm:universal-main}]
We first prove the bound $q_*$. Fix $\frac{1}{3}<\alpha<\alpha_*$
with a row length $s$ such that $f_s(n)\ge cn^{\alpha}$, and
$0<\omega<r_\alpha$.
Put $\Delta=0$, so that $\lambda=1$,
$F=\frac{1}{2}$, $a_c=W_0=\frac{1}{4}$, and $q=\frac{3}{4}+\omega$.
Choose $\sigma,\vartheta,\epsilon_{\mathrm{RW}}>0$ small, then the
fixed finite direction grid of Proposition~\ref{prop:tube-family}
covering all $s_{\mathrm{act}}\in[\alpha,1)$, and let $\eta_*>0$ be
the minimum tolerance of Theorem~\ref{thm:RW} over this grid at $t=1$
and $\epsilon=\epsilon_{\mathrm{RW}}$. Choose $\beta,\tau>0$ satisfying
\eqref{eq:strict-budget}--\eqref{eq:threshold-budget}, then
$\tau_0\le\frac{\tau}{2}$ and $\xi,\xi_{\mathrm{par}},\xi_{\mathrm{cut}}\le\tau$
satisfying \eqref{eq:tolerance-conditions} and
$\alpha\max\{\xi,\xi_{\mathrm{cut}}\}<\min\{\frac{\vartheta}{4},\frac{\eta_*}{4}\}$, the
candidate exponents \eqref{eq:candidate-exponents} with their number
$\mathfrak n$, the net $\Theta$ of Lemma~\ref{lem:joint-template},
the row lengths $s_1$ of Lemma~\ref{lem:flow} and $s_0$ of
Proposition~\ref{prop:homogeneous-rows}. All these choices precede
the set and its measures.

Let $K$ be compact with $\dim_{\mathrm H}K>1$, and suppose that its
bad-pin set at threshold $q$ has dimension greater than one.
Lemma~\ref{lem:seventh-classical-recurrence} supplies separated Frostman
measures $\mu,\nu$ with exponents greater than one, one oracle $A$,
and one positive-measure relation $G$ whose low-distance relations
have masses $P_n\ge n^{-2}$ at arbitrarily large precisions. Every
pair of $G$ has the certificate at level $1$
(Section~\ref{sec:plateau}). Corollary~\ref{cor:finite-inputs}
supplies, at each such precision, a template, mass classes with
$p\ge cn^{-\mathfrak n-4}$, and the finite inputs
\ref{F:counts}--\ref{F:pin-mass}. The scale conditions of
Section~\ref{sec:circle-rows} hold: $\frac{\rho_{\min}}{bn}\to\infty$
and $\frac{\delta^{2q}}{b}\to0$ by \eqref{eq:tolerance-conditions},
and \eqref{eq:coarsening-margin} follows from
\eqref{eq:threshold-budget}. Propositions~\ref{prop:rows-common-richness},
\ref{prop:homogeneous-rows}, and~\ref{prop:second-pruning},
Lemma~\ref{lem:pin-coarsening}, Lemma~\ref{lem:flow}, and
Proposition~\ref{prop:tube-family} produce the pins $P$ and the tubes
$\mathcal T$, and Proposition~\ref{prop:strict-budget} excludes the
configuration for all sufficiently small $\delta$. Since the
recurrent precisions are unbounded, this is a contradiction, and the
bad-pin set of $K$ has dimension at most one. All finite choices are
made after the oracle is fixed.
They need not be computable or
consistent at different precisions.

For analytic $E$ with $\dim_{\mathrm H}E>1$, choose a compact
$K\subset E$ of dimension greater than one. Since
$\Delta_xK\subset\Delta_xE$ for every pin, the bad-pin set of $E$ at
threshold $q$ is contained in that of $K$. Choose
$\alpha_j\nearrow\alpha_*$ in $(\frac{1}{3},\alpha_*)$, each with a fixed
row length, and $0<\omega_j<r_{\alpha_j}$ with
$\omega_j\nearrow r_{\alpha_*}$. The bad-pin set of $E$ at threshold
$q_*$ is the countable union over $j$ of its bad-pin sets at the
thresholds $\frac{3}{4}+\omega_j$, the $j$th treated with $\alpha=\alpha_j$.
Countable stability of Hausdorff dimension implies
$\dim_{\mathrm H}\{x\in\mathbb R^2:\dim_{\mathrm H}\Delta_xE<q_*\}\le1$. Each
threshold uses one fixed row length.
No estimate at zero margin is
used. With $\alpha=\frac{7}{19}$ and $s=5$ the same argument proves
the threshold $\frac{3}{4}+r_\alpha=\frac{132}{169}$, and
$\alpha=\frac{59}{159}$, $s=9$, yields $\frac{368}{471}$.

Now suppose $\dim_{\mathrm H}E>1$. For $0<r<\frac{1}{8}$ and
$0\le\kappa\le\frac{1}{2}$ put
\begin{equation}\label{eq:E-r}
 E_r(\kappa)=\frac{r+(\frac{1}{2}-4r)\kappa}{1-8r\kappa},
\end{equation}
so that $Q(\dim_{\mathrm H}E)=\frac{3}{4}+E_{r}(\dim_{\mathrm H}E-1)$ for $1\le \dim_{\mathrm H}E\le\frac{3}{2}$, with
$r=q_*-\frac{3}{4}=r_{\alpha_*}$. Fix $\alpha$ as above, put
$r=r_\alpha$, fix $0<\kappa<\min\{\dim_{\mathrm H}E-1,\frac{1}{2}\}$, and let
$\varepsilon<E_r(\kappa)$ be close to $E_r(\kappa)$. Put
\[
 q=\frac{3}{4}+\varepsilon,\qquad 
 b=b_{\mathrm{pin}}(\kappa)=\frac{1-2\kappa}{4(1-\kappa)},\qquad 
 \Delta_*=\frac{\kappa(1-4\varepsilon)}{1-2\kappa},\qquad 
 \omega=\varepsilon-\frac{\Delta_*}{2},
\]
so that $\Delta_*$ is the level of Lemma~\ref{lem:bootstrap} with
this $b$, by \eqref{eq:D-pin-sym}. Multiplying by $1-2\kappa$ shows
that $\omega<r_\alpha(1-2\Delta_*)$ is equivalent to
$\varepsilon<E_r(\kappa)$. At $\varepsilon=E_r(\kappa)$, with
$D=1-8r\kappa$,
\begin{gather*}
 \Delta_*=\frac{(1-4r)\kappa}{D}<\frac{1}{2},\qquad 
 \omega=\frac{r(1-2\kappa)}{D}>0,\\
 \varepsilon-\frac{\kappa}{2}=\frac{r(1-2\kappa)^{2}}{D}>0,\qquad 
 \frac{1+2\kappa}{8}-\varepsilon=\frac{(1-2\kappa)(1-8r+8r\kappa)}{8D}>0,
\end{gather*}
and the last two inequalities are the range
\eqref{eq:bootstrap-range} for $b=b_{\mathrm{pin}}(\kappa)$, which
reads $\frac{\kappa}{2}\le\varepsilon<\frac{1+2\kappa}{8}$. So for
$\varepsilon$ close to $E_r(\kappa)$ we have $0\le\Delta_*<\frac{1}{2}$,
$0<\omega<r_\alpha(1-2\Delta_*)$, and the range. Choose the
parameters as above with $\Delta=\Delta_*$ and this $\omega$. Apply
Lemma~\ref{lem:seventh-classical-recurrence} to a compact $K\subset E$
with $\dim_{\mathrm H}K>1+\kappa$, with a source measure
of Frostman exponent $1+\kappa$.
Then every limiting source profile
satisfies $f(u)\ge(1+\kappa)u$. The pins of $G$ have effective
dimension larger than one, so Corollary~\ref{cor:prefix-rate},
applied with $1<p\le s<1+\kappa$ and then $p\searrow1$,
$s\nearrow1+\kappa$, yields the prefix estimate \eqref{eq:general-prefix}
with $b=b_{\mathrm{pin}}(\kappa)$ for every limiting profile, and
Lemma~\ref{lem:bootstrap} supplies the certificate at level
$\lambda_*=1+\Delta_*$ for every limiting profile along low-distance
precisions. The finite argument applies verbatim and proves
$\dim_{\mathrm H}\{x\in\mathbb R^2:\dim_{\mathrm H}\Delta_xE<q\}\le1$. Since
\[
 \frac{\partial E_r}{\partial\kappa}=\frac{(1-4r)^2}{2D^{2}}>0,\qquad 
 \frac{\partial E_r}{\partial r}=\frac{(1-2\kappa)^{2}}{D^{2}}>0,
\]
letting $\varepsilon\nearrow E_r(\kappa)$, then
$\kappa\nearrow\min\{\dim_{\mathrm H}E-1,\frac{1}{2}\}$, and then $\alpha\nearrow\alpha_*$
through countably many thresholds, countable stability proves the
bound $\frac{3}{4}+E_{r_{\alpha_*}}(\dim_{\mathrm H}E-1)=Q(\dim_{\mathrm H}E)$ for $\dim_{\mathrm H}E\le\frac{3}{2}$.
For $\dim_{\mathrm H}E\ge\frac{3}{2}$ the thresholds increase to
$\frac{3}{4}+E_r(\frac{1}{2})=1$. The monotonicity of $E_r$ in
$\kappa$ shows that $Q$ is increasing, and $Q(1)=q_*$. With
$r=\frac{21}{676}$ and $r=\frac{59}{1884}$ the same argument yields
the fixed-row formulas stated after the theorem.
\end{proof}

\begin{remark}[Dependence of the threshold on the measures]
\label{rem:threshold-dependence}
The constants suppressed in $\lesssim_{\mathrm{pol}}$, the angular
density bound $L$, the uniformization thresholds of
Lemma~\ref{lem:joint-template}, and the coding constants of
Lemma~\ref{lem:seventh-classical-recurrence} depend on the fixed
measures $\mu,\nu$ and on the relation $G$: through the Frostman
constants and support diameters, the source--pin separation, and
the fixed constants of the drawing lemmas. The proof uses only that
they are independent of the precision $n$. They are not bounded in
terms of the Frostman exponents or of $\dim_{\mathrm H}E$ alone: the
uniformization thresholds come from Borel--Cantelli arguments
without a rate, and $L$ is chosen so that a restriction retains
positive mass. Consequently the scale below which the finite
configuration is excluded depends on the measures, whereas the
incidence parameters and the threshold are fixed before the set and
the measures are chosen. The source dimension enters only through
the level of the certificate. The theorem is a dimension bound, for
which the existence of arbitrarily large admissible scales is all
that is needed.
No effective threshold in terms of the dimension is
claimed. This dependence already enters the information-to-measure
step, through the almost-sure coding lemma, before the finite
geometric argument is applied.
\end{remark}

\section{Two-sided angular bounds and bisector witnesses}\label{sec:relation-witnesses}

We retain the notation of Sections~\ref{sec:circle-rows} and
\ref{sec:finite-incidence}. The source and pin sets are related by
assigned incidences. Previously we controlled the directions of the
source vertices from each pin. We now also control the directions
of the assigned pins from each source vertex. Near a perpendicular
bisector, this second angular estimate becomes a bound for the
number of witnesses in a small square.

All source and pin locations are in fixed bounded sets at positive distance
from each other. Constants may depend on this separation, on the diameter
bounds, and on the fixed incidence error. Directions are in $S^1$.
Using finitely many angular charts changes constants only.

Let $P$ be a pin set and let $I\subset P\times V$ be the assigned
incidence relation with a common finite source set $V$. Assume, for each $v\in V$ and every angular interval $J$ of length at least $\rho$, that
\begin{equation}\label{rel-reverse}
 \#\{x\in P:(x,v)\in I, \arg(x-v)\in J\}
 \le C_0 N_0(|J|+\rho).
\end{equation}
Here $N_0$ is a cardinality normalization, comparable to $\# P$ up to
the allowed polynomial factors. The rest of this section explains how to supply
\eqref{rel-reverse} in the construction of Section~\ref{sec:discretization}.

For distinct $v,z\in V$ put $\ell=|v-z|$ and
$\rho_\ell=\frac{\delta}{\ell}$. Let $\mathcal W(v,z)$ consist of assigned
common-incidence witnesses satisfying
\[
 (x,v),(x,z)\in I,\qquad 
 \bigl||x-v|-|x-z|\bigr|\le C\delta.
\]
Any subcollection, in particular the witnesses from actual row occurrences,
may be used. It is essential to keep this assigned-incidence condition.
Whenever the grid $\mathcal D_D$ is used in the following count,
$D$ is a dyadic side length.

\begin{theorem}\label{rel-main}
Suppose \eqref{rel-reverse} holds with $\rho\le C\rho_\ell$, and
$\rho_\ell\le c\ell$. For every square $Q$ of side $D\ge\rho_\ell$,
\begin{equation}\label{rel-parentcap}
 \#(\mathcal W(v,z)\cap Q)
 \le C C_0N_0(\ell D+\rho_\ell).
\end{equation}
Consequently, if $\#\mathcal W(v,z)\ge k$, then
\begin{equation}\label{rel-spread}
 \#\{Q\in\mathcal D_D:Q\cap\mathcal W(v,z)\ne\varnothing\}
 \ge \frac{k}{CC_0N_0(\ell D+\rho_\ell)}.
\end{equation}
Without the comparison $\rho\le C\rho_\ell$, the same conclusions
hold with $\ell D+\rho_\ell+\rho$ in place of $\ell D+\rho_\ell$.
\end{theorem}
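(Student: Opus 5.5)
The plan is to convert a witness count in a small square into an angular count from the source vertex $v$, and then apply \eqref{rel-reverse}. Every witness $x\in\mathcal W(v,z)$ lies within $C\rho_\ell$ of the perpendicular bisector $\mathcal B(v,z)$, by the squared-distance identity of Section~\ref{sec:bisector-geometry}: $\bigl||x-v|^2-|x-z|^2\bigr|\le C\delta$ since distances are bounded, and this equals $2\ell\operatorname{dist}(x,\mathcal B(v,z))$. Hence $\mathcal W(v,z)\cap Q$ lies in the set $Q\cap\mathcal B(v,z)^{(C\rho_\ell)}$. When $D\ge\rho_\ell$, this set is contained in a rectangle $T$ with long side $O(D)$ along the bisector and short side $O(\rho_\ell)$. (If $Q$ does not meet the tube, the count is zero.)

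Next I will bound the angular size of $T$ as seen from $v$. The points of $T$ are at distance bounded below from $v$, by the source--pin separation. A displacement of length $D$ along the bisector changes $\arg(x-v)$ by $O(D\sin\phi)$, where $\phi$ is the angle between the bisector and the ray from $v$. This is the step I expect to need the most care. The bisector is perpendicular to $v-z$ and passes through the midpoint $\frac{v+z}{2}$, which lies at distance $\frac{\ell}{2}$ from $v$. A point $x$ on the bisector at distance $d\asymp1$ from $v$ therefore sees the bisector direction at an angle whose sine is $\frac{\ell}{2d}$, up to sign, since $|(x-v)\cdot\frac{v-z}{\ell}|=\frac{\ell}{2}$ exactly. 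So moving along the bisector rotates the direction from $v$ at rate $O(\ell)$. The long side of $T$ therefore subtends an angle $O(\ell D)$. The transverse side of width $O(\rho_\ell)$ contributes $O(\rho_\ell)$, because the distance from $v$ is bounded below. Altogether $\arg(T-v)$ lies in an arc $J$ with $|J|\le C(\ell D+\rho_\ell)$; if this is below $\rho$, I enlarge $J$ to length $\rho$. The hypothesis $\rho_\ell\le c\ell$ ensures the tube is thin relative to the separation, so this linearization is uniform.

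Every witness is assigned to $v$, since $(x,v)\in I$. So \eqref{rel-reverse} applied at $v$ with the arc $J$ gives $\#(\mathcal W(v,z)\cap Q)\le C_0N_0(|J|+\rho)\le CC_0N_0(\ell D+\rho_\ell+\rho)$. When $\rho\le C\rho_\ell$, the last term is absorbed, which proves \eqref{rel-parentcap}. Without that comparison, the same computation gives the stated variant. Finally, \eqref{rel-spread} is pigeonholing: the squares of $\mathcal D_D$ partition the plane, the occupied squares contain all $k$ witnesses, and each holds at most $CC_0N_0(\ell D+\rho_\ell)$ of them. The only delicate point is the uniform angular-rate computation. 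The rest is bookkeeping of constants depending on the separation and diameter bounds.
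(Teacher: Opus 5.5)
Your proposal is correct and follows essentially the paper's argument: the witnesses lie in the $C\rho_\ell$-tube about the bisector $L$ by the squared-distance identity; seen from $v$, the angle moves along $L$ at rate $\operatorname{dist}(v,L)/|x-v|^2=O(\ell)$, and the thickening adds $O(\rho_\ell)$; then the reverse bound \eqref{rel-reverse} is applied with short arcs enlarged to length $\rho$, and \eqref{rel-spread} follows by pigeonholing over $\mathcal D_D$. The one point to tighten is that your rectangle $T$ need not stay away from $v$ when $D$ is not small, since the bisector passes within $\frac{\ell}{2}$ of $v$; as the paper does, first intersect with the pin region, where the distance to $v$ is bounded below, and cover the resulting angular image by boundedly many arcs rather than a single one.
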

\begin{proof}
The squared-distance identity places the witnesses in the
$C\rho_\ell$-neighborhood of the perpendicular bisector $L$ of $v,z$.
The distance from $v$ to $L$ is $\frac{\ell}{2}$. A segment of $L$ of length
$O(D)$, at distance bounded above and below from $v$, has radial image
as viewed from $v$ of angular length $O(\ell D)$: the derivative of
its angle with respect to arclength is
\[
 \frac{\operatorname{dist}(v,L)}{|x-v|^2}=O(\ell).
\]
Thickening the segment by $O(\rho_\ell)$ enlarges its radial image by
$O(\rho_\ell)$. Intersecting with the pin region, and splitting into a
bounded number of segments if necessary, shows that the radial image of
$Q\cap L^{(C\rho_\ell)}$ is covered by boundedly many arcs of total
length $O(\ell D+\rho_\ell)$. Apply \eqref{rel-reverse} to these arcs,
enlarging short arcs to the resolution $\rho$. This proves
\eqref{rel-parentcap}.
Summing over occupied $D$-squares proves \eqref{rel-spread}.
\end{proof}

\subsection{Preserving both angular bounds}
The additional condition is available from the common relation, but it
requires changing the deterministic choice of finite pin representatives.

\begin{lemma}[Two-sided angular restriction]\label{rel-two-sided}
Let $\mu,\nu$ be separated compactly supported planar Frostman probability
measures, both of exponent strictly greater than one. Any Borel relation
$G$ of positive $\nu\times\mu$ measure has a positive-measure restriction
$G'$ and a finite $L$ such that
\[
 (\pi_x)_*(\mu|_{G'_x})\le L\mathcal H^1,\qquad
 (\pi_y)_*(\nu|_{(G')^y})\le L\mathcal H^1
\]
for almost every relevant $x,y$, respectively.
\end{lemma}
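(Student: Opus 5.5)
The plan is to apply Orponen's radial absolute-continuity theorem in each orientation, as in Proposition~\ref{prop:fixed-pair}, and then truncate the resulting densities. Fix $1<t<\min\{\dim\mu,\dim\nu\}$ exponents, so both measures have finite $t$-energy. By \cite[Theorem~1.11]{O}, for all pins $x$ outside a set of dimension at most $2-t<1$, the pushforward $(\pi_x)_*\mu$ is absolutely continuous with an $L^1$ (indeed $L^p$) density $\phi_x$ on $S^1$. The exceptional set has zero $\nu$-mass because $\nu$ is Frostman of exponent greater than one. Symmetrically, for $\mu$-almost every $y$, $(\pi_y)_*\nu$ has a density $\psi_y$. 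Since $\mu|_{G_x}\le\mu$, the pushforwards of the fiber restrictions are dominated by these densities. Hence it suffices to find a restriction on which both $\phi_x(\pi_x(y))$ and $\psi_y(\pi_y(x))$ are bounded.

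Define the joint density functions $\Phi(x,y)=\phi_x(\pi_x(y))$ and $\Psi(x,y)=\psi_y(\pi_y(x))$. First choose Borel versions. I would take $\phi_x(\theta)=\limsup_{r\to0}\frac{\mu\{y:\pi_x(y)\in B(\theta,r)\}}{2r}$. This is jointly Borel in $(x,\theta)$ because the masses of open arcs are lower semicontinuous in $x$, given the separated supports, and the limsup may be taken over rational $r$. It agrees with the density for $\nu$-a.e.\ $x$ and a.e.\ $\theta$ by Lebesgue differentiation on $S^1$.

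Fubini then gives finiteness. For a good $x$, $\int\Phi(x,y)\,d\mu(y)=\int\phi_x^2\,d\mathcal H^1$. This may be infinite for an $L^1$ density, but $\Phi(x,y)<\infty$ for $\mu$-a.e.\ $y$ anyway, because the set $\{\phi_x=\infty\}$ is Lebesgue-null and therefore has null preimage. Thus $\Phi<\infty$ and, likewise, $\Psi<\infty$ for $\nu\times\mu$-almost every pair. The sets $G_L=G\cap\{\Phi\le L,\ \Psi\le L\}$ increase to $G$ up to a null set, so some finite $L$ gives $(\nu\times\mu)(G_L)>0$. Set $G'=G_L$.

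It remains to verify the bounds. For an arc $J$ and good $x$, $\mu(G'_x\cap\pi_x^{-1}J)=\int_J\phi_x 1_{\{\phi_x\le L\}\cap\ldots}\,d\theta\le L|J|$. This holds because on $G'_x$ the density value at $\pi_x(y)$ is at most $L$, so the restricted pushforward has density at most $\phi_x1_{\{\phi_x\le L\}}\le L$. The identical argument with roles exchanged gives the bound for $(G')^y$. The main obstacle is the measurability and choice of version in the first step, since a truncation based on a non-canonical density could fail to be well defined. I expect the explicit limsup version, together with separation of the supports, to resolve this.
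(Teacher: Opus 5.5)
Your proposal is correct and takes the same route as the paper. You apply Orponen's theorem in both orientations, with the exceptional pins null for the opposite Frostman measure. You take Borel versions of the two radial densities, evaluate them at $\pi_x(y)$ and $\pi_y(x)$, truncate at level $L$, and use continuity from below to retain positive mass. Your limsup-of-arc-masses version and the observation that $\{\phi_x=\infty\}$ has $\mu$-null preimage make explicit what the paper compresses into ``choose measurable versions.'' One small correction: joint Borel measurability in $(x,\theta)$ follows because the arc masses are jointly lower semicontinuous in $(x,\theta)$, by Fatou applied to an open set of triples $(x,\theta,y)$. Lower semicontinuity in $x$ alone, as you wrote it, does not give this.
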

\begin{proof}
Orponen's radial absolute continuity theorem \cite{O}, applied in both
orientations, establishes absolute continuity for almost every center with
respect to the other measure: the exceptional dimension is strictly less
than one, so the other Frostman measure assigns it mass zero.
Choose measurable versions of the two radial densities. Restrict $G$ to
pairs where both densities evaluated at the pair's direction are at most
$L$. These restrictions increase, as $L\to\infty$, to a product-full
subset of $G$. Choose finite $L$ retaining positive mass. Each restricted
fiber has angular pushforward bounded by its truncated original density.
\end{proof}

The restriction is fixed before the recurrent low-distance precisions
are selected. Every subsequent subrelation inherits these bounds.

\begin{lemma}[Random representatives retain both angular bounds]
\label{rel-random}
Use Lemma~\ref{lem:seventh-measure-finite-selection}, with source
$\delta$-cell masses in $[m,2m)$, fine pin-cell masses in $[v,2v)$,
and relation mass $p\ge n^{-C}$, where $\delta=2^{-n}$.
If the relation has both angular bounds of Lemma \ref{rel-two-sided}, its
finite representatives can be chosen so that all original forward
estimates hold and, simultaneously for every source-cell center $v_Q$,
\begin{equation}\label{rel-reversefine}
 \#\{x\in\mathbb R^2:(x,v_Q)\text{ is assigned}, \arg(x-v_Q)\in J\}
 \lesssim_{\mathrm{pol}} v^{-1}(|J|+\delta)+O(n)
\end{equation}
at every dyadic angular scale. Polynomial factors depend on $n,p^{-1}$
and the fixed measure constants.
\end{lemma}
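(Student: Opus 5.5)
The plan is to modify only the pin-selection step in the proof of Lemma~\ref{lem:seventh-measure-finite-selection}. The source-side selection, the threshold $\tau$, and the cell and radial-bin retention rule stay as they are. The deterministic choice of a pin $x_S$ in each occupied $\rho$-square $S$ is replaced by a random choice: independently in every occupied fine pin square $S$, draw $x_S$ from the normalized restriction of $\nu$ to the relevant pins of $S$, or from a weighted version proportional to $F\,d\nu$ on $S$. The forward estimates \eqref{eq:seventh-finite-counts}--\eqref{eq:seventh-finite-pin-balls} concern a single pin and its own fiber, or ancestor masses of the original $\nu$. They therefore hold for every realization, except the incidence lower bound $I\ge cP/(m_+v_+)$, which only holds on average. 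For that bound I would use the expectation $\mathbb E\sum_SF(x_S)\ge P/(2m_+v_+)$ together with the a priori bound $F(x_S)\le C\#V\le Cm_-^{-1}$. A second-moment or Paley--Zygmund argument then shows that $\sum_SF(x_S)\gtrsim_{\mathrm{pol}}P/(m_+v_+)$ with probability bounded below by a positive power of $p/n$. If needed, I would retain the sampling probability as a polynomial factor; that factor is harmless because the reverse bound only has to fail with probability $o(1)$.

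For the reverse bound, fix a source cell $Q$ with center $v_Q$ and a dyadic angular interval $J$ about $v_Q$. The count in \eqref{rel-reversefine} is a sum over occupied pin squares $S$ of independent indicators $\mathbf 1\{x_S\text{ is assigned to }v_Q,\ \arg(x_S-v_Q)\in J\}$. Assignment requires $(x_S,y)\in G'$ for a positive proportion $\tau\mu(Q)$ of the source mass in $Q$. By the fixed source--pin separation, the direction $\arg(x_S-v_Q)$ lies within $C\delta$ of $\arg(x_S-y)$ for every $y\in Q$. So the probability that the indicator for $S$ is one is at most $\nu(S)^{-1}\nu\{x\in S:\ \text{the condition holds}\}$, up to the mass-class normalization. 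Summing over $S$ and using $\nu(S)\ge v$ bounds the mean by $v^{-1}\nu\{x:(x,v_Q)\text{ assigned},\ \arg(x-v_Q)\in J\}$. I would then average over $y\in Q$ using the retention rule $\lambda_{x,j}(Q)\ge\tau\mu(Q)$ and Fubini:
\[
\nu\{\cdots\}\le\frac{1}{\tau\mu(Q)}\int_Q\nu\bigl((G')^y\cap\{\arg(x-y)\in J^{(C\delta)}\}\bigr)\,d\mu(y)\le \frac{CL(|J|+\delta)}{\tau}.
\]
The last inequality is the reverse angular bound of Lemma~\ref{rel-two-sided}. Since $\tau\asymp P\gtrsim_{\mathrm{pol}}1$, the mean is $\lesssim_{\mathrm{pol}}v^{-1}(|J|+\delta)$.

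Chernoff's upper tail then gives a count at most $C'(\text{mean}+n)$ outside probability $2^{-2Cn}$. A union bound over the polynomially many source cells and the dyadic angular intervals at all scales $\ge\delta$ yields \eqref{rel-reversefine} simultaneously with the additive $O(n)$ term. Intersecting with the event that keeps the incidence count and applying the nine-colour $\rho$-separation step (which only discards pins) completes the argument.

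The main obstacle is the interaction between the random pin choice and the requirement that the total incidence count be preserved. The deterministic choice maximized $F(x_S)$, and the random choice must not lose more than polynomial factors. I would first try sampling $x_S$ with density proportional to $F$ on $S$. This makes the expected incidence sum exactly $\int F\,d\nu$. It changes the reverse-count mean by at most the factor $\sup F/\operatorname{avg}F\le Cm_-^{-1}\,m_+v_+\nu(S)/P$, and that factor must be checked to be $\lesssim_{\mathrm{pol}}1$ using the mass classes. If it is not, I would fall back to uniform $\nu$-sampling combined with Paley--Zygmund.
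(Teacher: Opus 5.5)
Your outline follows the paper's proof. You keep the active cell and radial-bin selection, draw one pin per occupied fine cell independently, and bound the mean reverse count by the active-mass inequality, Fubini and the reverse density $L$. You then apply Chernoff with an additive $O(n)$ and a union bound, and use a second-moment argument for the incidence count.

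The step that fails as written is the sampling law. Let $\mathcal A_J$ be the set of pins assigned to $v_Q$ with $\arg(x-v_Q)\in J$. Under the two laws you actually propose, the probability that $x_S\in\mathcal A_J$ is $\nu(S\cap\mathcal A_J)/\nu(S\cap\mathrm{rel})$ for $\nu$ restricted to the relevant pins of $S$. It is $\int_{S\cap\mathcal A_J}F\,d\nu/\int_SF\,d\nu$ for the $F$-weighted law. Neither denominator is bounded below by $\nu(S)\ge v$. The relation may occupy only a small part of a cell of mass at least $v$, and $F$ may be concentrated on pins of $\mathcal A_J$. A cell can then contribute probability close to one while $\nu(S\cap\mathcal A_J)$ is tiny. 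The mean is then bounded only by the number of occupied cells meeting the cone, which the Fubini bound $\nu(\mathcal A_J)\le CL(|J|+\delta)/\tau$ does not control. Your proposed check of $\sup F/\operatorname{avg}F$ through the mass classes cannot succeed: the lower bound $p/(m_+v_+)$ holds for the sum over all cells, not for the average of $F$ on a single cell. Your inequality ``at most $\nu(S)^{-1}\nu\{x\in S:\dots\}$'' is correct only for the full normalized cell law.

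The repair is what the paper does. Draw $x_S$ from $\nu|_S/\nu(S)$ on the whole fine cell and discard samples with no active assignment. Every retained pin is then relevant and inherits the pointwise forward bounds, and your reverse-count argument goes through verbatim with mean at most $CL(|J|+\delta)/(\tau v)$.

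The incidence count then needs no Paley--Zygmund fallback. We have $\mathbb E\sum_SF(x_S)=\sum_S\nu(S)^{-1}\int_SF\,d\nu\ge\int F\,d\nu/(2v)\gtrsim p/(mv)$. Independence and $F\le Cm^{-1}$ give $\operatorname{Var}\le Cm^{-1}\,\mathbb E$, so $\operatorname{Var}/(\mathbb E)^2\lesssim_{\mathrm{pol}}v/p\to0$, because the pin Frostman bound gives $v\le C\delta^{1+\varkappa}$. Chebyshev then yields the incidence lower bound with probability $1-o(1)$.

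Your stated reason for tolerating an inverse-polynomial success probability, namely that the reverse bound need only fail with probability $o(1)$, would not suffice by itself. What makes the intersection nonempty is that your Chernoff failure probability is exponentially small in $n$.
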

\begin{proof}
First perform the active cell and radial-bin selection in the proof of Lemma~\ref{lem:seventh-measure-finite-selection}.
An assigned source cell $Q$ has active fiber mass at least
$\tau\mu(Q)$, with $\tau\gtrsim_{\mathrm{pol}}1$, and the surviving active
relation has mass $\gtrsim_{\mathrm{pol}} p$. Choose one independent point
$x_B$ from each relevant fine pin cell $B$, with law
$\frac{\nu|_B}{\nu(B)}$, and discard samples with no active assignments.

For a source cell $Q$ and an angular interval $J$, the indicator that
$x$ assigns $Q$ and $\arg(x-v_Q)\in J$ is bounded by
\[
 \frac{1}{\tau\mu(Q)}\int_Q
  1_{G'}(x,y)1_{\{\arg(x-y)\in J^{(C\delta)}\}}d\mu(y).
\]
This uses the active-mass lower bound and the fixed source--pin separation.
Integrate in $x$ over the pin cells and use $\nu(B)\ge v$.
Fubini and the reverse angular bound give the expectation at most
\[
 \frac{CL(|J|+\delta)}{\tau v}.
\]
The count is a sum of independent indicators. A Chernoff upper tail,
with an additive sufficiently large multiple of $n$, holds simultaneously
for every source cell and every dyadic angular interval: there are only
polynomially many in $\delta^{-1}$. Arbitrary intervals follow by a
bounded dyadic cover at comparable scale.

The original incidence lower bound also survives this
random choice. If $F(x)$ counts active assignments, then
$F(x)\lesssim m^{-1}$, while the expected total is
$\gtrsim_{\mathrm{pol}} \frac{p}{mv}$. Independence implies
\[
 \operatorname{Var}\Big(\sum_B F(x_B)\Big)
 \lesssim m^{-1}\mathbb E\sum_BF(x_B).
\]
The ratio of variance to squared expectation tends to zero, since it is
at most a polynomial factor times $\frac{v}{p}$, and $v$ has a positive-power
upper bound from the pin Frostman estimate. All forward angular and
parent and radial-bin upper bounds hold pointwise for any chosen relevant pin.
Thus a realization has the required incidence lower bound and all reverse
upper bounds. Fixed residue-class selections and subsequent deletions
preserve them, at the permitted polynomial cost.
\end{proof}

\subsection{Coarsening the pins}
At the chosen tube scale $\rho=\rho_j$, for the application in
Section~\ref{sec:relation-upper}, first use
Lemma~\ref{rel-lem:new-uniform-parents}. Every occupied $\rho$-cell
then contains exactly
$L_0=L_j\asymp_{\mathrm{pol}}\frac{\upsilon_\zeta}{v}$ fine pins,
where $\upsilon_\zeta$ is the common ancestor mass. Use exactly
$L_0$ occupied slots in the selection of
Lemma~\ref{lem:pin-coarsening}, so every occupied $\rho$-cell
retains exactly one actual pin. Each fine pin has inclusion
probability $L_0^{-1}$, independently across different coarse cells. For each fixed source
vertex and angular interval, the new count is again a sum of independent
indicators, with expectation bounded by \eqref{rel-reversefine} divided
by $L_0$. Chernoff and the same union bound give
\[
 \#\{x\in P:(x,v_Q)\text{ assigned}, \arg(x-v_Q)\in J\}
 \lesssim_{\mathrm{pol}} \upsilon_\zeta^{-1}(|J|+\rho),\qquad  |J|\ge\rho.
\]
The additive logarithmic errors are absorbed because
$\upsilon_\zeta\lesssim\rho^{1+\varkappa}$ with fixed
$\varkappa>0$. The same random experiment preserves every rich-pair
witness lower bound, by the positive lower-tail margin in Lemma~\ref{lem:pin-coarsening} and a union bound over its polynomially many source pairs.
Apply that lower-tail argument to the assigned row-occurrence
witnesses. Their cardinality has already been established, and each
retained witness still assigns both source vertices. Hence \eqref{rel-reverse} is obtained with
$N_0=\upsilon_\zeta^{-1}$ and $C_0\lesssim_{\mathrm{pol}}1$.
For the pair range in Proposition~\ref{prop:homogeneous-rows} $\ell_j\lesssim\ell\lesssim w$
and common tube width $\rho\asymp\frac{\delta}{\ell_j}$, use the last
variant of Theorem \ref{rel-main}. The resulting uniform bound is
$\#(\mathcal W(v,z)\cap Q_D)\lesssim_{\mathrm{pol}} N_0(wD+\rho)$.

\section{A second upper bound for rich bisectors}\label{sec:relation-upper}
We use the witnesses supplied by the graph: each one records an
assigned occurrence of the source pair on a circle. The bounds of
Section~\ref{sec:relation-witnesses} apply to these witnesses because
both source vertices remain assigned to the pin.

The common finite construction determines the following parameters.
Write $w=\delta^a$, $\rho=\delta^\zeta$, and
$N_0=\upsilon_\zeta^{-1}$, allowing fixed comparison constants.
By Proposition~\ref{prop:homogeneous-rows},
$\zeta=1-a+O(\xi)$. By \eqref{eq:fine-richness} and
\eqref{eq:k-from-kf}, we may use the lower richness bound
\[
k\gtrsim_{\mathrm{pol}}N_0\delta^{\mathfrak r+\beta},\qquad
\mathfrak r=\frac{1}{2}+\min\{2\omega,2\mathfrak h\},\qquad
  \mathfrak h=a_{\mathrm{par}}-a.
\]
This follows from the two-dimensional covering of source parents
in \eqref{eq:thinned-occupancy}, so it holds for both template branches.
The minimum and the tolerance $\beta$ have the same origin as in
\eqref{eq:k-exponent}. The additional branch~1 source bound can
increase the richness.
We use the covering bound common to both branches.
The parameter $\beta$ can be chosen arbitrarily small after the
strict exponent margins are fixed.
Put $\mathfrak u=\frac{\mathfrak h}{\omega}$.
The finite scales in \eqref{eq:finite-scales} give the exact bounds
\[
a=a_c+(\varpi-\mathfrak u)\omega+\xi_{\mathrm{par}},
\qquad a\ge a_c-\omega-\beta-\xi.
\]
Here $0\le\varpi\le2$ and $\mathfrak u\ge0$. In particular,
$(\varpi-\mathfrak u)\omega\ge-\omega-\beta-\xi-\xi_{\mathrm{par}}$.
In the leading exponent calculations below we set these tolerances
to zero, obtaining $\varpi-\mathfrak u\ge-1$. Every expression used
in the comparisons is Lipschitz in these parameters on the fixed
range. The error is therefore $O(\beta+\xi+\xi_{\mathrm{par}})$ and
is chosen smaller than the strict margins. This also covers the
first row scale, which may lie slightly below $a_c-\omega$.
The angular probability at each generating pin satisfies
\eqref{eq:flow-measure}. Therefore \eqref{eq:RW-lower} yields, after
making its fixed tolerances as small as required,
\begin{equation}\label{rel-eq:interface-lower}
\#\mathcal T\gtrsim\delta^{-\Lambda+o(1)},\qquad
\Lambda=\frac{1+3\alpha}{2}+(1-3\alpha)a.
\end{equation}
Here and below the exponent notation permits any prescribed small
fixed loss.
All strict margins are fixed before that loss is chosen.

Put $N_0=\upsilon_\zeta^{-1}$, so that $\#P\asymp_{\rm pol}N_0$.
The reverse angular estimate reads
\[
 \#(\mathcal W(e)\cap Q_D)
 \lesssim_{\rm pol}N_0(wD+\rho),\qquad  D\ge\rho.
\]
We shall use a candidate parent side comparable to
$D=\min\{w,\frac{\rho}{w}\}$.  These finitely many candidate parent scales
are frozen before source thinning and row construction.

\begin{lemma}[Preparing uniform pin parents]\label{rel-lem:new-uniform-parents}
Fix finitely many candidate parent widths $D_j$ and tube widths $\rho_j$,
and fix a positive integer $L_{\mathrm{reg}}$, with
$\epsilon=\frac{1}{L_{\mathrm{reg}}}$.  Before the source thinning and the construction
of rows, the finite pin set can be restricted at inverse-polynomial cost
so that the following hold for every candidate $j$:
\begin{enumerate}
\item distinct occupied $D_j$-parents are at distance at least $D_j$.
\item the numbers of retained fine pins in occupied cells are exactly
constant at every scale in
$\{D_j^{\epsilon},D_j^{2\epsilon},\ldots,D_j,\rho_j\}$.
\item after choosing one actual pin independently and uniformly in each
occupied $\rho_j$-cell, the numbers of selected pins in occupied cells
are exactly constant at the displayed coarser scales.
\end{enumerate}
The finite mass bounds and both angular bounds persist with adjusted
polynomial constants.  The resulting marked incidence count and pin
cardinality satisfy the original lower bounds with adjusted polynomial
losses.  Consequently the row construction and its richness conclusions
can be performed on this already restricted pin set.
\end{lemma}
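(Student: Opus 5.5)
The restriction will be built by successive pigeonholing over a finite list of dyadic scales. Each step keeps a subfamily carrying at least a $\gtrsim_{\mathrm{pol}}1$ fraction of the marked incidences. The number of scales is $O(\mathfrak n L_{\mathrm{reg}})$, and this number is fixed before $n$. So the total loss is a fixed power of $n$ times powers of $p^{-1}$, which is the permitted inverse-polynomial cost. For each candidate $j$, order the scales $D_j^{\epsilon},D_j^{2\epsilon},\ldots,D_j,\rho_j$, then the fine scale $\delta$. The constraints for different $j$ are processed one after the other, since later regularizations only delete pins and therefore cannot break separation of parents.

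The first step is item~1. Colour the dyadic $D_j$-grid by its two indices modulo two. Retain the colour class carrying the most marked incidences; this loses a factor four. Distinct occupied parents of one colour are then at distance at least $D_j$.

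The second step is item~2, handled top-down on the tree of occupied cells. At the coarsest scale $D_j^{\epsilon}$, each occupied cell has some number of descendant fine pins. Sort the cells into $O(n)$ dyadic classes $[2^i,2^{i+1})$ of this count and retain the class with the most incidences. Then move to the next finer scale and, inside each retained cell, classify the occupied children by their own fine-pin counts. Retain one dyadic class of child counts, uniformly over all parents, and regularize the number of retained children per parent dyadically as well. At the end, pass from counts within a factor two to exact equality. To do this, delete fine pins from each cell down to the minimal count at the finest scale. Choose the deleted pins to minimize the loss of incidences: each cell then keeps at least half its pins, and, taking the pins with the largest degrees, at least half its incidences. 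Because the counts are exact at the finest level and every parent has the same number of children at each level, the counts are exact at every displayed scale.

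The third step is item~3, which reduces to item~2. Once each occupied $\rho_j$-cell has exactly $L_0$ fine pins and each coarser occupied cell has an exact number of occupied $\rho_j$-children, choosing one pin uniformly per $\rho_j$-cell produces exactly one selected pin per occupied $\rho_j$-cell. The number of selected pins in a coarser cell is therefore its number of occupied $\rho_j$-children, which is constant by the regularization. The mass bounds \ref{F:source-mass}, \ref{F:pin-mass} and both angular bounds, \eqref{rel-reversefine} and \ref{F:angular}, are upper bounds, so they persist under deletion. The incidence lower bound and the estimate $N\gtrsim_{\mathrm{pol}}\upsilon_f^{-1}$ lose only the accumulated polynomial factor.

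The main obstacle I expect is that the deletions must happen before the source thinning of Lemma~\ref{lem:source-thinning}. Otherwise the later rows and richness counts would not see the regularized pins. The point to check is that the thinning and row propositions use only the counts of \ref{F:counts} and upper bounds, so they can be rerun on the restricted pin set with $p$ replaced by $p\,n^{-C}$. I would verify this by tracking $p$ through \eqref{eq:thinned-counts} and \eqref{eq:row-richness}, where it enters only polynomially.
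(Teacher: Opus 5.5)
Your separation step is fine: residues modulo two already give distance at least $D_j$, where the paper uses residues modulo three. Your reduction of item~3 to item~2 is also fine. The gap is in how you obtain exact uniformity, which items~2 and~3 both need.

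\emph{The procedure never makes coarser child counts equal.} Dyadic pigeonholing only puts each parent's number of children in a range $[2^k,2^{k+1})$. Your final equalization deletes fine pins only inside the finest cells. So the premise that every parent has the same number of children at each level is never produced. Without it, the fine-pin counts at the coarser displayed scales are not constant.

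\emph{The top-down order undoes earlier steps.} At level $\ell+1$ you discard children outside the chosen class and discard parents with an atypical number of retained children. Some level-$\ell$ cells then lose all their children and become unoccupied. That changes the child counts of their level-$(\ell-1)$ parents, which you had already regularized. The same failure occurs across candidates: regularizing for a later $j$ deletes pins and breaks the exact counts already arranged for an earlier $j$. Your remark that deletions preserve separation covers item~1 only.

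\emph{The fix, which is the paper's order.}
\begin{enumerate}
\item Do all the colourings first, since separation survives later deletions.
\item Make the scales dyadic by replacing each $D_j$ with a comparable dyadic side whose exponent is divisible by $L_{\mathrm{reg}}$. The union of all frozen levels over all $j$ is then one nested chain.
\item Regularize the ancestor tree once on that whole chain, working from the finest level towards the root. At each level, pigeonhole the number of occupied children into $O(n)$ dyadic ranges. Keep the range carrying the largest share of incidences, and prune every surviving parent to exactly the same power of two, keeping its heaviest children.
\end{enumerate}
Your direct tracking of incidences works here. Alternatively, first regularize pin degrees dyadically, as the paper does, so that fractions of pins and of incidences agree.

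Going bottom-up, the subtrees below each level are already exactly uniform, so deletions at a coarser level remove whole uniform subtrees and never disturb finer levels. The number of levels is fixed before $n$, so the total loss is $O(n)^{-O(1)}$.

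You should also record $L_j\asymp_{\mathrm{pol}}\upsilon_j/\upsilon_f$. The upper bound comes from the cell masses. The lower bound comes from having at most $\upsilon_j^{-1}$ occupied $\rho_j$-cells and $\gtrsim_{\mathrm{pol}}\upsilon_f^{-1}$ surviving pins. The subsequent random choice of one pin per $\rho_j$-cell uses this comparison for both the witness lower tail and the reverse angular upper tail.
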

\begin{proof}
First retain a dyadic class of marked pin degrees carrying at least an
inverse-polynomial fraction of the marked incidences.  The degrees are
then within a factor two, and the class contains an inverse-polynomial
fraction of $\upsilon_f^{-1}$ pins: each degree is at most $\frac{C}{m}$, while
the initial incidence count is at least an inverse-polynomial multiple
of $(m\upsilon_f)^{-1}$.  Any subsequent inverse-polynomial fraction of
this class preserves an inverse-polynomial fraction of its incidences.

At each candidate $D_j$, retain one of the nine residue classes of
parent indices modulo three.  This loses only a fixed factor at each
of finitely many scales and ensures the asserted separation.  Apply the
usual branching regularization to the finite ancestor tree at the
union of the displayed levels.  Working from the finest frozen level towards the root,
pigeonhole the number of occupied children into dyadic ranges and
retain a range carrying an inverse-polynomial fraction of the surviving
leaves.  The already regularized descendant subtrees have equal size,
so this weighted pigeonhole preserves that fraction of fine pins.  There are $O(n)$ possible ranges.  Prune each
surviving parent to exactly the same power-of-two number of children,
and continue recursively.  This loses at most an inverse polynomial
in $n$ and leaves an exactly uniform tree at every frozen level.
In particular every $\rho_j$-cell has the same number $L_j$ of fine
pins, and every coarser frozen parent has the same number of occupied
$\rho_j$-cells.  The latter count is unchanged by selecting the actual
representative within each cell.

Every retained fine pin has original fine-cell mass at least
$\upsilon_f$, and every occupied $\rho_j$-cell has original mass in
$[\upsilon_j,2\upsilon_j)$.  Thus
\[
 L_j\lesssim\frac{\upsilon_j}{\upsilon_f}.
\]
Conversely, there are at most $\upsilon_j^{-1}$ occupied $\rho_j$-cells,
whereas the number of surviving fine pins is
$\gtrsim_{\rm pol}\upsilon_f^{-1}$.  Therefore
$L_j\asymp_{\rm pol}\frac{\upsilon_j}{\upsilon_f}$.
The random representatives consequently have the same inclusion
probability, to polynomial factors, as in the original pin-coarsening
lemma.  Its lower-tail argument preserves all rich-pair witness counts
and an inverse-polynomial fraction of the generating pins.  The
reverse angular upper-tail argument is valid in the same experiment.
Its expectation is the fine angular count divided by $L_j$.
All mass bounds refer to the original measure and survive restriction.
The forward angular and source-parent estimates hold pointwise, and
reverse angular counts can only decrease before the random selection.
This proves all claims.  Each candidate parent side is replaced by a comparable dyadic side
whose logarithmic exponent is divisible by $L_{\mathrm{reg}}$.  The comparison
factor depends only on $L_{\mathrm{reg}}$.
Its logarithmic error is $O(\frac{L_{\mathrm{reg}}}{n})$,
and the displayed $L_{\mathrm{reg}}$-th roots are then dyadic.  This is absorbed
in the already selectable scale tolerances.
\end{proof}

\subsection{A linear dependence on coarse tube occupancy}

\begin{lemma}[Quantitative coarse occupancy]\label{rel-lem:RW-linear-C}
Let $0<\delta<\frac{\Delta}{32}$, $0<\Delta\le1$ be dyadic scales and let $\epsilon>0$
be sufficiently small with $\epsilon^{-1}\in\mathbb N$.
Let $\mathcal P$ be a multiset of dyadic $\delta$-squares in $[0,1)^2$, meaning
that each square may occur with a positive integer multiplicity.
Its total multiplicity is $N$.
All counts involving $\mathcal P$
include these repetitions. Suppose each occupied square of side $\Delta^{j\epsilon}$ contains
the same number of fine squares, counted with multiplicity, for
$1\le j\le\epsilon^{-1}$. Assume these scales are dyadic and that
distinct occupied $\Delta$-squares have distance at least $\Delta$.
Write $\mathcal D_r(\mathcal P)$ for the occupied dyadic $r$-squares.
Here we use dyadic tubes in one bounded-slope chart. A dyadic tube
is the union of the lines $y=ux+v$ whose parameters $(u,v)$ belong
to one dyadic square in a fixed nested parameter grid. For $r\ge\delta$,
$T^r$ is the tube corresponding to the dyadic $r$-ancestor of that
parameter square. If $U$ is a dyadic $r$-tube, $4U$ and $6U$ denote
its $4r$- and $6r$-neighborhoods, respectively. In particular,
$(T^s)^r=T^r$ whenever $\delta\le s\le r$. This ancestor convention
will be needed in the induction.
These are the uniformity and separation conditions in
\cite[Lemma~4.8]{RW}.  Suppose also
that, for all dyadic $r\in[\Delta,1]$ and all $r$-squares $Q$,
\[
 \#(\mathcal P\cap Q)\le K rN,\qquad  K\le\Delta^{-\epsilon}.
\]
Fix $C\ge1$ and $b\ge1$. Let $\mathcal T_{A,b}$ be a family of
distinct dyadic $\delta$-tubes. Suppose that, for each $T\in\mathcal T_{A,b}$,
there are at least $A\ge2$ occupied $\Delta$-parents $Q$ such that
at least $b$ members of $\mathcal P$, counted with multiplicity,
are contained in $Q\cap4T$. Assume
\[
 \#\{Q\in\mathcal D_r(\mathcal P):Q\subset6T^r\}
 \le C r\#\mathcal D_r(\mathcal P)
  \quad \text{for } r=\Delta^\epsilon,\ldots,\Delta,\qquad
   Ab>C\delta^{1-2\epsilon}N.
\]
Then, with an absolute fixed constant $c_0$ (for example $c_0=10$),
\begin{equation}\label{rel-eq:RW-linear-C}
 \#\mathcal T_{A,b}\lesssim_\epsilon
 C\delta^{-c_0\epsilon}\frac{N^2}{A^3b^2}.
\end{equation}
The parameter grid may have a fixed translation. It must be the
same nested grid at every scale. All implicit constants are independent
of $C$.
\end{lemma}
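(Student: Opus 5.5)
We plan to follow the proof of \cite[Lemma~4.8]{RW} and track explicitly how its conclusion depends on the coarse occupancy constant $C$ and on the two richness parameters $A,b$. The strategy is the usual two-scale induction: pass from the $\delta$-tubes to their $\Delta$-ancestors, count incidences at the coarse scale, and rescale each $\Delta$-tube to the unit square to count fine tubes inside it. First we reduce to a family in which every tube has, up to a factor two, the same number $A'\in[A,2A)$ of rich $\Delta$-parents and the same richness $b'\in[b,2b)$ in each of them. This is a dyadic pigeonhole costing $O(\log^2\delta^{-1})\lesssim_\epsilon\delta^{-\epsilon}$. We then group the tubes of $\mathcal T_{A,b}$ by their ancestors $U=T^{\Delta}$, using the ancestor convention $(T^s)^r=T^r$ so that the grouping is consistent across all intermediate scales $\Delta^{j\epsilon}$.

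At the coarse scale, a $\Delta$-ancestor $U$ carrying fine tubes meets at least $A$ occupied $\Delta$-parents inside $6U$. The occupancy hypothesis at $r=\Delta$ bounds the number of parents in $6U$ by $C\Delta\#\mathcal D_\Delta(\mathcal P)$, and the uniformity gives $\#\mathcal D_\Delta(\mathcal P)\approx N/(\text{fine count per parent})$. We will count the ancestors $U$ that are $A$-rich in parents by the Szemerédi--Trotter/Cauchy--Schwarz bound for $\Delta$-separated parents: the number of pairs of parents lying in one $\Delta$-tube gives $\#\{U\}\lesssim \#\mathcal D_\Delta^2/A^2$, up to $\delta^{-O(\epsilon)}$. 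This is where the linear factor $C$ should enter, and only through the upper occupancy bound at the intermediate scales. In the induction over $r=\Delta^{j\epsilon}$ we will carry the hypothesis unchanged, so that $C$ appears once rather than once per scale. Within a fixed $U$, we rescale the $\Delta$-tube to unit width. Each fine tube then becomes a $\frac{\delta}{\Delta}$-tube, and each of its $A$ parents supplies at least $b$ fine squares. The pairwise double count inside the $b$-rich cells gives the factor $b^{-2}$: two fine squares in the same parent lying in $4T$ determine $T$ up to $O(1)$ choices once their separation is fixed. The nonconcentration $K\le\Delta^{-\epsilon}$ and the uniform branching control the losses at the $\epsilon^{-1}$ intermediate scales, each costing $\delta^{-O(\epsilon)}$, which totals at most $\delta^{-10\epsilon}$.

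Combining, we will sum over $U$ the number of fine tubes per ancestor, roughly $\lesssim (\text{fine incidences in }U)^2/(Ab^2)$, and apply Cauchy--Schwarz with the total incidence count $\le N$ times the coarse multiplicity. The intended outcome is $\#\mathcal T_{A,b}\lesssim C\delta^{-c_0\epsilon}N^2/(A^3b^2)$. The hypothesis $Ab>C\delta^{1-2\epsilon}N$ is the threshold that lets the rich term dominate the trivial term $C\delta^{-1}N/(Ab)$, exactly as in the Fu--Gan--Ren threshold, and it lets us discard the degenerate contribution. We expect the main obstacle to be keeping the dependence on $C$ linear. Ren--Wang absorb $C$ into $\delta^{-\epsilon}$ factors, so we must check that $C$ is used only once, in bounding the number of parents in a coarse tube, and never squared in the Cauchy--Schwarz step. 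If that fails, we would instead apply the occupancy hypothesis only at the single scale $r=\Delta$ and use $K$ for the other scales.
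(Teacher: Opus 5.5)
Your plan has a genuine gap: nothing in it can produce the third power of $A$. Every count you describe is a pair count closed by Cauchy--Schwarz. Such counts stop at the weak estimate $\#\mathcal T_{A,b}\lesssim K\log(\frac{2}{\Delta})\frac{N^2}{(Ab)^2}$ of \cite[Lemma~5.2]{RW}. Your coarse bound $\#\{U\}\lesssim\#\mathcal D_\Delta(\mathcal P)^2/A^2$ is exactly this, not a Szemer\'edi--Trotter bound, and no Szemer\'edi--Trotter bound holds for $\Delta$-separated parents and $\Delta$-tubes. For example, take the $\Delta$-squares with even indices, each carrying the same fine structure. This configuration satisfies uniformity, separation, the Frostman bound with $K=O(1)$, and the occupancy hypothesis with $C=O(1)$. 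Yet a positive proportion of the $\approx\Delta^{-2}$ distinct $\Delta$-tubes each meet $\gtrsim\Delta^{-1}$ parents, whereas $\#\mathcal D_\Delta(\mathcal P)^2/A^3\approx\Delta^{-1}$. So the extra factor $A^{-1}$ must come from scales below $\Delta$, where you have no spacing information, and your within-ancestor step fails there. Two fine squares in one parent are at distance $d\lesssim\Delta$ and lie in about $\min\{d^{-1},\Delta/\delta\}$ fine tubes of $U$, not $O(1)$; the fine squares of a parent may even coincide. The per-ancestor bound $I_U^2/(Ab^2)$ is not derived from anything, and bounding $I_U$ by the occupancy hypothesis brings in $C$ twice. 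Finally, $\epsilon^{-1}$ scales each losing $\delta^{-O(\epsilon)}$ multiply to $\delta^{-O(1)}$, not $\delta^{-10\epsilon}$.

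The missing idea is the high--low induction on the fine width from \cite[Lemma~5.1]{RW}, which the paper runs with $\Delta,C,K$ frozen. The weak estimate disposes of $A\lesssim C\Delta^{-2\epsilon}$. The threshold $Ab>C\delta^{1-2\epsilon}N$ is not used to dominate a trivial term. Its role is to make the base case $\delta>\Delta^{1+\epsilon}$ vacuous, since occupancy at $\Delta$ gives $Ab\lesssim C\Delta N$, and it must be re-verified at each new width. Otherwise one takes the smallest frozen scale $r$ with $A>c_1C\Delta^{-\epsilon}rn_r$. Minimality makes the rescaled configuration in each occupied $r$-square Frostman with constant $\Delta^{-\epsilon}$, independent of $C$. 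Occupancy at that single scale then shows that tubelets meeting only one rich parent contribute $O(Crn_r)<A$ per tube, so they can be discarded. This use of occupancy at an adaptively chosen scale, rather than at $r=\Delta$, is what turns $C$ into a gain. The second moment of the remaining weights is bounded without $C$ by \cite[Lemma~5.2]{RW} inside each $r$-square, and \cite[Proposition~2.6]{RW} splits the tubes into two cases. In the high-frequency case, $\#\mathcal T_{A,b}\lesssim S\Delta^{-2\epsilon}(rn_r)^{-1}N^2/(Ab)^2$, and $(rn_r)^{-1}\lesssim C\Delta^{-2\epsilon}/A$ supplies the third power of $A$ with a single factor $C$. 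In the low-frequency case, the tubes are thickened to width $8S\delta$ with $A'\gtrsim S^{1-\epsilon}A/(k\log S)$ and $b'\asymp kb$. The induction hypothesis applies with the same $C$ because $(T^{\delta'})^t=T^t$ and $\mathcal D_t(\mathcal P)$ are unchanged. The recurrence coefficient $S^{-7\epsilon}(\log S)^4<\frac{1}{2}$ keeps losses from accumulating.
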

\begin{proof}
We track the constant in~\cite[Lemma~5.1]{RW}, using
\cite[Lemma~5.2 and Proposition~2.6]{RW} unchanged. Their proof is an induction on
the fine width $\delta$, for all multisets satisfying the hypotheses,
with $\Delta,C,K$ frozen. The bound on $K$ is expressed in terms of the frozen scale $\Delta$
and therefore persists through the induction.

The global Frostman hypothesis and Lemma~5.2 give
\[
 \#\mathcal T_{A,b}\lesssim K\log(\frac{2}{\Delta})\frac{N^2}{(Ab)^2}.
\]
Thus if $A\lesssim C\Delta^{-2\epsilon}$, this already proves
\eqref{rel-eq:RW-linear-C}, after absorbing a logarithm. If $\Delta$ is
bounded below in terms of $\epsilon$, the same weak estimate suffices,
as $A\lesssim\Delta^{-1}$.

The first induction base case is $\delta>\Delta^{1+\epsilon}$.
Uniformity and the coarse occupancy bound at $\Delta$ imply
$Ab\lesssim C\Delta N$. This contradicts
$Ab>C\delta^{1-2\epsilon}N$ when $\Delta$ is sufficiently small:
the fixed geometric constants are absorbed by the positive power gap
$\frac{\Delta^{1-\epsilon-2\epsilon^2}}{\Delta}\to\infty$.

In the remaining case $A$ is larger than a sufficiently large constant
times $C\Delta^{-2\epsilon}$. Put
$n_r=\#\mathcal D_r(\mathcal P)$ and choose the smallest frozen scale
$r=\Delta^{j\epsilon}$, equivalently the largest $j$, for which
\[
 A>c_1 C\Delta^{-\epsilon}r n_r.
\]
Such a scale exists by the planar packing bound at $\Delta^\epsilon$,
and it is strictly larger than $\Delta$ by the coarse occupancy
hypothesis. The next level and planar child packing imply
\begin{equation}\label{rel-eq:C-cut}
 c_1C\Delta^{-\epsilon}r n_r<A
 \lesssim C\Delta^{-2\epsilon}r n_r.
\end{equation}
For every finer frozen scale $t<r$, failure of the cut inequality implies
$r n_r<t n_t$.
The constant $C$ cancels. Uniformity therefore
shows that, for $Q\in\mathcal D_r(\mathcal P)$ and an occupied frozen
$t$-square $R\subset Q$,
\[
 \#(\mathcal P\cap R)=\frac{N}{n_t}
 \le\frac{t}{r}\frac{N}{n_r}
 =\frac{t}{r}\#(\mathcal P\cap Q).
\]
Rounding an intermediate dyadic side up to the next frozen side
costs at most $\Delta^{-\epsilon}$. Thus the rescaled configuration
inside $Q$ has local Frostman constant $\Delta^{-\epsilon}$,
independent of $C$, at every scale between $\frac{\Delta}{r}$ and $1$.

A tubelet is the part of a fine tube in an $r$-square. We use the
essentially distinct tubelets $\mathbb U_Q$ and the covering tubes
of width comparable to $\frac{\delta}{r}$ from
\cite[Lemma~5.1, Step~4]{RW}. For a tubelet $u$ based in $Q$, let
$u^+$ be the fixed enlargement in that construction, restricted
to $Q$, and give it weight
\[
w(u)=\#\{R\in\mathcal D_\Delta(\mathcal P):
           R\subset Q,\ \#(\mathcal P\cap R\cap u^+)\ge b\}.
\]
Assigning a $\Delta$-parent only to its containing $r$-square
keeps every counted fine square in the configuration to which the
local Frostman estimate applies. The tubelet cover ensures that
the sum of the weights incident to each original tube is at least
a fixed multiple of $A$. At most a bounded number of tubelets
in any one $Q$ are incident to that tube. Since every such $Q$
lies in $6T^r$, tubelets of weight one contribute at most
a constant times $Crn_r$ per tube. Choose $c_1$ in
\eqref{rel-eq:C-cut} larger than this absolute constant. After
discarding these tubelets, the weighted incidence count is still
bounded below by a fixed multiple of $A$ for each tube. Only
tubelets with $w(u)\ge2$ remain.

For a covering tube $\mathbf T$, write $\mathbb U_{\mathbf T}$
for its associated tubelets and $\mathcal T_{\mathbf T}$ for its
fine tubes. Each fine tube and each tubelet belongs to only
boundedly many such families. Straighten $\mathbf T$ and dilate
its transverse coordinate by a factor comparable to
$\frac{r}{\delta}$. Its tubelets then have both dimensions
comparable to $r$, and the fine tubes have width comparable to
$r$. This is the rescaling in Step~4 of the cited proof. The
high--low estimate is therefore applied at width $r$.
Let $S$ be the least power of two not smaller than
$\Delta^{-\frac{\epsilon}{100}}$. Because
$\Delta\le r\le\Delta^\epsilon$, for sufficiently small $\Delta$
we have
\[
r^{-\frac{\epsilon}{100}}\le S\le r^{-1}.
\]
These are exactly the restrictions on the enlargement parameter
in \cite[Proposition~2.6]{RW} after rescaling. Its enlarged
fine tubes have width comparable to $S\delta$ in the original
coordinates.

Put
\[
W_2=\sum_{Q\in\mathcal D_r(\mathcal P)}
        \sum_{u\in\mathbb U_Q:\,w(u)\ge2}w(u)^2.
\]
Inside each $Q$, apply Lemma~5.2 of \cite{RW} after scaling $Q$
to the unit square. The two relevant widths are
$\frac{\delta}{r}<\frac{\frac{\Delta}{r}}{32}$, the parent separation is
$\frac{\Delta}{r}$, and the local Frostman constant is at most
$\Delta^{-\epsilon}$. Since $Q$ contains exactly $\frac{N}{n_r}$
fine squares, counted with multiplicity, this proves
\begin{equation}\label{rel-eq:tubelet-second-moment}
W_2\lesssim
n_r\Delta^{-\epsilon}\log(\frac{2r}{\Delta})
       (\frac{N}{bn_r})^2
\lesssim_\epsilon
n_r\Delta^{-2\epsilon}(\frac{N}{bn_r})^2.
\end{equation}
The fixed enlargements and the bounded overlap of the tubelet
cover change only absolute constants. In particular, $C$ does
not occur in this estimate.

For a fine tube $T$, define its enlarged weighted count by
\[
F(T)=\sum_{\mathbf T:\,T\in\mathcal T_{\mathbf T}}
       \ \sum_{\substack{u\in\mathbb U_{\mathbf T}\\
                         u\subset6(T^{S\delta})}}w(u).
\]
The rescaled high--low estimate and Cauchy--Schwarz imply, for
every subfamily $\mathcal V\subseteq\mathcal T_{A,b}$,
\begin{equation}\label{rel-eq:subfamily-high-low}
A\#\mathcal V\lesssim_\epsilon
       (\frac{S}{r}\#\mathcal V\,W_2)^{\frac{1}{2}}
       +S^{-1+\epsilon}\sum_{T\in\mathcal V}F(T).
\end{equation}
The same tubelets, weights, and covering tubes may be used for
every $\mathcal V$. The lower bound per tube survives restriction,
and the sum of squared weights can only decrease. This explains
why the estimate is available for subfamilies as well as for the
whole family.

If the first term controls the left side of
\eqref{rel-eq:subfamily-high-low} for at least half the tubes,
\eqref{rel-eq:tubelet-second-moment} implies
\[
 \#\mathcal T_{A,b}
 \lesssim_\epsilon
 \frac{S\Delta^{-2\epsilon}}{r n_r}
 \frac{N^2}{A^2b^2}
 \lesssim_\epsilon
 C\Delta^{-5\epsilon}\frac{N^2}{A^3b^2},
\]
where the last step uses the right side of \eqref{rel-eq:C-cut}.
This is the sole nontrivial occurrence of $C$ in this part of the proof.

To pass to the other case, call $T$ good if
$F(T)\ge cS^{1-\epsilon}A$, where $c>0$ is a sufficiently small
constant depending only on $\epsilon$. Apply
\eqref{rel-eq:subfamily-high-low} to the bad tubes. Their second
term is at most a constant times $cA$ times their number, and is
absorbed into the left side. Thus, if at least half the tubes are
bad, the preceding bound proves the lemma. Otherwise at least
half are good, and we retain them. This argument justifies the
pointwise lower bound needed below.

We describe the parent count explicitly. In the tubelet decomposition
of \cite[Lemma~5.1, Step~4]{RW}, the tubelets associated to one
covering tube have directions within $O(\frac{\delta}{r})$ of
one another. Their fixed enlargements have bounded overlap. Each
fine tube belongs to only boundedly many covering tubes. Thus for
each good $T$ we can choose one covering tube whose contribution
is at least a fixed fraction of $F(T)$.

For a $\Delta$-parent $Q$, let $g_T(Q)$ count the tubelets in this
chosen family that contribute to the enlarged incidence of $T$ and
for which $Q$ is one of the parents counted by $w(u)$. Counting the same
tubelet--parent pairs in two orders gives
\[
 \sum_Qg_T(Q)\gtrsim S^{1-\epsilon}A.
\]
The tubelets are nearly parallel and essentially distinct. A
width-$O(S\delta)$ strip can contain only $O(S)$ of them through
one $\Delta$-parent, since $r\ge\Delta$. Hence $g_T(Q)\lesssim S$.
The overlap bound also holds for the fixed enlargements $u^+$:
within one covering tube, their slopes vary by $O(\frac{\delta}{r})$
and their transverse positions are separated at scale $\delta$,
up to a bounded number of classes. If $M_0$ is the resulting fixed
bound, their incidences in
$Q$ account for at least $\frac{g_T(Q)b}{M_0}$ fine-square copies.
This remains true for a multiset, since each copy is counted at most
$M_0$ times.

Choose a dyadic value $k_T\ge1$ for which the parents with
$k_T\le g_T(Q)<2k_T$ carry at least a fraction
$c(\log S)^{-1}$ of the last displayed sum. There are at least
\[
 c\frac{S^{1-\epsilon}A}{k_T\log S}
\]
such parents. Pigeonhole once more over the tubes to fix one
value $k=k_T$, with $1\le k\lesssim S$, losing a further factor
$O(\log S)$ in the number of tubes. Set $\delta'=8S\delta$ and
replace each retained tube by its dyadic $\delta'$-ancestor.
The incidence containment checked below shows that every such
ancestor is rich in the selected parents, with
\[
 A'\gtrsim\frac{S^{1-\epsilon}A}{k\log S},\qquad
 b'=\max\{1,\lfloor\frac{kb}{M_0}\rfloor\}\asymp kb.
\]
If the expression inside the floor is less than one, a selected
parent still contains at least one fine-square copy, which explains
the maximum with one. Each ancestor has at most $64S^2$ dyadic
fine descendants. After deleting duplicate ancestors we therefore
obtain a family $\mathcal T'_{A',b'}$ with
\[
 \#\mathcal T_{A,b}\lesssim S^2\log S\,
                  \#\mathcal T'_{A',b'}.
\]
The two logarithmic losses have different roles. One enters $A'$,
and the other enters this cardinality bound. All overlap constants
are absolute and independent of $C$.

The fixed dilation $8$ accommodates our neighborhood convention:
the tubelets counted in the low-frequency term lie in
$6(T^{S\delta})$, their contributing fine squares lie within a further
$c_2\delta$ for an absolute constant $c_2$, and replacing a fine
square by its $\delta'$-ancestor adds at most $\sqrt{2}\delta'$.
Since $T^{S\delta}\subset T^{\delta'}$, the total distance from
$T^{\delta'}$ is at most
\[
6S\delta+c_2\delta+\sqrt{2}\delta'
=\bigl(\frac{3}{4}+\frac{c_2}{8S}+\sqrt{2}\bigr)\delta'
<4\delta'
\]
for sufficiently large $S$. Thus these ancestors lie in
$4T^{\delta'}$. Once $\delta'<\Delta$, their dyadic ancestors
also remain in the same $\Delta$-parent as the original fine
squares.
Thus the enlarged incidences satisfy the same $4T$ condition as the
original ones. The dilation changes no exponent or dependence on $C$.

For sufficiently small $\Delta$, we have $\delta'<\frac{\Delta}{32}$,
since $\delta\le\Delta^{1+\epsilon}$. Let $\mathcal P^{\delta'}$
be the multiset obtained by replacing every fine square of
$\mathcal P$ by its $\delta'$-ancestor, retaining multiplicity. The total count $N$,
all frozen parent counts, their uniformity and separation, and the
global Frostman bound are unchanged. For every
frozen $t\ge\Delta$,
\[
 (T^{\delta'})^t=T^t,
 \qquad
 \mathcal D_t(\mathcal P^{\delta'})=\mathcal D_t(\mathcal P).
\]
The coarse occupancy inequality therefore holds with exactly the
same $C$. A comparison of widths alone would not establish this.
The richness condition is also preserved. More explicitly,
\[
\frac{A'b'}{C(\delta')^{1-2\epsilon}N}
\gtrsim_\epsilon
\frac{S^\epsilon}{\log S}
\frac{Ab}{C\delta^{1-2\epsilon}N}>1
\]
for sufficiently small $\Delta$, because $\epsilon$ is fixed and
$S\to\infty$ as $\Delta\to0$. Also, $k\lesssim S$ and the original
small-$A$ exclusion imply
\[
A'\gtrsim_\epsilon\frac{S^{-\epsilon}}{\log S}A
\gtrsim_\epsilon
\frac{C\Delta^{-2\epsilon+\frac{\epsilon^2}{100}}}{\log(\frac{2}{\Delta})}\ge2
\]
after decreasing the same threshold for $\Delta$. Thus all the
inductive hypotheses hold. Applying the induction hypothesis retains the same single
factor $C$. The powers in the recurrence can be checked before absorbing
constants. Substitution of the lower bound for $A'$ and of $b'\asymp kb$
produces
\[
 \frac{S^2\log S}{(A')^3(b')^2}
 \lesssim
 \frac{kS^{-1+3\epsilon}(\log S)^4}{A^3b^2}
 \lesssim\frac{S^{3\epsilon}(\log S)^4}{A^3b^2},
\]
since $k\lesssim S$. The replacement of
$\delta^{-c_0\epsilon}$ by $(8S\delta)^{-c_0\epsilon}$ contributes
$(8S)^{-c_0\epsilon}$. With $c_0=10$, the recurrence coefficient
is at most a constant depending only on $\epsilon$ times
$S^{-7\epsilon}(\log S)^4$. Choose $\Delta$ small enough that this
coefficient is less than $\frac{1}{2}$. This choice is independent
of $C$. Choose the implicit constant in the asserted bound larger
than those in the weak estimate and the high-frequency estimate.
The low-frequency step then uses at most half this constant at
the larger width $\delta'$. Induction on the dyadic fine width
closes, with $\Delta,C,K$ unchanged and with one factor of $C$.
\end{proof}

\subsection{Parents below the row width}

Let $w=\delta^a$, $\rho=\delta^\zeta$, $\zeta=1-a$ up to the selectable
row-scale tolerance. At each pin $x$ generating rows, retain the
direction probability $\sigma_x$ of Lemma~\ref{lem:flow},
with its two angular bounds. Write $T^r(x,\theta)$ for the
width-$r$ tube through $x$ in direction $\theta$, restricted to
the fixed bounded region containing the pins. For any chosen
parent side $D\in[\rho,w]$, averaging
against the global pin Frostman bound yields
\[
 \int\#(P\cap T^{Cr}(x,\theta))d\sigma_x(\theta)
 \lesssim_{\rm pol}N_0\{r+\delta^{-\tau_{\mathrm{dir}}}w^{1-\alpha}r^\alpha\}
 \lesssim_{\rm pol}\delta^{-\tau_{\mathrm{dir}}}N_0r(\frac{w}{D})^{1-\alpha},\qquad  r\ge D.
\]
Indeed the angular interval associated to a point at distance $t$ is
$O(\frac{r}{t})$.
Use the linear direction bound when $\frac{r}{t}\ge w$, the
$\delta^{-\tau_{\mathrm{dir}}}w^{1-\alpha}(\frac{r}{t})^\alpha$ bound otherwise.
Points within distance $r$ contribute at most $CN_0r^{1+\varkappa}$.
For the remaining points, sum over distance annuli with $t>r\ge\rho$.
The truncated energies at exponents $1$ and $\alpha$ are bounded by
the pin Frostman estimate. Uniformity at the finitely many frozen parent scales,
Markov's inequality, and restriction of the directional probabilities
therefore supply the coarse occupancy hypothesis of
Lemma~\ref{rel-lem:RW-linear-C} with
\[
 C\le\delta^{-o(1)-\eta_{\mathrm{occ}}}(\frac{w}{D})^{1-\alpha},
\]
where $\eta_{\mathrm{occ}}>0$ is an arbitrarily small preset tolerance. Choose
$\tau_{\mathrm{dir}}<\frac{\eta_{\mathrm{occ}}}{2}$ and all remaining exponent losses
with sum less than $\frac{\eta_{\mathrm{occ}}}{2}$. This deletes
a fixed fraction of directional mass, not witnesses. Renormalize the
retained directional probabilities and apply
Proposition~\ref{prop:tube-family} to them. Uniformity,
separation, and the forward and reverse angular estimates are prepared
before rows are constructed as above.  The normalized global pin
Frostman constant is at most $\delta^{-\tau}$ times polynomial
factors, with $\tau>0$ selectable.  The candidate exponent
$\frac{\log D}{\log\delta}$ stays bounded away from zero.  Choosing the
input losses sufficiently small therefore ensures $K\le D^{-\epsilon}$,
as required by Lemma~\ref{rel-lem:RW-linear-C}.
This is a bound in terms
of the frozen parent scale, and is not replaced by a bound in terms
of the fine width.

Choose $D=\min\{w,\frac{\rho}{w}\}$, up to the fixed dyadic comparison
factor from the preceding lemma.  At leading exponent order this is
$D=\frac{\rho}{w}$ when $a<\frac{1}{3}$, and $D=w$ otherwise.
At this scale the assigned-witness cap is $O_{\rm pol}(N_0\rho)$.
The exponent paid for $C$ is
$c_C=(1-\alpha)[1-3a]_+$. Consequently put
\[
 B_{\rm eff}=\zeta-c_C
 =\min\{\zeta, \alpha+(2-3\alpha)a\}.
\]
If $B_{\rm eff}>\mathfrak r$ with a fixed positive margin, then the
richness condition of Lemma~\ref{rel-lem:RW-linear-C} holds once
$\beta$ and the other tolerances are chosen smaller than this margin,
as does the requirement of at least two occupied parents.
For each tube choose one original source pair and a subset of exactly
$\lfloor k\rfloor$ of its assigned witnesses, with
$k\gtrsim_{\rm pol}N_0\delta^{\mathfrak r+\beta}$.  Dyadic pigeonholing
by the witness count in a parent supplies $A$ parents with between
$b$ and $2b$ witnesses each, where
\[
 Ab\gtrsim \frac{k}{\log(\frac{1}{\delta})},\qquad  b\lesssim_{\rm pol}N_0\rho.
\]
The condition $B_{\rm eff}>\mathfrak r$ ensures both $A\ge2$ and the
strengthened richness inequality $Ab>C\rho^{1-2\epsilon}\#P$.
Every chosen witness is an actual point of the same ambient set $P$.
Therefore the parent-richness hypothesis is valid even though the
chosen witness subset varies from tube to tube.  For each of the
$O(n^2)$ possible classes the lemma yields
\[
 \#\mathcal T_{A,b}\lesssim_\epsilon
 C\rho^{-c_0\epsilon}\frac{(\#P)^2b}{(Ab)^3}
 \lesssim_{\rm pol}C\rho^{-c_0\epsilon}
                 \frac{N_0^3\rho}{k^3}.
\]
For this application use the dyadic parameter tubes constructed
above, in each fixed parameter grid and bounded-slope chart separately.
Every frozen ancestor and its $6r$-neighborhood lie in a tube of
width $O(r)$ about the original line. The angular estimate was imposed
with this fixed enlargement, so it supplies the occupancy hypothesis
for these ancestors. Any fixed factor is included in $C$ before the
richness inequality is checked.

Choose a comparable larger dyadic fine side if necessary and push
$P$ to its fine cells with multiplicity. The $4T$ convention then
contains the cells of all assigned witnesses. The fine side remains
smaller than $D$, so all frozen parent counts are preserved.
Parameter separation bounds the number of original tubes represented
by each new parameter cell by a fixed constant. Treat these finitely
many classes separately. No witness points are discarded. The two
bounded-slope charts can be obtained by interchanging coordinate axes.
Summing the logarithmic classes and choosing the fixed tolerances
and $\epsilon$ sufficiently small yields the following estimate.
In particular, the factor $k^{-3}$ contributes a loss of $3\beta$.
Choose this and all other exponent losses to have sum less than $\eta$. Then
\begin{equation}\label{rel-eq:refined-common-upper}
 \#\mathcal T\lesssim_\eta
 \delta^{-3\mathfrak r+B_{\rm eff}-\eta}
  \quad \text{for } \eta>0.
\end{equation}
In the case $a<\frac{1}{3}$ this improves the effective parent-cap exponent
$2a$ by $\alpha(1-3a)$. In the other case it retains the indispensable
fine-resolution floor $\zeta$.

\subsection{The exponent comparison}

Write $a=a_c+(\varpi-\mathfrak u)\omega$, where $a_c=\frac{1}{4}+\frac{\Delta}{2}$,
$\varpi-\mathfrak u\ge-1$, and $\mathfrak r=\frac{1}{2}+f_0(\mathfrak u)\omega$ with
$f_0(\mathfrak u)=\min\{2,2\mathfrak u\}$, using the global drawing and elementary
parent-covering richness bound specified above. These are leading
exponents, with the finite-scale errors accounted for as above.
Since $\frac{1}{3}<\alpha<\frac{1}{2}$, comparison with the linear
branch of $B_{\rm eff}$ yields
\begin{align*}
 \Lambda-\{3\mathfrak r-\alpha-(2-3\alpha)a\}
 &= -1+\frac{5}{2}\alpha+(3-6\alpha)a-3f_0(\mathfrak u)\omega\\
 &\ge -\frac{1}{4}+\alpha+(\frac{3}{2}-3\alpha)\Delta
       -(9-6\alpha)\omega.
\end{align*}
Thus this branch is positive if
\begin{equation}\label{rel-eq:refined-main-budget}
\omega<\frac{-1+4\alpha+6(1-2\alpha)\Delta}{12(3-2\alpha)}.
\end{equation}
The inequality used here is
$3f_0(\mathfrak u)-(3-6\alpha)(\varpi-\mathfrak u)\le9-6\alpha$,
which follows from $f_0\le2$ and $\varpi-\mathfrak u\ge-1$.
For the floor branch use
$f_0(\mathfrak u)-\alpha\mathfrak u\le2-\alpha$ and $\varpi\le2$.
Its margin is at least
$\frac{3\alpha}{4}(1-2\Delta)-(6+3\alpha)\omega$, and is positive if
\[
 \omega<\frac{\alpha(1-2\Delta)}{4(2+\alpha)}.
\]
A sufficient uniform condition for $B_{\rm eff}>\mathfrak r$ is
\[
\omega<\frac{\frac{\alpha}{4}+(1-\frac{3\alpha}{2})\Delta}{4-3\alpha}
\]
combined with the floor richness condition $\zeta>\mathfrak r$.
Indeed its leading margin is
$\zeta-\mathfrak r=W_0-(\varpi-\mathfrak u+f_0(\mathfrak u))\omega
\ge W_0-3\omega>0$, using $f_0(\mathfrak u)-\mathfrak u\le1$
and the structural condition $W_0>6\omega$.
The next section verifies these inequalities and the range required
by the certificate iteration.

\section{The dimension-dependent pinned improvement}\label{sec:relation-proof}
Fix $\frac{1}{3}<\alpha<\alpha_*$.  For $0<\kappa<\frac{1}{2}$, the certificate implies
$\Delta=\frac{4\kappa(1-q)}{1-2\kappa}$ and
$\omega=q-\frac{3}{4}-\frac{\Delta}{2}$.  The two exponent inequalities are
respectively equivalent to $q<Q_b$ and $q<Q_f$, where
\begin{align}
 Q_b(\kappa;\alpha)
 &=1-\frac{5(1-\alpha)(1-2\kappa)}{6\{3-2\alpha+2(1-2\alpha)\kappa\}},\label{rel-eq:strong-Qb}\\
 Q_f(\kappa;\alpha)
 &=1-\frac{1-2\kappa}{4+2\alpha-4\alpha\kappa}.
 \label{rel-eq:strong-Qf}
\end{align}
Write $\mathcal Q_\alpha(\kappa)=\min\{Q_b(\kappa;\alpha),Q_f(\kappa;\alpha)\}$.
For comparison, denote the earlier pinned bound with parameter
$\alpha$ by
\[
D_0=10-3\alpha-6\alpha\kappa,\qquad
Q_{\mathrm{old}}=1-\frac{(5-3\alpha)(1-2\kappa)}{2D_0}.
\]
Thus $Q_{\mathrm{old}}$ is $Q(1+\kappa)$ with $\alpha_*$ replaced
by $\alpha$.

\subsection{The required scale inequalities}
Write
\[
 H_b(\Delta)=\frac{\alpha-\frac{1}{4}+(\frac{3}{2}-3\alpha)\Delta}{9-6\alpha},\qquad
 H_g(\Delta)=\frac{\frac{\alpha}{4}+(1-\frac{3\alpha}{2})\Delta}{4-3\alpha}.
\]
Then
\[
 H_g(\Delta)-H_b(\Delta)
 =\frac{(\alpha-1)(-6\Delta+3\alpha-2)}{6(2\alpha-3)(3\alpha-4)}>0
  \quad \text{for } \frac{1}{3}<\alpha<\frac{2}{5}, \Delta\ge0.
\]
Thus the first exponent inequality implies the effective parent-spread and
strengthened richness condition. The floor comparison implies the original
structural restriction, since
\[
 \frac{1}{24}-\frac{\alpha}{4(2+\alpha)}
 =\frac{2-5\alpha}{24(2+\alpha)}>0.
\]

We next check the range required by the certificate iteration.
For $0\le\kappa<\frac{1}{2}$, put
\[
 H=1-2\kappa,\qquad 
 D_b=3-2\alpha+2(1-2\alpha)\kappa,\qquad 
 A=4\alpha-1+6(1-2\alpha)\kappa,\qquad
   J=2+\alpha-2\alpha\kappa.
\]
At $q=Q_b$ the bootstrap parameters are
\[
 \Delta_b=\frac{10(1-\alpha)\kappa}{3D_b},\qquad
   \omega_b=\frac{A}{12D_b}>0,
\]
and
\[
 Q_b-\left(\frac{3}{4}+\frac{\kappa}{2}\right)=\frac{HA}{12D_b}>0,
\]
\[
 \left(\frac{7}{8}+\frac{\kappa}{4}\right)-Q_b
 =\frac{H\{11-14\alpha-6(1-2\alpha)\kappa\}}{24D_b}>0.
\]
For the last inequality, the bracket is at least $8(1-\alpha)>0$.
At $q=Q_f$ one has
\[
 \Delta_f=\frac{2\kappa}{J},\qquad 
 \omega_f=\frac{\alpha H}{4J}>0,
\]
\[
 Q_f-\left(\frac{3}{4}+\frac{\kappa}{2}\right)=\frac{\alpha H^2}{4J}>0,
\]
\[
 \left(\frac{7}{8}+\frac{\kappa}{4}\right)-Q_f
 =\frac{H\{2+\alpha(2\kappa-1)\}}{8J}>0.
\]
The lower certificate comparisons also give $\Delta_b,\Delta_f<\kappa$.
Consequently $\min\{Q_b,Q_f\}$ lies in the required bootstrap interval,
with $0<\Delta<\frac{1}{2}$ for $\kappa>0$. The strict inequalities persist for
$q$ below and sufficiently close to this minimum. The two main geometric
comparisons then become strict by the positive linear coefficient of $1-q$.

\subsection{The useful interval and its branch switch}
Define
\begin{align*}
 \kappa_{\mathrm{rel}}&=\frac{5-8\alpha-3\alpha^2}{6\alpha^2-48\alpha+30},\\
 \kappa_{bf}&=\frac{5\alpha^2-\alpha-1}{10\alpha^2+2\alpha-6},\\
 \kappa_f&=\frac{6+10\alpha-
 \sqrt{(6+10\alpha)^2-48\alpha(1+2\alpha)}}{24\alpha}.
\end{align*}
For $\frac{1}{3}\le\alpha\le\frac{3}{8}$ one has the strict ordering
\begin{equation}\label{rel-eq:strong-order}
 0<\kappa_{\mathrm{rel}}<\frac{3}{20}<\kappa_{bf}<\frac{3}{16}<\kappa_f<\frac{1}{5}.
\end{equation}
Here is an elementary verification.  The numerator controlling
$Q_b-Q_{\mathrm{old}}$ is the increasing linear polynomial
\[
 P_0(k)=(6\alpha^2-48\alpha+30)k+3\alpha^2+8\alpha-5.
\]
Its value at zero is negative and
$P_0(\frac{3}{20})=\frac{39\alpha^2+8\alpha-5}{10}>0$.
The numerator controlling the branch switch, before multiplication
by $1-2k$, is the decreasing linear polynomial
\[
 P_{bf}(k)=(10\alpha^2+2\alpha-6)k-5\alpha^2+\alpha+1.
\]
Its values at $\frac{3}{20}$ and $\frac{3}{16}$ are, respectively,
$-\frac{7\alpha^2}{2}+\frac{13\alpha}{10}+\frac{1}{10}>0$ and
$-\frac{25\alpha^2}{8}+\frac{11\alpha}{8}-\frac{1}{8}<0$.
Finally, the numerator controlling $Q_f-(\frac{2}{3}+k)$ is
\[
 P_f(k)=1+2\alpha-(6+10\alpha)k+12\alpha k^2.
\]
It is strictly decreasing on $[0,\frac{1}{5}]$, and
$P_f(\frac{3}{16})=\frac{35\alpha}{64}-\frac{1}{8}>0$ whereas
$P_f(\frac{1}{5})=\frac{12\alpha}{25}-\frac{1}{5}<0$.
This proves \eqref{rel-eq:strong-order}.

Put $D_b=3-2\alpha+2(1-2\alpha)\kappa$ and
$J=2+\alpha-2\alpha\kappa$.  Direct subtraction yields
\[
 Q_b-Q_f=-\frac{(1-2\kappa)P_{bf}(\kappa)}{6JD_b}.
\]
Thus $Q_b$ is the active branch for $\kappa\le\kappa_{bf}$ and
$Q_f$ for $\kappa\ge\kappa_{bf}$.
The second branch exceeds $\frac{2}{3}+\kappa$ exactly up to $\kappa_f$
in the interval considered.  The numerator of
$Q_b-(\frac{2}{3}+\kappa)$ is
\[
 P_b(\kappa)=1+\alpha-(4+6\alpha)\kappa
               +12(2\alpha-1)\kappa^2,
\]
over the positive denominator $6D_b$.
This polynomial decreases strictly for $\kappa\ge0$.
At $\kappa_{bf}$ the two branches agree and exceed $\frac{2}{3}+\kappa_{bf}$,
by \eqref{rel-eq:strong-order}.  Therefore the first branch also exceeds
that comparison throughout $[\kappa_{\mathrm{rel}},\kappa_{bf}]$.

\subsection{Comparison with the previous bounds}
The exact differences are
\[
 Q_b-Q_{\mathrm{old}}
 =\frac{(1-2\kappa)P_0(\kappa)}{6D_0D_b},\qquad
 Q_f-Q_{\mathrm{old}}
 =\frac{\alpha(2-3\alpha)(1-2\kappa)^2}{2JD_0}>0.
\]
Thus the minimum strictly exceeds the retained-prefix bound
for $\kappa_{\mathrm{rel}}<\kappa<\kappa_f$, and agrees at the lower endpoint.
The bootstrap comparisons above show that both branches also exceed
$\frac{3}{4}+\frac{\kappa}{2}$.
After $1+\kappa\nearrow\dim_{\mathrm H}E$, this becomes $\frac{1}{4}+\frac{\dim_{\mathrm H}E}{2}$.

\subsection{Closing the dimension argument}
Let $\kappa\in[\kappa_{\mathrm{rel}}(\alpha),\kappa_f(\alpha)]$ be a strict
source Frostman excess, and take $q<\mathcal Q_\alpha(\kappa)$ sufficiently
close to this minimum.  The bootstrap range and both geometric comparisons
are then strict.  The main comparison implies the effective parent-spread
and strengthened richness condition.
The floor comparison implies the original
profile restriction.  If the bad pin set at this threshold had dimension
greater than one, the common finite construction, the retained
certificate, and the reverse angular restriction would produce the
configuration above.  Its tube lower exponent strictly exceeds the
refined upper exponent, a contradiction for small recurrent $\delta$.

Now take admissible $\alpha\nearrow\alpha_*$, source exponents
$1+\kappa\nearrow \dim_{\mathrm H}E$, and a countable sequence of thresholds approaching
$Q_{\rm rel}(\dim_{\mathrm H}E)$.  For an analytic source one can first pass to compact
subsets with dimensions approaching $\dim_{\mathrm H}E$.
Bad pins for $E$ are bad for
each such subset.  The lower endpoint $S_{\mathrm{rel}}$ also follows from the
previous retained-prefix estimate, with which the new formula agrees.
The upper endpoint follows from strict source exponents and continuity.
The countable union of threshold exceptional sets has Hausdorff dimension
at most one.  This proves Theorem~\ref{rel-new-pinned-theorem}, including the
stated strict comparisons and endpoint values.

\section{Exceptional sets above the universal threshold}
\label{sec:exceptional}

The preceding proof used the pin measure through the ancestor bounds
of the plateau and through the Frostman bound in
\eqref{eq:coarse-frostman}, where the exponent $1+\varkappa$ was
replaced by $1$. Keeping $\varkappa$ in the Furstenberg estimate
yields explicit bounds for the exceptional set above $q_*$.
Combined
with the certificate level $\lambda_*$ of Section~\ref{sec:bootstrap}
it proves part~(b) of the theorem.

\begin{proposition}[The pin exponent in the comparison]\label{prop:pin-budget}
In the setting of Section~\ref{sec:universal-pinned}, suppose that
the pin measure has Frostman exponent $1+\varkappa$ with
$0\le\varkappa<\frac{5}{2}-6\alpha$, and that
\begin{equation}\label{eq:pin-margin}
 \frac{3\alpha}{4}(1-2\Delta)+\frac{\varkappa}{8}(3-2\Delta)
 -\Bigl(10-3\alpha-\frac{\varkappa}{2}\Bigr)\omega>0 .
\end{equation}
Then the finite configuration is impossible for all sufficiently
small $\delta$, once the loss parameters are small compared with the
left side of \eqref{eq:pin-margin}, with $(1-2\Delta)\frac{40\alpha-11}{96}$,
and with $W_0-6\omega$.
\end{proposition}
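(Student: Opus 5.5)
The plan is to repeat the proof of Proposition~\ref{prop:strict-budget} verbatim, except in the Ren--Wang lower count, where we keep the pin exponent. In Lemma~\ref{lem:pin-coarsening} the bound \eqref{eq:coarse-frostman} was weakened from $r^{1+\varkappa}$ to $r$. We retain $r^{1+\varkappa}$, so $P''$ is a $(\rho',1+\varkappa,\delta^{-o(1)})$-set. We then apply Theorem~\ref{thm:RW} with $t=1+\varkappa$ instead of $t=1$, which gives
\[
\#\mathcal T\gtrsim\rho^{-\min\{t,\frac{s'+t}{2},1\}+\epsilon_{\mathrm{RW}}}M .
\]
Since $t>1$, the minimum is now $\min\{1,\frac{s'+1+\varkappa}{2}\}$. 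The hypothesis $\varkappa<\frac52-6\alpha$ is presumably there to keep the middle branch active throughout the relevant range of $s_{\mathrm{act}}$. Check this first: near the worst case, $a\approx a_c$ and $s_{\mathrm{act}}\approx\alpha+(1-\alpha)\frac{a}{1-a}$, so for $\zeta$ near $\frac34$ we should have $s'+1+\varkappa<2$.

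The new lower exponent is
\[
\Lambda_\varkappa=\frac{(1+\varkappa+3s_{\mathrm{act}})\zeta}{2}-3\vartheta-\epsilon_{\mathrm{RW}}=\Lambda+\frac{\varkappa\zeta}{2}.
\]
Using $\zeta\ge1-a-\xi$ and $a=a_c+(\varpi-\mathfrak u)\omega+\xi_{\mathrm{par}}$, this gives $L_\varkappa=L+\frac{\varkappa}{2}\bigl(1-a_c-(\varpi-\mathfrak u)\omega\bigr)$. The constant part is
\[
\frac{\varkappa}{2}\Bigl(\frac34-\frac{\Delta}{2}\Bigr)=\frac{\varkappa}{8}(3-2\Delta),
\]
which is the new term in \eqref{eq:pin-margin}.

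The upper count \eqref{eq:tube-upper} via Lemma~\ref{lem:rectangular-sampling} is unchanged, because $\gamma$ cancels. The real work is redoing the maximization of $C_1(\mathfrak u)$. The coefficient of $\varpi-\mathfrak u$ changes from $c_\alpha$ to $c_\alpha-\frac{\varkappa}{2}$, so
\[
C_1^\varkappa(\mathfrak u)=3f_0(\mathfrak u)+2\varsigma+\chi-2\mathfrak e+\Bigl(c_\alpha-\frac{\varkappa}{2}\Bigr)(\varpi-\mathfrak u).
\]
We need $C_1^\varkappa\le 10-3\alpha-\frac{\varkappa}{2}=9-\bigl(c_\alpha-\frac{\varkappa}{2}\bigr)$, which is the same claim with $c_\alpha$ replaced by $c'=c_\alpha-\frac{\varkappa}{2}$. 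Rerun the branch analysis with $c'$:
\begin{itemize}
\item The first-maximum argument, that $3f_0-c'\mathfrak u$ peaks where $f_0$ first reaches $2$, still holds provided $c'<2$; this is immediate if $c'\ge0$.
\item Branch~1 with $e_0\le1$ needs $c'\le\frac14$ and $c'\le2$.
\item Branch~1 with $e_0>1$ needs $2c'<\frac12$.
\item Branch~2 needs $c'\min\{1,u_0\}\le\frac{u_0}4$.
\end{itemize}
All of these hold whenever $c'\le\frac14$. If $c'<0$, which is possible when $\varkappa>2c_\alpha$, the term $c'(\varpi-\mathfrak u)$ is largest at the smallest admissible value, $\varpi-\mathfrak u=-1$. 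So in that regime we would instead bound by $6+3-c'$ directly, and this yields the same constant. This sign case is where I expect the main difficulty. In particular, the condition $\varkappa<\frac52-6\alpha$, i.e.\ $\varkappa<\frac12-2c_\alpha$, must ensure $\mathfrak u$ cannot run beyond the range controlled by $\varpi-\mathfrak u\ge-1$ in a way that beats the bound.

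If all this goes through, then
\[
L_\varkappa-U_1\ge\frac{3\alpha}{4}(1-2\Delta)+\frac{\varkappa}{8}(3-2\Delta)-\Bigl(10-3\alpha-\frac{\varkappa}2\Bigr)\omega,
\]
which is positive by \eqref{eq:pin-margin}. The remaining hypotheses are routine. The threshold hypotheses of Lemma~\ref{lem:rectangular-sampling} need $W_0-6\omega$ dominating the losses, exactly as before. The extra smallness relative to $(1-2\Delta)\frac{40\alpha-11}{96}$ is, I expect, what guarantees $\omega<\frac{W_0}6$ and the validity of the middle Ren--Wang branch at the tolerance level. I would verify this last point by evaluating $\frac{s'+1+\varkappa}{2}<1$ at the extreme template.
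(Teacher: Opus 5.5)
Your overall plan is the paper's: keep $t=1+\varkappa$ in Theorem~\ref{thm:RW}, leave the upper count \eqref{eq:tube-upper} unchanged, and rerun the proof of \eqref{eq:leading-margin}. However, two steps fail.

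\textbf{The sign of the new coefficient.} $L$ already contains $-c_\alpha(\varpi-\mathfrak u)\omega$. Adding $\frac{\varkappa}{2}\bigl(1-a_c-(\varpi-\mathfrak u)\omega\bigr)$ therefore makes the coefficient of $\varpi-\mathfrak u$ in $C_1$ equal to $c=c_\alpha+\frac{\varkappa}{2}$, not $c_\alpha-\frac{\varkappa}{2}$. Likewise $10-3\alpha-\frac{\varkappa}{2}=9-(c_\alpha+\frac{\varkappa}{2})$, so your identity with $c_\alpha-\frac{\varkappa}{2}$ is an arithmetic slip. As written, your case analysis bounds the wrong quantity.

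With the correct sign, $c>0$, so your $c'<0$ regime never arises. What the branch analysis actually needs is $c\le\frac14$:
\begin{itemize}
\item branch~1 with $e_0\le1$ uses $(c-\frac14)u_0\le0$;
\item branch~1 with $e_0>1$ uses $2c<\frac12$;
\item branch~2 uses $c\min\{1,u_0\}\le\frac{u_0}{4}$ and $9-\frac14\le 9-c$.
\end{itemize}
The hypothesis $\varkappa<\frac52-6\alpha$ is exactly $c<\frac14$. You wrote the equivalent form $\varkappa<\frac12-2c_\alpha$, but attributed it to keeping the middle Ren--Wang branch active, which is not its role.

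\textbf{The terminal branch of Theorem~\ref{thm:RW}.} This is the more serious problem. With $t=1+\varkappa$, the lower exponent is, up to losses,
\[
\min\Bigl\{\frac{(1+3s_{\mathrm{act}}+\varkappa)\zeta}{2},\ (1+s_{\mathrm{act}})\zeta\Bigr\},
\]
and the second entry is the smaller one whenever $s'+\varkappa>1$. This happens under the stated hypotheses, and even in the application to part~(a). Take $\alpha=0.34$, $\Delta=0$, $\omega=0.04$, $\varkappa=0.45<\frac52-6\alpha$, and a template with $\varpi=2$ and $\mathfrak h$ near $0$. Then $a\approx0.33$ and $s_{\mathrm{act}}\approx0.665$, so $s'+\varkappa>1$. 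Meanwhile the left side of \eqref{eq:pin-margin} is about $0.07$ and $W_0-6\omega=0.01$. For $\Delta$ near $\frac12$, $s_{\mathrm{act}}$ is near $1$. So the check you propose at the end would fail, and in this regime your lower count overestimates $\#\mathcal T$.

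The missing step is a separate comparison of $(1+s_{\mathrm{act}})\zeta$ with $U_1$, which the paper carries out as follows.
\begin{itemize}
\item From $\zeta\ge1-a-\xi$ one gets $(1+s_{\mathrm{act}})\zeta\ge1+\frac{\alpha}{2}-\alpha\Delta-2\alpha\varpi\omega+2\alpha\mathfrak h-3\tau$.
\item Using $3f_0(\mathfrak u)-2\alpha\mathfrak u\le6-2\alpha$ together with \eqref{eq:plateau-polytope}, the resulting $\omega$-coefficient is at most $\frac{35}{4}+2\alpha$.
\item Then $\omega<\frac{1-2\Delta}{24}$ gives the margin $(1-2\Delta)\frac{40\alpha-11}{96}-3\tau$, which is positive since $\alpha>\frac13$.
\end{itemize}
This is what the hypothesis involving $(1-2\Delta)\frac{40\alpha-11}{96}$ is for. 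It is not about validating the middle branch, nor about $\omega<\frac{W_0}{6}$; the latter is the separate $W_0-6\omega$ margin needed for Lemma~\ref{lem:rectangular-sampling}.
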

\begin{proof}
By \eqref{eq:coarse-frostman}, $P'$ is a
$(\rho,1+\varkappa,\delta^{-o(1)})$-set. Choose the direction grid
before $\xi$ and $\xi_{\mathrm{cut}}$, and let $\eta_*>0$ be the
minimum tolerance of Theorem~\ref{thm:RW} over this grid at
$t=1+\varkappa$ and $\epsilon=\epsilon_{\mathrm{RW}}$. Take
$\alpha\max\{\xi,\xi_{\mathrm{cut}}\}<\min\{\frac{\vartheta}{4},\frac{\eta_*}{4}\}$.
The theorem then applies with $t=1+\varkappa$. Since $s'<1\le t$, the
exponent is $\min\{\frac{s'+t}{2},1\}$, and \eqref{eq:RW-lower}
becomes $\#\mathcal T\gtrsim\delta^{-\Lambda'}$ with
\[
 \Lambda'=\min\Bigl\{\frac{(1+3s_{\mathrm{act}}+\varkappa)\zeta}{2},
 (1+s_{\mathrm{act}})\zeta\Bigr\}-3\vartheta-\epsilon_{\mathrm{RW}}.
\]
Everything else is unchanged, and \eqref{eq:tube-upper} holds. We
show that both entries of the minimum exceed $U_1$ by the stated
margins.

For the first entry, the derivation of \eqref{eq:L}, with the
additional term
$\frac{\varkappa\zeta}{2}\ge\frac{\varkappa}{2}\bigl(\frac{3}{4}-\frac{\Delta}{2}-\varpi\omega+\mathfrak h-\xi_{\mathrm{par}}-\xi\bigr)$,
gives
\begin{gather*}
 \frac{(1+3s_{\mathrm{act}}+\varkappa)\zeta}{2}\ge L'-3\tau,\qquad 
 c=c_\alpha+\frac{\varkappa}{2}<\frac{1}{4},\\
 L'=\frac{3}{4}(1+\alpha)+\Bigl(\frac{1}{2}-\frac{3\alpha}{2}\Bigr)\Delta
 +\frac{\varkappa}{8}(3-2\Delta)-c\varpi\omega+c\mathfrak h,
\end{gather*}
the last inequality being $\varkappa<\frac{5}{2}-6\alpha$. The proof
of \eqref{eq:leading-margin} used only $0<c_\alpha<\frac{1}{4}$, so
with $c$ in place of $c_\alpha$ it yields
\[
 L'-U_1\ge\frac{3\alpha}{4}(1-2\Delta)+\frac{\varkappa}{8}(3-2\Delta)-(9-c)\omega,
\]
which is the left side of \eqref{eq:pin-margin}.

For the second entry, $\zeta\ge1-a-\xi$ and
$a=a_c+\varpi\omega+\xi_{\mathrm{par}}-\mathfrak h$ give
\[
 (1+s_{\mathrm{act}})\zeta=(1+\alpha)\zeta+(1-\alpha)a
 \ge1+\frac{\alpha}{2}-\alpha\Delta-2\alpha\varpi\omega+2\alpha\mathfrak h-3\tau .
\]
With $\mathfrak u=\frac{\mathfrak h}{\omega}$ as in
Section~\ref{sec:universal-pinned}, we have
$f_0(\mathfrak u)\le\min\{2,2\mathfrak u\}$ in both branches,
$3f_0(\mathfrak u)-2\alpha\mathfrak u\le6-2\alpha$, and \eqref{eq:plateau-polytope}
implies
$6+2\varsigma+\chi-2\mathfrak e+2\alpha(\varpi-1)\le9-\frac{u_0}{4}+2\alpha\min\{1,u_0\}\le\frac{35}{4}+2\alpha$.
Hence
\[
 (1+s_{\mathrm{act}})\zeta-U_1
 \ge\Bigl(\frac{1}{4}+\frac{\alpha}{2}\Bigr)(1-2\Delta)-\Bigl(\frac{35}{4}+2\alpha\Bigr)\omega-3\tau
 \ge(1-2\Delta)\frac{40\alpha-11}{96}-3\tau,
\]
using $\omega<\frac{1-2\Delta}{24}$.
The leading term is positive
because $\alpha>\frac{1}{3}$. The conditions \eqref{eq:threshold-budget}
are as before. Choosing $\beta,\tau,\sigma,\vartheta,\epsilon_{\mathrm{RW}}$
small compared with the three margins, the argument of
Proposition~\ref{prop:strict-budget} leads to a contradiction.
\end{proof}

\begin{proof}[Proof of Theorem~\ref{thm:exceptional-quantitative}]
Fix $\frac{1}{3}<\alpha<\alpha_*$ with its row length.

For part (a), put $\Delta=0$ and let
\[
 q_\alpha=\frac{3}{4}+r_\alpha=\frac{15-3\alpha}{20-6\alpha}
 \le q<\frac{19}{24},\qquad  \omega=q-\frac{3}{4}<\frac{1}{24}.
\]
Define
\begin{equation}\label{eq:kappa-alpha}
 \varkappa_\alpha(q)=20-6\alpha-\frac{15-3\alpha}{q}
 =\frac{(80-24\alpha)\omega-6\alpha}{3+4\omega}.
\end{equation}
It is nonnegative for $q\ge q_\alpha$, increasing in $q$, and at most
$\frac{20-42\alpha}{19}$, which is less than $\frac{5}{2}-6\alpha$
because $\alpha<\frac{55}{144}$. Let $K$ be compact with
$\dim_{\mathrm H}K>1$, and suppose that its bad-pin set $B_q$ has
dimension greater than $1+\varkappa_\alpha(q)$. Fix $\varkappa$ with
\[
 \varkappa_\alpha(q)<\varkappa<\min\Bigl\{\frac{5}{2}-6\alpha,
 \dim_{\mathrm H}B_q-1\Bigr\},
\]
and apply Lemma~\ref{lem:seventh-classical-recurrence} with a pin
measure of Frostman exponent $1+\varkappa$. With $\Delta=0$, the
left side of \eqref{eq:pin-margin} is
$\frac{3\alpha}{4}+\frac{3\varkappa}{8}-(10-3\alpha-\frac{\varkappa}{2})\omega$,
which is positive exactly when $\varkappa>\varkappa_\alpha(q)$, and $W_0-6\omega=\frac{1}{4}-6\omega>0$. Proposition~\ref{prop:pin-budget}
and the proof of Theorem~\ref{thm:universal-main} give a
contradiction. Hence $\dim_{\mathrm H}B_q\le1+\varkappa_\alpha(q)$,
which is the asserted bound for $q<\frac{19}{24}$. The bad-pin set
at $\frac{19}{24}$ is the union of the bad-pin sets at thresholds
$q_j\nearrow\frac{19}{24}$, so countable stability and the
continuity of the bound in $q$ give the endpoint. The passage to
analytic $E$ is as before. Finally, the right side of (a) decreases
as $\alpha$ increases, its derivative in $\alpha$ being
$-6+\frac{3}{q}<0$.
Letting $\alpha\nearrow\alpha_*$ through fixed
row lengths proves the bound with $\alpha_*$, and $q_\alpha\nearrow q_*$.

For part (b), let $0<k\le\frac{1}{6}$ and put
\begin{equation}\label{eq:D-B-k}
 \mathcal D=\frac{k(4-k)}{1-3k+k^{2}},\qquad 
 B=1-b_{\mathrm{sym}}(k)(1-k)=\frac{(1+k)(3-k)}{4-k},\qquad 
 c=c_\alpha+\frac{k}{2}<\frac{1}{8}+\frac{1}{12}<\frac{1}{4},
\end{equation}
and, for $q=\frac{3}{4}+\varepsilon$, $\Delta=\mathcal D(1-q)$,
$\omega=\varepsilon-\frac{\Delta}{2}$: these are the level and the
margin supplied by Lemma~\ref{lem:bootstrap} with $b=b_{\mathrm{sym}}(k)$
and \eqref{eq:D-pin-sym}. With the pin exponent $\varkappa=k$, the
left side of \eqref{eq:pin-margin} is affine in $q$, with slope
$-\frac{1}{2}\bigl(\mathcal D(10-6\alpha-k)+20-6\alpha-k\bigr)<0$, and it
vanishes at
\begin{equation}\label{eq:P-alpha}
 P_\alpha(1+k)
 =\frac{\frac{15-3\alpha}{2}+\bigl(5-3\alpha-\frac{k}{2}\bigr)\mathcal D}{10-3\alpha-\frac{k}{2}+\bigl(5-3\alpha-\frac{k}{2}\bigr)\mathcal D}
 =\frac{k^{3}+(1+3\alpha)k^{2}-(5+15\alpha)k+15-3\alpha}{9k^{2}-(21+6\alpha)k+20-6\alpha};
\end{equation}
so the margin is positive exactly when $q<P_\alpha(1+k)$. Both
denominators are positive for $0\le k\le\frac{1}{6}$, and $P_\alpha(t)$
is the function $P(t)$ of the theorem with $\alpha$ in place of
$\alpha_*$.

If $P_\alpha(1+k)\le B$, the conclusion follows from
Theorem~\ref{thm:pinnedmain} with $p=1+k<\dim_{\mathrm H}E$, because
$B(1+k,\dim_{\mathrm H}E)\ge B(1+k,1+k)=B$, using that
$B(p,t)$ is nondecreasing in its source parameter $t$. So let $P_\alpha(1+k)>B$ and let
$B<q<P_\alpha(1+k)$ be close to $P_\alpha(1+k)$. We check the remaining
hypotheses at $q=P_\alpha(1+k)$.
They are strict, so they persist.
The range \eqref{eq:bootstrap-range} for $b=b_{\mathrm{sym}}(k)$ is
$B\le q<\frac{1+B}{2}$, and
\[
 \frac{1+B}{2}-P_\alpha(1+k)
 =\frac{(1-3k+k^{2})\bigl(20+3k-7k^{2}-18\alpha+12\alpha k\bigr)}{2(4-k)\bigl(20-6\alpha-(21+6\alpha)k+9k^{2}\bigr)}>0 .
\]
Since $q>B$ and $\mathcal D(1-B)=k$, we have $0<\Delta<k\le\frac{1}{6}$.
The margin $\omega$ increases with $q$ and vanishes at
$q_0=\frac{3+2\mathcal D}{4+2\mathcal D}$, where $\Delta=\frac{\mathcal D}{4+2\mathcal D}<\frac{1}{2}$
and the left side of \eqref{eq:pin-margin} equals
$\frac{3\alpha}{4}(1-2\Delta)+\frac{k}{8}(3-2\Delta)>0$.
Hence
$q_0<P_\alpha(1+k)$ and $\omega>0$. Finally,
$\omega<\frac{1-2\Delta}{24}$ is equivalent to
$q<\frac{19+10\mathcal D}{24+10\mathcal D}$, and
\begin{gather*}
 \frac{19+10\mathcal D}{24+10\mathcal D}-P_\alpha(1+k)
 =\frac{(1-3k+k^{2})G_\alpha(k)}{2(12-16k+7k^{2})\bigl(20-6\alpha-(21+6\alpha)k+9k^{2}\bigr)},\\
 G_\alpha(k)=20-79k+57k^{2}-14k^{3}-42\alpha(1-3k+k^{2}),
\end{gather*}
where $G_\alpha$ decreases in $\alpha$ and in $k$ on
$[0,\frac{1}{6}]$ for $\alpha\le\frac{3}{8}$, since
$\partial_\alpha G_\alpha=-42(1-3k+k^{2})<0$ and
$\partial_kG_\alpha=-79+114k-42k^{2}+42\alpha(3-2k)<0$ there.
So
$G_\alpha(k)\ge G_{\frac{3}{8}}(\frac{1}{6})=\frac{17}{432}>0$. Hence
$\omega<\frac{1-2\Delta}{24}$, so that Section~\ref{sec:plateau}
and all the conditions of Section~\ref{sec:universal-pinned} apply, and
$\varkappa=k<\frac{5}{2}-6\alpha$.

Let $K$ be compact with $\dim_{\mathrm H}K>1+k$, and let
$q<P_\alpha(1+k)$, $q>B$, be so close to $P_\alpha(1+k)$ that the
checks above hold. Apply Lemma~\ref{lem:seventh-classical-recurrence}
with a source measure of Frostman exponent $1+k$.
Then every
limiting source profile satisfies $f(u)\ge(1+k)u$. If the bad-pin
set $B_q$ of $K$ had dimension greater than $1+k$, the lemma would
also supply a pin measure of Frostman exponent $1+k$, and every pin of
$G$ would have $\dim^Ax\ge1+k$.
Corollary~\ref{cor:prefix-rate}, applied to the pairs of $G$ with
$1<p\le s<1+k$ and then $p,s\nearrow1+k$, yields the prefix estimate
\eqref{eq:general-prefix} with $b=b_{\mathrm{sym}}(k)$ for every
limiting profile.
Lemma~\ref{lem:bootstrap} supplies the certificate at
level $\lambda=1+\Delta$ along low-distance precisions, and Proposition~\ref{prop:pin-budget} with the argument of
Theorem~\ref{thm:universal-main} would give a contradiction. Hence
$\dim_{\mathrm H}B_q\le1+k$.
By inclusion this holds for every
$q<P_\alpha(1+k)$, and by countable stability at $q=P_\alpha(1+k)$.
The first expression in \eqref{eq:P-alpha} increases with
$\mathcal D$ and with $k$, its partial derivatives having the positive
numerators $(5-3\alpha-k)(10-6\alpha-k)$ and $15-3\alpha+(5-3\alpha)\mathcal D$, and $P_\alpha(1+k)$ increases with $\alpha$, the numerator of its
$\alpha$-derivative being $3(1-3k+k^{2})(10-29k+11k^{2})>0$. Letting
$\alpha\nearrow\alpha_*$ and using countable stability once more
proves the bound with $P(1+k)$. For analytic $E$ with
$\dim_{\mathrm H}E>1+k$, choose a compact $K\subset E$ with
$\dim_{\mathrm H}K>1+k$.
Since $\Delta_xK\subset\Delta_xE$, the bad-pin
set of $E$ is contained in that of $K$.

For the last assertion of the theorem, $P$ is increasing on
$[1,\frac{7}{6}]$, because $\mathcal D$ increases with $k$ and the
first expression in \eqref{eq:P-alpha} increases with $\mathcal D$ and
with $k$.
So for $1<\dim_{\mathrm H}E\le\frac{7}{6}$ and $q<P(\dim_{\mathrm H}E)$ there is
$k<\dim_{\mathrm H}E-1$ with $q<P(1+k)$, and the set of $x\in E$ with
$\dim_{\mathrm H}\Delta_xE<q$ has dimension at most $1+k<\dim_{\mathrm H}E$. Its
complement in $E$ therefore has dimension $\dim_{\mathrm H}E$.
\end{proof}

The comparison $P(\dim_{\mathrm H}E)>Q(\dim_{\mathrm H}E)$ for $1<\dim_{\mathrm H}E\le\frac{7}{6}$ follows from the
same monotonicity: writing
$Q_\alpha(1+k)=\frac{3}{4}+E_{r_\alpha}(k)$ for the earlier pinned
bound with parameter $\alpha$, this is the
first expression in \eqref{eq:P-alpha} with $\frac{4k}{1-2k}$ in
place of $\mathcal D$ and $0$ in place of the pin exponent $k$, and
$\mathcal D-\frac{4k}{1-2k}=\frac{k^{2}(3-2k)}{(1-2k)(1-3k+k^{2})}>0$.

\section{Uniform finite-precision estimates and selection}
\label{sec:new-finite}

The lower-box proof uses a new pair at each terminal precision.
We therefore need uniform estimates for such changing pairs.
This section supplies that uniformity, together with the
selection and covering arguments; an asymptotic statement for
one fixed pair would not suffice.

\begin{lemma}[Limits of finite generic pairs]\label{lem:new-finite}
Fix an oracle $A$, $1<\sigma\le2$, $C>0$, and a bounded region.
Suppose $n_j\to\infty$ and pairs $x_j,y_j$ in that region have
$|x_j-y_j|\ge n_j^{-C}$. For each $n=n_j$, write $x=x_j,y=y_j$,
$e=\frac{y-x}{|y-x|}$, and suppose for every $1\le m\le2n$ that
\begin{align}
 K_m^A(x),K_m^A(y)&\ge\sigma m-C\log_2(n+2),\label{newfin:eq:F1}\\
 K_m^{A,x}(e),K_m^{A,y}(e)&\ge m-C\log_2(n+2),\label{newfin:eq:F2}\\
 K_m^{A,x}(y)&\ge K_m^A(y)-C\log_2(n+2),\label{newfin:eq:F3a}\\
 K_m^{A,y}(x)&\ge K_m^A(x)-C\log_2(n+2).\label{newfin:eq:F3b}
\end{align}
Every joint subsequential limit of the normalized endpoint and
conditioned distance profiles satisfies \eqref{eq:new-system}
in both orientations, with $f(u),g(u)\ge\sigma u$.
The conditional-parent transfer of
Lemma~\ref{lem:conditional-parent-transfer} also holds in the
finite approximation, with arbitrarily small fixed linear error.
\end{lemma}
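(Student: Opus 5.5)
The plan is to repeat the proofs of Lemmas~\ref{lem:prefix}, \ref{lem:transfer}, \ref{lem:fullpartition} and~\ref{lem:conditional-parent-transfer} with the pair allowed to vary. The point is that every error term in those arguments depends on the pair only through the constants in (H1)--(H4). Here those constants are replaced by the explicit bounds \eqref{newfin:eq:F1}--\eqref{newfin:eq:F3b}, which have the common constant $C\log_2(n+2)$. We also have the separation $|x_j-y_j|\ge n_j^{-C}$.

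First, for each $j$ form the interpolated normalized profiles on $[0,1]$ at terminal precision $n_j$. They are uniformly $2$-Lipschitz, respectively $1$-Lipschitz, so Arzel\`a--Ascoli gives a joint subsequential limit. The bound $f,g\ge\sigma u$ follows at once from \eqref{newfin:eq:F1}, and \eqref{eq:profileclass} follows from the planar upper bound. The separation hypothesis enters only because $|x-y|^{-1}$ appears in the geometry, for instance in the direction normalization and in the exact identity for $\mathbf e\cdot\frac{y-z}{|y-z|}$. A polynomially small separation changes precisions by $O(\log n)$, which vanishes after normalization.

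Second, verify the prefix estimate. Proposition~\ref{prop:appendix-prefix} and the Fiedler--Stull partition need only three inputs: directional randomness at every precision up to $n$, independence, and a lower dimension parameter $d\in(1,\sigma)$. Each of these is supplied uniformly here with error $C\log n$, by \eqref{newfin:eq:F2}, \eqref{newfin:eq:F3a} and \eqref{newfin:eq:F1}. The projection estimate of Lemma~\ref{lem:projection} already has a threshold $R_0(A,x,\tau,C,\xi)$ that does not depend on the direction. Its only dependence on $x$ is through the initial cutoff coming from $\dim^Ax>1$ and through $|x|$. Both are now uniform, via \eqref{newfin:eq:F1} and the bounded region. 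So for each fixed $\xi$ there is one threshold valid for all $j$, and dividing by $n_j$ gives \eqref{eq:limitprefix} in the limit. The same argument with the endpoints exchanged uses \eqref{newfin:eq:F3b}.

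Third, handle the closed and full transfers and the conditional-parent transfer. In Lemma~\ref{lem:transfer} the lowering oracle $G_j=D(n_j,y_j,\cdot)$ is already allowed to change with $j$, and the remaining steps use only the following pieces, all uniform with logarithmic errors: the competitor bound \eqref{eq:collision}, the tangent estimate, Lemma~14 and Lemma~15 of \cite{FS}, and the chain rule. For Lemma~\ref{lem:fullpartition}, Lemma~\ref{lem:uniform-partition-witnesses} provides finitely many templates with a uniform margin around the limiting profiles. For large $j$ the finite profiles lie inside that neighborhood, so the same templates apply at precision $n_j$.

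The main obstacle is checking that no hidden constant in the cited lemmas of \cite{S,FS} depends on the particular pair through a Borel--Cantelli or almost-everywhere statement rather than through the displayed finite bounds. I would audit each citation and replace every occurrence of ``$O_y(\log r)$'' or ``sufficiently large $r$ depending on $x$'' by the explicit $C\log(n+2)$ and the uniform directional deficiency. Where a lemma assumes an asymptotic effective dimension, I would substitute the finite inequality \eqref{newfin:eq:F1} on the range $m\le2n$, which is all those arguments use. Once this is done, the linear error in the conditional-parent transfer is $3\varepsilon n+O_\varepsilon(\log n)$ uniformly in $j$, which gives the final assertion.
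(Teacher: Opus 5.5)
Your route is the paper's: rerun the prefix, closed and full transfer, and conditional-parent arguments with the pair varying. You extract a joint limit by Arzel\`a--Ascoli, treat the separation $|x_j-y_j|\ge n_j^{-C}$ as an $O_C(\log n)$ loss of precision, and use the finitely many templates of Lemma~\ref{lem:uniform-partition-witnesses} for the full transfer. For the transfers this works. All precisions there are fixed fractions of $n$ (after discarding $[0,\kappa n_j]$ in the full transfer), and an $O(\log n)$ cutoff lies below the $\sqrt R$ truncation already present in the proof of Lemma~\ref{lem:projection}.

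The problem is the prefix step, which is exactly where the paper's proof spends its effort. The hypotheses give an error $C\log_2(n+2)$ at \emph{every} precision $m\le2n$, not an error $O(\log m)$. So \eqref{newfin:eq:F1} guarantees the floor $K_m^A(x)\ge dm$ only for $m\ge\frac{C\log_2(n+2)}{\sigma-d}$, a cutoff that grows with $n$. Likewise, \eqref{newfin:eq:F2} is a logarithmic directional deficiency relative to a local precision $b$ only when $b$ is at least a fixed power of $n$. Your prescription of replacing each $O_y(\log r)$ by $C\log(n+2)$ therefore does not produce the hypotheses of the local lemmas of \cite{S,FS} on small partition pieces. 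Your claim that the initial cutoff becomes uniform ``via \eqref{newfin:eq:F1}'' is false as stated. The partition behind Proposition~\ref{prop:appendix-prefix} runs over all of $[1,n]$ and discards only a bounded initial range. Its teal blocks with right endpoint below a power of $n$ therefore have neither the floor used in the root and ratio estimates nor a logarithmic deficiency giving a uniform projection threshold. So ``one threshold valid for all $j$'' is not justified for the prefix sum.

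The missing step is a truncation at a growing cutoff that is a small power of $n$, which costs only $o(n)$. The paper does this in three moves:
\begin{itemize}
\item It fixes $1<d<\min\{\sigma,\frac{5}{4}\}$ and puts $\ell_n=\lceil(\log_2(n+2))^2\rceil$, above which the floor $K_m^A\ge dm$ holds for all large $n$.
\item It discards projection pieces with right endpoint below $k_n=\lceil n^{1/4}\rceil$. For the remaining pieces, $C\log(n+2)\le5C\log(b+2)$, which gives one deficiency constant and makes the closed projection estimate uniform for $c_0h_n\le R\le n$.
\item It stops the prefix partition once the right endpoint reaches a fixed multiple of $h_n=\lceil\sqrt n\rceil$.
\end{itemize}
The total prefix error is then $O(\xi n)+O((\log n)^2)+O(\sqrt n)$, with $n\to\infty$ taken before $\xi\searrow0$. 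With this truncation inserted, the rest of your audit goes through as you describe.
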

\subsection{Uniform estimates for changing pairs}
\begin{proof}[Proof of Lemma~\ref{lem:new-finite}]
We first explain precisely why the retained information estimates survive
when $(x,y)$ varies with $n$. Set
\[
 \ell_n=\lceil(\log_2(n+2))^2\rceil,\qquad
 k_n=\lceil n^{\frac{1}{4}}\rceil,\qquad h_n=\lceil\sqrt n\rceil,
\]
and fix $1<d<\min\{\sigma,\frac{5}{4}\}$. For all large $n$,
\begin{equation}\label{newfin:eq:floor}
 K_m^A(x),K_m^A(y)\ge dm\qquad(\ell_n\le m\le2n).
\end{equation}
Thus the point-dependent initial cutoff supplied by effective dimension
is replaced by an explicit logarithmic-squared cutoff. On a local
projection piece with right endpoint $b\ge k_n$, a deficiency
$C\log(n+2)$ is at most $5C\log(b+2)$ for all large $n$.

The local projection estimates of Stull~\cite[Lemmas 14--15]{S} are
finite statements. Their tolerances, a logarithmic directional
deficiency, and a bound on the norm of the point control the threshold.
The remark following Lemma~14 states this uniformity explicitly, and
the proof of Lemma~15 has the same dependence. Their oracle simulations
are uniform. In particular, no threshold depending on an unrecorded
property of the selected direction is used here.

For the closed projection estimate, the proof of Lemma~\ref{lem:projection} retains
\begin{equation}\label{newfin:eq:projection}
 K_R^A(x\mid p_vx,v)
 \le\max\{K_R^A(x)-R,\frac{K_R^A(x)-u}{2}\}+o(n)
\end{equation}
when $R$ is a fixed positive fraction of $n$ and $\frac{u}{R}$ is bounded below.
In fact, for fixed $c_0,\tau,\xi>0$, the error is at most $\xi R$
uniformly for $c_0h_n\le R\le n$ and $\tau R\le u\le R$, once $n$
is large enough. Its partition has a bounded number of pieces once
$\tau$ is fixed. Discard projection pieces below $k_n$, at cost
$O(k_n)=o(R)$ uniformly in this range. Every remaining piece has a
common directional-deficiency constant, and $k_n\gg\ell_n$ ensures
that \eqref{newfin:eq:floor} supplies the lower dimension cutoff throughout
the remaining argument. Sum the bounded number of local estimates
with a fixed sufficiently small tolerance. This proves the required
uniform version of \eqref{newfin:eq:projection}, including the smaller
projection precisions used inside the prefix partition.

The circle geometry has a second possible dependence on the pair. If
$|x-z|=|x-y|$ and $v=\frac{y-z}{|y-z|}$, then
\begin{equation}\label{newfin:eq:circle}
 \left|e\cdot v\right|=\frac{|y-z|}{2|x-y|}.
\end{equation}
Since $|x-y|\ge n^{-C}$, approximating $v$ by a perpendicular to $e$
loses at most $O_C(\log n)$ bits of precision. The normalization of
$y-z$ and the perpendicular-bisector formula have the same uniform
logarithmic costs on bounded sets. The tangent transition also widens
by only $O_C(\log n)$. The distance-transfer search and the
complexity-lowering oracle in Fiedler--Stull~\cite[Lemmas 14--15]{FS}
are uniform finite algorithms; their parameter descriptions cost
$O_C(\log n)$ here. Supplying a bounded number of additional precision
indices does not introduce a point-dependent constant.

These observations account for the pair-dependent quantities, namely the
initial lower-complexity cutoff, the directional deficiency, the
separation of the endpoints, and the finite parameter descriptions.
We next carry out the limit argument using these bounds.

\subsection{Limits of a sequence of changing pairs}
Consider a sequence $n_j\to\infty$ and pairs $(x_j,y_j)$ satisfying the hypotheses of Lemma~\ref{lem:new-finite}. Write $\rho_j=|x_j-y_j|$.
Take a common uniformly convergent subsequence of the normalized
complexity functions
\[
 \begin{array}{ll}
 f_j(u)=n_j^{-1}K_{\lfloor un_j\rfloor}^A(y_j),&
 g_j(u)=n_j^{-1}K_{\lfloor un_j\rfloor}^A(x_j),\\[2pt]
 d_{x,j}(u)=n_j^{-1}K_{\lfloor un_j\rfloor}^{A,x_j}(\rho_j),&
 d_{y,j}(u)=n_j^{-1}K_{\lfloor un_j\rfloor}^{A,y_j}(\rho_j).
 \end{array}
\]
Set their values at zero to zero and interpolate between integer
precisions. Standard precision-increment estimates are uniform for
bounded points and for all oracles, so this extraction does not require
the points to be fixed. The limits satisfy
\begin{equation}\label{newfin:eq:profiles}
 \sigma u\le f(u),g(u)\le2u,
\end{equation}
with $f,g$ nondecreasing and $2$-Lipschitz, and $d_x,d_y$ nondecreasing
and $1$-Lipschitz, vanishing at zero. Every limiting upper bound on the terminal distance complexities is inherited by $d_x(1)$ and $d_y(1)$.

\subsection{The transfers remain valid}
Recall the closed transfer quantities
\[
 p_g(u;b)=\min\{b-u,\frac{g(b-u)+u}{2}\},\qquad
 H_{f,g}(a,b)=\min\{f(b),\inf_{a\le u\le b}(f(u)+p_g(u;b))\}.
\]
Fix $0<a<b\le1$ and a cap $c<H_{f,g}(a,b)$ with $c>f(a)$.
For a circle competitor at separation $|y_j-z|=2^{-un_j}$, the
uniform collision estimate, after the source complexity is capped
at $cn_j$, reads
\begin{equation}\label{newfin:eq:capped}
 K_{bn_j}^{A,G_j}(z)
 \ge n_j\min\{c,f(u)\}+n_jp_g(u;b)-o(n_j).
\end{equation}
Integer subscripts are rounded. For $u<\frac{b}{2}$ this follows from
\eqref{newfin:eq:projection} and the bisector identity. For $u\ge \frac{b}{2}$, the
tangent estimate has $p_g(u;b)=b-u$; the logarithmic transition is
absorbed by the cap's fixed strict margin.

The lowering oracle preserves information conditional on the source.
Consequently \eqref{newfin:eq:F3a}--\eqref{newfin:eq:F3b} preserve the positive pin
term, while the negative projection residual can only decrease. This
uses the original directional randomness, without requiring new
randomness relative to $G_j$. The strict gap between $c$ and $H$
verifies the competitor hypothesis of the finite distance-transfer
lemma. After the chain rule, removal of $G_j$, and letting the local
tolerance tend to zero, we obtain
\begin{equation}\label{newfin:eq:closed-transfer}
 d_x(b)-d_x(a)\ge[H_{f,g}(a,b)-f(a)]_+.
\end{equation}
The same reasoning applies with the endpoints exchanged.

We also need the full partition transfer. Put $h_g(u)=g(u)-u$. In a
yellow interval, $h_g$ has its minimum at the left endpoint; in a teal
interval it has its minimum at the right endpoint. Define
\[
 \mathcal L_u(g;R)=\inf_{\mathcal P}\sum_{[v,w]\in\mathcal P\,\mathrm{yellow}}
       (h_g(w)-h_g(v)),\qquad P_g(u;R)=g(R)-\mathcal L_u(g;R),
\]
where the infimum is over finite labelled yellow--teal partitions of
$[0,R]$ with piece lengths at most $u$. Set
\[
 \widehat p_g(u;b)=\max\{p_g(u;b),P_g(u;b-u)\},\quad
 \widehat H_{f,g}(a,b)
 =\min\{f(b),\inf_{a\le u\le b}(f(u)+\widehat p_g(u;b))\}.
\]
For a cap below $\widehat H$, Lemma~\ref{lem:uniform-partition-witnesses} supplies finitely many templates, a
bounded piece count, and a common strict margin for all
$u\in[a,\frac{b}{2}]$. The local Stull estimates apply to their approximate
yellow and teal conditions. Discard $[0,\kappa n_j]$, charge its full
complexity, and encode the rounded endpoints of each template in
$O(\log n_j)$ bits. Every remaining local estimate is at a
macroscopic precision and has uniform constants. First let $j\to\infty$
and then $\kappa\downarrow0$. Substituting these gains into
\eqref{newfin:eq:capped} proves
\begin{equation}\label{newfin:eq:full-transfer}
 d_x(b)-d_x(a)\ge[\widehat H_{f,g}(a,b)-f(a)]_+,
\end{equation}
and its reverse orientation. In particular, the number of witnesses
does not increase with $j$ or with the competitor.

\subsection{The prefixes remain valid}
Appendix~\ref{app:retained-prefix} proves its prefix bound using yellow blocks
and selected teal blocks. We check how its errors depend on the terminal precision. Use the fixed $d$ chosen above. For a selected teal block
$[a,b]$ with $b\gg h_n$, its root identities and \eqref{newfin:eq:floor} give
\[
 \frac{a}{b}\ge\frac{d}{5-d}-o(1)>0,\qquad a\le \frac{b}{2},
\]
and, for $a\le u\le b$,
\[
 F(u)\ge F(b)-\frac{d}{2} b+\frac{d-1}{2}u,
 \qquad F(u)\ge F(b)-(b-u),
\]
where $F(m)=K_m^A(y)$, with interpolation. The closed collision gain
is at least
\[
 \min\{b-u,\frac{d}{2}b-\frac{d-1}{2}u\}-o(b).
\]
Each branch, added to $F(u)$, is at least $F(b)-o(b)$.
The lowering and transfer lemmas therefore pass the full teal source
increment to the distance. The source increment per unit length is at
least
\[
 c_d=\min\{1,\frac{d(3-d)}{5-2d}\}\ge\frac{2}{3}.
\]
On a yellow block, the distance increment is at least its length,
up to the local error, while the source increment is at most twice
its length. The at-most-doubling yellow subpartitions use the same
uniform finite local estimates.

Every selected teal block at least halves the right endpoint; two
outer yellow blocks are not consecutive. Consequently the sum of
right endpoints is $O(n)$ and there are $O(\log n)$ relevant blocks.
Stop when the right endpoint is a fixed multiple of $h_n$, so the
discarded range costs $O(h_n)$. For fixed local tolerance $\xi$, the
total error is
\begin{equation}\label{newfin:eq:prefix-error}
 O_d(\xi n)+O_{C,d,\xi}((\log n)^2)+O_d(\sqrt n).
\end{equation}
The precision ratios ensure that all needed lower bounds remain above
the discarded cutoff. No estimate dividing by $2-D$ at $D=2$ is used.

If $L_n$ is the total yellow length, the two sums are therefore
\[
 K_n^{A,x}(|x-y|)\ge\frac{2n+L_n}{3}-o(n),\qquad
 K_n^{A,x}(|x-y|)\ge K_n^A(y)-L_n-o(n).
\]
Weighting them by $\frac{3}{4}$ and $\frac{1}{4}$, and applying the same argument at
each fixed positive proportion of $n_j$, yields
\begin{equation}\label{newfin:eq:prefix-limit}
 d_x(u)\ge \frac{u}{2}+\frac{f(u)}{4},\qquad
 d_y(u)\ge \frac{u}{2}+\frac{g(u)}{4}.
\end{equation}
In \eqref{newfin:eq:prefix-error}, $n\to\infty$ precedes $\xi\downarrow0$.

The same strict-cap argument with a source parent supplied as
finite side information proves the last assertion. In
Lemma~\ref{lem:conditional-parent-transfer}, the parent precision
is a fixed positive fraction of $n$. Describing the parent precision costs $O(\log n)$; the parent
itself is supplied as side information, and the projection, tangent, and complexity-lowering
steps just considered have the same uniform bounds. Thus its error
is at most $\varepsilon n$ for all sufficiently large $n$, with
$\varepsilon>0$ fixed before taking the limit.
This proves Lemma~\ref{lem:new-finite}.
\end{proof}

\section{Selecting generic pairs at a fixed precision}\label{newsel:sec:selection}
After a fixed similarity, assume $E\subset[0,1]^2$. Fix an oracle $A$
which computes the dyadic masses of $\mu$ and finite rational
Hausdorff approximations to $E$. It is fixed before any bad set or
pair is selected.

\begin{lemma}\label{newsel:lem:selection}
Assume the Frostman upper bound $\mu(B(z,r))\le Cr^s$, $s>1$, and fix $M>0$ and $1<\sigma<s$.
There is $C<\infty$ such that for all sufficiently large $n$ and
every Borel set $B\subset E$ with $\mu(B)>n^{-M}$, there exist
$x,y\in B$ satisfying \eqref{newfin:eq:F1}--\eqref{newfin:eq:F3b} for $m\le2n$
and $|x-y|\ge n^{-C}$.
\end{lemma}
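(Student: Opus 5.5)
We select the pair by a quantitative union bound rather than by the almost-everywhere statements used for a single fixed pair. Normalize $\mu$, put $\nu_B=\frac{\mu|_B}{\mu(B)}$, and for each of the four conditions \eqref{newfin:eq:F1}--\eqref{newfin:eq:F3b}, and for the separation, bound the $\mu\times\mu$ mass of the set of pairs in $E\times E$ that fail it at some $m\le2n$. The goal is a total failure mass at most $n^{-3M}$ for a large enough $C$. Since $(\mu\times\mu)(B\times B)>n^{-2M}$, this leaves a good pair in $B\times B$. The key point is that all bounds are for $\mu\times\mu$ on all of $E$, so they hold uniformly over every Borel $B$ at once. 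No dependence on $B$ enters, and $C$ depends only on $M,\sigma,s$, the Frostman constant and $A$.

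The individual bounds are as follows.

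\emph{Separation.} The Frostman bound gives $(\mu\times\mu)\{|x-y|<n^{-C}\}\le C'n^{-Cs}$.

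\emph{Condition \eqref{newfin:eq:F1}.} The number of rational points with $K^A<\sigma m-C\log_2(n+2)$ is at most $2^{\sigma m}(n+2)^{-C}$. The $2^{-m}$-balls about them have total $\mu$-mass at most $C'2^{(\sigma-s)m}(n+2)^{-C}$. Summing over $m\le2n$ gives $O((n+2)^{-C})$, since $\sigma<s$.

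\emph{Conditions \eqref{newfin:eq:F3a}--\eqref{newfin:eq:F3b}.} Run the Kraft argument of Lemma~\ref{lem:measure-coding} with an explicit tail instead of Borel--Cantelli. For fixed $x$ and the oracle $(A,x)$, the set of $y$ with $K_m^{A,x}(y)<-\log_2\mu(Q_m(y))-C\log_2(n+2)$ has $\mu$-mass at most $(n+2)^{-C}$. This holds uniformly in $x$, because Kraft's inequality holds for every oracle. The upper coding bound $K_m^A(y)\le-\log_2\mu(Q_m(y))+O(\log(m+2))$ holds for every $y$ with $\mu(Q_m(y))>0$, with an absolute constant. Combining the two, Fubini and a union over $m\le2n$ give failure mass $O(n(n+2)^{-C})$. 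The other orientation is symmetric.

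\emph{Condition \eqref{newfin:eq:F2}.} This is the main obstacle. We need a quantitative radial projection statement: for each pin $x$, the pushforward $(\pi_x)_*\mu$ should not concentrate on the set of directions of low $(A,x)$-complexity. Those directions are covered by at most $2^{m}(n+2)^{-C}$ arcs of length $2^{-m}$, of total length $(n+2)^{-C}$. If $(\pi_x)_*\mu$ had density bounded by $L$, their mass would be at most $L(n+2)^{-C}$. Orponen's theorem~\cite{O} gives $L^2$ densities only for most $x$, with no uniform bound. We would instead use the quantitative form: $\int\|(\pi_x)_*\mu\|_{L^p}^p\,d\mu(x)<\infty$ for some $p>1$, which follows from finite energy as $s>1$. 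Hölder's inequality then bounds the mass of the low-complexity arcs by $\|(\pi_x)_*\mu\|_{L^p}(n+2)^{-C(1-\frac{1}{p})}$. Integrating in $x$ gives a polynomially small failure mass, after enlarging $C$. Handling $\mu\times\mu$ rather than $\mu|_{G}$ needs no density truncation, because the integrated $L^p$ bound already suffices. Reversing roles gives the other orientation, using the same inequality with $x,y$ exchanged, up to the sign of $e$.

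Summing the failure masses, choosing $C$ large relative to $M$ and $p$, and taking $n$ large completes the plan. The $L^p$ radial step is the one to verify carefully; if only $L^1$-type absolute continuity were available, this step would fail.
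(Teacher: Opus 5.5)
Your plan is correct in outline, but it organizes the union bound differently from the paper. The paper localizes first. It partitions the square into cells of side $h\approx n^{-J}$ with $Js>M+1$, and finds two cells $Q,R$ with $\operatorname{dist}(Q,R)\ge h$ and $\mu(B\cap Q),\mu(B\cap R)\gtrsim n^{-(M+2J)}$. It then runs the Kraft and radial tail bounds only on $\mu|_Q\times\mu|_R$. Separation is automatic there. The radial input is Orponen's estimate $\int\|\pi_x\eta\|_{L^p}^p\,d\nu(x)\le C_\mu$ for $\nu,\eta\le\mu$ with \emph{disjoint} supports, with a constant the paper asserts is independent of the separation. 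You instead bound every failure set on all of $E\times E$, obtain separation from the Frostman bound, and conclude from $(\mu\times\mu)(B\times B)>n^{-2M}$. This makes the uniformity in $B$ transparent and removes the cell selection. Your direct counting argument for \eqref{newfin:eq:F1} and the explicit Kraft tail for \eqref{newfin:eq:F3a}--\eqref{newfin:eq:F3b} are fine.

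The step that is not justified as written is the radial one. You use $\int\|(\pi_x)_*\mu\|_{L^p}^p\,d\mu(x)<\infty$ with pins and sources drawn from the same measure. That is not the disjoint-support estimate the paper takes from Orponen's argument, and it does not follow from finite energy without further work. Near the diagonal, the source mass in each annulus $B(x,2^{-k})\setminus B(x,2^{-k-1})$ satisfies deterministically only the angular bound $\lesssim\theta^{s-1}2^{-ks}$ on arcs of length $\theta$. The self-pairing bound therefore needs the separated estimate in rescaled form, with explicit dependence on the rescaled energies, summed over all $k$.

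You do not actually need it, because you have already discarded pairs with $|x-y|<n^{-C_1}$. Cover the remaining pairs by the $O(n^{4C_1})$ pairs of grid cells of side about $n^{-C_1}$ at mutual distance $\gtrsim n^{-C_1}$. On each such pair, H\"older and the separated estimate bound the failure mass for \eqref{newfin:eq:F2} by $O\bigl(n^{(1-C_2)(1-\frac{1}{p})}\bigr)$, where $C_2$ is the directional deficiency constant. Choosing $C_2$ large compared with $C_1$ absorbs the number of cell pairs. This even tolerates a separated constant that grows polynomially in the reciprocal of the separation. Finally, take $C$ to be the largest of the constants used. With this change your argument is complete, and it still gives the uniformity in $B$ for free.
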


\subsection{Quantitative description tails}
Let $Q_m(z)$ be the dyadic square of side $2^{-m}$ containing $z$.
The Kraft--Chaitin code for the computable masses satisfies, uniformly in
$z$,
\begin{equation}\label{newsel:eq:uppercode}
 K_m^A(z)\le-\log_2\mu(Q_m(z))+c_A\log_2(m+2).
\end{equation}
For any oracle $W$, Kraft's inequality implies
\[
 \mu\{z:K^W(Q_m(z))<-\log_2\mu(Q_m(z))-h\}\le2^{-h}.
\]
Point and square descriptions differ by $O(\log(m+2))$, uniformly
on the bounded support. Indeed, a precision-$m$ approximation
identifies a bounded list of squares; the appropriate member costs a
bounded index. In the other direction, a bounded descendant index
selects a center within $2^{-m}$ of the point.

Apply this estimate with $W=(A,x)$ for each fixed $x$, integrate in
$x$, and then do the same in the other orientation. The Frostman
upper bound for $\mu(Q_m(z))$ and \eqref{newsel:eq:uppercode} show that,
outside a subset of $\mu\times\mu$ measure at most
\begin{equation}\label{newsel:eq:description-tail}
 O(n^{1-D}),
\end{equation}
both independence inequalities and both lower-complexity floors hold
for all $m\le2n$, with a deficiency at most
$C_D\log(n+2)$. Here $D>0$ is arbitrary and $C_D$ is fixed once $D$
is chosen. The estimates are uniform in the real oracle being
conditioned on; no intersection over all pins of unspecified
almost-everywhere sets is taken.

\subsection{A radial estimate uniform over separated restrictions}
Choose $1<a<s$ with $a<2$, choose $2-a<b<1$, and choose
\[
 1<p<\min\{2-b,\frac{b}{2-a}\},\qquad \tau=1-\frac{1}{p}>0.
\]
All the corresponding energies of $\mu$ are finite. The quantitative
estimate in Orponen's proof~\cite[Section 3, equation (3.6)]{O} has
the following consequence. If $\nu,\eta\le\mu$ have disjoint compact
supports, then
\begin{equation}\label{newsel:eq:radial}
 \int\|\pi_x\eta\|_{L^p(S^1)}^p\,d\nu(x)\le C_\mu,
 \qquad \pi_x(y)=\frac{y-x}{|y-x|}.
\end{equation}
The constant is uniform over these restrictions. To see the dependence,
use the weighted source $|x-y|^{-1}\,d\eta(y)$ in Orponen's estimate;
its bound uses only fixed powers of the $a$-energy of $\eta$ and the
$b$-energy of $\nu$. Both are bounded by those of $\mu$. On the fixed
bounded support, the unweighted radial measure is dominated by a
constant times this weighted one. The support separation is used for
the approximation argument, and its reciprocal does not enter this
energy bound. Taking strict inequalities in the choice of $p$ avoids
the endpoint in the angular energy estimate.

For every fixed pin $x$, the directions whose precision-$m$
complexity relative to $(A,x)$ is below $m-D\log_2(n+2)$ lie in at
most $2^m(n+2)^{-D}$ arcs of length $O(2^{-m})$. The union over
$m\le2n$ has arclength at most $O(n^{1-D})$. By H\"older and
\eqref{newsel:eq:radial}, the measure of pairs in $\operatorname{supp}\nu\times\operatorname{supp}\eta$
failing this directional condition is at most
\begin{equation}\label{newsel:eq:angular-tail}
 O_\mu(n^{-(D-1)\tau}).
\end{equation}
This remains true for pin-dependent unions of arcs. Interchanging
$\nu$ and $\eta$ gives the other orientation. Reversing a unit vector
costs only $O(1)$ bits.

\subsection{Two separated portions of the bad set}
Choose a fixed integer $J$ with $Js>M+1$. Partition the unit square
into a grid of side length $h=\frac{1}{\lceil n^J\rceil}$. There are
$O(n^{2J})$ cells. One cell $Q$ has
\[
 \mu(B\cap Q)\gtrsim n^{-(M+2J)}.
\]
The bounded collection of cells within distance $2h$ of $Q$ is
contained in a ball of radius $O(h)$, so its mass is
$O(n^{-Js})<\frac{\mu(B)}{2}$ for large $n$. Among the remaining cells
there is an $R$ with
\[
 \mu(B\cap R)\gtrsim n^{-(M+2J)},\qquad \operatorname{dist}(Q,R)\ge h.
\]
Consequently
\begin{equation}\label{newsel:eq:product-mass}
 (\mu\times\mu)((B\cap Q)\times(B\cap R))
 \gtrsim n^{-(2M+4J)}.
\end{equation}

Use \eqref{newsel:eq:radial} with $\nu=\mu|_Q$ and $\eta=\mu|_R$.
Their supports are contained in separated closed cells; the cells'
boundaries are null because $s>1$. Choose $D$ so large that
\[
 D-1>2M+4J+2,\qquad (D-1)\tau>2M+4J+2.
\]
The sum of the description and angular exceptional measures is then
smaller than \eqref{newsel:eq:product-mass}. A remaining pair belongs to
$B\times B$ and has all the required finite-precision properties.
Since $h\gtrsim n^{-J}$, increasing the fixed constant $C$ also gives
$|x-y|\ge n^{-C}$. This proves Lemma~\ref{newsel:lem:selection}.

\begin{remark}
Neither $B$ nor its restricted measures are added to the oracle.
The mass code always uses the original $\mu$. This avoids a possible
circularity between choosing a bad set and selecting incompressible
points from it. Shmerkin's lower box argument~\cite[Section 6.3]{Sh}
uses the same broad strategy of selecting a pair from a set of mass
at least a negative power of the precision. The grid and Kraft
estimates above give the version needed for the present complexity
hypotheses.
\end{remark}

\subsection{Coding a small pinned cover}\label{newsel:sec:coding}
\begin{lemma}\label{newsel:lem:cover}
For the fixed oracle $A$ above, there is a constant $c$ such that
for every $x\in E$, $r\in\Delta_xE$, and $n\ge1$,
\begin{equation}\label{newsel:eq:covercode}
 K_n^{A,x}(r)\le\log_2N_{2^{-n}}(\Delta_xE)+c\log_2(n+2).
\end{equation}
\end{lemma}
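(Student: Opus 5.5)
The plan is to describe $r$ by an index into an enumerable list of candidate cover intervals. We first construct, relative to $(A,x)$ and from input $n$ alone, a finite list of dyadic-rational points that covers $\Delta_xE$ at scale comparable to $2^{-n}$ and has length at most $C\,N_{2^{-n}}(\Delta_xE)$. The program for $r$ then consists of a self-delimiting description of $n$, costing $O(\log(n+2))$ bits, followed by the index of a list entry within $2^{-n}$ of $r$, costing $\log_2 N_{2^{-n}}(\Delta_xE)+O(1)$ bits. This gives \eqref{newsel:eq:covercode} with $c$ depending only on the machine and on $A$.

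The list is built from the oracle data. Since $A$ computes finite rational sets $E_j$ with $d_{\mathrm H}(E_j,E)\le 2^{-j}$ and the full oracle $x$ is available, the machine can compute rational approximations to the finite set $\{|x-e|:e\in E_{n+2}\}$ to accuracy $2^{-n-2}$. Because $z\mapsto|x-z|$ is $1$-Lipschitz, this set lies within $2^{-n-1}$ of $\Delta_xE$ in the Hausdorff metric. The difficulty is that this set may have far more than $N_{2^{-n}}(\Delta_xE)$ points, so we pass to the grid $2^{-n-3}\mathbb Z$ and collect the grid points within $2^{-n}$ of the approximate distances. This gives a computable finite set $S_n$. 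Every point of $\Delta_xE$ lies within $2^{-n}$ of some element of $S_n$, and every element of $S_n$ lies within $2^{-n+1}$ of $\Delta_xE$.

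The counting step is the only nontrivial point. Cover $\Delta_xE$ by $N=N_{2^{-n}}(\Delta_xE)$ intervals of radius $2^{-n}$. Each element of $S_n$ lies within $3\cdot 2^{-n}$ of the center of one of these intervals. A radius-$3\cdot2^{-n}$ neighborhood contains only $O(1)$ points of the grid $2^{-n-3}\mathbb Z$, so $\#S_n\le CN$ with $C$ absolute. The decoder enumerates $S_n$ in increasing order and outputs the entry with the given index. The index length can be written in $O(\log\log)$ bits, or the list length can be fixed by padding to a power of two, so a prefix-free code costs $\log_2 N+O(1)+O(\log(n+2))$.

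Two points need care. First, the rounding of approximate distances must be done with strict rational error margins so that the list $S_n$ is genuinely computed and not merely defined; this is handled by using accuracy $2^{-n-3}$ with slack. Second, the constant must not depend on $x$. It does not, because the oracle $x$ is supplied in full and the algorithm is uniform in $x$. Beyond these, there is no real obstacle, and the proof is routine bookkeeping.
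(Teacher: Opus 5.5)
Your proposal is correct and takes essentially the same route as the paper. You compute from $A$ and $x$ a finite rational Hausdorff approximation to $\Delta_xE$ and snap it to a grid of mesh comparable to $2^{-n}$; the paper rounds each point to the grid, while you take all grid points within $2^{-n}$, which is an inessential difference. You then bound the list size by a constant times $N_{2^{-n}}(\Delta_xE)$ and encode $n$ together with an index. Your observation that the decoder can compute $\#S_n$ itself, so a fixed-length index after a self-delimiting $n$ is prefix-free, is a detail the paper leaves implicit.
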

\begin{proof}
Using $A$ and the oracle $x$, compute a finite rational approximation
to $\Delta_xE$ with Hausdorff error at most $\frac{2^{-n}}{100}$. Round its
members to a grid of mesh $\frac{2^{-n}}{4}$ and remove repetitions. The
resulting list is a $2^{-n}$-net for $\Delta_xE$. It has at most a fixed
constant times $N_{2^{-n}}(\Delta_xE)$ members, since an interval of radius
$2^{-n}$, enlarged by the approximation error, meets only a bounded
number of the grid cells. The index of a suitable list member,
together with $n$, has length at most the right side of
\eqref{newsel:eq:covercode}. The algorithm is the same for every pin.
\end{proof}

The cover bound is an upper bound for the description of every
distance at that pin. That is why two bad pins force the common
distance to be short in both orientations.

\section{Matching endpoints at a fixed precision}\label{sec:finite-matching}
Fix an oracle $A$ encoding the original Frostman measure and the
compact set, as in Section~\ref{newsel:sec:selection}. Given $n$, let
\[
 m_n=\lfloor\sqrt{\log_2(n+2)}\rfloor
\]
(increase small values to one). For each point $z$, record the
$m_n$ values
\[
 \left\lfloor\frac{m_n}{n}
 K^A_{\lfloor j\frac{n}{m_n}\rfloor}(z)\right\rfloor,
 \qquad j=1,\ldots,m_n.
\]
Each value has $O(m_n)$ possibilities on the fixed bounded support.
Thus the number of possible records is
\begin{equation}\label{newmatch:eq:records}
 (O(m_n))^{m_n}=2^{O(\sqrt{\log n}\log\log n)}=n^{o(1)}.
\end{equation}

If two points have the same record, the uniform precision-increment
bound gives
\begin{equation}\label{newmatch:eq:matching}
 \sup_{1\le r\le n}|K_r^A(x)-K_r^A(y)|
 \le O(\frac{n}{m_n})+O(\log n)=o(n).
\end{equation}
Indeed, the discrepancy at a recorded precision is at most $\frac{n}{m_n}$;
between records, increasing precision costs at most twice the gap,
plus a logarithmic error. The initial interval is handled by
$0\le K_r^A(z)\le2r+O(\log n)$.

Let $B\subset E$ be Borel with $\mu(B)>n^{-M}$. Partition $B$ by these
records. For large $n$, one class has mass at least
$n^{-(M+1)}$. Apply Lemma~\ref{newsel:lem:selection} inside that class. We obtain $x_n,y_n\in B$, separated by
at least $n^{-C}$, such that for $r\le2n$ both points have the
prescribed dimension floor, both directional deficiencies are
$O(\log n)$, and both full-oracle independence deficiencies are
$O(\log n)$. In addition, \eqref{newmatch:eq:matching} holds.

No record or selected class is put into the oracle. The selection
lemma works inside an arbitrary Borel set of the stated mass.
Thus all its constants remain uniform after replacing $M$ by
$M+1$. Along every convergent subsequence of the selected
normalized profiles, \eqref{newmatch:eq:matching} forces $f=g$.

This finite-scale matching does not assert that there is one fixed
pair with matching profiles at every scale. The endpoints may
change with $n$, which is permitted in the lower-box proof.

\subsection{The lower-box bound and its exceptional sets}
\begin{proof}[The dimension-dependent part of Theorem~\ref{thm:new-box}]
Fix $q<L(s)$ and $M>0$. Choose
$1<\sigma<\min\{s,\frac{9}{8}\}$ with $q<\frac{3\sigma+1}{5}$.
Suppose that along infinitely many $n$ the set
\[
 B_n=\{x\in E:N_{2^{-n}}(\Delta_xE)<2^{qn}\}
\]
has mass greater than $n^{-M}$. Select generic matching pairs from
$B_n$ by the preceding construction, using lower complexity floor
$\sigma$. Lemma~\ref{lem:new-finite} supplies limiting
profiles satisfying the full scalar system. They have $f=g$ and
$f(u)\ge\sigma u$.

Lemma~\ref{newsel:lem:cover} implies
\[
 K_n^{A,x_n}(|x_n-y_n|),\ K_n^{A,y_n}(|x_n-y_n|)
 \le qn+O(\log n).
\]
Both terminal distance profiles are at most $q$.
Proposition~\ref{newmatch:prop:matched} instead makes each of them at least
$\frac{3\sigma+1}{5}>q$. This contradiction proves \eqref{eq:new-tail}.
Take $M=2$, apply Borel--Cantelli, and then let $q$ increase to
$L(s)$ through a countable sequence to obtain the bound $L(s)$ in \eqref{eq:new-box}. The universal part is proved in Section~\ref{sec:new-universal}.
\end{proof}

\begin{proof}[The spectral assertions in Corollary~\ref{cor:new-exceptional}]
First assume $E$ is compact. The map $x\mapsto\Delta_xE$ is
continuous for the Hausdorff metric; the covering conditions above
and hence the lower-box sublevel sets are Borel.

For $\frac{4}{5}\le q<\frac{7}{8}$, put $p=\frac{5q-1}{3}$.
If the Borel set $F=\{x\in E:\underline{\dim}_{\mathrm B}\Delta_xE\le q\}$ had Hausdorff
dimension greater than $p$, choose an $r$-Frostman probability
measure carried by $F$, with $p<r<\dim_{\mathrm H} F$. Since $L(r)>q$,
Theorem~\ref{thm:new-box} is a contradiction.
At $q=\frac{7}{8}$, the same reasoning applies to the set with
$\underline{\dim}_{\mathrm B}\Delta_xE<\frac{7}{8}$, using any exponent $r>\frac{9}{8}$ below its alleged
dimension. This proves the endpoint bound.

For a bounded set $E$, apply these results to its compact closure.
The identity $\Delta_x\overline E=\overline{\Delta_xE}$ and invariance of
lower box dimension under closure prove the last assertion.
\end{proof}

The proof does not give an exceptional-set bound of one for
$\underline{\dim}_{\mathrm B}\Delta_xE<L(\dim_{\mathrm H} E)$. The Frostman measure on a putative
exceptional set can have exponent smaller than $\dim_{\mathrm H} E$.
The statements of Corollary~\ref{cor:new-exceptional} give the precise
conclusions with the correct quantifiers.

\section{The geometric contradiction near the endpoint}\label{sec:new-geometry}

The rigid plateau has a concrete geometric meaning. The source
parent at exponent $\frac{3}{10}$ admits very little mass in any
one annulus, while the pin measure is spread over the scales
between $\frac{1}{5}$ and $\frac{3}{5}$. We show that these two
facts are incompatible with only $\delta^{-\frac{4}{5}}$ radii at each pin.
The finite statement below records the hypotheses separately
from the one-sided plateau parameters used earlier.

\begin{proposition}[Finite configurations near the rigid plateau]
\label{prop:endpoint-finite-geometry}
There is an absolute $\varepsilon_*>0$ with the following property.
Fix $C>0$ and a pin Frostman exponent $1+\varkappa>1$.
Let $0<\varepsilon<\varepsilon_*$ and $\delta=2^{-n}$.
Suppose a common source set, a set of actual pins, and assigned
radial incidences satisfy \ref{F:counts}--\ref{F:pin-mass}, with
the following replacements for their parameters.
\[
q=\frac{4}{5}+\varepsilon,\qquad
b=\delta^{1-\varepsilon},\qquad
\eta\asymp\delta^{\frac{3}{10}},\qquad F=\frac{1}{2}.
\]
The source-parent mass bound is $\delta^{\frac{1}{2}-\varepsilon}$.
No branch-specific source bound is assumed. For the pin ancestors,
replace $T_\theta$ by
\[
T(u)=\max\left\{0,\frac{1}{5}
-\operatorname{dist}\left(u,\left[\frac{1}{5},\frac{3}{5}\right]\right)\right\}
\]
and replace $\tau_0$ by $\varepsilon$.
Use a fixed mesh of candidate pair-separation exponents from
$\frac{1}{5}-3\varepsilon$ to $\frac{3}{10}$, with consecutive
gaps at most $\varepsilon$, and corresponding pin ancestor mass
classes. Assume $p\ge n^{-C}$.
Polynomial factors in $n$ are allowed in the angular bounds and
in the fixed geometric constants, and the source and pin supports
may have separation at least $n^{-C}$.
Then no such configuration exists for all sufficiently large $n$.
The threshold may depend on the fixed measures, $C$, $\varkappa$
and $\varepsilon$.
\end{proposition}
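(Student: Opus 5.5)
The plan is to rerun the finite pipeline of Sections~\ref{sec:circle-rows}--\ref{sec:universal-pinned} with the endpoint parameters substituted for the template parameters, and then redo the exponent comparison of Proposition~\ref{prop:strict-budget} directly. The margin should come out as a fixed positive multiple of $3\alpha-1$, up to $O(\varepsilon)$ and tolerance losses. Concretely, the transfer level becomes $\lambda=1$ and the cap becomes $b=\delta^{1-\varepsilon}$. The parent exponent is $a_{\mathrm{par}}=\frac{3}{10}$, with $F=\frac12$. The pin plateau has $u_-=\frac15$, $u_+=\frac35$, $t=u_+-u_-=\frac25$ and $W=\frac15$. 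Fix $\frac13<\alpha<\alpha_*$ with its Katz--Tardos row length $s$, and fix the losses $\beta,\tau,\xi,\xi_{\mathrm{cut}},\sigma,\vartheta,\epsilon_{\mathrm{RW}}$ as small multiples of the eventual margin. Then $\varepsilon_*$ is chosen small compared with that same margin.

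\textbf{Step 1 (configuration).} I would first check that every step from Lemma~\ref{lem:source-thinning} to Proposition~\ref{prop:tube-family} uses only \ref{F:counts}--\ref{F:pin-mass}, the parent cap, the pin ancestor tent, and scale inequalities of the type in \eqref{eq:tolerance-conditions}. With $a_0=\frac15-3\varepsilon$ these become
\[
\rho_{\min}\gg bn,\qquad \frac{\delta^{2q}}{b}=\delta^{\frac35+3\varepsilon}\to0,\qquad a_{\mathrm{par}}<\frac12 .
\]
The only new feature is that the source--pin separation is only $n^{-C}$, not fixed. This enters the drawing lemmas through the radius range $[d_0,D_0]$, the bisector-width identity, and the angular comparisons. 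I expect each of these to cost at most polynomial factors in $n$, which are absorbed into $\lesssim_{\mathrm{pol}}$.

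\textbf{Step 2 (exponents).} Next I would compute the analogues of \eqref{eq:N-exponent}--\eqref{eq:lower-count}. The richness is
\[
k\gtrsim_{\mathrm{pol}}\delta^{-\gamma+\mathfrak r+O(\varepsilon+\beta)},\qquad \mathfrak r=\min\Bigl\{\tfrac35,\ \tfrac12+2\mathfrak h\Bigr\},\qquad \mathfrak h=\tfrac{3}{10}-a .
\]
The first entry comes from $\frac{\delta^{2q}}{b}$, and the second from the parent-covering occupancy \eqref{eq:thinned-occupancy} with $\mu_a=\delta^{\frac12-\varepsilon}(\delta^a/\eta)^2$. As in the earlier proof, $\gamma$ cancels in the Fu--Gan--Ren upper count, which gives the exponent
\[
U_1=3\mathfrak r-t-2W=3\mathfrak r-\tfrac45 .
\]
The $U_2$ branch is dominated because $\mathfrak r-t-W\ge0$. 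The Ren--Wang lower count gives
\[
\Lambda=\frac{1+3\alpha}{2}+(1-3\alpha)a-O(\text{losses}),
\]
valid for $a\in[\frac15-3\varepsilon,\frac3{10}]$. The difference $\Lambda-U_1$ is piecewise linear in $a$, so it suffices to check the breakpoints:
\begin{itemize}
\item at $a=\frac15$ the margin is $\frac{9\alpha-3}{10}$;
\item at $a=\frac14$, where the two entries of $\mathfrak r$ cross, it is $\frac{3\alpha-1}{4}$;
\item at $a=\frac3{10}$ it is $\frac{3\alpha}{5}-\frac1{10}+\dots>0$.
\end{itemize}
All three are positive multiples of $3\alpha-1$. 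The shift of $a_0$ by $3\varepsilon$ changes the margin only by $O(\varepsilon)$, and this fixes $\varepsilon_*$.

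\textbf{Step 3 (side conditions, main obstacle).} The step I expect to be the main obstacle is verifying the hypotheses of Lemma~\ref{lem:rectangular-sampling} and Lemma~\ref{lem:pin-coarsening} at the endpoint. These are the analogues of \eqref{eq:threshold-budget} and \eqref{eq:coarsening-margin}: $\rho/h\to0$, $k/H\to\infty$, and $kRh/(H\rho^{1-2\sigma})\to\infty$. Here
\[
R=\delta^{1/5},\qquad h=\delta^{3/5},\qquad \rho=\delta^{1-c}\ \text{with } c\le\tfrac3{10},
\]
and $H$ is the reciprocal sum computed from the tent $T$ as in \eqref{eq:H-exponent}. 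I would check the leading exponents, which gives $\zeta-u_+\ge\frac1{10}$, $\iota_k-\iota_H\ge\frac15$, and a third margin of order $\frac1{10}$. I would then show that the polynomial factors from the $n^{-C}$ separation and from $p\ge n^{-C}$ never touch these power margins. Once this holds, the lower count $\delta^{-\Lambda}$ exceeds the upper count $\delta^{-U_1-O(\text{losses})}$ for large $n$, which is the desired contradiction.
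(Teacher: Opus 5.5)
Your route is the paper's: push the endpoint data through the thinning, row, pin-coarsening, direction-selection and tube-family steps, then compare the Ren--Wang exponent $\Lambda=\frac12+\frac{3\alpha}{2}+(1-3\alpha)a$ with the Fu--Gan--Ren exponent. The one difference is that you also run the localized second pruning, which gives $\mathfrak r=\min\{\frac35,\frac12+2\mathfrak h\}$. The paper deliberately uses only the global richness $\mathfrak r=\frac35$. Your refinement is legitimate in itself: it needs only the branch-independent covering bound \eqref{eq:thinned-occupancy}, and $\mu_a/b=\delta^{2a-\frac{11}{10}}$ grows by a power. However, it invalidates one step of your comparison as written.

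You assert that the $U_2$ branch is dominated because $\mathfrak r-t-W\ge0$. In the general argument that inequality came from $t+W=\frac12-(\varsigma-\mathfrak e)\omega\le\frac12\le\mathfrak r$. At the rigid plateau, however, $t+W=\frac25+\frac15=\frac35$. So $U_1\ge U_2$ holds only when $\mathfrak r\ge\frac35$, which under your refinement means only for $a\le\frac14$. For $\frac14<a\le\frac3{10}$ the upper exponent is $U_2=2\mathfrak r-\frac15=2-4a$, and this exceeds $U_1=3\mathfrak r-\frac45$. At $a=\frac3{10}$ it equals $\frac45$, not $\frac7{10}$. Even against $U_1$, your displayed margin there, $\frac{3\alpha}{5}-\frac1{10}$, should have been $\frac{3\alpha}{5}+\frac1{10}$.

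The conclusion survives, but you must compare $\Lambda$ with $\max\{U_1,U_2\}$. The difference $\Lambda-(2-4a)=-\frac32+\frac{3\alpha}2+(5-3\alpha)a$ increases from $\frac{3\alpha-1}4$ at $a=\frac14$ to $\frac{3\alpha}5$ at $a=\frac3{10}$. So the minimal margin is still $\frac{3\alpha-1}4$.

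The simpler repair, which is what the paper does, is to drop the second pruning. With $\mathfrak r=\frac35$ one gets $U_1=U_2=1$ for every $a$. Since $1-3\alpha<0$, the worst case is then $a=\frac3{10}$, with margin $\frac{3\alpha-1}{5}$; this equals $\frac{2}{95}$ for the five-entry exponent $\alpha=\frac7{19}$.

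Your sampling margins $\frac15,\frac1{10},\frac1{10}$ agree with the paper's, and they only improve as $\mathfrak r$ decreases.

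Finally, the mesh gap is given to be at most $\varepsilon$; you do not choose it freely. It feeds into $\tau_{\mathrm{dir}}\ge\alpha\xi$. Hence $\varepsilon_*$ must be small relative to $\vartheta$ and to the Ren--Wang tolerance $\eta_{\mathrm{RW}}$ (fixed once $\epsilon_{\mathrm{RW}}$ is fixed from the margin), and not only relative to the margin itself.
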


\begin{proof}
We first check that the common thinning and the global circle
drawing remain applicable under these hypotheses.
Retain each source vertex with probability a fixed small multiple
of $p\frac{m}{b}$, as in Lemma~\ref{lem:source-thinning}. This is at most
one because $m\le b$. The expected occupancy of a source parent
in a radial bin is polynomially bounded, so a Chernoff estimate
and a union bound make every such occupancy polynomially bounded.
The total source count is at most $b^{-1}$ up to polynomial
factors, and the retained incidence count is at least
$(b\upsilon_f)^{-1}$ up to polynomial factors.
For angular intervals of length at least
$\rho_{\min}=\delta^{\frac{4}{5}+4\varepsilon}$, the additive
logarithmic sampling error is absorbed by $\frac{|J|}{b}$, since
\[
\frac{\rho_{\min}}{b}=\delta^{-\frac{1}{5}+5\varepsilon}
\]
tends to infinity by a fixed power. The source-parent occupancy
estimate, if needed, has the positive margin
$1-\varepsilon-(\frac{1}{2}-\varepsilon)=\frac{1}{2}$.
Thus the thinning uses the same source vertices at every pin.

The proof of Proposition~\ref{prop:rows-common-richness} now
applies to this thinned set. Its two vanishing ratios are
\[
b\delta^{-q}=\delta^{\frac{1}{5}-2\varepsilon},\qquad
\frac{\delta^{2q}}{b}=\delta^{\frac{3}{5}+3\varepsilon}.
\]
These are the only numerical uses of the earlier plateau
parameters in the global drawing argument. It produces rows
whose pairs have at least
\[
k_1\gtrsim_{\mathrm{pol}}
\frac{\delta^{2q}}{b\upsilon_f}
\]
witnessing pins. The argument of
Proposition~\ref{prop:homogeneous-rows} selects rows at a common
pair-separation scale. Writing that scale as $\delta^a$ and the
tube width as $\rho=\delta^\zeta$, we have
\[
\frac{1}{5}-O(\varepsilon)\le a\le\frac{3}{10},\qquad
\zeta=1-a+O(\varepsilon).
\]
Only the global richness is used; no second pruning or
branch-specific source estimate is required.

Let $\upsilon_\zeta=\delta^\gamma$ be the common pin ancestor
mass. The sampling proof of Lemma~\ref{lem:pin-coarsening}
retains one pin per $\rho$-cell and preserves every witness set.
Indeed, its expected witness count is at least
\[
\frac{k_1\upsilon_f}{\upsilon_\zeta}
\gtrsim_{\mathrm{pol}}
\delta^{-\gamma+\frac{3}{5}+3\varepsilon}.
\]
The Frostman bound implies $\gamma\ge\zeta-o(1)$, and
$\zeta\ge\frac{7}{10}-O(\varepsilon)$, leaving a strict
margin of $\frac{1}{10}-O(\varepsilon)$. Chernoff's estimate
therefore applies simultaneously to all source pairs.
The resulting witness set $P$ and generating subset $P'$ satisfy
\begin{equation}\label{eq:endpoint-pin-counts}
\#P\le\delta^{-\gamma},\qquad
\#P'\gtrsim_{\mathrm{pol}}\delta^{-\gamma},\qquad
k_{\mathrm{rich}}\ge
\delta^{-\gamma+\frac{3}{5}+O(\varepsilon)+o(1)}.
\end{equation}
The normalized ball counts of $P'$ have exponent one, with
polynomial constants, by the original Frostman bound.
The terminal mass $\upsilon_f$ has canceled.

The direction selection of Lemma~\ref{lem:flow} uses only the
row count, its angular upper bounds and its pair-separation
range. Those properties hold here. Use the Katz--Tardos
five-entry exponent $\alpha=\frac{7}{19}$. With the fixed
direction-selection tolerances included in $O(\varepsilon)$,
the direction exponent satisfies
\[
m=\sigma\zeta=\alpha+(1-2\alpha)a+O(\varepsilon).
\]
Choose the exponent in the Ren--Wang theorem slightly below
$\sigma$, from a fixed finite grid, exactly as in
Proposition~\ref{prop:tube-family}. Since $0<\sigma<1$,
the pin exponent one and the middle branch of that theorem imply
\begin{equation}\label{eq:endpoint-tube-lower}
\#\mathcal T\ge
\delta^{-\Lambda+O(\varepsilon)+o(1)},\qquad
\Lambda=\frac{1}{2}+\frac{3\alpha}{2}+(1-3\alpha)a.
\end{equation}
Each tube retains the witnesses counted in
\eqref{eq:endpoint-pin-counts}. The finite grid makes the
Ren--Wang tolerances uniform in the possible direction exponents.

For the upper count take $R=\delta^{\frac{1}{5}}$ and
$h=\delta^{\frac{3}{5}}$. The pin ancestor bound implies
\[
\#(P\cap B(z,r))\le
\delta^{-\gamma+\frac{1}{5}-O(\varepsilon)-o(1)}r
\quad(h\le r\le3R).
\]
Summing over dyadic annuli and the ball of radius $h$ proves
the reciprocal-occupancy estimate of
Lemma~\ref{lem:rectangular-sampling} with
\[
H\le\delta^{-\gamma+\frac{4}{5}-O(\varepsilon)-o(1)}.
\]
The diagonal is covered as well, since the ancestor at exponent
$\frac{3}{5}$ has mass at most $\delta^{\frac{4}{5}-\varepsilon}$
and contains the occupied $\rho$-cell.
At zero error the three strict sampling conditions have margins
\[
\frac{3}{5}+\frac{1}{5}-\frac{3}{5}=\frac{1}{5},\qquad
\zeta+\frac{1}{5}-\frac{1}{5}-\frac{3}{5}\ge\frac{1}{10},\qquad
\zeta-\frac{3}{5}\ge\frac{1}{10}.
\]
They remain strict when all tolerances are sufficiently small.
Substitution in \eqref{eq:rich-tubes} yields
\begin{equation}\label{newgeo:eq:upper}
\#\mathcal T\le\delta^{-U-O(\varepsilon)-o(1)},\qquad
U=\max\left\{3\frac{3}{5}-\frac{2}{5}-\frac{2}{5},
2\frac{3}{5}-\frac{1}{5}\right\}=1.
\end{equation}

Because $1-3\alpha<0$, the least lower exponent occurs at
$a=\frac{3}{10}$. Consequently
\begin{equation}\label{newgeo:eq:gap}
\Lambda-U\ge\frac{3\alpha}{5}-\frac{1}{5}
=\frac{2}{95}>0.
\end{equation}
Fix all exponent tolerances so that their total cost is less than
half this gap. Then take $n$ large enough to absorb the
polynomial factors. The lower and upper tube counts contradict
each other.

If the support separation is only $n^{-C}$, angular rounding
and the circle-to-bisector identities introduce fixed powers
of $n$. Enlarging the fine scale by such a factor changes its
precision by $O(\log n)$. The mass caps and exponent inequalities
above change by $o(1)$ only. This proves the stated uniformity.
\end{proof}

\subsection{Realizing the finite hypotheses on a common relation}

Fix a Frostman measure, an oracle encoding it and its compact
support, a lower dimension exponent greater than one, and a
polynomial relation-mass loss $n^{-M}$.
The description tails in Section~\ref{newsel:sec:selection}
can be imposed on a relation of this size. Choose their
deficiency constant large enough that the discarded product
mass is $o(n^{-M})$. Simultaneously at both endpoints and for
all $m\le2n$, the retained pairs satisfy
\[
K_m^A(z)=-\log_2\mu(Q_m(z))+O(\log n).
\]
The finite generic conditions of Lemma~\ref{lem:new-finite}
are imposed in the same way. Two fixed restrictions of the
original measure may be used for pins and sources.

Let $F_x$ be the density of the radial pushforward of the
source restriction. The radial $L^p$ estimate
\eqref{newsel:eq:radial} implies
\[
\int\int_{\{F_x>n^D\}}F_x(\theta)\,d\theta\,d\nu(x)
\le n^{-D(p-1)}\int\|F_x\|_p^p\,d\nu(x).
\]
For sufficiently large fixed $D$, removing these pairs costs
$o(n^{-M})$. Every retained fiber has angular mass at most
$n^D|I|$ on an arc $I$, and this property survives all further
restrictions.

Record both endpoint profiles and both distance profiles on
the mesh of \eqref{newmatch:eq:records}, with the same rounded
heights. There are $n^{o(1)}$ joint records, so one class retains
polynomial pair mass. Its profiles differ from their
representatives by $o(1)$ uniformly on $[0,1]$.
The two endpoint records need not agree.
Bin the finitely many source, parent and pin-ancestor masses
needed at the candidate scales. These restrictions cost further
polynomial factors. The oracle remains the original one.

We check the annular cap carefully. At parent exponent
$a_{\mathrm{par}}=\frac{3}{10}$ let
$C=Q_{\lfloor a_{\mathrm{par}}n\rfloor}(y)$.
The stable tail certificate and the conditional transfer in
Lemma~\ref{lem:new-finite} imply
\[
K_n^{A,x}(|x-y|\mid C)
\ge(1-\varepsilon)n-K_{\lfloor a_{\mathrm{par}}n\rfloor}^A(y).
\]
For an occupied radial bin $J$, the conditional mass code from
Proposition~\ref{prop:conditional-parent-cap} bounds the same
quantity from above by
\[
-\log_2\frac{\mu(C\cap\{z:|x-z|\in J\})}{\mu(C)}
+O(\log n),
\]
allowing a bounded enlargement of $J$.
The base mass code at the parent cancels $\mu(C)$. Thus the
original source measure satisfies
\[
\mu(C\cap\{z:|x-z|\in J\})\le\delta^{1-O(\varepsilon)}.
\]
The source floor at the parent is
$f(\frac{3}{10})\ge\frac{1}{2}-O(\varepsilon)$.
The stable plateau, its Lipschitz extension, and the base floor
produce the pin-ancestor bound with the function $T$ in
Proposition~\ref{prop:endpoint-finite-geometry}.
Lemma~\ref{lem:seventh-measure-finite-selection} therefore
supplies one common source set and actual pins with assigned
radial labels satisfying that proposition, after slightly
enlarging the fixed error tolerance.
The source law has not been normalized separately at different pins.

Choose the stability tolerance small enough for
Proposition~\ref{prop:endpoint-finite-geometry}, and then choose
\[
0<\eta_0<\min\left\{\eta(\varepsilon),\varepsilon,
\frac{3}{40}\right\}
\]
with any further fixed reduction needed to absorb the errors
above. Lemma~\ref{lem:new-stability} and compactness imply that
finite generic profiles with both terminal values at most
$\frac{4}{5}+\eta_0$ satisfy those stable inequalities once
$n$ is sufficiently large. Otherwise a sequence violating them
would converge, by Lemma~\ref{lem:new-finite}, to a scalar
system contradicting the stability lemma.
The resulting finite configurations are impossible.

The constant $\eta_0$ is absolute. The geometric gap and the
scalar stability modulus are independent of the measure.
Frostman constants, radial energies, oracle constants and
polynomial relation losses affect only the starting precision.
They remain fixed while $n$ tends to infinity.

\section{The universal unpinned and lower-box bounds}\label{sec:new-universal}

\subsection{General unpinned Hausdorff dimension}

Suppose a compact $E$ with $\dim_{\mathrm H} E>1$ satisfies $\dim_{\mathrm H}\Delta E<\frac{4}{5}+\eta_0$. Choose $q$ strictly between these two numbers.
Choose a Frostman measure of exponent greater than one and restrict
it to separated source and pin portions, denoted by $\mu$ and $\nu$. Encode the measure and
the compact set using a base oracle $A$ as in Section~\ref{sec:classical}.
The effectively compact point-to-set principle implies
\[
 \liminf_n \frac{K_n^A(|x-y|)}{n}<q
\]
for every pair of points in these portions. Work with a slightly
smaller intermediate threshold so that logarithmic coding costs
can be absorbed in $qn$.

On the fixed positive-measure good-pair relation $G$, both
directional hypotheses, full-oracle independence estimates,
angular-density restrictions, and base mass codes hold. Let $R_n$
retain pairs with low unconditioned distance complexity.
Each pair belongs to infinitely many such $R_n$. The Tonelli argument in Lemma~\ref{lem:seventh-classical-recurrence} gives $(\nu\times\mu)(R_n)\ge n^{-2}$ at infinitely
many scales. Low unconditioned complexity implies low complexity
in both orientations:
\[
 K_n^{A,x}(|x-y|),\ K_n^{A,y}(|x-y|)\le qn+O(\log n).
\]
Bin the two endpoint profiles and the distance profiles jointly;
there are $n^{o(1)}$ records on the usual slowly growing scale mesh.
A common retained relation still has polynomial mass.

The resulting bilateral profiles and common finite configurations are excluded by Section~\ref{sec:new-geometry}, since $q<\frac{4}{5}+\eta_0$. This proves Theorem~\ref{thm:main} for compact sets.
For analytic sets, first choose a compact subset with Hausdorff
dimension greater than one; its distance set is contained in the
original distance set.

A recurrent relation of low
distance complexity need not contain many pairs in a single profile
class; a dense relation can connect two different classes. Therefore
finite matching alone does not justify the Hausdorff conclusion. Proposition~\ref{newrig:prop:rigidity} removes precisely that
restriction. It does not improve the abstract scalar four-fifths
bound by itself. The geometric contradiction is still essential.

\subsection{Internal pinned lower box dimension}

For $B_n=\{x\in E:N_{2^{-n}}(\Delta_xE)<2^{qn}\}$ we prove
\[
                    \mu(B_n)\le n^{-M}
\]
for every fixed $M>0$ at all sufficiently large $n$, whenever
$q<\frac{4}{5}+\eta_0$. We use Lemmas~\ref{lem:new-finite}, \ref{newsel:lem:selection}, and~\ref{newsel:lem:cover}. If this conclusion fails, choose two
polynomially separated portions of $B_n$ and retain a polynomial-mass
product relation. Both endpoints are bad pins at the same original
precision, so both conditioned distance complexities are low.

Joint binning of arbitrary endpoint profiles suffices, because Proposition~\ref{newrig:prop:rigidity} allows $f\ne g$. The finite
geometric contradiction above then proves the same quantitative
tail bound. Borel--Cantelli proves the almost-everywhere lower-box
claim. A Frostman measure on a hypothetical internal exceptional
set of dimension greater than one contradicts this conclusion; closure
extends that conclusion from compact to arbitrary bounded $E$,
since lower box dimension is unchanged by closure.

Together with Section~\ref{sec:finite-matching}, this proves
Theorem~\ref{thm:new-box}. For $q<L_*(s)$, either $q<L(s)$ or
$q<\frac{4}{5}+\eta_0$, so one of the two tail estimates applies.
The same exceptional-set argument proves \eqref{eq:new-exceptional}.

\section{The gains and the remaining limitations}\label{sec:gain}

The universal unpinned estimate improves $\frac{3}{4}$ by
$\frac{1}{20}+\eta_0$. The gain has two parts. Reciprocal
partition estimates account for $\frac{1}{20}$; endpoint rigidity
and the common-source geometry account for $\eta_0$.
The universal external pinned gain is
$q_*-\frac{3}{4}=0.03131\ldots$.
The distinctions between these arguments matter when considering
how far the methods can be pushed.

\subsection{What can be improved, and the limits of the methods}
\label{sec:method-limits}

There is one proved obstruction to a uniform pinned improvement
using only the pointwise inequalities of this paper.
Proposition~\ref{prop:pinned-obstruction} constructs abstract
profiles satisfying both prefix estimates and both transfer
estimates strengthened by the full partition functional, with
$f(u),g(u)\ge(1+\theta)u$ but
$d_x(1)=\frac{3}{4}+\theta$ and $d_y(1)=1$ for every sufficiently
small $\theta>0$. Consequently, further optimization of those
same constraints cannot force a fixed positive improvement over
$\frac{3}{4}$ at a prescribed pin. The common-source mass bounds
supply an additional constraint, and it is this additional
information that permits our pinned improvement. The obstruction
concerns the stated scalar inequalities. We do not prove that
its profiles come from a pair of points satisfying all the
geometric hypotheses, or that every possible pointwise argument
has the same limitation.

\subsection{The scalar obstruction at four fifths}
\label{sec:direct-scalar-obstruction}

The reciprocal argument reaches $\frac{4}{5}$. The next proposition
shows that the scalar inequalities used in that argument cannot force
a larger universal constant. In the construction, the excess
$f(u)-u$ first rises, stays constant for an interval of length
$\frac{t}{5}$, and then falls. A partition piece of length at most
$\frac{t}{5}$ cannot join any of that rise to the later fall. We use
this separation to compute the partition loss directly.

\begin{proposition}[The terminal-flat profiles]
\label{prop:direct-terminal-flat}
For $1<t\le\frac{5}{4}$, put, on $[0,1]$,
\[
 f(u)=g(u)=\min\{2u,u+\frac{2t}{5},t\},\qquad
 d_x(u)=d_y(u)=\min\{u,\frac{4t}{5}\}.
\]
The functions $f,g$ are nondecreasing and $2$-Lipschitz, and
$d_x,d_y$ are nondecreasing and $1$-Lipschitz. All four vanish at
zero. These profiles satisfy $f(u),g(u)\ge tu$, the ordinary prefix
inequalities, and both full transfer inequalities
\[
 d_x(b)-d_x(a)\ge[\widehat H_{f,g}(a,b)-f(a)]_+,
 \qquad
 d_y(b)-d_y(a)\ge[\widehat H_{g,f}(a,b)-g(a)]_+
\]
for $0<a\le b\le1$. They also satisfy every strengthened prefix
\[
 d_x(u),d_y(u)\ge(1-2b(p,s))u+b(p,s)f(u),
 \qquad 1\le p\le s\le t,
\]
where
\[
 c(p,s)=\frac{p(2-s)+s}{5-2s},\qquad
 b(p,s)=\frac{1-c(p,s)}{2-c(p,s)}.
\]
Consequently, these scalar inequalities cannot force a universal
unpinned constant larger than $\frac{4}{5}$, even when the lower
dimensions of both endpoint profiles are strictly greater than one.
\end{proposition}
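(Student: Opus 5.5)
The plan is a direct verification, organized by the regions where the piecewise-linear formulas change. Put $k_t=\frac{t}{5}$. Then $f=g$ equals $2u$ on $[0,\frac{2t}{5}]$, equals $u+\frac{2t}{5}$ on $[\frac{2t}{5},\frac{3t}{5}]$, and equals $t$ on $[\frac{3t}{5},1]$; note $\frac{3t}{5}\le\frac34<1$. The excess $h=f-u$ therefore rises with slope one to height $\frac{2t}{5}$, stays flat on an interval of length $\frac{t}{5}$, and then falls with slope one to $t-1\ge0$ at $u=1$. The distance profile is $d(u)=\min\{u,\frac{4t}{5}\}$. Monotonicity, the Lipschitz bounds and vanishing at zero are immediate. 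The floor $f(u)\ge tu$ holds because each of the three branches is at least $tu$ on $[0,1]$ when $t\le\frac54$; for instance $u+\frac{2t}{5}\ge tu$ reduces to $u(t-1)\le\frac{2t}{5}$. Since $f=g$ and $d_x=d_y$, both orientations are the same check, and only one needs to be done.

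The prefix inequalities come first. On $u\le\frac{4t}{5}$ we have $d=u$, and the right side $(1-2b)u+bf(u)$ is at most $u$ because $f\le2u$. On $u\ge\frac{4t}{5}$ we have $f=t$, and the right side is largest at $u=1$. There the worst case is the smallest coefficient $b(p,s)$, since $f(1)=t<2$ makes the right side decreasing in $b$. As $c$ increases in both parameters, the worst case is $b(t,t)$. I would then check the one-variable inequality $1-b(t,t)(2-t)\le\frac{4t}{5}$ on $(1,\frac54]$, together with the plain case $\frac12+\frac t4\le\frac{4t}{5}$, which is equivalent to $t\ge\frac{10}{11}$. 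I expect the first to reduce, after clearing denominators using $b(t,t)=\frac{1-3\kappa+\kappa^2}{(1-\kappa)(4-\kappa)}$ with $\kappa=t-1$, to a positivity statement about a low-degree polynomial. Note that this inequality is not merely an ordering of $c$.

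The transfer inequalities are the main work. By \eqref{eq:pinned-gain-cap} the full gain satisfies $\widehat p\le b-s$, so the competitor infimum is at most $f(a)+b-a$. This settles every interval with $b\le\frac{4t}{5}$, where $d(b)-d(a)=b-a$. For $b>\frac{4t}{5}$ the target is $\widehat H(a,b)\le f(a)+\frac{4t}{5}-d(a)$. When $a\ge\frac{3t}{5}$, the terminal cap $f(b)=t=f(a)$ finishes it. When $a$ is smaller, I would evaluate the infimum at the competitor $s=a$, or at a nearby competitor $s\in[\frac{t}{5},\frac{2t}{5}]$ with $f(s)=2s$, and bound $P_g(s;b-s)=g(b-s)-\mathcal L_s(g;b-s)$ from above. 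The key point is that when $s\le\frac{t}{5}$, a piece of length at most $s$ meeting the initial rise cannot reach the fall, because the flat stretch has length $\frac{t}{5}$. This is the same argument as \eqref{eq:pinned-loss-barrier}. It forces yellow loss at least the height of the rise covered within $[0,b-s]$, namely $\min\{b-s,\frac{2t}{5}\}$, so $P_g(s;b-s)\le(b-s)+h(b-s)-\min\{b-s,\frac{2t}{5}\}$. The closed gain $\frac{g(b-s)+s}{2}$ must be bounded separately.

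The case I expect to be the obstacle is $a\in(\frac{t}{5},\frac{3t}{5})$ with $b$ near 1, where the competitor may have $s>\frac{t}{5}$ and the barrier above fails. There I would instead choose the competitor $s=\frac{t}{5}$ when $a\le\frac{t}{5}$. When $a>\frac{t}{5}$, I would use the terminal cap together with $f(a)\ge\frac{3t}{5}$ and $d(a)=a$, which gives $t-f(a)\le\frac{4t}{5}-a$ whenever $f(a)\ge a+\frac{t}{5}$, and this holds on $[\frac{t}{5},\frac{3t}{5}]$. Finally, the claim that these scalar inequalities cannot force a universal unpinned constant above $\frac45$ follows by letting $t\searrow1$, since $d_x(1)=d_y(1)=\frac{4t}{5}\to\frac45$.
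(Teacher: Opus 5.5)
Your plan is essentially the paper's proof. You use the gain cap \eqref{eq:pinned-gain-cap} when $b\le\frac{4t}{5}$, and the terminal cap $f(b)=t$ together with $h(a)\ge\frac{t}{5}$ when $a\ge\frac{t}{5}$. For $v\le\frac{t}{5}$ you use the no-joining barrier (the flat stretch has length $\frac{t}{5}$) to get $\widehat p_g(v;b)\le\frac{3t}{5}$. Finally you check the strongest prefix $b(t,t)$ at $u=1$. The polynomial you left unverified is explicit: $1-b(t,t)(2-t)=\frac{t(4-t)}{5-t}$, and $\frac{4t}{5}-\frac{t(4-t)}{5-t}=\frac{t^2}{5(5-t)}>0$.

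One step in your last paragraph fails as written. For $a<\frac{t}{5}$ the competitor $s=\frac{t}{5}$ does not work. It only gives $\widehat H\le\frac{2t}{5}+\frac{3t}{5}=t$, and the required inequality $t-f(a)\le\frac{4t}{5}-a$, with $f(a)=2a$, is equivalent to $a\ge\frac{t}{5}$. You need the competitor $s=a$, which is the choice in your earlier paragraph. With $s=a$:
\begin{itemize}
\item $R=b-a>\frac{3t}{5}$, so $g(R)=t$.
\item The partition gain is at most $t-\frac{2t}{5}=\frac{3t}{5}$.
\item The closed gain is at most $\frac{t+a}{2}\le\frac{3t}{5}$.
\end{itemize}
Hence $\widehat H-f(a)\le\frac{3t}{5}\le\frac{4t}{5}-a$, and the last inequality holds precisely because $a\le\frac{t}{5}$. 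This is the paper's case split: competitor $s=a$ below $\frac{t}{5}$, terminal cap above.

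Your intermediate claim $f(a)\ge\frac{3t}{5}$ is also false on $(\frac{t}{5},\frac{3t}{10})$. It is harmless, because only $h(a)\ge\frac{t}{5}$ is used there, and that holds on all of $[\frac{t}{5},\frac{4t}{5}]$.
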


\begin{proof}
The slopes of $f=g$ are $2,1,0$, in that order. The slopes of
$d_x=d_y$ are $1,0$. These observations prove the monotonicity
and Lipschitz assertions. Also $2u\ge tu$, $t\ge tu$, and
$u+\frac{2t}{5}\ge tu$ for $0\le u\le1$ and $1<t\le\frac{5}{4}$.

Write $h=f-u$. It increases with slope one up to $\frac{2t}{5}$, is
constant on $[\frac{2t}{5},\frac{3t}{5}]$, and decreases with slope minus one
afterwards. Fix $0<v\le \frac{t}{5}$. An interval of length at most $v$
cannot meet both the increasing and decreasing parts of $h$ in
positive length. Thus every admissible yellow--teal partition of
$[0,R]$ pays for the entire increase of $h$ on that interval.
Conversely, partitioning at the two corners and subdividing the
monotone pieces realizes precisely that cost. Therefore
\[
 \mathcal L_v(f;R)=\min\{R,\frac{2t}{5}\},\qquad
 P_f(v;R)=\min\{R,\frac{3t}{5}\}.
\]
The closed gain is also at most $\frac{3t}{5}$: for $R\le\frac{3t}{5}$ use its
bound by $R$, and for $R\ge\frac{3t}{5}$ use
$\frac{f(R)+v}{2}=\frac{t+v}{2}\le\frac{3t}{5}$. It follows that
\begin{equation}\label{eq:direct-small-start}
 \widehat p_f(v;b)\le\frac{3t}{5}\qquad(0<v\le \frac{t}{5}).
\end{equation}

We now check the transfer. If $b\le\frac{4t}{5}$, the gain cap
$\widehat p_f(a;b)\le b-a$ gives the assertion immediately.
If $a\ge\frac{4t}{5}$, then $f(a)=f(b)=t$ and the right hand side is
zero. Suppose, then, that $a<\frac{4t}{5}<b$. When $a\ge \frac{t}{5}$, we have
$h(a)\ge \frac{t}{5}$, and hence
\[
 f(a)+\frac{4t}{5}-a=h(a)+\frac{4t}{5}\ge t\ge\widehat H_{f,f}(a,b).
\]
When $a<\frac{t}{5}$, choose the starting scale $a$ in the infimum defining
$\widehat H$ and use \eqref{eq:direct-small-start}. Consequently
\[
 \widehat H_{f,f}(a,b)-f(a)\le\frac{3t}{5}\le\frac{4t}{5}-a.
\]
The two orientations agree because $f=g$.

It remains to check the prefixes. The function $c(p,s)$ increases
in both variables on the stated range, and $b$ decreases as a
function of $c$. Since $f(u)\le2u$, the strongest displayed
prefix occurs at $p=s=t$. With $k=t-1$, its coefficient is
\[
 b(t,t)=\frac{1-3k+k^2}{(1-k)(4-k)}.
\]
For $u\le\frac{4t}{5}$, the prefix follows again from $f(u)\le2u$.
For $u\ge\frac{4t}{5}$, we have $f(u)=t$. The right side of
the prefix inequality increases with $u$, so it suffices to check
$u=1$. The difference is
\[
 \frac{4t}{5}-\{1-b(t,t)(2-t)\}
 =\frac{t^2}{5(5-t)}>0.
\]
The ordinary prefix corresponds to coefficient $\frac{1}{4}$, which is
no stronger. Finally, $d_x(1)=d_y(1)=\frac{4t}{5}$ and $t$ may decrease
to one. This proves the asserted scalar obstruction. It does not
assert that these profiles are realized by a geometric configuration.
\end{proof}

The proposition and Proposition~\ref{prop:reciprocal-scalar} identify
$\frac{4}{5}$ as the exact universal constant for the stated full
scalar system. This does not establish geometric sharpness. We have
not realized these profiles by a Euclidean pair with all the required
independence properties and a common source measure.

\subsection{The two geometric comparisons}

The first geometric upper count has a sharper local limitation.
For a fixed admissible restricted-sum exponent $\alpha$, the
comparison in \eqref{eq:leading-margin} is
\[
L-U_1\ge\frac{3\alpha}{4}(1-2\Delta)
                 -(10-3\alpha)\omega.
\]
The coefficient $10-3\alpha$ is attained in the abstract
parameter range of Proposition~\ref{prop:strict-budget} by
$\varsigma=2$, $\chi=-1$, $u_0=\mathfrak e=0$, $\varpi=1$, and
$\mathfrak u=2$. At transfer level one, where $\Delta=0$ and
$q=\frac{3}{4}+\omega$, this uniform comparison therefore requires
\[
q<\frac{3}{4}+\frac{3\alpha}{40-12\alpha}.
\]
Letting $\alpha\nearrow\alpha_*$ yields $q_*$. Optimizing the
same coefficient over the same parameter range cannot improve
this bound. It could improve if a stronger restricted-sum
estimate supplied a larger admissible $\alpha$, if one of the
geometric counts improved, or if additional geometric information
excluded the parameters attaining the coefficient. We make no
claim that those parameters are realized by the configurations
arising from a common source measure. Nor do we claim that
$\alpha_*$ is the optimal restricted-sum exponent. Substituting
a stronger input would also require checking the other scale
and richness inequalities in the proof.

The second upper count uses angular information from both
endpoints. Its effective parent exponent is
\[
B_{\mathrm{eff}}=\min\{\zeta,\alpha+(2-3\alpha)a\},
\]
and Section~\ref{sec:relation-upper} compares the lower count
with both entries. The resulting sufficient conditions are
\[
\omega<\frac{-1+4\alpha+6(1-2\alpha)\Delta}{12(3-2\alpha)},\qquad
\omega<\frac{\alpha(1-2\Delta)}{4(2+\alpha)}.
\]
After the certificate iteration these become $q<Q_b$ and
$q<Q_f$. Their minimum is the bound proved in
Section~\ref{sec:relation-proof}.
The controlling branch changes
at $S_{\mathrm{switch}}$ after $\alpha\nearrow\alpha_*$. At a
fixed source dimension, improving only the larger branch leaves
the minimum unchanged. Improving the smaller branch requires a
better count, a stronger angular bound, or a further restriction
on the allowable row and parent scales. The verified branch
switch is exact for the two displayed formulas, but the paper
does not prove that these formulas describe the best possible
use of the common relation. In particular, the present second
comparison does not raise the universal pinned constant as the
source dimension decreases to one. Proposition~\ref{prop:joint-count-obstruction}
in Appendix~\ref{sec:joint-count-obstruction} also shows that
combining the two displayed counts before optimizing does not
improve their threshold within the stated parameter set. Two
explicit choices of parameters suffice to prove this limitation.
They need not arise from a common source measure.

The constant continuation $Y(t)=\frac{7}{8}$ for $t\ge S_0$
comes from the range $\Delta<\frac{1}{8}$ of the reciprocal
estimate. In the source-dimension iteration, the limiting value
of $\Delta$ reaches $\frac{1}{8}$ at $t=S_0$.
The matching-profile argument reaches the same boundary when
the source floor is $\frac{9}{8}$.
Extending these calculations requires further control at and
beyond this transfer level. The value $\frac{7}{8}$ is not an
upper bound for the other methods of the paper; $R(t)$ exceeds it
for larger $t$.

The fixed gain $\eta_0$ uses a strict geometric exponent gap of
$\frac{2}{95}$ at the limiting configuration. This number is
not the value of $\eta_0$. A numerical dimension gain would
require quantitative control of the stability lemma and all
subsequent tolerances. The qualitative argument suffices for
the stated theorem.

Several losses affect the scale at which a contradiction occurs
but not its limiting exponent. Fixed multiplicative constants,
dyadic rounding, and polynomial factors in $n$ change the
logarithms of the counts by at most $O(\log n)$. Coding errors
have the same order. These contributions vanish after division
by $n$. The fixed exponent tolerances are chosen arbitrarily
small after a positive leading margin has been established.
Removing these losses alone cannot turn a zero leading margin
into a positive one or raise the constants above. Likewise, the
exceptional-set threshold one uses radial projection input with
source and pin exponents greater than one. Extending that part
of the proof to smaller pin exponents needs an appropriate
replacement for that input, not only a numerical reoptimization
of the final count.

Finally, every distance set is a subset of $\mathbb R$, so its
Hausdorff dimension is at most one. The conclusions giving full
pinned dimension when $\dim_{\mathrm H}E\ge\frac{3}{2}$ have
already reached this absolute ceiling. Below that range, the
paper proves the obstructions and limits of individual
comparisons described above.
It does not determine the best bound
obtainable by combining all the information in a common-source
configuration. The optimal unpinned constant and the optimal use of the two
pinned geometric counts remain open.

\begin{question}
Can the stability statement in Lemma~\ref{lem:new-stability} be made
quantitative enough to give a useful numerical value of $\eta_0$?
\end{question}

\begin{question}
Can the two-sided angular information in the common relation raise
the universal pinned constant $q_*$, rather than only the bound at
larger source dimensions?
\end{question}

\begin{question}
What additional compatibility across scales would turn the finite
rich-bisector counts into a Hausdorff-dimension estimate for one
fixed limiting family of rich bisectors?
\end{question}

\appendix

\section{The two prefix bounds with source information retained}
\label{app:retained-prefix}

References to equations and lemmas of Stull~\cite{S} and
Fiedler--Stull~\cite{FS} in this appendix use arXiv:2207.12501v2
and arXiv:2309.11701v1, respectively.

We give the partition calculation behind Lemma~\ref{lem:prefix},
including its endpoint at planar packing dimension two. The inputs
are the local yellow-interval estimate of \cite[Lemma~27]{FS}, the
circle-collision inequality of \cite[Lemma~30]{FS}, and the
distance-transfer lemma of \cite[Lemma~15]{FS}. The latter two are
used in Section~\ref{sec:transfer}. We retain the source complexity
through the summation instead of replacing it by a dimension bound.

Fix
\[
1<d<\min\{\dim^A x,\dim^A y,\frac{5}{4}\}.
\]
The upper cutoff $\frac{5}{4}$ is a convenient choice for this
auxiliary parameter because it keeps the rate
$\frac{d(3-d)}{5-2d}$ used below between $\frac{2}{3}$ and one,
with positive denominators. Such a $d$ always exists by (H1),
so this choice imposes no upper bound on either endpoint dimension.
The final inequalities in Proposition~\ref{prop:appendix-prefix}
are independent of $d$.
Let $F$ be the piecewise linear function on $[1,\infty)$ with
$F(m)=K_m^A(y)$ at every positive integer $m$, as in
\cite[\S4.3]{FS}.
Logarithmic interpolation and rounding errors
will be included in the errors below. Thus
$ds-o(s)\le F(s)\le2s+O(\log(s+2))$. Write
\[
\Delta_y[a,b]=K_b^{A,x}(y)-K_a^{A,x}(y),\qquad 
\Delta_d[a,b]=K_b^{A,x}(|x-y|)-K_a^{A,x}(|x-y|).
\]
Finite-precision chain rules identify these differences with the
corresponding conditional increments up to $O(\log b)$.

\begin{lemma}[The selected teal intervals]\label{lem:appendix-teals}
For every sufficiently large terminal precision $r$, the partition
of \cite[\S4.3]{FS}, with parameters $d$ and $D=2$, has the following
properties, up to errors that sum to $o(r)$.
On a yellow block $[a,b]$,
\[
\Delta_d[a,b]\ge b-a-o(b),\qquad 
\Delta_y[a,b]\le2(b-a)+O(\log b).
\]
On a selected teal block $[a,b]$,
\[
\Delta_d[a,b]\ge\Delta_y[a,b]-o(b),\qquad 
\Delta_y[a,b]\ge c_d(b-a)-o(b),\qquad
  c_d=\min\{1,\frac{d(3-d)}{5-2d}\}\ge\frac{2}{3}.
\]
The same assertions hold with the two endpoints exchanged.
\end{lemma}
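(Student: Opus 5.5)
The plan is to read off the four assertions from the construction of \cite[\S4.3]{FS} with $D=2$, substituting Lemma~\ref{lem:endpoint-ratio} wherever the original argument divides by $2-D$. Error control is then done by counting blocks.

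For a yellow block $[a,b]$ the lower bound $\Delta_d[a,b]\ge b-a-o(b)$ is the local yellow estimate \cite[Lemma~27]{FS}. Its hypotheses are the yellow condition on the interpolated $F$ and the directional randomness (H2), and neither involves $D$. The upper bound $\Delta_y[a,b]\le2(b-a)+O(\log b)$ is just the planar precision-increment bound, applied relative to $(A,x)$.

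For a selected teal block, the rate bound comes from the root identity of the construction, \eqref{eq:asymmetric-teal-identity} with $p=d$. We retain $F(b)\ge db-o(b)$ and use $F(a)\le2a+O(\log b)$; exactly as in the proof of Proposition~\ref{prop:asymmetric-pinned} with $s=p=d$, this gives $\frac{a}{b}\ge\frac{d}{5-d}-o(1)$. The slope $C_d(k)$ is then at least $\frac{d(3-d)}{5-2d}$, and a green block has slope one. The bound $\frac{d(3-d)}{5-2d}\ge\frac23$ follows from $d>1$ and $d<\frac54$. Independence (H3) and the chain rule convert $F$-increments into $\Delta_y$ at cost $O(\log b)$.

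The transfer $\Delta_d\ge\Delta_y-o(b)$ is the main step, and it is where care is needed. I would apply the capped collision argument of Lemma~\ref{lem:transfer}, with the cap equal to the full teal increment. Competitors at separation $t$ with $a\le t<\frac b2$ cost at least $F(t)+\min\{b-t,\frac{dR+t}{2}\}$ with $R=b-t$. This follows from \cite[Lemma~30]{FS} together with Lemma~\ref{lem:projection}, which applies uniformly because Lemma~\ref{lem:endpoint-ratio} bounds $t/R$ below. Each branch, added to $F(t)$, is at least $F(b)-o(b)$. The second branch reduces to the teal lower envelope $F(t)\ge F(b)-\frac d2b+\frac{d-1}2t$, and the first reduces to the $2$-Lipschitz bound. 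Tangent competitors $t\ge\frac b2$ are handled by the tangent estimate, and \cite[Lemma~15]{FS} then transfers the increment. The hard part is making the projection thresholds uniform in the direction and in $t$ without the count $S<\frac{2}{d(2-D)}$. The bounded precision ratio, giving a bounded partition size, replaces that count.

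Finally, successive selected teals at least halve the right endpoint, and yellow blocks are maximal. Hence there are $O(\log r)$ blocks and $\sum b=O(r)$. We fix a local tolerance $\xi$, discard precisions below $\sqrt r$, and let $r\to\infty$ before $\xi\to0$, so the total error is $O(\xi r)+O((\log r)^2)+O(\sqrt r)=o(r)$. Exchanging the endpoints uses (H1)--(H4) for $y$ as the pin, and the argument is symmetric.
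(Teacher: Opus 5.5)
Your outline follows the paper's route: the Fiedler--Stull partition at $D=2$, the root identity for the teal rate, the capped collision argument with \cite[Lemma~15]{FS} for the transfer, and halving for the summation. However, the transfer step contains a concrete error.

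For the branch $b-t$ you need $F(t)+(b-t)\ge F(b)-o(b)$, that is, $F(t)-t\ge F(b)-b-o(b)$ for $a\le t\le b$. The $2$-Lipschitz bound does not give this. It only yields $F(t)+(b-t)\ge F(b)-(b-t)$, which falls short by $b-t\ge\frac b2$ throughout the non-tangent range. The tangent competitors $t\ge\frac b2$ need exactly the same inequality, since there the gain is $b-t$ and the uncapped cost is $F(t)+b-t$. So your sentence ``tangent competitors are handled by the tangent estimate'' silently relies on it too.

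The missing input is the teal property of the block itself: $h(t)\ge h(b)$ for $h=F-u$ on $[a,b]$. In the paper's construction, $a$ is the larger of two last roots below $b$: one of the line $F(s)=F(b)-\frac d2b+\frac{d-1}2s$, which you already use for the second branch, and one of $F(s)=F(b)-(b-s)$. The absence of a terminal yellow block forces both lower envelopes to hold on $[a,b]$. With both envelopes, both non-tangent branches and the tangent range reach $F(b)-o(b)$.

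There is also a smaller imprecision: the cap cannot be set to ``the full teal increment.'' First, the cap is on the total source complexity at precision $b$; the transferred quantity is the cap minus $F(a)$. Second, the uncapped competitor costs are only $\ge F(b)-o(b)$, whereas \cite[Lemma~15]{FS} requires a strict surplus over the capped complexity.

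The paper instead caps at $K_b^A(y)-\theta b$ with $\theta>0$ fixed. Uncapped competitors then have surplus $\theta b-o(b)$. Capped competitors have surplus at least $\frac b2$ in the non-tangent range, because $b-t>\frac b2$ and $\frac{d(b-t)+t}{2}\ge\frac b2$. In the tangent range they have the terminal allowance $b-t$. The local tolerance is chosen much smaller than $\theta$; after the transfer one removes the lowering oracle and lets $\theta\searrow0$. With these two repairs your argument coincides with the paper's.
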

\begin{proof}
Here is the construction and the information it preserves. Starting
from a right endpoint $b$, first take the maximal terminal block
that is a union of yellow intervals $[v,w]$ with $w\le2v$.
The local yellow estimate, summed on its greedy at-most-doubling
partition, gives $\Delta_d[a,b]\ge b-a-o(b)$.
The source upper bound follows from the planar precision-increment
bound. If there is no such terminal block, let $t<b$ be the largest
solution of
\[
F(t)=F(b)-\frac{d}{2}b+\frac{d-1}{2}t,
\]
and let $t'<b$ be the largest solution of
$F(t')=F(b)-(b-t')$. Set $a=\max\{t,t'\}$.
These are equations~(10)--(11) of \cite{FS} with $D=2$.
The lower bound $F(b)>db-o(b)$ and continuity provide the first
root.
The absence of a terminal yellow interval ensures the teal
alternative. Above the last roots we have
\begin{equation}\label{eq:appendix-teal-lines}
F(s)\ge F(b)-\frac{d}{2}b+\frac{d-1}{2}s,\qquad
  F(s)\ge F(b)-(b-s),\qquad  a\le s\le b.
\end{equation}
The block is teal. The halving argument in
\cite[Lemma~25]{FS} implies $a\le\frac{b}{2}$.
The first inequality in \eqref{eq:appendix-teal-lines}, together
with $F(a)\le2a+o(b)$ and $F(b)\ge db-o(b)$, yields
\[
\frac{a}{b}\ge\frac{d}{5-d}-o(1).
\]
In particular, the block endpoints have a bounded positive ratio.

If $a=t'$, its source increment is $b-a+o(b)$.
Otherwise the defining identity at $a=t$ implies
\[
\frac{F(b)-F(a)}{b-a}
=\frac{d}{2}+\frac{a}{2(b-a)}
\ge\frac{d}{2}+\frac{d}{2(5-2d)}-o(1)
=\frac{d(3-d)}{5-2d}-o(1).
\]
Independence (H3) at the two endpoints transfers this increment
bound to $\Delta_y$. The coefficient is at least $\frac{2}{3}$,
because
\[
\frac{d(3-d)}{5-2d}-\frac{2}{3}
=\frac{(d-1)(10-3d)}{3(5-2d)}\ge0.
\]

It remains to justify the transfer of the whole teal source
increment. Cap the source at $m=K_b^A(y)-\theta b$, where
$\theta>0$ is fixed and small. For a circle competitor at separation
$2^{-s}$, $a\le s\le\frac{b}{2}$, the capped collision inequality
of Section~\ref{sec:transfer} gives
\[
K_b^{A,G}(z)\ge\min\{m,F(s)\}
+\min\left\{b-s,\frac{d}{2}b-\frac{d-1}{2}s\right\}-o(b).
\]
Both alternatives in the second minimum, added to $F(s)$, are at
least $F(b)$ by \eqref{eq:appendix-teal-lines}. If the source term
is capped, the remaining gain is at least $\frac{b}{2}$.
For $s\ge\frac{b}{2}$, the tangent bound instead yields
\[
K_b^{A,G}(z)\ge\min\{m,F(s)\}+b-s-o(b).
\]
The teal inequality in \eqref{eq:appendix-teal-lines} handles
the uncapped case.
The capped case has the required terminal
allowance $b-s$. Choose the local error tolerance sufficiently
smaller than $\theta$. These estimates verify the competitor
hypothesis of \cite[Lemma~15]{FS}, in exactly the form used in
Lemma~\ref{lem:transfer}. Lemma~14(iii) of \cite{FS} and (H3)
preserve the capped source increment relative to the pin. Removing
the cap and then letting $\theta$ decrease to zero proves
$\Delta_d[a,b]\ge\Delta_y[a,b]-o(b)$.

Finally, these local errors can be summed. Two outer yellow blocks
cannot be consecutive, and every teal block at least halves the
precision. Hence the sum of their right endpoints is $O(r)$ and
there are $O(\log r)$ blocks above a fixed initial cutoff. The
greedy subpartitions of yellow blocks have the same geometric
summability property. For a fixed local tolerance $\xi$, their
total error is $O(\xi r)+O((\log r)^2)$.
The discarded bounded
initial range contributes $O(1)$. Take $r\to\infty$ first and
then $\xi\searrow0$. This proves the asserted $o(r)$ error.
\end{proof}

\begin{proposition}[The retained prefix inequalities]
\label{prop:appendix-prefix}
Let $L_r$ be the total length of the yellow blocks in the preceding
partition. Under (H1)--(H4),
\[
K_r^{A,x}(|x-y|)\ge\frac{2r+L_r}{3}-o(r),\qquad 
K_r^{A,x}(|x-y|)\ge K_r^A(y)-L_r-o(r).
\]
\end{proposition}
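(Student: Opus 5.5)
The plan is to sum the block estimates of Lemma~\ref{lem:appendix-teals} over the partition of $[1,r]$ and then convert the conditional source increments back into unconditional source complexity using (H3). The partition consists of yellow blocks of total length $L_r$ and selected teal blocks of total length $r-L_r$. A bounded initial range costs $O(1)$. Because the distance and source increments telescope across consecutive blocks, the estimates only need to be added, with the errors already shown to total $o(r)$.

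For the first inequality, telescope $\Delta_d$ over all blocks. Each yellow block contributes at least its length, so the yellow total is at least $L_r-o(r)$. Each teal block satisfies
\[
\Delta_d[a,b]\ge\Delta_y[a,b]-o(b)\ge c_d(b-a)-o(b),
\]
and $c_d\ge\frac{2}{3}$, so the teal total is at least $\frac{2}{3}(r-L_r)-o(r)$. Adding the two totals gives
\[
K_r^{A,x}(|x-y|)\ge L_r+\frac{2}{3}(r-L_r)-o(r)=\frac{2r+L_r}{3}-o(r).
\]

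For the second inequality, telescope $\Delta_y$ instead. By (H3), $K_r^{A,x}(y)\ge K_r^A(y)-O(\log r)$, so
\[
K_r^A(y)\le\sum_{\text{yellow}}\Delta_y+\sum_{\text{teal}}\Delta_y+O(\log r)+O(1).
\]
On teal blocks, $\Delta_y\le\Delta_d+o(b)$. On yellow blocks, $\Delta_y\le2(b-a)+O(\log b)$, while $\Delta_d\ge b-a-o(b)$, so $\Delta_y\le\Delta_d+(b-a)+o(b)$. Summing over all blocks gives
\[
K_r^A(y)\le K_r^{A,x}(|x-y|)+L_r+o(r),
\]
which is the second inequality.

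Both telescoping identities have $O(\log r)$ chain-rule costs per block. The main point to check is that these costs, together with the $o(b)$ errors, sum to $o(r)$. This follows from the summability established in the lemma: there are $O(\log r)$ blocks and the sum of their right endpoints is $O(r)$. The order of limits must be respected, with $r\to\infty$ taken before the local tolerance $\xi\searrow0$. No step divides by $2-D$, so the endpoint $D=2$ causes no problem.
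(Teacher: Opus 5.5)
Your proposal is correct and follows the paper's proof. The first inequality comes from summing the yellow distance increments and the teal bounds with $c_d\ge\frac{2}{3}$; the second comes from telescoping the source increments, bounding the yellow ones by $2L_r$ and the teal ones by distance increments, and then removing the pin with (H3). The only difference is cosmetic: you combine the yellow estimates block by block rather than in aggregate.
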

\begin{proof}
Summing the yellow distance increments and the teal lower bounds
from Lemma~\ref{lem:appendix-teals} results in
\[
K_r^{A,x}(|x-y|)\ge L_r+c_d(r-L_r)-o(r)
\ge\frac{2r+L_r}{3}-o(r).
\]
For the second inequality, keep the full source complexity.
The sums indexed by $Y$ and $T$ run over the yellow and selected
teal blocks, respectively, with each increment evaluated at that
block's endpoints:
\begin{align*}
K_r^{A,x}(y)
&=\sum_Y\Delta_y+\sum_T\Delta_y+o(r)\\
&\le2L_r+\sum_T\Delta_d+o(r)\\
&\le L_r+K_r^{A,x}(|x-y|)+o(r).
\end{align*}
The only step removing the pin from this source quantity is (H3):
\[
K_r^A(y)-O(\log r)\le K_r^{A,x}(y).
\]
Combining proves the second inequality. Multiplying the first by
$\frac{3}{4}$ and the second by $\frac{1}{4}$ cancels $L_r$ and
proves Lemma~\ref{lem:prefix}.
\end{proof}

The endpoint calculation involves three places in the cited
argument. The first ratio bound of \cite[Lemma~26]{FS} survives at
$D=2$.
Its weaker bound proportional to $2-D$ is replaced by
$\frac{a}{b}\ge\frac{d}{5-d}-o(1)$. Equations~(22)--(23) in the
proof of its Lemma~31 use that weaker bound only to justify the
projection theorem. Here Lemma~\ref{lem:projection} applies because
$\frac{s}{b-s}$ is bounded below. Finally, the bound
$S<\frac{2}{d(2-D)}$ on the number of red--green--blue sequences
belongs to the proof of \cite[Theorem~16]{FS}.
We
use the bounded-ratio partition count proved in
Lemma~\ref{lem:projection} instead. The root identities, halving
argument, source-increment calculation, and summation above do not
divide by $2-D$. Thus no limiting assertion in $D$ is needed.

\section{The first scalar improvement}
\label{sec:thirteen-seventeen}

This appendix and Appendix~\ref{sec:refined} record preliminary
scalar estimates superseded by Proposition~\ref{newlev:prop:level}.
The first proof uses a single exchange of the endpoint estimates.
Suppose both
descriptions of the distance are short. With $y$ as the pin, we
deduce that $x$ must have a certain amount of complexity at every
late scale. We then use $x$ as the pin, where this lower bound
improves the projection estimate. The resulting contradiction proves the bound
$\frac{13}{17}$.

\begin{proposition}[Scalar consequence]\label{prop:scalar}
Let $f,g,d_x,d_y$ be scalar profiles as in
Section~\ref{sec:profiles}, satisfying \eqref{eq:profileclass},
\eqref{eq:limitprefix}, and Lemma~\ref{lem:transfer} for both
choices of pin. Then
\begin{equation}\label{eq:scalar}
\max\{d_x(1),d_y(1)\}\ge\frac{13}{17}.
\end{equation}
\end{proposition}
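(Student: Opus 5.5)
The plan is a single exchange of the endpoint estimates, using only the profile bounds \eqref{eq:profileclass}, the prefixes \eqref{eq:limitprefix} and the closed transfer of Lemma~\ref{lem:transfer}. I cannot simply quote Corollary~\ref{prop:reciprocal-scalar}: its proof uses the full partition transfer, which is not among the present hypotheses. Suppose, to reach a contradiction, that $d_x(1),d_y(1)\le Q$ with $Q<\frac{13}{17}$, and put $k=1-Q>\frac{4}{17}$. The level-one certificate holds automatically in both orientations, as checked before Proposition~\ref{newlev:prop:level}. That check uses only closed gains, so Lemma~\ref{lem:first-hit} with $\Delta=0$ applies to both pins. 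It gives the common cutoff $A_{\mathrm{init}}=2Q-\frac{5}{4}$, at which $H_{f,g}(\cdot,1)\ge1$ and $H_{g,f}(\cdot,1)\ge1$.

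\emph{First step (pin $y$).} For $b\ge A_{\mathrm{init}}$, the transfer on $[b,1]$ with $y$ as the pin gives $Q\ge d_y(b)+1-g(b)$. Combined with the prefix $d_y(b)\ge\frac{b}{2}+\frac{g(b)}{4}$, this yields the late envelope
\[
g(b)\ge\frac{4}{3}\Bigl(k+\frac{b}{2}\Bigr)\qquad(b\ge A_{\mathrm{init}}),
\]
and the symmetric envelope for $f$. Together with $g(u)\ge u$ and the $2$-Lipschitz bound, this gives a piecewise linear lower envelope $\ell_0\le g$ on $[A_{\mathrm{init}},1]$.

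\emph{Second step (pin $x$).} I then return to $x$ as the pin. The closed gain $p_g(s;1)=\min\{1-s,\frac{g(1-s)+s}{2}\}$ is monotone in $g$, so I may replace $g$ by $\ell_0$. For competitors $s$ with $1-s\ge A_{\mathrm{init}}$, roughly $s\le\frac{9}{4}-2Q$, the gain is then at least $\min\{1-s,\frac{1}{2}+\frac{2k}{3}\}$, not merely $\frac{1}{2}$. I will define an improved level $\lambda=1+\Delta'$ with $\Delta'$ explicit in $k$. The aim is to show that $H_{f,g}(a,1)\ge\lambda$ at the first point $a$ where $f(a)=\lambda-\frac{1}{2}$. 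Competitors after $\frac{1}{2}$ are controlled by $f(s)+1-s$ together with the late envelope for $f$. Feeding this into the argument of Lemma~\ref{lem:first-hit} gives
\[
Q\ge d_x(1)\ge\frac{a}{2}+\frac{f(a)}{4}+\lambda-f(a),
\]
with $a\ge\frac{f(a)}{2}$. This produces a linear inequality in $Q$ which fails exactly when $Q<\frac{13}{17}$.

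The main obstacle is the competitor bookkeeping in the second step. I need to check that every separation $s\in[a,1]$ has $1-s$ in the range where the envelope is valid, or else that the branch $1-s$ or the terminal cap already exceeds $\lambda$. I also need to balance the two gain branches so that the optimal $\Delta'$ gives the constant $\frac{13}{17}$. I would first try taking $\lambda$ at the crossing point of $f(s)+1-s$ and $f(s)+\frac{\ell_0(1-s)+s}{2}$, and then verify the orderings $A_{\mathrm{init}}<a\le\frac{1}{2}$ for $Q$ near $\frac{13}{17}$.
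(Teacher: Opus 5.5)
There is a genuine gap in your second step. A certificate $H_{f,g}(a,1)\ge\lambda$ with $\lambda=1+\Delta'>1$ is not available from the stated hypotheses. By definition $H_{f,g}(a,1)\le f(1)$, the terminal cap. Every competitor $s\in[\frac12,1]$ has $p_g(s;1)=1-s$. So the certificate forces $f(1)\ge\lambda$ and $f(s)\ge s+\Delta'$ for all late $s$.

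Your late envelope does not supply this. It gives only $f(s)+1-s\ge1+\frac{4k-s}{3}$, which falls below $1$ once $s>4k$. Moreover $4k<1$ precisely because $Q>\frac34$. On $[4k,1]$ you are left with $f(s)\ge s$, so the threshold is capped at one. Nothing in your plan produces $f(s)-s>0$ near $s=1$. In the paper such a bound comes only from a source floor $f(u)\ge(1+\kappa)u$, as in Lemma~\ref{lem:bootstrap}, and that floor is not a hypothesis here.

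Consequently your first-hit computation, which yields $Q\ge\frac14+\frac{\lambda}{2}$, is stuck at $\lambda=1$, that is, at $Q\ge\frac34$. Reaching $\frac{13}{17}$ this way would need $\lambda\ge\frac{35}{34}$. There is also a slip in the gain. For $s\le\frac12$ the improved bound is $p_g(s;1)\ge\min\{1-s,\frac13+\frac{2k}{3}+\frac{s}{6}\}$. Your $\frac12+\frac{2k}{3}$ is its value at $s=1$ only.

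The missing idea is to keep the level at one and move the cutoff to the left of the first hit $f=\frac12$. With the correct gain, a competitor $s\in[a,\frac12]$ costs at least one as soon as $f(s)+\frac{s}{6}\ge\frac23(1-k)=\frac{2Q}{3}$. Competitors $s\ge\frac12$ and the terminal cap cost at least one trivially. So define $a$ by $f(a)+\frac{a}{6}=\frac{2Q}{3}$; this gives $a<\frac12$ because $Q<\frac78$. Then $H_{f,g}(a,1)\ge1$.

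Prefix plus transfer at $a$ give $Q\ge1+\frac a2-\frac34f(a)=1-\frac Q2+\frac{5a}{8}$. The bound $f(a)\le2a$ gives $a\ge\frac{4Q}{13}$. Combining, $Q\ge1-\frac{4Q}{13}$, that is, $Q\ge\frac{13}{17}$. This is the paper's proof.

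The paper needs the pin envelope only on $[\frac12,1]$. There $H_{g,f}(v,1)\ge1$ holds directly from the tangent range and $g(s)\ge s$, so Lemma~\ref{lem:first-hit} and the cutoff $A_{\mathrm{init}}$ are not required.
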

\begin{proof}
Set $q=\max\{d_x(1),d_y(1)\}$. The baseline implies $q\ge\frac{3}{4}$. If $q\ge\frac{7}{8}$ there is nothing to prove, so assume $\frac{3}{4}\le q<\frac{7}{8}$.

For every $v\in[\frac{1}{2},1]$, the tangent range and $g(s)\ge s$ imply
\[
H_{g,f}(v,1)\ge1.
\]
Combining the prefix estimate and tail transfer with $y$ as the pin yields
\[
q\ge\frac{v}{2}+\frac{g(v)}{4}+\left[1-g(v)\right]_+
\ge1+\frac{v}{2}-\frac{3g(v)}{4}.
\]
Consequently
\begin{equation}\label{eq:pinenvelope}
g(v)\ge\frac{4(1-q)+2v}{3} \quad \text{for } \frac{1}{2}\le v\le1.
\end{equation}
For $s\le\frac{1}{2}$, substitute $v=1-s$ into \eqref{eq:pinenvelope} to obtain
\begin{equation}\label{eq:pgq}
p_g(s;1)\ge\min\left\{1-s, 1-\frac{2q}{3}+\frac{s}{6}\right\}.
\end{equation}

Choose the unique $a>0$ such that
\begin{equation}\label{eq:cutoff}
f(a)+\frac{a}{6}=\frac{2q}{3}.
\end{equation}
Existence and uniqueness follow because $f(u)+\frac{u}{6}$ is continuous and strictly increasing. Since $f(\frac{1}{2})\ge\frac{1}{2}$ and $q<\frac{7}{8}$, we have $a<\frac{1}{2}$. Since $f(a)\le2a$,
\begin{equation}\label{eq:alower}
a\ge\frac{4q}{13}>0.
\end{equation}
For $a\le s\le\frac{1}{2}$, monotonicity implies $f(s)+\frac{s}{6}\ge\frac{2q}{3}$. Equations \eqref{eq:pgq} and $f(s)\ge s$ therefore imply $f(s)+p_g(s;1)\ge1$. For $s\ge\frac{1}{2}$, the same conclusion follows from $p_g(s;1)=1-s$. Since $f(1)\ge1$, we conclude that
\[
H_{f,g}(a,1)\ge1.
\]
Now use the prefix estimate and tail transfer with $x$ as the pin:
\begin{align*}
q&\ge\frac{a}{2}+\frac{f(a)}{4}+\left[1-f(a)\right]_+\\
&\ge1+\frac{a}{2}-\frac{3f(a)}{4}\\
&=1-\frac{q}{2}+\frac{5a}{8}
\ge1-\frac{q}{2}+\frac{5}{8}\frac{4q}{13}
=1-\frac{4q}{13}.
\end{align*}
Thus $17q\ge13$, proving \eqref{eq:scalar}.
\end{proof}

The envelope \eqref{eq:pinenvelope} says that if the distance has a
short description given $y$, then $x$ must be fairly complicated at
every scale in $[\frac{1}{2},1]$. Otherwise the transfer with $y$ as
the pin would force a longer description of the distance. We now
regard $x$ as the pin. Its additional complexity improves the
projection gain $p_g$ and hence the transfer estimate for $d_x$.
This is the interaction between the two choices of pin. An upper
bound for $d_x(1)$ alone does not provide the upper bound for $d_y(1)$
used at the start.
Section~\ref{sec:pinned-obstruction} makes the
resulting limitation precise.

\section{An elementary iteration of the scalar bounds}\label{sec:refined}

The first argument used only a lower bound on the tail of each
profile. The interval estimates also give information at intermediate
scales. We use that information twice, and then return to the tail.
This produces an affine update for the cutoff from which the full
tail transfers. Its fixed point determines the exponent $\frac{2725}{3521}$.

The calculation below shows how iterating the cutoff improves the
initial bound. The reciprocal argument in
Section~\ref{sec:reciprocal} develops this idea further.

\begin{proposition}[Two intermediate envelopes]\label{prop:refinedscalar}
Under the scalar hypotheses of Proposition~\ref{prop:scalar},
\[
\max\{d_x(1),d_y(1)\}\ge\frac{2725}{3521}=0.7739278614\ldots.
\]
\end{proposition}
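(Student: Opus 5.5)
The plan is to run the argument of Proposition~\ref{prop:scalar} as an iteration rather than a single exchange. Set $q=\max\{d_x(1),d_y(1)\}$ and assume $\frac34\le q<\frac{13}{17}+$small, since otherwise there is nothing to prove. The quantity to be iterated is a common cutoff $a$: a scale such that both tails satisfy $H_{f,g}(a,1)\ge1$ and $H_{g,f}(a,1)\ge1$. By Lemma~\ref{lem:first-hit}, in the form displayed after it, we may start from $A_{\mathrm{init}}=2q-\frac54$. At a cutoff $a$, the prefix estimate together with the tail transfer gives $q\ge\frac a2+\frac{f(a)}4+1-f(a)$, hence $f(a),g(a)\ge\frac43(1-q+\frac a2)$. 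By the $2$-Lipschitz bound, both profiles then satisfy $\ell(u)\ge2u-\beta$ on $[0,a]$, where $\beta=2a-\frac43(1-q+\frac a2)$.

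The first use of intermediate scales is to propagate this initial envelope forward, the way \eqref{eq:pinenvelope} was obtained. For $b\ge a$, combine the prefix estimate at an earlier point $a'\le b$ with the transfer on $[a',b]$ and then the tail from $b$ to $1$. This is the correlated estimate \eqref{eq:tail-common}, but only with the closed gain $p_\ell$, and it yields a first intermediate envelope of the form $\ell(u)\ge u+c_1$ on a middle range. The second use feeds that envelope into $p_\ell(s;b)$ for a later $b$, producing a second, half-slope envelope on a later range. These are elementary, explicitly piecewise-linear versions of the two barriers of Lemma~\ref{lem:two-barriers}. I would not invoke the full continuity/open-closed argument; instead I would verify each envelope directly at the worst competitor, using the monotonicity of $p_g$ in $g$ and the fact that $p_g$, unlike $P_g$, is monotone (Remark~\ref{rem:partition-nonmonotonicity}).

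Next, return to the tail. For $s$ in $[a',a]$ below the current cutoff, the branch $f(s)+1-s$ of the competitor cost is at least $1$ once $s\ge$ (the deficit). The other branch $f(s)+\frac{g(1-s)+s}{2}$ uses the early bound for $f$ and the second envelope for $g$ at $1-s$. This shows that the tails have height one at a new cutoff $a'=\Phi_q(a)$, where $\Phi_q$ is affine in $a$ with slope less than one. The fixed point $a^*(q)$ is therefore a common cutoff, and applying the prefix and tail at $a^*$ gives $q\ge1+\frac{a^*}{2}-\frac34 f(a^*)$, with $f(a^*)\le2a^*$. Since $a^*(q)$ is an explicit linear-fractional function of $q$, this reduces to a linear inequality in $q$. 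Solving it should give $q\ge\frac{2725}{3521}$. The denominator $3521$ suggests the product of slopes and intercepts from the two envelope steps, and I would confirm that the algebra lands there.

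The main obstacle is bookkeeping the validity ranges. Each envelope holds only on a specific interval, and it must cover $1-s$ for every competitor $s$ near the cutoff, under the standing assumption $q$ slightly below $\frac{13}{17}$. I also have to choose which branch of each minimum is active at the fixed point. I would first compute the fixed point assuming the natural active branches, and then check a posteriori that the range inequalities (analogues of $1-a\in[A',4k]$ in Lemma~\ref{lem:cutoff-update}) hold at $q=\frac{2725}{3521}$.
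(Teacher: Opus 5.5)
Your outline has the same architecture as the paper's proof: a common cutoff $A$ with defect $\beta=\frac43(A-k)$, the envelopes $2u-\beta$ below $A$ and $L(u)=\frac{4k+2u}{3}$ above it, two intermediate envelopes from prefix, transfer and tail, and an affine cutoff update of slope less than one whose fixed point falls below $k$. What is missing is the part that actually produces $\frac{2725}{3521}$, and the way you propose to get the intermediate envelopes would not work.

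A slope-one envelope $\ell(u)\ge u+c_1$ on the range where your second step needs it is self-referential. (This is the analogue of the first barrier $t+k-\frac{\beta}{2}$ on $[A,3k]$.) In \eqref{eq:tail-common}, the terminal term and the competitors $s$ just below $b$ on the branch $b-s$ need a lower bound for $\ell$ at scales in that same range. If you use only $2u-\beta$ and $L$ there, they certify no more than about $k+L(b)-a'$, which has slope $\frac23$ in $b$. So slope one cannot be sustained on a long range by checking the worst competitor. This is exactly why Lemma~\ref{lem:two-barriers} needs the maximal-defect, open--closed argument that you set aside.

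The paper avoids the self-reference by fixing the cap at the already known $L(b)$. It proves $H_{f,g}(c_1,b)\ge L(b)$ for $\frac{8}{15}\le b\le4k$ with $c_1=\frac{8A-4k+2b}{13}$. This gives $f,g\ge M_A(b)=\frac{103k+20b-24A}{39}$, with slope $\frac{20}{39}$, not one. It then feeds $M_A$ into the gain at $b-s$ to obtain $N_A(b)=\frac{1534k+254b-384A}{525}$ on $[\frac{37}{50},4k]$. The tail step with $\gamma=\frac{271}{1050}$ yields $A'=\frac{796-2934k+384A}{971}$, whose fixed point $\frac{796-2934k}{587}$ lies below $k$ exactly when $3521k>796$.

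So the constant is dictated by the specific second envelope: at $A=k$, the tail condition $2k+\frac{N_k(1-k)+k}{2}\ge1$ is precisely $3521k\ge796$. With the barrier-type envelopes you describe ($u+k-\frac{\beta}{2}$, then $\frac{u}{2}+\frac{5k}{2}-\frac{\beta}{2}$ on $[3k,4k]$), the same computation requires $k\ge\frac{3}{14}$ and gives $\frac{11}{14}$ instead. Your expectation that the algebra lands on $\frac{2725}{3521}$ is therefore inconsistent with your own envelopes. Either route proves the statement once carried out; the barrier route, via the bootstrap of Lemma~\ref{lem:two-barriers}, proves more. As written, though, the proposal neither justifies a slope-one envelope nor computes the shallower ones.

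Two smaller corrections. First, the case to treat is $\frac{13}{17}\le q<\frac{2725}{3521}$, and the lower end ($k\le\frac{4}{17}$, from Proposition~\ref{prop:scalar}) is used in the range checks. Second, on the paper's route you should start from $A_0=\frac{4-12k}{5}\le\frac{13}{50}$, the cutoff of Proposition~\ref{prop:scalar} applied in both orientations. The value $2q-\frac54$ exceeds $\frac{4}{15}$ throughout this range and violates checks such as $b\ge2A$ for $b\ge\frac{8}{15}$.
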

\begin{proof}
Write $q=\max\{d_x(1),d_y(1)\}$ and $k=1-q$.
We will repeatedly move a common tail cutoff to the left. Every
valid cutoff must remain at least $k$, whereas the update has a
fixed point below $k$. This will give the contradiction.
By Proposition~\ref{prop:scalar}, we may assume
\begin{equation}\label{eq:refinedrange}
\frac{13}{17}\le q<\frac{2725}{3521}<\frac{31}{40},\qquad
  \frac{796}{3521}<k\le\frac{4}{17}.
\end{equation}
In particular $k>\frac{9}{40}$. We retain the sharper bound
$k>\frac{796}{3521}$ for the final cutoff update. Set
\[
L(u)=\frac{4k+2u}{3},\qquad  A_0=\frac{4-12k}{5}.
\]
We will decrease a common tail cutoff $A$, while preserving the following conditions for both choices of pin.
\begin{align}
k\le A\le A_0\le\frac{13}{50}, 
&H_{f,g}(A,1),H_{g,f}(A,1)\ge1,\label{eq:refinedinvariant}\\
\beta=\frac{4}{3}(A-k), 
&f(u),g(u)\ge
\begin{cases}
2u-\beta,&0\le u\le A,\\
L(u),&A\le u\le1.
\end{cases}\label{eq:refinedenvelope}
\end{align}
To initialize, apply the proof of Proposition~\ref{prop:scalar} first with $x$ as the pin and then with $y$ as the pin. The two resulting cutoffs are each at most $A_0=\frac{12q-8}{5}$. At those cutoffs the respective thresholds $H_{f,g}$ and $H_{g,f}$ are at least one. Hence both thresholds remain at least one at $A_0$. Prefix plus tail then gives $f(u),g(u)\ge L(u)$ for $u\ge A_0$. The functions $2u-f(u)$ and $2u-g(u)$ are nondecreasing, so their values at $A_0$ supply the early bounds in \eqref{eq:refinedenvelope}. Finally, $A_0\ge k$ follows from $q\ge\frac{13}{17}$.

We first improve the profiles on an intermediate interval.
For
\[
\frac{8}{15}\le b\le4k,\qquad 
c_1=\frac{4k+2b+6\beta}{13}
=\frac{8A-4k+2b}{13},
\]
we claim
\begin{equation}\label{eq:firstintermediateH}
H_{f,g}(c_1,b)\ge L(b).
\end{equation}
Here $b\ge2A$ and $0<c_1\le A$: the latter upper bound is equivalent to $b\le2k+\frac{5A}{2}$, which follows from $b\le4k$ and $A\ge k$.

For $s\ge \frac{b}{2}$, the projection gain is $b-s$, and
\[
f(s)+b-s\ge L(s)+b-s\ge L(b).
\]
For $c_1\le s\le \frac{b}{2}$, the pin argument $b-s$ is at least $A$. If $s\le A$, the second projection branch satisfies
\[
f(s)+\frac{g(b-s)+s}{2}
\ge\frac{2k}{3}+\frac{b}{3}+\frac{13s}{6}-\beta
\ge L(b).
\]
If $s\ge A$, that branch is instead at least $2k+\frac{b}{3}+\frac{5s}{6}$, which is at least $L(b)$ because
$4k-2b+5s\ge k>0$.
The competing tangent branch for $s\le A$ is at least $b+c_1-\beta$. The required inequality $b+c_1-\beta\ge L(b)$ is equivalent to
\[
19b\ge40k+21\beta=28A+12k.
\]
It follows from
\[
19b\ge\frac{152}{15},\qquad 
28A+12k\le28A_0+12k
=\frac{112-276k}{5}\le\frac{499}{50}<\frac{152}{15}.
\]
For $s\ge A$, the tangent branch was already checked, and $f(b)\ge L(b)$. This proves \eqref{eq:firstintermediateH}.

Prefix plus transfer, using $f(c_1)\le2c_1$, yields
\[
d_x(b)\ge L(b)-c_1.
\]
Since $b\ge A$, the tail at $b$ has height at least one. Consequently
\begin{equation}\label{eq:firstaffine}
f(b),g(b)\ge M_A(b):=\frac{103k+20b-24A}{39}
 \quad \text{for } \frac{8}{15}\le b\le4k.
\end{equation}
Indeed, $q\ge d_x(b)+1-f(b)$ implies $f(b)\ge k+L(b)-c_1=M_A(b)$.
The calculation with $y$ as the pin proves the same conclusion for $g$.

We apply the new envelope once more.
For
\[
\frac{37}{50}\le b\le4k,\qquad 
c_2=\frac{128A-103k+32b}{175},
\]
we claim
\begin{equation}\label{eq:secondintermediateH}
H_{f,g}(c_2,b)\ge L(b).
\end{equation}
The previous certificate covers $s\ge c_1$, so it suffices to check $c_2\le s\le c_1$. All pin arguments on this new interval lie in the range of \eqref{eq:firstaffine}. More explicitly, $c_2>0$, and
\[
c_1-c_2=\frac{3(213k-88A-22b)}{2275}>0,
\]
since $b\le4k$, $k\ge\frac{9}{40}$, and $A\le\frac{13}{50}$ imply
$213k-88A-22b\ge125k-88A>0$. Also
\[
b-c_1=\frac{11b+4k-8A}{13}
\ge\frac{174}{325}>\frac{8}{15}.
\]
Thus $b-s\in[\frac{8}{15},4k]$ whenever $c_2\le s\le c_1$, while $s\le c_1\le A$. By \eqref{eq:refinedenvelope} and \eqref{eq:firstaffine},
\[
f(s)+\frac{g(b-s)+s}{2}
\ge2s-\beta+\frac{M_A(b-s)+s}{2}\ge L(b).
\]
The last expression increases with $s$ and equals $L(b)$ at $s=c_2$, by the defining formula for $c_2$. The competing tangent branch is at least $b+c_2-\beta$.
The condition that this reaches $L(b)$ is
\[
271b\ge316A+309k.
\]
It follows from $b\ge\frac{37}{50}$, $A\le\frac{13}{50}$, and
$k\le\frac{1}{4}$. This proves \eqref{eq:secondintermediateH}.

Applying prefix and terminal tail exactly as before gives
\begin{equation}\label{eq:secondaffine}
f(b),g(b)\ge N_A(b):=\frac{1534k+254b-384A}{525}
 \quad \text{for } \frac{37}{50}\le b\le4k.
\end{equation}

The two intermediate estimates now improve the full tail.
Set
\[
\gamma=\frac{271}{1050},\qquad 
T=\frac{796-1534k+384A}{1050},
\]
and let $a$ solve $f(a)+\gamma a=T$. The solution exists uniquely in $(0,A]$. Indeed, $T>0$ follows from $A\ge k$ and $k\le\frac{4}{17}$, and
\[
f(A)+\gamma A-T
\ge\frac{2934k+587A-796}{1050}
\ge\frac{3521k-796}{1050}>0
\]
by the sharper lower bound $k>\frac{796}{3521}$ in \eqref{eq:refinedrange}.
This is the one place where $k>\frac{9}{40}$ would not suffice. Also $f(a)\le2a$ implies
\[
a\ge\frac{796-1534k+384A}{2371}
\ge\frac{796-1150k}{2371}\ge1-4k;
\]
the last step only requires $8334k\ge1575$, which follows from $k\ge\frac{9}{40}$.

For $a\le s\le A$, therefore, $1-s\in[\frac{37}{50},4k]$. From the new envelope \eqref{eq:secondaffine} we obtain
\[
f(s)+\frac{g(1-s)+s}{2}
\ge f(s)+\frac{1534k+254-384A+271s}{1050}\ge1.
\]
The competing branch $f(s)+1-s$ is also at least one. For $s\ge A$, invariant \eqref{eq:refinedinvariant} already supplies the required inequality. Thus $H_{f,g}(a,1)\ge1$.

Prefix plus this full tail implies
\[
q\ge1+\frac{a}{2}-\frac{3f(a)}{4}
=1-\frac{3T}{4}+\left(\frac{1}{2}+\frac{3\gamma}{4}\right)a,
\]
and hence
\begin{equation}\label{eq:cutoffupdate}
a\le A':=\frac{796-2934k+384A}{971}.
\end{equation}
The calculation with $y$ as the pin yields its own full-tail cutoff, bounded by the same $A'$. Every such cutoff is at least $k$, since prefix plus tail and $f(a)\le2a$ give $q\ge1-a$. If $A'<k$ we have a contradiction immediately.

Otherwise both tails at $A'$ have height at least one. Prefix plus tail implies $f(u),g(u)\ge L(u)$ for $u\ge A'$, and monotonicity of $2u-f(u)$ and $2u-g(u)$ supplies the early bounds with $\beta'=\frac{4(A'-k)}{3}$. Moreover,
\[
A'-A=\frac{796-2934k-587A}{971}
\le\frac{796-3521k}{971}<0.
\]
Thus \eqref{eq:refinedinvariant}--\eqref{eq:refinedenvelope} are preserved until a contradiction occurs.

The affine update \eqref{eq:cutoffupdate} is a contraction, with fixed point
\[
A_* =\frac{796-2934k}{587}<k.
\]
Starting at $A_0$, finitely many iterations must therefore give $A_n<k$, a contradiction. This proves the proposition.
\end{proof}

\begin{remark}[Finiteness and the role of the transfer lemma]
For every fixed $q<\frac{2725}{3521}$, only finitely many updates are needed. Within each update all interval endpoints are fixed positive proportions of the terminal precision, and Lemma~\ref{lem:transfer} holds for every such pair of endpoints. The proof does not sum a precision-dependent number of error terms. Proposition~\ref{prop:refinedscalar} improves the scalar consequence of the same analytic inputs.
Its conversion to the distance-set statement uses the same fixed-pair and classical arguments as the $\frac{13}{17}$ bound.
\end{remark}

\section{Pinned consequences of matching profiles}
\label{sec:matching-realization}

Finite-scale matching allowed us to choose a different pair at each
precision in the lower-box argument. A common dyadic mass law
ensures matching for almost every fixed pair. We can then use
the same scalar estimates to obtain a pinned Hausdorff bound.
The geometric argument supplies the fixed gain near dimension one.

\begin{proposition}[A matched-profile pinned consequence]
\label{prop:matched-profile}
Suppose $x\ne y$ satisfy (H1)--(H4) relative to $A$, and
\begin{equation}\label{eq:matched-profile}
\sup_{1\le m\le r}|K_m^A(x)-K_m^A(y)|=o(r).
\end{equation}
If $\dim^Ax,\dim^Ay>1+\kappa$ with $0<\kappa<\frac{1}{8}$, then
\[
\liminf_{r\to\infty}\frac{K_r^{A,x}(|x-y|)}{r}
\ge\frac{4}{5}+\frac{3\kappa}{5}.
\]
\end{proposition}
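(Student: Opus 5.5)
The plan is to reduce to Proposition~\ref{newmatch:prop:matched} by a compactness argument along a sequence of precisions. Suppose the conclusion fails. Then there are $q<\frac{4}{5}+\frac{3\kappa}{5}$ and precisions $r_j\to\infty$ with $K_{r_j}^{A,x}(|x-y|)\le qr_j$. By Lemma~\ref{lem:joint-extraction}, after passing to a subsequence the four profiles $f,g,d_x,d_y$ converge uniformly. The limits satisfy the profile bounds \eqref{eq:profileclass}, the prefix estimates \eqref{eq:limitprefix} and the closed transfer of Lemma~\ref{lem:transfer} in both orientations. The full partition transfer of Lemma~\ref{lem:fullpartition} also holds along the same subsequence. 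Thus $(f,g,d_x,d_y)$ is a scalar system in the sense of \eqref{eq:new-system}, and $d_x(1)\le q$.

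Next the matching hypothesis \eqref{eq:matched-profile} is used. Applied at precision $r_j$ and divided by $r_j$, it gives $\sup_{u\in[0,1]}|f_j(u)-g_j(u)|\to0$, so the limits satisfy $f=g$. The effective dimension bounds give $K_m^A(y)\ge(1+\kappa')m-o(m)$ for some $\kappa'$ with $\kappa<\kappa'<\min\{\dim^Ay-1,\frac{1}{8}\}$. Hence $f(u)\ge(1+\kappa')u$ for $u>0$, and trivially at $u=0$. The resulting system has $f=g$ and a source floor $1+\kappa'$ with $0<\kappa'<\frac{1}{8}$, so it meets the hypotheses of Proposition~\ref{newmatch:prop:matched}.

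Proposition~\ref{newmatch:prop:matched} applies separately to each orientation, and in particular to $d_x$. This matters because it lets us avoid any upper bound on $d_y$, which a prescribed pin does not supply. The proposition gives $d_x(1)\ge\frac{4}{5}+\frac{3\kappa'}{5}>q$, contradicting $d_x(1)\le q$. Letting $q$ increase to the claimed value, or simply using $\kappa'>\kappa$, proves the liminf bound.

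The main point to check is that the proof of Proposition~\ref{newmatch:prop:matched} truly uses only one distance profile. It forms the quadruple $(f,f,d,d)$ from one profile $d$ and rescales it to a minimal-ratio scale. It then invokes Proposition~\ref{newlev:prop:level} for this symmetric quadruple. That symmetric quadruple satisfies the reverse-orientation inequalities because the endpoint profiles coincide, so the transfer and prefix inequalities for $d_x$ with $(f,g)=(f,f)$ are exactly those required in the other orientation. I expect this to be the only delicate verification. One further check is needed: the full partition transfer must survive the rescaling and limit inside that proof. Lemma~\ref{lem:uniform-partition-witnesses} handles this by transferring strict caps to the approximating profiles.
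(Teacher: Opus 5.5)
Your proposal is correct and follows the paper's proof. Both extract convergent profiles, use the matching hypothesis to get $f=g$, note that the duplicated quadruple $(f,f,d_x,d_x)$ is a scalar system whose source floor exceeds $1+\kappa$, and apply Proposition~\ref{newmatch:prop:matched} in the single orientation with pin $x$. The only difference is cosmetic: you argue by contradiction along a low-complexity sequence, whereas the paper applies the bound along an arbitrary sequence of precisions.
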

\begin{proof}
Take any sequence of terminal precisions and extract uniformly
convergent endpoint profiles and the distance profile $d_x$.
Condition~\eqref{eq:matched-profile} implies $f=g$.
The prefix and both transfer expressions therefore agree in the
two orientations. Hence $(f,f,d_x,d_x)$ is a scalar system, with
$f(u)\ge(1+\kappa)u$. Proposition~\ref{newmatch:prop:matched}
proves the lower bound at the terminal scale.
The initial sequence was arbitrary, which proves the assertion.
The two actual conditioned distance profiles need not agree.
\end{proof}

\begin{theorem}[Pinned distances under a common dyadic mass law]
\label{thm:uniform-main}
Let $\mu$ be a compactly supported planar $s$-Frostman probability
measure, with $s>1$, and put $E=\operatorname{supp}\mu$.
Suppose there are real numbers $H(n)$ such that
\[
-\log_2\mu(Q_n(z))=H(n)+o_z(n)
\quad\text{for $\mu$-almost every }z.
\]
Then $\dim_{\mathrm H}\Delta_xE\ge L_*(s)$ for $\mu$-almost every $x$.
The quotient $\frac{H(n)}{n}$ need not converge.
\end{theorem}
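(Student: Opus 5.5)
The plan is to split $L_*(s)=\max\{\frac{4}{5}+\eta_0,L(s)\}$ into two contributions and prove each by contradiction. For a fixed threshold $q$, suppose the bad set $B_q=\{x\in E:\dim_{\mathrm H}\Delta_xE<q\}$ has positive $\mu$-mass. Since $B_q$ is Borel (Lemma~\ref{lem:seventh-classical-recurrence}), we can restrict $\mu$ to two separated compact pieces of $B_q$ with positive mass; call them $\nu$ (pins) and $\mu'$ (sources). Both remain $s$-Frostman, and both inherit the mass law $-\log_2\mu(Q_n(z))=H(n)+o_z(n)$ from the original $\mu$. Fix one oracle $A$ encoding the original $\mu$ and $E$. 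Lemma~\ref{lem:measure-coding} then gives $K_n^A(z)=-\log_2\mu(Q_n(z))+O_z(\log n)=H(n)+o_z(n)$ for $\mu$-almost every $z$. Consequently, for product-almost every pair $(x,y)$, \eqref{eq:matched-profile} holds: $\sup_{m\le r}|K_m^A(x)-K_m^A(y)|=o(r)$. The step to watch is that the $o(n)$ error is pointwise while the supremum is over $m\le r$. Since each error is $o(m)\le o(r)$ for large $m$, and is bounded by $O(m)$ for small $m$, the supremum is still $o(r)$. This works even though $H(n)/n$ oscillates.

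For the $L(s)$ part, take $q<L(s)$ and pick $1+\kappa<s$ with $\kappa<\frac18$ and $\frac{4}{5}+\frac{3\kappa}{5}>q$; if $L(s)=\frac78$, take $\kappa$ near $\frac18$. The Frostman bound gives $\dim^Ax,\dim^Ay>1+\kappa$ almost everywhere. Combining this with the fixed-pair hypotheses (H1)--(H4) on a product-full relation, as in Lemma~\ref{lem:seventh-classical-recurrence}, Proposition~\ref{prop:matched-profile} yields $\dim^{A,x}|x-y|\ge\frac45+\frac{3\kappa}5>q$ for some pair with $x\in B_q$. On the other hand, the effectively compact point-to-set principle gives $\dim^{A,x}|x-y|\le\dim_{\mathrm H}\Delta_xE<q$, a contradiction. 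Letting $q\nearrow L(s)$ along a countable sequence completes this part.

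For the $\frac45+\eta_0$ part, take $q<\frac45+\eta_0$ and reuse the argument of Section~\ref{sec:new-universal}. The relation of pairs in $\nu\times\mu'$ is recurrent in the sense of Lemma~\ref{lem:seventh-classical-recurrence}, with $K_n^{A,x}(|x-y|)\le qn$ infinitely often and $P_n\ge n^{-2}$ infinitely often. What is missing is the bound for the opposite orientation, $K_n^{A,y}(|x-y|)$. I would try to get it from matching. Along low-distance precisions, the limiting profiles satisfy $f=g$, so the system $(f,f,d_x,d_x)$ is a scalar system in both orientations with both terminal values at most $q$. Lemma~\ref{lem:new-stability} then applies to this symmetrized system, giving the rigid plateau for $f=g$ and the tail certificate at height $1-\varepsilon$. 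The finite realization of Section~\ref{sec:new-geometry} (the annular cap at parent exponent $\frac3{10}$, the pin-ancestor bounds built from $T$, and Lemma~\ref{lem:seventh-measure-finite-selection}) needs only the pinned transfer and the conditional-parent cap, both of which are available. Proposition~\ref{prop:endpoint-finite-geometry} then excludes the configuration.

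I expect the main obstacle to be this last step. Rigidity is stated for two distance profiles that are both small, but here only $d_x$ is known to be small. The point to justify is that, once $f=g$, the symmetrized system with $d_y$ replaced by $d_x$ satisfies every inequality used in Proposition~\ref{newrig:prop:rigidity} and Lemma~\ref{lem:new-stability}; those inequalities refer only to the profiles, not to an actual second pin. A second point to check is that the stable certificate supplies \eqref{eq:template-conditional} uniformly on a positive-mass subrelation, exactly as in the pinned template argument.
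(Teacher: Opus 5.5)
Your proposal is correct and follows the paper's proof essentially step by step. The coding lemma gives $K_n^A(z)=H(n)+o_z(n)$ and hence the matched-profile condition for almost every pair; Proposition~\ref{prop:matched-profile} with the effectively compact point-to-set principle then yields $L(s)$; and for $\frac45+\eta_0$ the paper, like you, duplicates the prescribed-orientation distance profile into $(f,f,d_x,d_x)$, uses Lemma~\ref{lem:new-stability} only to obtain the plateau and parent certificate, and observes that Proposition~\ref{prop:endpoint-finite-geometry} needs the small radial cover only at the actual pin. The only cosmetic difference is that you code against the original $\mu$, so the restricted pieces inherit the mass law automatically, whereas the paper invokes the dyadic density theorem to transfer the law to the restrictions; both routes work because the finite argument uses only upper mass bounds, which pass from $\mu$ to its restrictions.
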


The mass law says that the linear-scale part of the information in
a dyadic cell is described by one function $H(n)$ at almost every
point. It permits the rate of information growth to vary with the scale.

\begin{proof}
Choose an oracle $A$ encoding $E$ and all dyadic masses of $\mu$.
Lemma~\ref{lem:measure-coding} implies
$K_n^A(z)=H(n)+o_z(n)$ almost everywhere. For almost every pair,
the difference of the two complexities is $o(n)$. This implies
\eqref{eq:matched-profile} as well. Given a positive tolerance,
the errors at all sufficiently large $m\le r$ are at most that
tolerance times $r$; the finitely many smaller errors are bounded
independently of $r$.

The same coding lemma with the other endpoint as an additional
oracle, followed by Fubini, proves full-oracle independence for
almost every pair. The Frostman bound ensures effective dimension
at least $s$. To obtain the directional hypotheses, fix $1<s'<s$
and apply Orponen's theorem~\cite[Theorem~1.11]{O} to countably
many source restrictions in balls separated from the pin.
The exceptional centers have dimension at most $2-s'<1$ and
are $\mu$-null. Directions failing the logarithmic complexity
bound form an arclength-null set. Absolute continuity and Fubini
therefore supply both directional estimates.

For almost every $x$, Proposition~\ref{prop:matched-profile}
applies to almost every $y$, for each
$0<\kappa<\min\{s-1,\frac{1}{8}\}$.
Letting $\kappa$ increase to this upper endpoint proves
$\dim^{A,x}|x-y|\ge L(s)$. The pinned image is effectively
compact relative to $(A,x)$, so the effectively compact
point-to-set principle proves the Hausdorff bound $L(s)$.

It remains to prove the fixed gain. Suppose a positive measure
set of pins has pinned Hausdorff dimension below some
$q<\frac{4}{5}+\eta_0$. By a countable restriction, choose
separated source and pin portions of positive measure. The
common mass law and the preceding generic properties can be
made uniform on a positive product-measure relation. Restrictions
to positive-measure sets preserve the law, since the dyadic density
theorem makes their cell-mass ratio tend to a positive constant
at almost every retained point.

Lemma~\ref{lem:seventh-classical-recurrence} supplies infinitely
many precisions with a relation of polynomial mass and low
conditioned distance complexity in the prescribed orientation.
The endpoint profiles agree uniformly in the limit. The distance
profile in that orientation can therefore be duplicated to form
$(f,f,d_x,d_x)$. Lemma~\ref{lem:new-stability} now forces the
plateau and parent certificate used in
Proposition~\ref{prop:endpoint-finite-geometry}.
That proposition needs the small radial cover in only the actual
prescribed orientation. The duplicated scalar profile is used
solely to obtain its mass and plateau hypotheses.
Its contradiction proves the bound $\frac{4}{5}+\eta_0$.
Taking the maximum with $L(s)$ completes the proof.
\end{proof}

\begin{corollary}[Uniform branching]\label{cor:uniform-branching}
Start with a dyadic square. At level $n$, retain
$m_n\in\{1,2,3,4\}$ children of every retained square, using
the same number for all parents at that level. Their locations
may vary with the parent. Let $E$ be the limiting compact set
and let $\mu$ give equal mass to the retained cells. If
\[
s_0=\liminf_{n\to\infty}\frac{1}{n}\log_2\prod_{k=1}^n m_k>1,
\]
then $\dim_{\mathrm H}\Delta_xE\ge L_*(s_0)$ for
$\mu$-almost every $x$.
\end{corollary}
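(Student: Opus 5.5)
The plan is to deduce Corollary~\ref{cor:uniform-branching} from Theorem~\ref{thm:uniform-main}. That requires checking two things for the natural measure $\mu$ on $E$. First, $\mu$ must be compactly supported and $s$-Frostman for some $s>1$, and the exponent can be taken arbitrarily close to $s_0$ from below. Second, the common dyadic mass law must hold. The mass law is immediate from the construction. Every retained level-$n$ square has mass exactly $\prod_{k\le n}m_k^{-1}$. So setting $H(n)=\log_2\prod_{k=1}^n m_k$ gives
\[
-\log_2\mu(Q_n(z))=H(n)
\]
exactly, for every $z$ in the support. One caveat concerns points on cell boundaries, where $Q_n(z)$ may be a neighbouring dyadic square. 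These points form a $\mu$-null set once we know that $\mu$ gives no mass to lines, which follows from the Frostman bound with exponent greater than one. This is the reason the Frostman step comes first.

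For the Frostman bound, fix $1<s<s_0$. By the definition of $s_0$ as a liminf, $\prod_{k\le n}m_k\ge c_s2^{sn}$ for all $n$. A ball of radius $r\asymp2^{-n}$ meets at most a bounded number of level-$n$ dyadic squares, hence at most boundedly many retained cells. Its mass is therefore at most $C\prod_{k\le n}m_k^{-1}\le C2^{-sn}\lesssim r^s$. This makes $\mu$ $s$-Frostman, and the support $E$ is compact as a nested intersection. A subtlety is that $\operatorname{supp}\mu$ should equal the limiting set $E$. Every retained cell has positive mass, so this holds up to the closure convention for the half-open squares, which I would fix by taking closed cells.

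Theorem~\ref{thm:uniform-main} then gives $\dim_{\mathrm H}\Delta_xE\ge L_*(s)$ for $\mu$-almost every $x$, for every $s<s_0$. The function $L_*$ is nondecreasing and continuous from the left (it is the maximum of a constant and $\min\{\frac{3s+1}{5},\frac78\}$). So I let $s\nearrow s_0$ along a countable sequence and intersect the countably many full-measure sets. This yields $L_*(s_0)$ almost everywhere.

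The only step needing care is the Frostman verification. It must use the liminf exponent uniformly over all scales, including scales where the running rate $\frac1n\log_2\prod m_k$ dips. This is handled by the constant $c_s$ together with the bounded overlap of balls with the dyadic grid. I do not expect a real obstacle there.
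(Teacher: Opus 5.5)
Your proposal is correct and follows the paper's proof essentially step for step: you set $H(n)=\sum_{k\le n}\log_2 m_k$, note that the dyadic mass identity is exact, get an $s$-Frostman bound for each $1<s<s_0$ from the bounded number of level-$n$ cells meeting a ball of radius about $2^{-n}$, and apply Theorem~\ref{thm:uniform-main} along $s\nearrow s_0$ using continuity of $L_*$. The only cosmetic difference is how you show dyadic boundary lines are null: you deduce it from the Frostman bound, whereas the paper directly counts the $O(2^n)$ cells of total mass $O(2^{n-H(n)})\to0$ that meet a segment, which is the same estimate.
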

\begin{proof}
Put $H(n)=\sum_{k=1}^n\log_2m_k$. Each retained cell has
mass $2^{-H(n)}$. Dyadic boundary lines are null. A bounded
vertical or horizontal segment meets at most $C2^n$ cells, whose
total mass is at most $C2^{n-H(n)}\to0$. A ball of radius
$2^{-n}$ meets a bounded number of cells, so $\mu$ is
$t$-Frostman for some $t>1$. Apply the preceding theorem at every Frostman exponent below $s_0$
and use continuity of $L_*$. The dyadic mass identity is exact.
\end{proof}

Alternating two-child and four-child blocks of lengths $2^j$, $j\ge1$,
gives $\liminf_{n\to\infty}\frac{H(n)}{n}=\frac{4}{3}$ and
$\limsup_{n\to\infty}\frac{H(n)}{n}=\frac{5}{3}$. The ratio is monotone
within each block. This illustrates why the hypothesis concerns an entire
information profile rather than a single limit of the information per scale.
For the Hausdorff conclusion, the common law supplies one fixed
pair along all sequences of precisions. The lower-box proof needed
only the finite-scale matching of Section~\ref{sec:finite-matching}.

\section{Joint optimization of the two geometric counts}
\label{sec:joint-count-obstruction}

Both upper counts apply to the same family of rich bisectors. One
may therefore compare their minimum with the lower count before
optimizing over the parameters. We show that this does not improve
the threshold under the constraints used in the paper. The assertion
concerns these inequalities only. A further restriction imposed by
the common source measure could change the conclusion.

Fix $\frac{1}{3}<\alpha<\alpha_*$ and $0\le\Delta<\frac{1}{2}$. Recall that
$a_c=\frac{1}{4}+\frac{\Delta}{2}$ and $W_0=\frac{1-2\Delta}{4}$, and put
\begin{align*}
 \omega_o&=\frac{3\alpha(1-2\Delta)}{4(10-3\alpha)},\\
 \omega_b&=\frac{-1+4\alpha+6(1-2\alpha)\Delta}{12(3-2\alpha)},\\
 \omega_f&=\frac{\alpha(1-2\Delta)}{4(2+\alpha)}.
\end{align*}
Fix also $0<\omega<\frac{1-2\Delta}{24}$. We work with the leading
exponents, with the arbitrarily small tolerances set equal to zero.
The precise parameter set used here is
\[
 -1\le\chi\le2,\qquad
 \varsigma\ge\max\{0,-2\chi\},\qquad
 3\varsigma+2\chi\le4,\qquad 0\le\mathfrak e\le\varsigma.
\]
Set
\[
 u_0=\varsigma+2\chi,\qquad
 \varpi=\min\{2,1+u_0\},\qquad
 e_0=\mathfrak e+[u_0-1]_+,
\]
and take $0\le\mathfrak u\le\varpi+1$. In branch~1,
$\mathfrak e\le\frac{1}{8}$ and
$f_0(\mathfrak u)=\min\{2,2\mathfrak u,\mathfrak u+e_0\}$.
In branch~2, $\mathfrak e\ge\frac{1}{8}$ and
$f_0(\mathfrak u)=\min\{2,2\mathfrak u\}$. Thus we keep the
additional source bound in branch~1. Put
\[
 \mathfrak r=\frac{1}{2}+f_0(\mathfrak u)\omega,\qquad
 t=a_c+\chi\omega,\qquad
 W=W_0-(\varsigma+\chi-\mathfrak e)\omega,
\]
and retain
\[
 a=a_c+(\varpi-\mathfrak u)\omega,\qquad
 L=\frac{1+3\alpha}{2}+(1-3\alpha)a.
\]
The first upper exponent is $U_1=3\mathfrak r-t-2W$. The second
is the larger of
\[
 U_b=3\mathfrak r-\alpha-(2-3\alpha)a,
 \qquad U_f=3\mathfrak r-(1-a).
\]
Thus the combined comparison is contradictory at a template
precisely when
\[
 L>U_1\quad\hbox{or}\quad (L>U_b\ \hbox{and}\ L>U_f).
\]

\begin{proposition}[The joint comparison of the displayed exponents]
\label{prop:joint-count-obstruction}
The assertion that at least one of the two strict leading-exponent
comparisons succeeds for every parameter choice just specified is
equivalent to
\[
 \omega<\max\{\omega_o,\min\{\omega_b,\omega_f\}\}.
\]
The quantifier is uniform. Above this threshold there is an
admissible parameter choice at which both comparisons fail. Thus
joint optimization over this parameter set has the same threshold
as the better of the two separate optimizations.
\end{proposition}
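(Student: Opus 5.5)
The plan is to prove the equivalence in two directions.

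\emph{Sufficiency.} I would reuse the two separate optimizations already carried out. The proof of Proposition~\ref{prop:strict-budget}, via \eqref{eq:leading-margin}, bounds $C_1(\mathfrak u)\le10-3\alpha$ over the whole parameter set and in both branches. Hence $L-U_1\ge\frac{3\alpha}{4}(1-2\Delta)-(10-3\alpha)\omega$, which is positive whenever $\omega<\omega_o$. Likewise, the computation in Section~\ref{sec:relation-upper} gives $L-U_b\ge\alpha-\frac14+(\frac32-3\alpha)\Delta-(9-6\alpha)\omega$, which is positive for $\omega<\omega_b$. It also gives $L-U_f\ge\frac{3\alpha}{4}(1-2\Delta)-(6+3\alpha)\omega$, which is positive for $\omega<\omega_f$. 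These bounds used only $f_0\le\min\{2,2\mathfrak u\}$, $\varpi\le2$ and $\varpi-\mathfrak u\ge-1$. In branch~1 the extra entry $\mathfrak u+e_0$ only decreases $f_0$, so the bounds persist. Consequently, if $\omega$ is below the maximum, one of the two comparisons succeeds uniformly.

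\emph{Necessity.} Suppose $\omega\ge\max\{\omega_o,\min\{\omega_b,\omega_f\}\}$. I would exhibit one admissible parameter choice at which both $L\le U_1$ and ($L\le U_b$ or $L\le U_f$) hold. The natural candidate is the extremizer already recorded after Proposition~\ref{prop:strict-budget}: $\varsigma=2$, $\chi=-1$, $\mathfrak e=0$, $\mathfrak u=2$. This gives $u_0=0$, $\varpi=1$, $e_0=0$. It lies in branch~1, and there $f_0(2)=\min\{2,4,2\}=2$. For this choice $C_1=10-3\alpha$ exactly, so $L-U_1=\frac{3\alpha}{4}(1-2\Delta)-(10-3\alpha)\omega\le0$. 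At the same point $a=a_c-\omega$ and $\mathfrak r=\frac12+2\omega$. A direct substitution should show that $L-U_b$ equals its lower bound, namely $\varpi-\mathfrak u=-1$ and $f_0=2$, so the branch-$b$ bound is attained. It should likewise show that $L-U_f=\frac{3\alpha}{4}(1-2\Delta)-(7-\alpha)\omega$ or a similar expression. I would check whether the $f$-estimate is attained there too.

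If it is not, I would use a second choice: the extremizer of the floor estimate, with $f_0(\mathfrak u)-\alpha\mathfrak u=2-\alpha$ (so $\mathfrak u=2$) and $\varpi=2$. This needs $u_0\ge1$, for instance $\chi=0$, $\varsigma=1$ with $\mathfrak e\ge\frac18$ in branch~2. The paper hints that two explicit parameter choices suffice, so the case split is on which of $\omega_b,\omega_f$ is smaller. If $\omega\ge\omega_b$, use the first point, where $U_1$ and $U_b$ both fail. If instead $\omega\ge\omega_f$, use a point at which $U_1$ and $U_f$ both fail. The main obstacle is the second case. I must verify that a single admissible template simultaneously makes $L\le U_1$, which needs $C_1$ close to $10-3\alpha$, and $L\le U_f$, which needs $\varpi=2$. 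I would first try $\varsigma=\frac43$, $\chi=0$, $\mathfrak e=\frac18$, $\mathfrak u=2$, compute $C_1$ and the $f$-margin explicitly, and adjust along the face $3\varsigma+2\chi=4$. The aim is to show that the resulting two linear thresholds in $\omega$ are at most $\omega_o$ and $\omega_f$ respectively, using $\alpha<\frac38$.
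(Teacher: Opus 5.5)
Your sufficiency argument and your first template $(\varsigma,\chi,\mathfrak e,\varpi,\mathfrak u)=(2,-1,0,1,2)$ coincide with the paper's. There $L-U_1=(10-3\alpha)(\omega_o-\omega)$ and $L-U_b=3(3-2\alpha)(\omega_b-\omega)$, which settles the case $\omega\ge\max\{\omega_o,\omega_b\}$. The gap is the other case, $\omega_f\le\omega<\omega_b$, for which you never produce a working template. This range is nonempty inside the structural range, e.g.\ for $\alpha=0.37$, $\Delta=\frac15$. Write $L-U_f=\frac{3\alpha}{4}(1-2\Delta)-3\bigl(\alpha\varpi+f_0(\mathfrak u)-\alpha\mathfrak u\bigr)\omega$. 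To make $L\le U_f$ for all $\omega\ge\omega_f$ you need $\alpha\varpi+f_0(\mathfrak u)-\alpha\mathfrak u=2+\alpha$. This forces $\varpi=2$, $\mathfrak u=1$ and $f_0(1)=2$. Your parenthetical ``so $\mathfrak u=2$'' is a slip, since $f_0(2)-2\alpha=2-2\alpha$. At any template with $\mathfrak u=2$ one gets $L-U_f\ge\frac{3\alpha}{4}(1-2\Delta)-6\omega$. This is positive on the whole structural range $\omega<\frac{1-2\Delta}{24}$, because $\alpha>\frac13$. So the $f$-comparison always succeeds at such points, and your trial point with $(\varsigma,\chi,\mathfrak e,\mathfrak u)=(\frac43,0,\frac18,2)$ cannot defeat the combined comparison.

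Your stated aim for this case is also unattainable. Any template with $\varpi=2$ has $u_0\ge1$, hence $2\varsigma+\chi\le3-\frac{u_0}{4}\le\frac{11}{4}$. With $\mathfrak u=1$ this gives $C_1\le\frac{35}{4}+c_\alpha$, which is less than $10-3\alpha$ precisely because $\alpha<\frac38$. So the $U_1$-threshold cannot be brought down to $\omega_o$ at a point attaining the floor bound. The missing observation is that $\omega_f>\omega_o$, which is equivalent to $\alpha<\frac23$. In this case you only need $L\le U_1$ for $\omega\ge\omega_f$, that is, $C_1\ge3(2+\alpha)$. The paper uses $(1,\frac12,0,2,1)$ in branch~1, where $u_0=2$ and $e_0=1$ keep $f_0(1)=2$. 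It gives $L-U_f=3(2+\alpha)(\omega_f-\omega)$ and $L-U_1=\frac{3\alpha}{4}(1-2\Delta)-(\frac{15}{2}+3\alpha)\omega$, and the latter is already negative at $\omega_f$. Your own branch-2 idea $(\varsigma,\chi,\mathfrak e)=(1,0,\frac18)$ also works once you take $\varpi=2$ and $\mathfrak u=1$, giving $C_1=\frac{27}{4}+3\alpha$. The two cases then combine through $\min\{\max\{\omega_o,\omega_b\},\omega_f\}=\max\{\omega_o,\min\{\omega_b,\omega_f\}\}$.
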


\begin{proof}
The sufficiency follows from the separate strict comparisons in
Proposition~\ref{prop:strict-budget} and
Section~\ref{sec:relation-upper}. The latter remains valid with
the improved branch~1 richness, since decreasing $\mathfrak r$
can only decrease both upper exponents.
For the converse, only two templates are needed. Both are in branch
one, and both have $\mathfrak r=\frac{1}{2}+2\omega$, including the improved
source bound used in that branch.

First take
\[
 (\varsigma,\chi,\mathfrak e,\varpi,\mathfrak u)
 =(2,-1,0,1,2).
\]
Here $u_0=e_0=0$, $a=a_c-\omega$, $t=a_c-\omega$, and
$W=W_0-\omega$. Direct substitution yields
\begin{align*}
 L-U_1&=(10-3\alpha)(\omega_o-\omega),\\
 L-U_b&=3(3-2\alpha)(\omega_b-\omega).
\end{align*}
Consequently this template defeats the combined comparison whenever
$\omega\ge\max\{\omega_o,\omega_b\}$.

Next take
\[
 (\varsigma,\chi,\mathfrak e,\varpi,\mathfrak u)
 =(1,\frac{1}{2},0,2,1).
\]
Now $u_0=2$, $e_0=1$, and $a=a_c+\omega$. The branch-one
minimum is still $\min\{2,2\mathfrak u,\mathfrak u+e_0\}=2$.
For this template,
\begin{align*}
 L-U_f&=3(2+\alpha)(\omega_f-\omega),\\
 L-U_1&=\frac{3\alpha}{4}(1-2\Delta)
                  -(\frac{15}{2}+3\alpha)\omega.
\end{align*}
At $\omega=\omega_f$, the latter expression equals
\[
 -\frac{3\alpha(1-2\Delta)}{8(2+\alpha)}<0,
\]
and it decreases thereafter. This template therefore defeats the
combined comparison for $\omega\ge\omega_f$.

Since $\omega_f>\omega_o$, the two obstruction ranges start at
\[
 \min\{\max\{\omega_o,\omega_b\},\omega_f\}
 =\max\{\omega_o,\min\{\omega_b,\omega_f\}\}.
\]
All these thresholds lie in the stated structural range whenever
they control the maximum, since $\alpha<\alpha_*<\frac{2}{5}$ implies
$\omega_f<\frac{1-2\Delta}{24}$.
All the displayed template constraints, including
$3\varsigma+2\chi\le4$, $\varsigma\ge\max\{0,-2\chi\}$, and
$0\le\mathfrak e\le\varsigma$, hold for the two choices.
For both choices one may take $u_-=W$ and $u_+=W+t$, so the
remaining plateau-location inequalities hold as well. This
completes the calculation.
\end{proof}

\section*{Acknowledgments}

Alex Iosevich acknowledges partial support from National Science
Foundation grant DMS-2154232 and a 2026 Simons Fellowship in
Mathematics. Alexia Yavicoli acknowledges partial support from
the Natural Sciences and Engineering Research Council of Canada
(NSERC) through grants GR030571 and GR030540.

The authors thank Institut Mittag-Leffler and the program
organizers for their hospitality during the visit in which
part of this paper was written.

The authors used ChatGPT (OpenAI) to assist with literature
searches, calculations, the development and presentation of
arguments, and revision of the manuscript. The authors take
full responsibility for all statements, proofs, calculations,
and references in the paper.

\end{document}